\documentclass[a4paper]{amsart}

\usepackage{fix-cm}
\usepackage[final]{microtype}
\usepackage[dvipsnames,svgnames,x11names,hyperref]{xcolor}
\usepackage{dsfont,url,graphicx,verbatim,amssymb,enumerate,stmaryrd,booktabs,lmodern,mathtools,mathabx,nicefrac}
\SetSymbolFont{stmry}{bold}{U}{stmry}{m}{n}
\usepackage[pagebackref,colorlinks,citecolor=Mahogany,linkcolor=Mahogany,urlcolor=Mahogany,filecolor=Mahogany]{hyperref}
\usepackage[capitalize]{cleveref}
\usepackage[mathscr]{euscript}
\usepackage[margin=1.33in]{geometry}
\usepackage{tikz,tikz-cd}
\usetikzlibrary{matrix,calc,positioning,arrows,decorations.pathreplacing,patterns,arrows,patterns.meta}
\tikzset{
  snake left/.style={
    rounded corners,
    to path={
      let \p1 = (\tikztostart.east),
          \p2 = (\tikztotarget.west),
          \p3 = ($(\p1)!0.5!(\p2)$),
          \n1 = {8pt} 
      in
      (\p1)
      -- (\x1 + \n1, \y1)
      -- (\x1 + \n1, \y3)
      -- (\x2 - \n1, \y3) \tikztonodes
      -- (\x2 - \n1, \y2)
      -- (\p2)
    }
  }
}

\newtheorem{theorem}{Theorem}[section]
\newtheorem*{theorem*}{Theorem}
\newtheorem{lemma}[theorem]{Lemma}
\newtheorem{proposition}[theorem]{Proposition}
\newtheorem{corollary}[theorem]{Corollary}
\newtheorem*{corollary*}{Corollary}

\newtheorem{atheorem}{Theorem}

\newtheorem{innercustomgeneric}{\customgenericname}
\providecommand{\customgenericname}{}
\newcommand{\newcustomtheorem}[2]{%
  \newenvironment{#1}[1]
  {%
   \ifdefined\crefalias\crefalias{innercustomgeneric}{#2}\fi
   \renewcommand\customgenericname{#2}%
   \renewcommand\theinnercustomgeneric{##1}%
   \innercustomgeneric
  }
  {\endinnercustomgeneric}%
  \ifdefined\crefname\crefname{#2}{#2}{#2s}\fi
}

\newcustomtheorem{customthm}{Theorem}
\newtheorem{acorollary}[atheorem]{Corollary}

\newcustomtheorem{customconj}{Conjecture}
\theoremstyle{definition}
\newtheorem{definition}[theorem]{Definition}
\newtheorem*{definition*}{Definition}

\newtheorem{notation}[theorem]{Notation}

\theoremstyle{remark}
\newtheorem{example}[theorem]{Example}
\newtheorem*{example*}{Example}
\newtheorem*{remark*}{Remark}

\newtheorem{remark}[theorem]{Remark}

\renewcommand{\sf}[1]{{\mathsf{#1}}}
\newcommand{\scr}[1]{{\mathscr{#1}}}
\renewcommand{\rm}[1]{{\mathrm{#1}}}
\newcommand{\lra}{\longrightarrow}

\newcommand{\ul}[1]{{\underline{#1}}}
\newcommand{\ol}[1]{{\overline{#1}}}

\DeclareMathOperator*{\colim}{colim}
\newcommand{\ds}[1]{{\mathds{#1}}}
\newcommand{\fr}[1]{{\mathfrak{#1}}}
\newcommand{\bb}[1]{{\mathds{#1}}}
\newcommand{\cal}[1]{{\mathcal{#1}}}
\renewcommand{\bf}[1]{{\mathbf{#1}}}

\makeatletter
\newcommand{\superimpose}[2]{{%
  \ooalign{%
    \hfil$\m@th#1\@firstoftwo#2$\hfil\cr
    \hfil$\m@th#1\@secondoftwo#2$\hfil\cr
  }%
}}
\makeatother

\newcommand{\Fun}{\rm{Fun}}
\newcommand{\Map}{\rm{Map}}

\newcommand{\Sp}{\scr{S}\rm{p}}
\newcommand{\aug}{\rm{aug}}

\newcommand{\St}{\rm{St}}
\newcommand{\SSt}{{\scr{S}\rm{t}}}

\newcommand{\PGL}{\rm{PGL}}
\newcommand{\GL}{\rm{GL}}

\newcommand{\Spc}{{\scr{S}\rm{pc}}}

\newcommand{\alg}{\rm{alg}}
\newcommand{\Alg}{\rm{Alg}}

\newcommand{\coAlg}{\rm{coAlg}}
\newcommand{\Q}{\ds{Q}}
\newcommand{\N}{\ds{N}}
\newcommand{\Z}{\ds{Z}}
\newcommand{\C}{\ds{C}}
\newcommand{\R}{\ds{R}}
\newcommand{\heart}{\ensuremath\heartsuit}
\renewcommand{\cot}{\rm{cot}}
\newcommand{\triv}{\rm{triv}}
\newcommand{\prim}{\rm{prim}}
\newcommand{\fgt}{\rm{fgt}}
\newcommand{\free}{\rm{free}}
\newcommand{\cofree}{\rm{cofree}}
\newcommand{\fil}{\rm{fil}}
\newcommand{\gc}{\rm{gc}}

\newcommand{\indec}{\rm{indec}}

\newcommand{\gr}{\rm{gr}}

\newcommand{\coLie}{\rm{\rm{coLie}}}

\newcommand{\SL}{\rm{SL}}
\newcommand{\BGL}{\rm{BGL}}

\newcommand{\BGLb}{\bf{BGL}}

\newcommand{\para}{\boxbackslash}

\newcommand{\Vect}{\rm{Vect}}

\newcommand{\id}{\rm{id}}
\newcommand{\inc}{\rm{inc}}
\newcommand{\pr}{\rm{pr}}

\newcommand{\rkfil}{\rm{rkfil}}
\newcommand{\slopetwo}{{2{\bullet}{\shortminus}1}}

\newcommand{\StH}{\rm{St}^{2}}
\newcommand{\StL}{\rm{St}^{\infty}}

\newcommand{\FC}{\mathrm{FC}} 
\newcommand{\FI}{\mathrm{FI}} 
\newcommand{\FL}{\rm{FL}} 
\newcommand{\FIR}{\mathrm{FIR}} 
\newcommand{\FLR}{\rm{FLR}} 
\newcommand{\compactldots}{\mathinner{\ldotp\mkern-2mu\ldotp\mkern-2mu\ldotp}}
\newcommand{\compactcdots}{\mathinner{\cdotp\mkern-2mu\cdotp\mkern-2mu\cdotp}}

\newcommand{\PolyL}{\mathscr{G}} 
\newcommand{\PolyLF}{\scr{L}^\rm{f}} 
\newcommand{\MotCoLie}{\scr{L}^\rm{MTM}} 

\newcommand{\Gr}{\mathrm{Gr}}
\newcommand{\Conf}{\mathrm{Conf}}

\newcommand{\CorG}{\mathrm{Cor}^\PolyL} 
 
\newcommand{\CorH}{\mathrm{Cor}_\rm{Hod}} 
\newcommand{\ItG}{\mathrm{I}^\PolyL}

\newcommand{\LiG}{\mathrm{Li}^\PolyL}
\newcommand{\GLiG}{\mathrm{GLi}^\PolyL}

\newcommand{\dpw}{\rm{dpw}}
\newcommand{\nil}{\rm{nil}}
\newcommand{\red}{\rm{red}}
\newcommand{\DQ}{{\scr{D}_\bb{Q}}}

\DeclareMathSymbol{\shortminus}{\mathbin}{AMSa}{"39}

\DeclareFontFamily{U}{min}{}
\DeclareFontShape{U}{min}{m}{n}{<-> udmj30}{}

\AtBeginDocument{%
	\def\MR#1{}
}

\title{Mixed Tate motives over number fields}

\author{Alexander Kupers}
\address{Department of Computer and Mathematical Sciences, University of Toronto Scarborough, 1265 Military Trail, Toronto, ON M1C 1A4, Canada}
\email{a.kupers@utoronto.ca}

\author{Daniil Rudenko}
\address{Department of Mathematics,
University of Chicago,
5734 S. University Avenue, 
Chicago, IL 60637, USA}
\email{rudenkodaniil@uchicago.edu}

\author{Ismael Sierra}
\address{Department of Mathematics, University of Toronto. Bahen Centre 40 St. George Street, Room 6290. Toronto, Ontario Canada M5S 2E4}
\email{ismael.sierra@utoronto.ca}

\date{\today}

\begin{document}
	
\begin{abstract}This paper relates algebraic $K$-theory of fields to polylogarithms via general linear groups. We focus on the case of number fields and prove that the motivic realisation map from the Goncharov Lie coalgebra to the motivic Lie coalgebra is an isomorphism. This implies the Goncharov universality conjecture and a structural result for special values of Dedekind zeta functions. We also construct explicit polylogarithmic cocycles representing nonzero multiples of the Borel classes.
\end{abstract}

\maketitle

\vspace{-.75cm}

\tableofcontents

\vspace{-.75cm} 

\section{Introduction} In \cite{KRS1} we introduced the \emph{Goncharov Lie coalgebra} of a field $F$. It was constructed from the $E_\infty$-algebra $\BGLb(F)_\bb{Q}$ obtained from the chains $C_*(\BGL_n(F);\bb{Q})$ for $n \geq 1$ with multiplication induced by block sum, as
\[
\PolyL(F) \coloneq \bigoplus_{n \geq 1} \PolyL_n(F) \qquad \text{with} \qquad \PolyL_n(F) \coloneq H_{n,2n-1}^{E_{\infty}}(\BGLb(F)_{\bb{Q}}).
\]
We gave a presentation of $\PolyL(F)$ in terms of correlators and described its cobracket; alternatively one may give a presentation in terms of generators $\smash{\LiG_{n_1,\compactldots,n_k}(a_1,\compactldots,a_k)}$, which are analogues in the Goncharov Lie coalgebra of the multiple polylogarithms. We then combined this with the Rognes rank spectral sequence to find symbolic descriptions of certain rationalised algebraic $K$-theory groups of a field $F$. In this paper we continue studying this object, but this paper can be read mostly independently from \cite{KRS1}.

\medskip

We focus on number fields, though we also have comparable but easier results for fields of transcendence degree one over a finite field (see \cref{sec:transcendence-degree-1}). In \cite[Theorem D.a]{KRS1} we proved that for number fields there exists a motivic realisation functor $R^\rm{MTM}$ from category of a finite-dimensional graded comodules over $\PolyL(F)$ to the category of mixed Tate motives over $F$, which by the Tannakian formalism induces a map $r^\rm{MTM}$ of graded Lie coalgebras (see e.g.~\cite{DupSurvey}). The main result of this paper is that the latter is an isomorphism, and hence the former is an equivalence. This yields a complete and explicit description of the category of mixed Tate motives over a number field:

\begin{atheorem} \label{thm:polyl-iso-number-field} For a number field $F$  there is a unique isomorphism of graded Lie coalgebras
\[
r^{\rm{MTM}}\colon \PolyL(F) \overset{\cong}\lra \MotCoLie(F)\]
satisfying $\LiG_{n_1,\compactldots,n_k}(a_1,\compactldots,a_k) \mapsto \rm{Li}^\rm{MTM}_{n_1,\compactldots,n_k}(a_1,\compactldots,a_k)$. Consequently, the associated tensor functor is an equivalence of Tannakian categories \[R^\rm{MTM} \colon \rm{Comod}^\rm{fd}_{\PolyL(F)}(\rm{GrMod}_\bb{Q}) \overset{\simeq}\lra \rm{MTM}_\bb{Q}(F).\]
\end{atheorem}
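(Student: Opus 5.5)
The plan is to show that $r^{\rm{MTM}}$ is a morphism of graded Lie coalgebras which is surjective and which induces an isomorphism on Lie coalgebra cohomology in degrees $1$ and $2$. For connected, positively graded Lie coalgebras this forces an isomorphism. Existence and uniqueness of a map with the prescribed values on $\LiG$ are already settled: the map exists by \cite[Theorem D.a]{KRS1}, and it is unique because the elements $\LiG_{n_1,\compactldots,n_k}(a_1,\compactldots,a_k)$ generate $\PolyL(F)$. So the content to prove is bijectivity.

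\emph{Step 1 (surjectivity).} For a number field, the motivic Lie coalgebra $\MotCoLie(F)$ is cofree on its cogenerators $\rm{Ext}^1_{\rm{MTM}}(\bb{Q}(0),\bb{Q}(n)) \cong K_{2n-1}(F)_\bb{Q}$, because $\rm{Ext}^2$ vanishes (Borel). It is also known (Deligne--Goncharov / Brown-type arguments) that $\MotCoLie(F)$ is spanned by the motivic multiple polylogarithms $\rm{Li}^\rm{MTM}_{n_1,\compactldots,n_k}(a_1,\compactldots,a_k)$. I would check this by induction on weight using the cobracket: an element whose cobracket lies in the image is, modulo the image, primitive. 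It therefore suffices to show that the indecomposables $K_{2n-1}(F)_\bb{Q}$ are hit. For this I would use Zagier's conjecture as established through the $\PolyL$ side. Namely, the rank spectral sequence from \cite{KRS1} gives a map $\PolyL_n(F)\to K_{2n-1}(F)_\bb{Q}$ (a $d_1$-type edge map) which is surjective for number fields, and it is compatible with $r^{\rm{MTM}}$ on primitives.

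\emph{Step 2 (injectivity / cohomology comparison).} I would compare the Chevalley--Eilenberg (cobar) cohomologies. The Lie coalgebra cohomology $H^{i}(\PolyL(F))_n$ should be identified with a rank-graded piece of $K$-theory, $\rm{gr}^\gamma_n K_{2n-i}(F)_\bb{Q}$, via the $E_\infty$-homology/Koszul duality description of $\PolyL$ together with the rank spectral sequence. I would first aim to show $H^1(\PolyL(F))_n\cong K_{2n-1}(F)_\bb{Q}$ and $H^2(\PolyL(F))=0$ for number fields. The vanishing of $H^2$ should follow because the relevant $K$-groups $K_{2n-2}(F)_\bb{Q}$ vanish for $n\ge 2$ (Borel), together with the degeneration of the rank filtration in this range. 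Once $H^2=0$, $\PolyL(F)$ is cofree on $H^1$. A map between cofree conilpotent graded Lie coalgebras that is an isomorphism on $H^1$ is then an isomorphism, by induction on weight using the coradical filtration.

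\emph{Main obstacle.} The hard step is Step 2, specifically proving $H^2(\PolyL(F))=0$, i.e.\ that there are no relations beyond those forced by $K$-theory. This amounts to controlling the higher differentials of the rank spectral sequence and to knowing homological stability/vanishing for $\GL_n(F)$ in the relevant slope range. My first attempt would be to exploit Borel's computation: $H_*(\GL(\mathcal{O}_F);\bb{Q})$ is generated by primitive classes in degrees $2n-1$. Combined with stability, this should force the $E_\infty$-homology $\BGLb(F)_\bb{Q}$ to be concentrated on the line of slope $2$ in the relevant range, which would give the needed vanishing.
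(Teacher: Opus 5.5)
Your skeleton matches the shape of the paper's argument: show $\PolyL(F)$ is cofree, show $r^{\mathrm{MTM}}$ is an isomorphism on $H^1=\ker\delta$, and conclude by induction on weight. However, neither substantive step is actually supplied, and several inputs you rely on are conjectural or circular.

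\textbf{Cofreeness.} Identifying $H^i(\PolyL(F))_n$ with $\gr_n K_{2n-i}(F)_\Q$ is the open Conjecture A.b, so $K_{2n-2}(F)_\Q=0$ does not give $H^2=0$. Borel's stable computation for $\GL(\mathcal{O}_F)$ is also blind to the bidegrees $(n,2n-1)$ of $\BGLb(F)_\Q/(\sigma)$, which lie far outside the stable range. (Note too that the edge map goes $K_{2n-1}(F)_\Q\to\PolyL_n(F)$, not the reverse.) The paper needs two inputs here:
\begin{enumerate}[(a)]
\item the unstable Borel--Yang computation of $H_*(\GL_n(F),\GL_{n-1}(F);\Q)$ (\cref{prop:borel-yang-stab});
\item injectivity of $\mathrm{can}_n\colon H_{n,2n-1}(\BGLb(F)_\Q/(\sigma))\to H^{E_\infty}_{n,2n-1}(\BGLb(F)_\Q/(\sigma))$, i.e.\ that the Borel classes are $E_\infty$-indecomposable (\cref{prop:injectivity-number-fields}).
\end{enumerate}
The second is proved by mapping to the relative Lie algebra chains $\mathbf{C}_*(\mathfrak{gl},\mathfrak{u})$ via integration over geodesic simplices. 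There the Borel class is detected, and the target behaves like a trivial algebra below the slope-2 line. Only with (a) and (b) does \cref{thm:image-edge-is-cobracket} give cofreeness and $\mathrm{im}(\mathrm{edge}_n)=\ker\delta_n$.

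\textbf{The $H^1$ comparison.} Your claim that the edge map is ``compatible with $r^{\mathrm{MTM}}$ on primitives'' is the heart of the theorem, and you give no argument for it. Nothing a priori relates $\ker\delta_n\cong K_{2n-1}(F)_\Q$, built from $\BGLb(F)$, to $\ker\delta^{\mathrm{MTM}}_n\cong \mathrm{Ext}^1_{\mathrm{MTM}}(\Q(0),\Q(n))$, built from motivic cohomology. Invoking that $\MotCoLie(F)$ is spanned by motivic multiple polylogarithms, or Zagier's conjecture, is circular: these are \cref{cor:goncharov-universality} and \cref{cor:zagier}, which are deduced from the theorem.

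The missing idea is to compare both sides with the Borel regulator. Both kernels have dimension $d_n$, so injectivity of $\ker\delta_n\to\ker\delta^{\mathrm{MTM}}_n$ suffices. This injectivity holds because composing further with Hodge realisation and the real periods at all embeddings yields $\mu_n\cdot\mathrm{reg}^{\mathrm{Bo}}_n$ (\cref{prop:borel-regulator-multiple}), which is injective by Borel. Establishing this requires:
\begin{enumerate}[(i)]
\item the explicit Goncharov cocycle;
\item continuity of Hodge correlators, to lift its real period to continuous cohomology;
\item vanishing of that class on $\GL_{n-1}(\C)$, which forces it to be a multiple of $\ol{c}_{n,2n-1}$.
\end{enumerate}
Finally, $\mu_n\neq0$ is proved inductively using $\LiG_n(i)$. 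Its cobracket vanishes by the inductive hypothesis, so it lies in the image of $K_{2n-1}(\Q(i))_\Q$, and its real period is nonzero by Levin's computation.
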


\cref{thm:polyl-iso-number-field} has a concrete consequence for motives, proving the \emph{Goncharov universality conjecture} for number fields \cite[Conjecture 17(a)]{Gon95}:

\begin{acorollary}\label{cor:goncharov-universality} The class of every framed rational mixed Tate motive over a number field is a linear combination of motivic multiple polylogarithms.
\end{acorollary}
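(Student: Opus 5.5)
The corollary should follow formally from \cref{thm:polyl-iso-number-field}, together with the presentation of $\PolyL(F)$ by the generators $\LiG_{n_1,\compactldots,n_k}(a_1,\compactldots,a_k)$ recalled above from \cite{KRS1}. The plan has three steps.

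\emph{Step 1: framed motives are elements of the motivic Hopf algebra.} Recall the standard dictionary, as in \cite{DupSurvey} or Goncharov's setup. The equivalence classes of framed mixed Tate motives over $F$ form the graded Hopf algebra $\scr{A}(F)$ of functions on the unipotent part of the motivic Galois group of $\rm{MTM}_\bb{Q}(F)$. An $n$-framed motive $(M, v \in \rm{gr}^W_{-2n} M, f \in (\rm{gr}^W_0 M)^\vee)$ is sent to the matrix coefficient $[M,v,f] \in \scr{A}_n(F)$, and every element of $\scr{A}(F)$ arises this way. The motivic Lie coalgebra is the indecomposables $\MotCoLie(F) = \scr{A}_{>0}(F)/\scr{A}_{>0}(F)^2$, with cobracket induced by the reduced coproduct. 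The motivic multiple polylogarithm $\rm{Li}^\rm{MTM}_{n_1,\compactldots,n_k}(a_1,\compactldots,a_k)$ is a specific framed motive, and its image in $\MotCoLie(F)$ is the element named in \cref{thm:polyl-iso-number-field}.

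\emph{Step 2: transfer the generators.} By \cite{KRS1}, $\PolyL(F)$ is spanned by the elements $\LiG_{n_1,\compactldots,n_k}(a_1,\compactldots,a_k)$ with $a_i \in F^\times$. \cref{thm:polyl-iso-number-field} says $r^\rm{MTM}$ is an isomorphism sending these to the $\rm{Li}^\rm{MTM}$. Surjectivity is the only part needed here, and it shows that the images of motivic multiple polylogarithms span $\MotCoLie(F)$.

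\emph{Step 3: lift from the Lie coalgebra to the Hopf algebra.} We must show that the full Hopf algebra $\scr{A}(F)$ is generated as an algebra by motivic multiple polylogarithms. Here "linear combination" should be read the way Goncharov states his conjecture: modulo products, or equivalently as polynomial expressions in multiple polylogarithms. The argument is a standard induction on weight $n$, using that $\scr{A}(F)$ is connected graded. Take $x \in \scr{A}_n(F)$. Its class in $\MotCoLie_n(F)$ is a combination $\sum c_j \rm{Li}^\rm{MTM}_j$, so $x - \sum c_j \rm{Li}^\rm{MTM}_j$ is a sum of products of elements of weight less than $n$. By induction, each of these factors is a polynomial in multiple polylogarithms, and hence so is $x$.

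\emph{Main obstacle.} All the depth lies in \cref{thm:polyl-iso-number-field}; the corollary itself is bookkeeping. The step needing the most care is the precise comparison of conventions: one must check that the framed motive called the motivic multiple polylogarithm, viewed in $\scr{A}(F)$, really projects to $\rm{Li}^\rm{MTM}$ in $\MotCoLie(F)$. One must also settle whether the conjecture is meant modulo products. If it is, Step 3 is unnecessary and the corollary is exactly surjectivity of $r^\rm{MTM}$.
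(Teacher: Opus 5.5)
Your argument is the one the paper intends. The paper gives no separate proof: it presents the corollary as an immediate consequence of \cref{thm:polyl-iso-number-field}. That is, $r^\rm{MTM}$ is surjective, so the $\rm{Li}^\rm{MTM}$ span $\MotCoLie(F)$, and this passes up to the Hopf algebra $\scr{A}(F)$ of framed motives, which is generated by lifts of its indecomposables. If the statement is read in $\MotCoLie(F)$, your Step 2 already finishes the proof.

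The one place to tighten is the end of Step 3. Your induction shows that every class in $\scr{A}(F)$ is a \emph{polynomial} in motivic multiple polylogarithms. That is equivalent to the ``modulo products'' version. It is not, on its face, the literal claim that the class is a \emph{linear combination} in $\scr{A}(F)$, and you should not need to reinterpret the corollary to get there.

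The missing observation is that the span of motivic multiple polylogarithms in $\scr{A}(F)$ is already a subalgebra. By the standard dictionary, the motivic multiple polylogarithms and the motivic iterated integrals $I^{\mathcal{M}}(0;b_1,\compactldots,b_m;1)$ with $b_j \in F$ span the same subspace of $\scr{A}(F)$. The shuffle product formula for motivic iterated integrals writes a product of two such integrals as a sum of integrals $I^{\mathcal{M}}(0;\,\cdot\,;1)$ over the shuffled words, so this subspace is closed under multiplication. With this added, your Step 3 proves the statement exactly as written.
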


\cref{cor:goncharov-universality} implies that any period of a mixed Tate motive over $F$ can be expressed in terms of multiple polylogarithms evaluated at elements in $F$. A related application to Dedekind zeta functions is the following. For a number field $F$, the dimensions $\dim(K_{2n-1}(F)_\bb{Q}) = d_n$ for $n>1$ of the rationalised algebraic $K$-theory groups were computed by Borel \cite{BorelStable}: if $r_1$ is the number of real embeddings and $r_2$ is the number of conjugate pairs of complex embeddings, then $d_n = r_1+r_2$ if $n$ is odd and $d_n = r_2$ is $n$ is even. The next statement uses the real period map $p_\R\colon \PolyL_n(\C)\to \R$ defined in \cref{sec: real periods} (with a suitable universal choice of normalisation, see \cref{sec:normalisation}). This sends $\smash{\LiG_{n_1,\compactldots,n_k}(a_1,\compactldots,a_k)}$ to the evaluation of a single-valued real-valued variant of the corresponding multiple polylogarithm at the point $(a_1,\compactldots,a_k) \in \bb{C}^k$.

\begin{acorollary}\label{cor:zagier} For a number field $F$ and $n>1$, the kernel of the cobracket map $\delta\colon \PolyL_n(F)\to \Lambda^2\PolyL(F)$ is a $\Q$-vector space of dimension $d_n$. For any basis $\xi_1,\dots,\xi_{d_n}$ of the kernel, we have
\[\zeta_F(n) \sim_{\bb{Q}^\times} \frac{\pi^{n([F: \Q]-d_n)}}{\sqrt{|D_F|}}\det\Bigl(p_\R\bigl(\sigma_j(\xi_i)\bigr)\Bigr)_{1 \leq i,j \leq d_n}\]
where $\sim_{\bb{Q}^\times}$ denotes that two real numbers agree up to a nonzero rational multiple and $D_F$ denotes the discriminant.
\end{acorollary}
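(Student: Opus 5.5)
Via \cref{thm:polyl-iso-number-field}, I transport the statement to the motivic Lie coalgebra $\MotCoLie(F)$ and then apply Borel's regulator theorem.

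\emph{Step 1: the kernel of $\delta$.} The isomorphism $r^{\rm{MTM}}$ is compatible with cobrackets, so $\ker(\delta\colon \PolyL_n(F)\to\Lambda^2\PolyL(F))$ is identified with $\ker(\delta\colon \MotCoLie_n(F)\to \Lambda^2\MotCoLie(F))$. The latter is the space of primitives of weight $n$, which is $\rm{Ext}^1_{\rm{MTM}(F)}(\bb{Q}(0),\bb{Q}(n))$. For a number field, the motivic $t$-structure and Borel's computation give $\rm{Ext}^1(\bb{Q}(0),\bb{Q}(n)) \cong K_{2n-1}(F)_\bb{Q}$. By Borel this has dimension $d_n$ for $n>1$. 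This proves the dimension claim.

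\emph{Step 2: identifying the periods with the Borel regulator.} Fix a basis $\xi_1,\dots,\xi_{d_n}$ of the kernel, and let $\sigma_1,\dots,\sigma_{d_n}$ be the complex embeddings used in the determinant (one per real place and per conjugate pair, suitably chosen by parity). I need to show that $p_\R(\sigma_j(\xi_i))$ equals, up to a universal rational and $\pi$-power normalisation fixed in \cref{sec:normalisation}, the Beilinson (equivalently, up to a factor of $2$, the Borel) regulator of the class $r^{\rm{MTM}}(\xi_i)\in K_{2n-1}(F)_\bb{Q}$ at the place $\sigma_j$. The plan is to check that $p_\R$ restricted to $\ker\delta\subset \PolyL_n(\C)$ coincides with the real period of the Hodge realisation of an extension $\bb{Q}(0)\to\bb{Q}(n)$, namely the imaginary or real part of the extension class in $\C/(2\pi i)^n\bb{Q}$, depending on the parity of $n$. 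This is the standard description of the Beilinson regulator on $\rm{Ext}^1$ in mixed Hodge structures. Naturality of $r^{\rm{MTM}}$ in the field embedding $\sigma_j\colon F\to\C$ then gives the compatibility. One could verify this first on the generators $\LiG_n(a)$, where $p_\R$ is the single-valued Zagier polylogarithm $\mathcal{L}_n$ and the statement is Beilinson--Deligne's theorem. However, the kernel of $\delta$ is not spanned by such elements a priori, so it is cleaner to argue at the level of the Hodge realisation functor.

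\emph{Step 3: Borel's theorem.} Borel's theorem states that the regulator lattice of $K_{2n-1}(F)$ has covolume $\zeta_F(n)\sqrt{|D_F|}\,\pi^{-n([F:\Q]-d_n)}$ up to $\bb{Q}^\times$. Indeed, it is commensurable with $\zeta_F^*(1-n)$, which the functional equation converts into this expression. Any $\bb{Q}$-basis of $K_{2n-1}(F)_\bb{Q}$ spans a lattice commensurable with the integral image, so it has the same covolume up to $\bb{Q}^\times$. Combining this with Step 2, and absorbing the universal normalisation constants (powers of $2$, factorials, and powers of $\pi$ from the twist), gives the formula.

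The main obstacle is Step 2: pinning down that $p_\R$, as defined on the Goncharov Lie coalgebra, agrees on primitives with the Beilinson regulator via the Hodge realisation. Part of this is checking that the powers of $\pi$ and the real/imaginary parity conventions are consistent, so that exactly the factor $\pi^{n([F:\Q]-d_n)}$ appears. I would rely on the real period being defined functorially through the Hodge realisation, and carefully track the $(2\pi i)^n$ twist.
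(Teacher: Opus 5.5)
Your Step 1 is fine. Step 3 is also fine once the regulator is taken in Zagier's normalisation. But Step 2 carries the entire content of the corollary, and you only assert it.

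Grant that $p_\R$ is Goncharov's canonical real period composed with $r^{\rm{Hod}}$, and that on primitives this is an $\bb{R}$-linear functional on the one-dimensional space $\rm{Ext}^1(\bb{R}(0),\bb{R}(n))$. All this yields is that $p_\R\circ\sigma_j$ on $\ker\delta$ equals $c_n$ times the regulator, for some universal constant $c_n$. Two things are then missing:
\begin{enumerate}[(a)]
\item Nothing in your proposal shows $c_n\neq 0$. In the paper this is the nontrivial statement $\mu_n\neq 0$ of \cref{thm:polyl-iso-mun}.
\item The constant enters the determinant as $c_n^{d_n}$, and $d_n$ depends on $F$. So powers of $\pi$ cannot be ``absorbed'' into $\sim_{\bb{Q}^\times}$. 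You must show that, for the normalisation $\lambda_n\sim_{\bb{Q}^\times}\pi^{1-n}$ in \eqref{eqn:hodge correlator via log abs}, one has $c_n\in\bb{Q}^\times$ relative to Zagier's normalisation.
\end{enumerate}
``Carefully tracking the $(2\pi i)^n$ twist'' through Goncharov's canonical period and the Hodge correlator integrals is exactly the computation you leave undone. Your route also rests on an input the paper avoids: that under $\rm{Ext}^1_{\rm{MTM}(F)}(\bb{Q}(0),\bb{Q}(n))\cong K_{2n-1}(F)_{\bb{Q}}$ the Hodge realisation computes Beilinson's regulator.

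The missing idea is that once the comparison is known to be a universal scalar, a single test element calibrates it. So your worry that $\ker\delta$ is not spanned by polylogarithms is beside the point. The paper argues as follows:
\begin{enumerate}[(i)]
\item It obtains $p_\R\circ\rm{edge}_n=\mu_n\,\rm{reg}^{\rm{Bo}}_n$ on all of $K_{2n-1}(F)_{\bb{Q}}$ group-cohomologically, without Hodge theory (\cref{prop:multiple}, \cref{prop:borel-regulator-multiple}, \cref{rem:goncharov-regulator-and-borel-regulator}). Here $\rm{edge}_n\colon K_{2n-1}(F)_{\bb{Q}}\to\ker\delta$ is an isomorphism by \cref{thm:number-fields-goncharov}.
\item It takes $F=\bb{Q}(i)$, where $d_n=1$ and $\LiG_n(i)\in\ker\delta$, since its cobracket only involves $(i)=0\in F^\times_{\bb{Q}}$.
\item Levin's integral gives $p_\R(\LiG_n(i))=\lambda_n(2\pi i)^{n-1}\mathcal{L}_n^{\#}(i)$. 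Up to $\bb{Q}^\times$ this is $\lambda_n\pi^{n-1}\beta(n)$ for $n$ even and $\lambda_n\pi^{n-1}\zeta(n)$ for $n$ odd, in particular nonzero.
\item It compares with $\zeta_{\bb{Q}(i)}(n)=\zeta(n)\beta(n)$ via Borel's theorem. Using $\zeta(n)\sim_{\bb{Q}^\times}\pi^n$ for $n$ even and $\beta(n)\sim_{\bb{Q}^\times}\pi^n$ for $n$ odd, this gives $\mu_n/\lambda_n\sim_{\bb{Q}^\times}\pi^{2n-2}$ (\cref{prop:proportionality}).
\end{enumerate}
Hence the choice $\lambda_n\sim_{\bb{Q}^\times}\pi^{1-n}$ makes $p_\R\circ\rm{edge}_n$ agree with Zagier's normalisation of the Borel regulator up to $\bb{Q}^\times$. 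Your Step 3 then finishes exactly as in the paper.
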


For totally real fields this can also be deduced by combining \cite[Corollary 1.4]{BrownTR} and \cite[Theorem 1.3]{Rud20}. The Zagier conjecture asks for a similar formula for $\zeta_F(n)$ but requires that only classical polylogarithms appear, rather than multiple polylogarithms \cite[Section 8]{Zag90} \cite[Conjecture 6]{Gon95}. This follows from \cref{cor:zagier} if the depth conjecture for $\scr{G}(F)$ is true. Previous results on the Zagier conjecture include \cite{ZagierHyperbolic,Gon95,GR18}, see \cite{Dup20}. 

\subsection{Proof of \cref{thm:polyl-iso-number-field}} The proof of \cref{thm:polyl-iso-number-field} combines
\begin{enumerate}[(a)]
    \item the $E_\infty$-algebra machinery from \cite{KRS1}, based on \cite{GKRW18,GKRW20},
    \item the computations of the cohomology of $S$-arithmetic groups  obtained using automorphic methods due to Borel and Yang \cite{borelyang}, and \item a description of the cocycles that induce the canonical map from homology to $E_\infty$-homology (see \cref{sec:intro-borel}).
\end{enumerate}
Recall that $\BGLb(F)_\bb{Q}$ is an $E^\rm{nu}_\infty$-algebra with homology $H_{n,d}(\BGLb_\Q(F)) \cong H_d(\BGL_n(F);\bb{Q})$ for $n>0$ and that the Goncharov Lie coalgebra $\scr{G}(F)$ is built from its $E_\infty$-homology. Since $\scr{L}^\rm{MTM}(F)$ is isomorphic to a cofree graded Lie coalgebra with cogenerators given by $K_*(F)_\Q$ for $*>0$ \cite{DG05}, our first step is to prove that $\scr{G}(F)$ is also isomorphic to this, that is, the Goncharov Lie coalgebra and the motivic Lie coalgebra are \emph{abstractly} isomorphic.

It would suffice if the quotient $\BGLb(F)_\bb{Q}/(\sigma)$ of the $E_\infty^\rm{nu}$-algebra $\BGLb(F)_\bb{Q}$ by the generator $\sigma \in H_0(\GL_1(F);\bb{Q})$ were a trivial algebra on the units of $F$ and duals of the Borel classes: these exactly correspond to the rational algebraic $K$-theory of $F$ by the work of Borel \cite{BorelStable}, and the Koszul dual of a trivial algebra is a cofree Lie coalgebra. While the previous statement is not true, the work of Borel and Yang implies that the homology of the quotient, $H_{*,*}(\BGLb(F)_\bb{Q}/(\sigma))$, is a trivial algebra on the units of $F$ and duals of the Borel classes \emph{below a line of slope 2} \cite{borelyang}. To use this, we show one may work below such a line (see \cref{thm:image-edge-is-cobracket}) and then exploit the existence of a map $\BGLb(F)_\bb{Q} \to \bf{C}_*(\fr{gl},\fr{u})$ to lift Borel--Yang's statement about homology of $\BGLb(F)_\bb{Q}/(\sigma)$ to a statement about $\BGLb(F)_\bb{Q}/(\sigma)$ itself. Its target constructed as the dual to the relative Lie algebra cochains of the pairs $\fr{u}_n \subset \fr{gl}_n$ (see \cref{thm:integration-assocative-algebra}) and is trivial as an algebra below a line of slope 2. The map is, under the van Est isomorphism, given by pairing homology against continuous cohomology and is known to detect the duals of the Borel classes.

Once we know $\scr{G}(F)$ and $\scr{L}^\rm{MTM}(F)$ are abstractly isomorphic, we prove that motivic realisation map is an isomorphism. For this second step we proceed by induction on $n$ and the initial case $n=1$ is well-known. For the induction step, we take $n \geq 2$ and consider for each embedding $F \subset \C$ the commutative diagram
\[\begin{tikzcd}[row sep=small] & K_{2n-1}(F)_\bb{Q} \arrow[dashed,bend right=18]{ld} \dar[swap]{\rm{edge}_n} \\ 
H_{2n-1}(\GL_n(F);\bb{Q}) \rar \dar & \scr{G}_n(F) \rar{r^\rm{MTM}} \dar & \scr{L}^\rm{MTM}_n(F) \dar & \\
H_{2n-1}(\GL_n(\C);\bb{Q}) \rar & \scr{G}_n(\C) \rar{r^\rm{Hod}} & \scr{L}^\rm{Hod}_n \rar & \R & \end{tikzcd}\]
where a dashed lift exists since the rank conjecture is true for number fields by the work of Borel and Yang. By unwinding the constructions, we produce a cocycle representing the bottom composition and observe it represents a continuous cohomology class that vanishes when restricted to $\GL_{n-1}(\C)$, so must be a multiple of the Borel class. This is used to show that the composition $K_{2n-1}(F)_\bb{Q} \to \scr{G}_n(F) \to  \scr{G}_n(\C) \to \scr{L}^\rm{Hod}_n \to \R$ is the same multiple of the Borel regulator associated with the embedding $F \hookrightarrow \C$. To see this multiple must be \emph{nonzero} we combine two facts. Firstly, the element $\LiG_n(i) \in \scr{G}_n(\bb{Q}(i))$ maps to a nonzero real number under the composition $\scr{G}_n(\bb{Q}(i)) \to \scr{G}_n(\bb{C}) \to \scr{L}^\rm{Hod} \to \bb{R}$ by a computation of Levin. Secondly, it is in the image of $\smash{K_{2n-1}(F)_\bb{Q}}$ since we prove that $\rm{im}(\rm{edge}_n) = \smash{\ker(\delta|_{\scr{G}_n(F)})}$ and since the cobracket involves only terms of lower degree we can use the induction hypothesis to compute that $\smash{\delta(\LiG_n(i))} = 0$ (this uses that $i$ is a root of unity). We conclude that the Borel regulators factor over the motivic realisation map and thus the composition
\[K_{2n-1}(F)_\bb{Q} \xrightarrow{\rm{edge}_n} \ker(\delta|_{\scr{G}_n(F)}) \xrightarrow{r^\rm{MTM}} \ker(\delta^\rm{MTM}|_{\scr{L}^\rm{MTM}_n(F)}) \lra \bb{R}^{d_n}\]
is injective by the work of Borel \cite{BorelReg,BorelRegErrata}. As all terms have the same dimension, the middle map must be an isomorphism, completing the induction step.

\subsection{Cocycles for Borel classes}\label{sec:intro-borel} In the proof of \cref{thm:polyl-iso-number-field}, we obtained a polylogarithmic cocycle representative for the Borel classes. We now describe this in detail, as such cocycles were sought after, see e.g.~\cite{GonExplicit} \cite{GoncharovArakelov} \cite{CGR}, and previously only known for the first three Borel classes. In \cite{KRS3} we will use these to prove a surjectivity conjecture of Monod in the context of continuous bounded cohomology of connected semisimple Lie groups with finite centre.

\medskip

We start with a general procedure: for an $n$-spherical complex $X$ with $G$-action, we can interpret the augmented cellular chains with rational coefficients as an iterated extension of $\bb{Q}$ by $\smash{\widetilde{H}_n(X;\bb{Q})}$ in $\bb{Q}[G]$-modules, and through Yoneda's work as $x \in \smash{H^{n+1}(G;\widetilde{H}_n(X;\bb{Q}))}$. Then the map induced on pointed $G$-orbits by the canonical pointed $G$-equivariant map from the circle into the double suspension of $X$ is given by the cap product
\[- \cap x \colon H_*(G;\ds{Q}) \lra H_{*-(n+1)}(G;\widetilde{H}_n(X;\bb{Q})).\]
Finding an explicit cocycle representative $\sf{x}$ of $x$, and hence a chain level description of this cap product map, uses the proof that $X$ is $n$-spherical.

\medskip

Using \cite[Section 4.4.1]{KRS1}, the canonical map 
\[H_d(\GL_n(F);\bb{Q}) = H_{n,d}(\BGLb(F)_\bb{Q}) \lra H^{E_1}_{n,d}(\BGLb(F)_\bb{Q}) \cong H_{d-(n-1)}(\GL_n(F);\St_n(F))\]
arises in such a manner for the $(n-2)$-spherical complex given by the Tits building $T(F^n)$. Its top rational homology is given by the (rationalised) Steinberg module $\St_n(F)$ and so the canonical map is given by the cap product with a \emph{Steinberg class} $\rm{st}_n \in H^{n-1}(\GL_n(F);\St_n(F))$, which admits an explicit representative $\sf{st}_n$ arising from the proof of the Solomon--Tits theorem. The proof of Rognes conjecture in \cite{CharltonRadchenkoRudenko} implies that the analogous canonical map 
\[H_d(\GL_n(F);\bb{Q}) = H_{n,d}(\BGLb(F)_\bb{Q}) \lra H^{E_\infty}_{n,d}(\BGLb(F)_\bb{Q}) \cong H_{d-2n+2}(\GL_n(F);\StL_n(F))\]
is given by the cap product with $\pi(\rm{st}_n \cup \rm{st}_n)$ with $\pi \colon \StH_n(F) \to \StL_n(F)$ the projection, and can be used to give an explicit representative $\pi(\sf{st}_n \cup \sf{st}_n)$ for this class.

Recall now that as a consequence of the proof of \cref{thm:polyl-iso-number-field} the Borel class is given, up to a nonzero multiple, by the map
\[H_{2n-1}(\GL_n(\C);\bb{Q}) \lra H_1(\GL_n(\C);\StL_n(\C)) \overset{\cong}\lra \scr{G}_n(\C) \overset{p_\bb{R}} \lra \bb{R}.\] 
We have an explicit cocycle for the canonical map on the left, and the right map merely applies the real period map, so it remains to incorporate the identification of $H_{1}(\GL_n(F);\StL_n(F))$ with $\scr{G}_n(F)$. This involves a cocycle built out of the differences $D_{x,y} = D_x-D_y$ of dual decomposition operators (see \cref{sec:dual-decomposition}). Combining these ingredients we obtain:

\begin{atheorem}\label{thm:borel-cocycles} The following inhomogeneous cocycle represents, up to a nonzero multiple, the Borel class $\rm{Bo}_{n,2n-1} \in H^{2n-1}(\GL_n(\bb{C});\bb{R})$:
   \begin{align*} \GL_n(\C)^{2n-1} &\lra \bb{R} \\
    [g_1|\compactcdots|g_{2n-1}] &\longmapsto(-1)^{n-1} p_\R \big(D_{\ell_{n-1},\ell_n}(\pi ([\ell_0,\compactldots,\ell_{n-1}] \otimes [\ell_n,\compactldots,\ell_{2n-1}])) \big),\end{align*}
    where $\ell_i \coloneq g_1 \compactcdots g_i \cdot \ell$ for a fixed line $\ell \in \bb{P}^1(\C^n)$.
\end{atheorem}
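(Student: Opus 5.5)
Theorem~\ref{thm:borel-cocycles} comes from making each arrow in the composition
\[H_{2n-1}(\GL_n(\C);\bb{Q}) \lra H_1(\GL_n(\C);\StL_n(\C)) \overset{\cong}\lra \scr{G}_n(\C) \overset{p_\R}\lra \R\]
explicit at the chain level, and then using the proof of \cref{thm:polyl-iso-number-field} to identify the resulting class with a nonzero multiple of $\rm{Bo}_{n,2n-1}$.

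\textbf{Step 1: the cap product.} I would take the explicit Solomon--Tits representative $\sf{st}_n$. With a fixed line $\ell$ and the partial products $\ell_i = g_1\compactcdots g_i\cdot\ell$, it is the inhomogeneous $(n-1)$-cochain
\[[g_1|\compactcdots|g_{n-1}] \longmapsto [\ell_0,\compactldots,\ell_{n-1}] \in \St_n(\C),\]
namely the apartment class spanned by the lines $\ell_0,\compactldots,\ell_{n-1}$, interpreted as zero when they are not in general position. This is the standard cocycle coming from contracting the Tits building by coning off at a fixed line. The Alexander--Whitney cup product then gives
\[(\sf{st}_n\cup \sf{st}_n)[g_1|\compactcdots|g_{2n-2}] = [\ell_0,\compactldots,\ell_{n-1}]\otimes(g_1\compactcdots g_{n-1})[\ell,\compactldots] = [\ell_0,\compactldots,\ell_{n-1}]\otimes[\ell_{n-1},\compactldots,\ell_{2n-2}].\]
By the Charlton--Radchenko--Rudenko description quoted in \cref{sec:intro-borel}, capping with $\pi(\sf{st}_n\cup\sf{st}_n)$ computes the canonical map $H_{2n-1}\to H_1(\GL_n;\StL_n)$. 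Capping a $(2n-1)$-chain $[g_1|\compactcdots|g_{2n-1}]$ gives the $1$-chain $[g_{2n-1}]$ with coefficient $\pi(\cdots)$. Up to conventions for which end of the bar is capped, and hence the index shift to $\ell_n,\compactldots,\ell_{2n-1}$ and the sign, this is the bracket appearing in the theorem.

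\textbf{Step 2: the isomorphism $H_1(\GL_n;\StL_n)\cong\scr{G}_n$.} From the dual decomposition operators section I would take the fact that a $1$-cycle $\sum c_j\,[h_j]\otimes m_j$ is sent to $\sum c_j\, D_{x,h_jx}(m_j)$. The choice of base point does not matter because $D_{x,y}=D_x-D_y$ telescopes and is equivariant. Choosing the base points compatibly with the flag of lines, here $x=\ell_{n-1}$ and $h_jx=\ell_n$, produces $D_{\ell_{n-1},\ell_n}$. Applying $p_\R$ then gives the stated real-valued cochain. It is automatically a cocycle, being the composite of a chain map with a cocycle, up to the usual check that the chain-level choices glue.

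\textbf{Step 3: identification with the Borel class.} The proof of \cref{thm:polyl-iso-number-field} gives three facts:
\begin{itemize}
\item the resulting class is a continuous (van Est) class that vanishes on $\GL_{n-1}(\C)$, so it is a multiple $c\cdot\rm{Bo}_{n,2n-1}$;
\item on $K_{2n-1}(\bb{Q}(i))_\Q$ it equals $c$ times the Borel regulator;
\item $c\neq0$, since $\LiG_n(i)$ lies in $\ker\delta=\rm{im}(\rm{edge}_n)$ and Levin's computation shows its real period is nonzero.
\end{itemize}

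\textbf{Main obstacle.} I expect the hard part to be Step 2 together with the genericity issues. The cochain must be defined on all tuples, including those whose lines are not in general position, and it must be measurable or continuous enough for van Est to apply. To handle this, I would first restrict to generic tuples, which form a conull set, and then argue that the degenerate tuples contribute terms killed by $\pi$, the projection onto $\StL_n$.
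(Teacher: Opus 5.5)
Your overall route is the paper's: cap with $\pi(\sf{st}_n\cup\sf{st}_n)$, apply the connecting map $H_1(\GL_n;\StL_n)\to\scr{G}_n$, apply $p_\R$, then use a continuity-and-restriction argument, with $\LiG_n(i)$ giving nonvanishing. The genuine gap is in Step 2, which is exactly where the specific formula of the theorem has to come out. In the chain-level connecting map $[g]\otimes a\mapsto D_{v,gv}(g\cdot a)$ (note that $g$ acts on $a$), the base point $v$ must be the same for every term of the cycle. If you let it vary as $v_j$, the output changes by $\sum_j c_j D_{v_j,v}(h_jm_j-m_j)$, and this vanishes on cycles only when $v_j$ is constant. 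Moreover, the group element in your capped $1$-chain ($g_{2n-1}$, or $g_1$ with the paper's cap convention) does not carry $\ell_{n-1}$ to $\ell_n$. Carried out with a fixed base point, your Steps 1--2 produce the $\scr{G}_n$-valued cocycle $\pm D_{\ell_0,\ell_1}\big(\pi([\ell_1,\compactldots,\ell_n]\otimes[\ell_n,\compactldots,\ell_{2n-1}])\big)$. On the line resolution this is $\rm{ev}_n(\sf{D}\cup\pi(\sf{st}_n\cup\sf{st}_n))$, with $D$ at one end and the two apartments sharing a line; this is \cref{prop:canonical-is-cap-gamman}. The cochain in the theorem is instead $(-1)^{n-1}\rm{ev}_n(\pi(\sf{st}_n\cup\sf{D}\cup\sf{st}_n))$. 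The missing ingredient is graded commutativity of the cup product in cohomology, which moves the degree-$1$ class $\rm{D}_n$ past the degree-$(n-1)$ class $\rm{st}_n$. That step produces both the index shift to $\ell_n,\compactldots,\ell_{2n-1}$ and the sign $(-1)^{n-1}$, so neither is a matter of convention. It also gives the cocycle property directly, since the theorem's cochain is a cup product of cocycles on $\bb{Q}[L_\bullet]$ pulled back along $\sf{L}$ (\cref{thm:goncharov-class}).

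Step 3 also needs tightening. The vanishing on $\GL_{n-1}(\C)$ is known a priori only in discrete cohomology, because the class factors through $H_{2n-1}(\GL_n(\C),\GL_{n-1}(\C);\R)$. To conclude that the continuous lift is a multiple of $\ol{c}_{n,2n-1}$, you also need injectivity of $H^*_{\rm{cts}}(\GL_{n-1}(\C);\R)\to H^*(\GL_{n-1}(\C);\R)$. The paper deduces this from Borel--Yang over $\Q(i)$ (\cref{lem:gln-c-cts-injective}). For the continuous lift itself, do not try to show that degenerate tuples are killed by $\pi$; that is neither needed nor to be expected. It suffices that the non-generic locus is Haar-null and that on the generic locus the decomposition operator has a fixed shape. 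The cochain is then a finite sum of Hodge correlators of rational functions of the matrix entries, hence continuous there by Malkin's theorem (\cref{prop:hodge-correlator-cts-bounded}), and Austin--Moore finishes (\cref{lem:borel-cocycle-continuous}). Finally, $\mu_n\neq 0$ is not available independently. The vanishing $\delta\LiG_n(i)=0$ in $\scr{G}(\Q(i))$ is deduced from the inductive hypothesis that $r^{\rm{MTM}}$ is an isomorphism in weights $<n$, so nonvanishing is proved inside the induction of \cref{thm:polyl-iso-mun}.
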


Let us refer to the expression
\[
\sf{g}_n(\ell_0,\compactldots,\ell_{2n-1}) \coloneq (-1)^{n-1}  D_{\ell_{n-1},\ell_n}(\pi ([\ell_0,\compactldots,\ell_{n-1}] \otimes [\ell_n,\compactldots,\ell_{2n-1}]))
\]
as the \emph{Goncharov cocycle}. For generic configurations, it admits a more explicit description:

\begin{atheorem}\label{athm: borel explicit} For lines $\ell_0,\dots,\ell_{2n-1}\in \mathbb{P}^{n-1}$ in general position, we have  
\[
\sf{g}_n(\ell_0,\compactldots,\ell_{2n-1})=(-1)^{\binom{n}{2}+1}\GLiG_n(\ell_0,\compactldots,\ell_{n-1},\ell_{2n-1},\compactldots,\ell_n),
\]
where $\GLiG_n$ is a $\PolyL$-version of the \emph{cluster Grassmannian polylogarithm} of \cite[Section 1.5]{MR22}. 
\end{atheorem}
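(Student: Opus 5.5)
We argue by direct computation in the correlator presentation of $\PolyL$ from \cite{KRS1}: unwind the projection $\pi$ and the dual decomposition operator $D_{\ell_{n-1},\ell_n}$ on generic configurations, then match the result term by term with the defining formula of $\GLiG_n$.

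\textbf{Step 1: rewrite the Steinberg tensor.} For lines in general position, $[\ell_0,\compactldots,\ell_{n-1}]$ is the apartment class of a frame, i.e.\ a basis of $\St_n$. Hence $[\ell_0,\compactldots,\ell_{n-1}]\otimes[\ell_n,\compactldots,\ell_{2n-1}]$ is a generator of $\StH_n$. Its image under $\pi\colon \StH_n\to\StL_n$ will be written in the standard generators of $\StL_n(F)$ from \cite{KRS1}, namely the classes of $2n$ generic lines modulo the relations coming from the shuffle/Rognes structure. We keep track of how $\pi$ interacts with reversing the order of the second block. This reversal is what produces the ordering $\ell_0,\compactldots,\ell_{n-1},\ell_{2n-1},\compactldots,\ell_n$ in the statement, and it contributes the sign $(-1)^{\binom{n}{2}}$ as the sign of the reversal permutation on $n$ letters.

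\textbf{Step 2: apply $D_{\ell_{n-1},\ell_n}=D_{\ell_{n-1}}-D_{\ell_n}$.} Each dual decomposition operator $D_x$ (see \cref{sec:dual-decomposition}) sends a configuration to a combination of correlators, obtained by projecting from the distinguished line $x$ and then using the identification $H_1(\GL_n;\StL_n)\cong\PolyL_n$.

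\textbf{Step 3: compare with $\GLiG_n$.} We take the definition of $\GLiG_n$ as the $\PolyL$-analogue of the cluster Grassmannian polylogarithm of \cite[Section~1.5]{MR22}. That definition is a signed sum of correlators $\CorG$ of cross-ratio type, indexed by $n$-element subsets or by cyclic structure on the $2n$ points. We then check that the expansion from Step~2 coincides with this sum term by term, with the remaining sign $(-1)$ combining with the prefactor $(-1)^{n-1}$ to give $(-1)^{\binom{n}{2}+1}$.

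\textbf{Alternative route via the cobracket.} If the term-by-term match in Step~3 is unwieldy, we would use induction on $n$. Both sides are functions on generic configurations of $2n$ points in $\PolyL_n$, and both are $\GL_n$-invariant. Compute the cobracket $\delta$ of each side: the cocycle condition and the Leibniz-type compatibility of $D$ with $\delta$ should reduce $\delta$ of each side to products of lower-rank expressions of the same shape, which agree by the induction hypothesis. The difference then lies in $\ker\delta$. Since the identity is functorial in $F$, we may take $F$ to be the universal field $\Q(\text{configuration space})$, a purely transcendental extension. For such a field, $\ker\delta$ in weight $n$ is the image of $K_{2n-1}$, which is constant, i.e.\ comes from $\Q$, so it is zero for $n$ even. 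For $n$ odd it is spanned by $\zeta(n)$, and that constant is killed by specialising to a degenerate configuration where both sides vanish.

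\textbf{Main obstacle.} The hard step is Step~2 together with Step~3: controlling $\pi$ on $\StH_n$ explicitly and matching the dual decomposition terms with the $\GLiG_n$ terms while getting the signs right. I would first check $n=2$, where $\GLiG_2$ reduces to a cross-ratio dilogarithm, to fix the sign conventions, and then treat the general case.
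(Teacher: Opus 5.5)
Your main route is the paper's route in outline. You propose to compute $D_{\ell_{n-1},\ell_n}$ of $\pi([\ell_0,\compactldots,\ell_{n-1}]\otimes[\ell_n,\compactldots,\ell_{2n-1}])$ directly and match it with the formula defining $\GLiG_n$. But the proposal stops where the proof begins, and the mechanisms you sketch for Steps 2 and 3 are not the ones that work.

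First, a single $D^{\FL}_x$ does not produce correlators. It lands in $\FL(V)$, and only a difference $D^{\FL}_x-D^{\FL}_y$ lies in $\FLR(V)$, whose $\GL(V)$-coinvariants are $\PolyL_n(F)$. So you cannot expand $D_{\ell_{n-1}}$ and $D_{\ell_n}$ separately. The paper transports the problem along the duality $D\colon\StL(V)\to\StL(V^\vee)$, which induces $-1$ on coinvariants by \eqref{eqn: iso FI to FL}. It then computes the projection to $\PolyL_n(F)$ with the evaluation map $E_{b_0}$. This kills the $D^{\FI}_{b_0}$-term outright, since each of its terms evaluates to $\CorG(0,1,\compactldots,1)=0$. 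What remains is $D_{a_{n-1},b_0}(T)=-E_{b_0}\circ D^{\FI}_{a_{n-1}}(D(T))$, involving a single decomposition operator.

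Second, you never need $\pi$ in terms of generators of $\StL_n$. By \cref{prop: dual decomposition}, what is needed is the symbol of $T=\pi([a_0,\compactldots,a_{n-1}]\otimes[b_0,\compactldots,b_{n-1}])$. \cite[Lemma 19]{CharltonRadchenkoRudenko} writes it as a signed sum over $(\sigma,\tau)\in\mathfrak{S}_n\times\mathfrak{S}_n$ of bars whose entries are intersections of annihilators. General position with $a_{n-1}$ selects $\sigma(0)=n-1$, and evaluating the normalised vectors at $b_0$ gives determinant ratios $x_k^{\sigma,\tau}$.

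Third, and this is the real content, the terms with $\tau^{-1}(0)\le n-3$ cancel in pairs. The terms with $\tau(n-2)=0$ pair with those with $\tau(n-1)=0$ to give $\CorG(0,x_0,\compactldots,x_{n-1})-\CorG(0,x_0,\compactldots,x_{n-2},0)=\ItG(0;0,x_0,\compactldots,x_{n-2};x_{n-1})$. Only then does $\GLiG_n$ appear. It is the double alternation over $\mathfrak{S}_{\{0,\compactldots,n-2\}}\times\mathfrak{S}_{\{n,\compactldots,2n-2\}}$ of $\ItG(0;0,R_0,\compactldots,R_{n-2};R_{n-1})$, not a sum indexed by $n$-subsets or cyclic structures. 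For the identity permutations, $x_k=R_k(a_0,\compactldots,a_{n-1},b_{n-1},\compactldots,b_0)$.

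The reversed second block and the factor $(-1)^{\binom{n}{2}}$ come out of this reindexing, via $\operatorname{sgn}(\tau_\eta)=(-1)^{\binom{n}{2}}\operatorname{sgn}(\eta)$. The remaining signs are: the $-1$ on coinvariants; the $(-1)^{n-1}$ comparing $D$ with $\vee$ on symbols \cite[Lemma 8.1]{KRS1}; $\operatorname{sgn}(\sigma_\rho)=(-1)^{n-1}\operatorname{sgn}(\rho)$; the $\FI$/$\FC$ conventions; and the factor $(-1)^{n-1}$ in the $\ItG$-form of $\GLiG_n$. Together these give $D_{a_{n-1},b_0}(T)=(-1)^{\binom{n}{2}+n}\GLiG_n(a_0,\compactldots,a_{n-1},b_{n-1},\compactldots,b_0)$. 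Your bookkeeping $(-1)^{\binom{n}{2}}\cdot(-1)\cdot(-1)^{n-1}$ would instead give $(-1)^{\binom{n}{2}+n}$ for $\sf{g}_n$ itself, which is wrong for even $n$.

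The alternative route does not close the gap. Neither the paper nor your proposal supplies a Leibniz-type compatibility between $D$ and $\delta$. Nothing shows that $\delta\,\sf{g}_n$ and $\delta\,\GLiG_n$ reduce to lower-rank expressions of the same shape.

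More seriously, you assert that for the universal field $F=\Q(\Conf_{2n}(\mathbb{P}^{n-1}))$ the kernel of $\delta$ on $\PolyL_n(F)$ is the image of $K_{2n-1}$. This is open. The paper proves $\mathrm{im}(\mathrm{edge}_n)=\ker(\delta|_{\PolyL_n(F)})$ only under the hypotheses of \cref{thm:image-edge-is-cobracket}, which it verifies for number fields using Borel--Yang. For general fields only the inclusion $\mathrm{im}(\mathrm{edge}_n)\subseteq\ker\delta$ is available (\cref{cor:image-of-edge-in-ker-cobracket}). The reverse inclusion is essentially the case $i=1$ of Conjecture~A.b (refined).

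Moreover, the theorem is asserted for arbitrary fields, including positive characteristic. You also do not say how to specialise an identity in $\PolyL_n$ of the function field to a degenerate configuration, where $\GLiG_n$ is not even defined. So that route is at best conditional; the unconditional argument is the explicit symbol computation described above.
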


\begin{remark}The alternation of the Goncharov cocycle $\sum_{\sigma \in \mathfrak{S}_{2n}}(-1)^{\sigma}\sf{g}_n(\ell_{\sigma(0)},\compactldots,\ell_{\sigma(2n-1)})$ is proportional to Goncharov's \emph{Grassmannian polylogarithm}, whose symbol was constructed in \cite[Theorem 1.3]{Gon13} and for which a polylogarithmic presentation was found in \cite{CGR}. \cref{thm:borel-cocycles} implies that the cocycle constructed by Goncharov is a nonzero multiple of the Borel regulator: to see this, use that the homogeneous cochain corresponding to \cref{thm:borel-cocycles} is given by mapping $(g_0,\compactldots,g_{2n-1})$ to the same expression but with $\ell_i \coloneq g_i \cdot \ell$ and that alternation is a nonzero multiple of the identity on homogeneous cochains \cite[Proposition 4.26]{Frigerio}. This was previously known only for $n\le 4$ \cite{Bloch,Gon95b,GR18}. It is surprising that the Goncharov cocycle has cluster-like properties.\end{remark}

\subsection{Edge homomorphisms} \label{sec:intro-edge} We finally highlight several ingredients in the proof of \cref{thm:polyl-iso-number-field}, which either may be of independent interest or were promised in \cite{KRS1}. 

\medskip

When $\bf{R}$ is an $E_\infty^\rm{u}$-algebra in spaces with $\pi_0\,\bf{R} \cong \bb{N}$, the group completion theorem provides an acyclic map
\[\colim_n \bf{R}(n) \lra \Omega^\infty_0(\bf{R}^\gc)\]
where the colimit is taken by multiplication with some fixed choice of $\sigma \in \bf{R}(1)$ and $(-)^\gc$ stands for group completion. For example, for $\bf{R}$ the unitalisation of $\BGLb(F)$ this yields the homology equivalence $\BGL_\infty(F)^+ \to \Omega^\infty_0 K(F)$.

In \cref{sec:stable-algebras-and-rank-spectral-sequences} of this paper we give analogous constructions for an $E^\rm{u}_\infty$-algebra $\bf{R}$ in a presentable stable symmetric monoidal $\scr{C}$ with an additional rank grading and a choice of element in rank 1, encoded as a map $\sigma \colon \free_{E_\infty^\rm{u}}(1_! 1_\scr{C}) \to \bf{R}$. Namely, we construct from this data an \emph{stable algebra} $\bf{R}/(\sigma-1)$ where $\sigma$ is set equal to the unit (so this loses its additional rank grading); if $\scr{C} = \Spc$ and $\pi_0\,\bf{R} \cong \bb{N}$ as above, then this is equivalent to $\Omega^\infty(\bf{R}^\gc/\bb{S})$ which is rationally the same as $\Omega^\infty_0 (\bf{R}^\gc)$. Taking into account an induced filtration by rank, we obtain a filtered object whose associated graded is given by $\bf{R}/(\sigma)$, which has rational homology $H_*(\bf{R}(n),\bf{R}(n-1);\bb{Q})$ in additional grading $n$, as the unit lies in rank $0$ but $\sigma$ in rank $1$. This yields a rank spectral sequence that is a reduced variant of the Rognes rank spectral sequence used in \cite{KRS1}.

This generality is necessary because in \cref{sec:goncharov-lie-coalgebra-general} it is applied to $E_\infty^\rm{nu}$-algebras in rational chain complexes (freely taking augmentations or unitalising when necessary) that do \emph{not} arise from spaces. We construct these as follows, given an $E_\infty^\rm{nu}$-algebra $\bf{R}$ in rational chain complexes: take $E_\infty^\rm{nu}$-indecomposables, perform a Postnikov truncation on the resulting shifted Lie coalgebra, and take $s\,\rm{coLie}$-primitives. The case of interest for this paper is when the $E_\infty$-homology of $\bf{R}$ vanishes in bidegrees $(n,d)$ satisfying $d < 2n-1$  and the Postnikov truncation is to bidegrees $(n,d)$ satisfying $d \leq 2n-1$. The resulting \emph{Koszul truncation} is analogue to Randal-Williams' construction of the ``stability Hopf algebra'' in \cite{RWchromatic}. This construction in particular applies to $\BGLb(F)_\bb{Q}/(\sigma)$ and then produces the Koszul dual $E_\infty^\rm{nu}$-algebra to the Goncharov Lie coalgebra. There is a canonical map from $\bf{R}$ to its Koszul truncation and applying the techniques from \cref{sec:stable-algebras-and-rank-spectral-sequences} to this we use the induced map on rank spectral sequences to define refined edge homomorphisms and study their properties. Rather than further discussing generalities, let us specialise to $\BGLb(F)_\bb{Q}$ and state the outcome: 

\begin{enumerate}[\noindent (1)]
\item We prove that the map $\rm{edge}_n \colon K_{2n-1}(F)_\bb{Q} \to \scr{G}_n(F)$ of \cite[Definition 9.2]{KRS1} has image in $\ker(\delta|_{\scr{G}_n(F)}) \subseteq \scr{G}_n(F)$.
    \item We produce \emph{refined edge homomorphisms}
\[\rm{e}_{n,2n-i} \colon \gr^\prim_n K_{2n-i}(F)_\bb{Q} \lra H^\rm{CE}_{n,2n-i}(\scr{G}(F))\]
from the associated graded $\gr^\prim_n K_{2n-i}(F)_\bb{Q}$ of the primitive rank filtration to the Chevalley--Eilenberg homology of the Goncharov Lie coalgebra. 
\item We describe how to compute $\rm{edge}_n \colon K_{2n-1}(F)_\bb{Q} \to \scr{G}_n(F)$ on elements of low ``indecomposable rank filtration'' in terms of the canonical map from homology to $E_\infty$-homology.
\end{enumerate}
Part (1) is a crucial step in the proof of \cref{thm:polyl-iso-number-field}. Part (3) allows us to relate edge homomorphisms to \cref{sec:intro-borel}: more precisely, we relate a \emph{Goncharov regulator map} (defined using edge homomorphisms and real periods) to Borel's regulator (see \cref{prop:borel-regulator-multiple}). This is also used in the proof of \cref{thm:polyl-iso-number-field}. 

Assuming the rank conjecture stating that there is an isomorphism $\smash{\gr^\prim_n K_{2n-i}(F)_\bb{Q}} \cong \smash{K_{2n-i}^{(n)}(F)_{\bb{Q}}}$, we propose the maps $\rm{e}_{n,2n-i}$ of part (2) as candidates for the isomorphisms in \cite[Conjecture A.b]{KRS1}, using that $H^\rm{CE}_{n,2n-i}(\scr{G}(F)) = H^i(\scr{G}(F))_n$ in terms of \cite[(2)]{KRS1}:

\begin{customconj}{A.b (refined)}\label{conjecture main gamma} Let $F$ be an arbitrary field. For $1\leq i \leq n$, then the refined edge homomorphisms are isomorphisms
\[
\gr^\prim_n K_{2n-i}(F)_\bb{Q} \overset{\cong}\lra H^i(\PolyL(F))_n.
\]
\end{customconj}

\cref{thm:polyl-iso-number-field} proves the above conjecture for $F$ a number field. Moreover, for arbitrary fields $F$ we prove the above conjecture for Milnor $K$-theory, i.e.~in the case $i=n$, see \cref{lem:milnor K theory}. 

\subsection*{Acknowledgments} AK acknowledges the support of the Natural Sciences and Engineering Research Council of Canada (NSERC) [funding reference number 512156 and 512250]. DR was supported by NSF grant DMS-2502729. IS acknowledges the University of Toronto. The authors would like to thank Clément Dupont, Søren Galatius, Alexander Goncharov, David Kazhdan, and Oscar Randal-Williams for helpful discussions and suggestions.

\section{Stable algebras and rank spectral sequences}\label{sec:stable-algebras-and-rank-spectral-sequences}
In this section we develop for $E_\infty^\rm{nu}$-algebras with a choice of stabilisation map in a presentable symmetric monoidal $\infty$-category $\scr{C}$ the following tools:
\begin{enumerate}[(i)]
    \item \emph{Algebraic group completion} and \emph{stable algebra} construction.
    \item Variants of this in the filtered and graded settings.
    \item Rank spectral sequences associated with these variants.
\end{enumerate}
Part (i) is summarised by the following table, which explains the analogy to spaces:
\begin{center}\medskip\begin{tabular}{cc} \toprule
     $\Spc$ & $\scr{C}$  \\ \midrule
    $E_\infty^\rm{u}$-algebra $\bf{R}$ with $\pi_0\,\bf{R} \cong \bb{N}$  &  $E_\infty^\rm{u}$-algebra in $\scr{C}$ with $\sigma \colon \bf{U} \to \bf{R}$ \\
    group completion $\Omega^\infty \bf{R}^\gc$ & algebraic group completion $\bf{R}[\sigma^{-1}]$ \\ 
    colimit $\rm{colim}_n\, \bf{R}(n)$ & stable algebra $\bf{R}/(\sigma-1)$
    \\\bottomrule
\end{tabular}\medskip\end{center}
The analogue of the map $\sigma \colon \bf{U} \coloneq \free_{E_\infty^\rm{u}}(1_\scr{C}) \to \bf{R}$ in the right column is in the left column given by a choice of representative of the component of $\bf{R}$ of $1 \in \bb{N}$. We suggest that the reader on a first reading skips ahead to \cref{sec:alg-k-theory-goncharov-maps}, which states the applications of these results to the $E_\infty^\rm{nu}$-algebra $\BGLb(F)_\bb{Q}$.

\subsection{Algebraic group completion and stable algebras} We start this subsection by a more detailed discussion about the setting of spaces, and then give the definitions and basic properties of the algebraic group completion and stable algebra constructions.

\subsubsection{Group completion in spaces} The category $\Spc$ of spaces is a symmetric monoidal category with cartesian symmetric monoidal structure. A unital $E_\infty$-algebra $\bf{R} \in \Alg_{E_\infty^\rm{u}}(\Spc)$ is \emph{group-like} if the map $(\pr_1,m) \colon \bf{R} \times \bf{R} \to \bf{R} \times \bf{R}$ is an equivalence (equivalently, the commutative monoid $\pi_0\bf{R}$ is a group), and we denote by $\Alg_{E_\infty^\rm{u}}^\rm{grp}(\Spc)$ the full subcategory of $E_\infty^\rm{u}$-algebras which are group-like. Its inclusion into $\Alg_{E_\infty^\rm{u}}(\Spc)$ admits a left adjoint \cite[Corollary 4.4]{GGN} and we will use the equivalences of categories \cite[5.1.3.7]{LurieHA}
\[\begin{tikzcd} \Alg_{E_\infty^\rm{u}}^\rm{grp}(\Spc) \arrow[shift left=2pt]{r}{B^\infty} &[10pt] \Sp_{\ge 0} \arrow[shift left=2pt]{l}{\Omega^\infty}.\end{tikzcd}\]
to view this as a \emph{group-completion} functor 
\[(-)^\gc \colon \Alg_{E_\infty^\rm{u}}(\Spc) \lra \Sp_{\ge 0}.\]
From this the space-level group completion can be recovered as $\Omega^\infty \bf{R}^\gc$ and the unit transformations give a canonical $E_\infty^\rm{u}$-algebra map $\bf{R} \to \Omega^\infty \bf{R}^\gc$. This is what our ``algebraic group completion'' construction aims to generalise.

We next explain our ``stable algebra'' construction. If we start with $\bf{R} \in \Alg_{E_\infty^\rm{u}}(\Spc)$ such that there is an isomorphism $\pi_0\,\bf{R} \cong \bb{N}$ of commutative monoids, then its group completion is a connective spectrum $\bf{R}^\gc$ with an isomorphism $\pi_0\, \bf{R}^\gc \cong \bb{Z}$. In this situation, we may consider the identity path-component of the group completion: $\Omega_0^\infty \bf{R}^\gc$. By the group-completion theorem \cite[Proposition 1.1]{McDuffSegal} \cite[Theorem 1.1]{RWgroupcompletion} there is an acyclic map of spaces
\[\colim_n\,\bf{R}(n) \lra \Omega_0^\infty \bf{R}^\gc,\]
where $\bf{R}(n)$ denotes the path component of $\bf{R}$ corresponding to $n \in \bb{N}$ and the colimit is taken over a stabilisation map given by multiplying by an element of $\bf{R}(1)$ (which is well-defined up to homotopy). The left side is what our ``stable algebra'' aims to generalise.

\subsubsection{Algebraic group completion} We next explain our notion of algebraic group completion. Recall that the category $\smash{\Alg_{E_\infty^\rm{u}}(\scr{C})}$ of $E^\rm{u}_\infty$-algebras in $\scr{C}$ admits a symmetric monoidal structure so that the forgetful functor is symmetric monoidal \cite[3.2.4.4]{LurieHA}, and by \cite[3.2.4.7]{LurieHA} this tensor product of $E_\infty^\rm{u}$-algebras agrees with the coproduct in the sense that the map
\[\bf{R} \sqcup^{E^\rm{u}_\infty} \bf{S} \overset{\simeq}\lra \bf{R} \otimes \bf{S}\]
induced by the units, is an equivalence. We will generally opt to use the right side as our notation, since it makes visible the underlying object.

Let us denote by $\bf{U} \coloneq \free_{E_\infty^\rm{u}}(1_\scr{C}) \in \Alg_{E_\infty^\rm{u}}(\scr{C})$ the free $E_\infty^\rm{u}$-algebra on the monoidal unit $1_\scr{C}$. It comes with the following additional structure:

\begin{enumerate}[\noindent (1)]
    \item The \emph{group completion augmentation}
\[\epsilon_\gc \colon \bf{U} \lra 1_\scr{C}\]
is defined as the adjoint to $1_\scr{C} \xrightarrow{\id} \fgt_{E_\infty^\rm{u}} 1_\scr{C}$.
    \item The \emph{codiagonal}
        \[\nabla \colon \bf{U} \otimes \bf{U} \lra \bf{U}\]
    is given by under the equivalence $\bf{U} \sqcup^{E_\infty^\rm{u}} \bf{U} \simeq \bf{U} \otimes \bf{U}$ by the map $\id_{\bf{U}}$ on each component. Its underlying map is the multiplication of $\bf{U}$.
    \item The \emph{diagonal}
        \[\Delta \colon \bf{U} \lra \bf{U} \otimes \bf{U}\]
    is given by using the oplax symmetric monoidality $\free_{E_\infty^\rm{u}}(X \otimes Y) \to \free_{E_\infty^\rm{u}}(X) \otimes\free_{E_\infty^\rm{u}}(Y)$
on the free algebra functor as the left adjoint to $\fgt_{E_\infty^\rm{u}}$, specialising to $X = 1_\scr{C} = Y$, and using the equivalence $1_\scr{C} \otimes 1_\scr{C} \simeq 1_\scr{C}$. It is the unique map of $E_\infty^\rm{u}$-algebras sending the generator $1_\scr{C}$ of the left to the tensor product of generators $1_\scr{C} \otimes 1_\scr{C}$ of the right.
\end{enumerate}

We will work in the category $\Alg_{E_\infty^\rm{u}}(\scr{C})^{\bf{U}/}$ of algebras under $\bf{U}$. One should think of its objects as $E_\infty^\rm{u}$-algebras $\bf{R}$ with a map $\sigma \colon \bf{U} \to \bf{R}$ specifying a way to ``stabilise''. The following is informally given by inverting $\sigma$, as the notation suggests:

\begin{definition}\label{def:algebraic-gc} The \emph{algebraic group completion} functor is
\begin{align*}(-)[\sigma^{-1}] \colon \Alg_{E_\infty^\rm{u}}(\scr{C})^{\bf{U}/} &\lra \Alg_{E_\infty^\rm{u}}(\scr{C}) \\
(\bf{U} \xrightarrow{\sigma} \bf{R}) &\longmapsto 1_\scr{C} \otimes_{\bf{U}} (\bf{U} \otimes \bf{R}),\end{align*}
equivalently given by the following pushout in $\Alg_{E_\infty^\rm{u}}(\scr{C})$
 \[\begin{tikzcd} \bf{U} \rar{(\id_\bf{U} \sqcup \sigma) \circ \Delta} \arrow{d}[swap]{\epsilon_\gc} &[40pt] \bf{U} \sqcup^{E_\infty^\rm{u}} \bf{R} \arrow{d}\\[-5pt]
   1_\scr{C} \rar & \bf{R}[\sigma^{-1}]. \end{tikzcd}\]
\end{definition}

\begin{remark}In the language of cellular $E_k$-algebras \cite{GKRW18}, this construction takes $\bf{R}$, adds a free $E_\infty$-cell on $D^0$, and declares it to be the multiplicative inverse of $\sigma$.\end{remark}

In the next lemma, we use the natural transformation given by the compositions
\[\bf{R} \xrightarrow{\inc} \bf{U} \sqcup^{E_\infty^\rm{u}} \bf{R} \lra \bf{R}[\sigma^{-1}].\]
The first part says $(-)[\sigma^{-1}] \simeq (-) \otimes_\bf{U} \bf{U}[\id_\bf{U}^{-1}]$ and the second part says this preserves colimits.

\begin{lemma}\label{lem:alg-group-completion-first-props}\,
    \begin{enumerate}[(i)]
        \item \label{enum:alg-group-completion-first-props-i} For $\bf{R} \in \Alg_{E_\infty^\rm{u}}(\scr{C})^{\bf{U}/}$ there is a natural pushout diagram in the category $\Alg_{E_\infty^\rm{u}}(\scr{C})$
    \[\begin{tikzcd}
        \bf{U} \arrow{r}{\sigma} \arrow{d} &[10pt] \bf{R} \arrow{d} \\[-5pt] \bf{U}[\id_\bf{U}^{-1}] \arrow{r}{\sigma[\sigma^{-1}]} & \bf{R}[\sigma^{-1}].
    \end{tikzcd}\]
    \item \label{enum:alg-group-completion-first-props-ii} The functor $(-)[\sigma^{-1}]$ preserves colimits.
    \end{enumerate}
\end{lemma}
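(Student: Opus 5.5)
The plan is to manipulate pushouts in $\Alg_{E_\infty^\rm{u}}(\scr{C})$. Throughout we use that the coproduct there is $\otimes$, so $\bf{U}\sqcup^{E_\infty^\rm{u}}\bf{R}\simeq \bf{U}\otimes\bf{R}$. We also use that pushouts of $E_\infty^\rm{u}$-algebras are relative tensor products $A\otimes_B C$. Both facts are \cite[3.2.4.7]{LurieHA} and its relative version.

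For (i), we first rewrite the top map of the defining square in \cref{def:algebraic-gc}. The map $(\id_\bf{U}\sqcup\sigma)\circ\Delta\colon\bf{U}\to\bf{U}\otimes\bf{R}$ factors as
\[\bf{U}\xrightarrow{\Delta}\bf{U}\otimes\bf{U}\xrightarrow{\id\otimes\sigma}\bf{U}\otimes\bf{R}.\]
Hence $\bf{R}[\sigma^{-1}]$ is an iterated pushout. First form $P\coloneq 1_\scr{C}\sqcup_{\bf{U}}(\bf{U}\otimes\bf{U})$, the pushout along $\Delta$ and $\epsilon_\gc$. Then push $P$ out along $\id\otimes\sigma\colon\bf{U}\otimes\bf{U}\to\bf{U}\otimes\bf{R}$. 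By definition, $P$ is $\bf{U}[\id_\bf{U}^{-1}]$.

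Next we exhibit $\id\otimes\sigma$ as a pushout: the square with corners $\bf{U}$, $\bf{R}$ (via $\sigma$), $\bf{U}\otimes\bf{U}$ and $\bf{U}\otimes\bf{R}$ is a pushout. Here the vertical maps are the inclusions into the second tensor factor. This holds because pushing out along a coproduct inclusion simply replaces that summand, and coproducts are tensor products.

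Pasting pushout squares then gives
\[\bf{R}[\sigma^{-1}]\simeq P\sqcup_{\bf{U}\otimes\bf{U}}(\bf{U}\otimes\bf{R})\simeq P\sqcup_{\bf{U}}\bf{R}.\]
The map $\bf{U}\to P$ in the last pushout is the composite $\bf{U}\xrightarrow{\inc_2}\bf{U}\otimes\bf{U}\to P$. This composite is precisely the natural map $\bf{U}\to\bf{U}[\id_\bf{U}^{-1}]$ used in the statement. The resulting bottom map $P\to\bf{R}[\sigma^{-1}]$ is $\sigma[\sigma^{-1}]$ by naturality of $(-)[\sigma^{-1}]$ applied to $\sigma\colon(\bf{U},\id)\to(\bf{R},\sigma)$. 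Naturality in $\bf{R}$ is clear, since every identification above is functorial.

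The main obstacle is the bookkeeping in identifying the maps, not the existence of the equivalence. In particular, we must check that the second-factor inclusion into $\bf{U}\otimes\bf{U}$ composed with $\id\otimes\sigma$ matches $\inc\colon\bf{R}\to\bf{U}\otimes\bf{R}$ after precomposition with $\sigma$. We must also keep the two copies of $\bf{U}$ straight: $\Delta$ feeds the first factor into $\epsilon_\gc$-territory and the second factor into $\sigma$. We handle this by writing everything as maps out of coproducts and using their universal property.

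For (ii), by (i) the functor is $\bf{R}\mapsto \bf{U}[\id_\bf{U}^{-1}]\otimes_{\bf{U}}\bf{R}$, i.e.\ pushout along the fixed map $\bf{U}\to\bf{U}[\id_\bf{U}^{-1}]$. The functor $\Alg_{E_\infty^\rm{u}}(\scr{C})^{\bf{U}/}\to\Alg_{E_\infty^\rm{u}}(\scr{C})^{\bf{U}[\id^{-1}]/}$ given by base change is left adjoint to restriction along that map, so it preserves colimits. It remains to forget to $\Alg_{E_\infty^\rm{u}}(\scr{C})$. This forgetful functor preserves colimits indexed by weakly contractible diagrams. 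For general colimits, we note that colimits in the source undercategory are also computed as colimits relative to $\bf{U}$. The coproduct in $\Alg_{E_\infty^\rm{u}}(\scr{C})^{\bf{U}/}$ is $\otimes_\bf{U}$, and it is sent to $\otimes_{\bf{U}[\id^{-1}]}$. We therefore interpret (ii) as preservation of colimits as a functor out of the undercategory into the undercategory of $\bf{U}[\id_\bf{U}^{-1}]$, and we state explicitly if the paper intends the weaker sifted/contractible version when landing in $\Alg_{E_\infty^\rm{u}}(\scr{C})$.
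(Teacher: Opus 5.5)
Your proof of (i) is the paper's argument. The paper stacks the coproduct-inclusion pushout square (with $\sigma$ on top and $\id_{\bf{U}}\sqcup\sigma$ below) over the defining pushout of $\bf{R}[\sigma^{-1}]$ factored through $\bf{U}\sqcup^{E_\infty^\rm{u}}\bf{U}$, and pastes pushouts exactly as you do; (ii) is likewise deduced from (i) as base change along the fixed map $\bf{U}\to\bf{U}[\id_\bf{U}^{-1}]$.

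Your caveat in (ii) is justified and sharper than the paper, which cites that $A\sqcup_B(-)\colon\scr{D}^{B/}\to\scr{D}$ preserves colimits. With target $\Alg_{E_\infty^\rm{u}}(\scr{C})$ this holds only for weakly contractible diagrams: for instance, the initial object $(\bf{U},\id_\bf{U})$ is sent to $\bf{U}[\id_\bf{U}^{-1}]$, which in spaces is $QS^0$ rather than a point. This gap is harmless for how the lemma is used later.
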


\begin{proof}For \eqref{enum:alg-group-completion-first-props-i}, we use that there is a commutative diagram
\[\begin{tikzcd}
    & \bf{U} \rar{\sigma} \dar{\inc} &[10pt] \bf{R} \dar{\inc} \\[-5pt]
    \bf{U} \arrow{r}{\Delta} \arrow{d}{\epsilon_\gc} & \bf{U} \sqcup^{E_\infty^\rm{u}} \bf{U} \rar{\id_\bf{U} \sqcup \sigma} \arrow{d} & \bf{U} \sqcup^{E_\infty^\rm{u}} \bf{R} \arrow{d} \\[-5pt]
    1_\scr{C} \arrow{r} & \bf{U}[\id_\bf{U}^{-1}] \rar{\sigma[\sigma^{-1}]} & \bf{R}[\sigma^{-1}].
\end{tikzcd}\]
The left and bottom outer squares are pushouts, and thus so is the right square. But the top square is also a pushout, hence so is the right outer square. For part \eqref{enum:alg-group-completion-first-props-ii}, we use part \eqref{enum:alg-group-completion-first-props-i} and the well-known fact that any functor of the form $A \sqcup_B (-) \colon \scr{D}^{B/} \to \scr{D}$ preserves colimits.
\end{proof}

\begin{example}\label{exam:stable-algebra-of-gc-augmentation} For the group completion augmentation $(\bf{U} \overset{\epsilon_\gc}\lra 1_\scr{C})$, the unit map yields an equivalence $\smash{1_\scr{C} \overset{\simeq}\lra 1_\scr{C}/[\epsilon_\gc^{-1}]}$. To see this, note this map fits in a pushout square 
\[\begin{tikzcd} \bf{U} \arrow{rr}{(\id_\bf{U} \sqcup \epsilon_{\gc}) \circ \Delta} \arrow{d}[swap]{\epsilon_\gc} &[20pt]  & \bf{U} \sqcup^{E_\infty^\rm{u}} 1_\scr{C} \arrow{d}\\[-5pt]
   1_\scr{C} \arrow{rr} & & 1[\epsilon_\gc^{-1}] \end{tikzcd}\]
in $\Alg_{E_\infty^\rm{u}}(\scr{C})$, and the claim follows since the horizontal top map is under the identification of its target with $\bf{U}$ equivalent to $\id_\bf{U}$.
\end{example}

\subsubsection{Stable algebras} The stable algebra construction is an instance of a construction that declares two elements $\sigma$ and $\tau$ of $\bf{R}$ to be equal by taking a relative tensor product or coequaliser. Recall that in a category with coproducts the left is a coequaliser diagram if and only if the right is a pushout
\vspace{-.2cm} \[\begin{tikzcd} X \rar[shift left=.5ex]{f} \rar[shift left=-.5ex,swap]{g} & Y \rar{p} & Z \end{tikzcd} \qquad \begin{tikzcd} X \sqcup X \dar[swap]{\nabla} \rar{(f,g)} & Y \dar \\[-5pt]
X \rar & Z.\end{tikzcd}\]

\begin{definition}Given two maps $\sigma,\tau \colon \bf{U} \to \bf{R}$ in $\Alg_{E^\rm{u}_\infty}(\scr{C})$ we define $\bf{R}/(\sigma-\tau)$ as the relative tensor product $\bf{U} \otimes_{\bf{U} \otimes \bf{U}} \bf{R}$, or equivalently by the following pushout in $\Alg_{E_\infty^\rm{u}}(\scr{C})$
 \[\begin{tikzcd} \bf{U} \sqcup^{E_\infty^\rm{u}} \bf{U} \rar{(\sigma,\tau)} \dar[swap]{\nabla} & \bf{R} \dar \\[-5pt]
 \bf{U} \rar & \bf{R}/(\sigma-\tau).\end{tikzcd}\]
\end{definition}

\begin{remark}In the language of cellular $E_k$-algebras \cite{GKRW18}, this construction takes $\bf{R}$, and attaches a free $E_\infty$-cell on $D^1$ along its boundary mapped to $\sigma$ and $\tau$.\end{remark}

Algebraic group completion was given by inverting $\sigma$, and the stable algebra is giving by declaring $\sigma$ to be $1$. This uses that there is a map $1 \colon \bf{U} \to \bf{R}$ adjoint to the map $1_\scr{C} \to \fgt_{E_\infty^\rm{u}} \bf{R}$ that picks out the unit. A special case of the previous construction is hence:

\begin{definition}\label{def:stable-algebra} The \emph{stable algebra} functor is
\begin{align*}(-)/(\sigma-1) \colon \Alg_{E_\infty^\rm{u}}(\scr{C})^{\bf{U}/} &\lra \Alg_{E_\infty^\rm{u}}(\scr{C}) \\
(\bf{U} \xrightarrow{\sigma} \bf{R}) &\longmapsto \bf{U} \otimes_{\bf{U} \otimes \bf{U}} \bf{R}\end{align*}
where the relative tensor product is with respect to $(\sigma,1) \colon \bf{U} \otimes \bf{U} \to \bf{R}$ and $\nabla \colon \bf{U} \otimes \bf{U} \to \bf{U}$.
\end{definition}

By definition we have the following pushout in $\Alg_{E_\infty^\rm{u}}(\scr{C})$
\[\begin{tikzcd}
    \bf{U} \sqcup^{E_\infty^\rm{u}} \bf{U} \arrow{r}{(\sigma, 1)} \arrow{d}[swap]{\nabla} & \bf{R} \arrow{d} \\[-5pt] 
    \bf{U} \arrow{r} & \bf{R}/(\sigma-1).
\end{tikzcd}\]

\begin{example}\label{exam:stable-algebra-of-id-u} The group completion augmentation $\epsilon_\gc \colon \bf{U} \to 1_\scr{C}$ equalises the maps $\id_\bf{U}, 1 \colon \bf{U} \to \bf{U}$, so induces a map $\bf{U}/(\id_\bf{U}-1) \smash{\overset{\simeq}\lra} 1_\scr{C}$. This is an equivalence, by verifying the universal property.
\end{example}

This implies the following lemma, proven similarly to \cref{lem:alg-group-completion-first-props}. The first part says that 
\[(-)/(\sigma-1) \simeq (-) \otimes_{\bf{U}} \bf{U}/(\id_\bf{U}-1) \simeq (-) \otimes_\bf{U} 1_\scr{C},\] 
and the second part that these constructions preserve colimits.

\begin{lemma}\label{lem:stable-alg-first-props}\,
    \begin{enumerate}[(i)]
        \item \label{enum:stable-alg-first-props-i} 
            For $\bf{R} \in \Alg_{E_\infty^\rm{u}}(\scr{C})^{\bf{U}/}$ there are natural pushout squares in $\Alg_{E_\infty^\rm{u}}(\scr{C})$
            \[\begin{tikzcd} \bf{U} \rar{\sigma} \dar & \bf{R} \dar \\[-5pt]
            \bf{U}/(\id_\bf{U}-1) \rar & \bf{R}/(\sigma-1)\end{tikzcd} \quad\text{and} \quad \begin{tikzcd}
            \bf{U} \arrow{r}{ \sigma} \arrow{d}[swap]{\epsilon_\gc} & \bf{R} \arrow{d} \\[-5pt] 1_\scr{C} \arrow{r} & \bf{R}/(\sigma-1).
          \end{tikzcd}\]
        \item \label{enum:stable-alg-first-props-ii} The functor $(-)/(\sigma-1)$ preserves colimits.
    \end{enumerate}
\end{lemma}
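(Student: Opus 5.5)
The plan is to follow the proof of \cref{lem:alg-group-completion-first-props}, working throughout in $\Alg_{E_\infty^\rm{u}}(\scr{C})$, where the coproduct is the tensor product, and using pasting of pushout squares.

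\textbf{First square in \eqref{enum:stable-alg-first-props-i}.} Form the commutative diagram
\[\begin{tikzcd}
    & \bf{U} \rar{\sigma} \dar{\inc_1} &[10pt] \bf{R} \dar{\inc_1} \\[-5pt]
    \bf{U} \sqcup^{E_\infty^\rm{u}} \bf{U} \arrow{r}{(\id_\bf{U}, 1)} \arrow{d}{\nabla} & \bf{U} \sqcup^{E_\infty^\rm{u}} \bf{U} \rar{\sigma \sqcup \id_\bf{U}} \arrow{d} & \bf{R} \sqcup^{E_\infty^\rm{u}} \bf{U} \arrow{d} \\[-5pt]
    \bf{U} \arrow{r} & \bf{U}/(\id_\bf{U}-1) \rar & \bf{R}/(\sigma-1).
\end{tikzcd}\]
The maps in the middle row are as follows. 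The left one is $(\id_\bf{U},1)$, which is the identity of $\bf{U}\sqcup\bf{U}$ up to relabelling: the unit map $1\colon \bf{U}\to\bf{U}$ corresponds to the second factor, and it is the first factor that receives $\sigma$. The right vertical map is $(\id_\bf{R},1)$ on the two summands, using that $1\colon\bf{U}\to\bf{R}$ factors as $\bf{U}\xrightarrow{1}\bf{U}\xrightarrow{\sigma}\bf{R}$.

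This bookkeeping is the one subtle point. The unit map $\bf{U}\to\bf{R}$ equals $\sigma\circ 1$, since both are adjoint to the unit of $\bf{R}$ and $\sigma$ is a unital map. So the correct middle-right map is $\bf{R}\sqcup\bf{U}\to\bf{R}$ given by $(\id_\bf{R},\,1)$. I would then arrange the pushouts defining $\bf{U}/(\id_\bf{U}-1)$ and $\bf{R}/(\sigma-1)$ so that both are obtained from $\bf{U}\sqcup\bf{U}\xrightarrow{\nabla}\bf{U}$. Concretely, $\bf{R}/(\sigma-1)$ is the pushout of $\bf{R}\xleftarrow{(\sigma,1)}\bf{U}\sqcup\bf{U}\xrightarrow{\nabla}\bf{U}$, and $(\sigma,1)=(\sigma\sqcup\sigma)$ precomposed with $(\id,1)$. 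Factoring through $\bf{U}\sqcup\bf{U}\xrightarrow{(\id,1)}\bf{U}$ gives $\bf{U}/(\id_\bf{U}-1)$ as an intermediate pushout.

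Pasting then shows that the square
\[\begin{tikzcd} \bf{U} \rar{\sigma} \dar & \bf{R} \dar \\[-5pt] \bf{U}/(\id_\bf{U}-1) \rar & \bf{R}/(\sigma-1)\end{tikzcd}\]
is a pushout. Equivalently, in relative-tensor language,
\[\bf{U}\otimes_{\bf{U}\otimes\bf{U}}\bf{R}\simeq (\bf{U}\otimes_{\bf{U}\otimes\bf{U}}\bf{U})\otimes_{\bf{U}}\bf{R},\]
where $\bf{R}$ is a $\bf{U}\otimes\bf{U}$-algebra via $(\sigma,1)=\sigma\circ(\id,1)$ and $\bf{U}$ is a $\bf{U}\otimes\bf{U}$-algebra via $(\id,1)$. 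This is associativity of base change.

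\textbf{Second square in \eqref{enum:stable-alg-first-props-i}.} Paste the first square with the equivalence $\bf{U}/(\id_\bf{U}-1)\simeq 1_\scr{C}$ of \cref{exam:stable-algebra-of-id-u}. One checks that under this equivalence the map from $\bf{U}$ is $\epsilon_\gc$, since the equivalence was induced by $\epsilon_\gc$.

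\textbf{Part \eqref{enum:stable-alg-first-props-ii}.} By part \eqref{enum:stable-alg-first-props-i}, $(-)/(\sigma-1)\simeq 1_\scr{C}\sqcup_\bf{U}(-)$ as functors $\Alg_{E_\infty^\rm{u}}(\scr{C})^{\bf{U}/}\to\Alg_{E_\infty^\rm{u}}(\scr{C})$. Any functor of the form $A\sqcup_B(-)$ on an undercategory preserves colimits, since it is left adjoint to restriction along $B\to A$ followed by forgetting.

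\textbf{Main obstacle.} The only real difficulty I expect is naturality together with the identification of the middle-row maps, in particular the check that $1_\bf{R}=\sigma\circ 1_\bf{U}$. This follows by adjunction, because $\sigma$ is a map of unital algebras.
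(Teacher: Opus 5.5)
Your argument is correct and is the paper's. The paper proves this ``similarly to \cref{lem:alg-group-completion-first-props}'', i.e.\ by writing $(\sigma,1)=\sigma\circ(\id_{\bf{U}},1)$ and pasting the defining pushout of $\bf{U}/(\id_{\bf{U}}-1)$ (taken along $(\id_{\bf{U}},1)\colon \bf{U}\sqcup^{E_\infty^{\mathrm{u}}}\bf{U}\to\bf{U}$) with the square $\bf{U}\xrightarrow{\sigma}\bf{R}$ over $\bf{U}/(\id_{\bf{U}}-1)\to\bf{R}/(\sigma-1)$. It then uses \cref{exam:stable-algebra-of-id-u} for the second square and colimit preservation of $1_\scr{C}\sqcup_{\bf{U}}(-)$ for (ii), which is exactly the argument of your ``Concretely'' paragraph.

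You should discard the three-row diagram you display first. There $(\id_{\bf{U}},1)$ is a map $\bf{U}\sqcup^{E_\infty^{\mathrm{u}}}\bf{U}\to\bf{U}$, not the identity of $\bf{U}\sqcup^{E_\infty^{\mathrm{u}}}\bf{U}$; with an identity in that position, the lower-left square being a pushout would force $\bf{U}/(\id_{\bf{U}}-1)\simeq\bf{U}$. The extra top square is also unnecessary.

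For (ii), the adjunction you cite only works because $1_\scr{C}$ is initial, so that $\Alg_{E_\infty^{\mathrm{u}}}(\scr{C})^{1_\scr{C}/}\simeq\Alg_{E_\infty^{\mathrm{u}}}(\scr{C})$.
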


Consider now the augmented setting, where $\bf{U}$ is augmented through $\epsilon_\gc$. By definition, an object in $\Alg^\aug_{E_\infty^\rm{u}}(\scr{C})^{\bf{U}/}$ yields a commutative square
\[\begin{tikzcd} \bf{U} \rar{\sigma} \dar[swap]{\epsilon_\gc} & \bf{R} \dar{\epsilon} \\[-5pt]
1_\scr{C} \rar{\id} & 1_\scr{C} \end{tikzcd}\]
so as a consequence of \cref{lem:stable-alg-first-props} \eqref{enum:stable-alg-first-props-i}, $\bf{R}/(\sigma-1)$ comes with a preferred augmentation, which we also refer to as the \emph{group completion augmentation}. Hence the stable algebra functor lifts to
\begin{equation}\label{eqn:stable-algebra-augmentation}(-)/(\sigma-1) \colon \Alg^\aug_{E_\infty^\rm{u}}(\scr{C})^{\bf{U}/} \lra \Alg^\rm{aug}_{E_\infty^\rm{u}}(\scr{C}).\end{equation}

\subsubsection{Cofibres} In this subsection we suppose $\scr{C}$ is pointed. Then the stable algebra construction has a variant which yields the cofibre instead. For any $\bf{R}$, there is a zero map $0 \colon \bf{U} \to \bf{R}$ adjoint to the zero map $1_\scr{C} \to \fgt_{E_\infty^\rm{u}} \bf{R}$ and we can use this to define $\bf{R}/(\sigma-0)$.

\begin{example}The zero map for $\bf{R} = 1_\scr{C}$ yields the \emph{canonical augmentation} $\epsilon_\rm{can} \colon \bf{U} \to 1_\scr{C}$. As the canonical augmentation equalises the maps $\id_\bf{U},0 \colon \bf{U} \to \bf{U}$, so induces a map $\bf{U}/(\id_\bf{U}-0) \smash{\overset{\simeq}\lra} 1_\scr{C}$ which is an equivalences, as may be verified by the universal property.
\end{example}

As in \cref{lem:stable-alg-first-props}, it follows that for $\bf{R} \in \Alg_{E_\infty^\rm{u}}(\scr{C})$ there are natural pushout squares
            \[\begin{tikzcd} \bf{U} \rar{\sigma} \dar & \bf{R} \dar \\[-5pt]
            \bf{U}/(\id_\bf{U}-0) \rar & \bf{R}/(\sigma-0)\end{tikzcd} \quad\text{and} \quad \begin{tikzcd}
            \bf{U} \arrow{r}{\sigma} \arrow{d}[swap]{\epsilon_\rm{can}} & \bf{R} \arrow{d} \\[-5pt] 1_\scr{C} \arrow{r} & \bf{R}/(\sigma-0),
          \end{tikzcd}\]
and by definition of the cofibre $\bf{R}/(\sigma)$ as a pushout, the right square says:

\begin{lemma}There is a natural equivalence between $\bf{R}/(\sigma-0)$ and $\bf{R}/(\sigma)$ in $\Alg_{E_\infty^\rm{u}}(\scr{C})$.
\end{lemma}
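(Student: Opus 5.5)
The plan is to compare the two pushout squares directly. The cofibre $\bf{R}/(\sigma)$ is, by definition (as in \cite{GKRW18}), the pushout in $\Alg_{E_\infty^\rm{u}}(\scr{C})$ of $\bf{R} \xleftarrow{\sigma} \bf{U} \to 1_\scr{C}$, where $\bf{U} = \free_{E_\infty^\rm{u}}(1_\scr{C}) \to 1_\scr{C}$ is the map of $E_\infty^\rm{u}$-algebras adjoint to the zero map $1_\scr{C} \to \fgt_{E_\infty^\rm{u}}1_\scr{C}$. In other words, it kills the free generator by sending it to zero, which is exactly $\epsilon_\rm{can}$. So the lemma reduces to identifying $\bf{R}/(\sigma-0)$ with the pushout $1_\scr{C} \sqcup_{\bf{U}} \bf{R}$ along $\epsilon_\rm{can}$ and $\sigma$. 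This is the right-hand square displayed just before the statement.

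To establish that square, I would mirror the proof of \cref{lem:alg-group-completion-first-props}\eqref{enum:alg-group-completion-first-props-i}. Consider the diagram whose top row is $\bf{U} \xrightarrow{\sigma} \bf{R}$. Its middle row is $\bf{U} \sqcup^{E_\infty^\rm{u}} \bf{U} \xrightarrow{\id \sqcup \sigma} \bf{U} \sqcup^{E_\infty^\rm{u}} \bf{R}$, with vertical maps the inclusions of the second summand, which makes a coproduct pushout. The bottom row is $\bf{U}/(\id_\bf{U}-0) \to \bf{R}/(\sigma-0)$.

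For the lower square, observe that the defining pushout of $\bf{R}/(\sigma-0)$ uses the map $(\sigma,0)\colon \bf{U}\sqcup\bf{U} \to \bf{R}$. This map factors as
\[\bf{U}\sqcup\bf{U} \xrightarrow{\id\sqcup\sigma} \bf{U}\sqcup\bf{R} \xrightarrow{(0,\id)} \bf{R},\]
after swapping summands, since $0 = 0\circ\id$ and $0\circ$ anything is $0$ by naturality of the zero map in $\bf{R}$. Pasting pushouts, the square from $\bf{U}\sqcup\bf{U}$ to $\bf{U}/(\id-0)$ and then to $\bf{R}/(\sigma-0)$ is a pushout. By pasting with the top coproduct square, the outer square
\[\bf{U} \to \bf{R},\qquad \bf{U}/(\id_\bf{U}-0) \to \bf{R}/(\sigma-0)\]
is a pushout. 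Finally, the example preceding the statement gives $\bf{U}/(\id_\bf{U}-0) \simeq 1_\scr{C}$ via $\epsilon_\rm{can}$. This verification is done by the universal property: a map out of the coequaliser is a map $f\colon \bf{U}\to \bf{S}$ with $f \simeq f\circ 0 = 0$, which is the same as a point of $\Map(1_\scr{C},\bf{S})$ when $\bf{S}$ is the target of $1_\scr{C}$ under the unit. Substituting gives the right-hand square, hence the equivalence with $\bf{R}/(\sigma)$. Naturality follows since every step is functorial in $(\bf{U}\xrightarrow{\sigma}\bf{R})$.

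The main obstacle I expect is the bookkeeping in the middle step. One has to check at the level of $E_\infty^\rm{u}$-maps, not just underlying objects, that the zero map $0\colon\bf{U}\to\bf{R}$ equals the composite $\bf{U}\xrightarrow{0}\bf{U}\xrightarrow{\sigma}\bf{R}$. Here I would use adjunction: both maps are adjoint to $1_\scr{C}\to\fgt\,\bf{R}$, namely the zero map and $\sigma\circ 0$. These agree because $\scr{C}$ is pointed and $\fgt\,\sigma$ preserves zero maps.
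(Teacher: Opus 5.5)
Your overall plan is the paper's: show that the square $\bf{U}\xrightarrow{\sigma}\bf{R}$ over $\bf{U}/(\id_\bf{U}-0)\to\bf{R}/(\sigma-0)$ is a pushout, replace $\bf{U}/(\id_\bf{U}-0)$ by $1_\scr{C}$ using the preceding example, and recognise the resulting square along $\epsilon_\rm{can}$ as the definition of $\bf{R}/(\sigma)$. The first and last steps are fine, but the middle step does not go through as written.

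The factorisation you display, $(\sigma,0)\simeq(0,\id)\circ(\id\sqcup\sigma)\circ\mathrm{swap}$, never uses the defining pushout of $\bf{U}/(\id_\bf{U}-0)$. The pushout of $\bf{U}\xleftarrow{\nabla}\bf{U}\sqcup\bf{U}\xrightarrow{\id\sqcup\sigma}\bf{U}\sqcup\bf{R}$ is just $\bf{R}$, because $\id\sqcup\sigma$ is the cobase change of $\sigma$ along $\mathrm{inc}_2$ and $\nabla\circ\mathrm{inc}_2=\id_\bf{U}$. So this pasting only re-expresses $\bf{R}/(\sigma-0)$ as a pushout of the two maps $(\sigma,\id),(0,\id)\colon\bf{U}\sqcup\bf{R}\to\bf{R}$. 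Moreover, since your upper coproduct square is a pushout, your lower square is a pushout \emph{if and only if} the outer square is, which is exactly the claim. Asserting the lower square ``by pasting pushouts'' is therefore circular.

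The row $\bf{U}\sqcup\bf{U}\to\bf{U}\sqcup\bf{R}$ was needed in the group-completion lemma because the attaching map there lands in $\bf{U}\sqcup\bf{R}$. Here the attaching map $(\sigma,0)$ already lands in $\bf{R}$, and the right way to mirror that proof is to factor it through $\bf{U}$.

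The fix uses exactly the fact you verify in your last paragraph, which your displayed factorisation never uses: $\sigma\circ 0\simeq 0$ as maps of $E_\infty^\rm{u}$-algebras, hence $(\sigma,0)\simeq\sigma\circ(\id_\bf{U},0)$. Consider the diagram
\[\begin{tikzcd}
\bf{U}\sqcup^{E_\infty^\rm{u}}\bf{U} \rar{(\id_\bf{U},0)} \dar[swap]{\nabla} & \bf{U} \rar{\sigma} \dar & \bf{R} \dar \\
\bf{U} \rar & \bf{U}/(\id_\bf{U}-0) \rar & \bf{R}/(\sigma-0)
\end{tikzcd}\]
The left square is the defining pushout of $\bf{U}/(\id_\bf{U}-0)$, and the outer rectangle is the defining pushout of $\bf{R}/(\sigma-0)$. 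By the pasting lemma the right square is a pushout. Combined with $\bf{U}/(\id_\bf{U}-0)\simeq 1_\scr{C}$, compatibly with $\epsilon_\rm{can}$, this is the right-hand square preceding the lemma, and the rest of your argument goes through.
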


\subsubsection{Comparing algebraic group completion and stable algebras}
Given these two functors---the algebraic group completion and the stable algebra---we will now compare them: setting $\sigma$ to $1$ after inverting it should be the same as directly setting it to $1$. This is the content of the next lemma. We can lift $(-)[\sigma^{-1}]$ to an endofunctor 
\begin{align*}(-)[\sigma^{-1}] \colon \Alg_{E_\infty^\rm{u}}(\scr{C})^{\bf{U}/} &\lra \Alg_{E_\infty^\rm{u}}(\scr{C})^{\bf{U}/} \\
(\bf{U} \xrightarrow{\sigma} \bf{R}) &\longmapsto (\bf{U} \to \bf{R} \to \bf{R}[\sigma^{-1}]).\end{align*} 

\begin{lemma} \label{lem:stable-algebra-gc}
    Given $(\bf{U} \xrightarrow{\sigma} \bf{R}) \in \Alg_{E_\infty^\rm{u}}(\scr{C})^{\bf{U}/}$ the map $\bf{R} \to \bf{R}[\sigma^{-1}]$ induces a natural equivalence in $\Alg_{E_\infty^\rm{u}}(\scr{C})$
    \[\bf{R}/(\sigma-1) \xrightarrow{\simeq} \bf{R}[\sigma^{-1}]/(\sigma-1).\]
\end{lemma}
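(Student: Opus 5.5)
The plan is to reduce the statement to the case $\bf{R} = \bf{U}$ by means of the pushout descriptions already available, and then to settle that case by a direct computation with the universal property.

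By \cref{lem:stable-alg-first-props} \eqref{enum:stable-alg-first-props-i}, for any object $(\bf{U}\xrightarrow{\sigma}\bf{R})$ under $\bf{U}$ we have $\bf{R}/(\sigma-1) \simeq \bf{R}\otimes_\bf{U} 1_\scr{C}$, where $1_\scr{C}$ is regarded as a $\bf{U}$-algebra via $\epsilon_\gc$. Likewise, by \cref{lem:alg-group-completion-first-props} \eqref{enum:alg-group-completion-first-props-i} we have $\bf{R}[\sigma^{-1}] \simeq \bf{R}\otimes_\bf{U}\bf{U}[\id_\bf{U}^{-1}]$. The structure map of $\bf{R}[\sigma^{-1}]$ as an object under $\bf{U}$ is the composite $\bf{U}\to\bf{R}\to\bf{R}[\sigma^{-1}]$. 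Applying the second square of \cref{lem:stable-alg-first-props} \eqref{enum:stable-alg-first-props-i} again, we obtain
\[\bf{R}[\sigma^{-1}]/(\sigma-1)\simeq \bf{R}[\sigma^{-1}]\otimes_\bf{U} 1_\scr{C} \simeq \bf{R}\otimes_\bf{U}\bigl(\bf{U}[\id_\bf{U}^{-1}]\otimes_\bf{U} 1_\scr{C}\bigr).\]
Here the inner relative tensor product uses the map $\bf{U}\to\bf{U}[\id_\bf{U}^{-1}]$ together with $\epsilon_\gc$. This step is just pasting of pushouts, and naturality in $\bf{R}$ is clear because all identifications are natural. Under these equivalences, the map in the statement is $\bf{R}\otimes_\bf{U}(-)$ applied to the map $1_\scr{C}\simeq\bf{U}\otimes_\bf{U}1_\scr{C}\to \bf{U}[\id_\bf{U}^{-1}]\otimes_\bf{U}1_\scr{C}$. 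It therefore suffices to prove the universal case $\bf{R}=\bf{U}$, $\sigma=\id_\bf{U}$: that is, to show that $1_\scr{C}\simeq\bf{U}/(\id_\bf{U}-1)\to\bf{U}[\id_\bf{U}^{-1}]/(\id_\bf{U}-1)$ is an equivalence.

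For the universal case, I use naturality of the stable algebra functor applied to the morphism $\epsilon_\gc\colon(\bf{U}\xrightarrow{\id}\bf{U})\to(\bf{U}\xrightarrow{\epsilon_\gc}1_\scr{C})$ of objects under $\bf{U}$, together with the fact that $(-)[\sigma^{-1}]$ commutes with $(-)/(\sigma-1)$ up to the evident interchange of pushouts. Both constructions are pushouts in $\Alg_{E_\infty^\rm{u}}(\scr{C})$ along maps out of $\bf{U}$, and colimits commute with colimits. Concretely, $\bf{U}[\id_\bf{U}^{-1}]/(\id_\bf{U}-1)$ is the pushout of $\bf{U}[\id^{-1}_\bf{U}]\leftarrow\bf{U}\xrightarrow{\epsilon_\gc}1_\scr{C}$. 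Using the defining pushout of $\bf{U}[\id_\bf{U}^{-1}]$ from \cref{def:algebraic-gc}, I rewrite this as $\bigl(\bf{U}/(\id_\bf{U}-1)\bigr)[\sigma^{-1}]$, which equals $1_\scr{C}[\epsilon_\gc^{-1}]$. By \cref{exam:stable-algebra-of-gc-augmentation} this is equivalent to $1_\scr{C}$ via the unit map. The remaining check is that the composite $1_\scr{C}\to \bf{U}[\id_\bf{U}^{-1}]/(\id_\bf{U}-1)\simeq 1_\scr{C}[\epsilon_\gc^{-1}]$ is that unit map. This follows because all objects involved are initial-type pushouts of $1_\scr{C}$ and the maps are the canonical ones induced by units.

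The main obstacle is the interchange step. One has to verify that the pushout presentation of $\bf{U}[\id^{-1}_\bf{U}]$, which involves $\Delta$ and $\bf{U}\sqcup\bf{U}$, interacts correctly with setting $\sigma=1$. Specifically, after applying $\epsilon_\gc$ to the relevant copy of $\bf{U}$, the map $(\id_\bf{U}\sqcup\epsilon_\gc)\circ\Delta$ must become identified with $\id_\bf{U}$, exactly as in \cref{exam:stable-algebra-of-gc-augmentation}. I would handle this by drawing a $3\times3$ grid of pushouts built from $\Delta$, $\epsilon_\gc$ and $\sigma$, and applying the pasting lemma row-wise and column-wise.
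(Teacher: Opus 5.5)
Your proposal is correct and follows essentially the paper's proof. You reduce to the universal case $\bf{R}=\bf{U}$ using the pushout of \cref{lem:alg-group-completion-first-props} together with colimit-preservation of $(-)/(\sigma-1)$, and then identify $\bf{U}[\id_\bf{U}^{-1}]/(\id_\bf{U}-1)\simeq 1_\scr{C}[\epsilon_\gc^{-1}]\simeq 1_\scr{C}$ via \cref{exam:stable-algebra-of-gc-augmentation}.

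The two points you flag as delicate are immediate. The ``interchange'' is a single pushout $\bf{U}[\id_\bf{U}^{-1}]\leftarrow\bf{U}\xrightarrow{\epsilon_\gc}1_\scr{C}$, read once via \cref{lem:stable-alg-first-props} and once via \cref{lem:alg-group-completion-first-props}, so no $3\times 3$ grid is needed. No identification of the map is required either, because $1_\scr{C}$ is initial in $\Alg_{E_\infty^\rm{u}}(\scr{C})$: any map from it to an object equivalent to $1_\scr{C}$ is automatically an equivalence.
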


\begin{proof}
   Since the stable algebra functor preserves colimits by \cref{lem:stable-alg-first-props} \eqref{enum:stable-alg-first-props-ii}, we can apply it to the pushout square of \cref{lem:alg-group-completion-first-props} \eqref{enum:alg-group-completion-first-props-i} to reduce the statement to proving that
   \[\bf{U}/(\id_\bf{U}-1) \lra \bf{U}[\id_\bf{U}^{-1}]/(\id_\bf{U}-1)\]
   is an equivalence. As $1_\scr{C} \simeq  \bf{U}/(\id_\bf{U}-1)$ by \cref{exam:stable-algebra-of-id-u}, it suffices to show that $1_\scr{C} \simeq \bf{U}[\id_\bf{U}^{-1}]/(\id_\bf{U}-1)$; there is no need to identify the map because $1_\scr{C}$ is initial. By \cref{lem:alg-group-completion-first-props} \eqref{enum:alg-group-completion-first-props-i} and \cref{exam:stable-algebra-of-gc-augmentation}, $\bf{U}[\id_\bf{U}^{-1}]/(\id_\bf{U}-1) \simeq 1_\scr{C}[\epsilon_\gc^{-1}] \simeq 1_\scr{C}$.\end{proof}

\subsubsection{Interpretation in terms of module categories} There is a category $\rm{Mod}_\bf{A}(\scr{C})$ of modules over an $E_\infty^\rm{u}$-algebra $\bf{A}$ in $\scr{C}$ \cite[4.5.1.1]{LurieHA} (we suppress $\scr{O} = E_\infty^\rm{u}$ from the notation for legibility). Its underlying category agrees with that of left (or right) modules by \cite[4.5.1.6]{LurieHA} but it comes with a symmetric monoidal structure whose underlying tensor product is given by the relative tensor product $(-)\otimes_\bf{A} (-)$ \cite[Theorem 4.5.2.1]{LurieHA} and whose monoidal unit is given by $\bf{A}$. There is an equivalence \cite[3.4.1.7]{LurieHA}
\[\Alg_{E_\infty^\rm{u}}(\rm{Mod}_\bf{A}) \lra \Alg_{E_\infty^\rm{u}}(\scr{C})^{\bf{A}/}\]
given by remembering the underlying $E_\infty^\rm{u}$-algebra in $\scr{C}$ and its unit map from $\bf{A}$. Thus we can interpret the domains of the algebraic group completion and stable algebra constructions as $\Alg_{E_\infty^\rm{u}}(\rm{Mod}_\bf{U})$. From this perspective, the stable algebra construction is induced by the functor $\rm{Mod}_\bf{U} \to \rm{Mod}_{1_\scr{C}} \simeq \scr{C}$ obtained by base-change along $\epsilon_\gc \colon \bf{U} \to 1_\scr{C}$.

\subsubsection{Algebraic group completion and stable algebras in spaces}
We now return to the setting of spaces and restrict our attention to $E^\rm{u}_\infty$-algebras $\bf{R}$ with an isomorphism of commutative monoids $\pi_0\,\bf{R} \cong \bb{N}$. Since $\bb{N}$ has no non-trivial monoid automorphisms, there is in fact a \emph{unique} identification $\pi_0\,\bf{R} \cong \bb{N}$. In this case we have the following, opting to write $* \coloneq 1_\Spc$:

\begin{proposition} \label{prop:stable-algebra-spc}
    Let $\bf{R} \in \Alg_{E_\infty^\rm{u}}(\Spc)$ such that $\pi_0\, \bf{R} \cong \N$. Then we have:
    \begin{enumerate}[(i)]
        \item \label{enum:stable-algebra-spc-i} There is a unique map $\sigma \colon \bf{U} \to \bf{R}$ that induces an isomorphism on $\pi_0$.
        \item \label{enum:stable-algebra-spc-ii} There is a natural equivalence $\bf{R}[\sigma^{-1}] \simeq \Omega^\infty \bf{R}^\gc$ in $\Alg_{E_\infty^\rm{u}}(\Spc)$.
        \item \label{enum:stable-algebra-spc-iii} There is a unique \emph{group-completion augmentation} $\epsilon_\gc \colon \bf{R} \to \ast$ and it makes the following commutes
        \[\begin{tikzcd} \bf{U} \rar{\sigma} \arrow{rd}[swap]{\epsilon_\gc} & \bf{R} \dar{\epsilon_\gc} \\[-5pt]
        & \ast.\end{tikzcd}\]
        This induces an augmentation $\epsilon\colon \bf{R}/(\sigma-1) \to \ast$ of the stable algebra of $\bf{R}$ of \eqref{eqn:stable-algebra-augmentation}.
        \item \label{enum:stable-algebra-spc-iv} There is a natural equivalence of augmented $E_\infty^\rm{u}$-algebras in spaces $\bf{R}/(\sigma-1) \simeq \Omega^\infty(\bf{R}^\gc/\bb{S})$, where the map of spectra $\bb{S} \to \bf{R}^\gc$ is induced by $\sigma$ using $\bf{U}^\gc \simeq \bb{S}$ and the augmentation of $\Omega^\infty(\bf{R}^\gc/\bb{S})$ is given by applying $\Omega^\infty$ to the $0$ map $\bf{R}^\gc/\bb{S} \to 0$.
    \end{enumerate}
\end{proposition}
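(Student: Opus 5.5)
Each part reduces to a universal property or to one of the pushout descriptions from \cref{lem:alg-group-completion-first-props} and \cref{lem:stable-alg-first-props}, transported along $B^\infty$/$\Omega^\infty$.

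For \eqref{enum:stable-algebra-spc-i}, in $\Spc$ we have $\bf{U} = \free_{E_\infty^\rm{u}}(\ast) \simeq \coprod_n B\mathfrak{S}_n$. Maps $\bf{U} \to \bf{R}$ of $E_\infty^\rm{u}$-algebras are therefore points of $\bf{R}$. Inducing an isomorphism on $\pi_0$ forces the point to lie in $\bf{R}(1)$. So "unique" should be read as: the space of such maps is the component $\bf{R}(1)$, and I will take uniqueness up to homotopy. If contractibility is needed, I would interpret the statement as choosing $\sigma$ once, naturally in the data.

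For \eqref{enum:stable-algebra-spc-ii}, the key step is to show that $\bf{R}[\sigma^{-1}]$ is group-like and that $\bf{R} \to \bf{R}[\sigma^{-1}]$ is initial among maps to group-like algebras.
\begin{enumerate}[(a)]
\item $\pi_0$ of a pushout of $E_\infty^\rm{u}$-algebras in spaces is the pushout of commutative monoids. Hence $\pi_0\,\bf{R}[\sigma^{-1}] = \bb{N}[x]/(x\cdot 1 = 0)$ with $\sigma = 1$, which is $\bb{Z}$. So $\bf{R}[\sigma^{-1}]$ is group-like.
\item For group-like $\bf{G}$, a map $\bf{R}[\sigma^{-1}] \to \bf{G}$ is a map $\bf{R} \to \bf{G}$ together with a map $\bf{U} \to \bf{G}$ and a nullhomotopy of the product with $\sigma$. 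In a group-like target this extra data is contractible, since inverses exist uniquely.
\item Therefore $\bf{R}[\sigma^{-1}]$ corepresents the same functor on $\Alg^\rm{grp}_{E_\infty^\rm{u}}(\Spc)$ as $\Omega^\infty \bf{R}^\gc$.
\end{enumerate}
I expect the contractibility claim in (b) to be the main obstacle. I would handle it by applying it to $\bf{R} = \bf{U}$ first: I would check that $\bf{U}[\id_\bf{U}^{-1}] \simeq \Omega^\infty\bb{S}$. This uses that $(-)^\gc$ is a left adjoint and hence preserves pushouts, and that $\epsilon_\gc$ becomes the map $\bb{S} \to 0$ up to the diagonal. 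Then I would conclude in general from the pushout of \cref{lem:alg-group-completion-first-props}\eqref{enum:alg-group-completion-first-props-i}, because group completion commutes with pushouts.

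For \eqref{enum:stable-algebra-spc-iii}, maps $\bf{R} \to \ast$ are unique because $\ast$ is terminal. Commutativity of the triangle is then automatic, and the induced augmentation comes from \eqref{eqn:stable-algebra-augmentation}.

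For \eqref{enum:stable-algebra-spc-iv}, I would combine \cref{lem:stable-algebra-gc} with \eqref{enum:stable-algebra-spc-ii}. This gives $\bf{R}/(\sigma-1) \simeq \bf{R}[\sigma^{-1}]/(\sigma-1)$. By \cref{lem:stable-alg-first-props}\eqref{enum:stable-alg-first-props-i}, the right-hand side is the pushout of $\ast \leftarrow \bf{U} \to \Omega^\infty\bf{R}^\gc$. That pushout may be computed after group-completing $\bf{U}$, since the target is group-like. In group-like algebras, pushouts correspond under $B^\infty$ to pushouts in $\Sp_{\ge 0}$. Thus we get $\Omega^\infty$ of the pushout $0 \leftarrow \bb{S} \to \bf{R}^\gc$, which is $\Omega^\infty(\bf{R}^\gc/\bb{S})$. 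The cofibre is connective because $\pi_0$ is surjective.

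The augmentation is compatible because both augmentations are induced from $0$ by the same pushout. This compatibility is automatic once we note that augmentations to the terminal object $\ast$ are unique.
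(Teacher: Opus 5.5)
Your proposal is correct. For (i), (iii) and (iv) it follows the paper's proof essentially step for step; your reading of ``unique'' in (i) as unique up to homotopy is also how the paper uses it later.

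The genuine difference is in (ii). The paper maps $\bf{R}[\sigma^{-1}] \to (\Omega^\infty \bf{R}^\gc)[\sigma^{-1}] \simeq \Omega^\infty \bf{R}^\gc$ and cites \cite[Corollary 13.41]{GKRW18} for this being an equivalence. You instead prove it directly in two steps. First, the pushout computation in commutative monoids shows $\bf{R}[\sigma^{-1}]$ is group-like; this is exactly where the hypothesis that $\pi_0\,\bf{R} \cong \bb{N}$ is generated by $\sigma$ enters. Second, a mapping-space computation shows it has the universal property of group completion. Your route is self-contained and makes the role of the hypothesis visible; the paper's is shorter and defers to a general result.

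The step you flagged as the main obstacle is in fact immediate. $\Map(\bf{R}[\sigma^{-1}],\bf{G})$ is the fibre over the unit of the map $\bf{G} \times \Map(\bf{R},\bf{G}) \to \bf{G}$, $(x,f) \mapsto x \cdot f(\sigma)$. For fixed $f$, the remaining fibre is the homotopy fibre of multiplication by $f(\sigma)$, which is an equivalence when $\bf{G}$ is group-like. So the detour through $\bf{U}[\id_{\bf{U}}^{-1}]$ is unnecessary. It is also valid once (a) is known: then $\bf{R}[\sigma^{-1}]$ is $\Omega^\infty$ of its group completion, and that group completion is $0 \sqcup_{\bb{S}} (\bb{S} \oplus \bf{R}^\gc) \simeq \bf{R}^\gc$.

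In (iv), ``since the target is group-like'' needs the justification the paper gives. In commutative monoids, $\pi_0$ of the pushout is $0 \sqcup_{\bb{N}} \bb{Z} = 0$, so the pushout is group-like. Only then can it be computed in group-like algebras, i.e.\ in $\Sp_{\ge 0}$.

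Your closing connectivity remark is not needed: cofibres of maps between connective spectra are always connective. Surjectivity on $\pi_0$ only gives the stronger statement $\pi_0(\bf{R}^\gc/\bb{S}) = 0$.
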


\begin{proof}For \eqref{enum:stable-algebra-spc-i}, it is given by the map adjoint to $\ast \to \fgt_{E_\infty^\rm{u}}\bf{R}$ picking out the component corresponding to $1 \in \bb{N}$. For \eqref{enum:stable-algebra-spc-ii} we use the group completion map $\bf{R} \to \Omega^\infty \bf{R}^{\gc}$ induces a map
\[\bf{R}[\sigma^{-1}] \lra (\Omega^\infty \bf{R}^{\gc})[\sigma^{-1}] \simeq \Omega^\infty \bf{R}^{\gc}\]
that is an equivalence by \cite[Corollary 13.41]{GKRW18}. For \eqref{enum:stable-algebra-spc-iii}, we observe that the monoidal unit $\ast = 1_\Spc$ is also the terminal object, so it is an $E_\infty^\rm{u}$-algebra and every algebra $\bf{R} \in \Alg_{E_\infty^\rm{u}}(\Spc)$ is uniquely augmented. 

For \eqref{enum:stable-algebra-spc-iv}, we recall that by \cref{lem:stable-algebra-gc} the map $\bf{R} \to \bf{R}[\sigma^{-1}]$ induces an equivalence
\[\bf{R}/(\sigma-1) \overset{\simeq}\lra \bf{R}[\sigma^{-1}](\sigma-1) \simeq (\Omega^\infty \bf{R}^\gc)/(\sigma-1),\]
where the right equivalence uses \eqref{enum:stable-algebra-spc-ii}. Its target is the pushout in $\Alg_{E_\infty^\rm{u}}(\Spc)$
    \[\begin{tikzcd}
    \bf{U} \arrow{r}{ \sigma'} \arrow{d}[swap]{\epsilon_\gc} & \Omega^\infty \bf{R}^\gc \arrow{d} \\[-5pt] \ast \simeq \Omega^\infty 0 \arrow{r} & (\Omega^\infty \bf{R}^\gc)/(\sigma-1),
\end{tikzcd}\]
where $\sigma'$ is the composition $\bf{U} \xrightarrow{\sigma} \bf{R} \to (\Omega^\infty \bf{R}^\gc)$. We claim that $\pi_0\,(\Omega^\infty \bf{R}^\gc)/(\sigma-1)$ is trivial. The inclusion $\scr{S}\rm{et} \hookrightarrow \Spc$ has a left adjoint given by $\pi_0 \colon \Spc \to \scr{S}\rm{et}$, and both functors are strong symmetric monoidal, so induce an adjunction on categories of $E_\infty^\rm{u}$-algebras, where $\Alg_{E_\infty^\rm{u}}(\scr{S}\rm{et}) \simeq \rm{CMon}$ is the $1$-category of symmetric monoids. In particular, $\pi_0 \colon \Alg_{E_\infty^\rm{u}}(\Spc) \to \rm{CMon}$ preserves all colimits, so $\pi_0((\Omega^\infty \bf{R}^\gc)/(\sigma-1))=0$ using the above diagram.

Thus $(\Omega^\infty \bf{R}^\gc)/(\sigma-1)$ is equivalent to its group completion as an algebra in $\Alg_{E_\infty^\rm{u}}(\Spc)$, and since group completion is a left adjoint then we can apply it to the previous diagram to show that $(\Omega^\infty \bf{R}^\gc)/(\sigma-1)$ is the pushout in $\Alg_{E_\infty^\rm{u}}^\rm{gp}(\Spc)$ of 
\[\begin{tikzcd}
    \Omega^\infty \bf{U}^\gc \arrow{r}{ \sigma'} \arrow{d}[swap]{\epsilon_\gc} & \Omega^\infty \bf{R}^\gc \arrow{d} \\[-5pt] \ast \simeq \Omega^\infty 0 \arrow{r} & (\Omega^\infty \bf{R}^\gc)/(\sigma-1).
\end{tikzcd}\]
By the Barratt--Priddy--Quillen--Segal theorem, $\bf{U}^\gc \simeq \bb{S}$, so using the adjoint equivalence $B^\infty \dashv \Omega^\infty$, we obtain an equivalence 
\[\bf{R}/(\sigma-1) \simeq (\Omega^\infty \bf{R}^\gc)/(\sigma-1) \simeq \Omega^\infty(\bf{R}^\gc/\bb{S}).\]
Finally, observe the augmentations are unique and $* \simeq \Omega^\infty 0$.  
\end{proof}

\begin{remark} Note there is a canonical algebra map $\Omega^\infty_0 \bf{R}^\rm{gc} \to \Omega^\infty(\bf{R}^\rm{gc}/\mathbb{S})$ and this is rationally is an equivalence. This explains why we consider $\bf{R}/(\sigma-1)$ as an analogue of $\colim_n \bf{R}(n)$. This seems to be the best one can do, see \cref{rem:pos-char-counterexample}.\end{remark}

\subsubsection{The pointed setting}\label{sec:stable-algebra-and-basepoints} It can be convenient to work in a pointed setting, and we explain how the above constructions interact with basepoints, e.g.~for $\scr{C} = \Spc$.

Let $\scr{C}_*$ denote the category of pointed objects in a category $\scr{C}$ with terminal object $t$. The forgetful map $\fgt_\ast \colon \scr{C}_* \to \scr{C}$ has a left adjoint $(-)_+ \colon \scr{C} \to \scr{C}_*$ given by $X_+ \coloneq (t \to X \sqcup t) \in \scr{C}_*$. 
If $\scr{C}$ is symmetric monoidal then there is a symmetric monoidal structure on $\scr{C}_*$ such that $(-)_+$ is strong symmetric monoidal and $\fgt_\ast$ is lax symmetric monoidal, so one gets a corresponding adjunction on categories of algebras. The functor $(-)_+$ commutes with the free algebra functor $\free_{E_\infty^\rm{u}}(-)$ since the right adjoints commute. In particular, $(\bf{U}_\scr{C})_+ \simeq \bf{U}_{\scr{C}_*}$ (since $1_{\scr{C}_*} \simeq (1_\scr{C})_+$) and hence if $\bf{R} \in \Alg_{E_\infty^\rm{u}}(\scr{C})$, then there are equivalences in $\Alg_{E_\infty^\rm{u}}(\scr{C}_*)$ and $\Alg_{E_\infty^\rm{u}}(\scr{C}_*)$
    \[(\bf{R}[\sigma^{-1}])_+ \simeq (\bf{R}_+)[\sigma_+^{-1}] \qquad \text{and} \qquad (\bf{R}/(\sigma-1_\scr{C}))_+ \simeq (\bf{R}_+)/(\sigma_+-1_{\scr{C}_*}).\]

\subsection{Stable algebras in the filtered and graded settings} To take advantage of natural gradings by ``rank'' and construct spectral sequences related to this, we lift the stable algebra construction to the filtered and graded settings.

\subsubsection{Filtered and graded objects}\label{sec:filtered-and-graded} We start by summarising some results from \cite[Section A.3]{KRS1}. Let $\bb{N}$ be the category whose objects are the nonnegative integers and whose morphisms are all identities. Similarly, let $\bb{N}_{\leq}$ be the category whose nonnegative objects are the integers and whose morphisms are given by a unique morphism $n \to m$ whenever $n \leq m$.

\begin{definition}Let $\scr{C}$ be a category.
	\begin{itemize}
		\item The category of \emph{(nonnegatively) graded objects} in $\scr{C}$ is $\Fun(\bb{N},\scr{C})$.
		\item The category of \emph{(nonnegatively) filtered objects} in $\scr{C}$ is $\Fun(\bb{N}_{\leq},\scr{C})$.
	\end{itemize}
\end{definition}

Addition makes $\bb{N}$ and $\bb{N}_{\leq}$ into symmetric monoidal categories, inducing Day convolution symmetric monoidal structures on the categories of graded and filtered objects \cite[Section A.2]{KRS1}.

For each integer $n \in \bb{N}$ we have functors $n^* \colon \Fun(\bb{N},\scr{C}) \to \scr{C}$ and $n^* \colon \Fun(\bb{N}_{\leq},\scr{C}) \to \scr{C}$ which admit left and right adjoints, denoted $n_!$ and $n_*$ respectively, as long as $\scr{C}$ has an initial object $\rm{i}$ and terminal object $\rm{t}$. We will mostly have use to $n_!$, which informally places an object in grading or filtration $n$. In \cite[Lemma A.12]{KRS1} we explained when these are (op)lax monoidal. In particular, the functor $0^* \colon \Fun(\bb{N}_{\leq},\scr{C}) \to \scr{C}$ is symmetric monoidal and thus induces a lax symmetric monoidality on the right adjoint $0_* \colon \scr{C} \to \Fun(\bb{N}_{\leq},\scr{C})$. The monoidal unit $1_\scr{C}$ thus gives a commutative algebra object $S(0) \coloneq 0_* 1_\scr{C}$ in $\Fun(\bb{N}_{\leq},\scr{C})$ and hence in $\Fun(\bb{N}_{\leq},\scr{C})$. If $\scr{C}$ is pointed we can identify $S(0)$-modules in $\Fun(\bb{N}_{\leq},\scr{C})$ with those filtered objects $X$ such that the maps $X(i) \to X(i+1)$ factor over the terminal object, and define $\rm{gr}$ as the left adjoint to the forgetful functor $\rm{Mod}_{S(0)}(\Fun(\bb{N}_{\leq},\scr{C})) \to \Fun(\bb{N}_{\leq},\scr{C})$. Pulling back along the unique map $p \colon \bb{N}_{\leq} \to \ast$ induces a functor $\rm{const} \coloneq p^* \colon \scr{C} \to \Fun(\bb{N}_{\leq},\scr{C})$ which admits a left adjoint $\colim \coloneq p_!$ and right adjoint $\lim \coloneq p_*$. 

\begin{lemma}\label{lem:colim-gr-sym-mon-left-adjoints} \quad
	\begin{enumerate}[(i)]
		\item \label{enum:colim-gr-sym-mon-left-adjoints-i} $\colim \colon \Fun(\bb{N}_{\leq},\scr{C}) \to \scr{C}$ is a symmetric monoidal left adjoint.
		\item \label{enum:colim-gr-sym-mon-left-adjoints-ii} $\gr \colon \Fun(\bb{N}_{\leq},\scr{C}) \to \Fun(\bb{N},\scr{C})$ is a symmetric monoidal left adjoint if $\scr{C}$ is pointed.
	\end{enumerate}
\end{lemma}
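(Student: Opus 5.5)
For both parts I would show that the functor is a left adjoint, and that it is strong symmetric monoidal by checking that its lax structure map is an equivalence. Since each functor preserves colimits and the tensor products in question preserve colimits separately in each variable, it suffices to check this on generators. Presentability of $\scr{C}$ is assumed throughout, as in the rest of the section.

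For \eqref{enum:colim-gr-sym-mon-left-adjoints-i}, $\colim = p_!$ is left adjoint to $\rm{const} = p^*$ by definition. The functor $p\colon \bb{N}_\leq \to \ast$ is symmetric monoidal, so left Kan extension along $p$ is symmetric monoidal for the Day convolution structures (cf.\ \cite[Section A.2]{KRS1}). Concretely, I would write
\[(X \otimes_{\rm{Day}} Y)(n) = \colim_{a+b \le n} X(a) \otimes Y(b).\]
Taking the colimit over $n$ and using cofinality of the diagonal $\bb{N}_\le \to \bb{N}_\le \times \bb{N}_\le$ gives
\[\colim_n (X\otimes Y)(n) \simeq \colim_{(a,b)} X(a) \otimes Y(b) \simeq \colim X \otimes \colim Y,\]
because $\otimes$ preserves colimits in each variable. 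The unit $0_!1_\scr{C}$, which is the constant object $1_\scr{C}$, is sent to $1_\scr{C}$.

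For \eqref{enum:colim-gr-sym-mon-left-adjoints-ii}, I would factor $\gr$ as the composite
\[\Fun(\bb{N}_\le,\scr{C}) \xrightarrow{S(0)\otimes -} \rm{Mod}_{S(0)}(\Fun(\bb{N}_\le,\scr{C})) \simeq \Fun(\bb{N},\scr{C}).\]
The first functor, free module, is a symmetric monoidal left adjoint because $S(0)$ is a commutative algebra. For the second, since $\scr{C}$ is pointed, an $S(0)$-module is a filtered object whose transition maps are null with chosen nullhomotopies. Such an object is the same as a graded object, via the functor that regards a graded object as a filtered object with zero transition maps. Under this identification, $\otimes_{S(0)}$ should match the graded Day convolution, and $S(0)$ should correspond to $0_!1_\scr{C}$.

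This equivalence is the main obstacle. I would prove it by checking that both sides are generated under colimits by the objects $n_!c$ and comparing tensor products of these generators. In filtered objects,
\[S(0)\otimes n_!c = \rm{cofib}\bigl((n-1)_!c \to n_!c\bigr),\]
which is the object concentrated in degree $n$. Tensoring two such objects over $S(0)$ gives $(n+m)_!(c\otimes d)$ concentrated in degree $n+m$, which is the graded Day convolution. Alternatively, if \cite[Section A.3]{KRS1} already records that $\gr$ is symmetric monoidal, I would simply cite it.
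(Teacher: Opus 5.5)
Your proposal follows the paper's proof. Part (i) is the observation that $\colim = p_!$ is left Kan extension along the symmetric monoidal functor $p$; the paper simply cites \cite[Lemma A.6 (ii)]{KRS1} for this. Part (ii) is the observation that $\gr$ is the free-module functor $S(0)\otimes -$, with the identification $\rm{Mod}_{S(0)}(\Fun(\bb{N}_{\leq},\scr{C}))\simeq\Fun(\bb{N},\scr{C})$ taken from the setup rather than reproved, so your extra verification of it is optional.

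If you keep that verification, fix an index slip: $S(0)\otimes n_!c\simeq\rm{cofib}\bigl((n+1)_!c\to n_!c\bigr)$, not the cofibre of a map $(n-1)_!c\to n_!c$. Indeed, any map $(n-1)_!c\to n_!c$ is null when $\scr{C}$ is pointed, because $\Map((n-1)_!c,n_!c)\simeq\Map(c,(n_!c)(n-1))$ and $(n_!c)(n-1)$ is the zero object.
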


\begin{proof}For \eqref{enum:colim-gr-sym-mon-left-adjoints-i} we apply \cite[Lemma A.6 (ii)]{KRS1} to $\colim = p_!$. For \eqref{enum:colim-gr-sym-mon-left-adjoints-ii} we note that the left adjoint to the forgetful functor $\rm{Mod}_{S(0)}(\Fun(\bb{N}_{\leq},\scr{C})) \to \Fun(\bb{N}_{\leq},\scr{C})$ is given by tensoring with the idempotent $S(0)$. \end{proof}

The unique functor $\iota \colon \bb{N} \to \bb{N}_\le$ that is the identity on objects induces a restriction functor $\iota^* \colon \Fun(\bb{N}_{\leq},\scr{C}) \to \Fun(\bb{N},\scr{C})$, which has a left adjoint given by left Kan extension
\[\iota_! \colon \Fun(\bb{N},\scr{C}) \lra \Fun(\bb{N}_{\leq},\scr{C}).\]
Explicitly, we have $\iota_!(-) \simeq \bigsqcup_{n \in \Z}{n_! n^*(-)}$. Since $\iota$ is strong symmetric monoidal, by \cite[Lemma A.6]{KRS1} the functor $\iota_!$ is strong symmetric monoidal and $\iota^*$ is lax symmetric monoidal.

\begin{lemma} \label{lem:basic-props-rank-filtration} \,
    \begin{enumerate}[(i)]
        \item \label{enum:basic-props-rank-filtration-i}  There is a natural equivalence $\colim \circ \iota_! \simeq \colim$ of functors $\Fun(\bb{N},\scr{C}) \to \scr{C}$.
        \item \label{enum:basic-props-rank-filtration-ii} There is a natural equivalence $\gr \circ \iota_! \simeq \id_{\Fun(\bb{N},\scr{C})}$ of functors $\Fun(\bb{N},\scr{C}) \to \Fun(\bb{N},\scr{C})$ if $\scr{C}$ is pointed.
    \end{enumerate}
\end{lemma}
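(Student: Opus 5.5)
Both parts are formal consequences of adjunctions, and I would prove them by comparing right adjoints (or by evaluating the left adjoints directly) rather than computing anything.

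\emph{Part (i).} Write $q \colon \bb{N} \to \ast$ for the unique functor, so that the colimit functor on graded objects is $q_!$. Since $p \circ \iota = q$ as functors $\bb{N} \to \ast$, we get $\iota^* \circ p^* \simeq q^*$ on the level of restrictions. Passing to left adjoints gives the natural equivalence $p_! \circ \iota_! \simeq q_!$, which is the claim. Alternatively, one can use the explicit formula $\iota_!(X) \simeq \bigsqcup_n n_! n^* X$ and the fact that $\colim = p_!$ preserves coproducts. It then suffices to note $p_! n_! \simeq (p\circ n)_! \simeq \id$, so $\colim \iota_! X \simeq \bigsqcup_n X(n) \simeq q_! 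X$.

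\emph{Part (ii).} Assume $\scr{C}$ is pointed. By construction $\gr$ is $S(0)\otimes(-)$ followed by the identification of $S(0)$-modules with graded objects: such modules are filtered objects whose transition maps factor through the terminal (hence zero) object. Both $\gr$ and $\iota_!$ preserve colimits, and $\iota_! X \simeq \bigsqcup_n n_! n^* X$, so it suffices to show naturally that $\gr(n_! Y)$ is the graded object with $Y$ in degree $n$ and zero elsewhere. Here $n_! Y$ is the filtered object equal to $\rm{i}=0$ below $n$ and constantly $Y$ (with identity transitions) from $n$ on. Its associated graded should be $Y$ in degree $n$, via the cofibre $Y/0$ at $n$ and the cofibres $Y/Y \simeq 0$ above $n$.

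To make this rigorous without cofibre formulas, compare right adjoints. The right adjoint of $\gr \circ n_!$ is $n^*$ composed with the forgetful functor from graded objects, regarded as $S(0)$-modules, to filtered objects. Applied to a graded object $Z$, this gives $Z(n)$, since the underlying filtered object of an $S(0)$-module has the same values. This is exactly the right adjoint of $n_!^{\mathrm{gr}} \colon \scr{C} \to \Fun(\bb{N},\scr{C})$. Hence $\gr \circ n_! \simeq n_!^{\mathrm{gr}}$, and summing over $n$ yields $\gr \circ \iota_! \simeq \bigsqcup_n n^{\mathrm{gr}}_! n^* \simeq \id$, because every graded object is the coproduct of its pieces.

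The main obstacle is pinning down the identification of $S(0)$-modules with graded objects carefully enough to see that the forgetful functor is ``take values levelwise.'' I would take this from \cite[Section A.3]{KRS1} rather than reprove it. Naturality in $X$ then follows since all the equivalences come from adjunction units and counits.
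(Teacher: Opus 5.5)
Your proof is correct and is essentially the paper's argument: both parts come from passing to left adjoints from an equivalence of right adjoints. For (i) this is $\iota^*\circ p^*\simeq q^*$ (the paper writes $\iota^*\circ\rm{const}\simeq\rm{const}$), and for (ii) it is the fact that the forgetful functor $i$ from $S(0)$-modules to filtered objects does not change the values. The only difference is that in (ii) you apply this summand by summand via $\iota_!\simeq\bigsqcup_n n_!n^*$, whereas the paper uses it globally as $\iota^*\circ i\simeq\id_{\Fun(\bb{N},\scr{C})}$, which gives $\gr\circ\iota_!\simeq\id$ directly without the decomposition-and-summing step.
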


\begin{proof}
    Part \eqref{enum:basic-props-rank-filtration-i} follows similarly from $\colim$ being the left adjoint to $\rm{const}$ and the natural equivalence $\iota^* \circ \rm{const} \simeq \rm{const}$ of right adjoints. The functor $\gr$ is the left adjoint to the forgetful functor 
    \[i \colon \Fun(\bb{N},\scr{C}) \simeq \rm{Mod}_{S(0)}(\Fun(\bb{N}_{\leq},\scr{C})) \lra \Fun(\bb{N}_{\leq},\scr{C}),\] 
    exhibiting graded objects as filtered objects whose structure maps factor through the zero object. There is a natural equivalence $\iota^* \circ i \simeq \id_{\Fun(\bb{N},\scr{C})}$ of right adjoints, and passing to left adjoints gives the first part of \eqref{enum:basic-props-rank-filtration-ii}.
\end{proof}

\subsubsection{Stable algebras in the filtered setting} We work in the category $\Fun(\bb{N}_\le,\scr{C})$ of filtered objects, where $\scr{C}$ is a presentable symmetric monoidal $\infty$-category. Our goal is to make sense of a ``filtered stable algebra'', whose colimit will agree with the stable algebra of \cref{def:stable-algebra}; it differs from applying the constructions in that section only by shifting the monoidal unit in grading. As the stable algebra can serve as a replacement for group completion, we will not need a ``filtered group completion construction''. 

\medskip

We consider the filtered algebra $\bf{U}^\fil \coloneq \free_{E_\infty^\rm{u}}(1_! 1_\scr{C}) \in \Alg_{E_\infty^\rm{u}}(\Fun(\bb{N}_{\leq},\scr{C}))$, where $1_! \colon \scr{C} \to \Fun(\bb{N}_{\leq},\scr{C})$ is the functor that places an object in filtration $1$. This is \emph{not} an instance of the previous theory applied to $\Fun(\bb{N}_{\leq},\scr{C})$. It comes with the following additional structure:
\begin{enumerate}[\noindent (1)] \item 
The \emph{filtered group completion augmentation} 
\[\epsilon^\fil_\gc \colon \bf{U}^\fil \lra 1_{\Fun(\bb{N}_{\leq},\scr{C})},\]
is defined as the adjoint to $1_! 1_\scr{C} \to \fgt_{E_\infty^\rm{u}} 1_{\Fun(\bb{N}_{\leq},\scr{C})} \simeq 0_! 1_\scr{C}$ which is in turn adjoint to $\id \colon 1_\scr{C} \to 1^* 0_! 1_\scr{C} \simeq 1_\scr{C}$.
\item The \emph{filtered codiagonal}
\[\nabla^\fil \colon \bf{U}^\fil \otimes \bf{U}^\fil \lra \bf{U}^\fil\]
is given under $\bf{U}^\fil \sqcup^{E_\infty^\rm{u}} \bf{U}^\fil \simeq \bf{U}^\fil \otimes \bf{U}^\fil$ by the map that is $\id_{\bf{U}^\fil}$ on each component.
\end{enumerate}

We can then make the following definition, using the map $1 \colon \bf{U}^\fil \to \bf{R}$ which is adjoint to the map $1_\scr{C} \to 1^* \fgt_{E_\infty^u}(\bf{R})$ that picks out the unit.

\begin{definition}\label{def:filtered-stable-algebra} The \emph{filtered stable algebra} functor is
\begin{align*}(-)/(\sigma-1) \colon \Alg_{E_\infty^\rm{u}}(\Fun(\bb{N}_{\leq},\scr{C}))^{\bf{U}^\fil/} &\lra \Alg_{E_\infty^\rm{u}}(\Fun(\bb{N}_{\leq},\scr{C})) \\
(\bf{U}^\fil \xrightarrow{\sigma} \bf{R}) &\longmapsto \bf{U}^\fil \otimes_{\bf{U}^\fil \otimes \bf{U}^\fil} \bf{R}\end{align*}
where the relative tensor product is with respect to the maps $(\sigma,1) \colon \bf{U}^\fil \otimes \bf{U}^\fil \to \bf{U}^\fil$ and $\nabla^\fil \colon \bf{U}^\fil \otimes \bf{U}^\fil \to \bf{U}^\fil$.\end{definition}

Thus by definition, we have the following pushout in $\Alg_{E_\infty^\rm{u}}(\Fun(\bb{N}_{\leq},\scr{C}))$
\[\begin{tikzcd}
    \bf{U}^\fil \sqcup^{E_\infty^\rm{u}} \bf{U}^\fil \arrow{r}{(\sigma, 1)} \arrow{d}[swap]{\nabla^\fil} & \bf{R} \arrow{d} \\[-5pt] \bf{U}^\fil \arrow{r} & \bf{R}/(\sigma-1).
\end{tikzcd}\]
The proof of \cref{lem:stable-alg-first-props} and its preceding computation go through. In particular, the filtered algebra stable construction preserves colimits and may be constructed in terms of the filtered group completion augmentation: there is a natural pushout in $\Alg_{E_\infty^\rm{u}}(\Fun(\bb{N}_{\leq},\scr{C}))$ 
    \begin{equation}\label{eqn:filered-r-mod-sigma-1} \begin{tikzcd}
        \bf{U}^\fil \arrow{r}{\sigma} \arrow{d}[swap]{\epsilon^\fil_\gc} & \bf{R} \arrow{d} \\[-5pt] 1_{\Fun(\bb{N}_{\leq},\scr{C})} \arrow{r} & \bf{R}/(\sigma-1).
    \end{tikzcd}\end{equation}

As $\colim$ and $\gr$ are symmetric monoidal left adjoints by \cref{lem:colim-gr-sym-mon-left-adjoints}, they induce functors 
\begin{align*}\colim \colon \Alg_{E_\infty^\rm{u}}(\Fun(\bb{N}_{\leq},\scr{C}))^{\bf{U}^\fil/} &\lra \Alg_{E_\infty^\rm{u}}(\scr{C})^{\colim \bf{U}^\fil/} \\
\gr \colon \Alg_{E_\infty^\rm{u}}(\Fun(\bb{N}_{\leq},\scr{C}))^{\bf{U}^\fil/} &\lra \Alg_{E_\infty^\rm{u}}(\Fun(\bb{N},\scr{C}_*))^{\gr(\bf{U}^\fil)/}\end{align*}
interacting with the group completion augmentation and stable algebra construction as follows:

\begin{lemma} \label{lem:filtered-stable-algebra}\,
    \begin{enumerate}[(i)]
        \item \label{enum:filtered-stable-algebra-i} Taking colimits induces an equivalence
        \[\colim\big(\epsilon^\fil_\gc \colon \bf{U}^\fil \to 1_{\Fun(\bb{N}_{\leq},\scr{C})}\big) \simeq \big(\epsilon_\gc \colon \bf{U} \to 1_\scr{C}\big)\]
        \item \label{enum:filtered-stable-algebra-ii}There is natural equivalence in $\Alg_{E_\infty^\rm{u}}(\scr{C})$
            \[\colim(\bf{R}/(\sigma-1)) \simeq \colim(\bf{R})/(\colim(\sigma)-1).\]
        \item \label{enum:filtered-stable-algebra-iii} Taking associated graded induces an equivalence
        \[\gr(\epsilon^\fil_\gc \colon \bf{U}^\fil \to 1_{\Fun(\bb{N},\scr{C})}) \simeq (\epsilon_\rm{can} \colon \gr(\bf{U}^\fil) \to 1_{\Fun(\bb{N},\scr{C}_*)}).\]
        \item \label{enum:filtered-stable-algebra-iv} There is a natural equivalence in $\Alg_{E_\infty^\rm{u}}(\scr{C})$
            \[\gr(\bf{R}/(\sigma-1)) \simeq \gr(\bf{R})/(\gr(\sigma)).\]
    \end{enumerate}
\end{lemma}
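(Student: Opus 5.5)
The plan is to obtain all four parts from two inputs: $\colim$ and $\gr$ are symmetric monoidal left adjoints (\cref{lem:colim-gr-sym-mon-left-adjoints}), and the filtered stable algebra is a pushout \eqref{eqn:filered-r-mod-sigma-1}. A symmetric monoidal left adjoint $L$ induces a functor on $E_\infty^\rm{u}$-algebras that commutes with free algebras, preserves the monoidal unit, and preserves colimits of algebras. Colimits of algebras are preserved because the left adjoint on algebras is compatible with the forgetful functors and with free algebras, and pushouts of $E_\infty^\rm{u}$-algebras are relative tensor products, which $L$ preserves as it preserves geometric realisations and tensor products. So parts (ii) and (iv) will follow from parts (i) and (iii) by applying $\colim$, respectively $\gr$, to \eqref{eqn:filered-r-mod-sigma-1}.

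For (i), I will compute $\colim \bf{U}^\fil \simeq \free_{E_\infty^\rm{u}}(\colim 1_! 1_\scr{C}) \simeq \free_{E_\infty^\rm{u}}(1_\scr{C}) = \bf{U}$, using that $\colim \circ 1_! \simeq \id$ since $p \circ 1$ is the identity of the point. The unit is preserved, $\colim 1_{\Fun(\bb{N}_\le,\scr{C})} \simeq 1_\scr{C}$. It remains to identify $\colim(\epsilon^\fil_\gc)$. By freeness, a map $\bf{U} \to 1_\scr{C}$ is determined by its restriction to the generator $1_\scr{C}$, and here this restriction is $\colim$ applied to the adjunct map $1_!1_\scr{C} \to 0_!1_\scr{C}$. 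That map is the identity in filtrations $\geq 1$, so its colimit is the identity of $1_\scr{C}$, which is exactly the adjunct defining $\epsilon_\gc$. Applying $\colim$ to \eqref{eqn:filered-r-mod-sigma-1} and comparing with the second square of \cref{lem:stable-alg-first-props}~\eqref{enum:stable-alg-first-props-i} then gives (ii).

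For (iii), the same argument gives $\gr(\bf{U}^\fil) \simeq \free_{E_\infty^\rm{u}}(\gr 1_! 1_\scr{C})$ and $\gr(1) \simeq 1$. The one genuine computation is the generator map. I need $\gr$ of $1_!1_\scr{C} \to 0_!1_\scr{C}$ to be null. By \cref{lem:basic-props-rank-filtration}~\eqref{enum:basic-props-rank-filtration-ii}, applied via $1_! \simeq \iota_!(1_!)$ and $0_! \simeq \iota_!(0_!)$, this is a map from an object concentrated in grading $1$ to one concentrated in grading $0$ in $\Fun(\bb{N},\scr{C}_*)$, hence it factors through the zero object. (Here $\scr{C}_*$ is used because $\gr$ lands in the pointed setting.) Its adjoint is therefore the zero map, so $\gr(\epsilon^\fil_\gc) \simeq \epsilon_\rm{can}$. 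Applying $\gr$ to \eqref{eqn:filered-r-mod-sigma-1} yields the pushout along $\epsilon_\rm{can}$, which by the cofibre lemma is $\gr(\bf{R})/(\gr\sigma - 0) \simeq \gr(\bf{R})/(\gr\sigma)$, proving (iv).

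I expect the main obstacle to be the bookkeeping in (iii): checking carefully that $\gr$ of a map between objects in different filtration degrees is canonically the zero map in the pointed category, and that the resulting map of free algebras is equivalent to $\epsilon_\rm{can}$ as a map under $\gr(\bf{U}^\fil)$, not just abstractly. I would handle this through the mapping-space identification $\Map_{\Alg}(\free(X),A) \simeq \Map(X,A)$ together with the fact that $\Map(1_!1, 0_!1)$ in graded pointed objects is contractible.
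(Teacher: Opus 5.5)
Your proposal is correct and is essentially the argument the paper intends. The paper gives no separate proof: it derives the lemma from $\colim$ and $\gr$ being symmetric monoidal left adjoints (\cref{lem:colim-gr-sym-mon-left-adjoints}) applied to the pushout square \eqref{eqn:filered-r-mod-sigma-1}, where the only real input is identifying the generator map $1_!1_\scr{C}\to 0_!1_\scr{C}$ after $\colim$ (the identity) and after $\gr$ (null, since the relevant mapping space is contractible), exactly as you do.
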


Assuming $\scr{C} = \Spc_*$, $\DQ$, or any other category with appropriate homology theory $H_*$, we can extract spectral sequences, e.g.~using \cite[Theorem 10.10]{GKRW18}. We opt to reindex them so all contributions to a homology group of the abutment lie on the same row:

\begin{notation}\label{not:rank-ss-indexing} The \emph{rank spectral sequence indexing convention} for spectral sequence is 
\[E^1_{n,d} = H_{n,d} \Longrightarrow H_d \qquad \text{with differential of bidegree $(-r,-1)$},\]
and can be obtained from the convention in \cite[Theorem 10.10]{GKRW18} by replacing $E^1_{p,q} = H_{p+q,p} \Rightarrow H_{p+q}$ with differentials of bidegree $(-r,r-1)$ with  by setting $n \coloneq p$ and $d \coloneq p+q$.\end{notation}

In the case of interest we get the following, using that half-plane spectral sequence with exiting differentials converges strongly and that the filtered object admits a multiplication:

\begin{lemma}\label{lem:filtered-rank-ss} Given $(\bf{U}^\fil \xrightarrow{\sigma} \bf{R}) \in \Alg_{E_\infty^\rm{u}}(\Fun(\bb{N}_{\leq},\scr{C}))^{\bf{U}^\fil/}$, there is a strongly convergent multiplicative \emph{rank spectral sequence}
    \[E^1_{p,q}= H_{n,d}(\gr(\bf{R})/(\gr(\sigma))) \Longrightarrow H_{d}(\colim(\bf{R})/(\colim(\sigma)-1))\]
with differentials $d^r$ of bidegree $(-r,-1)$.
\end{lemma}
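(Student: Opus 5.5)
The plan is to apply the general spectral sequence of a filtered object, \cite[Theorem 10.10]{GKRW18}, to the filtered stable algebra $\bf{X} \coloneq \bf{R}/(\sigma-1) \in \Alg_{E_\infty^\rm{u}}(\Fun(\bb{N}_{\leq},\scr{C}))$. We then identify its $E^1$-page and abutment using \cref{lem:filtered-stable-algebra}, and finally reindex as in \cref{not:rank-ss-indexing}.

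\textbf{Step 1: the raw spectral sequence.} For a filtered object $\bf{X}$ in $\scr{C}$ with a homology theory $H_*$, \cite[Theorem 10.10]{GKRW18} gives a spectral sequence with $E^1_{p,q} = H_{p+q}(\gr(\bf{X})(p))$ converging to $H_{p+q}(\colim \bf{X})$, with $d^r$ of bidegree $(-r,r-1)$. After setting $n \coloneq p$ and $d \coloneq p+q$, the $E^1$-page becomes $H_{n,d}(\gr(\bf{X}))$, the target is $H_d(\colim \bf{X})$, and the differentials have bidegree $(-r,-1)$.

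\textbf{Step 2: identifying the $E^1$-page and the abutment.} By \cref{lem:filtered-stable-algebra}\eqref{enum:filtered-stable-algebra-iv}, $\gr(\bf{X}) \simeq \gr(\bf{R})/(\gr(\sigma))$. This identifies the $E^1$-page with $H_{n,d}(\gr(\bf{R})/(\gr(\sigma)))$. By \cref{lem:filtered-stable-algebra}\eqref{enum:filtered-stable-algebra-ii}, $\colim(\bf{X}) \simeq \colim(\bf{R})/(\colim(\sigma)-1)$, which identifies the abutment. For \eqref{enum:filtered-stable-algebra-ii} we use that $\colim$ is a symmetric monoidal left adjoint (\cref{lem:colim-gr-sym-mon-left-adjoints}), so it preserves the pushout \eqref{eqn:filered-r-mod-sigma-1}. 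Part \eqref{enum:filtered-stable-algebra-i} then identifies the left column of that pushout with $\epsilon_\gc \colon \bf{U} \to 1_\scr{C}$, and \cref{lem:stable-alg-first-props}\eqref{enum:stable-alg-first-props-i} recognises the result. Part \eqref{enum:filtered-stable-algebra-iv} is proved the same way, using $\gr$ and part \eqref{enum:filtered-stable-algebra-iii}.

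\textbf{Step 3: strong convergence.} The filtration is indexed by $\bb{N}_{\leq}$, so it is bounded below: $\gr(\bf{X})(n)$ vanishes for $n<0$. Hence for each fixed $d$, the differentials exiting position $n$ land in positions $n-r$, and these are zero once $r>n$. So every entry stabilises at a finite page, giving a half-plane spectral sequence with exiting differentials. Since the abutment is the colimit rather than a limit, there is no $\lim^1$ issue. Concretely, $H_d(\colim \bf{X}) = \colim_n H_d(\bf{X}(n))$ because homology commutes with sequential colimits, and the filtration on it is exhaustive and Hausdorff. This yields strong convergence.

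\textbf{Step 4: multiplicativity.} I expect this to be the main technical point. $\bf{X}$ is an $E_\infty^\rm{u}$-algebra in filtered objects with Day convolution, so in particular it carries a multiplication $\bf{X} \otimes \bf{X} \to \bf{X}$ of filtered objects. The multiplicative structure on the spectral sequence then comes from the multiplicative refinement in \cite[Section 10]{GKRW18}. That refinement requires a Künneth-type lax monoidality of $H_*$, which holds for $\Spc_*$ and $\DQ$ (over $\bb{Q}$); this is why the statement is restricted to such $\scr{C}$. If their theorem is stated only for particular categories, I would instead verify directly that the construction of the spectral sequence from the tower of cofibres is lax monoidal with respect to Day convolution. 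The remaining steps are formal bookkeeping.
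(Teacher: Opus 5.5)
Your proposal is correct and is essentially the paper's argument: apply \cite[Theorem 10.10]{GKRW18} to the filtered algebra $\bf{R}/(\sigma-1)$, identify the $E^1$-page and the abutment via \cref{lem:filtered-stable-algebra}, reindex as in \cref{not:rank-ss-indexing}, and obtain strong convergence from the exiting differentials and multiplicativity from the multiplication on the filtered object. One minor correction to Step 3: entries need not stabilise at a finite page in general, since only the outgoing differentials from $(n,d)$ vanish for $r>n$ while the incoming ones from $(n+r,d+1)$ can persist without a vanishing line; this claim is not needed, though, because an exhaustive filtration with $F_{-1}=0$ already gives strong convergence for a spectral sequence with exiting differentials.
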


We can consider $\bf{U}^\fil$ as augmented through the filtered group completion augmentation $\epsilon^\fil_\gc$ and thus make sense of the category $\smash{\Alg^\aug_{E_\infty^\rm{u}}(\Fun(\bb{N}_{\leq},\scr{C}))^{\bf{U}^\fil/}}$. An object in this category yield a commutative square
\[\begin{tikzcd} \bf{U}^\fil \rar{\sigma} \dar[swap]{\epsilon^\fil_\gc} & \bf{R} \dar{\epsilon} \\[-5pt]
1_{\Fun(\bb{N}_{\leq},\scr{C})} \rar{\id} & 1_{\Fun(\bb{N}_{\leq},\scr{C})}\end{tikzcd}\]
and by the pushout square \eqref{eqn:filered-r-mod-sigma-1} this induces an augmentation on $\bf{R}/(\sigma-1)$. Thus we get a lift of the filtered stable algebra functor to
\[(-)/(\sigma-1) \colon \Alg^\aug_{E_\infty^\rm{u}}(\Fun(\bb{N}_{\leq},\scr{C}))^{\bf{U}^\fil/} \lra \Alg^\aug_{E_\infty^\rm{u}}(\Fun(\bb{N}_{\leq},\scr{C})).\]
There are then analogous results in the augmented setting to \cref{lem:filtered-stable-algebra}. In particular, we can apply $\cot_{E_\infty^\rm{nu}}$ after passing to augmentation ideal to get a spectral sequence, to get an analogue of the rank spectral sequence for indecomposables: 

\begin{lemma}\label{lem:filtered-rank-ss-indec} Given $\bf{R} \in \Alg^\aug_{E_\infty^\rm{u}}(\Fun(\bb{N}_{\leq},\scr{C}))^{\bf{U}^\fil/}$, there is a strongly convergent \emph{indecomposable rank spectral sequence}
    \[{}^\infty E^1_{p,q}= H^{E_\infty}_{n,d}(\gr(\bf{R})/(\gr(\sigma))) \Longrightarrow H^{E_\infty}_{d}(\colim(\bf{R})/(\colim(\sigma)-1))\]
with differentials $d^r$ of bidegree $(-r,-1)$. 
\end{lemma}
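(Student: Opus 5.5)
The plan is to obtain this spectral sequence from the filtered object $\cot_{E_\infty^\rm{nu}}(\overline{\bf{R}/(\sigma-1)})$, where $\overline{(-)}$ denotes the augmentation ideal. Its colimit and associated graded should be the indecomposables of the stable algebra and of $\gr(\bf{R})/(\gr(\sigma))$ respectively. Once these two identifications are in place, the argument of \cref{lem:filtered-rank-ss} applies verbatim.

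First, the augmentation ideal functor $\Alg^\aug_{E_\infty^\rm{u}} \simeq \Alg_{E_\infty^\rm{nu}}$ and $\cot_{E_\infty^\rm{nu}}$ are both left adjoints; the latter is the left adjoint of the trivial-algebra functor. The functors $\colim$ and $\gr$ are symmetric monoidal left adjoints by \cref{lem:colim-gr-sym-mon-left-adjoints}, so they commute with free $E_\infty^\rm{nu}$-algebras. They therefore commute with the trivial-algebra functor on the right-adjoint side, and hence with $\cot_{E_\infty^\rm{nu}}$. This gives natural equivalences
\[\colim \cot_{E_\infty^\rm{nu}}(\overline{\bf{S}}) \simeq \cot_{E_\infty^\rm{nu}}(\overline{\colim \bf{S}}), \qquad \gr \cot_{E_\infty^\rm{nu}}(\overline{\bf{S}}) \simeq \cot_{E_\infty^\rm{nu}}(\overline{\gr \bf{S}}).\]
In the $\gr$ case one checks that the equivalence is compatible with the bar construction model $\cot = \Sigma^{-1}\widetilde{B}^{E_\infty}$ (shifted appropriately). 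Alternatively, one can use that $\cot$ is computed as a geometric realisation of free algebras, which both functors preserve.

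Second, set $\bf{S} = \bf{R}/(\sigma-1)$ with the augmentation induced via \eqref{eqn:filered-r-mod-sigma-1}. The augmented analogue of \cref{lem:filtered-stable-algebra} (ii) and (iv) gives $\colim \bf{S} \simeq \colim(\bf{R})/(\colim(\sigma)-1)$ and $\gr \bf{S} \simeq \gr(\bf{R})/(\gr(\sigma))$ as augmented algebras. The only point to verify is that the augmentations match. In the colimit this follows from (i), since $\colim \epsilon^\fil_\gc = \epsilon_\gc$. In the associated graded, (iii) says $\gr \epsilon^\fil_\gc$ is the canonical augmentation, so the induced augmentation on $\gr(\bf{R})/(\gr(\sigma))$ is the one coming from $\gr(\epsilon)$, as the $E_\infty$-homology in the statement requires. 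This bookkeeping of augmentations is where I expect the only real care to be needed.

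Finally, apply the spectral sequence of a filtered object, e.g.\ \cite[Theorem 10.10]{GKRW18}, to the filtered object $\cot_{E_\infty^\rm{nu}}(\overline{\bf{S}})$, with homology $H_*$, and reindex as in \cref{not:rank-ss-indexing}. The $E^1$-page is $H_{n,d}$ of the associated graded, which by the above is $H^{E_\infty}_{n,d}(\gr(\bf{R})/(\gr(\sigma)))$. The abutment is the homology of the colimit, namely $H^{E_\infty}_d(\colim(\bf{R})/(\colim(\sigma)-1))$. The filtration is nonnegative and exhaustive, and homology commutes with sequential colimits. The spectral sequence is half-plane with exiting differentials of bidegree $(-r,-1)$, so it converges strongly. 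No multiplicativity is claimed, since indecomposables carry no multiplication.
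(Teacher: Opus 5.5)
Your proposal is correct and follows the paper's own (very brief) argument. In both, you pass to the augmentation ideal of the filtered stable algebra and apply $\cot_{E_\infty^{\rm{nu}}}$. You then use that $\colim$ and $\gr$ commute with $\cot_{E_\infty^{\rm{nu}}}$, together with the augmented analogue of \cref{lem:filtered-stable-algebra}, and take the strongly convergent spectral sequence of the resulting exhaustive, bounded-below filtered object.

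One small point: the commutation of the right adjoints with $\triv_{E_\infty^{\rm{nu}}}$ does not follow from the commutation with free algebras, as your ``therefore'' suggests. It follows from the right adjoints being lax symmetric monoidal. Your alternative argument, computing $\cot$ via the monadic bar resolution by free algebras, establishes the commutation directly anyway.
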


The unit natural transformation $\id \to \triv_{E_\infty^\rm{nu}} \cot_{E_\infty^\rm{nu}}(-)$ induces the canonical map from homology to $E_\infty$-homology:

\begin{lemma}\label{lem:filtered-rank-ss-indec-map} Given $\bf{R} \in \Alg^\aug_{E_\infty^\rm{u}}(\Fun(\bb{N}_{\leq},\scr{C}))^{\bf{U}^\fil/}$, there is a map of multiplicative spectral sequence from \cref{lem:filtered-rank-ss} to \cref{lem:filtered-rank-ss-indec}, if the target is given the trivial multiplication.
\end{lemma}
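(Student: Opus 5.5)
The plan is to obtain the map from naturality of the rank spectral sequence construction with respect to maps of filtered augmented algebras. First I would write the unit transformation $\eta \colon \id \to \triv_{E_\infty^\rm{nu}} \cot_{E_\infty^\rm{nu}}$ on augmentation ideals as a map of filtered augmented $E_\infty^\rm{u}$-algebras. Concretely, let $\bf{I}$ denote the augmentation ideal of $\bf{R}/(\sigma-1)$, formed in $\Fun(\bb{N}_{\leq},\scr{C})$. I would then consider
\[\bf{R}/(\sigma-1) \lra (\triv_{E_\infty^\rm{nu}} \cot_{E_\infty^\rm{nu}} \bf{I})^+,\]
where $(-)^+$ denotes unitalisation. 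This is a map in $\Alg^\aug_{E_\infty^\rm{u}}(\Fun(\bb{N}_{\leq},\scr{C}))$ whose target carries the trivial multiplication. The spectral sequence of \cref{lem:filtered-rank-ss} is the one associated to the filtered object underlying the source. That of \cref{lem:filtered-rank-ss-indec} is the one associated to the filtered object $\cot_{E_\infty^\rm{nu}}\bf{I}$, which is the underlying object of the target (minus the unit). So the displayed map of filtered objects induces a map of spectral sequences, by functoriality of the spectral sequence of a filtered object.

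Next I would identify the $E^1$-pages and the abutments. For $E^1$, I would use that $\gr$ is a symmetric monoidal left adjoint (\cref{lem:colim-gr-sym-mon-left-adjoints}). Hence it commutes with the free, trivial and indecomposables functors, and with passing to augmentation ideals, because these are built from colimits and tensor products. Combined with \cref{lem:filtered-stable-algebra}\eqref{enum:filtered-stable-algebra-iv}, the map on $E^1$ becomes the canonical map $H_{n,d}(\gr(\bf{R})/(\gr(\sigma))) \to H^{E_\infty}_{n,d}(\gr(\bf{R})/(\gr(\sigma)))$, and similarly on abutments using $\colim$ and \cref{lem:filtered-stable-algebra}\eqref{enum:filtered-stable-algebra-ii}.

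For multiplicativity, I would argue that the map of filtered objects is a map of $E_\infty$-algebras in filtered objects. The multiplicative structure on the spectral sequence is natural for such maps, since it arises from the lax monoidality of $\gr$ and of the associated filtration on homology. On the target the multiplication is trivial, so the target spectral sequence carries the trivial multiplication, and the map respects products.

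The main obstacle is the compatibility of $\cot_{E_\infty^\rm{nu}}$ with $\gr$ and $\colim$. I would handle it by noting that $\cot_{E_\infty^\rm{nu}}$ is left adjoint to $\triv_{E_\infty^\rm{nu}}$, and that $\gr$, $\colim$ commute with $\triv$ because they are symmetric monoidal. By uniqueness of adjoints they then commute with $\cot$. One also has to check that the identification of the $E^1$-page used in \cref{lem:filtered-rank-ss-indec} is precisely this one, which holds by construction.
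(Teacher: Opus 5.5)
Your proposal is correct and is essentially the paper's argument. The paper justifies the lemma only by the remark that the unit $\id \to \triv_{E_\infty^{\mathrm{nu}}}\cot_{E_\infty^{\mathrm{nu}}}$, applied to the augmentation ideal of the filtered stable algebra $\bf{R}/(\sigma-1)$, gives a map of filtered $E_\infty$-algebras into a trivial algebra and hence a multiplicative map of spectral sequences; you have spelled this out. One small correction: the compatibility of $\cot_{E_\infty^{\mathrm{nu}}}$ with $\gr$ and $\colim$ is already built into \cref{lem:filtered-rank-ss-indec}, and if you derive it from uniqueness of adjoints, you need the right adjoints of $\gr$ and $\colim$ (the inclusion of graded objects and $\mathrm{const}$) to commute with $\triv_{E_\infty^{\mathrm{nu}}}$, not $\gr$ and $\colim$ themselves.
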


\subsubsection{Stable algebras in the graded setting}\label{sec:stable-algebras-graded}
In the graded case we consider $\bf{U}^\gr \coloneq \rm{free}_{E_\infty^\rm{u}}(1_!1_\scr{C}) \in  \Alg_{E_\infty^\rm{u}}(\Fun(\bb{N},\scr{C}))$, but unlike in the previous cases, there need not exist an analogue of the group-completion augmentation. Rather, we will use the ``rank filtration'' to lift $\bb{N}$-graded objects to $\bb{N}$-filtered objects, and apply the techniques for the filtered case from the previous subsection. This is given by left Kan extension along the unique functor $\iota \colon \bb{N} \to \bb{N}_{\leq}$ that is the identity on objects, which is a symmetric monoidal left adjoint by \cref{sec:filtered-and-graded} and so induces
\[\rm{rkfil} \coloneq \iota_!^\rm{alg} \colon \Alg_{E_\infty^\rm{u}}(\Fun(\bb{N},\scr{C})) \lra \Alg_{E_\infty^\rm{u}}(\Fun(\bb{N}_{\leq},\scr{C})).\]

\begin{lemma}\label{lem:ugr-vs-ualg} $\rm{rkfil}(\bf{U}^\gr) \simeq \bf{U}^\fil$ and $\gr(\bf{U}^\fil) \simeq \bf{U}^\gr$.
\end{lemma}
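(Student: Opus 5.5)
Both identifications follow from the fact that $\rm{rkfil}$ and $\gr$ are symmetric monoidal left adjoints, so they commute with free $E_\infty^\rm{u}$-algebras; the remaining work is to see what they do to the generator $1_! 1_\scr{C}$.

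\emph{First claim.} The algebra-level functor $\rm{rkfil} = \iota_!^\rm{alg}$ is induced by the strong symmetric monoidal left adjoint $\iota_!$. Its right adjoint $\iota^*$ on algebras commutes with the forgetful functors to underlying objects, since $\iota^*$ is computed objectwise. Passing to left adjoints gives a natural equivalence
\[\rm{rkfil} \circ \free_{E_\infty^\rm{u}} \simeq \free_{E_\infty^\rm{u}} \circ \iota_!.\]
It then suffices to check $\iota_!(1_! 1_\scr{C}) \simeq 1_! 1_\scr{C}$, where on the left $1_!$ denotes the graded functor and on the right the filtered one. This holds because $\iota \circ 1 = 1$ as functors $\ast \to \bb{N}_\le$, and left Kan extensions compose. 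Alternatively, one can use the formula $\iota_!(-) \simeq \bigsqcup_n n_! n^*(-)$. Hence $\rm{rkfil}(\bf{U}^\gr) \simeq \free_{E_\infty^\rm{u}}(1_!1_\scr{C}) = \bf{U}^\fil$.

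\emph{Second claim.} Here we need $\scr{C}$ pointed, as in \cref{lem:colim-gr-sym-mon-left-adjoints}; in general one passes to $\scr{C}_*$ via $(-)_+$ as in \cref{sec:stable-algebra-and-basepoints}. Given the first claim, \cref{lem:basic-props-rank-filtration} \eqref{enum:basic-props-rank-filtration-ii} gives
\[\gr(\bf{U}^\fil) \simeq \gr(\iota_! \bf{U}^\gr) \simeq \bf{U}^\gr,\]
since $\gr \circ \iota_! \simeq \id$ holds on underlying objects. To make this an equivalence of algebras rather than only of objects, I would note that $\gr \circ \iota_!$ is a composite of symmetric monoidal left adjoints. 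By the same left-adjoint argument, its algebra-level version commutes with $\free_{E_\infty^\rm{u}}$, so it suffices to check the generator: $\gr(\iota_! 1_! 1_\scr{C}) \simeq 1_! 1_\scr{C}$. A direct route also works: $\gr$ is a symmetric monoidal left adjoint, so $\gr(\free(1_!1_\scr{C})) \simeq \free(\gr(1_! 1_\scr{C}))$. Moreover $\gr(1_!1_\scr{C}) = S(0)\otimes 1_!1_\scr{C}$ is $1_\scr{C}$ concentrated in grading $1$. This is because $1_! 1_\scr{C}$, the filtered object that is zero in filtration $0$ and $1_\scr{C}$ constantly from filtration $1$ onwards, has associated graded concentrated in degree $1$.

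\emph{Main obstacle.} The only delicate point is justifying that a symmetric monoidal left adjoint $L$ with strong monoidal structure satisfies $L^\rm{alg}\circ\free \simeq \free\circ L$. I would deduce this from the commutation of right adjoints $\fgt \circ R^\rm{alg} \simeq R \circ \fgt$, where $R^\rm{alg}$ is the lax monoidal right adjoint lifted to algebras. This is exactly the argument already used in \cref{sec:stable-algebra-and-basepoints} for $(-)_+$. For $\gr$ the identification of the right adjoint with the inclusion of $S(0)$-modules requires pointedness, which is why the lemma is stated for $\scr{C}_*$.
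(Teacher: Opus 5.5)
Your proposal is correct and follows essentially the same route as the paper. For the first equivalence, you commute $\free_{E_\infty^\rm{u}}$ past $\iota_!$ by passing to left adjoints from $\fgt_{E_\infty^\rm{u}}\circ\iota^{*,\alg}\simeq\iota^*\circ\fgt_{E_\infty^\rm{u}}$ and then use $\iota_!\circ 1_!\simeq 1_!$; for the second, you apply $\gr$ and use $\gr\circ\iota_!\simeq\id$, and your extra check that $\gr^\alg\circ\iota_!^\alg$ commutes with free algebras in the pointed setting only makes explicit the algebra-level step the paper leaves implicit.
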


\begin{proof}The left equivalence is given by
\[\rkfil(\bf{U}^\gr) \simeq \free_{E^\rm{u}_\infty} (\iota_! 1_! 1_\scr{C}) \simeq \free_{E^\rm{u}_\infty}(1_! 1_\scr{C}) \simeq \bf{U}^\fil.\]
using firstly that there are equivalences 
\[\fgt_{E^\rm{u}_\infty} \circ \iota^{*,\alg} \simeq \iota^* \circ \fgt_{E^\rm{u}_\infty}\qquad \text{and} \qquad \free_{E^\rm{u}_\infty} \circ \iota_! \simeq \iota_!^\alg \circ \free_{E^\rm{u}_\infty},\]
the left by construction and the right by passing to left adjoints. It uses secondly that there are similarly equivalences  $1^* \iota^* \simeq 1^*$ and $\iota_! \circ 1_! \simeq 1_!$. Moreover, applying $\gr$ to this and using \cref{lem:basic-props-rank-filtration} \eqref{enum:basic-props-rank-filtration-ii} we get the right equivalence.
\end{proof}

Thus $\iota^\rm{alg}_!$ induces a functor on slice categories under $\bf{U}^\fil$ or $\bf{U}^\gr$, and applying the filtered stable algebra construction we get:

\begin{definition}The \emph{rank-filtered stable algebra} functor is
\[\rkfil(-)/(\rkfil(\sigma)-1) \colon \Alg_{E_\infty^\rm{u}}(\Fun(\bb{N},\scr{C}))^{\bf{U}^\gr/} \lra \Alg_{E_\infty^\rm{u}}(\Fun(\bb{N}_{\leq},\scr{C})).\]
\end{definition}

Applying \cref{lem:basic-props-rank-filtration} and \cref{lem:filtered-stable-algebra}, we obtain:
            
\begin{lemma} \label{lem:graded-stable-algebra}\,
\begin{enumerate}[(i)]
\item \label{enum:graded-stable-algebra-i} There is a natural equivalence in $\Alg_{E_\infty^\rm{u}}(\scr{C})$
\[\colim(\rkfil(\bf{R})/(\rkfil(\sigma)-1)) \simeq (\colim \bf{R})/(\colim(\sigma)-1).\]
\item \label{enum:graded-stable-algebra-ii} There is a natural equivalence in $\Alg_{E_\infty^\rm{u}}(\Fun(\bb{N},\scr{C}))$
\[\gr(\rkfil(\bf{R})/(\rkfil(\sigma)-1)) \simeq \bf{R}/(\sigma).\]
\end{enumerate}
\end{lemma}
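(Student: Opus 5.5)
Both parts reduce, via the definition of the rank-filtered stable algebra as the filtered stable algebra applied to $\rkfil(\bf{R})$ with stabilisation $\rkfil(\sigma)$, to \cref{lem:filtered-stable-algebra}, with \cref{lem:basic-props-rank-filtration} converting back to graded data. The one preliminary point is that $\rkfil(\sigma)$ is a map out of $\bf{U}^\fil$. This holds by \cref{lem:ugr-vs-ualg}: $\rkfil(\bf{U}^\gr)\simeq \bf{U}^\fil$, so $\rkfil(\sigma)\colon \bf{U}^\fil \to \rkfil(\bf{R})$ is an object of $\Alg_{E_\infty^\rm{u}}(\Fun(\bb{N}_{\leq},\scr{C}))^{\bf{U}^\fil/}$ and the construction is defined.

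For \eqref{enum:graded-stable-algebra-i}, apply \cref{lem:filtered-stable-algebra} \eqref{enum:filtered-stable-algebra-ii} to $\rkfil(\bf{R})$. This gives
\[\colim(\rkfil(\bf{R})/(\rkfil(\sigma)-1)) \simeq \colim(\rkfil(\bf{R}))/(\colim(\rkfil(\sigma))-1).\]
Next use \cref{lem:basic-props-rank-filtration} \eqref{enum:basic-props-rank-filtration-i}, i.e.\ $\colim\circ\iota_!\simeq\colim$. Both sides are symmetric monoidal left adjoints, so this equivalence is compatible with the induced functors on $E_\infty^\rm{u}$-algebras. It therefore identifies $\colim\rkfil(\bf{R})\simeq\colim\bf{R}$ together with the maps from $\colim\bf{U}^\fil\simeq\colim\bf{U}^\gr\simeq\bf{U}$, using \cref{lem:filtered-stable-algebra} \eqref{enum:filtered-stable-algebra-i}. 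Naturality in $\bf{R}$ follows because every equivalence used is natural.

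For \eqref{enum:graded-stable-algebra-ii}, proceed the same way. \cref{lem:filtered-stable-algebra} \eqref{enum:filtered-stable-algebra-iv} gives $\gr(\rkfil(\bf{R})/(\rkfil(\sigma)-1))\simeq \gr(\rkfil(\bf{R}))/(\gr(\rkfil(\sigma)))$. Then \cref{lem:basic-props-rank-filtration} \eqref{enum:basic-props-rank-filtration-ii}, $\gr\circ\iota_!\simeq\id$, lifted to algebras in the same manner, identifies $\gr\rkfil(\bf{R})\simeq\bf{R}$ and $\gr\rkfil(\sigma)\simeq\sigma$. The identification of the source uses $\gr(\bf{U}^\fil)\simeq\bf{U}^\gr$ from \cref{lem:ugr-vs-ualg}. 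The cofibre $\bf{R}/(\sigma)$ here is taken with respect to the canonical augmentation. \cref{lem:filtered-stable-algebra} \eqref{enum:filtered-stable-algebra-iii} guarantees that $\gr$ carries $\epsilon^\fil_\gc$ to $\epsilon_\rm{can}$, so the pushout defining $\bf{R}/(\sigma)$ is exactly the one produced by $\gr$.

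The main obstacle is coherence rather than computation. The equivalences $\colim\circ\iota_!\simeq\colim$ and $\gr\circ\iota_!\simeq\id$ are first obtained on underlying objects, as equivalences of left adjoints. They must be promoted to symmetric monoidal equivalences, so that they induce equivalences of functors on $E_\infty^\rm{u}$-algebras under $\bf{U}^\gr$. I would obtain this by passing to right adjoints as in the proof of \cref{lem:basic-props-rank-filtration}: the equivalences $\iota^*\circ\rm{const}\simeq\rm{const}$ and $\iota^*\circ i\simeq\id$ are equivalences of lax symmetric monoidal functors. Their mates are then oplax, hence strong, symmetric monoidal equivalences, which pass to algebras and slices.
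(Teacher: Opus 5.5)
Your proposal is correct and matches the paper's argument: the paper obtains both parts directly by combining \cref{lem:filtered-stable-algebra}, applied to $\rkfil(\bf{R})$ (whose source is $\bf{U}^\fil$ by \cref{lem:ugr-vs-ualg}), with \cref{lem:basic-props-rank-filtration}. Your extra discussion of promoting $\colim\circ\iota_!\simeq\colim$ and $\gr\circ\iota_!\simeq\id$ to symmetric monoidal equivalences makes explicit a coherence point the paper leaves implicit.
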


\begin{notation}We will opt to abbreviate $(\colim \bf{R})/(\colim(\sigma)-1)$ to $\bf{R}/(\sigma-1)$, as we believe there is no risk for confusion: the only way to make sense of the latter is to apply the functor $\colim \colon \Fun(\bb{N},\scr{C}) \to \scr{C}$ first.
\end{notation}

To take indecomposables of rank-filtered stable algebras we require augmentations. To construct these, let $\overline{\epsilon} \colon \bf{U}^\gr \to \bf{V} \coloneq \iota^{*,\rm{alg}} 1_{\Fun(\bb{N}_{\leq},\scr{C})}$ be adjoint to $\epsilon^\fil_\gc$ and consider the category
\[\left(\Alg_{E_\infty^\rm{u}}(\Fun(\bb{N},\scr{C}))_{/\bf{V}}\right)^{\overline{\epsilon}/}\]
of factorisations $\bf{U}^\gr \xrightarrow{\sigma} \bf{R} \xrightarrow{\epsilon} \bf{V}$ of $\overline{\epsilon}$. Given an object $\bf{R}$ of this, we get a commutative square
\[\begin{tikzcd}
    \bf{U}^\fil \simeq \rkfil(\bf{U}^\gr) \rar{\rkfil(\sigma)} \dar[swap]{\epsilon^\fil_\gc} & \rkfil(\bf{R}) \rar{\rkfil(\epsilon)} & \rkfil(\bf{V}) \dar \\[-5pt]
    1_{\Fun(\bb{N}_{\leq},\scr{C})} \arrow{rr}{\id} & & 1_{\Fun(\bb{N}_{\leq},\scr{C})}
\end{tikzcd}\]
with right map the counit of $\iota^\alg_! \dashv \iota^{*,\alg}$, and through its description as a pushout in \eqref{eqn:filered-r-mod-sigma-1}, this provides an augmentation on $\rkfil(\bf{R})/(\rkfil(\sigma) -1)$. We obtain from this spectral sequences, again in the indexing convention of \cref{not:rank-ss-indexing}, using \cref{lem:filtered-rank-ss,lem:filtered-rank-ss-indec,lem:filtered-rank-ss-indec-map}.

\begin{lemma}\label{lem:rank-spectral-sequences} Let $(\bf{U}^\gr \xrightarrow{\sigma} \bf{R} \xrightarrow{\epsilon} \bf{V}) \in \left(\Alg_{E_\infty^\rm{u}}(\Fun(\bb{N},\scr{C}))_{/\bf{V}}\right)^{\overline{\epsilon}/}$.
\begin{enumerate}[(i)]
    \item \label{enum:rank-spectral-sequences-i}There is a strongly convergent \emph{rank spectral sequence}
    \[E^1_{p,q}= H_{n,d}(\bf{R}/(\sigma)) \Longrightarrow H_{d}(\bf{R}/(\sigma-1))\]
with differentials $d^r$ of bidegree $(-r,-1)$. 
    \item \label{enum:rank-spectral-sequences-ii}There is a strongly convergent \emph{rank spectral sequence for indecomposables}
    \[{}^\infty E^1_{n,d}= H_{n,d}^{E_\infty} (\bf{R}/(\sigma)) \Longrightarrow H_d^{E_\infty}(\bf{R}/(\sigma-1))\]
    with $d^r$ of bidegree $(-r,-1)$.
    \item \label{enum:rank-spectral-sequences-iii}There is a map of multiplicative spectral sequences $E^r_{*,*} \to {}^\infty E^r_{*,*}$, where the multiplicative structure on the target is trivial, which agrees with the canonical map from homology to $E_\infty$-homology on the $E^1$-page and abutment.
\end{enumerate}
\end{lemma}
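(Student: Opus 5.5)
The plan is to reduce everything to the filtered statements \cref{lem:filtered-rank-ss,lem:filtered-rank-ss-indec,lem:filtered-rank-ss-indec-map}, applied to the filtered object $\rkfil(\bf{R})$ with stabilisation $\rkfil(\sigma)$ and the augmentation constructed just before the statement. The remaining work is to identify the $E^1$-pages and abutments using \cref{lem:graded-stable-algebra}.

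For \eqref{enum:rank-spectral-sequences-i}, I apply \cref{lem:filtered-rank-ss} to $\bf{U}^\fil \simeq \rkfil(\bf{U}^\gr) \xrightarrow{\rkfil(\sigma)} \rkfil(\bf{R})$, using \cref{lem:ugr-vs-ualg} to identify the source. The $E^1$-page is $H_{n,d}(\gr(\rkfil\bf{R})/(\gr\rkfil\sigma))$. By \cref{lem:filtered-stable-algebra} \eqref{enum:filtered-stable-algebra-iv} this is $H_{n,d}(\gr(\rkfil(\bf{R})/(\rkfil(\sigma)-1)))$, and by \cref{lem:graded-stable-algebra} \eqref{enum:graded-stable-algebra-ii} it equals $H_{n,d}(\bf{R}/(\sigma))$. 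For the abutment, \cref{lem:graded-stable-algebra} \eqref{enum:graded-stable-algebra-i} identifies the colimit with $(\colim\bf{R})/(\colim(\sigma)-1)$, which is what the notation $\bf{R}/(\sigma-1)$ abbreviates. Strong convergence follows as in \cref{lem:filtered-rank-ss}: the filtration is nonnegative and exhaustive, so in the indexing of \cref{not:rank-ss-indexing} the spectral sequence is half-plane with exiting differentials. Multiplicativity comes from $\rkfil(\bf{R})/(\rkfil(\sigma)-1)$ being a filtered $E_\infty^\rm{u}$-algebra.

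For \eqref{enum:rank-spectral-sequences-ii}, I use the augmentation $\rkfil(\bf{R})/(\rkfil(\sigma)-1) \to 1_{\Fun(\bb{N}_\le,\scr{C})}$ produced via the pushout \eqref{eqn:filered-r-mod-sigma-1}. Then I apply \cref{lem:filtered-rank-ss-indec}. Because $\gr$ and $\colim$ are symmetric monoidal left adjoints (\cref{lem:colim-gr-sym-mon-left-adjoints}), they commute with passing to augmentation ideals and with $\cot_{E_\infty^\rm{nu}}$, so the same identifications as in \eqref{enum:rank-spectral-sequences-i} give ${}^\infty E^1_{n,d} = H^{E_\infty}_{n,d}(\bf{R}/(\sigma))$ and the stated abutment.

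This is the step I expect to need the most care. The augmentation on $\gr$ must be the one for which $\bf{R}/(\sigma)$ is augmented compatibly, i.e. the canonical augmentation of \cref{lem:filtered-stable-algebra} \eqref{enum:filtered-stable-algebra-iii}. I would verify this by noting that $\gr$ of the counit $\rkfil(\bf{V}) \to 1$ is the counit for $\gr \circ \iota_! \simeq \id$ of \cref{lem:basic-props-rank-filtration}. Chasing the square with $\gr(\epsilon^\fil_\gc) \simeq \epsilon_\rm{can}$ then gives the result.

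Finally, \eqref{enum:rank-spectral-sequences-iii} is \cref{lem:filtered-rank-ss-indec-map} applied to the same filtered object. The map is induced by the unit $\id \to \triv_{E_\infty^\rm{nu}}\cot_{E_\infty^\rm{nu}}$ on filtered objects. By naturality under the symmetric monoidal left adjoints $\gr$ and $\colim$, it restricts on $E^1$ and on the abutment to the canonical map from homology to $E_\infty$-homology, under the identifications already made.
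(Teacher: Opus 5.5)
Your proposal is correct and takes the same route as the paper, which obtains the lemma in one line by applying \cref{lem:filtered-rank-ss,lem:filtered-rank-ss-indec,lem:filtered-rank-ss-indec-map} to $\rkfil(\bf{R})$, with stabilisation $\rkfil(\sigma)$ and the augmentation built from the counit of $\iota_!^\alg \dashv \iota^{*,\alg}$. Your identifications of the $E^1$-pages and abutments via \cref{lem:ugr-vs-ualg,lem:graded-stable-algebra}, and your compatibility check for the augmentation, are exactly the steps the paper leaves implicit.
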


\subsection{Stable algebras in the graded setting over $\DQ$} \label{sec:graded-over-dq} In this paper we need the case $\scr{C}= \DQ$ and its relationship to $\Spc$. This has special features that allow for easier description of the rank-filtered stable algebra: the underlying object of $\rkfil(\bf{R})/(\rkfil(\sigma)-1)$ is
\begin{equation}\label{eqn:underlying-graded-rank-filtered} \bf{R}(0) \overset{\sigma}\lra \bf{R}(1) \overset{\sigma}\lra \bf{R}(2) \overset{\sigma}\lra\cdots.\end{equation}

\begin{remark}\label{rem:pos-char-counterexample} This is not true without working in characteristic $0$ by the existence of Dyer--Lashof operations: one cannot give \eqref{eqn:underlying-graded-rank-filtered} the structure of an $E_2^\rm{u}$-algebra so the canonical map 
\[\fgt_{E_\infty^\rm{u}}(\bf{R}) \lra  \gr(\bf{R}(0) \overset{\sigma}\lra \bf{R}(1) \overset{\sigma}\lra \cdots) \simeq (\fgt_{E_\infty^\rm{u}} \bf{R})/\sigma\]
lifts to an $E_2^\rm{u}$-algebra map. (We warn the reader that, unless one works rationally, $(\fgt_{E_\infty^\rm{u}} \bf{R})/\sigma$ need not agree with the underlying object of $\bf{R}/(\sigma)$.)

Let us take $\bf{R}$ to be the $E_\infty^\rm{u}$-algebra in $\scr{D}_{\bb{F}_2}$ given by $\free_{E_\infty^\rm{u}}(1_! 1)$. Suppose there existed an $\bf{X} \in \Alg_{E_\infty^\rm{u}}(\Fun(\bb{N}_{\leq},\scr{D}_{\bb{F}_2}))$ and an $E_\infty^\rm{u}$-algebra map $f \colon \bf{R} \to \gr(\bf{X})$ so that
\[\fgt_{E_\infty^\rm{u}}(\bf{X}) \simeq (\bf{R}(0) \overset{\sigma}\lra \bf{R}(1) \overset{\sigma}\lra \cdots)\]
and under the associated graded of this equivalence, $\fgt_{E_\infty^\rm{u}}(f)$ were the quotient map $\fgt_{E_\infty^\rm{u}}(\bf{R}) \to (\fgt_{E_\infty^\rm{u}} \bf{R})/\sigma$. Using knowledge of the homology of free $E_\infty^\rm{u}$-algebra \cite[Section 16.3]{GKRW18}, for $\sigma$ the canonical generator of $H_{1,0}(\bf{R})$ we have both 
\[f_*(\sigma)=0 \in H_{1,0}(\bf{X}) \qquad \text{and} \qquad H_{2,1}(\gr(\bf{X}))\cong\bb{F}_2 \{f_* Q^1(\sigma) \} \cong \bb{F}_2,\]
yielding a contradiction. In fact, \eqref{eqn:underlying-graded-rank-filtered} can not even have an $E_2^\rm{nu}$-algebra structure.
\end{remark}

\subsubsection{The underlying object of the rank filtration}
Several objects that appear in the graded case are equivalent: there is a map $\free_{E_1^\rm{u}}(1_! 1_\DQ) \to \smash{\fgt_{E_\infty^\rm{u}}^{E_1^\rm{u}} \bf{U}^\gr}$ of $E_1^\rm{u}$-algebras adjoint to the map $1_\DQ \to 1^* \fgt_{E_\infty^\rm{u}} \bf{U}^\gr$ selecting the generator of the target. A direct computation yields:

\begin{lemma} \label{lem:equivalences-of-algebras-in-DQ} The following are are equivalences of $E_1^\rm{u}$-, respectively $E_\infty^\rm{u}$-algebras,
\[\free_{E_1^\rm{u}}(1_! 1_\DQ) \lra \fgt_{E_\infty^\rm{u}}^{E_1^\rm{u}} \bf{U}^\gr \qquad \text{and} \qquad \bf{U}^\rm{gr} \lra \bf{V}.\]
\end{lemma}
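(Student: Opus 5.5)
We prove the two equivalences separately, in each case by computing underlying graded objects in $\DQ$ and using that in characteristic zero free algebras have explicit descriptions.

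For the first map, recall that the free $E_\infty^\rm{u}$-algebra in $\Fun(\bb{N},\DQ)$ on an object $X$ has underlying object $\bigoplus_{k \geq 0} (X^{\otimes k})_{h\mathfrak{S}_k}$. Rationally, homotopy orbits by a finite group agree with strict coinvariants. For $X = 1_! 1_\DQ$ the object $X^{\otimes k} \simeq k_! 1_\DQ$ carries the trivial $\mathfrak{S}_k$-action, since the symmetry on $1_\DQ^{\otimes k}$ is homotopic to the identity and sits in degree $0$. Hence
\[\fgt_{E_\infty^\rm{u}}\bf{U}^\gr \simeq \bigoplus_{k\ge 0} k_! (1_\DQ)_{h\mathfrak{S}_k} \simeq \bigoplus_{k \geq 0} k_! 1_\DQ.\]
Similarly, the free $E_1^\rm{u}$-algebra is $\bigoplus_k X^{\otimes k} \simeq \bigoplus_k k_! 1_\DQ$. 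The comparison map is determined by the generator, and in grading $k$ it is the projection $X^{\otimes k} \to (X^{\otimes k})_{h\mathfrak{S}_k}$. I would check this by writing the map as the $E_1$-algebra map induced by the generator and observing that the $k$-fold product of the generator in $\bf{U}^\gr$ is the image of $X^{\otimes k}$. Since the action is trivial and we work rationally, this projection is an equivalence on $1_\DQ$ in grading $k$. Because equivalences of $E_1^\rm{u}$-algebras are detected on underlying objects, the first map is an equivalence.

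For the second map, we compute $\bf{V} = \iota^{*,\rm{alg}} 1_{\Fun(\bb{N}_\le,\DQ)}$. The filtered unit is $0_! 1_\DQ$, which is the constant filtered object $1_\DQ \to 1_\DQ \to \cdots$ with identity transition maps, because in $\bb{N}_\le$ every $m \geq 0$ receives a map from $0$. Restricting along $\iota$ gives $1_\DQ$ in every grading, so $\bf{V} \simeq \bigoplus_k k_! 1_\DQ$ as a graded object. Its $E_\infty$-multiplication is induced by the identifications $1_\DQ \otimes 1_\DQ \simeq 1_\DQ$ in gradings $a,b \mapsto a+b$.

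The map $\overline{\epsilon}\colon \bf{U}^\gr \to \bf{V}$ is adjoint to the identity on the generator in grading $1$. Composing with the first equivalence, it becomes the $E_1$-map $\free_{E_1^\rm{u}}(1_!1_\DQ) \to \bf{V}$ sending the generator to $1 \in \bf{V}(1)$. In grading $k$ this map is the $k$-fold multiplication of $\bf{V}$ applied to $1^{\otimes k}$, which is the unit isomorphism $1_\DQ^{\otimes k} \simeq 1_\DQ$ and hence an equivalence. Therefore $\overline{\epsilon}$ is an equivalence on underlying objects, and thus an equivalence of $E_\infty^\rm{u}$-algebras.

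The main obstacle is the bookkeeping in the first part. We must confirm that the $\mathfrak{S}_k$-action on $(1_!1_\DQ)^{\otimes k}$ is trivial up to coherent homotopy, which holds because $\DQ$ is the derived category of rational vector spaces and $1_\DQ$ is concentrated in degree $0$. We must also identify the comparison map in grading $k$ with the orbit projection, rather than merely noting that both sides are abstractly $1_\DQ$. This is where characteristic zero enters; it is exactly what fails in \cref{rem:pos-char-counterexample}.
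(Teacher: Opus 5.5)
Your proposal is correct and carries out precisely the direct computation the paper invokes without giving details. You identify all three underlying graded objects with $\bigoplus_{k} k_! 1_\DQ$, using that the $\mathfrak{S}_k$-action on $(1_!1_\DQ)^{\otimes k}$ is trivial and that rational homotopy orbits of a finite group are coinvariants, then check the comparison maps gradingwise and finish for $\overline{\epsilon}$ by two-out-of-three and conservativity of the forgetful functors.
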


We write $\Q[t] \coloneq \smash{\free_{E_1^\rm{u}}(1_! 1_\DQ)} \in \smash{\Alg_{E_1^\rm{u}}(\Fun(\bb{N},\DQ))}$, inspired by the notation in \cite[Remark 3.1.5]{lurie2015rotation}, and observe the above lemma endows it with a preferred lift to an $E_\infty^\rm{u}$-algebra. There are then equivalence of categories
   \begin{align*}\left(\Alg_{E_\infty^\rm{u}}(\Fun(\bb{N},\DQ))_{/\bf{V}}\right)^{\ol{\epsilon}/} &\simeq \left(\Alg_{E_\infty^\rm{u}}(\Fun(\bb{N},\DQ))_{/\bf{U}^\gr}\right)^{\id_{\bf{U}}/} \\
    &\simeq \left(\Alg_{E_\infty^\rm{u}}(\Fun(\bb{N},\DQ))_{/\bb{Q}[t]}\right)^{\id_{\bb{Q}[t]}/}.\end{align*}
Working in the latter, we may change perspective to that of $\bb{Q}[t]$-modules, using the equivalence of symmetric monoidal categories
 \begin{equation}\label{eqn:slice-as-qt-mod}
    \left(\Alg_{E_\infty^\rm{u}}^\rm{aug}(\Fun(\bb{N},\DQ))_{/\bb{Q}[t]}\right)^{\id_{\bb{Q}[t]}/} \simeq \Alg_{E_\infty^\rm{u}}^\rm{aug}(\rm{Mod}_{\Q[t]}(\Fun(\bb{N},\DQ))).\end{equation}
The following is the rational analogue of \cite[Proposition 3.1.6]{lurie2015rotation} and is proven by repeating its proof in the category $\DQ$ of rational spectra, rather than spectra (note Lurie uses the opposition direction for filtrations).

\begin{proposition} \label{prop:filtered-objects-as-modules}
There is an equivalence of symmetric monoidal categories
\[\theta \colon \Fun(\bb{N}_{\leq},\DQ) \simeq \rm{Mod}_{1_{\Fun(\bb{N}_{\leq},\DQ)}}(\Fun(\bb{N}_{\leq},\DQ)) \xrightarrow[\iota^*]{\simeq} \rm{Mod}_{\bb{Q}[t]}(\Fun(\bb{N},\DQ)).\]
given by sending the filtered object $X(0) \to X(1) \to \cdots $ to the graded object $X(0), X(1), \ldots$ viewed as a $\bb{Q}[t]$-module such that the generator acts via the maps $X(n) \to X(n+1)$. 
\end{proposition}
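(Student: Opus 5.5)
We follow Lurie's argument for \cite[Proposition 3.1.6]{lurie2015rotation}, replacing spectra by $\DQ$ throughout. First, the left equivalence is formal: $1_{\Fun(\bb{N}_{\leq},\DQ)}$ is the monoidal unit, so modules over it are just objects. For the right map, we use that $\iota^*$ is lax symmetric monoidal, as the right adjoint of the strong symmetric monoidal $\iota_!$. It therefore sends the commutative algebra $1_{\Fun(\bb{N}_{\leq},\DQ)}$, namely the constant filtered object $\bb{Q} \to \bb{Q} \to \cdots$, to a commutative algebra $\bf{V} = \iota^{*,\alg}1$ in graded objects. By \cref{lem:equivalences-of-algebras-in-DQ} we have $\bf{V} \simeq \bf{U}^\gr$, and the underlying $E_1^\rm{u}$-algebra is $\bb{Q}[t]$. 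Consequently $\iota^*$ lifts to a lax symmetric monoidal functor $\Fun(\bb{N}_{\leq},\DQ) \to \rm{Mod}_{\bb{Q}[t]}(\Fun(\bb{N},\DQ))$. Unwinding definitions, it sends $X$ to the graded object $(X(n))_n$, with $t$ acting through the structure maps $X(n) \to X(n+1)$. This is the explicit description in the statement.

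Next we show that $\theta$ is an equivalence, using the Barr--Beck--Lurie theorem (equivalently, the recognition of $\Fun(\bb{N}_{\leq},\DQ)$ as modules over the monad $\iota^*\iota_!$). The functor $\iota^*$ is conservative, because equivalences of filtered objects are detected levelwise. It preserves all limits and colimits, since these are computed pointwise. Hence it is monadic, and $\Fun(\bb{N}_{\leq},\DQ)$ is equivalent to algebras over $T = \iota^*\iota_!$. Using the formula $\iota_!(Y)(m) \simeq \bigoplus_{n\le m} Y(n)$, we get $T(Y) \simeq \bb{Q}[t] \otimes Y$, where the tensor is Day convolution and $\bb{Q}[t](k) \simeq \bb{Q}$ for all $k \geq 0$. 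One must then check that this identification is compatible with the monad structure: the multiplication of $T$ should match the algebra structure of $\bb{Q}[t]$, so that $T$-algebras are exactly $\bb{Q}[t]$-modules. The clean way to see this is that $\iota^*\iota_!$ is the monad of a projection formula. Since $\iota_!$ is strong monoidal and $\iota^*$ preserves colimits, the canonical map $\iota^*(Z) \otimes Y \to \iota^*(Z \otimes \iota_! Y)$ is an equivalence; checking this on the generators $n_!$ reduces it to a direct computation. Taking $Z = 1$ gives the identification of monads. Finally, because the lift of $\iota^*$ is lax monoidal and the projection formula holds, the comparison map $\theta(X)\otimes_{\bb{Q}[t]}\theta(Y) \to \theta(X\otimes Y)$ is an equivalence. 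This follows by checking it on the generators $\iota_!(n_!\bb{Q})$, which are preserved by colimits. So $\theta$ is strong symmetric monoidal.

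The main obstacle is the identification of monads, and in particular the symmetric monoidal compatibility. This is also where characteristic zero should enter, as \cref{rem:pos-char-counterexample} indicates: we need $\bb{Q}[t]$ as an $E_\infty$-algebra to agree with the free $E_1$-algebra on $1_!1$, so that module structures are merely a map $t$ with no higher coherences. We would handle this by invoking \cref{lem:equivalences-of-algebras-in-DQ} to identify $\bf{V} \simeq \bf{U}^\gr$ with $\bb{Q}[t]$ as commutative algebras. After that, the remaining checks are levelwise computations in $\DQ$.
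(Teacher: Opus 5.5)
Your proposal is correct and is essentially the argument the paper invokes, namely Lurie's proof of \cite[Proposition 3.1.6]{lurie2015rotation} carried out in $\DQ$: lift $\iota^*$ to a lax symmetric monoidal functor into $\iota^*1$-modules, obtain the underlying equivalence from monadicity plus the projection formula (equivalently, by checking on free modules), and deduce strong monoidality by checking on generators. One small correction: characteristic zero plays no role in this proposition. Lurie's argument works over the sphere, where the underlying $E_1$-algebra of $\iota^*1$ is always the free $E_1$-algebra on $1_!1$; rationality enters only through \cref{lem:equivalences-of-algebras-in-DQ}, which identifies $\iota^*1$ with the free $E_\infty$-algebra $\bf{U}^\gr$, and that identification matters for \cref{thm:stable-algebras-DQ} and \cref{rem:pos-char-counterexample}, not for $\theta$ itself.
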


\begin{example}There is a natural equivalence $\theta^{-1}(\free_{\bb{Q}[t]}(-)) \simeq \iota_!(-)$ of functors $\Fun(\bb{N},\DQ) \to \Fun(\bb{N}_{\leq},\DQ)$, as they have the same universal property:
\begin{align*} \Map_{\Fun(\bb{N}_{\leq},\DQ)}(\iota_! X,Y) &\simeq \Map_{\Fun(\bb{N},\DQ)}(X,\iota^*Y) \\
&\simeq \Map_{\rm{Mod}_{\bb{Q}[t]}(\Fun(\bb{N},\DQ))}(\free_{\bb{Q}[t]} X,\theta(Y)).\end{align*}
\end{example}

By symmetric monoidality of $\theta$, it induces an equivalence
\[\theta^\alg \colon \Alg_{E_\infty^\rm{u}}^\rm{aug}(\Fun(\bb{N}_{\leq},\DQ)) \xrightarrow{\simeq} \Alg_{E_\infty^\rm{u}}^\rm{aug}\big(\rm{Mod}_{\Q[t]}(\Fun(\bb{N},\DQ))\big)\]
Now we can prove the main result of this subsection:

\begin{theorem} \label{thm:stable-algebras-DQ} The filtered (unital) stable algebra construction 
    \[\rkfil(-)/(\rkfil(\sigma)-1) \colon \left(\Alg_{E_\infty^\rm{u}}^\rm{aug}(\Fun(\bb{N},\DQ))_{/\bb{Q}[t]}\right)^{\id_{\bb{Q}[t]}/} \lra \Alg_{E_\infty^\rm{u}}^\rm{aug}(\Fun(\bb{N}_{\leq},\DQ))\]
    is given by applying the equivalence \eqref{eqn:slice-as-qt-mod} followed by an inverse of the equivalence $\theta^\alg$. In particular, the rank-filtered stable algebra functor is an equivalence of categories.
\end{theorem}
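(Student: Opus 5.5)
The plan is to compare two functors out of $\left(\Alg_{E_\infty^\rm{u}}^\rm{aug}(\Fun(\bb{N},\DQ))_{/\bb{Q}[t]}\right)^{\id_{\bb{Q}[t]}/}$: the rank-filtered stable algebra $\rkfil(-)/(\rkfil(\sigma)-1)$, and the composite $(\theta^{\alg})^{-1}\circ\Phi$, where $\Phi$ denotes the equivalence \eqref{eqn:slice-as-qt-mod}. Both are left adjoints, or at least colimit preserving. For the first this follows from \cref{lem:stable-alg-first-props} \eqref{enum:stable-alg-first-props-ii} in its filtered form together with the fact that $\rkfil=\iota_!^\alg$ is a left adjoint. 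So it suffices to exhibit a natural equivalence between them. I would obtain this by identifying both with base change along an augmentation and then using the symmetric monoidality of $\theta$.

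\textbf{Step 1.} By the filtered pushout \eqref{eqn:filered-r-mod-sigma-1}, the first functor is
\[\bf{R}\longmapsto 1_{\Fun(\bb{N}_\leq,\DQ)}\otimes_{\bf{U}^\fil}\rkfil(\bf{R}).\]
Here $\bf{U}^\fil\simeq\rkfil(\bf{U}^\gr)\simeq\rkfil(\bb{Q}[t])$ by \cref{lem:ugr-vs-ualg} and \cref{lem:equivalences-of-algebras-in-DQ}, and the tensor product is formed along $\epsilon^\fil_\gc$. I would therefore first identify the augmentation $\epsilon^\fil_\gc\colon\rkfil(\bb{Q}[t])\to 1$ under $\theta$. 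The functor $\rkfil=\iota_!$ corresponds under $\theta$ to $\free_{\bb{Q}[t]}$ by the Example preceding the theorem, and this identification is symmetric monoidal since both functors are strong monoidal left adjoints. Hence $\theta^\alg(\rkfil(\bf{R}))\simeq\bb{Q}[t]\otimes\bf{R}$, with $\bb{Q}[t]$ acting on the left factor. Moreover $\theta(1_{\Fun(\bb{N}_\leq,\DQ)})=\theta(\rm{const}\,\bb{Q})=\bb{Q}[t]$ as a module over itself. Under these identifications, $\theta^\alg(\epsilon^\fil_\gc)$ is the multiplication map $\bb{Q}[t]\otimes\bb{Q}[t]\to\bb{Q}[t]$. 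This is checked on adjoints: $\epsilon^\fil_\gc$ is adjoint to the identity of $1_\DQ$ in filtration $1$, which becomes $t$.

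\textbf{Step 2.} Applying the monoidal equivalence $\theta$, the relative tensor product becomes
\[\theta\bigl(1\otimes_{\bf{U}^\fil}\rkfil(\bf{R})\bigr)\simeq \bb{Q}[t]\otimes_{\bb{Q}[t]\otimes\bb{Q}[t]}(\bb{Q}[t]\otimes\bf{R}).\]
The algebra $\bb{Q}[t]\otimes\bb{Q}[t]$ acts on $\bb{Q}[t]\otimes\bf{R}$ via $\id\otimes\sigma$, and on the left factor $\bb{Q}[t]$ via $\nabla$. This relative tensor product collapses to $\bf{R}$ with its $\bb{Q}[t]$-module structure induced by $\sigma$, which is exactly $\Phi(\bf{R})$. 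The collapse is a cancellation of the form $A\otimes_{A\otimes B}(A\otimes M)\simeq M$, which I would verify by noting that it holds for free $B$-modules $M$ and that both sides commute with colimits in $M$.

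\textbf{Step 3.} Next I would check that the equivalence is one of augmented $E_\infty^\rm{u}$-algebras. Naturality and multiplicativity follow because every step is a relative tensor product of commutative algebras, which is the coproduct in the relevant slice. The augmentations match because on both sides they are induced from $\epsilon\colon\bf{R}\to\bb{Q}[t]$. The final claim then follows, since $\Phi$ and $\theta^\alg$ are equivalences.

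\textbf{Main obstacle.} I expect the hardest step to be the coherent identification in Step 1 of $\theta^\alg\circ\rkfil$ with $\bb{Q}[t]\otimes(-)$, together with $\epsilon^\fil_\gc$, at the level of $E_\infty$-algebras rather than merely on underlying objects. I would try to handle this purely through adjunctions. Both $\theta^\alg\circ\iota_!^\alg$ and $\free_{\bb{Q}[t]}^\alg$ are left adjoint to the same forgetful functor, namely restriction of scalars along $\bb{Q}[t]$, which agrees with $\iota^*$ on algebras by the definition of $\theta$. Uniqueness of adjoints then yields the identification, including the maps out of $\bf{U}^\fil$.
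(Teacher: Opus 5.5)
Your argument is correct, but it is organised differently from the paper's. The paper does not compute the pushout after transporting it along $\theta^\alg$. Instead it uses the adjunction $\iota_!^\alg\dashv\iota^{*,\alg}$ to write down a natural comparison map between $(\theta^\alg)^{-1}$ (precomposed with \eqref{eqn:slice-as-qt-mod}) and the rank-filtered stable algebra. It notes that both functors preserve small colimits, and then checks the map only on the free objects $\free_{E_\infty^\rm{u}}(\free_{\bb{Q}[t]}(X))$. There both sides are visibly $\free_{E_\infty^\rm{u}}(\iota_!X)$ and the map is the inclusion of generators.

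You instead identify $\theta^\alg\circ\iota_!^\alg$ with the free $\bb{Q}[t]$-algebra functor by uniqueness of left adjoints. You identify $\theta^\alg(\epsilon^\fil_\gc)$ with $\nabla$, which is tautological once one recalls that $\bb{Q}[t]\simeq\iota^{*,\alg}1$ is defined via $\overline{\epsilon}$, the adjoint of $\epsilon^\fil_\gc$. Then you compute the pushout directly. Your route gives an explicit, generator-free computation in which the augmentation and the underlying object $\bf{R}(0)\to\bf{R}(1)\to\cdots$ fall out immediately. The price is pinning down both structure maps under $\theta$. The paper's route avoids that, at the price of a density argument over free algebras on free modules.

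Three small remarks.
\begin{itemize}
\item The sentence in Step 1 that the identification $\theta\circ\iota_!\simeq\free_{\bb{Q}[t]}$ is symmetric monoidal ``since both functors are strong monoidal left adjoints'' is not a valid inference on its own. Your final paragraph fixes it correctly, by comparing right adjoints at the level of algebras, using $\fgt\circ\theta^\alg\simeq\iota^{*,\alg}$.
\item The cancellation $A\otimes_{A\otimes B}(A\otimes M)\simeq M$ holds here only because $A\otimes B$ acts on $A$ through $\nabla$ with $B\to A$ the identity. In general one gets $A\otimes_B M$.
\item Steps 2 and 3 are cleanest done together, on algebras, by pasting pushouts. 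Since $\bb{Q}[t]\otimes\bf{R}\simeq\bb{Q}[t]\sqcup^{E_\infty^\rm{u}}\bf{R}$ is the pushout of $\sigma$ along the second coproduct inclusion, pushing out further along $\nabla$ amounts to pushing out $\sigma$ along $\id_{\bb{Q}[t]}$. The result is $\bf{R}$ with $\bb{Q}[t]$-algebra structure $\sigma$. This gives multiplicativity, naturality and the augmentation without separate checks.
\end{itemize}
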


\begin{proof} Under \eqref{eqn:slice-as-qt-mod}, the augmented rank-filtered stable algebra is the functor
\begin{align*}\rkfil(-)/(\rkfil(\sigma)-1) \colon \Alg_{E_\infty^\rm{u}}^\rm{aug}(\rm{Mod}_{\bb{Q}[t]}(\Fun(\bb{N},\DQ))) &\lra \Alg_{E_\infty^\rm{u}}^\rm{aug}(\Fun(\bb{N}_{\leq},\DQ)) \\
\bf{R} &\longmapsto \rkfil(\bf{R}) \otimes_{\bb{Q}[t]^\fil} 1_{\Fun(\bb{N}_{\leq},\scr{C})} \end{align*}
where we write $\bb{Q}[t]^\fil \coloneq \rkfil(\bb{Q}[t]) \simeq \free_{E_\infty^\rm{u}}(1_! 1_{\DQ})$, $\bb{Q}[t]^\fil \to \rkfil(\bf{R})$ is obtained by applying $\rkfil$ to the unit $\bb{Q}[t] \to \bf{R}$, and $\bb{Q}[t]^\fil \to 1_{\Fun(\bb{N}_{\leq},\scr{C})}$ is the filtered group completion augmentation $\epsilon_\gc^\fil$.

We must prove its composition with $\theta^\alg$ is equivalent to the identity. As $\theta^\alg$ is given by applying $i^{*,\alg}$ and the equivalence $\smash{i^{*,\alg} 1_{\Fun(\bb{N}_{\leq},\DQ)}} \simeq \bb{Q}[t]$, and $\rkfil(-)=\smash{\iota_!^\alg}$ (we will use the latter for the remainder of this proof), the counit of $\smash{\iota_!^\alg \dashv \iota^{*,\alg}}$ yields natural transformations
\[\id \lra \theta^\alg \circ \iota_!^\alg(-)/(\iota_!^\alg(\sigma)-1) \qquad \text{and} \qquad (\theta^\alg)^{-1} \lra \iota_!^\alg(-)/(\iota_!^\alg(\sigma)-1).\]
Since both sides of the right side preserve small colimits it suffices to verify it is an equivalence on $\bf{R}$ of the form $\free_{E_\infty^\rm{u}}(\free_{\bb{Q}[t]}(X))$ for $X \in \Fun(\bb{N},\DQ)_{/\bb{Q}[t]}$. Using $\theta^{-1}(\free_{\bb{Q}[t]}(X)) \simeq \iota_! X$, on the one hand we have
\[(\theta^\alg)^{-1}(\free_{E_\infty^\rm{u}}(\free_{\rm{Mod}^{E_\infty}_{\bb{Q}[t]}}(X))) \simeq \free_{E_\infty^\rm{u}}(\iota_! X).\]
On the other hand we have 
\begin{align*}\iota_!^\alg(\free_{E_\infty^\rm{u}}(\free_{\bb{Q}[t]}(X)))/(\iota_!^\alg(\sigma)-1) &\simeq \free_{E_\infty^\rm{u}}(\free_{\bb{Q}^\fil[t]}(\iota_! X)) \otimes_{\bb{Q}[t]^\fil} 1_{\Fun(\bb{N}_{\leq},\scr{C})} \\
&\simeq \free_{E_\infty^\rm{u}}(\iota_!X).\end{align*}
The map between these is an equivalence because upon applying $\fgt_{E_\infty^\rm{u}}$ and restricting to $i_! X$, it is given by the inclusion $i_! X \to \fgt_{E_\infty^\rm{u}}(\free_{E_\infty^\rm{u}}(i_!X))$.\end{proof}

\begin{corollary}The underlying object of the stable algebra $\rkfil(\bf{R})/(\rkfil(\sigma)-1)$ is given by 
    \[(\bf{R}(0) \xrightarrow{\sigma} \bf{R}(1) \xrightarrow{\sigma} \cdots ) \in \Fun(\bb{N}_{\leq},\DQ).\]
\end{corollary}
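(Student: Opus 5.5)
The corollary should follow directly from \cref{thm:stable-algebras-DQ} together with the explicit description of $\theta$ in \cref{prop:filtered-objects-as-modules}. By \cref{thm:stable-algebras-DQ}, the rank-filtered stable algebra $\rkfil(\bf{R})/(\rkfil(\sigma)-1)$ is equivalent, as an augmented $E_\infty^\rm{u}$-algebra in $\Fun(\bb{N}_{\leq},\DQ)$, to $(\theta^\alg)^{-1}$ applied to $\bf{R}$. Here $\bf{R}$ is viewed, via \eqref{eqn:slice-as-qt-mod}, as an augmented $E_\infty^\rm{u}$-algebra in $\rm{Mod}_{\bb{Q}[t]}(\Fun(\bb{N},\DQ))$, with module structure induced by the unit $\sigma\colon \bb{Q}[t] \to \bf{R}$. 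So the first step is to pass to underlying objects on both sides. Since $\theta^\alg$ is induced by the symmetric monoidal equivalence $\theta$, the forgetful functors commute with it. Hence the underlying filtered object is $\theta^{-1}$ of the underlying $\bb{Q}[t]$-module of $\bf{R}$.

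The second step is to identify that $\bb{Q}[t]$-module. Its underlying graded object is $\bf{R}(0),\bf{R}(1),\ldots$, and the action of the generator $t$ (the class in grading $1$ coming from $1_!1_\DQ$) is multiplication by the image of $t$ under the algebra map $\bb{Q}[t]\to\bf{R}$. By construction this image is $\sigma$. Concretely, the module action $\bb{Q}[t]\otimes\bf{R}\to\bf{R}\otimes\bf{R}\to\bf{R}$ restricted to $1_!1_\DQ\otimes\bf{R}$ is the map $\sigma\cdot-\colon \bf{R}(n)\to\bf{R}(n+1)$.

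Finally, the explicit description of $\theta$ in \cref{prop:filtered-objects-as-modules} says that its inverse sends a $\bb{Q}[t]$-module $M(0),M(1),\ldots$ to the filtered object whose structure maps are the action of $t$. This yields $\bf{R}(0)\xrightarrow{\sigma}\bf{R}(1)\xrightarrow{\sigma}\cdots$.

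The only point needing care is the second step. One must check that the equivalence \eqref{eqn:slice-as-qt-mod}, which comes from \cite[3.4.1.7]{LurieHA}, gives $\bf{R}$ the $\bb{Q}[t]$-module structure obtained by restricting along the unit map. One must also check that the generator of $\bb{Q}[t]$ goes to $\sigma$ under the identifications of \cref{lem:equivalences-of-algebras-in-DQ}. Both are routine unwindings of the adjunctions involved.
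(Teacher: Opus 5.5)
Your proposal is correct and follows the paper's own route. The paper gives no separate proof: it treats the corollary as an immediate consequence of \cref{thm:stable-algebras-DQ}, obtained by passing to underlying objects and using the explicit description of $\theta$ in \cref{prop:filtered-objects-as-modules}, with the $\bb{Q}[t]$-module structure on $\bf{R}$ from \eqref{eqn:slice-as-qt-mod} given by restriction along the unit $\sigma$, exactly as you argue.
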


\begin{remark}For $\bf{R} \in \Alg_{E_\infty^\rm{u}}(\Fun(\bb{N},\scr{C}))$, the multiplication of $\bf{R}$ gives maps $\bf{R}(n) \otimes \bf{R}(m) \to \bf{R}(n+m)$ that are up to homotopy compatible with $\sigma$. An interpretation of this corollary is that for $\scr{C} = \DQ$, this lifts to an $E_\infty^\rm{u}$-structure.
\end{remark}

Taking colimits of underlying objects, we get an equivalence and maps $\bf{R}(0) \to \bf{R}(1) \to \cdots \to  \colim_n\,\bf{R}(n) \simeq \bf{R}/(\sigma-1)$ and thus there are ``stabilisation'' maps in homology 
\[H_{0,d}(\bf{R}) \lra H_{1,d}(\bf{R}) \lra \cdots \lra H_d(\bf{R}/(\sigma-1))\]
and thus the filtration in the rank spectral sequence of \cref{lem:rank-spectral-sequences} \eqref{enum:rank-spectral-sequences-i} agrees with filtering by rank the underlying object. 

\subsubsection{Examples from spaces} We give a general method to produce examples:

\begin{lemma} \label{lem:rational-algebras-from-spaces}
    If $\bf{R} \in \Alg_{E_\infty^\rm{u}}(\Fun(\bb{N},\Spc))$ so that $\bf{R}(n)$ is connected all $n \in \bb{N}$, then its rational chains $\bf{R}_\bb{Q} \in \Alg_{E_\infty^\rm{u}}(\Fun(\bb{N},\DQ))$ has a canonical lift to 
    \[\bf{R}_\bb{Q} \in \left(\Alg_{E_\infty^\rm{u}}(\Fun(\bb{N},\DQ))_{/\bb{Q}[t]}\right)^{\id_{\bb{Q}[t]}/}.\] 
\end{lemma}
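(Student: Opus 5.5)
We need to produce the unit map $\bb{Q}[t] \to \bf{R}_\bb{Q}$ and an augmentation $\bf{R}_\bb{Q} \to \bb{Q}[t]$ whose composite is the identity. By \cref{lem:equivalences-of-algebras-in-DQ}, $\bb{Q}[t]$ with its preferred $E_\infty^\rm{u}$-structure is equivalent to $\bf{U}^\gr$, which in turn is equivalent to $\bf{V}$. So it suffices to produce a factorisation $\bf{U}^\gr \xrightarrow{\sigma} \bf{R}_\bb{Q} \xrightarrow{\epsilon} \bf{U}^\gr$ of the identity.

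The key observation is that both maps can be built in spaces and then transported by rational chains. Rational chains $C_*(-;\bb{Q})\colon \Spc \to \DQ$ is strong symmetric monoidal and a left adjoint. Hence its levelwise extension to $\Fun(\bb{N},-)$ is symmetric monoidal for Day convolution, induces a functor on $E_\infty^\rm{u}$-algebras, and commutes with free algebras. In particular, it sends the space-level $\bf{U}^\gr_\Spc \coloneq \free_{E_\infty^\rm{u}}(1_! \ast)$ to $\bf{U}^\gr$.

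The first step is the augmentation, carried out in spaces. Write $\bb{N}_\rm{disc}$ for the graded space that is a point in every degree. Its only $E_\infty^\rm{u}$-structure comes from addition, since $\ast$ is terminal in $\Spc$ and Day convolution of points is computed as points. This object is terminal in $\Alg_{E_\infty^\rm{u}}(\Fun(\bb{N},\Spc))$, because $\Fun(\bb{N},\Spc)$ has terminal object the constant point and the algebra structure is unique. So there is a unique map $\bf{R} \to \bb{N}_\rm{disc}$. Moreover $\bf{U}^\gr_\Spc(n) \simeq (E\mathfrak{S}_n)/\mathfrak{S}_n \simeq B\mathfrak{S}_n$, and the composite $\bf{U}^\gr_\Spc \to \bb{N}_\rm{disc}$ is $B\mathfrak{S}_n \to \ast$ levelwise.

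The second step is the unit. Since each $\bf{R}(1)$ is connected and hence nonempty, choosing a point gives $1_!\ast \to \bf{R}$ and, by adjunction, $\sigma\colon \bf{U}^\gr_\Spc \to \bf{R}$. The choice of point is unique up to homotopy in the sense that the space of choices, namely $\bf{R}(1)$, is connected. This is why the lift is canonical: the space of such lifts is connected, and I would state canonicity in that sense.

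The third step is to apply rational chains. This gives maps $\bf{U}^\gr \to \bf{R}_\bb{Q} \to C_*(\bb{N}_\rm{disc};\bb{Q})$. The target is $\bb{Q}$ in each degree, and the composite from $\bf{U}^\gr$ is levelwise $C_*(B\mathfrak{S}_n;\bb{Q}) \to \bb{Q}$. This is an equivalence because $\mathfrak{S}_n$ is finite, so $B\mathfrak{S}_n$ is rationally acyclic. Hence $C_*(\bb{N}_\rm{disc};\bb{Q}) \simeq \bf{U}^\gr \simeq \bb{Q}[t]$ as $E_\infty^\rm{u}$-algebras, compatibly with the maps. Composing the augmentation with the inverse of this equivalence yields a factorisation of $\id_{\bb{Q}[t]}$ through $\bf{R}_\bb{Q}$, which is exactly an object of the required slice category.

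I expect the main obstacle to be making precise that rational chains commute with Day convolution and free $E_\infty^\rm{u}$-algebras in the graded setting, together with the identification $\bf{U}^\gr_\Spc(n) \simeq B\mathfrak{S}_n$. Both facts are standard. The connectedness hypothesis is used only to make the choice of $\sigma$ canonical; connectedness in higher ranks is not needed beyond nonemptiness, but it matches how the lemma will later be applied to $\BGLb(F)$.
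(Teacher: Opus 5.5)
Your proposal is correct and is essentially the paper's argument: the unit $\sigma$ is adjoint to a point of the connected space $\bf{R}(1)$, the augmentation is the unique map to the terminal algebra $\ul{*}$, and rational chains transport both, with $C_*(\ul{*};\bb{Q})$ identified with $\bb{Q}[t]$. The differences are minor. You build $\sigma$ directly in the graded setting, so only connectedness of $\bf{R}(1)$ is used, rather than lifting the ungraded map of \cref{prop:stable-algebra-spc} along $t_!$; you also make explicit the rational acyclicity of $B\mathfrak{S}_n$ behind $\bf{U}^{\gr} \simeq \bb{Q}[t]$. One correction: $\ul{*}$ is terminal among algebras because the forgetful functor creates limits, not because Day convolution of points is a point, since $(\ul{*}\otimes\ul{*})(n)$ is $n+1$ points.
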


\begin{proof}
    Since $\bf{R}$ is path-connected in each degree, \cref{prop:stable-algebra-spc} \eqref{enum:stable-algebra-spc-i} gives a unique map up to homotopy $\sigma \colon \bf{U} \to \bf{R}$. To lift this to the graded setting, we recall that the unique functor $t \colon \bb{N} \to \ast$ induces a equivalence of categories 
    \[\{\bf{R} \in \Alg_{E_\infty^\rm{u}}(\Fun(\bb{N},\Spc)) \mid \, \bf{R}(n) \, \text{ is connected } \, \forall n \in \N \} \overset{t_!}\lra \{\bf{R} \in \Alg_{E_\infty^\rm{u}}(\Spc) \mid \, \pi_0\,\bf{R} \cong \bb{N}\},\]
    allowing us to lift this to a map $\sigma \colon \bf{U}^\gr \to \bf{R}$ in $\Alg_{E_\infty^\rm{u}}(\Fun(\bb{N},\rm{Spc}))$.
    
    In $\Spc$, the point $*$ is a terminal object, and hence the constant functor $\ul{*} \in \Fun(\bb{N},\Spc)$ is also a terminal object, so it admits a unique $E_\infty^\rm{u}$-structure and there is a unique $E_\infty^\rm{u}$-algebra map $\bf{R} \to \ul{*}$. Applying the strong symmetric monoidal functor given by rational chains we get 
    \[\bf{U}^\gr \xrightarrow{\sigma} \bf{R}_\bb{Q} \xrightarrow{\epsilon} \Q[t] \simeq \iota^* 1_{\Fun(\bb{N}_{\leq},\DQ)}\]
    and under the equivalences $\bf{U}^\gr \simeq \bb{Q}[t] \simeq \iota^* 1_{\Fun(\bb{N}_{\leq},\DQ)}$ this map is the identity.
\end{proof}

These examples have a special feature, particular to the category of spaces:

\begin{lemma}\label{lem:rational-algebras-from-spaces-free} Let $\bf{R}$ be as in \cref{lem:rational-algebras-from-spaces}. Then the following is free as an $E^\rm{u}_\infty$-algebra
\[\bf{R}_\bb{Q}/(\sigma-1) \in \Alg^\aug_{E_\infty^\rm{u}}(\DQ).\]
\end{lemma}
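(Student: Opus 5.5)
The plan is to identify $\bf{R}_\bb{Q}/(\sigma-1)$ with the rational chains on an infinite loop space, and then use the classical fact that the rational chains on a connected infinite loop space form a free $E_\infty^\rm{u}$-algebra on its rational homotopy.

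\emph{Step 1 (reduction to spaces).} Rational chains $C_*(-;\bb{Q})\colon \Spc \to \DQ$ is a strong symmetric monoidal left adjoint. It therefore induces a colimit-preserving functor on $E_\infty^\rm{u}$-algebras that commutes with free algebras, and it sends $\bf{U}$ to $\bf{U}_\DQ$. By \cref{lem:stable-alg-first-props}, the stable algebra is the pushout of $1 \leftarrow \bf{U} \to \bf{R}$ along $\epsilon_\gc$. Hence
\[\bf{R}_\bb{Q}/(\sigma-1) \simeq C_*\big(t_!\bf{R}/(\sigma-1);\bb{Q}\big).\]
Here I use that colimits over $\bb{N}$ commute with chains, so that the graded construction of \cref{lem:rational-algebras-from-spaces} matches the ungraded one (via $\colim$ and \cref{lem:graded-stable-algebra}). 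The augmentations match because $\ast$ is terminal. By \cref{prop:stable-algebra-spc}~\eqref{enum:stable-algebra-spc-iv},
\[t_!\bf{R}/(\sigma-1) \simeq \Omega^\infty E \qquad \text{with} \qquad E \coloneq \bf{R}^\gc/\bb{S}.\]
Since $\pi_0\,\bb{S} \to \pi_0\,\bf{R}^\gc$ is the isomorphism $\bb{Z} \cong \bb{Z}$, the spectrum $E$ is connected ($1$-connective). So it remains to prove that $C_*(\Omega^\infty E;\bb{Q})$ is free whenever $E$ is a connected spectrum.

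\emph{Step 2 (building the comparison map).} Set $V \coloneq \pi_*(E)\otimes\bb{Q}$. Regard $V$ as an object of $\DQ$ with zero differential; it is concentrated in positive degrees. Every object of $\DQ$ is a sum of shifts of $\bb{Q}$, so a map $V \to C_*(\Omega^\infty E;\bb{Q})$ in $\DQ$ amounts to a choice of homology classes. I choose the images of the rational Hurewicz map
\[\pi_*(\Omega^\infty E)\otimes \bb{Q} \lra H_*(\Omega^\infty E;\bb{Q}),\]
which also lie in the augmentation ideal. By adjunction this gives an augmented $E_\infty^\rm{u}$-algebra map
\[\free_{E_\infty^\rm{u}}(V) \lra C_*(\Omega^\infty E;\bb{Q}).\]

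\emph{Step 3 (it is an equivalence).} In characteristic zero, the homology of $\free_{E_\infty^\rm{u}}(V)$ is the free graded-commutative algebra $\Lambda[V]$. The induced map on homology is the algebra map $\Lambda[V] \to H_*(\Omega^\infty E;\bb{Q})$ extending the Hurewicz map, using the Pontryagin product coming from the $E_\infty$-structure. By the Milnor--Moore/Cartan--Serre theorem for connected H-spaces, $H_*(\Omega^\infty E;\bb{Q})$ is the free graded-commutative Hopf algebra on its primitives, and these primitives are exactly the rational Hurewicz image $\pi_*(E)\otimes\bb{Q}$. So the map is an isomorphism on homology, hence an equivalence, and $\bf{R}_\bb{Q}/(\sigma-1)$ is free.

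The main obstacle I expect is Step 1. One must check carefully that the pushouts defining the stable algebra, and the passage from graded to ungraded via $\colim$, are preserved by rational chains compatibly with $\sigma$ and the augmentations. This should follow from symmetric monoidality and colimit preservation, but requires care. Step 3 is classical; the only point to verify is that the Pontryagin product used there agrees with the product induced by the $E_\infty$-structure.
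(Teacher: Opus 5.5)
Your proposal is correct and follows essentially the same route as the paper. Both identify the stable algebra with $\Omega^\infty(\bf{R}^\gc/\bb{S})$ via \cref{prop:stable-algebra-spc}, use Milnor--Moore to see its rational homology is free graded-commutative, and then map in a free $E_\infty^\rm{u}$-algebra on chosen generators to get a homology isomorphism (the argument the paper imports from \cite[Proposition 4.10]{KRS1}); your Step~1 just makes explicit the compatibility of rational chains with the graded stable algebra construction, which the paper leaves implicit.
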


\begin{proof}By \cref{prop:stable-algebra-spc} we can identify $\bf{R}/(\sigma-1)$ with the infinite loop space $\Omega_0^\infty(\bf{R}^\gc/\bb{S})$ as $E_\infty^\rm{u}$-algebras and by the Milnor--Moore theorem \cite[Appendix]{MilnorMoore}, the rational homology of the latter is free as an algebra. Now we argue as \cite[Proposition 4.10]{KRS1}.\end{proof}

\section{The Goncharov Lie coalgebra of an $E_\infty$-algebra} \label{sec:goncharov-lie-coalgebra-general}

In the special case $\scr{C} = \DQ$, we add to the results of the previous section the following tools: 
\begin{enumerate}[\noindent (i)] 
\setcounter{enumi}{3} 
\item a \emph{Koszul truncation} that produces a new $E_\infty^\rm{nu}$-algebra from an old one by truncating its indecomposables to vanish above a line, and
\item for a reduced $E_\infty^\rm{nu}$-algebra whose indecomposables vanishes strictly below the line $* = \slopetwo$, a \emph{Goncharov Lie coalgebra} obtained by Koszul truncation of its quotient by $\sigma$.
\end{enumerate}
We start this discussion with \cref{sec:alg-k-theory-goncharov-maps},  which states the applications to the $E_\infty^\rm{nu}$-algebra $\BGLb(F)$. This is followed by the general constructions and results, which may be skipped on a first reading.

\medskip

In this section we work in the categories $\Fun(\bb{N},\DQ)$ or $\Fun(\bb{N}_{>0},\DQ)$, with the advantages explained in \cref{sec:graded-over-dq}. In $\Fun(\bb{N},\DQ)$, we will implicitly move between unital and nonunital algebras using the adjoint equivalence provided by augmentation and unitalisation. We take $t$-structures on these categories using (nonunital) abstract connectivities, constructed from the canonical one on $\DQ$, cf.~\cite[Section 1.2]{LurieHA}. We will drop the decorations for ``divided power'' ($\dpw$) and ``conilpotency'' ($\nil$) conditions for Lie coalgebras; the former is irrelevant in $\DQ$ and we will guarantee, through connectivity hypotheses, that the latter is irrelevant. 

\subsection{Mapping algebraic $K$-theory to the Goncharov Lie coalgebra} \label{sec:alg-k-theory-goncharov-maps} In this subsection we fix a field $F$ and state the results obtained by specialising the rest of this section to the $E_\infty^\rm{nu}$-algebra $\BGLb(F)$. In particular, we construct maps
\[\rm{e}_{n,d} \colon \gr^\rm{prim}_n K_d(F)_{\bb{Q}} \lra H^\rm{CE}_{n,d}(\scr{G}(F))\]
refining the edge homomorphisms $\rm{edge}_n \colon K_{2n-1}(F)_\bb{Q} \to \scr{G}_n(F)$, as promised in \cite{KRS1}, and establish some of their properties.

\medskip

Recall $\BGLb(F) \in \Alg_{E_\infty^\rm{nu}}(\Fun(\bb{N},\Spc))$, as in \cite[Section 4.1]{KRS1}, is given by the spaces $\BGLb(F)(n) \simeq \BGL_n(F)$ together with coherently commutative and associative multiplication maps $\BGL_n(F) \times \BGL_m(F) \to \BGL_{n+m}(F)$ induced by block sum. Using \cref{lem:rational-algebras-from-spaces} and taking augmentation ideals it yields an object
\[\BGLb(F)_\bb{Q} \in \left(\Alg_{E_\infty^\rm{nu}}(\Fun(\bb{N},\DQ))_{/\bb{Q^\rm{nu}}[t]}\right)^{\id_{\bb{Q}^\rm{nu}[t]}/}.\] 
This has a standard vanishing line of slope 2 as a consequence of the work of Galatius--Kupers--Randal-Williams \cite{GKRW20}, see \cite[Theorem 4.3]{KRS1}. Letting $\BGLb(F)_\bb{Q}/(\sigma)$ denote the cofibre of the map $\sigma \colon \bb{Q}^\rm{nu}[t] \to \BGLb(F)_\bb{Q}$ in the category of $E^\rm{nu}_\infty$-algebras, the maps
\[H^{E_\infty}_{n,2n-1}(\BGLb(F)_\bb{Q}) \lra H^{E_\infty}_{n,2n-1}(\BGLb(F)_\bb{Q}/(\sigma))\]
are isomorphisms and assemble to an isomorphism $\scr{G}(F) \overset{\cong}\lra \scr{G}(\BGLb(F)_\bb{Q})$ of Lie coalgebras, with left side as in \cite[p.~1]{KRS1} and right side as in \cref{defn Goncharov lie coalgebra}. 

Recalling that the group completion of $\BGLb_\bb{Q}(F)$ is by definition the algebraic $K$-theory spectrum of $F$, \cref{lem:rank-spectral-sequences} \eqref{enum:rank-spectral-sequences-ii} gives (implicitly unitalising) a rank spectral sequence
\[{}^\infty E^1_{n,d}=H^{E_\infty}_{n,d}(\BGLb(F)_\bb{Q}/(\sigma)) \Longrightarrow K_d(F)_\bb{Q}.\]
This spectral sequence receives a map from the Rognes rank spectral sequence considered in \cite[Section 9]{KRS1}, which is an isomorphism on the $E^1$-page in all entries except $E^1_{1,0}$ where it projects the permanent cycle $\bb{Q}\{\sigma\}$ to zero. Then \cref{cor:image-of-edge-in-ker-cobracket} says the edge homomorphism $\rm{edge}_n \colon K_{2n-1}(F)_\bb{Q} \to \scr{G}_n(F)$ considered in \cite[Definition 9.2]{KRS1} has image contained in the kernel of the cobracket.

We improve on this by taking into account rank filtrations. Identifying $K_d(F)_\bb{Q}$ with $H_d^{E_\infty}(\BGLb(F)_\bb{Q}/(\sigma-1))$ for $d>0$,  \cref{lem:filtered-map-to-gocharov-complex} provides maps 
\begin{equation} \label{eqn:goncharov-map}
  \rm{e}_{n,d} \colon \gr^\rm{prim}_n\,K_d(F)_\bb{Q} \lra H^\rm{CE}_{n,d}(\scr{G}(F))  
\end{equation}
which fit into a commutative diagram
    \[\begin{tikzcd} \fil_n^\rm{prim} K_{2n-1}(F)_\bb{Q} \rar{\rm{pr}} \dar[swap]{\rm{inc}}& \rm{gr}^\rm{prim}_n K_{2n-1}(F)_\bb{Q} \dar{\rm{e}_{n,2n-1}} \\[-5pt]
    K_{2n-1}(F)_\bb{Q} \rar{\rm{edge}_n} & \ker(\delta|_{\scr{G}_n(F)}) \subset \scr{G}_n(F).\end{tikzcd}\]

By construction, the primitive rank filtration here agrees with the classical one appearing e.g.~in \cite[\S 2.3]{Cathelineau}, \cite{borelyang}, \cite{deJeu}. Thus by \cite[Corollary 5]{deJeu} we have that $\fil^\rm{prim}_n\,K_d(F)_\bb{Q}$ lies in $\bigoplus_{r=1}^n \smash{K_d^{(r)}(F)_\bb{Q}}$, where $\smash{K_d^{(r)}(F)_\bb{Q}}$ is given by the $p^r$-eigenspace of the Adams operation $\psi_p$. There several other rank filtrations and these can behave quite differently; the next lemma is a reason to prefer the primitive rank filtration.

\begin{lemma} \label{lem:milnor K theory}
    The following maps are isomorphisms for all $n \geq 1$:
    \[\rm{e}_{n,n} \colon \rm{gr}^\rm{prim}_n K_n(F)_\bb{Q} \lra H_{n,n}^\rm{CE}(\scr{G}(F)).\]
    In fact, both sides are isomorphic to the Milnor $K$-theory groups $K^M_n(F)_\bb{Q}$.
\end{lemma}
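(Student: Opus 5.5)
We work on the diagonal $d=n$, where everything is controlled by the $E^1$-pages $E^1_{n,d}$ with $d\le n$. The plan is to identify each of the three groups $\gr^\prim_n K_n(F)_\Q$, $H^\rm{CE}_{n,n}(\scr{G}(F))$ and $K^M_n(F)_\Q$ with the same quotient of $(F^\times_\Q)^{\otimes n}$, and then to check that $\rm{e}_{n,n}$ is compatible with these identifications.

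\emph{The Chevalley--Eilenberg side.} Since $\scr{G}_m(F)$ sits in homological degree $2m-1$, a CE chain of weight $n$ built from $k$ cogenerators has degree $\sum(2m_i-1)+(k-1)$. With the grading conventions of \cref{defn Goncharov lie coalgebra}, the only contributions in bidegree $(n,n)$ come from $\Lambda^n\scr{G}_1(F)$, after the appropriate suspension, with $\scr{G}_1(F)\cong F^\times_\Q$. Consequently $H^\rm{CE}_{n,n}$ is the cokernel of the cobracket part $\scr{G}_2(F)\otimes\Lambda^{n-2}\scr{G}_1(F)\to\Lambda^n\scr{G}_1(F)$, and there is nothing in higher terms to form a kernel. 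The cobracket $\delta\colon\scr{G}_2(F)\to\Lambda^2 F^\times_\Q$ has image spanned by $a\wedge(1-a)$, which follows from the presentation in terms of $\LiG_2(a)$ in \cite{KRS1}. Therefore
\[H^\rm{CE}_{n,n}(\scr{G}(F))\cong\Lambda^nF^\times_\Q/(a\wedge(1-a))\cong K^M_n(F)_\Q.\]

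\emph{The $K$-theory side.} We use the rank spectral sequence of \cref{lem:rank-spectral-sequences}(i) applied to $\BGLb(F)_\Q/(\sigma)$. Along the line $d=n$, the $E^1$-term is $H_{n,n}(\BGLb(F)_\Q/(\sigma))$, namely $H_n(\GL_n,\GL_{n-1};\Q)$. By Suslin's stability theorem, rationally this is $K^M_n(F)_\Q$, and the stable class maps to it via the product of units, that is, the image of $(F^\times_\Q)^{\otimes n}$ under block sum of $\GL_1$'s. Differentials into or out of the line $d=n$ vanish by the standard vanishing line of slope $2$, since $E^1_{m,d}=0$ for $d<m$, which rules out differentials leaving $d=n$. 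Incoming differentials come from $d=n+1$ and rank $n+r$, and these are handled the same way. It follows that $\gr^\prim_nK_n(F)_\Q\cong H_{n,n}(\BGLb_\Q/(\sigma))\cong K^M_n(F)_\Q$. Some care is needed to check that the primitive filtration coincides with the rank filtration of this spectral sequence; by construction it does.

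\emph{Compatibility.} By \cref{lem:filtered-map-to-gocharov-complex}, $\rm{e}_{n,n}$ is induced by the map from homology to $E_\infty$-homology of the rank spectral sequences, followed by the identification with Koszul-dual CE homology. We therefore check it on the generators $\{a_1,\dots,a_n\}$, which are products of rank-one classes $[a_i]\in H_{1,1}$. Because the map of spectral sequences in \cref{lem:rank-spectral-sequences}(iii) is multiplicative, and on the Koszul side products of weight-one classes correspond to wedge products in $\Lambda^n\scr{G}_1$, the symbol $\{a_1,\dots,a_n\}$ is sent to $a_1\wedge\cdots\wedge a_n$. Thus $\rm{e}_{n,n}$ is the identity on $K^M_n(F)_\Q$ under the two identifications, and in particular it is an isomorphism.

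The main obstacle is this compatibility step. The target multiplication in \cref{lem:rank-spectral-sequences}(iii) is trivial, so the product structure must be traced through the Koszul truncation. This means identifying the Koszul dual algebra's $(n,n)$ homology with CE homology in a way that sends $E_\infty$-products of weight-one indecomposables to wedges. If that direct comparison fails, the fallback is a dimension and surjectivity argument: show surjectivity on generators of $\Lambda^nF^\times_\Q$, then compare the two presentations.
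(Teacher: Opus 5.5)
Your architecture matches the paper's: Nesterenko--Suslin on the $K$-theory side, then multiplicativity and generation of Milnor $K$-theory in degree one. Your direct computation of $H^{\rm{CE}}_{n,n}(\scr{G}(F))$ as $\Lambda^n F^\times_\bb{Q}$ modulo the ideal generated by $\delta(\scr{G}_2(F))$ is fine, although a product of $k$ cogenerators of weights $m_i$ sits in degree $\sum_i (2m_i-1)$, not $\sum_i(2m_i-1)+(k-1)$.

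\textbf{First gap: the $K$-theory side.} Put $H_n \coloneq H_n(\BGLb(F)_\bb{Q}/(\sigma-1))$ and let $F_\bullet$ be its rank filtration. Then $\fil^{\prim}_n K_n(F)_\bb{Q}=F_n\cap \prim\,H_n$, so $\gr^{\prim}_n K_n(F)_\bb{Q}$ only \emph{injects} into $F_n/F_{n-1}=E^\infty_{n,n}$. Equality is not ``by construction''. Indeed $F_n/F_{n-1}$ also receives decomposable classes, e.g.\ block-sum products $[a_1]\cdots[a_n]$ of classes in $H_1(\GL_1(F);\bb{Q})$, and these already map onto the Milnor symbols in $H_n(\GL_n(F),\GL_{n-1}(F);\bb{Q})$. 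That primitive classes also surject is exactly the Nesterenko--Suslin theorem: the composite $K^M_n(F)\to K_n(F)\to H_n(\GL_n(F),\GL_{n-1}(F))\cong K^M_n(F)$ is multiplication by a nonzero constant, namely $\pm(n-1)!$. This is the input the paper's proof rests on. The same constant relates the primitive symbol $\{a_1,\ldots,a_n\}\in\gr^{\prim}_n K_n(F)_\bb{Q}$ to the decomposable product $[a_1]\cdots[a_n]$, which your compatibility step conflates.

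Your degeneration argument is also wrong as stated. Differentials leave $(n,n)$ towards $(n-r,n-1)$, and $d^1$ enters from $(n+1,n+1)$. Neither is excluded by $E^1_{m,d}=0$ for $d<m$. What kills them is Suslin's homological stability, i.e.\ injectivity of stabilisation.

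\textbf{Second gap: the compatibility step uses the wrong map.} \cref{lem:filtered-map-to-gocharov-complex} only concerns $d=2n-1$. For $n\geq 2$ one has $H^{E_\infty}_{n,n}(\BGLb(F)_\bb{Q}/(\sigma))=0$ since $n<2n-1$. So nothing can be detected through $E_\infty$-homology, nor through the map of spectral sequences in \cref{lem:rank-spectral-sequences}~\eqref{enum:rank-spectral-sequences-iii}, whose target multiplication is trivial anyway.

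By \cref{def:refined-edge-homomorphism}, $\rm{e}_{n,n}$ is given on \emph{ordinary} homology. It is the map induced by the $E_\infty^{\rm{nu}}$-algebra map $\BGLb(F)_\bb{Q}/(\sigma)\to\rho^{\alg}_{\le\slopetwo}(\BGLb(F)_\bb{Q})/(\sigma)$, whose target has homology $H^{\rm{CE}}_{*,*}(\scr{G}(F))$ as an algebra. This map is multiplicative. Combined with the compatibility of the Nesterenko--Suslin identification with block-sum products, it reduces the claim to $n=1$, where it holds by construction. You gesture at this (``traced through the Koszul truncation''), but it is the definition of $\rm{e}_{n,n}$ and should be your starting point, not an obstacle to work around.
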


\begin{proof}In \cite{NesterenkoSuslin}, Nesterenko and Suslin proved that projection $K^M_n(F) \to \gr^\rm{prim}_n K_n(F)_\bb{Q}$ is an isomorphism using the map
\[\gr^\rm{prim}_n K_n(F)_\bb{Q} \lra H_{n,n}(\BGLb(F)_\bb{Q}/(\sigma)),\]
(well-defined because of their homological stability result), identifying its target with $K^M_n(F)$, and proving that with respect to this identification the composition is given by multiplication with a nonzero constant. It remains to understand why the map induced by Koszul truncation
    \[K_n^M(F)_\bb{Q} \cong H_{n,n}(\BGLb(F)_\bb{Q}/(\sigma)) \to H_{n,n}(\rho_{\le \slopetwo}^\alg(\BGLb(F)_\bb{Q})/(\sigma)) \cong H^\rm{CE}_{n,n}(\scr{G}(F)) \cong K_n^M(F)_\bb{Q}\]
is an isomorphism. Taking together all $n$, we obtain a map of algebras, because the identification of Nesterenko--Suslin is compatible with the product arising from $E_\infty^\rm{nu}$-algebra structure on $\BGLb(F)_\bb{Q}$ \cite[Lemma 3.23]{NesterenkoSuslin}. Since Milnor $K$-theory is generated in degree $1$, verifying it is an isomorphism reduces to the case $n=1$, where it is an isomorphism by construction.\end{proof}

\subsection{Koszul truncations} Having discussed the applications used elsewhere in this paper, we now start with this section proper, beginning with the notion of Koszul truncation. 

\subsubsection{Abstract connectivity and $t$-structures} \label{subsection connectivity and t structures} Following \cite{GKRW18}, an \emph{abstract connectivity} on a category $\scr{C}$ is a functor $c \colon \scr{C} \to [-\infty,\infty]_{\geq}$, with target the category whose objects are the integers along with $\infty,-\infty$ and where there is a morphism $n \to m$ when $n \geq m$. That is, it is given by $\infty \to \cdots \to 2 \to 1 \to 0 \to -1 \to -2 \to \cdots \to -\infty$. This admits a symmetric monoidal structure given by $\min$, so if $\scr{C}$ is a symmetric monoidal there is a Day convolution tensor product $\ast$ of abstract connectivities. We will interested in $c$ that are lax, oplax, or symmetric monoidal with respect to this:
\begin{itemize}
	\item $c$ is \emph{subadditive} if $c \ast c \geq c$,
	\item $c$ is \emph{superadditive} if $c \ast c \leq c$,
	\item $c$ is \emph{additive} if $c \ast c = c$.
\end{itemize}

\begin{example}If $\scr{C} = \bb{N}$ or $\bb{N}_{>0}$ with monoidal structure given by addition, we have
\[\min\{c(i)+c(j) \mid i+j=k\} \begin{cases} \geq c(k) & \text{if $c$ is subadditive} \\
\leq c(k) & \text{if $c$ is superadditive} \\
= c(k) & \text{if $c$ is additive}.\end{cases}\]
\end{example}

We use these to produce $t$-structures of functor categories. Recall that a $t$-structure on a stable category $\scr{D}$ is a pair of full subcategories $\scr{D}_{\geq 0},\scr{D}_{\leq 0}$ satisfying the axioms of \cite[1.2.1.1]{LurieHA}. Setting $\scr{D}_{\geq n} \coloneq \scr{D}_{\geq 0}[n]$ and $\scr{D}_{\leq n} \coloneq \scr{D}_{\leq 0}[n]$, the inclusions $\scr{D}_{\geq n} \to \scr{D}$ admit right adjoints $\tau_{\geq n}$, the inclusions $\scr{D}_{\leq n} \to \scr{D}$ admit left adjoints $\tau_{\leq n}$ \cite[1.2.1.7]{LurieHA}, and there are natural fibre sequences $\tau_{\geq n} \to \rm{id} \to \tau_{\leq n-1}$. We can extend this to allow $n=-\infty,\infty$, by setting $\scr{D}_{\geq -\infty} = \scr{D} = \scr{D}_{\leq \infty}$ to be the entire category and $\scr{D}_{\geq \infty} = \{0\} = \scr{C}_{\leq -\infty}$ to be the subcategory of zero objects. The fibre sequence $\tau_{\geq 0} \to \rm{id} \to \tau_{\leq -1}$ determines the $t$-structure, and one may verify that postcomposition with it yields a $t$-structure on $\Fun(\scr{C},\scr{D})$: there is a unique $t$-structure on $\Fun(\scr{C},\scr{D})$ where $G \in \Fun(\scr{C},\scr{D})_{\geq 0}$ if and only if $G(x) \in \scr{D}_{\geq 0}$, and $G \in \Fun(\scr{C},\scr{D})_{\leq 0}$ if and only if $G(x) \in \scr{D}_{\leq 0}$. More generally we can incorporate an abstract connectivity:

\begin{lemma} \label{lem: unique t structure functor category with a connectivity functor}
Let $\scr{C}$ be a small category, $\scr{D}$ be a stable category with a $t$-structure, and $c \colon \scr{C} \to [-\infty,\infty]_{\geq}$ be any functor. Then there is a unique $t$-structure on $\Fun(\scr{C},\scr{D})$ so that $G \in \Fun(\scr{C},\scr{D})_{\geq 0}$ if and only if $G(x) \in \scr{D}_{\geq c(x)}$ for all $x \in \scr{C}$, and $G \in \Fun(\scr{C},\scr{D})_{\le 0}$ if and only if $G(x) \in \scr{D}_{\leq c(x)}$ for all $x \in \scr{C}$.
\end{lemma}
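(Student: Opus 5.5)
The plan is to reduce to the case of a constant connectivity, by shifting pointwise and then assembling a $t$-structure on $\Fun(\scr{C},\scr{D})$ directly. Define
\[\Fun(\scr{C},\scr{D})_{\geq 0} \coloneq \{G \mid G(x) \in \scr{D}_{\geq c(x)} \ \forall x\}, \qquad \Fun(\scr{C},\scr{D})_{\leq 0} \coloneq \{G \mid G(x) \in \scr{D}_{\leq c(x)} \ \forall x\}.\]
Uniqueness needs no separate argument: a $t$-structure is determined by its pair of subcategories, and these are prescribed by the statement. For existence we check the axioms of \cite[1.2.1.1]{LurieHA}. The shift axioms $\Fun(\scr{C},\scr{D})_{\geq 0}[1] \subseteq \Fun(\scr{C},\scr{D})_{\geq 0}$ and $\Fun(\scr{C},\scr{D})_{\leq 0}[-1] \subseteq \Fun(\scr{C},\scr{D})_{\leq 0}$ are pointwise consequences of the same inclusions in $\scr{D}$. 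The conventions $\scr{D}_{\geq -\infty}=\scr{D}=\scr{D}_{\leq\infty}$ and $\scr{D}_{\geq\infty}=0=\scr{D}_{\leq -\infty}$ handle the infinite values of $c$.

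The key step is to construct the truncation fibre sequence. Here we use functoriality of $c$. If $f\colon x\to y$ is a morphism, then $c(x)\geq c(y)$, so $\scr{D}_{\geq c(x)}\subseteq \scr{D}_{\geq c(y)}$. For $G\in\Fun(\scr{C},\scr{D})$ we try to define $(\tau G)(x)\coloneq \tau_{\geq c(x)}G(x)$. To make this a functor, note that the composite $\tau_{\geq c(x)}G(x)\to G(x)\to G(y)$ has source in $\scr{D}_{\geq c(x)}\subseteq\scr{D}_{\geq c(y)}$, so it factors uniquely (in the $\infty$-categorical sense) through the coreflection $\tau_{\geq c(y)}G(y)\to G(y)$.

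To get this coherently rather than one morphism at a time, we realise $\tau$ as a right adjoint. Consider the full subcategory $\scr{E}\subseteq \Fun(\scr{C},\scr{D})$ of those $G$ with $G(x)\in\scr{D}_{\geq c(x)}$ for all $x$. This is exactly $\Fun(\scr{C},\scr{D})_{\geq 0}$ as defined above. We show that its inclusion admits a right adjoint, computed pointwise by $\tau_{\geq c(x)}$. One way to do this is to view $\Fun(\scr{C},\scr{D})$ as sections of the constant cocartesian fibration over $\scr{C}$, and the pointwise truncations as a relative coreflective subcategory over $\scr{C}$. This is a subfibration, preserved by the transition functors because $c$ is monotone, so it is coreflective and the coreflection is computed fibrewise (compare \cite[7.3.2]{LurieHA}). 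Define $\tau_{\leq -1}G$ as the cofibre of the counit. Pointwise this cofibre is $\tau_{\leq c(x)-1}G(x)$, so it lies in $\Fun(\scr{C},\scr{D})_{\leq -1}$ (with $\infty-1=\infty$).

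Finally we prove orthogonality: $\Map(A,B)\simeq *$ for $A\in\Fun(\scr{C},\scr{D})_{\geq 0}$ and $B\in\Fun(\scr{C},\scr{D})_{\leq -1}$. Write the mapping space as the end $\int_{x}\Map_\scr{D}(A(x),B(x))$. Each term is contractible because $A(x)\in\scr{D}_{\geq c(x)}$ and $B(x)\in\scr{D}_{\leq c(x)-1}$, and a limit of contractible spaces is contractible.

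I expect the main obstacle to be the coherent functoriality of the pointwise truncation in the second step. The orthogonality and shift axioms are formal. If the relative-adjoint argument is awkward, a fallback is to prove orthogonality first and then deduce the adjoint via the pointwise criterion for right adjoints: it suffices that each $G$ admits a map from some object of $\scr{E}$ which is a coreflection objectwise. This holds because evaluation at $x$ commutes with the relevant mapping-space computations.
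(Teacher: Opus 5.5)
Your proposal is correct and is essentially the argument the paper intends; the paper records no proof beyond the remark that postcomposing with the truncation fibre sequence gives the $t$-structure. Your version truncates pointwise by $\tau_{\geq c(x)}$, and the monotonicity of $c$ together with a relative right adjoint over the cocartesian fibration $\scr{C}\times\scr{D}\to\scr{C}$ supplies the coherence that the paper leaves implicit.

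One slip needs repair in the orthogonality step. The end $\int_x \Map_{\scr{D}}(A(x),B(x))$ is a limit over the twisted arrow category of the spaces $\Map_{\scr{D}}(A(x),B(y))$ for all arrows $x\to y$, so contractibility of the diagonal terms alone does not suffice. The off-diagonal terms are also contractible, because $c(x)\geq c(y)$ puts $A(x)$ in $\scr{D}_{\geq c(y)}$. Alternatively, you can skip the end entirely: $\Map(A,B)\simeq\Map(A,\tau B)$, and $\tau B$ vanishes pointwise, hence is zero.
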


These interact well with Day convolution under the following conditions:

\begin{definition}Let $\scr{C}$ be a symmetric monoidal stable category. 
\begin{itemize}
    \item A $t$-structure on $\scr{C}$ is \emph{left-compatible} if $\otimes$ takes $\scr{C}_{\geq 0} \times \scr{C}_{\geq 0}$ into $\scr{C}_{\geq 0}$, and $1_\scr{C} \in \scr{C}_{\geq 0}$.
    \item A $t$-structure on $\scr{C}$ is  \emph{right-compatible} if $\otimes$ takes $\scr{C}_{\leq 0} \times \scr{C}_{\leq 0}$ into $\scr{C}_{\leq 0}$, and $1_\scr{C} \in \scr{C}_{\leq 0}$.
\end{itemize}
There are nonunital variants where we drop the condition on $1_\scr{C}$.
\end{definition}

(Note that a left-compatible $t$-structure is \emph{compatible} in the sense of \cite[2.2.1.3]{LurieHA}.) If a $t$-structure is left-compatible then $\scr{D}_{\geq n}$ for $n \geq 0$ inherits a symmetric monoidal structure so that the inclusion $\scr{D}_{\geq n} \to \scr{D}$ is symmetric monoidal and hence induces a lax monoidality on $\tau_{\geq n}$ by the mate correspondence. Moreover, on $\scr{D}_{\geq 0}$ we have that $\tau_{\leq n}$ is lax monoidal \cite[2.2.1.8, 2.2.1.9, 2.2.1.10]{LurieHA}. If a $t$-structure is right-compatible then we have the dual properties: $\tau_{\leq n}$ is oplax monoidal on $\scr{D}$ and $\tau_{\geq n}$ is oplax monoidal on $\scr{D}_{\leq 0}$. If we only have the nonunital variants of left- or right-compatibility, the symmetric monoidal structures and (op)lax monoidalities will be only nonunital. The formula for Day convolution implies:

\begin{lemma}Suppose $\scr{C}$ is a symmetric monoidal category with abstract connectivity $c$ with $c(0) \leq 0$ and $\scr{D}$ is a symmetric monoidal stable category with a $t$-structure. If the $t$-structure is left-compatible and the abstract connectivity $c$ is subadditive, then the induced $t$-structure on $\Fun(\scr{C},\scr{D})$ is left-compatible.\end{lemma}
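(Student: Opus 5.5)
The approach is to compute the Day convolution pointwise and check the two conditions of left-compatibility separately. Recall that for $G,H \in \Fun(\scr{C},\scr{D})$ the Day convolution is the left Kan extension of $\otimes \circ (G \times H)$ along $\otimes \colon \scr{C} \times \scr{C} \to \scr{C}$. Pointwise this reads
\[(G \ast H)(z) \simeq \colim_{(x,y,\, x\otimes y \to z)} G(x) \otimes H(y),\]
where the colimit runs over the comma category $\otimes/z$. The unit is the left Kan extension of $1_\scr{D}$ along $1_\scr{C} \colon \ast \to \scr{C}$, that is, $z \mapsto \colim_{\Map(1_\scr{C},z)} 1_\scr{D}$. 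In the cases of interest ($\scr{C}=\bb{N}$ or $\bb{N}_{\leq}$) these are finite coproducts, respectively the unit placed at $0$, but I would argue for general $\scr{C}$.

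\emph{Unit.} We must show $1(z) \in \scr{D}_{\geq c(z)}$. Each term in the colimit is indexed by a map $1_\scr{C} \to z$, and $1_\scr{D} \in \scr{D}_{\geq 0} \subseteq \scr{D}_{\geq c(0)}$ because $c(0) \leq 0$. Here I write $0$ for the unit of $\scr{C}$, as in the paper's notation. Since $c$ is a functor to $[-\infty,\infty]_{\geq}$, the map $1_\scr{C} \to z$ gives $c(1_\scr{C}) \geq c(z)$, so each term lies in $\scr{D}_{\geq c(z)}$. Finally, $\scr{D}_{\geq n}$ is closed under colimits, being the coaisle of a $t$-structure.

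\emph{Tensor products.} Suppose $G(x) \in \scr{D}_{\geq c(x)}$ and $H(y) \in \scr{D}_{\geq c(y)}$. Left-compatibility, together with exactness of $\otimes$ in each variable (which shifts degrees), gives $G(x)\otimes H(y) \in \scr{D}_{\geq c(x)+c(y)}$; the cases where a value is $\pm\infty$ are handled by the conventions, the $\infty$ case being zero objects. Subadditivity says $c \ast c \geq c$, and unwinding the Day convolution on $[-\infty,\infty]_{\geq}$ with $\min$ gives $c(x)+c(y) \geq c(z)$ whenever there is a map $x \otimes y \to z$. So every term of the colimit lies in $\scr{D}_{\geq c(z)}$, and closure under colimits finishes the argument.

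The main point needing care is the unwinding of subadditivity. Either one checks that the Day convolution in the poset target is exactly $\min$ over the comma category, or, as in the paper's example, uses the pointwise inequality $c(i)+c(j) \geq c(k)$ directly. I would also verify that the monoidal structure on $[-\infty,\infty]_{\geq}$ being used is addition. The paper writes $\min$, but the example shows the sum is intended.
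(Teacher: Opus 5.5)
Your proof is correct and is essentially the paper's argument. The paper only says that the pointwise formula for Day convolution implies the claim, and you have spelled that out: each term $G(x)\otimes H(y)$ lands in $\scr{D}_{\geq c(z)}$ by left-compatibility plus subadditivity, the unit is handled by $c(0)\leq 0$, and $\scr{D}_{\geq n}$ is closed under colimits. Your reading that the monoidal structure on $[-\infty,\infty]_{\geq}$ must be addition, with $\min$ arising as the colimit in the Day convolution, is also the right one.
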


It is less common that right-compatibility is preserved by the Day convolution construction but here is one example, proven using the analogous argument:

\begin{lemma} \label{lem: t structure right compatible}
Suppose $\scr{G}$ is a symmetric monoidal groupoid whose automorphism groups are finite, $c \colon \scr{G} \to [-\infty,\infty]_{\geq}$ any functor, and $\scr{D}$ is a symmetric monoidal stable category with a $t$-structure. If the $t$-structure is right-compatible, $\scr{D}_{\leq 0}$ is closed under coproducts and finite group orbits, and the abstract connectivity $c$ is superadditive, then the induced $t$-structure on $\Fun(\scr{G},\scr{D})$ is right-compatible.\end{lemma}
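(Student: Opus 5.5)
Using the Day convolution formula, for $G,H \in \Fun(\scr{G},\scr{D})$ the value of $G \ast H$ at $z$ is a colimit over the comma category of pairs $(x,y)$ with a morphism $x \otimes y \to z$. Since $\scr{G}$ is a groupoid, every such morphism is an isomorphism. This colimit therefore decomposes as a coproduct, indexed by isomorphism classes of decompositions $x \otimes y \cong z$, of homotopy orbits
\[
\bigl(G(x) \otimes H(y)\bigr)_{h\mathrm{Stab}} ,
\]
where $\mathrm{Stab}$ is the stabiliser of the chosen decomposition inside $\mathrm{Aut}(x) \times \mathrm{Aut}(y)$. Alternatively one can index by the subgroup of $\mathrm{Aut}(z)$ acting on the decompositions. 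In either description the groups that occur are finite, because the automorphism groups of $\scr{G}$ are finite.

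The plan has three steps.

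\textbf{Step 1 (the tensor factors).} Take $G,H \in \Fun(\scr{G},\scr{D})_{\leq 0}$, so that $G(x) \in \scr{D}_{\leq c(x)}$ and $H(y) \in \scr{D}_{\leq c(y)}$. Right-compatibility of the $t$-structure on $\scr{D}$, together with the fact that shifting commutes with $\otimes$ in a stable category, gives
\[
G(x) \otimes H(y) \in \scr{D}_{\leq c(x)+c(y)}.
\]
The cases where $c$ takes the values $\pm\infty$ need separate conventions. If some value is $-\infty$, the corresponding term is $0$. If some value is $+\infty$, the bound is vacuous, and superadditivity then forces $c(z)=\infty$.

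\textbf{Step 2 (passing to $z$).} Superadditivity of $c$ says $\min\{c(x)+c(y) \mid x \otimes y \cong z\} \leq c(z)$, and in particular $c(x)+c(y) \leq c(z)$ for every decomposition. Hence each summand $G(x) \otimes H(y)$ lies in $\scr{D}_{\leq c(z)}$. The hypotheses say that $\scr{D}_{\leq 0}$, and hence every shift $\scr{D}_{\leq n}$, is closed under finite group orbits and under coproducts. It follows that $(G \ast H)(z) \in \scr{D}_{\leq c(z)}$, that is, $G \ast H \in \Fun(\scr{G},\scr{D})_{\leq 0}$.

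\textbf{Step 3 (the unit).} The Day convolution unit is the functor $z \mapsto \coprod_{\mathrm{Hom}(1,z)} 1_\scr{D}$. This is nonzero only on objects isomorphic to $1$, where it takes the value $1_\scr{D} \in \scr{D}_{\leq 0}$. To place it in the induced $\scr{D}_{\leq 0}$ we need $1_\scr{D} \in \scr{D}_{\leq c(1)}$, which holds as soon as $c(1) \geq 0$. Superadditivity applied to $1 \otimes 1 \cong 1$ gives $2c(1) \leq c(1)$, i.e.\ $c(1) \leq 0$, which points the wrong way. So in general we only obtain the nonunital variant of right-compatibility; the unital statement holds exactly when $c(1)=0$, as in the examples $\bb{N}$ with $c(0)=0$.

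The main obstacle is the bookkeeping of the Day convolution colimit over a groupoid: identifying it honestly as coproducts of finite homotopy orbits, so that the closure hypotheses on $\scr{D}_{\leq 0}$ apply. In $\DQ$, finite group homotopy orbits are retracts of the underlying objects, so this part is harmless there.
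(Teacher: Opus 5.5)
Your argument is the one the paper intends. The paper proves the lemma by ``the analogous argument'' to the left-compatible case: the Day convolution formula, which over a groupoid with finite automorphism groups writes $(G\ast H)(z)$ as a coproduct of finite homotopy orbits of the objects $G(x)\otimes H(y)$, followed by right-compatibility of $\scr{D}$ and the closure properties of $\scr{D}_{\le 0}$. Your Steps 1 and 2 carry this out correctly, except for one inference.

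In Step 2, the bound $\min\{c(x)+c(y)\mid x\otimes y\cong z\}\le c(z)$ does not give $c(x)+c(y)\le c(z)$ for every decomposition. A bound on the minimum only controls the minimising term. What your argument actually needs is the pointwise inequality $c(x)+c(y)\le c(x\otimes y)$ for all $x,y$, i.e.\ oplax monoidality of $c$, and this has to be what ``superadditive'' means here, because the min-version is too weak. For example, take $\bb{N}$ with $c(0)=0$, $c(1)=1$ and $c(k)=0$ for $k\geq 2$. The min-condition holds (use the decomposition $0+k$). Yet for $\scr{D}=\DQ$ and $G=H=1_!\bb{Q}[1]$ one gets $(G\ast H)(2)\simeq\bb{Q}[2]\notin\scr{D}_{\le 0}$. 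So state the hypothesis pointwise; your treatment of $+\infty$ in Step 1 already uses it in that form. It holds in the paper's application $c(n)=2n-1$ on $\bb{N}_{>0}$, where $c(i)+c(j)=c(i+j)-1$ for all $i,j$.

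Step 3 is a correct point that the paper's statement glosses over. At $1\otimes 1\cong 1$ the pointwise condition gives $c(1)\le 0$, so the unit condition needs $c(1)=0$ (barring the degenerate case $c(1)=+\infty$). In general only the nonunital variant holds. This is harmless for the paper, which only applies the lemma nonunitally over $\bb{N}_{>0}$. A minor correction: at $z\cong 1$ the Day unit is $\coprod_{\mathrm{Aut}(1)}1_{\scr{D}}$ rather than $1_{\scr{D}}$, which closure under coproducts covers.
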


\subsubsection{Koszul truncation} A feature of working in $\DQ$ is that its standard $t$-structure is both left- and right-compatible with $\otimes$, and hence $\scr{C}_{\le 0}$ is closed under coproducts, finite group orbits and fixed points. More generally, we fix a (nonunital) abstract connectivity $c \colon \bb{N}_{>0} \to [-\infty,\infty)_\ge$. Then \cref{lem: unique t structure functor category with a connectivity functor} provides a $t$-structure on $\Fun(\bb{N}_{>0},\DQ)$ satisfying
\begin{align*}X \in \Fun(\bb{N}_{>0},\DQ)_{\ge c} \quad &\Longleftrightarrow \quad X(n) \in (\DQ)_{\ge c(n)} \text{for all $n \in \bb{N}_{>0}$},\\
X \in \Fun(\bb{N}_{>0},\DQ)_{\le c} \quad &\Longleftrightarrow \quad X(n) \in (\DQ)_{\le c(n)} \text{ for all $n \in \bb{N}_{>0}$.}\end{align*}
We will denote its heart by $\Fun(\bb{N}_{>0},\DQ)_{=c}^\heart$, which is equivalent to the category of bigraded vector spaces concentrated in bidegrees $(n,c(n))$ for $n \in \bb{N}_{>0}$. 

Assume now that $c$ is superadditive, i.e.~$\min\{c(i)+c(j) \mid i+j=k\} \leq c(k)$. By \cref{lem: t structure right compatible}, the resulting $t$-structure is right-compatible: the tensor product takes $\Fun(\bb{N}_{>0},\DQ)_{\le c} \times \Fun(\bb{N}_{>0},\DQ)_{\le c}$ into $\Fun(\bb{N}_{>0},\DQ)_{\le c}$, essentially due to the vanishing of $\rm{Tor}$-terms in the K\"unneth theorem. Thus the nonunital symmetric monoidal structure on $\Fun(\bb{N}_{>0},\DQ)$ restricts to a nonunital symmetric monoidal structure on subcategory, making the inclusion
\[i_{\le c} \colon \Fun(\bb{N}_{>0},\DQ)_{\le c} \lra \Fun(\bb{N}_{>0},\DQ)\]
into a strong symmetric monoidal functor, and via the mate correspondence its left adjoint truncation functor $\tau_{\le c}$ acquires an oplax symmetric monoidality.

Proceeding to Koszul duality, we restrict our attention to reduced objects:

\begin{definition}An object $X \in \Fun(\bb{N},\DQ)$ is \emph{reduced} if $X(0) \simeq 0$. An algebra over an operad, or coalgebra over a cooperad, in $\Fun(\bb{N},\DQ)$ is reduced if its underlying object is.\end{definition}

Restriction induces an equivalence $\Fun^\red(\bb{N},\DQ) \to \Fun(\bb{N}_{>0},\DQ)$, and using this we can identify reduced (co)algebras over nonunitary (co)operads in $\Fun(\bb{N},\DQ)$ with (co)algebras in $\Fun(\bb{N}_{>0},\DQ)$. In particular, this induces an identification of $\Alg^\rm{red}_{E_\infty^\rm{nu}}(\Fun(\bb{N},\DQ))$ with $\Alg_{E_\infty^\rm{nu}}(\Fun(\bb{N}_{>0},\DQ))$ and similarly for reduced Lie coalgebras. The upshot of this is that it is sufficient to consider only nonunital abstract connectivities $c \colon \bb{N}_{>0} \to [-\infty,\infty)_{\geq}$.

We will from now on freely use notions from \cite[Appendix B]{KRS1}. For example, \cite[Section B.1.6]{KRS1} provides a left adjoint $\smash{\tau_{\le c}^\rm{alg}}$ to the functor $\smash{i_{\le c}^\rm{alg}}$ induced by $i_{\le c}$ on categories of algebras. The composition $\rho_{\le c} \coloneqq i_{\le c} \circ \tau_{\le c} \colon \Fun(\bb{N}_{>0},\DQ) \to \Fun(\bb{N}_{>0},\DQ)$ is also oplax symmetric monoidal and admits a natural transformation $\id \to \rho_{\le c}$ of oplax symmetric monoidal functors by the mate correspondence. We similarly have functors
\[\rho^\rm{alg}_{\le c} \coloneq i^\rm{alg}_{\le c} \circ \tau^\alg_{\le c} \qquad \text{and} \qquad \rho^\rm{coalg}_{\le c} \simeq i^\rm{coalg}_{\le c} \circ \tau^\rm{coalg}_{\le c}.\]

\begin{definition}
    The \textit{Koszul truncation} is the functor
    \[\rho_{\le c}^{\rm{alg}} \coloneq i_{\le c}^\rm{alg} \circ \tau_{\le c}^\rm{alg} \colon \Alg^\red_{E_\infty^\rm{nu}}(\Fun(\bb{N},\DQ)) \lra \Alg^\red_{E_\infty^\rm{nu}}(\Fun(\bb{N},\DQ)),\]
    which comes with a unit natural transformation $\id \to \rho_{\le c}^\rm{alg}$.
\end{definition}

To consider the effect on indecomposables of the algebras of interest we make the further assumption that $c+1$ is superadditive on $\bb{N}_{>0}$. This amounts to $\min\{c(i)+c(j) \mid i+j = k\} \leq c(k)-1$ when $i,j>0$, so in particular $c$ is superadditive. Under this assumption is the comonad $\rm{Sym}_{s\,\rm{coLie}}$ preserves $\Fun(\bb{N},\DQ)_{\leq c}$:

\begin{lemma}Suppose $c+1$ is superadditive when restricted to $\bb{N}_{>0}$. If a symmetric sequence $X \in \rm{SSeq}(\DQ)$ has $X(0) \simeq 0$ and $X(r) \in \scr{D}_\bb{Q}^\heart$, then $sX \circ (-)$ preserves $\Fun(\bb{N},\DQ)_{\leq c}$.
\end{lemma}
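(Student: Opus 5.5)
We use the explicit formula for the composition product of symmetric sequences. For $Y \in \Fun(\bb{N},\DQ)$ we have
\[
(sX\circ Y)(k) \simeq \bigoplus_{r\ge 1} \big(sX(r)\otimes Y^{\otimes r}\big)_{h\mathfrak{S}_r}(k),
\]
where $Y^{\otimes r}$ is the $r$-fold Day convolution. There is no $r=0$ term because $X(0)\simeq 0$. We work degreewise in the grading $k$. Since $\DQ_{\le 0}$ is closed under coproducts and finite group orbits (homotopy orbits for finite groups are exact rationally), it suffices to show that for each $r\ge1$ and each $k$ the object $sX(r)\otimes Y^{\otimes r}(k)$ lies in $(\DQ)_{\le c(k)}$. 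Here $Y\in\Fun(\bb{N},\DQ)_{\le c}$, which we may take reduced, so only positive gradings occur.

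First we bound the Day convolution. We have
\[
Y^{\otimes r}(k)\simeq\bigoplus_{k_1+\dots+k_r=k} Y(k_1)\otimes\cdots\otimes Y(k_r),
\]
with all $k_i>0$. By the Künneth theorem over $\bb{Q}$ (no Tor terms), each summand lies in $(\DQ)_{\le c(k_1)+\dots+c(k_r)}$. The superadditivity hypothesis on $c+1$ gives $c(i)+c(j)\le c(i+j)-1$ for $i,j>0$. Iterating this by induction on $r$ (combining $k_1+\dots+k_{r-1}$ with $k_r$) yields
\[
c(k_1)+\dots+c(k_r)\le c(k)-(r-1).
\]
If some $c(k_i)=-\infty$, the summand vanishes and there is nothing to check. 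Hence $Y^{\otimes r}(k)\in(\DQ)_{\le c(k)-(r-1)}$.

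Next we account for the shift. The operation $s$ on symmetric sequences is the operadic suspension: $(sX)(r)$ is $X(r)$ shifted up by $r-1$, twisted by the sign representation. Since $X(r)$ lies in the heart, $sX(r)$ is concentrated in homological degree $r-1$, so tensoring with it raises the truncation bound by exactly $r-1$. Therefore $sX(r)\otimes Y^{\otimes r}(k)\in(\DQ)_{\le c(k)}$. The sign twist does not affect connectivity, and taking $\mathfrak{S}_r$-orbits and then the direct sum over $r$ preserves $(\DQ)_{\le c(k)}$. This proves that $sX\circ(-)$ preserves $\Fun(\bb{N},\DQ)_{\le c}$.

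The main point to check carefully is the degree bookkeeping: the convention for $s$ must produce a shift of exactly $r-1$ (matching $s\,\mathrm{coLie}$, whose arity-$r$ term sits in degree $r-1$), so that it is exactly compensated by the $-(r-1)$ gained from the strengthened superadditivity. The $r=1$ term is where this is tight. There $sX(1)=X(1)$ sits in the heart, so it contributes no shift and $Y(k)\in(\DQ)_{\le c(k)}$ already. Finally, infinite sums over $k_i$ are finite in each degree $k$, and arbitrary sums are harmless anyway, since $\DQ_{\le 0}$ is closed under coproducts.
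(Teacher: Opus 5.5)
Your proof is correct and follows the paper's argument. You decompose $sX\circ Y$ into arity-$r$ pieces, use that $sX(r)$ sits in degree $r-1$, and bound $Y^{\otimes r}(k)$ in degrees $\le c(k)-(r-1)$ by iterating the superadditivity of $c+1$. You also spell out details the paper leaves implicit: Künneth over $\bb{Q}$, exactness of $\mathfrak{S}_r$-orbits, closure under sums, and reading superadditivity as $c(i)+c(j)\le c(i+j)-1$ for every decomposition, which is exactly what the paper's bound $c^{*r}(n)\le c(n)-r+1$ requires.
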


\begin{proof}The suspension $sX(r)$ is concentrated in degree $r-1$, so if $Y \in \Fun(\bb{N},\DQ)_{\leq c}$ we have that $(sX(r) \otimes_{S_r} Y^{\otimes r})(n)$ has homology concentrated in degrees $\leq r-1+c^{*r}(n) \leq r-1+c(n)-r+1=c(n)$.\end{proof}

Fix $c \colon \bb{N}_{>0} \to [-\infty,\infty)_{\geq}$ so that $c+1$ is superadditive. Since we are working with reduced objects, Koszul duality is an equivalence by \cite[Theorem B.17]{KRS1}. Moreover, by \cite[Section B.1.5]{KRS1} the relationship between indecomposables and left adjoints on the level of algebras simplifies to a commutative diagram 
\[\begin{tikzcd} \Alg^\rm{red}_{E_\infty^\rm{nu}}(\Fun(\bb{N},\DQ)) \rar{\indec_{E_\infty^\rm{nu}}}[swap]{\simeq} \dar[swap]{\tau^\rm{alg}_{\le c}} &[20pt] \coAlg^\rm{red}_{s\,\rm{coLie}}(\Fun(\bb{N},\DQ)) \dar{\tau_{\le c}^\rm{coalg}} \\[-5pt]
\Alg^\rm{red}_{E_\infty^\rm{nu}}(\Fun(\bb{N},\DQ)_{\le c}) \rar{\indec_{E_\infty^\rm{nu}}}[swap]{\simeq}  & \coAlg^\rm{red}_{s\,\rm{coLie}}(\Fun(\bb{N},\DQ)_{\le c})
\end{tikzcd}\]
with horizontal maps equivalences. We recall that $\tau_{\le c}^\rm{coalg}$ agrees with the map induced on shifted Lie coalgebras by the oplax symmetric monoidal functor $\tau_{\leq c}$. We prefer to think of objects of $\Fun(\bb{N},\DQ)_{\leq c}$ as objects of $\Fun(\bb{N},\DQ)$ through $i_{\leq c}$: this inclusion $i_{\leq c}$ is symmetric monoidal and preserves coproducts and orbits for finite group actions, so induces a functor on Lie coalgebras that fits in a commutative square
\[\begin{tikzcd} \Alg^\rm{red}_{E_\infty^\rm{nu}}(\Fun(\bb{N},\DQ)_{\le c}) \rar{\indec_{E_\infty^\rm{nu}}}[swap]{\simeq} \dar[swap]{i_{\le c}^\rm{alg}} &[20pt] \coAlg^\rm{red}_{s\,\rm{coLie}}(\Fun(\bb{N},\DQ)_{\le c}) \dar{i_{\le c}^\rm{coalg}} \\[-5pt]
\Alg^\rm{red}_{E_\infty^\rm{nu}}(\Fun(\bb{N},\DQ)) \rar{\indec_{E_\infty^\rm{nu}}}[swap]{\simeq} & \coAlg^\rm{red}_{s\,\rm{coLie}}(\Fun(\bb{N},\DQ)).\end{tikzcd}\]
The above discussion then yields the following, incorporating a shift so that its statement involves ordinary Lie coalgebras rather than shifted ones:

\begin{lemma} \label{lem:truncation-via-indec}
Suppose $c \colon \bb{N}_{>0} \to [-\infty,\infty)_{\geq}$ is so that $c+1$ is superadditive. Then there is an equivalence of natural transformations
 \[\indec_{E_\infty}(-)[1] \circ (\id \to \rho^\rm{alg}_{\le c}) \simeq (\id \to \rho_{\le c+1}^\rm{coalg}) \circ (\indec_{E_\infty}(-)[1]) .\]
\end{lemma}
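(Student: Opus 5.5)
The proof amounts to assembling the two commutative squares displayed just before the statement. It remains to track how the shift by $[1]$ moves the truncation level from $c$ to $c+1$.

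\emph{Unwinding $\rho^{\rm{alg}}_{\le c}$.} By definition, $\rho^\rm{alg}_{\le c} = i^\rm{alg}_{\le c}\circ\tau^\rm{alg}_{\le c}$, and the unit $\id \to \rho^\rm{alg}_{\le c}$ is the unit of the adjunction $\tau^\rm{alg}_{\le c}\dashv i^\rm{alg}_{\le c}$. The first square says that $\indec_{E_\infty^\rm{nu}}$ intertwines $\tau^\rm{alg}_{\le c}$ with $\tau^\rm{coalg}_{\le c}$. The second square says that it intertwines $i^\rm{alg}_{\le c}$ with $i^\rm{coalg}_{\le c}$. Both horizontal functors are equivalences by \cite[Theorem B.17]{KRS1}, since $c+1$ superadditive makes $\rm{Sym}_{s\,\rm{coLie}}$ preserve $\Fun(\bb{N},\DQ)_{\le c}$ by the preceding lemma. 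Hence $\indec_{E_\infty^\rm{nu}}$ conjugates the adjunction $\tau^\rm{alg}_{\le c}\dashv i^\rm{alg}_{\le c}$ into the adjunction $\tau^\rm{coalg}_{\le c}\dashv i^\rm{coalg}_{\le c}$. Because an equivalence conjugating one pair of adjoints to another also carries units to units, we get
\[\indec_{E_\infty}\circ(\id\to\rho^\rm{alg}_{\le c}) \simeq (\id\to\rho^\rm{coalg}_{\le c})\circ\indec_{E_\infty},\]
where the right side is taken in shifted Lie coalgebras. One subtlety here is that the two squares are only stated to commute. I would make the identification coherent by noting that the second square is a strict restriction of the equivalence to full subcategories. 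The first square then follows formally by passing to left adjoints, so its mate agrees with it, and there is nothing further to check.

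\emph{Desuspension.} The final step is to pass from shifted Lie coalgebras to ordinary ones. The functor $(-)[1]$ gives an equivalence $\coAlg_{s\,\rm{coLie}}\simeq\coAlg_{\rm{coLie}}$. On underlying objects it sends $\Fun(\bb{N},\DQ)_{\le c}$ onto $\Fun(\bb{N},\DQ)_{\le c+1}$, since $X(n)\in(\DQ)_{\le c(n)}$ if and only if $X(n)[1]\in(\DQ)_{\le c(n)+1}$. It is compatible with the inclusions $i_{\le c}$ and $i_{\le c+1}$, and therefore, by uniqueness of left adjoints, with the truncations $\tau^\rm{coalg}_{\le c}$ and $\tau^\rm{coalg}_{\le c+1}$ and with their units. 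Composing this with the previous paragraph gives the stated equivalence of natural transformations.

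I expect the only real obstacle to be checking that $\tau^\rm{coalg}_{\le c+1}$ on ordinary Lie coalgebras is well defined, that is, that it is the functor induced by the oplax monoidal $\tau_{\le c+1}$. This requires $\Fun(\bb{N},\DQ)_{\le c+1}$ to be preserved by $\rm{Sym}_{\rm{coLie}}$. I would verify this with the same degree count as in the preceding lemma, using $\rm{coLie}(r)$ in degree $0$ together with $(c+1)^{*r}(n)\le c(n)+1-(r-1)$. Once that holds, all the identifications above are formal.
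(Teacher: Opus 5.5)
Your proposal is correct and is essentially the paper's argument, which likewise just combines the two displayed squares (whose horizontal functors are the Koszul duality equivalences) and then shifts by $[1]$ to move the truncation level from $c$ to $c+1$. One small slip in your last paragraph: the bound you need is $(c+1)^{*r}(n)\le c(n)+1$. This is exactly superadditivity of $c+1$, and it suffices because $\coLie(r)$ sits in degree $0$. The bound $c(n)+1-(r-1)$ that you wrote is false in general, for instance for $c=\slopetwo$. In any case this check is automatic, by transporting along the shift equivalence.
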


\begin{notation} \label{notation rho}
    From now on we will abbreviate $\rho_{\le c+1}^\rm{coalg}$ to $\rho_{\le c+1}$ when there is no risk of confusion, as it agrees with this on underlying objects.
\end{notation}

\subsection{Koszul truncations} We now work with the category of algebras with stabilisation map as in \cref{sec:stable-algebras-graded}, though we apply the adjoint equivalence given by taking augmentation ideal and unitalisation to move into the nonunital setting: that is, we work in $\Alg_{E_\infty^\rm{nu}}(\Fun(\bb{N},\DQ))^{\bb{Q}^\rm{nu}[t]/}$ where we write
\[\bb{Q}^\rm{nu}[t] \coloneq \free_{E^\rm{nu}_\infty}(1_! \Q) \in \Alg_{E_\infty^\rm{nu}}(\Fun(\bb{N},\DQ)).\]
We will be specifically interested in the following vanishing lines of slope 2:

\begin{notation}In this subsection, we let $2{\bullet}{\shortminus}1$ denote the nonunital abstract connectivity given by $n \mapsto 2n-1$ for $n \geq 1$. Note that $c+1$ is super-additive.\end{notation}

\begin{definition}\label{def:std-vanishing-line} We say that $(\bf{Q}^\rm{nu}[t] \xrightarrow{\sigma} \bf{R}) \in \Alg_{E^\rm{nu}_\infty}(\Fun(\bb{N},\DQ))^{\bf{Q}^\rm{nu}[t]/}$ has a \emph{standard vanishing line of slope 2} if $\rm{cot}_{{E^\rm{nu}_\infty}}(\bf{R}/(\sigma)) \in \Fun(\bb{N},\DQ)_{\ge \slopetwo}$.
\end{definition}

In other words, the $E_\infty$-homology groups of $\bf{R}/(\sigma)$ satisfy $H^{E_\infty}_{n,d}(\bf{R}/(\sigma)) = 0$ for $d < 2n-1$.

\begin{remark}We take $\bf{R}/(\sigma)$ instead of $\bf{R}$ because we are interested in the \emph{difference} between $\bb{Q}^\rm{nu}[t]$ and $\bf{R}$. A reader familiar with \cite{GKRW20} may recognise that this removes the generator in bidegree $(n,d) = (1,0)$ denoted $\sigma$.\end{remark}

We refer to the Koszul truncation $\rho_{\le \slopetwo}^\alg(\bf{R})$ simply as the \emph{Koszul truncation}. Its indecomposables are classical in the following sense: recall that a Lie coalgebra in $\Fun(\bb{N},\DQ)$ is \emph{coformal} if it is equivalent to to its homology with induced cobracket. 

\begin{lemma} \label{lem:coformality-critical}
If $(\bf{Q}^\rm{nu}[t] \xrightarrow{\sigma}\bf{R})$ has a standard vanishing line of slope 2 then the shifted Lie coalgebra $\indec_{E^\rm{nu}_\infty}(\rho_{\le \slopetwo}^\rm{alg}(\bf{R}/(\sigma)))$ is coformal.
\end{lemma}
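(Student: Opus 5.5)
The plan is to combine \cref{lem:truncation-via-indec} with the standard vanishing line, so that the shifted Lie coalgebra is concentrated on a single line. Set $\bf{S} \coloneq \bf{R}/(\sigma)$; this is reduced, since $\bf{R}/(\sigma)$ has nothing in rank $0$ after passing to augmentation ideals. By \cref{lem:truncation-via-indec} applied with $c = \slopetwo$ (for which $c+1$ is superadditive, as noted), we have
\[\indec_{E_\infty^\rm{nu}}(\rho^\alg_{\le \slopetwo}(\bf{S}))[1] \simeq \rho_{\le 2\bullet}\bigl(\indec_{E_\infty^\rm{nu}}(\bf{S})[1]\bigr),\]
where $\rho_{\le 2\bullet}$ is the coalgebraic truncation with respect to the connectivity $n \mapsto 2n$. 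The underlying object of the right side is $\tau_{\le 2\bullet}$ applied to $\cot_{E_\infty^\rm{nu}}(\bf{S})[1]$.

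By the standard vanishing line (\cref{def:std-vanishing-line}), $\cot_{E^\rm{nu}_\infty}(\bf{S}) \in \Fun(\bb{N},\DQ)_{\ge \slopetwo}$, so after the shift $\cot_{E_\infty^\rm{nu}}(\bf{S})[1] \in \Fun(\bb{N},\DQ)_{\ge 2\bullet}$. Truncation $\tau_{\le 2\bullet}$ preserves $(\ge 2\bullet)$-connectivity, since in each rank $n$ it is just the ordinary Postnikov truncation in $\DQ$. Hence the truncated shifted Lie coalgebra $L$ has underlying object in $\Fun(\bb{N},\DQ)_{\ge 2\bullet} \cap \Fun(\bb{N},\DQ)_{\le 2\bullet}$, which is the heart $\Fun(\bb{N},\DQ)^\heart_{=2\bullet}$. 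Concretely, in rank $n$ its homology is $H^{E_\infty}_{n,2n-1}(\bf{S})$, placed in degree $2n$ after the shift.

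It remains to show that a (shifted) Lie coalgebra whose underlying object lies in the heart is coformal. I expect this to be the main obstacle. The heart is a symmetric monoidal subcategory because the connectivity $2\bullet$ is additive, so the $t$-structure is both left- and right-compatible on it by the lemmas above. Moreover, the inclusion of the heart is symmetric monoidal and preserves coproducts and finite group orbits, which are exact rationally. Since $\DQ^\heart \simeq \Vect_\bb{Q}$ and objects of the heart are determined by their homology, the equivalence $L \simeq i(H_*(L))$ of underlying objects should lift to an equivalence of coalgebras. For this I would argue that $\tau_{\le 2\bullet}$ restricted to $(\ge 2\bullet)$-connective objects is symmetric monoidal onto the heart, and that $H_*$ is an inverse equivalence from the heart to bigraded vector spaces. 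This gives $L \simeq i^\rm{coalg}(\tau^\rm{coalg}_{\le 2\bullet} L)$, with $\tau^\rm{coalg}_{\le 2\bullet}L$ a coalgebra in the ordinary $1$-category of bigraded vector spaces, namely $H_*(L)$ with its induced cobracket. The delicate point is the compatibility of the shifted and Lie conventions: the operations $sX(r)\otimes_{S_r}(-)^{\otimes r}$ raise degree by $r-1$. I would check this using the lemma that $sX\circ(-)$ preserves the $\le c$ subcategories, together with the dual statement for $\ge$.
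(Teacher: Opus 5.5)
Your proposal is correct and follows the paper's proof. Like the paper, you apply \cref{lem:truncation-via-indec} to $\bf{R}/(\sigma)$, use the standard vanishing line to place the truncated shifted indecomposables in the heart $\Fun(\bb{N},\DQ)^\heart_{=2\bullet}$, and deduce coformality from the heart being equivalent, up to shift, to bigraded vector spaces via homology. Your extra care in the last step fills in what the paper leaves implicit: additivity of $2\bullet$ makes the heart closed under the tensor product and orbits, so the coalgebra structure lives in a $1$-category.
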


\begin{proof} Apply \cref{lem:truncation-via-indec} to $\bf{R}/(\sigma)$ to get an equivalence
\[\indec_{E^\rm{nu}_\infty}(\rho_{\le \slopetwo}^\rm{alg}(\bf{R}/(\sigma)))[1] \simeq \rho_{\le 2\bullet} (\indec_{E^\rm{nu}_\infty}(\bf{R}/(\sigma))[1]).\] 
The slope 2 vanishing line yields that $\rm{cot}_{E^\rm{nu}_\infty}(\bf{R}/(\sigma))$ lies in $\Fun(\bb{N},\DQ)_{\ge \slopetwo}$. Thus the truncation of its shift, the underlying object of the right term, lies not only in $\Fun(\bb{N},\DQ)_{\le 2\bullet}$ but also $\Fun(\bb{N},\DQ)_{\ge 2\bullet}$ and hence in the heart. Coformality then follows from the fact that the heart is equivalent, up to shifts, to graded vector spaces via the homology functor. 
\end{proof}

By the previous lemma, $\rm{indec}_{E_\infty}(\rho_{\le 2 \bullet -1}^\rm{alg} (\bf{R}/(\sigma)))[1]$ is equivalent to its homology and hence to the Lie coalgebra $\scr{G}(\bf{R})$ of the following definition:

\begin{definition} \label{defn Goncharov lie coalgebra}For $(\bf{Q}^\rm{nu}[t] \xrightarrow{\sigma} \bf{R})$ with a standard vanishing line of slope 2, its \emph{Goncharov Lie coalgebra} is 
\vspace{-.2cm} \[\qquad \scr{G}(\bf{R}) \coloneq \bigoplus_{n \geq 1} \scr{G}_n(\bf{R}) \qquad \text{with} \qquad \scr{G}_n(\bf{R}) \coloneq H^{E_\infty}_{n,2n-1} (\bf{R}/(\sigma)),\]
considered as a Lie coalgebra in bigraded vector spaces, concentrated in bidegrees $(n,2n)$ with Lie cobracket of degree $0$.
\end{definition}

Thus, $\rm{indec}_{E_\infty}(\rho_{\le 2 \bullet -1}^\rm{alg} (\bf{R}/(\sigma))) \simeq \scr{G}(\bf{R})[-1] \in \coAlg_{\rm{s\,coLie}}(\DQ^\N)$ for $(\bf{Q}^\rm{nu}[t] \xrightarrow{\sigma}\bf{R})$ as in \cref{lem:coformality-critical}. 

\begin{remark}
    One might be tempted view $\scr{G}(\bf{R})$ as a Lie coalgebra with a grading, by letting $\scr{G}_n(\bf{R})$ be the degree $n$ part. From this perspective, however, there are \emph{no} extra Koszul signs since the original bigraded version is concentrated in even homological degrees. 
\end{remark}

By Koszul duality, the Koszul truncation of $\bf{R}/(\sigma)$ may be recovered from $\scr{G}(\bf{R})$ by applying the inverse $\prim_{\coLie}$ of $\indec_{E_\infty^\rm{nu}}$. One can also recover the Koszul truncation $\bf{R}$ itself, as well as the stable algebra, as long as the map $\sigma$ is split:

\begin{theorem}\label{thm:r-from-critical}
        Suppose $(\bf{Q}^\rm{nu}[t] \xrightarrow{\sigma} \bf{R} \xrightarrow{\epsilon} \bf{Q}^\rm{nu}[t]) \in (\Alg^\rm{red}_{E_\infty^\rm{nu}}(\Fun(\bb{N},\DQ))_{/\bf{Q}^\rm{nu}[t]})^{\id_{\bf{Q}^\rm{nu}[t]}/}$ has a standard vanishing line of slope 2. Then there is an equivalence
        \[\rho_{\leq \slopetwo}^\alg(\bf{R}) \simeq \bb{Q}^\rm{nu}[t] \sqcup^{E_\infty^\rm{nu}} \prim_{s\,\coLie}(\scr{G}(\bf{R})[-1]).\] 
\end{theorem}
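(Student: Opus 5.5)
The idea is to use the splitting $\epsilon$ to decompose $\bf{R}$ as a coproduct $\bb{Q}^\rm{nu}[t] \sqcup^{E_\infty^\rm{nu}} \bf{R}/(\sigma)$ after Koszul truncation, and then identify the second factor with $\prim_{s\,\coLie}(\scr{G}(\bf{R})[-1])$ via \cref{lem:coformality-critical}.

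\textbf{Step 1: comparison map.} The composite $\epsilon\circ\sigma\simeq\id$ makes $\sigma$ a split inclusion of $E_\infty^\rm{nu}$-algebras. Both $\bb{Q}^\rm{nu}[t]$ and the Koszul truncation of $\bf{R}/(\sigma)$ receive maps into $\rho^\alg_{\le\slopetwo}(\bf{R})$ or out of it, and I want to assemble these into a map between $\rho^\alg_{\le\slopetwo}(\bf{R})$ and $\bb{Q}^\rm{nu}[t]\sqcup^{E_\infty^\rm{nu}}\rho^\alg_{\le\slopetwo}(\bf{R}/(\sigma))$. I would work on indecomposables, since $\indec_{E_\infty^\rm{nu}}$ is an equivalence on reduced objects. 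The cofibre sequence $\bb{Q}^\rm{nu}[t]\to\bf{R}\to\bf{R}/(\sigma)$ gives a cofibre sequence on cotangent complexes
\[
1_!\bb{Q}\lra \cot(\bf{R})\lra \cot(\bf{R}/(\sigma)).
\]
Applying $\cot$ to $\epsilon$ retracts the first map, so this sequence splits in $\Fun(\bb{N},\DQ)$: $\cot(\bf{R})\simeq 1_!\bb{Q}\oplus\cot(\bf{R}/(\sigma))$. On shifted Lie coalgebras, $\sigma$ and $\epsilon$ exhibit $\indec(\bb{Q}^\rm{nu}[t])=1_!\bb{Q}$, which has trivial cobracket, as a retract of $\indec(\bf{R})$.

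\textbf{Step 2: truncate.} By \cref{lem:truncation-via-indec}, Koszul truncation corresponds to applying $\rho_{\le 2\bullet}$ to $\indec(-)[1]$. The class $1_!\bb{Q}[1]$ sits in bidegree $(1,1)$, which is $\le 2\cdot1$, so truncation preserves it. The summand coming from $\bf{R}/(\sigma)$ lies in the heart by the argument of \cref{lem:coformality-critical}, and is thus $\scr{G}(\bf{R})$ with its cobracket. The truncated Lie coalgebra is therefore concentrated in the part $\le 2\bullet$, with underlying object $1_!\bb{Q}[1]\oplus\scr{G}(\bf{R})$.

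\textbf{Step 3: identify the cobracket as a product (main obstacle).} I must show there are no cross terms, i.e.\ no component of the cobracket of the form $\scr{G}\to 1_!\bb{Q}[1]\otimes\scr{G}$ and no component landing in or leaving the $(1,1)$ class nontrivially. This is the key step, and I would settle it by degree counting. A cross term from $\scr{G}_n$ (bidegree $(n,2n)$) would land in $(1,1)+(n-1,2n-2)$, which has total degree $(n,2n-1)$; for a degree-$0$ cobracket on shifted objects the target degree is off by one, so the term must vanish. The truncated object has homology only in bidegrees $(n,2n)$ and $(1,1)$, so I would check this carefully, including the higher coherences. I would argue formality as in \cref{lem:coformality-critical}: the object is a direct sum of two pieces, each in a shifted heart, and the mixed cooperations land in zero groups. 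The retraction furnished by $\epsilon$ additionally guarantees that the projection to $1_!\bb{Q}[1]$ is a map of Lie coalgebras, which rules out the remaining possibility. The result is a product of Lie coalgebras, $\indec(\bb{Q}^\rm{nu}[t])\times\scr{G}(\bf{R})[-1]$, up to shift.

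\textbf{Step 4: conclude.} Since $\indec$ turns coproducts of $E_\infty^\rm{nu}$-algebras into products of Lie coalgebras and is an equivalence (\cite[Theorem B.17]{KRS1}), applying $\prim_{s\,\coLie}$ gives
\[
\rho^\alg_{\le\slopetwo}(\bf{R})\simeq\bb{Q}^\rm{nu}[t]\sqcup^{E_\infty^\rm{nu}}\prim_{s\,\coLie}(\scr{G}(\bf{R})[-1]).
\]
Here $\prim_{s\,\coLie}$ of the trivial coalgebra $1_!\bb{Q}$ is $\bb{Q}^\rm{nu}[t]$ itself, which is free.
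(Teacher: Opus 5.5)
Your Steps 1, 2 and 4 agree with the paper: you split $\cot_{E_\infty^\rm{nu}}(\bf{R})\simeq 1_!\bb{Q}\oplus\cot_{E_\infty^\rm{nu}}(\bf{R}/(\sigma))$ using $\epsilon$, truncate, read off the homology as $\bb{Q}$ in bidegree $(1,1)$ plus $\scr{G}(\bf{R})$ on the line $(n,2n)$, and finish by Koszul duality. But Step 3, which carries the whole content, never produces an equivalence of Lie coalgebras.

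Write $L$ for $\rho_{\le 2\bullet}(\indec_{E_\infty^\rm{nu}}(\bf{R})[1])$. The Lie coalgebra maps you actually have are $\indec_{E_\infty^\rm{nu}}(\epsilon)[1]$ and the map induced by $\bf{R}\to\bf{R}/(\sigma)$. Both go \emph{out of} $L$, so at best they assemble to a map into the categorical product of Lie coalgebras. That product is not $1_!\bb{Q}[1]\oplus\scr{G}(\bf{R})$: like the dual of a free product of Lie algebras, it has cross terms, for instance a copy of $1_!\bb{Q}[1]\otimes\scr{G}_{n-1}(\bf{R})$ in bidegree $(n,2n-1)$, which survives $\rho_{\le 2\bullet}$.

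The direct sum with componentwise cobracket is the \emph{coproduct}, and that is what $\indec_{E_\infty^\rm{nu}}$, being an equivalence, matches with $\sqcup^{E_\infty^\rm{nu}}$. So ``product'' in Steps 3 and 4 must be ``coproduct''. To use the coproduct you need a Lie coalgebra map from the $\scr{G}$-part \emph{into} $L$, i.e.\ a section of $L\to\rho_{\le 2\bullet}(\indec_{E_\infty^\rm{nu}}(\bf{R}/(\sigma))[1])$, and nothing in your argument supplies one.

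The appeal to formality ``as in \cref{lem:coformality-critical}'' does not fill this gap. That lemma works because the object lies in the heart of the $t$-structure for $2\bullet$, which is a $1$-category of bigraded vector spaces. Here $L$ already has homology in degrees $1$ and $2$ in rank $1$, so it lies in no such heart. Checking the binary cobracket on homology then says nothing about the higher coherences.

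The paper supplies exactly the missing section, in the Claim in the proof of \cref{lem:structure-goncharov-truncation}. Since $2\bullet$ is additive, $\tau_{\ge 2\bullet}$ restricted to $\Fun(\bb{N},\DQ)_{\le 2\bullet}$ is a strong symmetric monoidal right adjoint to the inclusion $i_{=2\bullet}$ of the heart. Hence $i_{=2\bullet}\tau_{\ge 2\bullet}\tau_{\le 2\bullet}(\indec_{E_\infty^\rm{nu}}(\bf{R})[1])$ is a Lie coalgebra, and the counit is a Lie coalgebra map into $\tau_{\le 2\bullet}(\indec_{E_\infty^\rm{nu}}(\bf{R})[1])$. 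This object has lost precisely the $(1,1)$ class, so its composite to $\rho_{\le 2\bullet}(\indec_{E_\infty^\rm{nu}}(\bf{R}/(\sigma))[1])$ is an equivalence. Inverting it gives $j$, and $\rho_{\le 2\bullet}(\indec_{E_\infty^\rm{nu}}(\sigma)[1])\sqcup j$ out of the coproduct is then an equivalence on underlying objects.

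Your degree-counting route can also be made to work, but only with an input you did not state: rational homotopy transfer (equivalently, minimal models on the Koszul dual side), which replaces $L$ by an $L_\infty$-coalgebra structure on $1_!\bb{Q}[1]\oplus\scr{G}(\bf{R})$. There an $r$-ary cooperation preserves rank and raises homological degree by $r-2$. A component out of $\scr{G}_n(\bf{R})$ with $k$ tensor factors equal to the $(1,1)$ class lands in degree $2n-k$, which equals $2n+r-2$ only when $k=0$ and $r=2$. The $(1,1)$ class admits no cooperations for rank reasons. Hence the transferred structure is the strict direct-sum Lie coalgebra, and naturality along $\bf{R}\to\bf{R}/(\sigma)$ identifies its cobracket on $\scr{G}(\bf{R})$ with the defining one.
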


This is a direct consequence of the first part of the next lemma by Koszul duality. The second part tells us that the Goncharov Lie coalgebra is a classical one.

\begin{lemma}  \label{lem:structure-goncharov-truncation}Suppose $(\bf{Q}^\rm{nu}[t] \xrightarrow{\sigma} \bf{R} \xrightarrow{\epsilon} \bf{Q}^\rm{nu}[t]) \in \left(\Alg_{E_\infty^\rm{nu}}(\Fun(\bb{N},\DQ))_{/\bf{Q}^\rm{nu}[t]}\right)^{\id_{\bf{Q}^\rm{nu}[t]}/}$ has a standard vanishing line of slope 2. Then
\begin{enumerate}[(i)]
    \item \label{enum:structure-goncharov-truncation-i} $\rho_{\le 2\bullet} (\indec_{E^\rm{nu}_\infty}(\bf{R})[1]) \simeq \rm{cotriv}_{\rm{coLie}}(1_! \bb{Q}\{\sigma\}[1]) \sqcup^{\rm{coLie}} \scr{G}(\bf{R})$,
    \item \label{enum:structure-goncharov-truncation-ii} $\rho_{\le 2\bullet} (\indec_{E^\rm{nu}_\infty}(\bf{R})[1])$ is coformal as a Lie coalgebra.
\end{enumerate}
\end{lemma}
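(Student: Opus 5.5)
We first use the splitting to decompose $\bf{R}$ as an algebra under $\bb{Q}^\rm{nu}[t]$ and then apply the Koszul truncation. Since $\epsilon\circ\sigma \simeq \id$, the map $\sigma$ exhibits $\bb{Q}^\rm{nu}[t]$ as a retract of $\bf{R}$ in $E_\infty^\rm{nu}$-algebras. Passing to shifted Lie coalgebras via the equivalence $\indec_{E_\infty^\rm{nu}}$, we get a retraction
\[
\indec(\bb{Q}^\rm{nu}[t])[1] \to \indec(\bf{R})[1] \to \indec(\bb{Q}^\rm{nu}[t])[1].
\]
Here $\indec(\bb{Q}^\rm{nu}[t])[1] \simeq \rm{cotriv}_{\rm{coLie}}(1_!\bb{Q}\{\sigma\}[1])$, because $\bb{Q}^\rm{nu}[t]$ is free. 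Its class lies in bidegree $(1,1)$, which is exactly on the line $2\bullet$.

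The cofibre sequence $\bb{Q}^\rm{nu}[t] \to \bf{R} \to \bf{R}/(\sigma)$ is a pushout of algebras, so it becomes a pushout of Lie coalgebras:
\[
\indec(\bf{R}) \simeq \indec(\bb{Q}^\rm{nu}[t]) \sqcup^{\rm{coLie}}_{\indec(\bb{Q}^\rm{nu}[t])} \cdots .
\]
More usefully, $\bf{R}/(\sigma)$ is the pushout of $\bf{R} \leftarrow \bb{Q}^\rm{nu}[t] \to 0$. For the split case I want to show
\[
\bf{R} \simeq \bb{Q}^\rm{nu}[t] \sqcup^{E_\infty^\rm{nu}} \bf{R}',
\]
where $\bf{R}'$ is a suitable algebra whose indecomposables match those of $\bf{R}/(\sigma)$. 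Before truncation this is not literally true. So I would prove the statement only after applying $\rho_{\le 2\bullet}$, arguing on underlying objects.

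The plan is as follows.

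\textbf{Step 1.} Use the fibre sequence of indecomposables
\[
\cot(\bb{Q}^\rm{nu}[t]) \to \cot(\bf{R}) \to \cot(\bf{R}/(\sigma)).
\]
It is split by $\epsilon$, so $\cot(\bf{R}) \simeq 1_!\bb{Q}\{\sigma\} \oplus \cot(\bf{R}/(\sigma))$ as objects. Combined with the standard vanishing line, this puts $\cot(\bf{R})[1]$ in $\Fun(\bb{N},\DQ)_{\ge 2\bullet}$, since the $\sigma$-summand sits in degree $(1,1)$. Hence $\rho_{\le 2\bullet}(\indec(\bf{R})[1])$ lies in the heart, which gives coformality, part (ii), as in \cref{lem:coformality-critical}.

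\textbf{Step 2.} In the heart, Lie coalgebras are classical graded Lie coalgebras concentrated on the line. The cofibre map $\bf{R} \to \bf{R}/(\sigma)$ together with the retraction $\bf{R} \to \bb{Q}^\rm{nu}[t]$ induce maps of truncated Lie coalgebras $\rho(\indec \bf{R}[1]) \to \scr{G}(\bf{R})$ and $\rho(\indec\bf{R}[1]) \to \rm{cotriv}(1_!\bb{Q}\{\sigma\}[1])$. Here I use that $\rho_{\le 2\bullet}$ is a left adjoint on coalgebras, so it preserves the relevant colimits, and that the truncation of the quotient is $\scr{G}(\bf{R})$ by \cref{lem:coformality-critical}.

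In Lie coalgebras these two maps combine into a map to the product. The coproduct $\sqcup^{\rm{coLie}}$ in the statement should be read dually, as the coalgebra analogue of a free product, which in coalgebras corresponds to the fibre product over the zero coalgebra. Concretely, I would instead construct the map in the other direction. This comes from Koszul duality of the pushout $\bb{Q}^\rm{nu}[t] \sqcup \prim(\scr{G}[-1]) \to \rho^\alg(\bf{R})$, which is what \cref{thm:r-from-critical} needs anyway. To build it, I lift $\prim(\scr{G}[-1]) = \rho^\alg(\bf{R}/(\sigma))$ to $\rho^\alg(\bf{R})$ using the splitting.

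\textbf{Step 3.} Check that the map is an equivalence on underlying objects in the heart, i.e.\ on homology on the line $d = 2n$. The main obstacle is identifying the cobracket: the mixed terms pairing $\sigma$ with $\scr{G}$ must be the free-product ones. I would handle this by a dimension count. The $E_\infty$-homology sequence splits, so the line-part of $\cot(\bf{R})[1]$ equals $\bb{Q}\{\sigma\} \oplus \scr{G}(\bf{R})$. Then I would compare this with the primitives of the coproduct Lie coalgebra using Chevalley--Eilenberg homology, and finish with the Koszul equivalence \cite[Theorem B.17]{KRS1}.
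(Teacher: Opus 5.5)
There is a genuine gap, and it starts with a bidegree error in Step~1. The line $2\bullet$ is $n\mapsto 2n$, so in rank $1$ it sits in degree $2$. The class $\sigma$ contributes to $\cot_{E_\infty^{\rm{nu}}}(\bf{R})[1]$ in bidegree $(1,1)$, which lies strictly \emph{below} the line, not on it. Hence $\cot_{E_\infty^{\rm{nu}}}(\bf{R})[1]\notin\Fun(\bb{N},\DQ)_{\ge 2\bullet}$, and $\rho_{\le 2\bullet}(\indec_{E_\infty^{\rm{nu}}}(\bf{R})[1])$ is \emph{not} in the heart: its homology is $\bb{Q}$ in bidegree $(1,1)$ together with $\scr{G}(\bf{R})$ on the line $(n,2n)$. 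So part (ii) cannot be obtained as in \cref{lem:coformality-critical}. In the paper it is deduced from (i), because a coproduct of two coformal Lie coalgebras is coformal.

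You also misread $\sqcup^{\rm{coLie}}$. It is the honest coproduct of Lie coalgebras. Since the forgetful functor from coalgebras preserves colimits, its underlying object is the direct sum and there are no mixed cobracket terms. It is Koszul dual to the coproduct $\bb{Q}^{\rm{nu}}[t]\sqcup^{E_\infty^{\rm{nu}}}\prim_{s\,\coLie}(\scr{G}(\bf{R})[-1])$ of algebras; it is not a ``dual free product'', nor the fibre product you describe. Consequently the mixed-terms worry of Step~3 does not arise. Once you have a Lie coalgebra map out of the coproduct, it suffices to check it is an equivalence on underlying objects. That is a homology computation using the split sequence $\cot(\bb{Q}^{\rm{nu}}[t])\to\cot(\bf{R})\to\cot(\bf{R}/(\sigma))$, which you correctly write down. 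A dimension count or Chevalley--Eilenberg comparison alone cannot produce an equivalence of Lie coalgebras.

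The real missing idea is the construction of that map, specifically a Lie coalgebra map
$j\colon\rho_{\le 2\bullet}(\indec_{E_\infty^{\rm{nu}}}(\bf{R}/(\sigma))[1])\to\rho_{\le 2\bullet}(\indec_{E_\infty^{\rm{nu}}}(\bf{R})[1])$.
This is your ``lift of $\rho^{\rm{alg}}_{\le 2\bullet-1}(\bf{R}/(\sigma))$ to $\rho^{\rm{alg}}_{\le 2\bullet-1}(\bf{R})$ using the splitting''. But $\epsilon$ splits $\sigma$, not the quotient map $\bf{R}\to\bf{R}/(\sigma)$, so it yields no such map; as you note, none exists before truncation.

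The paper produces $j$ from the $t$-structure:
\begin{enumerate}[(a)]
\item On $\Fun(\bb{N},\DQ)_{\le 2\bullet}$, the inclusion $i_{=2\bullet}$ of the heart has right adjoint $\tau_{\ge 2\bullet}$. Both are strong symmetric monoidal because $2\bullet$ is additive.
\item Hence the counit $i_{=2\bullet}\tau_{\ge 2\bullet}\tau_{\le 2\bullet}(\indec_{E_\infty^{\rm{nu}}}(\bf{R})[1])\to\tau_{\le 2\bullet}(\indec_{E_\infty^{\rm{nu}}}(\bf{R})[1])$ is a map of Lie coalgebras.
\item Its source maps by an equivalence to $\rho_{\le 2\bullet}(\indec_{E_\infty^{\rm{nu}}}(\bf{R}/(\sigma))[1])$, since both lie in the heart and $\tau_{\ge 2\bullet}$ kills exactly the $(1,1)$ class.
\item Inverting this gives $j$, which on homology includes every class except the one in bidegree $(1,1)$.
\end{enumerate}
Then $\rho_{\le 2\bullet}(\indec_{E_\infty^{\rm{nu}}}(\sigma)[1])\sqcup j$ is the required map out of the coproduct, and it is an equivalence on underlying objects by the splitting. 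Without this step, or an equivalent mechanism, your Steps~2 and~3 never produce the comparison map whose invertibility you want to check.
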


\begin{proof} It suffices to prove part \eqref{enum:structure-goncharov-truncation-i} as part \eqref{enum:structure-goncharov-truncation-ii} is an immediate consequence. Applying the left adjoint $\cot_{E^\rm{nu}_\infty}$ to the cofibre sequence defining $\bf{R}/(\sigma)$ yields a cofibre sequence
\[ \cot_{E^\rm{nu}_\infty}(\bf{Q}^\rm{nu}[t]) \lra \cot_{E^\rm{nu}_\infty}(\bf{R}) \lra \cot_{E^\rm{nu}_\infty}(\bf{R}/(\sigma))\]
 in $\Fun(\N,\DQ)$. 
 The map $\cot_{E_\infty^\rm{nu}}(\epsilon)$ provides a splitting, and since the functor $\rho_{\le 2 \bullet}((-)[1])$ preserves finite coproducts we get an equivalence
 \begin{equation} \label{eq: split}
  \rho_{\le 2 \bullet}( \cot_{E^\rm{nu}_\infty}(\bf{R})[1]) \xrightarrow{\simeq} \rho_{\le 2 \bullet}(\cot_{E_\infty^\rm{nu}}(\bf{Q}^\rm{nu}[t])[1]) \sqcup \rho_{\le 2 \bullet}( \cot_{E^\rm{nu}_\infty}(\bf{R}/(\sigma))[1]).  
 \end{equation}
This in turn gives a splitting in homology, which by inspection is by bidegrees: the first summand only has homology $\Q$ in bidegree $(1,1)$, and the homology of the second summand is concentrated on the line $(n,2n)$.

\medskip

\noindent \emph{Claim} There is a Lie coalgebra map 
\[j \colon \rho_{\leq 2\bullet}(\indec_{E_\infty^\rm{nu}}(\bf{R}/(\sigma)[1]) \lra \rho_{\leq 2\bullet}(\indec_{E_\infty^\rm{nu}}(\bf{R})[1])\]
such that the corresponding map $j_0 \colon \rho_{\leq 2\bullet}(\cot_{E_\infty^\rm{nu}}(\bf{R}/(\sigma))[1]) \to \rho_{\leq 2\bullet}(\cot_{E_\infty^\rm{nu}}(\bf{R})[1])$ on underlying objects is given in homology by including all classes except the $\Q$ in bidegree $(1,1)$. 

\begin{proof}[Proof of Claim.]
The inclusion $i_{=2\bullet } \colon \Fun(\bb{N},\DQ)_{=2\bullet}^\heart \to {\Fun(\bb{N},\DQ)}_{\le 2\bullet}$ has a right adjoint given by restricting $\tau_{\ge 2\bullet}$ to the subcategory ${\Fun(\bb{N},\DQ)}_{\le 2\bullet}$. Since $2\bullet$ is additive, one verifies that symmetric monoidal structure on $\Fun(\bb{N},\DQ)$ restricts to the heart, making both the inclusion $i_{=2\bullet}$ and its right adjoint $\tau_{\ge 2\bullet}$ strong symmetric monoidal. Consequently, $\tau_{\ge 2\bullet} \tau_{\le 2\bullet} (\indec_{E_\infty}(\bf{R})[1])$ has a natural Lie coalgebra structure and the counit provides a map of Lie coalgebras  
\[i_{=2\bullet}\tau_{\ge 2\bullet} \tau_{\le 2\bullet} (\indec_{E^\rm{nu}_\infty}(\bf{R})[1]) \lra \tau_{\le 2\bullet} (\indec_{E^\rm{nu}_\infty}(\bf{R})[1]).\]
Postcomposing with the Lie coalgebra map $\tau_{\le 2\bullet} (\indec_{E^\rm{nu}_\infty}(\bf{R})[1]) \to \tau_{\le 2\bullet} (\indec_{E^\rm{nu}_\infty}(\bf{R}/(\sigma))[1])$ and applying the functor
$i_{\le 2 \bullet} \colon \Fun(\N,\DQ)_{\le 2 \bullet} \to \Fun(\N,\DQ)$ we get a map of Lie coalgebras 
\[i_{\le 2 \bullet} i_{=2\bullet}\tau_{\ge 2\bullet} \tau_{\le 2\bullet} (\indec_{E^\rm{nu}_\infty}(\bf{R})[1]) \xrightarrow{\simeq} \rho_{\le 2\bullet} (\indec_{E^\rm{nu}_\infty}(\bf{R}/(\sigma))[1])\]
that is an equivalence. To see this, we observe it is an equivalence on underlying objects because both lie in the heart and the map is a homology isomorphism (using the splitting \eqref{eq: split} in homology and that $\tau_{\ge 2 \bullet}$ kills the first summand).
We then define $j$ as the zig-zag
\[\rho_{\le 2\bullet} (\indec_{E^\rm{nu}_\infty}(\bf{R}/(\sigma))[1]) \xleftarrow{\simeq} i_{\le 2 \bullet} i_{=2\bullet}\tau_{\ge 2\bullet} \tau_{\le 2\bullet} (\indec_{E^\rm{nu}_\infty}(\bf{R})[1]) \to \rho_{\le 2\bullet} (\indec_{E^\rm{nu}_\infty}(\bf{R})[1]), \]
so that by definition post-composing it with $\rho_{\le 2\bullet} (\indec_{E^\rm{nu}_\infty}(\bf{R})[1]) \to \rho_{\le 2\bullet} (\indec_{E^\rm{nu}_\infty}(\bf{R}/(\sigma))[1])$ yields a self-map on 
$\rho_{\le 2\bullet} (\indec_{E^\rm{nu}_\infty}(\bf{R}/(\sigma))[1])$ acting as the identity on homology, which implies the result by the splitting of \eqref{eq: split}. 
\end{proof}

To finish the proof it suffices to show that the map of Lie coalgebras 
 \[\rm{cotriv}_\rm{coLie} (1_! \bb{Q} [1]) \sqcup \rho_{\le 2\bullet}( \rm{indec}_{E_\infty^\rm{nu}}(\bf{R}/(\sigma))[1])\ \xrightarrow{ \rho_{\le 2 \bullet} (\indec_{E_\infty^\rm{nu}}(\sigma)[1])\sqcup j}\rho^\rm{coalg}_{\le 2\bullet}(\rm{indec}_{E_\infty^\rm{nu}}(\bf{R})[1]) \] 
where $\sqcup$ indicates the coproduct of Lie coalgebras, is an equivalence. 

First observe that $\rm{indec}_{E_\infty^\rm{nu}}(\bf{Q}^\rm{nu}[t])[1] \simeq \rm{cotriv}_\rm{coLie} (1_! \bb{Q} [1])$ by Koszul duality and hence that $\rho_{\le 2 \bullet}(\rm{indec}_{E_\infty^\rm{nu}}(\bf{Q}^\rm{nu}[t])[1]) \simeq \rm{cotriv}_\rm{coLie} (1_! \bb{Q} [1])$. Thus, it suffices to check that the map is an equivalence on underlying objects, that is,
\[1_! \bb{Q} [1] \oplus \rho_{\le 2\bullet}( \rm{cot}_{E_\cot^\rm{nu}}(\bf{R}/(\sigma))[1])\ \xrightarrow{ \rho_{\le 2 \bullet} (\cot_{E_\infty^\rm{nu}}(\sigma)[1])\sqcup j_0}\rho_{\le 2\bullet}(\rm{cot}_{E_\infty^\rm{nu}}(\bf{R})[1])\] 
 is an equivalence. However, this follows from the homological description of $j_0$ and the fact that $\rho_{\le 2 \bullet} (\cot_{E_\infty^\rm{nu}}(\sigma)[1])$ is given in homology by inclusion of the $\Q$ summand in bidegree $(1,1)$ by the discussion at the beginning of the proof. 
\end{proof}

\begin{corollary}Let $(\bf{Q}^\rm{nu}[t] \xrightarrow{\sigma} \bf{R} \xrightarrow{\epsilon} \bf{Q}^\rm{nu}[t])$ be as in \cref{thm:r-from-critical}. Then we have:
    \begin{enumerate}[(i)]
        \item $\rho_{\leq \slopetwo}^\alg(\bf{R})/(\sigma) \simeq \rho_{\leq \slopetwo}^\alg(\bf{R}/(\sigma)) \simeq \prim_{s\,\coLie}(\scr{G}(\bf{R})[-1])$ in $\Alg^\rm{red}_{E_\infty^\rm{nu}}(\Fun(\bb{N},\DQ))$,
        \item $\rho_{\leq \slopetwo}^\alg(\bf{R})/(\sigma-1) \simeq \rho_{\leq \slopetwo}^\alg(\bf{R}/(\sigma-1)) \simeq \prim_{s\,\coLie}(\colim \scr{G}(\bf{R})[-1])$ in $\Alg_{E_\infty^\rm{nu}}(\DQ)$.
        \end{enumerate}
\end{corollary}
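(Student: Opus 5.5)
The plan is to deduce both parts from \cref{thm:r-from-critical} together with the fact that the quotient and stable-algebra constructions are pushouts, hence commute with coproducts of algebras.

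For part (i), start from
\[\rho_{\leq \slopetwo}^\alg(\bf{R}) \simeq \bb{Q}^\rm{nu}[t] \sqcup^{E_\infty^\rm{nu}} \prim_{s\,\coLie}(\scr{G}(\bf{R})[-1]).\]
Here one must check that the composite $\bb{Q}^\rm{nu}[t] \to \bf{R} \to \rho^\alg_{\le\slopetwo}(\bf{R})$ corresponds to the inclusion of the first summand. This follows from the construction of the equivalence in \cref{lem:structure-goncharov-truncation}, where the map on the first factor was $\rho_{\le 2\bullet}(\indec_{E_\infty^\rm{nu}}(\sigma)[1])$. Taking the cofibre by $\sigma$ is a pushout along $\bb{Q}^\rm{nu}[t] \to 0$, so it gives
\[\rho_{\leq \slopetwo}^\alg(\bf{R})/(\sigma) \simeq \prim_{s\,\coLie}(\scr{G}(\bf{R})[-1]).\]

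For the middle term of (i), note that $\rho^\alg_{\le \slopetwo} = i^\alg\tau^\alg$. Its indecomposables are computed by \cref{lem:truncation-via-indec}, so it suffices to compare indecomposables via Koszul duality. The indecomposables of $\rho^\alg_{\le\slopetwo}(\bf{R}/(\sigma))$ are $\rho_{\le 2\bullet}(\indec(\bf{R}/(\sigma))[1])[-1]$. By the proof of \cref{lem:coformality-critical} this is $\scr{G}(\bf{R})[-1]$. Both algebras then have the same Koszul dual, and the natural map between them is the one induced from the unit $\bf{R}\to\rho^\alg(\bf{R})$ after quotienting by $\sigma$. This map is compatible with the claim map $j$ from \cref{lem:structure-goncharov-truncation}, so it is an equivalence.

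For part (ii), apply $\colim$ and $(-)/(\sigma-1)$, which preserve colimits by \cref{lem:stable-alg-first-props} and \cref{lem:graded-stable-algebra}, to the coproduct decomposition above. This gives
\[\rho^\alg(\bf{R})/(\sigma-1) \simeq \bb{Q}^\rm{nu}[t]/(\sigma-1) \sqcup \colim \prim(\scr{G}(\bf{R})[-1]).\]
The first factor is trivial by \cref{exam:stable-algebra-of-id-u}. Since $\colim$ is symmetric monoidal and a left adjoint, it commutes with $\prim_{s\,\coLie}$ on reduced objects. This holds because $\prim$ is computed by the Chevalley--Eilenberg type construction built from colimits and tensor products, convergent by connectivity. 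The identification $\rho^\alg(\bf{R})/(\sigma-1)\simeq\rho^\alg(\bf{R}/(\sigma-1))$ is then interpreted as the colimit of the graded statement in (i), through the rank filtration.

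The main obstacle is making the second equivalences precise, namely commuting the truncation with quotients. The approach is to always reduce to indecomposables via \cref{lem:truncation-via-indec}, and to use the splitting from $\epsilon$ to see that the homology of the two sides matches.
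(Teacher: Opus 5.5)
Your argument is correct and matches what the paper intends; the paper gives no separate proof. It treats (i) as immediate from \cref{thm:r-from-critical}, by quotienting the coproduct by its first factor, together with Koszul duality applied to \cref{lem:coformality-critical}, and (ii) as the result of applying the colimit-preserving stable-algebra construction, which kills the $\bb{Q}^\rm{nu}[t]$ factor via \cref{lem:stable-alg-first-props}. The one caveat is that $\rho^\alg_{\le\slopetwo}$ is only defined on graded algebras, so the middle term of (ii) has no independent meaning in the paper, and your ``rank filtration'' sentence amounts to a notational identification rather than an additional proof step.
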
 

\subsection{Edge homomorphisms} Recall that for 
\[(\bf{Q}^\rm{nu}[t] \xrightarrow{\sigma} \bf{R} \xrightarrow{\epsilon} \bf{Q}^\rm{nu}[t]) \in (\Alg^\rm{red}_{E_\infty^\rm{u}}(\Fun(\bb{N},\DQ))_{/\bf{Q}^\rm{nu}[t]})^{\id_{\bf{Q}^\rm{nu}[t]}/},\]
Implicitly unitalising, \cref{lem:rank-spectral-sequences} provides a pair of spectral sequences
\begin{align*}E^1_{n,d}=H_{n,d}(\bf{R}/(\sigma)) &\Longrightarrow H_d(\bf{R}/(\sigma-1)) \\
 {}^\infty E^1_{n,d}=H^{E_\infty}_{n,d}(\bf{R}/(\sigma)) &\Longrightarrow H_d^{E_\infty}(\bf{R}/(\sigma-1))\end{align*}
together with a morphism of spectral sequences $E^r_{*,*} \to {}^\infty E^r_{*,*} $ which is multiplicative if we give the domain the induced product on homology and the target the trivial product.

\subsubsection{Edge homomorphisms for algebras with a slope 2 vanishing line}
 
If $\bf{R}$ has a standard vanishing line of slope 2, as we suppose from now on, ${}^\infty E^1_{n,d}$ will be concentrated in bidegrees $(n,d)$ satisfying $d \geq 2n-1$. Thus in odd degrees its edge homomorphisms take the form
\[\rm{edge}_n \colon H_{2n-1}^{E_\infty}(\bf{R}/(\sigma-1)) \lra \scr{G}_n(\bf{R}) \cong H_{n,2n-1}^{E_\infty}(\bf{R}/(\sigma)).\] 
The goal of this subsection is to understand these better and give a more refined construction.

\begin{example}\label{exam:edge-ce-identification}  Let us start with computational consequences of \cref{thm:r-from-critical}: concrete descriptions of $\rho_{\leq \slopetwo}^\alg(\bf{R})$, its stable algebra, and the cofibre of stabilisation. Firstly, by \cite[Section C.4]{KRS1} $\prim_{s\,\rm{coLie}}$ may be computed using the Chevalley--Eilenberg complex, yielding a commutative square
\[\begin{tikzcd}H_{n,d}(\rho_{\leq \slopetwo}^\alg(\bf{R})) \rar{\cong} \dar &  \bigoplus_{i = 1}^n H_{i,d}^\rm{CE}(\scr{G}(\bf{R})) \dar{\pr} \\[-5pt]
H_{n,d}(\rho_{\leq \slopetwo}^\alg(\bf{R})/(\sigma)) \rar{\cong} & H^\rm{CE}_{n,d}(\scr{G}(\bf{R}))\end{tikzcd}\]
with left vertical map induced by the map $\bf{R} \to \bf{R}/(\sigma)$ of $E_\infty^\rm{nu}$-algebras. Secondly, there is a commutative square
\[\begin{tikzcd}H_{n,d}(\rho_{\leq \slopetwo}^\alg(\bf{R})) \rar{\cong} \dar &  \bigoplus_{i=1}^n H_{i,d}^\rm{CE}(\scr{G}(\bf{R})) \dar{\inc} \\[-5pt]
H_{d}(\rho_{\leq \slopetwo}^\alg(\bf{R})/(\sigma-1)) \rar{\cong} & \bigoplus_{i=1}^\infty H^\rm{CE}_{i,d}(\scr{G}(\bf{R}))\end{tikzcd}\]
with left vertical map induced by the stabilisation map $\rho_{\leq \slopetwo}^\alg(\bf{R})(n) \to \rho_{\leq \slopetwo}^\alg(\bf{R})/(\sigma-1)$.
In all these cases, the canonical map to $E_\infty$-homology is given by the projection of the Chevalley--Eilenberg complex onto its generators.\end{example}

Since the natural map $\bb{Q}^\rm{nu}[t] \to \rho_{\le \slopetwo}^\alg(\bb{Q}^\rm{nu}[t])$ is an equivalence, we get maps
\[(\bf{Q}^\rm{nu}[t] \xrightarrow{\sigma} \bf{R} \xrightarrow{\epsilon} \bf{Q}^\rm{nu}[t]) \lra (\bf{Q}^\rm{nu}[t] \xrightarrow{\sigma} \rho_{\le \slopetwo}^\alg(\bf{R}) \xrightarrow{\epsilon} \bf{Q}^\rm{nu}[t])\]
in the category $(\Alg^\rm{red}_{E_\infty^\rm{u}}(\Fun(\bb{N},\DQ))_{/\bf{Q}^\rm{nu}[t]})^{/\id_{\bf{Q}^\rm{nu}[t]}}$. Thus, by the naturality of the rank spectral sequence construction we get a map of spectral sequences 
\[\begin{tikzcd} {}^\infty E^1_{n,d}=H^{E_\infty}_{n,d}(\bf{R}/(\sigma)) \Longrightarrow H_d^{E_\infty}(\bf{R}/(\sigma-1)) \dar \\[-10pt]
{}^\infty \ol{E}^1_{n,d}=H^{E_\infty}_{n,d}(\rho_{\le \slopetwo}^\alg(\bf{R})/(\sigma)) \Longrightarrow H_d^{E_\infty}(\rho_{\le \slopetwo}^\alg(\bf{R})/(\sigma-1)).\end{tikzcd}\]
The first page ${}^\infty \ol{E}^1_{n,d}$ of the second spectral sequence is given by $\scr{G}(\bf{R})[-1]$, concentrated in bidegrees $(n,2n-1)$, and collapses for degree reasons---by \cref{not:rank-ss-indexing}, $d^r$ has bidegree $(-r,r-1)$. The map on the first pages of the spectral sequences is an isomorphism in bidegrees $(n,2n-1)$ and thus the following commutes
\[\begin{tikzcd} H^{E_\infty}_{n,2n-1}(\bf{R}/(\sigma)) \dar[swap]{\cong} &[10pt] H^{E_\infty}_{2n-1}(\bf{R}/(\sigma-1)) \dar \lar[swap]{\rm{edge}_n} \\[-5pt]
    H^{E_\infty}_{n,2n-1}(\rho_{\le \slopetwo}^\alg(\bf{R})/(\sigma)) & H^{E_\infty}_{2n-1}(\rho_{\le \slopetwo}^\alg(\bf{R})/(\sigma-1)) \lar{\cong} \end{tikzcd}\]
where the vertical maps are induced by naturality. We can compute the edge homomorphism in some cases:

\begin{lemma} \label{lem:edge-homomorphism-in-rank-n} 
    Let $(\bf{Q}^\rm{nu}[t] \xrightarrow{\sigma} \bf{R} \xrightarrow{\epsilon} \bf{Q}^\rm{nu}[t])$ be as above. For $n \geq 1$ the following diagram commutes
    \[\begin{tikzcd}  H_{n,2n-1}(\bf{R}) \rar{\rm{stab}} \dar &[5pt] H_{2n-1}(\bf{R}/(\sigma-1)) \dar{\rm{can}} \\[-5pt]
    H_{n,2n-1}^{E_\infty}(\bf{R}/(\sigma)) & H_{2n-1}^{E_\infty}(\bf{R}/(\sigma-1)) \lar[swap]{\rm{edge}_n}\end{tikzcd}\]
    where the vertical maps are induced by $\bf{R} \to \bf{R}/(\sigma)$ followed by the canonical map from homology to $E_\infty$-homology, and top horizontal map is the stabilisation map induced by $\bf{R} \to \bf{R}/(\sigma-1)$. 
\end{lemma}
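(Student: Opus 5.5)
We will deduce the lemma from the map of rank spectral sequences of \cref{lem:rank-spectral-sequences} \eqref{enum:rank-spectral-sequences-iii}, combined with the rank filtration on $H_*(\bf{R}/(\sigma-1))$ described after \cref{thm:stable-algebras-DQ}.

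The stabilisation map $\rm{stab}$ factors through the rank-$n$ stage of the filtered object $\bf{R}(0)\to\bf{R}(1)\to\cdots$. Hence its image lies in filtration $\fil_n H_{2n-1}(\bf{R}/(\sigma-1))$. Moreover, the image in the associated graded $E^\infty_{n,2n-1}\subseteq E^1_{n,2n-1}=H_{n,2n-1}(\bf{R}/(\sigma))$ is the image of $x$ under $\bf{R}\to\bf{R}/(\sigma)$. This is the usual description of a filtered colimit spectral sequence: the $E^1$-term is the homology of $\bf{R}(n)/\bf{R}(n-1)$, and a class coming from $\bf{R}(n)$ is a permanent cycle whose leading term is its image in that quotient.

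Next we apply the map of spectral sequences $E\to{}^\infty E$. On abutments it is $\rm{can}$, and on $E^1$ it is the canonical map $H_{n,d}(\bf{R}/(\sigma))\to H^{E_\infty}_{n,d}(\bf{R}/(\sigma))$. A map of filtered objects is compatible with the filtrations and with the identification of the associated graded with $E^\infty$. So $\rm{can}(\rm{stab}(x))$ lies in $\fil_n H^{E_\infty}_{2n-1}(\bf{R}/(\sigma-1))$, and its class in ${}^\infty E^\infty_{n,2n-1}$ is the image of $x$ along the left vertical composite.

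The remaining point is to identify $\rm{edge}_n$ with this projection. Since ${}^\infty E^1_{m,d}=0$ for $d<2m-1$, the groups ${}^\infty E^1_{m,2n-1}$ vanish for $m>n$. Therefore $\fil_n H^{E_\infty}_{2n-1}$ is the whole group, and $\rm{edge}_n$ is the quotient onto ${}^\infty E^\infty_{n,2n-1}$ followed by the inclusion into ${}^\infty E^1_{n,2n-1}$.

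For this inclusion to make sense we must check that ${}^\infty E^\infty_{n,2n-1}\subseteq{}^\infty E^1_{n,2n-1}$, i.e.\ that no differentials leave position $(n,2n-1)$. A differential $d^r$ there would land in bidegree $(n-r,2n-2)$. By the vanishing line this target is nonzero only if $2n-2\ge 2(n-r)-1$, which holds for every $r\ge 1$. So the vanishing line alone does not rule these differentials out. Instead we would use the comparison with the Koszul truncation established just before the lemma: the edge map is defined through that comparison square, where the truncated spectral sequence collapses, and the vertical comparison map on $E^1$ is an isomorphism at $(n,2n-1)$. Naturality of the filtration argument then transfers the conclusion.

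The main obstacle is justifying the filtration claim at the chain level: that the map $\bf{R}(n)\to\colim$ factors through $\fil_n$ with leading term in $\gr_n\simeq\bf{R}/(\sigma)(n)$. We would deduce this from \cref{lem:graded-stable-algebra} \eqref{enum:graded-stable-algebra-ii} together with the underlying-object description of $\rkfil(\bf{R})/(\rkfil(\sigma)-1)$, checking that the map $\bf{R}\to\gr$ is the quotient by $\sigma$.
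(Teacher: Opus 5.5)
Your core argument is correct, and it takes a different route from the paper. You work entirely inside the map of filtered objects underlying $E^r_{*,*}\to{}^\infty E^r_{*,*}$ from \cref{lem:rank-spectral-sequences}~\eqref{enum:rank-spectral-sequences-iii}. The class $\rm{stab}(x)$ lies in $\fil_n$ with leading term the image $\bar x\in H_{n,2n-1}(\bf{R}/(\sigma))$. Hence $\rm{can}(\rm{stab}(x))$ lies in $\fil_n H^{E_\infty}_{2n-1}(\bf{R}/(\sigma-1))$, which is the whole group by the vanishing line, and its leading term is $\rm{can}(\bar x)$. That leading term is $\rm{edge}_n(\rm{can}(\rm{stab}(x)))$ by definition.

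The paper instead uses naturality along $\bf{R}\to\rho^\alg_{\le\slopetwo}(\bf{R})$. In the resulting cube all faces except the front and back commute, and $H^{E_\infty}_{n,2n-1}(\bf{R}/(\sigma))\to H^{E_\infty}_{n,2n-1}(\rho^\alg_{\le\slopetwo}(\bf{R})/(\sigma))$ is an isomorphism. So it suffices to check the front face, which under the Chevalley--Eilenberg identifications of \cref{exam:edge-ce-identification} consists of inclusions and projections and visibly commutes.

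The paper's route is a short consequence of \cref{thm:r-from-critical} and sets up the same reduction used again in \cref{lem:edge-in-ker-cobracket}. Yours is more elementary and more general: it needs no Koszul duality and no splitting $\epsilon$, only spectral sequence bookkeeping. The step you flag as the main obstacle is routine. Apply $\gr$ to the pushout \eqref{eqn:filered-r-mod-sigma-1} for $\rkfil(\bf{R})$, and use \cref{lem:filtered-stable-algebra} together with $\gr\circ\iota_!\simeq\id$ from \cref{lem:basic-props-rank-filtration}. This identifies $\bf{R}(n)\to\gr_n$ with the quotient by $\sigma$.

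You should, however, delete your paragraph about differentials leaving $(n,2n-1)$. It misidentifies what is needed, and the statement it tries to establish is false.

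\begin{itemize}
\item For ${}^\infty E^\infty_{n,2n-1}$ to be a subgroup of ${}^\infty E^1_{n,2n-1}$, what you need is the absence of \emph{incoming} differentials. These are excluded by the vanishing line: a $d^r$ into $(n,2n-1)$ starts at $(n+r,2n)$, and $2n<2(n+r)-1$.
\item Outgoing differentials only cut ${}^\infty E^\infty_{n,2n-1}$ down to the permanent cycles, which is still a subgroup.
\item Outgoing differentials genuinely occur. For $\bf{R}=\BGLb(F)_\bb{Q}$ with $F$ a number field, \cref{thm:image-edge-is-cobracket} and \cref{thm:number-fields-goncharov} show that the image of $\rm{edge}_n$, i.e.\ the permanent cycles at $(n,2n-1)$, is $\ker(\delta|_{\scr{G}_n(F)})$. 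This is a proper subspace of $\scr{G}_n(F)={}^\infty E^1_{n,2n-1}$ for $n\geq 2$.
\item The Koszul-truncation comparison could not rule such differentials out anyway, since ${}^\infty\ol{E}^1_{n-r,2n-2}=0$.
\item $\rm{edge}_n$ is not defined through the comparison square; that square is a consequence of the definition.
\end{itemize}

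A related imprecision: on the homology side, $E^\infty_{n,2n-1}$ is in general only a subquotient of $E^1_{n,2n-1}$, because $H_{*,*}(\bf{R}/(\sigma))$ need not satisfy the slope-$2$ vanishing line. This is harmless. The element $\bar x$ is a permanent cycle representing the class of $\rm{stab}(x)$, and $E^\infty\to{}^\infty E^\infty$ is induced by $\rm{can}$ on permanent cycles.
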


\begin{proof}Naturality in $\bf{R}$ yields the cube
    \[\begin{tikzcd}[column sep=small, row sep=small] H_{n,2n-1}(\bf{R}) \arrow{rr} \arrow{dd} \arrow{rd}&[-15pt] &[-15pt] H_{2n-1}(\bf{R}/(\sigma-1)) \arrow{dd} \arrow{rd} & \\[-3pt]
    & H_{n,2n-1}(\rho_{\le \slopetwo}^\alg(\bf{R})) \arrow{rr} \arrow{dd} & & H_{2n-1}(\rho_{\le \slopetwo}^\alg(\bf{R})/(\sigma-1)) \arrow{dd} \\[-3pt]
    H^{E_\infty}_{n,2n-1}(\bf{R}/(\sigma))  \arrow{rd}{\cong} & & H^{E_\infty}_{2n-1}(\bf{R}/(\sigma-1)) \arrow{rd} \arrow{ll} & \\[-3pt]
    & H^{E_\infty}_{n,2n-1}(\rho_{\le \slopetwo}^\alg(\bf{R})/(\sigma)) & & H^{E_\infty}_{2n-1}(\rho_{\le \slopetwo}^\alg(\bf{R})/(\sigma-1)) \arrow{ll}\end{tikzcd}\]
    where all faces except possibly the front and back commute; the statement of the lemma is that the back commutes, and it suffices to prove that the front does. Using \cref{exam:edge-ce-identification}, we can identify the front as
    \[\begin{tikzcd} \bigoplus_{i=1}^n H^\rm{CE}_{i,2n-1}(\scr{G}(\bf{R})) \rar{\rm{inc}} \dar[swap]{\rm{inc} \circ \rm{pr}} & \bigoplus_{i=1}^\infty H^\rm{CE}_{i,2n-1}(\scr{G}(\bf{R}))\dar{\inc \circ \pr} \\[-5pt]
    \scr{G}_n(\bf{R}) & \scr{G}_n(\bf{R}) \lar[swap]{\id_n},\end{tikzcd}\]
    with projections onto the $n$th term, and it visibly commutes.
\end{proof}

\subsubsection{Rank filtrations on $E_\infty$-homology} To improve on \cref{lem:edge-homomorphism-in-rank-n}, we introduce two different filtrations on the homology groups $H_*^{E_\infty}(\bf{R}/(\sigma-1))$. They both arise by regarding $\bf{R}/(\sigma-1)$ as the colimit of the filtered object $\rkfil(\bf{R})/(\rkfil(\sigma)-1)$, but differ in which order we take image in homology or pass to indecomposables. Recall that for an filtered object $\bf{X}$ we use the notation $H_{n,d}(\bf{X}) \coloneq H_d(n^* \bf{X})$.

\begin{definition} \label{defn rank filtrations} Let $(\bb{Q}^\rm{nu}[t] \xrightarrow{\sigma} \bf{R} \xrightarrow{\epsilon} \bb{Q}^\rm{nu}[t]) \in (\Alg^\rm{red}_{E_\infty^\rm{u}}(\Fun(\bb{N},\DQ))_{/\bf{Q}^\rm{nu}[t]})^{\id_{\bf{Q}^\rm{nu}[t]}/}$.
    \begin{enumerate}[\noindent (i)]
        \item The \emph{Rognes rank filtration} on $H_*^{E_\infty}(\bf{R}/(\sigma-1))$ is the increasing filtration with filtration step $\fil^\rm{rog}_n H^{E_\infty}_d(\bf{R}/(\sigma-1))$ given by
        \[\rm{im}\big[H^{E_\infty}_{n,d}(\rkfil(\bf{R})/(\rkfil(\sigma)-1)) \to H^{E_\infty}_d(\bf{R}/(\sigma-1))\big].\]
        \item The \emph{indecomposable rank filtration} on $H_*^{E_\infty}(\bf{R}/(\sigma-1))$ is the increasing filtration with filtration step $\fil_n^\rm{ind} H_d^{E_\infty}(\bf{R}/(\sigma-1))$ given by 
        \[\rm{im}\big[H_{n,d}(\bf{R}) \to H_d(\bf{R}/(\sigma-1)) \to H_d^{E_\infty}(\bf{R}/(\sigma-1))\big].\]
    \end{enumerate}
\end{definition}

The Rognes rank filtration is that associated to the rank spectral sequence ${}^\infty E^1_{n,d}=\smash{H^{E_\infty}_{n,d}(\bf{R}/(\sigma))} \Rightarrow \smash{H_d^{E_\infty}(\bf{R}/(\sigma-1))}$ of \cref{lem:rank-spectral-sequences} \eqref{enum:rank-spectral-sequences-ii}. If $\bf{R}$ has a standard vanishing line of slope 2, then the Rognes rank filtration is exhaustive and satisfies $\rm{fil}_n^\rm{rog} H_{2n-1}^{E_\infty}(\bf{R}/(\sigma-1)) = H_{2n-1}^{E_\infty}(\bf{R}/(\sigma-1))$.

The indecomposable rank filtration is induced by the one associated with the rank spectral sequence $E^1_{n,d}=H_{n,d}(\bf{R}/(\sigma)) \Rightarrow H_d(\bf{R}/(\sigma-1))$ of \cref{lem:rank-spectral-sequences} \eqref{enum:rank-spectral-sequences-i}, under the canonical map $\rm{can} \colon H_*(\bf{R}/(\sigma-1)) \to \smash{H_*^{E_\infty}(\bf{R}/(\sigma-1))}$. It is exhaustive if and only if this canonical map is surjective. It admits an alternative description as
    \[\fil_n^\rm{ind} H_d^{E_\infty}(\bf{R}/(\sigma-1)) = \rm{im}\big[(H_{n,d}(\rkfil(\bf{R})/(\rkfil(\sigma)-1)) \to H_d(\bf{R}/(\sigma-1)) \to H_d^{E_\infty}(\bf{R}/(\sigma-1))\big].\]
From the existence of a map of spectral sequences 
    \[\big(E^1_{n,d}=H_{n,d}(\bf{R}/(\sigma)) \Rightarrow H_d(\bf{R}/(\sigma-1)\big) \lra \big({}^\infty E^1_{n,d}=H^{E_\infty}_{n,d}(\bf{R}/(\sigma)) \Rightarrow H_d^{E_\infty}(\bf{R}/(\sigma-1))\big)\]
of \cref{lem:rank-spectral-sequences} \eqref{enum:rank-spectral-sequences-iii}, we obtain the following:

\begin{lemma}\label{lem:indec-vs-rognes-rank-filtratino}
    For $(\bb{Q}^\rm{nu}[t] \xrightarrow{\sigma} \bf{R} \xrightarrow{\epsilon} \bb{Q}^\rm{nu}[t])$ as above, we have 
        \[\fil_n^\rm{ind} H_*^{E_\infty}(\bf{R}/(\sigma-1)) \subseteq \fil_n^\rm{rog} H_*^{E_\infty}(\bf{R}/(\sigma-1)).\]
\end{lemma}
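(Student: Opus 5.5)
The inclusion should follow from the map of spectral sequences in \cref{lem:rank-spectral-sequences} \eqref{enum:rank-spectral-sequences-iii}. By construction, that map comes from applying the unit natural transformation $\id \to \triv_{E_\infty^\rm{nu}}\cot_{E_\infty^\rm{nu}}$ to the filtered object $\bf{X} \coloneq \rkfil(\bf{R})/(\rkfil(\sigma)-1)$. I will therefore not argue with pages of the spectral sequences at all. Instead I will use the underlying map of filtered objects
\[\fgt(\bf{X}) \lra \cot_{E_\infty^\rm{nu}}(\bf{X}),\]
taken after passing to augmentation ideals. The plan is to show that, in each filtration degree $n$, this map fits into a commutative square with the colimit maps.

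\textbf{Step 1.} Apply $n^*$ and $\colim$ to the natural transformation above. Because it is a map in $\Fun(\bb{N}_{\leq},\DQ)$, the square
\[\begin{tikzcd} H_{n,d}(\bf{X}) \rar \dar & H_d(\colim \bf{X}) \dar \\[-5pt] H^{E_\infty}_{n,d}(\bf{X}) \rar & H^{E_\infty}_d(\colim\bf{X}) \end{tikzcd}\]
commutes. Here the top row uses $H_{n,d}(\bf{X}) = H_d(n^*\bf{X})$. For the bottom row, $\cot_{E_\infty^\rm{nu}}$ is a left adjoint that commutes with $\colim$ (the latter is symmetric monoidal by \cref{lem:colim-gr-sym-mon-left-adjoints}), so we may identify $\colim\cot(\bf{X}) \simeq \cot(\colim \bf{X})$. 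By \cref{lem:graded-stable-algebra} we also have $\colim\bf{X} \simeq \bf{R}/(\sigma-1)$.

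\textbf{Step 2.} Identify the two images in the square. Composing down then across gives a map whose image is $\fil_n^\rm{rog}$, by definition. Composing across then down has image $\fil_n^\rm{ind}$, using the alternative description recorded just after \cref{defn rank filtrations}. Commutativity of the square then gives the inclusion immediately, since the image of the composite across-then-down equals the image of down-then-across, and the latter is contained in $\fil_n^\rm{rog}$.

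\textbf{Main obstacle.} The only subtle point is verifying that the vertical map on the right is the canonical map $\rm{can}$ under these identifications. Equivalently, the unit transformation must be compatible with $\colim$. I would deduce this from naturality of the unit with respect to the symmetric monoidal left adjoint $\colim$, as was already used for \cref{lem:rank-spectral-sequences} \eqref{enum:rank-spectral-sequences-iii}, which asserts agreement on abutments.
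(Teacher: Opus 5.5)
Your proof is correct and is essentially the paper's argument. The paper deduces the inclusion in one line from the map of rank spectral sequences in \cref{lem:rank-spectral-sequences} \eqref{enum:rank-spectral-sequences-iii}, which is induced by exactly the map of filtered objects $\fgt(\bf{X}) \to \cot_{E_\infty^{\rm{nu}}}(\bf{X})$ that you use; filtration steps therefore land in filtration steps, compatibly with $\rm{can}$ on the abutment. One small imprecision: the image of your down-then-across composite is only contained in $\fil^{\rm{rog}}_n$, not equal to it, but containment is all your argument needs.
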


\begin{example}\label{exam:rank-ce-identification} It is straightforward to describe the Rognes rank filtration and indecomposable rank filtration on $H_*(\rho_{\le \slopetwo}(\bf{R})/(\sigma-1))$. We have that
\[\fil^\rm{Rog}_n H_d^{E_\infty}(\rho_{\le \slopetwo}(\bf{R})/(\sigma-1)) \cong \begin{cases} \scr{G}_i(\bf{R}) & \text{if $d=2i-1$ and $n \geq i$} \\
0 & \text{else,}\end{cases}\]
from which one may read off the associated graded. Similarly, we have that 
\[\fil^\rm{ind}_n H_d^{E_\infty}(\rho_{\le \slopetwo}(\bf{R})/(\sigma-1)) \cong \begin{cases} \ker(\delta|_{\scr{G}_i(\bf{R})}) & \text{if $d=2i-1$ and $n \geq i$} \\
0 & \text{else.}\end{cases}\]
\end{example}

\begin{lemma}\label{lem:edge-in-ker-cobracket} Let $(\bb{Q}^\rm{nu}[t] \xrightarrow{\sigma} \bf{R} \xrightarrow{\epsilon} \bb{Q}^\rm{nu}[t])$ be as above with a standard vanishing line of slope 2. Then if $x \in \bigcup_{p \geq 0} \fil_p^\rm{ind} H_{2n-1}^{E_\infty}(\bf{R}/(\sigma-1)) \subseteq H_{2n-1}^{E_\infty}(\bf{R}/(\sigma-1))$, we have
        \[\rm{edge}_n(x) \in \ker(\delta|_{\scr{G}(\bf{R})_n}) \subseteq \scr{G}(\bf{R})_n.\] 
\end{lemma}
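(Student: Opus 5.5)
The plan is to reduce to the Koszul truncation $\rho_{\le \slopetwo}^\alg(\bf{R})$, where everything is computed by the Chevalley--Eilenberg complex of $\scr{G}(\bf{R})$. Naturality does this. The unit map $\bf{R} \to \rho_{\le \slopetwo}^\alg(\bf{R})$ is a map in the slice category, so it induces a map of rank spectral sequences together with compatible maps on $H_{n,d}(-)$, on $H_d(-/(\sigma-1))$ and on $H_d^{E_\infty}(-/(\sigma-1))$. It therefore sends $\fil_p^\rm{ind} H_{2n-1}^{E_\infty}(\bf{R}/(\sigma-1))$ into $\fil_p^\rm{ind} H_{2n-1}^{E_\infty}(\rho_{\le \slopetwo}^\alg(\bf{R})/(\sigma-1))$. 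Indeed, the latter is defined as the image of $H_{p,2n-1}$ under stabilisation followed by the canonical map, and all of these maps commute with the unit.

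Next I use the commutative square established just before \cref{lem:edge-homomorphism-in-rank-n}. It expresses $\rm{edge}_n$ for $\bf{R}$ as the composite of the map $H^{E_\infty}_{2n-1}(\bf{R}/(\sigma-1)) \to H^{E_\infty}_{2n-1}(\rho_{\le \slopetwo}^\alg(\bf{R})/(\sigma-1))$ with the inverse of the isomorphism given by the edge map of the collapsing spectral sequence ${}^\infty\ol{E}$, followed by the identification $\scr{G}_n(\bf{R}) \cong H^{E_\infty}_{n,2n-1}(\rho_{\le \slopetwo}^\alg(\bf{R})/(\sigma))$. So it suffices to prove the claim for the truncation. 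There the edge map identifies the degree $2n-1$ part of $H^{E_\infty}_{*}(\rho_{\le \slopetwo}^\alg(\bf{R})/(\sigma-1))$, which is $\scr{G}_n(\bf{R})$ (the only generators of the Chevalley--Eilenberg complex in total degree $2n-1$ lie in rank $n$), with $\scr{G}_n(\bf{R})$ via the identity.

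The remaining step, which carries the content, is to show that the image of $H_{p,2n-1}(\rho_{\le \slopetwo}^\alg(\bf{R})) \to H_{2n-1}^{E_\infty}(\rho_{\le \slopetwo}^\alg(\bf{R})/(\sigma-1))$ lies in $\ker \delta$. This is the content of \cref{exam:rank-ce-identification}, and I would justify it with \cref{exam:edge-ce-identification}. Homology of $\rho_{\le \slopetwo}^\alg(\bf{R})/(\sigma-1)$ is $\bigoplus_i H^\rm{CE}_{i,*}(\scr{G}(\bf{R}))$, and the canonical map to $E_\infty$-homology is the projection of the Chevalley--Eilenberg complex onto its generators $\scr{G}(\bf{R})[-1]$. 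A class in $H^\rm{CE}_{n,2n-1}$ is represented by a cycle. Its component in $\scr{G}_n(\bf{R})$ (word length one) must have vanishing CE-differential modulo terms coming from higher word length. In total degree $2n-1$ and rank $n$, however, words of length $k$ sit in degree $2n-k$, so only length one occurs. The cycle condition is therefore exactly $\delta(x)=0$, and the image is $\ker(\delta|_{\scr{G}_n(\bf{R})})$.

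The main obstacle I expect is this last bookkeeping: checking that the CE differential on the length one generators is the cobracket with the correct grading, and that stabilisation corresponds to the inclusion of summands as in \cref{exam:edge-ce-identification}, so that rank $p$ classes map to rank $n$ components without picking up non-cycle pieces. Once that identification is granted, the lemma follows immediately by chasing the commutative square.
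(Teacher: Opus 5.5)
Your proposal is correct and follows the paper's own argument. Like the paper, you reduce by naturality (via the square preceding \cref{lem:edge-homomorphism-in-rank-n}) to the Koszul truncation $\rho^\alg_{\le \slopetwo}(\bf{R})$, and then use \cref{exam:edge-ce-identification,exam:rank-ce-identification} to identify the image of the indecomposable rank filtration with $H^\rm{CE}_{n,2n-1}(\scr{G}(\bf{R})) = \ker(\delta|_{\scr{G}_n(\bf{R})}) \subseteq \scr{G}_n(\bf{R})$. Your word-length count, in which length-$k$ words of rank $n$ sit in degree $2n-k$, simply makes explicit the equality $H^\rm{CE}_{n,2n-1}(\scr{G}(\bf{R})) = \ker(\delta|_{\scr{G}_n(\bf{R})})$ that the paper uses without comment.
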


\begin{proof}By naturality, we may replace $\bf{R}$ with $\rho^\alg_{\le \slopetwo}(\bf{R})$ and use \cref{exam:edge-ce-identification,exam:rank-ce-identification} to identify the image of the union of the indecomposable rank filtration steps with the inclusion
\[\inc \colon H^\rm{CE}_{n,2n-1}(\scr{G}(\bf{R})) = \ker(\delta|_{\scr{G}(\bf{R})_n}) \lra \scr{G}_n(\bf{R}).\qedhere\]
\end{proof}

\begin{lemma}For $\bf{A} \in \Alg_{E^\rm{nu}_\infty}(\DQ)$, the following are equivalent:
\begin{enumerate}[(a)]
    \item $\bf{A}$ is free as an $E_\infty^\rm{nu}$-algebra,
    \item $H_*(\bf{A})$ is free as a (nonunital) graded-commutative algebra,
    \item $H_*(\bf{A}) \to H_*^{E_\infty}(\bf{A})$ is surjective.
\end{enumerate}
\end{lemma}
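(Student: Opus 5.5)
We prove the cycle (a) $\Rightarrow$ (b) $\Rightarrow$ (c) $\Rightarrow$ (a), working throughout in $\DQ$. This is where rational Koszul duality and the fact that free $E_\infty$-algebras are symmetric algebras make everything transparent.

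\emph{(a) $\Rightarrow$ (b).} If $\bf{A} \simeq \free_{E^\rm{nu}_\infty}(X)$, then rationally the underlying object is $\bigoplus_{r\ge1}(X^{\otimes r})_{h\mathfrak{S}_r}$. Since homotopy orbits for finite groups are exact over $\bb{Q}$ and the Künneth theorem holds, $H_*(\bf{A}) \cong \bigoplus_{r \geq 1}\rm{Sym}^r(H_*(X))$. This is the free nonunital graded-commutative algebra on $H_*(X)$, and the identification is multiplicative by naturality of the product on the free algebra.

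\emph{(b) $\Rightarrow$ (c).} For any $\bf{A}$, the canonical map $H_*(\bf{A}) \to H^{E_\infty}_*(\bf{A})$ kills products, so it factors through the indecomposables $Q(H_*(\bf{A})) = H_*(\bf{A})/H_*(\bf{A})^{2}$. We use the filtration of $\cot_{E_\infty^\rm{nu}}(\bf{A})$ coming from the bar construction, equivalently the Chevalley--Eilenberg / Koszul dual Lie coalgebra description. This gives a spectral sequence whose $E^2$-page is the derived indecomposables (André--Quillen homology) of the graded-commutative algebra $H_*(\bf{A})$; rationally this is legitimate since $H_*$ is symmetric monoidal on $\DQ$. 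Its zeroth column is $Q(H_*(\bf{A}))$ and is the image of the edge map from homology.

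If $H_*(\bf{A})$ is free, its higher André--Quillen homology vanishes. The spectral sequence is then concentrated in the zeroth column, and the edge map $Q(H_*(\bf{A})) \to H_*^{E_\infty}(\bf{A})$ is an isomorphism. In particular the canonical map is surjective.

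\emph{(c) $\Rightarrow$ (a).} This is the main step; it is the argument of \cite[Proposition 4.10]{KRS1}, which we would repeat.
\begin{enumerate}[(1)]
\item Choose a graded subspace $V \subseteq H_*(\bf{A})$ mapping isomorphically onto $H_*^{E_\infty}(\bf{A})$; this is possible by surjectivity.
\item Over $\bb{Q}$ every graded vector space is realised by a chain complex with zero differential, and every homology class lifts to a cycle. So we get a map $V \to \fgt(\bf{A})$ in $\DQ$ realising the inclusion on homology.
\item Take its adjoint $f\colon \free_{E^\rm{nu}_\infty}(V) \to \bf{A}$.
\item On $E_\infty$-homology, $f$ induces $V = H^{E_\infty}_*(\free(V)) \to H^{E_\infty}_*(\bf{A})$. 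This map is the composite of the inclusion into homology with the canonical map, hence an isomorphism by the choice of $V$. So $\cot_{E^\rm{nu}_\infty}(f)$ is an equivalence.
\item Conclude that $f$ is an equivalence by the Hurewicz/Whitehead theorem for $E_\infty$-homology (\cite[Corollary 11.21]{GKRW18} or Koszul duality).
\end{enumerate}

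The anticipated obstacle in (c) $\Rightarrow$ (a) is step (5). Detecting equivalences on $E_\infty$-homology needs a connectivity hypothesis: either $\bf{A}$ connective in a suitable sense, or reduced in a grading as in \cite[Theorem B.17]{KRS1}. Without it the statement can fail for nonconnective objects. The plan is therefore to read the lemma under the standing hypotheses used where it is applied, namely $\bf{A}$ of the form $\bf{R}/(\sigma-1)$ for $\bf{R}$ reduced in $\Fun(\bb{N},\DQ)$ with connected pieces, hence connective and, in the relevant cases, 0-connected in positive homological degree after removing the unit. We would then invoke the convergence of the Hurewicz argument in that setting, filtering by homological degree if necessary. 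If needed, this also makes the bar spectral sequence in (b) $\Rightarrow$ (c) converge.
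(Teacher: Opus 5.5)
Your argument is correct once the connectivity hypothesis you isolate is added. Two of your three steps match the paper.

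\begin{itemize}
\item (a)$\Rightarrow$(b) is the same computation of $H_*(\free_{E^\rm{nu}_\infty}(X))$.
\item Your (c)$\Rightarrow$(a) is exactly the paper's argument: lift $H^{E_\infty}_*(\bf{A})$ into $H_*(\bf{A})$ and map the free algebra in.
\end{itemize}

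You differ from the paper at (b)$\Rightarrow$(c). The paper uses no Andr\'e--Quillen or word-length spectral sequence. It gets (a)$\Leftrightarrow$(b) from the argument of \cite[Proposition 4.10]{KRS1}: lift a free generating set $V\subseteq H_*(\bf{A})$, and the map $\free_{E^\rm{nu}_\infty}(V)\to\bf{A}$ induces $\rm{Sym}^{\geq 1}(V)\to H_*(\bf{A})$ on homology, an isomorphism by freeness. It then gets (a)$\Rightarrow$(c) because, for a free algebra, the canonical map is the projection $\rm{Sym}^{\geq 1}(V)\to V$ killing products.

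Your spectral sequence route is fine, since the word-length filtration is exhaustive and bounded below. But it is heavier, and your cyclic arrangement costs something. Your (b)$\Rightarrow$(a) now goes through (c)$\Rightarrow$(a), so it inherits a connectivity assumption it does not need. The paper's route proves (a)$\Leftrightarrow$(b) and (a),(b)$\Rightarrow$(c) for every $\bf{A}\in\Alg_{E^\rm{nu}_\infty}(\DQ)$.

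Your worry about step (5) is justified, and it affects the paper's proof as well. Its claim that the lifted map ``induces an isomorphism on homology'' is exactly where a hypothesis is used silently. In fact (c)$\Rightarrow$(a),(b) is false for general $\bf{A}$.

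Take $\bf{A}=\bb{Q}$ in degree $0$ with $e\cdot e=e$. Its unitalisation $\bb{Q}\times\bb{Q}$ is \'etale over $\bb{Q}$, so $\cot_{E^\rm{nu}_\infty}(\bf{A})\simeq 0$ and (c) holds trivially. Yet $H_*(\bf{A})$ is not free, and $\bf{A}\not\simeq\free_{E^\rm{nu}_\infty}(0)=0$.

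This example is connective, so, contrary to your remark, the failure is not just a nonconnective phenomenon. The right hypothesis is the one you eventually impose: $H_*(\bf{A})$ concentrated in degrees $\geq 1$, or reducedness in an auxiliary grading. Under that hypothesis your Hurewicz/Whitehead step goes through. It holds for the augmentation ideal of $\bf{R}/(\sigma-1)$ in \cref{cor:image-of-edge-in-ker-cobracket}. Moreover, the later uses (part (ii) there and Step 4 of the proof of \cref{thm:image-edge-is-cobracket}) only need the unconditional direction (a),(b)$\Rightarrow$(c), so nothing downstream breaks.
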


\begin{proof}
To prove the claim, we note the proof of \cite[Proposition 4.10]{KRS1} yields (a) $\Leftrightarrow$ (b). For (c) $\Rightarrow$ (a), we use that if $H_*(\bf{A}) \to H_*^{E_\infty}(\bf{A})$ is surjective then by picking lifts we can construct a map $\free_{E^\rm{nu}_\infty}(H_*^{E_\infty}(\bf{A})) \to \bf{A}$ which induces an isomorphism on homology and hence an equivalence. For (a) $\Rightarrow$ (c) we note that the map $H_*(\free_{E_\infty^\rm{nu}}(V)) \to \smash{H^{E_\infty}_*}(\free_{E_\infty^\rm{nu}}(V))$ can be identified with the map $\free_{E_\infty^\rm{nu}}(V) \to V$ sending all non-trivial products to zero.
\end{proof}

The previous results have the following crucial consequence.

\begin{corollary} \label{cor:image-of-edge-in-ker-cobracket}
Let $(\bb{Q}^\rm{nu}[t] \xrightarrow{\sigma} \bf{R} \xrightarrow{\epsilon} \bb{Q}^\rm{nu}[t])$ be as above with a standard vanishing line of slope 2. 
\begin{enumerate}[(i)]
    \item $H_*(\bf{R}/(\sigma-1))$ is free as an algebra if and only if the indecomposable rank filtration is exhaustive.
    \item If so, the edge homomorphism satisfies $\rm{im}(\rm{edge}_n) \subseteq \ker(\delta|_{\scr{G}_n(\bf{R})})$.
\end{enumerate} 
\end{corollary}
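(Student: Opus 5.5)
Both parts follow by combining the freeness lemma just proved with \cref{lem:edge-in-ker-cobracket}; the only real work is matching the definitions of the filtrations.

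For part (i), apply the preceding lemma to $\bf{A} = \bf{R}/(\sigma-1)$, viewed as a nonunital $E_\infty$-algebra in $\DQ$ by taking the augmentation ideal of the group completion augmentation. This gives that $H_*(\bf{R}/(\sigma-1))$ is free as a graded-commutative algebra if and only if the canonical map
\[\rm{can}\colon H_*(\bf{R}/(\sigma-1)) \lra H_*^{E_\infty}(\bf{R}/(\sigma-1))\]
is surjective. Unitalising does not affect this: the augmentation ideal of a free unital graded-commutative algebra is free nonunital, and conversely.

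It then remains to check that surjectivity of $\rm{can}$ is equivalent to exhaustiveness of the indecomposable rank filtration. By \cref{defn rank filtrations}, $\fil_n^\rm{ind}$ is the image under $\rm{can}$ of the image of $H_{n,d}(\bf{R}) \to H_d(\bf{R}/(\sigma-1))$. The underlying object of $\bf{R}/(\sigma-1)$ is $\colim_n \bf{R}(n)$ along the stabilisation maps; this is the corollary to \cref{thm:stable-algebras-DQ}, after unitalising. Homology commutes with filtered (sequential) colimits, so the images of $H_{n,d}(\bf{R})$ exhaust $H_d(\bf{R}/(\sigma-1))$. Hence
\[\bigcup_n \fil_n^\rm{ind} = \rm{im}(\rm{can}),\]
and the filtration is exhaustive precisely when $\rm{can}$ is surjective. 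This is the remark already made after \cref{defn rank filtrations}, and this step is where a little care with units and augmentation ideals is needed.

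For part (ii), assume the indecomposable rank filtration is exhaustive. Then every $x \in H^{E_\infty}_{2n-1}(\bf{R}/(\sigma-1))$ lies in $\bigcup_p \fil_p^\rm{ind} H^{E_\infty}_{2n-1}(\bf{R}/(\sigma-1))$. \Cref{lem:edge-in-ker-cobracket} then gives $\rm{edge}_n(x) \in \ker(\delta|_{\scr{G}_n(\bf{R})})$. Since $x$ was arbitrary, $\rm{im}(\rm{edge}_n) \subseteq \ker(\delta|_{\scr{G}_n(\bf{R})})$.
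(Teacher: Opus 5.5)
Your proposal is correct and follows the paper's proof: part (i) is the preceding freeness lemma applied to $\bf{A}=\bf{R}/(\sigma-1)$, combined with the observation made after \cref{defn rank filtrations} that the indecomposable rank filtration is exhaustive exactly when the canonical map to $E_\infty$-homology is surjective. Part (ii) is then \cref{lem:edge-in-ker-cobracket}; your justification of that observation via the colimit description of $\bf{R}/(\sigma-1)$ just fills in a step the paper leaves implicit.
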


\begin{proof}The first part is proven by by specialising the lemma to $\bf{A}=\bf{R}/(\sigma-1)$, and the second part then follows using \cref{lem:edge-in-ker-cobracket}.\end{proof}

\subsubsection{The primitive rank filtration for algebras from spaces}
Suppose we have $E_\infty^\rm{u}$-algebra in $\Fun(\bb{N},\Spc)$ that is path-connected in each grading. Then by \cref{lem:rational-algebras-from-spaces} its rationalisation admits a canonical lift to $(\Alg_{E_\infty^\rm{u}}(\Fun(\bb{N},\DQ))_{/\bb{Q}[t]})^{\id_{\bb{Q}[t]}/}$, and by \cref{lem:rational-algebras-from-spaces-free} the homology of its stable algebra is free as an algebra.

Letting $\bf{R}_\bb{Q}$ denote the augmentation ideal the rationalisation, let us further suppose that $\bf{R}_\bb{Q}$ has a standard vanishing line of slope 2. Then the previous subsection gives us two exhaustive filtrations 
\[\fil^\rm{rog} H_*^{E_\infty}(\bf{R}_\bb{Q}/(\sigma-1)) \qquad \text{and} \qquad \fil^\rm{ind} H_*^{E_\infty}(\bf{R}_\bb{Q}/(\sigma-1))\]
of $H_*^{E_\infty}((\bf{R}_\bb{Q})/(\sigma-1))$, which is isomorphic to $\pi_{*}(\Omega_0^\infty \bf{R}^\gc) \otimes \bb{Q}$ by \cref{prop:stable-algebra-spc}. Moreover, we also know the first of these filtrations is associated with the spectral sequence of \cref{lem:rank-spectral-sequences} \eqref{enum:rank-spectral-sequences-ii} and its edge homomorphism has image in the kernel of the cobracket by \cref{cor:image-of-edge-in-ker-cobracket}.

We will now introduce yet another filtration and use it to refine the edge homomorphism. By the Milnor--Moore theorem \cite[Appendix]{MilnorMoore} in
\[\rm{prim}\,H_*(\bf{R}_\bb{Q}/(\sigma-1)) \lra H_*(\bf{R}_\bb{Q}/(\sigma-1)) \lra  H_*^{E_\infty}(\bf{R}_\bb{Q}/(\sigma-1))\]
the left map is injective, the right map is surjective, and their composition is an isomorphism. In particular, the composition is a preferred isomorphism between its domain and target.

\begin{definition}
    The \emph{primitive rank filtration} on $H_*^{E_\infty}(\bf{R}_\bb{Q}/(\sigma-1))$ is given by taking
    \[\fil_n^\rm{prim} H_d^{E_\infty}(\bf{R}_\bb{Q}/(\sigma-1)) \coloneq \rm{im}\big[H_{n,d}(\bf{R}_\bb{Q}) \to H_d(\bf{R}_\bb{Q}/(\sigma-1))\big] \cap \rm{prim}\,H_d(\bf{R}_\bb{Q}/(\sigma-1))\]
and identifying $\rm{prim}\,H_d(\bf{R}_\bb{Q}/(\sigma-1))$ with $H_d^{E_\infty}(\bf{R}_\bb{Q}/(\sigma-1))$.
\end{definition}

Projecting to $H_*^{E_\infty}(\bf{R}_\bb{Q}/(\sigma-1))$ we get that
\[\fil_n^\rm{prim} H_*^{E_\infty}(\bf{R}_\bb{Q}/(\sigma-1)) \subseteq \fil^\rm{ind} H_*^{E_\infty}(\bf{R}_\bb{Q}/(\sigma-1)).\]
A standard vanishing line of slope 2 for $\bf{R}_\bb{Q}$ implies homological stability by \cite[Theorem 18.3]{GKRW18}, so the primitive rank filtration is exhaustive. Thus the refined edge homomorphisms to be defined momentarily can detect all of algebraic $K$-theory rather than part of it. Consider the dashed map
\begin{equation}\label{eqn:prim-map} \begin{tikzcd} \fil^\rm{prim}_n H_d^{E_\infty}(\bf{R}_\bb{Q}/(\sigma-1)) \arrow[dashed]{dd} & \lar[two heads] \rm{stab}^{-1}(\fil^\rm{prim}_n H_d^{E_\infty}(\bf{R}_\bb{Q}/(\sigma-1))) \subseteq H_{n,d}(\bf{R}_\bb{Q}) \dar \\[-10pt]
& H_{n,d}(\bf{R}_\bb{Q}/(\sigma)) \dar \\[-10pt]
H_{n,d}^\rm{CE}(\scr{G}(\bf{R})) & H_{n,d}(\rho_{\le \slopetwo}^\alg(\bf{R}_\bb{Q})/(\sigma))\lar{\cong} \end{tikzcd} \end{equation}
defined by picking a lift along the surjective left-most map and applying the right-bottom map. This is well-defined because if $H_{n,d}(\bf{R}_\bb{Q})$ stabilises to zero then so does its image in $H_{n,d}(\rho_{\le \slopetwo}^\alg(\bf{R}_\bb{Q}))$ but there stabilisations maps are injective. By construction, the filtration step $\smash{\fil^\rm{prim}_{n-1} H_d^{E_\infty}(\bf{R}_\bb{Q}/(\sigma-1))}$ goes to zero and so induces a map on associated gradeds:

\begin{definition}\label{def:refined-edge-homomorphism} Suppose $\bf{R}$ has a standard vanishing line of slope 2. Then the \emph{refined edge homomorphism} is indued by the zigzag \eqref{eqn:prim-map}
 \[\rm{e}_{n,d} \colon \rm{gr}^\rm{prim}_n H_d^{E_\infty}(\bf{R}_\bb{Q}/(\sigma-1)) \lra H_{n,d}^\rm{CE}(\scr{G}(\bf{R})).\]
\end{definition}

It is natural in $\bf{R}$ and is related to the original edge homomorphism via:

\begin{lemma}\label{lem:filtered-map-to-gocharov-complex}
   The following diagram commutes
    \[\begin{tikzcd} \fil_n^\rm{prim} H_{2n-1}^{E_\infty}(\bf{R}_\bb{Q}/(\sigma-1)) \rar{\rm{pr}} \dar[swap]{\rm{inc}}& \rm{gr}^\rm{prim}_n H_{2n-1}^{E_\infty}(\bf{R}_\bb{Q}/(\sigma-1)) \dar{\rm{e}_{n,2n-1}} \\[-5pt]
    H_{2n-1}^{E_\infty}(\bf{R}_\bb{Q}/(\sigma-1)) \rar{\rm{edge}_n} & \scr{G}_n(\bf{R}).\end{tikzcd}\]
\end{lemma}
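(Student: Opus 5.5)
The argument chases an element around the diagram, using \cref{lem:edge-homomorphism-in-rank-n} as the main input. Let $x \in \fil_n^\rm{prim} H_{2n-1}^{E_\infty}(\bf{R}_\bb{Q}/(\sigma-1))$. By definition of the primitive rank filtration, and via the Milnor--Moore identification of $\rm{prim}\,H_{2n-1}(\bf{R}_\bb{Q}/(\sigma-1))$ with $H^{E_\infty}_{2n-1}(\bf{R}_\bb{Q}/(\sigma-1))$, we may choose a lift $y \in H_{n,2n-1}(\bf{R}_\bb{Q})$ such that $\rm{stab}(y)$ is the primitive element corresponding to $x$. In particular, $\rm{can}(\rm{stab}(y)) = x$, because the composite $\rm{prim} \to H_* \to H_*^{E_\infty}$ is exactly the identification used.

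We then compute both ways around the square. Going down and then right, $\rm{e}_{n,2n-1}(\rm{pr}(x))$ is by definition \eqref{eqn:prim-map} the image of $y$ under
\[H_{n,2n-1}(\bf{R}_\bb{Q}) \to H_{n,2n-1}(\bf{R}_\bb{Q}/(\sigma)) \to H_{n,2n-1}(\rho^\alg_{\le\slopetwo}(\bf{R}_\bb{Q})/(\sigma)) \cong H^\rm{CE}_{n,2n-1}(\scr{G}(\bf{R})).\]
Going right and then down, $\rm{edge}_n(x) = \rm{edge}_n(\rm{can}(\rm{stab}(y)))$. By \cref{lem:edge-homomorphism-in-rank-n}, this equals the image of $y$ under $H_{n,2n-1}(\bf{R}_\bb{Q}) \to H_{n,2n-1}(\bf{R}_\bb{Q}/(\sigma)) \to H^{E_\infty}_{n,2n-1}(\bf{R}_\bb{Q}/(\sigma)) = \scr{G}_n(\bf{R})$.

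It remains to compare these two answers. The vertical map $e_{n,2n-1}$ in the square lands in $H^\rm{CE}_{n,2n-1}(\scr{G}(\bf{R}))$. Since $\scr{G}$ sits in bidegrees $(n,2n)$, shifted down by one, the only Chevalley--Eilenberg chains in bidegree $(n,2n-1)$ are the generators themselves. Hence $H^\rm{CE}_{n,2n-1}(\scr{G}(\bf{R})) = \ker(\delta|_{\scr{G}_n(\bf{R})}) \subseteq \scr{G}_n(\bf{R})$, and the square should be read with this inclusion. Under this inclusion, \cref{exam:edge-ce-identification} says that the map from homology to $E_\infty$-homology of $\rho^\alg_{\le\slopetwo}(\bf{R}_\bb{Q})/(\sigma)$ is the projection of the Chevalley--Eilenberg complex onto its generators. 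In bidegree $(n,2n-1)$ this projection is precisely the inclusion of $\ker\delta$ into $\scr{G}_n$.

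Naturality of the canonical map $H \to H^{E_\infty}$ along $\bf{R}_\bb{Q}/(\sigma) \to \rho^\alg_{\le\slopetwo}(\bf{R}_\bb{Q})/(\sigma)$ gives a commutative square. Its bottom map $H^{E_\infty}_{n,2n-1}(\bf{R}_\bb{Q}/(\sigma)) \to H^{E_\infty}_{n,2n-1}(\rho^\alg_{\le\slopetwo}(\bf{R}_\bb{Q})/(\sigma))$ is an isomorphism, namely the identification of both sides with $\scr{G}_n(\bf{R})$ used earlier. This square identifies the two composites, and the diagram commutes.

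The step most likely to need care is this last compatibility. One must check that the identification $H^{E_\infty}_{n,2n-1}(\rho^\alg_{\le\slopetwo}(\bf{R})/(\sigma)) \cong \scr{G}_n(\bf{R})$ coming from \cref{thm:r-from-critical} is the same one used in defining $\rm{edge}_n$. I would argue this from \cref{lem:truncation-via-indec}: the unit $\id \to \rho$ induces the identity on $\scr{G}$ in this bidegree.
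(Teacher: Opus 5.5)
Your proof is correct and takes the paper's route: you compute the left-bottom composite on a chosen lift $y \in H_{n,2n-1}(\bf{R}_\bb{Q})$ via \cref{lem:edge-homomorphism-in-rank-n}, and you compare it with $\rm{e}_{n,2n-1}$ using a naturality square for the canonical map from homology to $E_\infty$-homology. You also make explicit two points that the paper packs into ``by the definition of the refined edge homomorphism'': that $H^\rm{CE}_{n,2n-1}(\scr{G}(\bf{R})) = \ker(\delta|_{\scr{G}_n(\bf{R})})$, and that, by \cref{exam:edge-ce-identification}, the canonical map on the Koszul truncation in this bidegree is the inclusion of that kernel into $\scr{G}_n(\bf{R})$.
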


\begin{proof}The left-bottom composition is by \cref{lem:edge-homomorphism-in-rank-n} given by lifting an element to the preimage of $\rm{prim}\, H_{2n-1}(\bf{R}_\bb{Q}/(\sigma-1))$ in $H_{n,2n-1}(\bf{R}_\bb{Q})$ and taking its image under the canonical map $H_{n,2n-1}(\bf{R}_\bb{Q}) \to H^{E_\infty}_{n,2n-1}(\bf{R}_\bb{Q}) \cong \scr{G}_n(\bf{R})$ (which is independent of the choice of lift). The result follows since the following commutes by the definition of the refined edge homomorphism:
\[\begin{tikzcd}H_{n,2n-1}(\bf{R}_\bb{Q}) \dar[swap]{\rm{can}} \rar & H_{n,2n-1}(\bf{R}_\bb{Q}/(\sigma)) \dar{\rm{can}} \\[-5pt]
H^{E_\infty}_{n,2n-1}(\bf{R}_\bb{Q}) \rar{\cong} & H^{E_\infty}_{n,2n-1}(\bf{R}_\bb{Q}/(\sigma)).\end{tikzcd} \qedhere \]
\end{proof}

\section{Field of transcendence degree $1$ over finite fields}\label{sec:transcendence-degree-1}
Before commencing the discussion of number fields and the proof of \cref{thm:polyl-iso-number-field}, we will work out a case that may be of independent interest: that where $F$ is a field of transcendence degree $1$ over $\bb{F}_p$ for some prime number $p$, e.g.~$\bb{F}_q(t)$ for $q = p^n$. These fields are special because work of Harder allows us to fully describe the algebra $\BGLb(F)_\bb{Q}/(\sigma)$ and determine its $E_\infty$-indecomposables. We will see there is a preferred isomorphism of Lie coalgebras
        \[\scr{G}(F) \overset{\cong}\lra \rm{cofree}_{\rm{coLie}}(F^\times_\bb{Q}),\]
where $F^\times_\Q$ is placed in weight $1$. As an application, we deduce the distribution relations for classical polylogarithms in the case of fields of positive characteristic (see \cref{cor:char-p-distribution}).

\subsection{Recollection of the work of Harder} 
Our starting point will be the following result \cite[Satz 1]{Harder} for the homology of special linear groups:

\begin{theorem}[Harder] \label{thm:harder-sln} Let $F$ be a field of transcendence degree 1 over a finite field. Then for all $n \ge 1$ we have $\widetilde{H}_*(\SL_n(F);\bb{Q})=0$.\end{theorem}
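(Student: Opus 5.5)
Since this is Harder's theorem \cite[Satz 1]{Harder}, which the paper cites rather than proves, what follows is only a plan for how I would reconstruct it. The approach is to reduce to $S$-arithmetic subgroups and compute their rational homology via their actions on Bruhat--Tits buildings.

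Let $X$ be the smooth projective curve over $\bb{F}_q$ with function field $F$ (after enlarging $\bb{F}_q$ to the field of constants of $F$). For a nonempty finite set $S$ of closed points, let $\scr{O}_S$ be the ring of functions regular away from $S$. Then $\SL_n(F)=\colim_S \SL_n(\scr{O}_S)$, and group homology commutes with filtered colimits. So it suffices to show that every class in $\widetilde{H}_*(\SL_n(\scr{O}_S);\bb{Q})$ maps to zero in $\widetilde{H}_*(\SL_n(\scr{O}_{S'});\bb{Q})$ for some larger $S'\supseteq S$.

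First, I would let $\Gamma_S=\SL_n(\scr{O}_S)$ act on $\Delta_S=\prod_{v\in S}\Delta_v$, the product of the Bruhat--Tits buildings of $\SL_n(F_v)$, which is contractible. A stabiliser of a cell is the intersection of the discrete group $\Gamma_S$ with a compact open subgroup of $\prod_{v\in S}\SL_n(F_v)$, hence finite. Since we use rational coefficients, the isotropy spectral sequence degenerates and gives
\[H_*(\Gamma_S;\bb{Q})\cong H_*(\Gamma_S\backslash \Delta_S;\bb{Q}).\]

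Second, I would use Harder's reduction theory over function fields to describe $\Gamma_S\backslash\Delta_S$. It is a finite complex $K_S$ with finitely many cuspidal ends attached, one for each $\Gamma_S$-conjugacy class of parabolics, and each end is a product of a lower-rank quotient of the same type (for the Levi factor) with a contractible sector in a Weyl chamber. An induction on $n$ over the parabolics then shows that the quotient deformation retracts onto a compact core. For $|S|=1$ the core should be contractible, in the spirit of the Soulé/Serre computation that $\Gamma_S\backslash\Delta_S$ is contractible for $\SL_n(\bb{F}_q[t])$, where $S$ is the point at infinity. For general $S$, any remaining rational homology should sit in a single degree governed by $|S|$ and $n$, and come from a Steinberg-type, cuspidal contribution.

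Third, to kill these remaining classes, I would compare $\Gamma_S\subset\Gamma_{S\cup\{w\}}$ for a new place $w$ and analyse the Hecke/Mayer--Vietoris sequence for the action of $\Gamma_{S\cup\{w\}}$ on the tree-like factor $\Delta_w$. The vertex stabilisers are conjugates of $\Gamma_S$, and the edge stabilisers are congruence-type subgroups. The idea is that a class of $\Gamma_S$ becomes, in $\Gamma_{S\cup\{w\}}$, the difference of its images under two Hecke correspondences. Counting points over finite fields, in the sense that the Hecke eigenvalue $q_w^{\,k}$ on cuspidal homology differs from the trivial one, should then force the image to vanish after adding enough places.

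I expect the main obstacle to be this last step: proving that the cuspidal and Steinberg part of $H_*(\Gamma_S;\bb{Q})$ dies in the colimit. The reduction-theoretic description of the quotient is also substantial, but it is essentially standard once Harder's reduction theory is available. For the last step I would first try to show directly that the stabilisation maps are zero on the non-constant homology, using the eigenvalue mismatch of the Hecke operators at an added place $w$.
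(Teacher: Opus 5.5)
Your skeleton is sound: writing $\SL_n(F)$ as a colimit of the $S$-arithmetic groups $\Gamma_S=\SL_n(\scr{O}_S)$, and computing $H_*(\Gamma_S;\bb{Q})$ as the homology of $\Gamma_S\backslash\Delta_S$ using finite stabilisers, is fine. There are two gaps, however. The first is easy to repair. A field of transcendence degree $1$ over a finite field need not be finitely generated; think of $\overline{\bb{F}}_p(t)$ or the perfect closure of $\bb{F}_p(t)$. So in general there is no curve $X$ over a finite field with function field $F$. You must first write $F$ as the filtered union of its finitely generated subfields containing a fixed transcendental element, each of which is a global function field, and then use that group homology commutes with filtered colimits. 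This is exactly the reduction the paper makes before citing Harder's Satz~1, which concerns global function fields.

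The second gap is the real one: Step~2 asserts, rather than proves, the whole content of Harder's theorem. Reduction theory gives you a compact core, but a compact complex can carry rational homology in any degree. What you need is that $\widetilde{H}_q(\SL_n(\scr{O}_S);\bb{Q})=0$ for all $q<(n-1)|S|=\dim\Delta_S$, and this requires automorphic input: a decomposition into cuspidal and Eisenstein contributions. For the cuspidal part one uses that a nontrivial irreducible unitary representation of $\SL_n(F_v)$ has nonzero smooth cohomology only if it is Steinberg, and then only in degree $n-1$ (Garland, Casselman); the Eisenstein part needs a separate analysis.

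Your claim that the core is contractible for $|S|=1$ is also false beyond special cases like $\bb{F}_q[t]$. Cusp forms that are Steinberg at the place of $S$ contribute to $H_{n-1}(\SL_n(\scr{O}_S);\bb{Q})$. For $n=2$ these are the cycles of the quotient graph, counted by the genus of a Drinfeld modular curve, which can be positive.

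Once concentration in degree $(n-1)|S|$ is known, Step~3 is unnecessary. That degree grows with $|S|$, so a class of fixed degree $q>0$ maps to $H_q(\SL_n(\scr{O}_{S'});\bb{Q})=0$ as soon as $(n-1)|S'|>q$. Conversely, Step~3 cannot replace Step~2. It is correct that $(T-\deg T)x$ maps to zero in $H_*(\Gamma_{S\cup\{w\}};\bb{Q})$ for a Hecke operator $T$ at $w$. But concluding that $x$ itself dies needs $T_w-\deg T_w$ to be injective on $\widetilde{H}_*(\Gamma_S;\bb{Q})$. That presupposes the same automorphic description of this homology, and it gives nothing for Eisenstein-type classes, whose eigenvalues need not differ from the degree (classically, the weight-$2$ Eisenstein class has $T_p$-eigenvalue $1+p=\deg T_p$).
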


More precisely, Harder studies global function fields, which are finitely generated function field of a smooth projective curve over $\bb{F}_q$, and a general field of transcendence degree 1 over $\bb{F}_p$ is a filtered colimit of such. The analogous statement for general linear groups is:

\begin{corollary}\label{cor:harder-gln} For all $n \geq 1$ the determinant induces an isomorphism
\[{\det}_* \colon H_*(\GL_n(F);\bb{Q}) \overset{\cong}\lra H_*(F^\times;\bb{Q}).\]
\end{corollary}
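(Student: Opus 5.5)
We deduce the corollary from \cref{thm:harder-sln} using the extension
\[1 \lra \SL_n(F) \lra \GL_n(F) \xrightarrow{\det} F^\times \lra 1.\]
Its Lyndon--Hochschild--Serre spectral sequence with rational coefficients has
\[E^2_{p,q} = H_p\big(F^\times; H_q(\SL_n(F);\bb{Q})\big) \Longrightarrow H_{p+q}(\GL_n(F);\bb{Q}).\]
By \cref{thm:harder-sln}, $H_q(\SL_n(F);\bb{Q})=0$ for $q>0$. Since $H_0(\SL_n(F);\bb{Q})=\bb{Q}$, the only nonzero row is
\[E^2_{p,0} = H_p(F^\times;\bb{Q}),\]
with trivial coefficients: the conjugation action of $F^\times$ on $H_0$ is trivial. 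The spectral sequence therefore collapses at $E^2$, and the edge homomorphism
\[H_p(\GL_n(F);\bb{Q}) \lra E^\infty_{p,0} = E^2_{p,0} = H_p(F^\times;\bb{Q})\]
is an isomorphism. This edge homomorphism is the map induced on homology by the quotient map $\GL_n(F) \to F^\times$, which is $\det$. This gives the claim for every $n\ge 1$, including $n=1$, where $\SL_1$ is trivial and $\det$ is the identity.

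Two points need care. First, Harder's theorem is stated for global function fields, whereas $F$ is a general field of transcendence degree $1$ over $\bb{F}_p$. If \cref{thm:harder-sln} is only taken for global function fields, we extend it as the paper indicates. Write $F=\colim_\alpha F_\alpha$ as a filtered colimit of global function fields. Then $\SL_n(F)=\colim_\alpha \SL_n(F_\alpha)$, and group homology commutes with filtered colimits of groups, so $\widetilde{H}_*(\SL_n(F);\bb{Q})=0$. Alternatively, we run the spectral sequence argument for each $F_\alpha$ and pass to the colimit, using that both $H_*(\GL_n(-);\bb{Q})$ and $H_*((-)^\times;\bb{Q})$ commute with filtered colimits and that $\det$ is natural.

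Second, one must confirm that the edge map is indeed induced by the quotient homomorphism. This is standard: it factors as
\[H_*(G) \lra H_*(G/N; H_0(N)) = H_*(G/N).\]
Neither step is a serious obstacle; the main input is Harder's vanishing result itself.
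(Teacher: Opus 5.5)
Your proposal is correct and follows the paper's proof. The Lyndon--Hochschild--Serre spectral sequence for $1 \to \SL_n(F) \to \GL_n(F) \to F^\times \to 1$ is concentrated on the row $q=0$ by Harder's vanishing result, so it collapses and the edge map $\det_*$ is an isomorphism. Your remarks on the filtered-colimit reduction to global function fields and on identifying the edge map with the map induced by the quotient spell out points the paper leaves implicit.
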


\begin{proof}The Lyndon--Hochschild--Serre spectral sequence associated with the short exact sequence of groups $1 \to \SL_n(F) \to \GL_n(F) \smash{\xrightarrow{\det}} F^\times \to 1$ is given by
    \[E^2_{p,q}=H_p(F^\times;H_q(\SL_n(F);\bb{Q})) \Longrightarrow H_{p+q}(\GL_n(F);\bb{Q}).\]
It is concentrated on the row $q=0$ by \cref{thm:harder-sln}, and there given by $E^2_{p,0} = H_p(F^\times;\bb{Q}) \cong \Lambda^p F^\times_\bb{Q}$. This implies the statement.\end{proof}

Since the determinant map is compatible with stabilisation, this implies:

\begin{corollary}\label{cor:harder-gln-stab} The iterated stabilisation map 
    \[(s_{n,m})_* \colon H_*(\GL_m(F);\bb{Q}) \lra  H_{*}(\GL_n(F);\bb{Q})\]
    is an isomorphism for any $n \geq m \ge 1$.
\end{corollary}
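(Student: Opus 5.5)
The plan is to reduce to \cref{cor:harder-gln} using naturality of the determinant. Write $s_{n,m} \colon \GL_m(F) \to \GL_n(F)$ for the iterated stabilisation $A \mapsto A \oplus I_{n-m}$. This is block sum with the identity matrix, which is the model of multiplication by $\sigma^{n-m}$ in $\BGLb(F)$. The key observation is that $s_{n,m}$ commutes with determinants:
\[
\det(A \oplus I_{n-m}) = \det(A),
\]
so the triangle of group homomorphisms
\[
\GL_m(F) \xrightarrow{s_{n,m}} \GL_n(F) \xrightarrow{\det} F^\times
\]
commutes with $\det \colon \GL_m(F) \to F^\times$.

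Applying $H_*(-;\bb{Q})$ then gives ${\det}_* \circ (s_{n,m})_* = {\det}_*$ as maps $H_*(\GL_m(F);\bb{Q}) \to H_*(F^\times;\bb{Q})$.

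By \cref{cor:harder-gln}, both determinant maps ${\det}_*$, for $\GL_m(F)$ and for $\GL_n(F)$, are isomorphisms. Hence
\[
(s_{n,m})_* = ({\det}_*)^{-1} \circ {\det}_*
\]
is a composite of isomorphisms, and so is an isomorphism.

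There is no real obstacle here. The only point needing care is that the stabilisation map used in the paper really is, up to conjugation, the block inclusion. Conjugation acts trivially on group homology, so any alternative choice of stabilisation (for instance placing the identity block first) induces the same map on homology. The statement therefore does not depend on this choice.
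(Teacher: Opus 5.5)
Your proof is correct and is essentially the paper's argument: both rest on the determinant being compatible with stabilisation, combined with \cref{cor:harder-gln}. The only cosmetic difference is that the paper first treats $s_{n,1}$ and then concludes from $s_{n,m}\circ s_{m,1}=s_{n,1}$ by two-out-of-three, whereas you apply ${\det}\circ s_{n,m}={\det}$ directly using the determinant isomorphisms for both $\GL_m(F)$ and $\GL_n(F)$.
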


\begin{proof}
    The stabilisation map $s_{n,1} \colon \GL_1(F) \to \GL_n(F)$ has the property that $\det \circ s_{n,1}$ is equal to the usual identification of $\GL_1(F)$ with $F^\times$. Since $\det_* \colon H_*(\GL_n(F);\bb{Q}) \to H_*(F^\times;\bb{Q})$ is an isomorphism, so is $(s_{n,1})_* \colon H_{*}(\GL_1(F);\bb{Q}) \to  H_{*}(\GL_n(F);\bb{Q})$, and the statement is deduced by observing $s_{n,m} \circ s_{m,1} = s_{n,1}$
\end{proof}

\subsection{The $E_\infty$-indecomposables of $\BGLb(F)_\Q$} We use this to give a full description of the algebra $\BGLb(F)_\Q/(\sigma)$ and its $E_\infty$-indecomposables. The latter allows us to recover the cotangent complex of $\BGLb$ as an $E_\infty$-algebra since
\[\cot_{E_\infty}(\BGLb(F)_\Q) \simeq \Q\{\sigma\} \oplus \cot_{E_\infty}(\BGLb(F)_\Q/(\sigma)),\]
with splitting provided by the maps $\Q[t]^\rm{nu} \xrightarrow{\sigma} \BGLb(F)_\Q \xrightarrow{\epsilon} \Q[t]^\rm{nu}$.

\begin{theorem}\label{thm:harder-alg-indec}\,
    \begin{enumerate}[(i)]
        \item \label{enum:harder-alg-indec-i} There is an equivalence in $\Alg_{\rm{Com}}(\Fun(\bb{N},\DQ))$
        \[\BGLb(F)_\Q/(\sigma) \simeq \rm{triv}_{\rm{com}}(1_!(\Lambda^{\ge 1} F^\times_\bb{Q})),\] 
        where $1_!(\Lambda^{\ge 1} F^\times_\bb{Q}) \in \Fun(\bb{N},\DQ)$ denotes the bigraded vector space with $\Lambda^i F^\times_\bb{Q}$ in bidegree $(1,i)$ for each $i \ge 1$. 
        \item \label{enum:harder-alg-indec-ii} There is an equivalence in $\rm{coAlg}_{\rm{coLie}}(\Fun(\bb{N},\DQ))$
        \[\indec_{E_\infty} (\BGLb(F)_\Q/(\sigma))[1] \simeq \rm{cofree}_{\rm{coLie}}(s\, 1_!(\Lambda^{\ge 1} F^\times_\bb{Q})).\]
        \item \label{enum:harder-alg-indec-iii} There is an isomorphism of (graded) Lie coalgebras 
        \[\PolyL(F) \cong \cofree_\rm{coLie}(F^\times_\Q).\]
    \end{enumerate}
\end{theorem}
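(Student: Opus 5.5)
We prove the three parts in order, each deduced from the previous one.

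For \eqref{enum:harder-alg-indec-i}, start from the underlying object. Since we work rationally, the underlying object of $\BGLb(F)_\Q/(\sigma)$ in rank $n$ is the cofibre of the stabilisation map $\sigma\colon C_*(\BGL_{n-1}(F);\Q)\to C_*(\BGL_n(F);\Q)$, where rank $0$ contributes $\Q$ via the unit. By \cref{cor:harder-gln-stab}, this cofibre is acyclic for $n\geq 2$. In rank $1$ it is the reduced chains of $\BGL_1(F)=BF^\times$, with homology $\Lambda^{\geq 1}F^\times_\Q$ by \cref{cor:harder-gln}. So the homology of $\BGLb(F)_\Q/(\sigma)$ is $1_!(\Lambda^{\geq 1}F^\times_\Q)$, concentrated in rank $1$.

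Now use the rank grading. Any nonunital commutative algebra structure on an object concentrated in rank $1$ has trivial multiplication, because products of $k\geq 2$ factors land in rank $k\geq 2$, where everything vanishes. To make this precise at the $\infty$-level, consider the truncation to rank $\leq 1$, i.e.\ the quotient by the ideal of rank $\geq 2$ objects. On reduced $\N$-graded algebras, restricting to rank $1$ is left adjoint to $\rm{triv}_\rm{com}$ on rank-$1$ objects. The unit of this adjunction, $\BGLb(F)_\Q/(\sigma)\to \rm{triv}_\rm{com}(1^*\BGLb(F)_\Q/(\sigma))$, is an equivalence on underlying objects by the homology computation. Finally, over $\Q$ the rank-$1$ chain complex is formal, being a complex of $\Q$-vector spaces, so it is equivalent to $1_!(\Lambda^{\geq1}F^\times_\Q)$. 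This gives \eqref{enum:harder-alg-indec-i}.

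For \eqref{enum:harder-alg-indec-ii}, apply Koszul duality for reduced algebras \cite[Theorem B.17]{KRS1}. Indecomposables of a trivial algebra form the cofree (shifted) Lie coalgebra on its underlying object: $\indec_{E_\infty}(\rm{triv}(V))\simeq \rm{cofree}_{s\,\rm{coLie}}(V)$. Shifting by $[1]$ converts this into $\rm{cofree}_\rm{coLie}(s\,1_!(\Lambda^{\geq1}F^\times_\Q))$. The step that needs care is identifying the Koszul dual of $\rm{triv}$ with the cofree coalgebra in the nilpotent/reduced setting. The rank grading (reducedness) ensures conilpotency, so this identification is the standard one, and I would cite the relevant statement in \cite[Appendix B]{KRS1}.

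For \eqref{enum:harder-alg-indec-iii}, read off the bidegrees $(n,2n)$ from \eqref{enum:harder-alg-indec-ii}; this is expected to be the main bookkeeping obstacle. The cogenerator $s\Lambda^iF^\times_\Q$ sits in bidegree $(1,i+1)$, so only $i=1$ lies on the line $(1,2)$. An element of cobracket word length $k$ built from cogenerators in bidegrees $(1,i_j+1)$ lies in bidegree $(k,\sum_j(i_j+1))$. It lies on the line $(k,2k)$ precisely when all $i_j=1$. Hence the homology of the cofree coalgebra on the line $d=2n$ is exactly the weight-$n$ part of $\rm{cofree}_\rm{coLie}(F^\times_\Q)$ with $F^\times_\Q$ in weight $1$. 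Moreover, the inclusion of this sub-cofree coalgebra on the line is compatible with cobrackets: the cobracket preserves total bidegree, and the sub-cofree coalgebra on $F^\times_\Q$ is a subcoalgebra. Since $\PolyL_n(F)=H^{E_\infty}_{n,2n-1}(\BGLb(F)_\Q/(\sigma))$, which after the shift is this degree, we obtain $\PolyL(F)\cong\rm{cofree}_\rm{coLie}(F^\times_\Q)$ as graded Lie coalgebras. Alternatively, one can note that the standard vanishing line holds here and apply \cref{lem:coformality-critical} to see that the induced cobracket on homology is the one computed.
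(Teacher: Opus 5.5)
Your proposal is correct. Parts (ii) and (iii) follow the paper: Koszul duality applied to a trivial algebra, then truncation to the line $d=2n$. Your proof of (i), however, takes a different and more elementary route.

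The paper uses \cref{cor:harder-gln-stab} only to identify the homology $H_{*,*}(\BGLb(F)_\Q/(\sigma))$ as a trivial algebra. It then proves that $\BGLb(F)_\Q/(\sigma)$ itself is formal by a weight argument. The Frobenius fixes $\sigma$, so it acts on the quotient, and it acts by $p^d$ on $H_{*,d}$. A bigraded version of Sullivan's (or Emprin--Horel's) ``purity implies formality'' theorem then gives formality.

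You instead use the fact that Harder's stability kills the homology in every rank $n\geq 2$, so the algebra is concentrated in rank $1$. For a reduced algebra $A$, every arity $\geq 2$ term of the bar construction lies in rank $\geq 2$, so $\cot_{E_\infty^{\mathrm{nu}}}(A)(1)\simeq A(1)$. Hence $A\mapsto A(1)$ is left adjoint to $V\mapsto \triv(1_!V)$, and the unit is the identity in rank $1$. Conservativity of the forgetful functor then shows $A\simeq \triv(1_!A(1))$, and formality of $A(1)$ is automatic over $\Q$. You should write out this justification of the adjunction explicitly, since it is the only nontrivial input you assert without proof.

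What your route buys: it needs no formality theorem and no Frobenius. It therefore works for any field whose stabilisation maps $H_*(\GL_{n-1}(F);\Q)\to H_*(\GL_n(F);\Q)$ are isomorphisms for $n\geq 2$. It also sidesteps the fact that Frobenius is only an endomorphism when $F$ is imperfect.

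What the paper's route buys: the weight argument would still give formality if the homology were spread over several ranks but pure. That extra generality is not needed here.
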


\begin{proof}
    Part \eqref{enum:harder-alg-indec-ii} follows from \eqref{enum:harder-alg-indec-i} by Koszul duality and part \eqref{enum:harder-alg-indec-iii} follows from part \eqref{enum:harder-alg-indec-ii} by truncation. To prove \eqref{enum:harder-alg-indec-i}, firstly, \cref{cor:harder-gln-stab} implies there is an equivalence in $\Alg_{\rm{Com}}(\Fun(\bb{N},\DQ))$
    \[H_{*,*}(\BGLb(F)_\bb{Q}/(\sigma)) \simeq \rm{triv}_{\rm{com}}(1_!(\Lambda^{\ge 1} F^\times_\bb{Q})),\]
    where the left-hand side is interpreted as a formal algebra. Secondly, the Frobenius $\varphi$ acts on $\BGLb(F)_\bb{Q}$ fixing $\sigma$, i.e.~in a way that the canonical sequence $\bb{Q}^\rm{nu}[t] \xrightarrow{\sigma} \BGLb(F)_\bb{Q} \xrightarrow{\epsilon} \bb{Q}^\rm{nu}[t]$ is a splitting of algebras with endomorphism, where $\varphi$ acts trivially on the outer terms. Thus, $\varphi$ acts on $\BGLb(F)_\bb{Q}/(\sigma)$ as an algebra. 
    By the above description of the homology of $\BGLb(F)_\bb{Q}/(\sigma)$ it acts by multiplication by $p^d$ on $H_{*,d}(\BGLb(F)_\bb{Q}/(\sigma))$ because it acts by multiplication by $p^d$ on $\Lambda^d F^\times_\bb{Q}$. Thus $\BGLb(F)_\bb{Q}/(\sigma)$ is formal by the bigraded variant of \cite[Theorem 12.7]{Sullivan} or \cite[Section 7.2]{EmprinHorel}.
\end{proof}

\begin{remark}\label{rem:transcedence-degree-1-mtm} We believe known results can be used to construct a Tannakian category of mixed Tate motives over a field $F$ of transcendence degree $1$ over a finite field, and a realisation functor inducing $\scr{G}(F) \to \scr{L}^{\rm{MTM}}(F)$. Then \cref{thm:harder-alg-indec} should imply the analogue of \cref{thm:polyl-iso-number-field}.\end{remark}

\subsection{Symbol map in the Goncharov Lie coalgebra}
We prefer an explicit description of the isomorphism of \cref{thm:harder-alg-indec} \eqref{enum:harder-alg-indec-iii} in terms of the generators of $\PolyL(F)$. To do so, we need the \emph{symbol map} of the Goncharov Lie coalgebra; here $F$ can be any field.

Let $\scr{L}$ be a reduced graded Lie coalgebra,  that is, $\scr{L}_0 = 0$. The canonical projection $\fgt_\coLie(\scr{L}) \to \scr{L}_1$ gives a canonical map of graded Lie coalgebras called the \emph{symbol} map
\[s \colon \scr{L} \lra \cofree_\coLie(\scr{L}_1).\]
It is an isomorphism if $\scr{L}$ is a cofree Lie coalgebra cogenerated in degree $1$.

\begin{lemma} \label{lem symbol map}
    The symbol map of the Goncharov Lie coalgebra is given by
    \begin{align*}s \colon \PolyL(F) &\lra \cofree_\coLie(F^\times_\bb{Q}) \\\CorG(x_0,\dots,x_n) &\longmapsto \sum_{\iota=((i_1,j_1),\compactldots,(i_n,j_n)) \in T(n)} \rm{sign}(\iota) [x_{i_1}-x_{j_1}] \otimes \compactcdots \otimes [x_{i_n}-x_{j_n}],\end{align*}
     where we omit terms where $x_{i_k}=x_{j_k}$ for some $1 \le k \le n$ and notation is as in \cite[Proposition 2.27]{KRS1}. In particular for $a \in F^{\times}$ and $n \ge 2$ we have (interpreted as $0$ for $a=1$)
    \[s(\LiG_n(a))=-[1-a] \otimes [a] \otimes \compactcdots \otimes [a] \in \coLie_n(F^\times_\Q).\]
\end{lemma}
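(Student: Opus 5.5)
The symbol map is the unique map of graded Lie coalgebras $\PolyL(F) \to \cofree_\coLie(F^\times_\bb{Q})$ lifting the projection onto weight one. I will therefore compute it by iterating the cobracket: the degree $n$ component of $s$ is the composite
\[\PolyL_n(F) \xrightarrow{\delta^{(n-1)}} \PolyL_1(F)^{\otimes n} \cong (F^\times_\bb{Q})^{\otimes n},\]
landing in the subspace $\coLie_n(F^\times_\bb{Q}) \subseteq (F^\times_\bb{Q})^{\otimes n}$ (the cofree Lie coalgebra realised as the image of the iterated cobracket, e.g.\ via the Dynkin/left-normed convention). Here $\delta^{(n-1)}$ is the iterated reduced cobracket, always splitting off one weight-one factor at a time. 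The first step is to fix this convention precisely, so that signs and ordering agree with those of \cite[Proposition 2.27]{KRS1}.

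Next I take the presentation of $\PolyL(F)$ by correlators $\CorG(x_0,\dots,x_n)$ from \cite{KRS1}. Two inputs are needed from there: the identification $\PolyL_1(F) \cong F^\times_\bb{Q}$, under which $\CorG(x_0,x_1) \mapsto [x_0-x_1]$, and the explicit cobracket formula. That formula writes $\delta\,\CorG(x_0,\dots,x_n)$ as a sum over cutting the cyclically ordered points into two consecutive arcs sharing an endpoint, giving $\CorG(\text{arc}) \wedge \CorG(\text{complement})$.

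Iterating this formula $n-1$ times decomposes the $(n+1)$-gon into a sequence of segments $(i_k,j_k)$ with signs. The combinatorial data produced this way should match exactly the set $T(n)$ with sign $\rm{sign}(\iota)$ from \cite[Proposition 2.27]{KRS1}. I will prove this by induction on $n$. Cutting off a weight-one piece $\CorG(x_i,x_j)$ yields $[x_i-x_j]$ together with a correlator on $n$ points, and the inductive hypothesis then reorganises the remaining terms into $T(n-1)$; the recursive description of $T(n)$ should mirror this. Terms with $x_{i_k}=x_{j_k}$ vanish because $\CorG(x,x)=0$ in weight one. Matching signs and the bijection with $T(n)$ is the main obstacle. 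If the recursive structure of $T(n)$ in \cite{KRS1} is already phrased as the iterated cobracket, that step becomes formal.

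For the special case, I express $\LiG_n(a)$ as a correlator, namely $\CorG(0,1,0,\dots,0,a)$ up to the sign and normalisation fixed in \cite{KRS1}. I then apply the formula directly. Alternatively, I can induct using $\delta\LiG_n(a) = \LiG_{n-1}(a)\wedge [a]$ and $\LiG_1(a) = -[1-a]$, which gives $s(\LiG_n(a)) = -[1-a]\otimes[a]^{\otimes(n-1)}$. The case $a=1$ gives $0$ since $[1-a]$ is omitted.
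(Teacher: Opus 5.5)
The step that carries all the content of the lemma is only asserted. You say that iterating the correlator cobracket ``should match exactly'' the sum over $T(n)$, and as you set it up this is false.

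In your model, $\coLie_n(F^\times_{\bb{Q}})$ is the image of the left-normed iterated cobracket inside $(F^\times_{\bb{Q}})^{\otimes n}$. There $s$ is indeed given by $\delta^{(n-1)}$, but that is a Lie-type tensor. The $T(n)$-sum, by contrast, is only a representative of a class in the quotient $(F^\times_{\bb{Q}})^{\otimes n}/(\text{shuffle products})$, which is the model the statement uses. Already for $n=2$ the two differ: $\delta\LiG_2(a)$ lies in $\Lambda^2F^\times_{\bb{Q}}$, so it is an antisymmetric tensor (a multiple of $[1-a]\otimes[a]-[a]\otimes[1-a]$), whereas the lemma asserts the single tensor $-[1-a]\otimes[a]$. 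In general, by the Dynkin--Specht--Wever lemma, $\delta^{(n-1)}(x)$ reduces modulo shuffles to $n\,s(x)$ (with $a\wedge b=a\otimes b-b\otimes a$). So there is no term-by-term match; what must be proved is an identity modulo shuffles, with this normalisation.

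A workable form is inductive. Write $\Sigma(x_0,\ldots,x_n)$ for the $T(n)$-sum modulo shuffles. Show that the cofree cobracket of $\Sigma(x_0,\ldots,x_n)$ equals the sum of $\Sigma(\text{arc})\wedge\Sigma(\text{complementary arc})$ dictated by the correlator cobracket. Then conclude because $\delta$ is injective in weights $\geq 2$ on a cofree Lie coalgebra. Your proposal does not supply this identity.

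Two further inputs are unverified. First, disposing of coincident points via $\CorG(x,x)=0$ needs the cobracket formula of \cite{KRS1} to hold verbatim for non-generic tuples. Second, your alternative route for $\LiG_n(a)$ uses $\delta\LiG_n(a)=\LiG_{n-1}(a)\wedge[a]$ in $\PolyL(F)$, which is not available. Even in the proof of \cref{thm:polyl-iso-mun}, the vanishing of $\delta\LiG_n(i)$ is obtained only through $r^{\mathrm{MTM}}$. Finally, the relevant identity is $\LiG_n(a)=-\CorG(1,0,\ldots,0,a)$ with $n-1$ zeros.

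The paper's proof never computes a cobracket in $\PolyL(F)$. It uses two facts:
\begin{itemize}
\item The symbol map is natural for maps of reduced Lie coalgebras.
\item The formal realisation $r^{\mathrm{f}}\colon\PolyL(F)\to\PolyLF(F)$, $\CorG\mapsto\mathrm{Cor}^{\mathrm{f}}$, of \cite[Theorem~7.28]{KRS1} is an isomorphism in weight one, compatible with the identifications of both sides with $F^\times_{\bb{Q}}$ (\cite[Lemma~7.8]{KRS1}, \cite[Lemma~1]{CMRR24}).
\end{itemize}
Hence $s=s^{\mathrm{f}}\circ r^{\mathrm{f}}$, and it suffices to compute $s^{\mathrm{f}}(\mathrm{Cor}^{\mathrm{f}}(x_0,\ldots,x_n))$. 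For pairwise distinct $x_i$ this is \cite[(12)]{CMRR24} combined with the universal symbol formula of \cite[Section~2.6.1]{KRS1}. The degenerate case follows from the specialisation maps of \cite[Section~2.2]{CMRR24}.

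The $\LiG_n$ case is then immediate. For $(1,0,\ldots,0,a)$, every factor $[1-0]$ vanishes in $F^\times_{\bb{Q}}$ and every $[0-0]$ is omitted, so only the first term survives. An intrinsic argument inside $\PolyL(F)$ along your lines would need exactly this combinatorial identity plus a cobracket formula valid for degenerate tuples. Factoring through $r^{\mathrm{f}}$ is the shorter route.
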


\begin{proof}
    By \cite[Theorem 7.28]{KRS1}, there is a formal realisation map of $r^\rm{f} \colon \PolyL(F) \to \scr{L}^\rm{f}(F)$ to the Lie coalgebra of formal polylogarithms $\scr{L}^\rm{f}(F)$ of \cite{CMRR24} sending $\CorG(x_0,\compactldots,x_n)$ to $\rm{Cor}^\rm{f}(x_0,\compactldots,x_n)$. This is an isomorphism for $n=1$ since $\PolyL(F) \cong \smash{F^\times_\bb{Q}}$ via $\CorG(x_0,x_1) \in \PolyL(F) \mapsto (x_1-x_0) \in \smash{F^\times_\bb{Q}}$ by \cite[Lemma 7.8]{KRS1} and $\PolyLF_1(F) \cong F^\times$ via $\rm{Cor}^\rm{f}(x_0,x_1) \in \PolyLF_1(F) \mapsto (x_1-x_0) \in \smash{F^\times_\Q}$ by \cite[Lemma 1]{CMRR24}. Thus, one gets a commutative square
    \[
        \begin{tikzcd}
        \PolyL(F) \rar{s} \dar[swap]{r^\rm{f}} & \cofree_\coLie(F^\times) \dar[equal] \\[-7pt]
        \scr{L}^\rm{f}(F) \rar{s^\rm{f}} &  \cofree_\coLie(F^\times)
    \end{tikzcd}
    \]
    which reduces the computation to showing that  
     \[s^\rm{f}(\rm{Cor}^\rm{f}(x_0,\dots,x_n)) = \sum_{\iota=((i_1,j_1),\compactldots,(i_n,j_n)) \in T(n)} \rm{sign}(\iota) [x_{i_1}-x_{j_1}] \otimes \compactldots \otimes [x_{i_n}-x_{j_n}],\]
    where we omit any terms where $x_{i_k}=x_{j_k}$. This follows from \cite[(12)]{CMRR24} and the universal symbol formula of \cite[Section 2.6.1]{KRS1} in the generic case where $x_i \neq x_j$ for $i \neq j$, and in general by using specialisation maps \cite[Section 2.2]{CMRR24}. Finally, the computation for classical polylogarithms follows since 
     \[\LiG_n(a)=-\CorG(1,0,\compactldots,0,a)\]
    for $n \ge 2$ and hence only the first term in the symbol expression is non-zero. 
\end{proof}

By \cref{thm:harder-alg-indec} \eqref{enum:harder-alg-indec-iii} there is some isomorphism of Lie coalgebras $\scr{G}(F) \cong \rm{cofree}_{\rm{coLie}}(F^\times_\bb{Q})$, but in fact the symbol map of \cref{lem symbol map} is an explicit such isomorphism.

\subsection{Application to distribution relations}

\subsubsection{Field homomorphisms} The constructions in \cite{KRS1} are natural in field homomorphisms. If $\chi \colon F \to F'$ is a homomorphism of fields, then it induces a map $\chi_* \coloneq F' \otimes_F (-) \colon \Vect_F \to \Vect_{F'}$ of symmetric monoidal groupoids. As the notation suggests, this takes an object $V$ to $V^\chi \coloneq F' \otimes_F V$, and takes an isomorphism $A \colon V \to W$ to the induced isomorphism $\id_{F'} \otimes A \colon F' \otimes_F V \to F' \otimes_F W$, in terms of matrices given by applying $\chi$ to all entries.

\begin{example}If $F$ has characteristic $p \neq 0$, then the \emph{Frobenius} $\rm{Frob} \colon F \to F$ given by $\lambda \mapsto \lambda^p$ is a field endomorphism, and it is an automorphism if $F$ is perfect. We get an induced map $\rm{Frob}_* \colon \Vect \to \Vect$ by the above construction, for which we will have a use as it is known how the Frobenius acts on the rationalised algebraic $K$-theory groups of $F$.\end{example}

This induces a map of $E_\infty^\rm{nu}$-algebras $\chi \colon \BGLb(F) \to \BGLb(F')$, and in turn a map 
\begin{align*}\chi \colon \scr{G}(F) &\lra \scr{G}(F') \\
\CorG(x_0,\compactldots,x_n) &\longmapsto \CorG(\chi(x_0),\compactldots,\chi(x_n))\end{align*}
of Lie coalgebras, as can be seen by tracing through the constructions in \cite{KRS1}.

\subsubsection{Distribution relations for classical polylogarithms}
We finish with an application: a proof of the distribution relations for the ``classical polylogarithm'' elements $\LiG_n(x) \in \scr{G}_n(F)$ for $F$ \emph{any} field of characteristic $p$.

\begin{corollary}\label{cor:char-p-distribution}
Let $n\geq 1$ and let $F$ be a field of positive characteristic $p$. Let $d \in \N_{>0}$ and suppose that the polynomial $t^d-1$ splits in $F$. Then, for $x \in F^\times$ the following equation holds in $\PolyL_n(F)$
\[\LiG_n(x^d)=d^{n-1} \sum_{\zeta^d=1} \LiG_n(\zeta x),\]
where we sum over all roots of $t^d-1$ in $F$, counted with multiplicity. In particular, $\LiG_n(x^p)=p^n \LiG_n(x)$.
\end{corollary}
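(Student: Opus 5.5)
The plan is to reduce to fields of transcendence degree at most one over $\bb{F}_p$, where the symbol map is an isomorphism, and then compare symbols.

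\emph{Reduction.} Both sides of the identity involve only finitely many elements of $F$: namely $x$ and the roots $\zeta$ of $t^d-1$. Let $F_0 \subseteq F$ be the subfield generated over $\bb{F}_p$ by these elements. It is finitely generated of transcendence degree $\le 1$, since the $\zeta$ are algebraic. The polynomial $t^d-1$ already splits in $F_0$, with the same roots and multiplicities. The inclusion $\chi \colon F_0 \to F$ induces a Lie coalgebra map $\chi \colon \scr{G}(F_0) \to \scr{G}(F)$ sending $\LiG_n(a) \mapsto \LiG_n(\chi(a))$, because it acts on correlators entrywise. So it suffices to prove the identity in $\scr{G}_n(F_0)$.

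If $F_0$ is finite, every $\GL_m(F_0)$ is a finite group, so $\BGLb(F_0)_\bb{Q}$ has trivial reduced homology. Hence $\scr{G}(F_0)=0$ and the identity is automatic. So we may assume $F_0$ has transcendence degree exactly $1$.

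\emph{Symbol map is injective.} By \cref{thm:harder-alg-indec} \eqref{enum:harder-alg-indec-iii}, $\scr{G}(F_0)$ is cofree cogenerated in degree $1$, and $\scr{G}_1(F_0) \cong (F_0)^\times_\bb{Q}$. For a cofree Lie coalgebra cogenerated in degree $1$, the symbol map $s$ is an isomorphism. So it is enough to check that both sides have the same image under $s$, which we compute with \cref{lem symbol map}.

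\emph{Main case: $n\ge 2$ and $x^d \neq 1$.}
\begin{itemize}
\item The left side is $s(\LiG_n(x^d)) = -[1-x^d]\otimes[x^d]^{\otimes(n-1)} = -d^{n-1}[1-x^d]\otimes [x]^{\otimes (n-1)}$.
\item On the right, each $\zeta$ is torsion in $F_0^\times$, so $[\zeta x]=[x]$ in $(F_0)^\times_\bb{Q}$.
\item Terms with $\zeta x=1$ contribute $0$, and here $[1-\zeta x]$ is defined for all other terms.
\item Summing, the right side has symbol $-d^{n-1}\big[\prod_\zeta(1-\zeta x)\big]\otimes[x]^{\otimes(n-1)}$.
\item Writing $t^d-1=\prod_\zeta (t-\zeta)$ with multiplicity and substituting $t=x^{-1}$ gives $\prod_\zeta(1-\zeta x) = x^d(x^{-d}-1)=1-x^d$. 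So the symbols agree.
\end{itemize}

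\emph{Degenerate cases.} These are where I expect the bookkeeping to need care.
\begin{itemize}
\item If $n\ge2$ and $x^d=1$, then $x$ is torsion, so $[x]=0$ and both symbols vanish.
\item If $n=1$ and $x^d\neq 1$, the same product identity handles it.
\item If $n=1$ and $x^d=1$, the left side is $0$ by convention. Write $d=p^km$ with $p\nmid m$. The surviving right-hand terms give $\big[\prod_{\eta^m=1,\eta\neq 1}(1-\eta)^{p^k}\big]=[m^{p^k}]$. This is a nonzero element of $\bb{F}_p$, hence torsion and zero rationally.
\end{itemize}

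\emph{Final claim.} For $d=p$ we have $t^p-1=(t-1)^p$, which splits in any characteristic-$p$ field, with $\zeta=1$ of multiplicity $p$. This yields $\LiG_n(x^p)=p^{n-1}\cdot p\,\LiG_n(x)=p^n\LiG_n(x)$.
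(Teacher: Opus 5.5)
Your proof is correct. It shares the paper's reduction: pass to the subfield generated by $x$ and the roots of $t^d-1$, dispose of the finite case via $\PolyL(F_0)=0$, and use that the symbol map $s$ is an isomorphism in transcendence degree $1$ by \cref{thm:harder-alg-indec}. The final step is different. The paper computes no symbols at all. It uses the square $s=s^{\mathrm{f}}\circ r^{\mathrm{f}}$ to conclude that the formal realisation $r^{\mathrm{f}}\colon\PolyL(F_0)\to\PolyLF(F_0)$ is injective, hence an isomorphism since it is surjective by construction. It then imports the distribution relation proved for formal polylogarithms in \cite[Propositions 12, 13]{CMRR24}.

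You instead verify the identity directly in $\mathrm{cofree}_{\mathrm{coLie}}((F_0)^\times_{\bb{Q}})$. For this you use the explicit formula of \cref{lem symbol map}, the fact that roots of unity are torsion, and the factorisation $\prod_\zeta(1-\zeta x)=1-x^d$.

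Your route is more elementary. It needs only the symbol of $\LiG_n(a)$, not the distribution relations in $\PolyLF$. Note, though, that the paper itself derives that symbol formula through $r^{\mathrm{f}}$ and the symbol formula of \cite{CMRR24}, so your route reduces rather than removes the dependence on formal polylogarithms. The paper's route yields more: $r^{\mathrm{f}}$ is an isomorphism for such fields, so every functional equation known in $\PolyLF$ transfers to $\PolyL$ at once.

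Two minor points.
\begin{itemize}
\item For $n=1$ you are implicitly using that $\LiG_1(a)$ corresponds to $\pm[1-a]$, with a universal sign and $\LiG_1(1)=0$, under $\PolyL_1(F)\cong F^\times_{\bb{Q}}$. This comes from the definitions in \cite{KRS1}, since \cref{lem symbol map} is stated only for $n\ge 2$. This case also needs no reduction to transcendence degree $1$.
\item In the case $n=1$, $x^d=1$, the bookkeeping with $d=p^km$ is unnecessary. Each $1-\eta$ with $\eta\neq 1$ a root of unity lies in a finite field, so it is already torsion in $F^\times$.
\end{itemize}
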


\begin{proof}
The second part follows from the first one as $t^p-1=(t-1)^p$ splits over any field $E$ of positive characteristic $p$. Thus, it suffices to prove the first part of the corollary.

By naturality, it suffices to verify the distribution relation in the subfield of $F$ generated by the $d$-th roots of unity and $x \in F$, and then we can assume without loss of generality that $\rm{trdeg}_{\bb{F}_p}(F) \le 1$ (depending on whether $x$ is algebraic or transcendental over $\bb{F}_p$). 
When $\rm{trdeg}_{\bb{F}_p}(F)=0$ the result is trivial since $\PolyL(F)=0$, so we can assume that $\rm{trdeg}_{\bb{F}_p}(F)=1$. There is a commutative diagram, see the proof of \cref{lem symbol map}
    \begin{center}
        \begin{tikzcd}
        \PolyL(F) \rar{s} \dar[swap]{r^\rm{f}} & \cofree_\coLie(F^\times_\Q) \dar[equal] \\[-7pt]
        \scr{L}^\rm{f}(F) \rar{s^\rm{f}} &  \cofree_\coLie(F^\times_\Q)
    \end{tikzcd}
    \end{center}
    and the top map is an isomorphism since $\rm{trdeg}_{\bb{F}_p}(F)=1$. Thus, the formal realisation $\smash{r^\rm{f}} \colon \smash{\PolyL(F)} \to \smash{\PolyLF(F)}$ must be injective. However, it also is surjective by definition (for any field), so it must be an isomorphism. Hence, it suffices to prove the distribution relation on $\PolyLF(F)$, where it holds as a consequence of \cite[Propositions 12, 13]{CMRR24}. 
\end{proof}

\section{Number fields} In this section we discuss the case where $F$ is a number field, i.e.~a finite extension of $\bb{Q}$. What makes this accessible is the work of Borel and Yang \cite{borelyang}, relying on that of Blasius, Franke, and Grunewald \cite{BFG}, which provides a description of the rational cohomology of $\GL_n(F)$. This uses automorphic methods and thus relies on deep work in analysis.

\subsection{Recollection of the work of Borel and Yang} \label{section: borel-yang}
We start by giving a short explanation of the work of Borel and Yang \cite{borelyang}---which builds on the work of Blasius, Franke, and Grunewald \cite{BFG}---and extract three statements from their work, see \cref{prop:borel-yang-stab}. 

Using automorphic techniques, they compute the rational cohomology of $\SL_n(F)$ and $\GL_n(F)$ for a number field $F$, and describe the stabilisation maps in terms of these computations. Suppose that $F$ has $r_1$ real embeddings and $r_2$ pairs of non-real complex embeddings. The following is \cite[Theorem 2.1, Section 4.1]{borelyang} (we always take $E=\bb{Q}$, suppressing it from the notation):

\begin{theorem}[Borel--Yang] \label{thm:borelyang-sln} For all $n \geq 1$ we have
\[H^*(\SL_n(F);\bb{Q}) \cong \begin{cases}
    S^\bullet_n \otimes \bigotimes_{r_1} \Lambda^*(e_{n,n}) & \text{if $n$ is even,} \\
    S^\bullet_n & \text{if $n$ is odd.}
\end{cases}\]
\end{theorem}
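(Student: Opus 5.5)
This theorem is imported from \cite{borelyang}. Our plan is to reconstruct its proof along the lines of Borel and Yang rather than reprove the automorphic input. The first step is to write $\SL_n(F)$ as the filtered colimit of its $S$-arithmetic subgroups $\SL_n(\cal{O}_S)$, with $S$ running over finite sets of places containing the archimedean ones. Rational group cohomology of a filtered colimit of groups is the limit of the cohomologies (everything is finite-dimensional in each degree). This reduces the statement to computing
\[\lim_S H^*(\SL_n(\cal{O}_S);\bb{Q})\]
together with the restriction maps between these groups.

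For a fixed $S$, we would use the theorem of Franke and Blasius--Franke--Grunewald \cite{BFG}. It identifies $H^*(\SL_n(\cal{O}_S);\bb{C})$ with the relative Lie algebra cohomology of the space of automorphic forms, taken with respect to $\prod_{v \mid \infty}\SL_n(F_v)$. At each finite $v\in S$ this is combined with continuous cohomology of $\SL_n(F_v)$. The cohomology decomposes according to the constituents $\pi=\bigotimes_v \pi_v$ of the automorphic spectrum. By Casselman's computation, the continuous cohomology of $\SL_n(F_v)$ with coefficients in an irreducible unitary representation $\pi_v$ vanishes unless $\pi_v$ is trivial or $\pi_v$ is the Steinberg representation. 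A Steinberg contribution only appears in the top degree $n-1$ for that place. It is killed in the limit, because the restriction maps from $S$ to $S\cup\{v\}$ shift or annihilate these classes.

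We expect this to be the main obstacle: showing that in the colimit over $S$ only the $\pi$ whose finite components are trivial survive. Once that is done, only the trivial representation, i.e.\ the constant functions in the residual spectrum, contributes. Its contribution is the relative Lie algebra cohomology
\[H^*\Bigl(\textstyle\prod_{v\mid\infty}\fr{sl}_n(F_v),\,K_\infty;\bb{C}\Bigr),\]
which is the cohomology of the compact dual. A more careful bookkeeping of Franke's filtration is needed, since Eisenstein classes coming from parabolics also contribute. This is exactly the input we would take from \cite[Section 4.1]{borelyang}. The output is that the surviving classes form the subalgebra generated by the Borel classes $S^\bullet_n$, together with the Euler-type classes $e_{n,n}$ in degree $n$ at each of the $r_1$ real places when $n$ is even.

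The last step is to compute the compact dual cohomology explicitly. At a complex place $\fr{sl}_n(\bb{C})$ with $K=\rm{SU}(n)$, the compact dual is $\rm{SU}(n)$, whose cohomology is an exterior algebra on generators of degrees $3,5,\dots,2n-1$. At a real place with $K=\rm{SO}(n)$, the compact dual is $\rm{SU}(n)/\rm{SO}(n)$, whose cohomology is an exterior algebra on generators of degrees $5,9,\dots$. For $n$ even it has an extra degree-$n$ generator $e_{n,n}$. The remaining task is to identify the product of these algebras over all places with $S^\bullet_n$, tensored with $\bigotimes_{r_1}\Lambda^*(e_{n,n})$ when $n$ is even. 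This is a check on the degrees of the generators, matching the definitions of $S^\bullet_n$ in \cite{borelyang}.
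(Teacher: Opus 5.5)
The paper does not prove this statement; it imports it from \cite{borelyang}, which relies on \cite{BFG}. As the paper uses it in the proof of \cref{lem:gln-c-cts-injective}, the content of Theorem~2.1 there is that the natural map $H^*_{\rm{cts}}(\prod_{v\mid\infty}\SL_n(F_v))\to H^*(\SL_n(F))$ is an isomorphism. Section~4.1 names the generators $x_{n,i}$, $e_{n,n}$ of the source, so the limit bookkeeping you attribute to Section~4.1 is really Theorem~2.1. Deferring the automorphic input is therefore the same route as the paper. Your final step is also right: the compact duals $\rm{SU}(n)$ at complex places and $\rm{SU}(n)/\rm{SO}(n)$ at real places, with the extra degree-$n$ Euler class when $n$ is even, give exactly $S^\bullet_n\otimes\bigotimes_{r_1}\Lambda^*(e_{n,n})$.

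What is off is your account of why only the constants survive. In the inverse system $\lim_S H^*(\SL_n(\cal{O}_S))$, a Steinberg class at $v$ that dies under restriction to a smaller $\SL_n(\cal{O}_S)$ with $v\notin S$ is not thereby removed from the limit. What matters is which classes at a fixed level $S$ lift along restriction from every larger level.

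In the automorphic description, restriction from $\SL_n(F_w)$ to the compact open $\SL_n(\cal{O}_w)$ is zero on positive-degree continuous cohomology. In degree $0$ it is the inclusion of $\SL_n(F_w)$-invariants. This holds for any smooth coefficients, so Casselman's classification of unitary representations is not the relevant input. Hence, modulo the bookkeeping with Franke's filtration, a class that survives in the limit only involves automorphic forms invariant under $\SL_n(F_w)$ for every finite $w\notin S$.

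The global input is then approximation for $\SL_n$. Since $\SL_n(F)$ is dense in $\prod_{v\in S}\SL_n(F_v)$, the subgroup $\SL_n(F)\cdot\prod'_{w\notin S}\SL_n(F_w)$ is dense in $\SL_n(\bb{A}_F)$, so such forms are constant. This one argument removes Steinberg classes at places of $S$, cuspidal classes, and everything else except the constant functions. In particular, your remark that further Eisenstein classes contribute contradicts your own correct conclusion that the limit is exactly the compact-dual cohomology.
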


Here, firstly, the ``Euler classes'' $e_{n,n}$ are of degree $n$, one for each of the $r_1$ real embeddings $F \hookrightarrow \bb{R}$. Secondly, $S^\bullet_n$ is a tensor product \cite[Section 4.1]{borelyang}
\[S^\bullet_n \cong \left(\bigotimes_{r_1} \begin{cases} S^*(x_{n,5},x_{n,9},\ldots,x_{n,2n-3}) & \text{if $n$ is even} \\
S^*(x_{n,5},x_{n,9},\ldots,x_{n,2n-1}) & \text{if $n$ is odd} \end{cases}\right) \otimes \left(\bigotimes_{r_2} S^*(x_{n,3},x_{n,5},\ldots,x_{n,2n-1})\right)\]
where the ``Borel classes'' $x_{n,i}$ are of degree $i$, one for each embedding $F \hookrightarrow \bb{C}$, or $F \hookrightarrow \bb{R}$ and $i \equiv 1 (\rm{mod} 4)$, p.~693 loc.cit.. The inclusion $\rm{SL}_m(F) \to \rm{SL}_n(F)$ for $m<n$ induces a map 
\[j_{m,n} \colon H^*(\SL_n(F);\bb{Q}) \lra H^*(\SL_m(F);\bb{Q}).\]
By \cite[Section 4.2]{borelyang}, this takes each $x_{n,i}$ for a real or complex embedding to the corresponding $x_{m,i}$, to be interpreted as zero if $i$ is larger than $2m-1$ (if $m$ is odd or the embedding is complex) or $2m-3$ (if $m$ is even and the embedding is real), and takes each $e_{n,n}$ to zero. 

The homology of $\rm{SL}_n(F)$ is isomorphic to the linear dual of \cref{thm:borelyang-sln}, using the universal coefficients theorem and that these (co)homology groups are degreewise finite-dimensional; we denote the preduals of the $x_{n,i}$ by $\sf{x}_{n,i}$. The analogous statement for general linear groups is as follows (note that the preduals to the Euler classes $e_{n,n}$ have disappeared):

\begin{corollary}\label{cor:borelyang-gln} For all $ n \geq 1$ we have $H_*(\GL_n(F);\bb{Q}) \cong \Lambda^* F^\times_\bb{Q} \otimes (S^\bullet_n)^\vee$.
\end{corollary}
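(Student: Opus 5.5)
The plan is to imitate the proof of \cref{cor:harder-gln}, using the Lyndon--Hochschild--Serre spectral sequence for the extension $1 \to \SL_n(F) \to \GL_n(F) \xrightarrow{\det} F^\times \to 1$. It reads
\[E^2_{p,q}=H_p(F^\times;H_q(\SL_n(F);\bb{Q})) \Longrightarrow H_{p+q}(\GL_n(F);\bb{Q}).\]
The argument has three steps: compute the action of $F^\times$ on $H_*(\SL_n(F);\bb{Q})$, compute the $E^2$-page, and show the spectral sequence degenerates.

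\textbf{Step 1: the action.} We must determine how $F^\times \cong \GL_n(F)/\SL_n(F)$ acts on $H_*(\SL_n(F);\bb{Q})$. The action is conjugation by $\mathrm{diag}(a,1,\ldots,1)$. For each archimedean place, this extends to conjugation inside $\GL_n(\bb{R})$ or $\GL_n(\C)$. There the identity component acts trivially on continuous and stable classes. In the description of \cref{thm:borelyang-sln}, the Borel classes $x_{n,i}$ come from continuous cohomology of the connected group $\GL_n(\C)$, or from $\GL_n(\bb{R})$ where they are invariant. So the $x_{n,i}$ are fixed.

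The Euler classes $e_{n,n}$, for $n$ even, change sign under elements of negative determinant at the corresponding real place. Elements $a \in F^\times$ that are negative at a given real embedding exist by weak approximation. Hence the dual of $H_*(\SL_n(F);\bb{Q})$ splits $F^\times$-equivariantly as $(S^\bullet_n)^\vee$, with trivial action, tensored with $\bigotimes_{r_1}\Lambda^*(\sf{e})$. Here each $\sf{e}$ transforms through the sign character $\chi_v$ of its real place.

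\textbf{Step 2: the $E^2$-page.} Because $F^\times$ is abelian, rational group homology with coefficients in a character can be computed explicitly. We have $H_*(F^\times;\bb{Q}) \cong \Lambda^* F^\times_\bb{Q}$. For a nontrivial character $\chi$ of order $2$ with kernel $K$ of index $2$, one gets $H_*(F^\times;\bb{Q}_\chi) = 0$: the transfer identifies it with the $\chi$-isotypic part of $H_*(K;\bb{Q})$, and the quotient $F^\times/K$ acts trivially on $H_*(K;\bb{Q})$ because $F^\times$ is abelian. Every nonempty product of the $\sf{e}$'s gives a nontrivial product of distinct sign characters. This uses that the real places are independent, again by weak approximation. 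Therefore only the trivial-character part survives, and
\[E^2_{p,q} \cong \Lambda^p F^\times_\bb{Q} \otimes ((S^\bullet_n)^\vee)_q.\]

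\textbf{Step 3: degeneration.} This is the main obstacle. I would use the determinant section. The inclusion $F^\times = \GL_1(F) \to \GL_n(F)$ via the top-left corner splits $\det$, so the row $q=0$ consists of permanent cycles that are not boundaries. For the column $p=0$, I would argue that the Borel classes extend to classes in $H^*(\GL_n(F);\bb{Q})$. They are restrictions of continuous cohomology classes of $\prod_v \GL_n(F_v)$ via van Est, so they lie in the image of restriction from $\GL_n(F)$. Dually, $(S^\bullet_n)^\vee$ consists of permanent cycles, meaning it is the image of $H_*(\SL_n(F)) \to H_*(\GL_n(F))$ and all differentials out of the $p=0$ column vanish. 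Multiplicativity of the spectral sequence in cohomology, together with the cup product of the pulled-back $\det^*$ classes and the extended Borel classes, then shows all differentials vanish.

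Finally, to get the tensor product description rather than just an associated graded, I would construct the map
\[\Lambda^*F^\times_\bb{Q}\otimes (S^\bullet_n)^\vee \to H_*(\GL_n(F);\bb{Q})\]
directly, as a cap or cup product pairing with $\det^*$ and the extended Borel classes. This map is compatible with the filtration, and it is an isomorphism on associated gradeds.
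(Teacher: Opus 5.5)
Your argument is correct, and its skeleton is the paper's: the Lyndon--Hochschild--Serre spectral sequence for $\det\colon \GL_n(F)\to F^\times$. The paper simply cites \cite[Theorem 7.5]{borelyang} for two things: the identification $E^2_{p,q}\cong \Lambda^p F^\times_{\bb{Q}}\otimes ((S^\bullet_n)^\vee)_q$, and the collapse.

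You instead derive both from a finer input, namely that $H^*(\SL_n(F);\bb{R})$ is the restriction of the continuous cohomology of $\prod_{v\mid\infty}\SL_n(F_v)$ (\cite[Theorem 2.1]{borelyang}). From this you get three things:
\begin{enumerate}
\item The $F^\times$-action factors through the signs at the real places, which is essentially \cite[Lemma 7.2]{borelyang}.
\item The Euler-class summands die, because $H_*(F^\times;\bb{Q}_\chi)$ is the $\chi$-isotypic part of $H_*(\ker\chi;\bb{Q})$, on which the abelian group $F^\times$ acts trivially.
\item The spectral sequence collapses multiplicatively. The $E_2$-page is generated by the row $q=0$ and the column $p=0$, and the classes in that column extend to $\prod_v \GL_n(F_v)$, hence to $\GL_n(F)$.
\end{enumerate}
The paper's version is shorter; yours shows exactly which parts of Borel--Yang are needed.

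Two small corrections to Step 3. First, in the homological spectral sequence, differentials out of the column $p=0$ vanish for degree reasons. What the extension of the invariant classes actually gives is that nothing hits that column; dually, in cohomology, it says $E_2^{0,*}$ consists of permanent cycles. Second, in the cohomological multiplicative argument the row $q=0$ consists of permanent cycles automatically, so the splitting of $\det$ is not needed.

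Finally, your last paragraph is unnecessary for the stated isomorphism, since a filtration of a $\bb{Q}$-vector space always splits.
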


\begin{proof}We apply the Lyndon--Hochschild--Serre spectral sequence associated to the short exact sequence of groups $1 \to \SL_n(F) \to \GL_n(F) \smash{\xrightarrow{\det}} F^\times \to 1$, given by
\[E_{p,q}^2 = H_p(F^\times;H_q(\SL_n(F);\bb{Q})) \Longrightarrow H_{p+q}(\GL_n(F);\bb{Q}),\]
and by the dual of \cite[Theorem 7.5]{borelyang}, the $E_2$-page can be identified as the tensor product $H_p(F^\times) \otimes H_q(\SL_n(F);\bb{Q})_{F^\times}$ where one has $\smash{H^q(\SL_n(F);\bb{Q})_{F^\times}} = (S^\bullet_n)^\vee$, and the spectral sequence collapses at the $E^2$-page.\end{proof} 

Using compatibility of the spectral sequence of the above corollary with stabilisation, we get the following, implicit in the proof of \cite[Lemma 7.4]{borelyang}:

\begin{corollary}\label{cor-borelyang-gln-stab} The iterated stabilisation map
\[(s_{n,m})_* \colon H_*(\GL_m(F);\bb{Q}) \lra H_*(\GL_n(F);\bb{Q})\]
is given by $\id \otimes (j_{m,n})^\vee$ with respect to the isomorphisms of \cref{cor:borelyang-gln} for all $n \geq m \geq 1$.\end{corollary}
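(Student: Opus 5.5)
The plan is to compare the two Lyndon--Hochschild--Serre spectral sequences used in the proof of \cref{cor:borelyang-gln}, for $m$ and for $n$, via the map of extensions induced by the stabilisation map $s_{n,m}\colon \GL_m(F)\to\GL_n(F)$, $A\mapsto A\oplus I_{n-m}$. This homomorphism restricts to the inclusion $\SL_m(F)\to\SL_n(F)$ and commutes with the determinant, so we obtain a commutative diagram of short exact sequences
\[\begin{tikzcd}[row sep=small] 1 \rar & \SL_m(F) \rar \dar & \GL_m(F) \rar{\det} \dar{s_{n,m}} & F^\times \rar \dar[equal] & 1 \\ 1 \rar & \SL_n(F) \rar & \GL_n(F) \rar{\det} & F^\times \rar & 1,\end{tikzcd}\]
and hence a map of spectral sequences $E^r_{p,q}(m)\to E^r_{p,q}(n)$ that converges to $(s_{n,m})_*$ on abutments.

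On $E^2$-pages, the map $H_p(F^\times;H_q(\SL_m(F);\bb{Q}))\to H_p(F^\times;H_q(\SL_n(F);\bb{Q}))$ is induced by the $F^\times$-equivariant map on coefficients. Under the identification from \cite[Theorem 7.5]{borelyang}, $E^2_{p,q}\cong H_p(F^\times;\bb{Q})\otimes H_q(\SL(F);\bb{Q})_{F^\times}$. I would check that this identification is natural with respect to the inclusion. It comes from $F^\times$ acting trivially on the coinvariant part and the projection to coinvariants being split, and Borel--Yang construct the splitting compatibly with restriction $j_{m,n}$, since the classes $x_{n,i}$ are defined uniformly in $n$. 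With that, the map on $E^2$ is $\id_{\Lambda^p F^\times_\bb{Q}}\otimes (j_{m,n})^\vee$, using that the dual of restriction on $S^\bullet$ is $(j_{m,n})^\vee$. Since both spectral sequences collapse at $E^2$, this describes $(s_{n,m})_*$ on associated gradeds of the Serre filtrations.

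To pass from associated graded to the actual map, I would note that the isomorphism of \cref{cor:borelyang-gln} is itself built from a splitting of the Serre filtration. The $\Lambda^*F^\times_\bb{Q}$ factor comes from $H_*(F^\times)$ via a section. This could be the block inclusion $F^\times=\GL_1(F)\to\GL_m(F)$, which is compatible with $s_{n,m}$, or the central/diagonal embeddings, which become compatible after rationalisation up to scalars. The $(S^\bullet)^\vee$ factor comes from $H_*(\SL(F))_{F^\times}$ pushed forward, which is visibly natural. The product of these two pieces uses the Pontryagin-type module structure of $H_*(\GL(F))$ over $H_*(\GL_1)$ through block sum, which commutes with stabilisation up to conjugation. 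Conjugation acts trivially on homology, so the identification is compatible with $s_{n,m}$ on the nose, not just on gradeds.

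I expect the main obstacle to be this last point: making sure the chosen splitting in \cref{cor:borelyang-gln} is natural in $n$. If the cleanest route fails, a fallback is available. The Serre filtration pieces in each fixed total degree are finite-dimensional over the $S^\bullet$ part, and the map is determined up to filtration-lowering terms. One then argues that such correction terms vanish by a weight argument: Borel classes of different degrees are separated by Adams-type eigenvalues, or one uses that the $\Lambda^* F^\times_\bb{Q}$-module structure is preserved. This is what is implicit in the proof of \cite[Lemma 7.4]{borelyang}, and I would defer to it there.
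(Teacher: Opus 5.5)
Your proposal follows the paper's argument. The map of extensions induced by $s_{n,m}$ gives a map of Lyndon--Hochschild--Serre spectral sequences that is $\id\otimes(j_{m,n})^\vee$ on $E^2$, and collapse finishes the proof; like you, the paper defers the finer bookkeeping to \cite[Lemma 7.4]{borelyang}. Your last paragraph is not needed. The block-sum product there would also need repair, since block sum $\GL_1\times\GL_m\to\GL_{m+1}$ raises the rank. The later uses in \cref{prop:borel-yang-stab} and \cref{lem:borel-yang-regulators} only need the associated-graded statement: a filtered map with injective associated graded is strict, so its injectivity, surjectivity and cokernel can be read off there.
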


We need three consequences. The first consequence concerns the stabilisation map and its relative homology groups:

\begin{proposition}
    \label{prop:borel-yang-stab} \,
    \begin{enumerate}[(i)]
    \item \label{enum:borel-yang-stab-i} The stabilisation map is always injective and thus the canonical map
    \[H_*(\GL_n(F);\bb{Q}) \lra H_*(\GL_n(F),\GL_{n-1}(F);\bb{Q})\]
    is always surjective.
    \item \label{enum:borel-yang-stab-ii} For $n \geq 1$ we have that $H_*(\GL_n(F),\GL_{n-1}(F);\bb{Q})=0$ for $*<2n-1$.
    \item \label{enum:borel-yang-stab-iii} For $n \geq 2$ we have that
         \[\quad H_{2n-1}(\GL_n(F),\GL_{n-1}(F);\bb{Q}) \cong \begin{cases} 
        \bigoplus_{r_1} \bb{Q}\{\overline{\sf{x}}_{n,2n-1}\} \oplus \bigoplus_{r_2} \bb{Q}\{\overline{\sf{x}}_{n,2n-1}\} & \text{if $n$ is odd,} \\
        \bigoplus_{r_2} \bb{Q}\{\overline{\sf{x}}_{n,2n-1}\} & \text{if $n$ is even,}\end{cases}\]
    where the $\overline{\sf{x}}_{n,2n-1}$ have degree $2n-1$, given by the image of the $\sf{x}_{n,2n-1}$ under the canonical map $H_*(\GL_n(F);\bb{Q}) \to H_*(\GL_n(F),\GL_{n-1}(F);\bb{Q})$.
\end{enumerate}
\end{proposition}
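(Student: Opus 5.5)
The whole proposition will come out of \cref{cor:borelyang-gln} and \cref{cor-borelyang-gln-stab}, which identify the stabilisation map $H_*(\GL_{n-1}(F);\bb{Q}) \to H_*(\GL_n(F);\bb{Q})$ with $\id \otimes (j_{n-1,n})^\vee$ on $\Lambda^* F^\times_\bb{Q} \otimes (S^\bullet_{n-1})^\vee \to \Lambda^* F^\times_\bb{Q} \otimes (S^\bullet_n)^\vee$. The first step is to record what $j_{n-1,n}\colon S^\bullet_n \to S^\bullet_{n-1}$ does. It is a map of graded-commutative algebras sending each generator $x_{n,i}$ to $x_{n-1,i}$, or to zero when $i$ exceeds the top allowed degree for $n-1$. (The Euler classes play no role, since $S^\bullet_n$ already excludes them.) Comparing the generator lists, every generator of $S^\bullet_{n-1}$ is hit. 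For a complex embedding the generators $x_{n-1,3},\dots,x_{n-1,2n-3}$ are all hit. For a real embedding, the degrees $\equiv 1 \pmod 4$ up to $2n-3$ or $2n-5$ are all present at level $n$. Hence $j_{n-1,n}$ is a surjective algebra map, and its kernel is the ideal generated by the generators of $S^\bullet_n$ that do not survive.

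For \eqref{enum:borel-yang-stab-i}, surjectivity of $j_{n-1,n}$ dualises to injectivity of $(j_{n-1,n})^\vee$. Tensoring with $\Lambda^*F^\times_\bb{Q}$ over $\bb{Q}$ preserves injectivity, so the stabilisation map is injective. The long exact sequence of the pair then splits into short exact sequences, and in particular $H_*(\GL_n(F);\bb{Q}) \to H_*(\GL_n(F),\GL_{n-1}(F);\bb{Q})$ is surjective. The relative homology is identified with $\Lambda^*F^\times_\bb{Q} \otimes \operatorname{coker}((j_{n-1,n})^\vee) \cong \Lambda^*F^\times_\bb{Q} \otimes \ker(j_{n-1,n})^\vee$.

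For \eqref{enum:borel-yang-stab-ii} and \eqref{enum:borel-yang-stab-iii}, it remains to find the lowest-degree part of $\ker(j_{n-1,n})$. Since $j_{n-1,n}$ is a surjection of free graded-commutative algebras killing some generators, its kernel is the ideal generated by the killed generators. Its lowest degree is therefore the minimum degree of a killed generator, and in that degree the kernel is spanned by exactly those generators; no product can contribute, because all generators have positive degree. The killed generators are as follows:
\begin{itemize}
\item for a complex embedding, $x_{n,2n-1}$;
\item for a real embedding with $n$ odd, $x_{n,2n-1}$ (here $2n-1\equiv 1 \pmod 4$, and it is not present at level $n-1$, which is even with top degree $2n-5$);
\item for a real embedding with $n$ even, the top generator has degree $2n-3$, which is present at level $n-1$ since $n-1$ is odd, so nothing is killed.
\end{itemize}
Thus $\ker(j_{n-1,n})$ vanishes below degree $2n-1$, and in degree $2n-1$ it has basis the $x_{n,2n-1}$ with multiplicities $r_1+r_2$ for $n$ odd and $r_2$ for $n$ even.

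Tensoring with $\Lambda^*F^\times_\bb{Q}$, which is concentrated in degrees $\geq 0$ and equals $\bb{Q}$ in degree $0$, gives vanishing below $2n-1$ and the stated basis in degree $2n-1$. These basis elements are the images of the preduals $\sf{x}_{n,2n-1}$ under the surjection of \eqref{enum:borel-yang-stab-i}. The main point requiring care is the generator bookkeeping for real embeddings of each parity, including the boundary case $n=2$, where for a real embedding $S^\bullet_2$ has no generators. The case $n=1$ in \eqref{enum:borel-yang-stab-ii} is vacuous.
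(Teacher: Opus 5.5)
Your proposal is correct and follows the paper's route. Like the paper, you dualise the surjection $j_{n-1,n}$ via \cref{cor-borelyang-gln-stab} and identify the relative homology with $\Lambda^*F^\times_\bb{Q}\otimes\ker(j_{n-1,n})^\vee$. You then do the same case analysis: nothing is killed for real embeddings when $n$ is even, and $x_{n,2n-1}$ is killed otherwise. One small quibble: the case $n=1$ of (ii) is not vacuous but trivially true, since $H_0(\GL_1(F),\GL_0(F);\bb{Q})=0$ by connectivity, and it lies outside the range $m\geq 1$ of \cref{cor-borelyang-gln-stab}.
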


\begin{proof}Consider the map in \cref{cor-borelyang-gln-stab}. Part \eqref{enum:borel-yang-stab-i} follows since $j_{n-1,n}$ is visibly surjective and hence its dual is injective. Part \eqref{enum:borel-yang-stab-ii} follows by \cref{cor:borelyang-gln} and inspection of $j_{n-1,n}^\vee$. Part \eqref{enum:borel-yang-stab-iii} follows by considering several cases: the map $S^\bullet_n \to S^\bullet_{n-1}$ is on a term for a real embedding  an isomorphism if $n$ is even and surjective with kernel the ideal generated by $x_{n,2n-1}$ if $n$ is odd, and on a term for a complex embedding surjective with kernel the ideal generated by $x_{n,2n-1}$. Also, there are no additional contributions from units by \cref{cor:borelyang-gln} and part (ii).\end{proof}

The cohomology classes $x_{n,i}$ and $e_{n,n}$ are the indecomposables for the cup product on $H^*(\SL_n(F);\bb{Q})$, so the homology classes $\sf{x}_{n,i}$ and $\sf{e}_{n,n}$ are the primitives for the coproduct on $H_*(\SL_n(F);\bb{Q})$ induced by the diagonal. The primitives in $H_*(\GL_n(F);\bb{Q})$ are given by
\[\prim \, H_*(\GL_n(F);\bb{Q})=F^\times_\Q \oplus (S^\bullet_n)^\vee\]
since the isomorphism of \cref{cor:borelyang-gln} is one of coalgebras because it comes from a Lyndon--Hochschild--Serre spectral sequence and there are no extension issues.

If $n \ge 2$ then $2n-1>n$, and hence there is an isomorphism $\prim\, H_{2n-1}(\SL_n(F);\bb{Q}) \xrightarrow{\cong} \prim\, H_{2n-1}(\GL_n(F);\bb{Q})$ with the latter given by
\[\prim\, H_{2n-1}(\GL_n(F);\bb{Q}) \cong \begin{cases} 
        \bigoplus_{r_1} \bb{Q}\{{\sf{x}}_{n,2n-1}\} \oplus \bigoplus_{r_2} \bb{Q}\{{\sf{x}}_{n,2n-1}\} & \text{if $n$ is odd,} \\
        \bigoplus_{r_2} \bb{Q}\{{\sf{x}}_{n,2n-1}\} & \text{if $n$ is even,}\end{cases}\] 
as degrees rule out contributions from the Euler classes and units. Thus, using \cref{prop:borel-yang-stab}\eqref{enum:borel-yang-stab-iii} we obtain the following relationship between primitive elements and relative homology of the stabilisation map: 

\begin{lemma}\label{lem:borel-yang-prim-rel-iso}For $n \geq 2$, in the commutative diagram
\[\begin{tikzcd} \rm{prim}\,H_{2n-1}(\SL_n(F);\bb{Q}) \rar{\cong} \dar{\cong} & \rm{prim}\, H_{2n-1}(\GL_n(F);\bb{Q}) \dar{\rm{inc}} \\[-5pt]
H_{2n-1}(\GL_n(F),\GL_{n-1}(F);\bb{Q}) & H_{2n-1}(\GL_n(F);\bb{Q}) \lar \end{tikzcd}\]
the top horizontal and left vertical maps are isomorphisms.\end{lemma}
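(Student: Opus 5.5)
The plan is to verify the two claimed isomorphisms separately. Commutativity of the square is immediate from naturality: both composites are induced by $\SL_n(F) \hookrightarrow \GL_n(F) \to (\GL_n(F),\GL_{n-1}(F))$ on homology.

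\emph{Top horizontal map.} I would use the Lyndon--Hochschild--Serre description from the proof of \cref{cor:borelyang-gln}. The isomorphism $H_*(\GL_n(F);\bb{Q}) \cong \Lambda^* F^\times_\bb{Q} \otimes (S^\bullet_n)^\vee$ is one of coalgebras, so the primitives are $F^\times_\bb{Q} \oplus \prim (S^\bullet_n)^\vee$. The primitives of $(S^\bullet_n)^\vee$ are spanned by the preduals $\sf{x}_{n,i}$ of the polynomial generators. In degree $2n-1 > 1$ the units do not contribute, so $\prim\, H_{2n-1}(\GL_n(F);\bb{Q})$ is spanned by the $\sf{x}_{n,2n-1}$.

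On the $\SL_n(F)$ side, \cref{thm:borelyang-sln} shows that the primitives in degree $2n-1$ are the $\sf{x}_{n,2n-1}$ together with possible Euler preduals $\sf{e}_{n,n}$. Since $n < 2n-1$ for $n \geq 2$, the $\sf{e}_{n,n}$ do not contribute. Moreover, $H_*(\SL_n(F)) \to H_*(\GL_n(F))$ is the edge map onto $H_0(F^\times; H_*(\SL_n(F)))$, i.e.\ onto the coinvariants. It sends each $\sf{x}_{n,i}$ to the element with the same name, because the coinvariants are $(S^\bullet_n)^\vee$, with the Euler classes being exactly what is killed. Hence the map sends a basis to a basis, and is an isomorphism.

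\emph{Left vertical map.} By \cref{prop:borel-yang-stab}\eqref{enum:borel-yang-stab-iii}, the target has basis the images $\overline{\sf{x}}_{n,2n-1}$ of the $\sf{x}_{n,2n-1}$. The number of these classes matches the number of $\sf{x}_{n,2n-1}$ in the source: $r_1+r_2$ for $n$ odd and $r_2$ for $n$ even, since for real embeddings the class $x_{n,2n-1}$ exists exactly when $2n-1 \equiv 1 \pmod 4$, i.e.\ $n$ odd. By the commutativity of the square, the left map sends $\sf{x}_{n,2n-1}$ to $\overline{\sf{x}}_{n,2n-1}$, so it is a bijection on bases.

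\emph{Main obstacle.} The main point needing care is the identification of the preduals across $\SL_n \to \GL_n$: the Borel classes in $H^*(\GL_n(F);\bb{Q})$ must restrict to the Borel classes in $H^*(\SL_n(F);\bb{Q})$, with the Euler classes not in the image. This is exactly the content of \cite[Theorem 7.5]{borelyang} used in \cref{cor:borelyang-gln}, and I would cite it there. The same bookkeeping also gives \cref{prop:borel-yang-stab}\eqref{enum:borel-yang-stab-iii}.
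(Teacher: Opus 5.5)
Your proposal is correct and follows the paper's own route. The top map is an isomorphism because, using the coalgebra form of the Lyndon--Hochschild--Serre isomorphism from \cref{cor:borelyang-gln}, degree reasons ($2n-1>n$ and $2n-1>1$) rule out contributions from the Euler classes and the units. The left map is then an isomorphism by commutativity of the square together with \cref{prop:borel-yang-stab}\eqref{enum:borel-yang-stab-iii}, since the $\overline{\sf{x}}_{n,2n-1}$ are by definition the images of the $\sf{x}_{n,2n-1}$.
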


Tensoring the above discussion with $\bb{R}$, the stabilisation map and Hurewicz map give rise to the horizontal maps in
\begin{equation}\label{eqn:borel-yang-regulators} \begin{tikzcd}[ampersand replacement=\&] \mathrm{prim}\,H_{2n-1}(\GL_n(F);\bb{R}) \rar \& \mathrm{prim}\,H_{2n-1}(\GL_\infty(F);\bb{R}) \dar \& \lar{\rm{Hur}} K_{2n-1}(F)_\bb{R} \\[-10pt]
\& \parbox{3cm}{$\begin{cases}\bb{R}^{r_1+r_2} & \text{if $n$ is even,} \\
\bb{R}^{r_2} & \text{if $n$ is odd,}\end{cases}$} \& \end{tikzcd}\end{equation}
while the vertical map is given by the Borel regulators for all real embeddings and conjugate pairs of complex embeddings, to be described momentarily in \cref{subsec:number-fields-injectivity}.

\begin{lemma}\label{lem:borel-yang-regulators}
The maps in \eqref{eqn:borel-yang-regulators} are isomorphisms of finite-dimensional real vector spaces.
\end{lemma}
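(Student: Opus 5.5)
The plan is to treat the three maps in \eqref{eqn:borel-yang-regulators} separately, and to use a dimension count to make the bookkeeping of \cref{thm:borelyang-sln} consistent with Borel's computation of the ranks $d_n$. Throughout, $n \geq 2$. All groups are obtained from the rational ones of \cref{section: borel-yang} by tensoring with $\bb{R}$, and this is exact, so it suffices to argue rationally for the two horizontal maps.

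For the Hurewicz map I would use that $\BGL_\infty(F)^+ \simeq \Omega^\infty_0 K(F)$ is an infinite loop space. Its rational homology is then a connected graded-commutative and cocommutative Hopf algebra. By the Milnor--Moore theorem \cite[Appendix]{MilnorMoore}, equivalently the rational Hurewicz theorem for $H$-spaces, the Hurewicz map identifies $\pi_*(\Omega^\infty_0 K(F)) \otimes \bb{Q}$ isomorphically with the primitives. Since $\BGL_\infty(F)\to\BGL_\infty(F)^+$ is a homology equivalence, this gives $K_{2n-1}(F)_\bb{Q} \cong \prim\,H_{2n-1}(\GL_\infty(F);\bb{Q})$. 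This is also the identification implicit in \cref{prop:stable-algebra-spc}.

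For the stabilisation map I would use \cref{cor-borelyang-gln-stab}. Because $H_*(\GL_\infty(F);\bb{Q}) = \colim_N H_*(\GL_N(F);\bb{Q})$ and taking primitives commutes with filtered colimits, it suffices to show that $(s_{N,n})_*$ restricts to an isomorphism on $\prim\,H_{2n-1}$ for all $N \geq n$.
\begin{enumerate}[(a)]
\item By the description of primitives above, in degree $2n-1 > n$ the primitives of $H_*(\GL_N(F);\bb{Q})$ are exactly the span of the $\sf{x}_{N,2n-1}$. The units $F^\times_\bb{Q}$ sit in degree $1$, and the Euler classes do not survive to $\GL_N$.
\item The map $(s_{N,n})_* = \id \otimes j_{n,N}^\vee$ sends $\sf{x}_{n,2n-1} \mapsto \sf{x}_{N,2n-1}$. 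Dually, $j_{n,N}(x_{N,2n-1}) = x_{n,2n-1}$ whenever the latter exists in $S^\bullet_n$.
\item One checks that the set of embeddings carrying a class $x_{N,2n-1}$ for $N$ large is the same as for $N = n$. For complex embeddings every odd degree $\geq 3$ occurs. For a real embedding a class occurs in degree $2n-1$ iff $2n-1 \equiv 1 \pmod 4$, i.e.\ $n$ odd. In that case $2n-1$ is exactly the top degree allowed for $n$ odd, so the class already exists for $\GL_n$.
\end{enumerate}
Hence the restriction to primitives is a bijection on these bases. This is the step where I expect the only real care to be needed: matching the parity conventions of \cref{thm:borelyang-sln}, in particular the cutoff $2m-3$ for $m$ even and real embeddings, against the ranks in \cref{prop:borel-yang-stab}\eqref{enum:borel-yang-stab-iii}. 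I would double-check this count against \cref{lem:borel-yang-prim-rel-iso}. That lemma already gives $\dim \prim\,H_{2n-1}(\GL_n(F);\bb{Q}) = d_n$, where $d_n = r_1 + r_2$ for $n$ odd and $d_n = r_2$ for $n$ even.

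For the vertical regulator map I would invoke Borel's theorem \cite{BorelReg,BorelRegErrata}. It says that the Borel regulator $K_{2n-1}(F) \to \bb{R}^{d_n}$, with one component for each real embedding when $n$ is odd and one for each conjugate pair of complex embeddings, becomes an isomorphism after tensoring with $\bb{R}$. By construction this regulator is the composite of the vertical map with the Hurewicz isomorphism established above. Consequently the vertical map is an isomorphism.

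Finally, all three spaces have dimension $d_n$, so finite-dimensionality is automatic. This agreement of dimensions also serves as a consistency check on the indexing of the parity cases in the display.
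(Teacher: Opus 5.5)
Your proposal is correct and follows the same route as the paper. The left map is handled via \cref{cor-borelyang-gln-stab}, the Hurewicz map via Milnor--Moore, and the vertical map via Borel's theorem that the regulator becomes an isomorphism after $\otimes\bb{R}$; your steps (a)--(c) fill in bookkeeping the paper leaves implicit. Your parity convention ($d_n = r_1+r_2$ for $n$ odd, $d_n = r_2$ for $n$ even) is the correct one, matching \cref{prop:borel-yang-stab} and the introduction, whereas the display \eqref{eqn:borel-yang-regulators} has the two cases swapped.
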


\begin{proof}The left horizontal map is an isomorphism by \cref{cor-borelyang-gln-stab} and the right horizontal map is an isomorphism by the Milnor--Moore theorem \cite[Appendix]{MilnorMoore}. The vertical map is an isomorphism by Borel's computation of the rational algebraic $K$-theory of number fields \cite[Section 12]{BorelStable} and its interpretation in terms of regulator maps \cite{BorelReg,BorelRegErrata}.\end{proof}

\subsection{Technical $E_\infty$-algebra tools} The main result of this subsection is \cref{thm:image-edge-is-cobracket}, which will be used to compute the Goncharov Lie coalgebra for $\BGLb(F)_\bb{Q}$. The intuition is this: for a trivial $E_\infty$-algebra, its homology injects into $E_\infty$-homology and the Koszul dual is cofree, and the theorem says that under mild conditions one has a similar result in a range.

\medskip

We recall some notation. Recall that $\bb{Q}^\rm{nu}[t] \in \Alg_{E_\infty^\rm{nu}}(\Fun(\bb{N},\DQ))$ is the augmentation ideal of $\bb{Q}[t] = \free_{E_\infty^\rm{u}}(1_! 1_\DQ)$, and that for $\sigma \colon \bb{Q}^\rm{nu}[t] \to \bf{R}$ we let $\bf{R}/(\sigma)$ be the cofibre in $\Alg_{E_\infty^\rm{nu}}(\Fun(\bb{N},\DQ))$. If $\sigma \colon \bb{Q}^\rm{nu}[t] \to \bf{R}$ has a standard vanishing line of slope 2 as in \cref{def:std-vanishing-line}, we can truncate its indecomposables to obtain a shifted Lie coalgebra
\[\scr{G}(\bf{R})[-1] = \rho_{\le \slopetwo}(\indec_{E^\rm{nu}_\infty}(\bf{R}/(\sigma))).\]
Recall that an $E_\infty^\rm{nu}$-algebra $\bf{R}$ in $\Fun(\bb{N},\DQ)$ is \emph{reduced} if $\bf{R}(0) \simeq 0$ and let $\Alg_{E_\infty^\rm{nu}}^\rm{red}(\Fun(\bb{N},\DQ))$ denote the full subcategory of reduced $E_\infty^\rm{nu}$-algebras. We will see momentarily, in \cref{lem:transferring-vanishing-lines-abs}, that \cref{thm:image-edge-is-cobracket} \eqref{enum:image-edge-is-cobracket-hypothesis-1} implies that $\bf{R}$ has a standard vanishing line of slope 2. Finally, the stable algebra $\bf{R}/(\sigma-1)$ is defined by implicitly first unitalising.

\begin{theorem} \label{thm:image-edge-is-cobracket}
    Let $\big(\bb{Q}^\rm{nu}[t] \xrightarrow{\sigma} \bf{R} \xrightarrow{\epsilon} \bb{Q}^\rm{nu}[t]\big)\in (\Alg_{E_\infty^\rm{nu}}^\rm{red}(\Fun(\bb{N},\DQ))_{/\bb{Q}^\rm{nu}[t]})^{\id_{\bb{Q}^\rm{nu}[t]}/}$ satisfy
    \begin{enumerate}[(i)]
        \item \label{enum:image-edge-is-cobracket-hypothesis-1} $H_{n,d}(\bf{R}/(\sigma))=0$ for $d<2n-1$,
        \item \label{enum:image-edge-is-cobracket-hypothesis-2} the canonical maps $\rm{can}_n \colon H_{n,2n-1}(\bf{R}/(\sigma)) \to H_{n,2n-1}^{E_\infty}(\bf{R}/(\sigma))$ are injective for all $n \ge 1$,
        \item \label{enum:image-edge-is-cobracket-hypothesis-3} the maps $H_{n,2n-1}(\bf{R}) \to H_{n,2n-1}(\bf{R}/(\sigma))$ are surjective for all $n \ge 1$,
        \item \label{enum:image-edge-is-cobracket-hypothesis-4} the stable algebra $\bf{R}/(\sigma-1) \in \Alg_{E^\rm{u}_\infty}(\DQ)$ is free.
    \end{enumerate}
    Then we have that:
    \begin{enumerate}[\noindent (1)]
        \item \label{enum:image-edge-is-cobracket-conclusion-1}There exists an isomorphism of graded Lie coalgebras
        \[\scr{G}(\bf{R}) \overset{\cong}\lra \cofree_{\rm{coLie}}\Big(\bigoplus_n H_{n,2n-1}(\bf{R}/(\sigma))[1]\Big)\]
        so the canonical maps $\rm{can}_n \colon H_{n,2n-1}(\bf{R}/(\sigma)) \to H_{n,2n-1}^{E_\infty}(\bf{R}/(\sigma))$ agree with the inclusion of the cogenerators (after suspension).
        \item \label{enum:image-edge-is-cobracket-conclusion-2} The image of the canonical map is given by
        \[\rm{im}\big[H_{n,2n-1}(\bf{R}/(\sigma)) \xrightarrow{\rm{can}_n} \scr{G}_n(\bf{R}) = H_{n,2n-1}^{E_\infty}(\bf{R}/(\sigma))\big] = \ker(\delta|_{\scr{G}_n(\bf{R})}).\]
        \item \label{enum:image-edge-is-cobracket-conclusion-3} The image of the edge homomorphism is given by
        \[\rm{im}\big[ H_{2n-1}^{E_\infty}(\bf{R}/(\sigma-1)) \xrightarrow{\rm{edge}_n} \scr{G}_n(\bf{R})=H_{n,2n-1}^{E_\infty}(\bf{R}/(\sigma))\big] = \ker(\delta|_{\scr{G}_n(\bf{R})}).\]
    \end{enumerate}
\end{theorem}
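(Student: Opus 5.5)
Write $\bf{A} \coloneq \bf{R}/(\sigma)$ and $V_n \coloneq H_{n,2n-1}(\bf{A})$. The plan is to work entirely with the Koszul truncation $\rho^\alg_{\le\slopetwo}(\bf{A})$, which by \cref{lem:coformality-critical} is Koszul dual to $\scr{G}(\bf{R})[-1]$. By hypothesis \eqref{enum:image-edge-is-cobracket-hypothesis-1}, $\bf{A}$ lies in $\Fun(\bb{N},\DQ)_{\ge\slopetwo}$. The lowest $E_\infty$-cells of a minimal cell structure therefore also satisfy $d\ge 2n-1$, which gives the standard vanishing line of slope 2 and lets us speak of $\scr{G}(\bf{R})$ at all.

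\textbf{Homology of the truncation.} The unit $\bf{A}\to\rho^\alg_{\le\slopetwo}(\bf{A})$ is compatible with the canonical maps to $E_\infty$-homology. On $E_\infty$-homology it is an isomorphism on the line $d=2n-1$. By \cref{exam:edge-ce-identification},
\[
H_{n,2n-1}(\rho^\alg_{\le\slopetwo}(\bf{A}))\cong H^{\rm{CE}}_{n,2n-1}(\scr{G}(\bf{R}))=\ker(\delta|_{\scr{G}_n(\bf{R})}),
\]
with the canonical map to $E_\infty$-homology being the inclusion. Hence $\rm{can}_n$ factors through $\ker\delta$, giving one inclusion in \eqref{enum:image-edge-is-cobracket-conclusion-2}. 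Injectivity of $\rm{can}_n$, hypothesis \eqref{enum:image-edge-is-cobracket-hypothesis-2}, then shows that $V_n\to H_{n,2n-1}(\rho^\alg_{\le\slopetwo}(\bf{A}))$ is injective.

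\textbf{Reverse inclusion: the main obstacle.} We must show every element of $\ker\delta$ lifts to $V_n$, i.e.\ that
\[
H_{n,2n-1}(\bf{A})\to H_{n,2n-1}(\rho^\alg_{\le\slopetwo}(\bf{A}))
\]
is surjective. My approach uses hypotheses \eqref{enum:image-edge-is-cobracket-hypothesis-3} and \eqref{enum:image-edge-is-cobracket-hypothesis-4} together with the rank spectral sequences.

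\begin{enumerate}[(a)]
\item By \eqref{enum:image-edge-is-cobracket-hypothesis-4} and \cref{cor:image-of-edge-in-ker-cobracket}, the indecomposable rank filtration is exhaustive, so $\rm{im}(\rm{edge}_n)\subseteq\ker\delta$.
\item By \cref{lem:edge-homomorphism-in-rank-n}, $\rm{edge}_n\circ\rm{can}\circ\rm{stab}$ equals the composite $H_{n,2n-1}(\bf{R})\to V_n\xrightarrow{\rm{can}_n}\scr{G}_n$.
\item By \eqref{enum:image-edge-is-cobracket-hypothesis-3}, the image of this composite is exactly $\rm{can}_n(V_n)$.
\end{enumerate}

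It then remains to show that $\rm{edge}_n$ has image at least $\ker\delta$ and that this image is reached from rank $n$. I would establish this by comparing with the truncation: the map $\bf{R}/(\sigma-1)\to\rho^\alg_{\le\slopetwo}(\bf{R})/(\sigma-1)$ on $E_\infty$-homology in degree $2n-1$, with target $\bigoplus_i H^{\rm{CE}}_{i,2n-1}$ projected to $\scr{G}_n$. I expect this comparison to be the delicate point, and I would attempt it by induction on $n$ as follows.

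\begin{itemize}
\item Assuming the cofree statement in ranks $<n$, the CE homology in bidegrees $(i,2n-1)$ for $i<n$ vanishes, because a cofree Lie coalgebra has CE homology concentrated on its cogenerators.
\item The stable $E_\infty$-homology in degree $2n-1$ of the truncation is then just $\ker(\delta|_{\scr{G}_n})$.
\item Freeness \eqref{enum:image-edge-is-cobracket-hypothesis-4} identifies $H^{E_\infty}_{2n-1}(\bf{R}/(\sigma-1))$ with primitives, which come from rank $\le n$ through the $E^1$-terms $V_i$.
\end{itemize}

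Together these should give surjectivity onto $\ker\delta$ from $V_n$, which proves \eqref{enum:image-edge-is-cobracket-conclusion-2} and, via (a)--(c), \eqref{enum:image-edge-is-cobracket-conclusion-3}.

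\textbf{Cofreeness.} With \eqref{enum:image-edge-is-cobracket-conclusion-2} in hand, consider the cogenerator inclusion $\bigoplus_n V_n[1]\cong\ker\delta\hookrightarrow\scr{G}(\bf{R})$. A map of Lie coalgebras into a cofree one is built by choosing a projection $\scr{G}(\bf{R})\to\ker\delta$ splitting this inclusion, which is possible over $\bb{Q}$. This induces a map
\[
\scr{G}(\bf{R})\to\cofree_{\rm{coLie}}(\textstyle\bigoplus V_n[1]).
\]
The map is injective because its kernel would be a subcoalgebra meeting $\ker\delta$ trivially, and a nonzero subcoalgebra always contains a nonzero element of minimal weight, which lies in $\ker\delta$. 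It is surjective by comparing CE homology. The source is $\prim$-dual to $\rho^\alg_{\le\slopetwo}(\bf{A})$, whose homology on and above the line coincides with that of $\bf{A}$ in the relevant range. Hypothesis \eqref{enum:image-edge-is-cobracket-hypothesis-2} makes $\bf{A}$ look trivial on the line, so $H^{\rm{CE}}_{n,d}(\scr{G}(\bf{R}))$ is concentrated in $d=2n-1$. This is Koszulness, which forces $\scr{G}(\bf{R})$ to be cofree on $\ker\delta$; dimension counts weight by weight then give the isomorphism in \eqref{enum:image-edge-is-cobracket-conclusion-1}.
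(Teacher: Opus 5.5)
Your steps (a)--(c) are correct. Combined with surjectivity of $\rm{can}$ on the free stable algebra (from (iv)) and of stabilisation from rank $n$ in degree $2n-1$ (from (i)), they give $\rm{im}(\rm{edge}_n)=\rm{can}_n(V_n)$; this is essentially the paper's final step. But because of this identity, the task you then set yourself, $\rm{im}(\rm{edge}_n)\supseteq\ker\delta$, is literally equivalent to the inclusion $\ker\delta\subseteq\rm{can}_n(V_n)$ you are trying to prove. So (iii) and (iv) give no leverage on it, and the induction does not break the circle:
\begin{itemize}
\item $H^{\rm{CE}}_{i,2n-1}(\scr{G}(\bf{R}))=0$ for $i<n$ holds for degree reasons, with no inductive input.
\item $H^{E_\infty}_{2n-1}(\rho^{\alg}_{\le\slopetwo}(\bf{R})/(\sigma-1))$ is all of $\scr{G}_n(\bf{R})$, not $\ker\delta$. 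It is the image of stable \emph{homology} that equals $\ker\delta$, again unconditionally.
\item Nothing in your three bullets produces, for a given element of $\ker\delta$, a class in $V_n$ mapping to it.
\end{itemize}
Whether such a lift exists is governed by higher cooperations of $\indec_{E_\infty^{\rm{nu}}}(\bf{A})[1]$ involving classes just above the line, which $\scr{G}(\bf{R})$ forgets. So it does not follow from the truncation and rank-spectral-sequence bookkeeping you describe; this is exactly where (i) and (ii) must be used together.

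Your cofreeness paragraph has the same hole. Injectivity via an element of minimal weight is fine. Surjectivity, however, rests on $H^{\rm{CE}}_{n,d}(\scr{G}(\bf{R}))=0$ for $d<2n-1$, i.e.\ on Koszulness, which you only assert (``(ii) makes $\bf{A}$ look trivial on the line''). That vanishing concerns the homology of $\rho^{\alg}_{\le\slopetwo}(\bf{A})$ \emph{below} the line, where products of line classes can land and which the truncation unit does not visibly control. Note also that (2) alone does not force cofreeness (a cotrivial $\scr{G}$ has $\ker\delta=\scr{G}$), and without finiteness hypotheses no dimension count is available.

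The missing idea is to compare $\bf{A}$ not with its Koszul truncation but with a \emph{trivial} algebra. Using (ii), choose retractions $s_n$ of $\rm{can}_n$. Let $F\colon\bf{A}\to\bf{T}\coloneq\triv_{E_\infty^{\rm{nu}}}(\bigoplus_n V_n)$ be adjoint to $\cot_{E_\infty^{\rm{nu}}}(\bf{A})\to\rho_{\le\slopetwo}\cot_{E_\infty^{\rm{nu}}}(\bf{A})\simeq\bigoplus_n H^{E_\infty}_{n,2n-1}(\bf{A})\xrightarrow{\oplus s_n}\bigoplus_n V_n$. Then $H_{n,2n-1}(F)=s_n\circ\rm{can}_n=\id$. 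Since $\bf{A}$ has homology only on or above the line by (i), and $\bf{T}$ has homology only on the line, the relative bar spectral sequence with transfer of vanishing lines (\cref{lem:transferring-vanishing-line-rel}) gives $H^{E_\infty}_{n,d}(\bf{T},\bf{A})=0$ for $d\le 2n$. So $F$ is an isomorphism on $E_\infty$-homology along the line. Hence $\rho_{\le 2\bullet}(\indec_{E_\infty^{\rm{nu}}}(F)[1])$ identifies $\scr{G}(\bf{R})$ with $\indec_{E_\infty^{\rm{nu}}}(\bf{T})[1]\simeq\cofree_{\coLie}(\bigoplus_n V_n[1])$, under which $\rm{can}_n$ becomes the cogenerator inclusion. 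That is (1). Then (2) follows because the kernel of the cobracket of a cofree Lie coalgebra is its cogenerators, and your (a)--(c) yield (3). The logical order is (1)$\Rightarrow$(2)$\Rightarrow$(3), with (iii) and (iv) entering only in the last implication.
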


\begin{remark}It may be of interest that parts \eqref{enum:image-edge-is-cobracket-conclusion-1} and \eqref{enum:image-edge-is-cobracket-conclusion-2} only use hypotheses \eqref{enum:image-edge-is-cobracket-hypothesis-1} and \eqref{enum:image-edge-is-cobracket-hypothesis-2}.
\end{remark}

\subsubsection{Transfer of vanishing lines} For any map $f \colon \bf{A} \to \bf{B}$ in $\Alg_{E_\infty^\rm{nu}}(\Fun(\bb{N},\DQ))$ there is a \emph{relative bar spectral sequence} of the form
\[E^1_{n,p,q}=H_{n,q}(\bf{B}^{\otimes p},\bf{A}^{\otimes p}) \Longrightarrow H_{n,p+q-1}^{E_1}(\bf{B},\bf{A}),\]
arising by filtering by skeleta the bar constructions of the unitalisations (see \cite[Proposition 14.5]{GKRW18}). We start with two lemmas obtained using this bar spectral sequence: 

\begin{lemma}\label{lem:transferring-vanishing-lines-abs}
    Let $\bf{A}\in \Alg_{E_\infty^\rm{nu}}^\rm{red}(\Fun(\bb{N},\DQ))$ be such that $H_{n,d}(\bf{A})=0$ for $d<2n-1$, then $\smash{H_{n,d}^{E_\infty}(\bf{A})}=0$ for $d<2n-1$ as well.
\end{lemma}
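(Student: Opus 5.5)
We argue via the relative bar spectral sequence just recalled, applied first to pass from homology to $E_1$-homology, and then compare $E_1$-homology with $E_\infty$-homology. Since the field is $\bb{Q}$, the latter comparison should also go through a vanishing-line transfer (or a direct $E_\infty$-bar/cotangent-complex argument). It is convenient to rephrase the condition as: $\bf{A}$ lies in $\Fun(\bb{N},\DQ)_{\ge \slopetwo}$, with the abstract connectivity $c(n)=2n-1$ for $n\ge1$ ($c(0)=\infty$, since $\bf{A}$ is reduced). The key numerical point is that $c$ is superadditive in the strong form: $c(i)+c(j) = 2(i+j)-2 = c(i+j)-1$.

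\emph{Step 1: tensor powers.} By Künneth over $\bb{Q}$, $H_{n,*}(\bf{A}^{\otimes p})$ is a sum of tensor products $H_{n_1,*}\otimes\cdots\otimes H_{n_p,*}$ with all $n_i\ge1$ and $\sum n_i=n$. Hence it vanishes in degrees $d<\sum(2n_i-1)=2n-p$.

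\emph{Step 2: bar spectral sequence.} Apply the bar spectral sequence with $\bf{A}$ as target and $0$ as source. The $E_1$-homology of the unitalisation is computed by $E^1_{n,p,q}=H_{n,q}(\bf{A}^{\otimes p})\Rightarrow H^{E_1}_{n,p+q-1}(\bf{A})$. By Step 1 the entry vanishes unless $q\ge 2n-p$, i.e.\ total degree $p+q-1\ge 2n-1$. So $H^{E_1}_{n,d}(\bf{A})=0$ for $d<2n-1$. Equivalently, $\Sigma\,\cot_{E_1}$ is the reduced bar construction, which gives the same bound.

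\emph{Step 3: from $E_1$ to $E_\infty$.} This is the main obstacle. I would avoid iterating through $E_k$ and use the rational description directly: over $\bb{Q}$, $\cot_{E_\infty^\rm{nu}}(\bf{A})[1]$ is the Harrison/$s\,\coLie$ indecomposables, i.e.\ the underlying object of $\indec_{E_\infty^\rm{nu}}(\bf{A})[1]$. This is filtered by word length, and its associated graded is a sum of $s\,\rm{coLie}(p)\otimes_{\mathfrak{S}_p}\bf{A}^{\otimes p}$ shifted by $p$, where $s\,\coLie(p)$ sits in degree $p-1$. By Step 1 the $p$-th piece lives in degrees $\ge (2n-p)+(p-1)=2n-1$. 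More invariantly, the lemma on $sX\circ(-)$ preserving connectivity (with $c+1$ superadditive) shows that $\rm{Sym}_{s\,\coLie}$ preserves $\Fun(\bb{N},\DQ)_{\ge\slopetwo}$ when $X(r)$ sits in the heart. The same estimate holds with "$\ge$" in place of "$\le$", since $c^{*p}(n)\ge c(n)-p+1$ with equality available, and we need the lower bound. Hence the bar/Chevalley--Eilenberg spectral sequence computing $H^{E_\infty}_{n,*}(\bf{A})$ has $E^1$ concentrated in degrees $\ge 2n-1$. Convergence is no issue because $n$ is fixed and only $p\le n$ contribute, by reducedness. This gives $H^{E_\infty}_{n,d}(\bf{A})=0$ for $d<2n-1$.

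As an alternative to Step 3, one could cite the transfer of vanishing lines $E_1\Rightarrow E_\infty$ from \cite{GKRW18} (rationally the $E_k$-indecomposables shift in a controlled way). The direct rational computation above seems cleaner. The one thing to check carefully is the degree bookkeeping of the suspension in $s\,\coLie(p)$.
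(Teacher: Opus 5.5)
Your proof is correct, and it takes a different route from the paper for the passage from $E_1$ to $E_\infty$.

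\textbf{Shared part.} Your Steps 1 and 2 coincide with the first half of the paper's proof: the bar spectral sequence plus K\"unneth gives $H^{E_1}_{n,d}(\bf{A})=0$ for $d<2n-1$.

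\textbf{Where the routes diverge.} The paper simply cites the transfer of vanishing lines \cite[Theorem 14.4]{GKRW18} with $\rho(n)=2n$, which is the alternative you mention at the end. You instead bound $\cot_{E_\infty^\rm{nu}}(\bf{A})$ directly through its rational Koszul-dual (Harrison) model and the word-length filtration, using that only $p\le n$ contributes in rank $n$. This works. Note that it makes your Step 2 redundant, since the word-length argument uses only the K\"unneth bound of Step 1.

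\textbf{Bookkeeping.} Fix the point you flagged. It is $\cot_{E_\infty^\rm{nu}}(\bf{A})$ itself whose associated graded is $\bigoplus_{p\le n} s\coLie(p)\otimes_{\mathfrak{S}_p}\bf{A}^{\otimes p}$, with $s\coLie(p)$ in degree $p-1$. Equivalently, $\cot_{E_\infty^\rm{nu}}(\bf{A})[1]$ has graded pieces $\coLie(p)\otimes_{\mathfrak{S}_p}(\bf{A}[1])^{\otimes p}$. An additional ``shift by $p$'' on top of that would double count. The count $(2n-p)+(p-1)=2n-1$ that you actually perform is the right one.

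\textbf{What each route buys.} The paper's route is a one-line appeal to a general theorem that holds in the full $E_k$ setting of \cite{GKRW18}, not just rationally. It is also the same mechanism the paper reuses in \cref{lem:transferring-vanishing-line-rel} via \cite[Proposition 14.5]{GKRW18}. Your route is self-contained and stays within the characteristic-zero Koszul duality already used in Section~3. It is the lower-bound counterpart of the lemma that $sX\circ(-)$ preserves $\Fun(\bb{N},\DQ)_{\le c}$. The cost is that it relies on the rational identification of $E_\infty$-indecomposables with the Harrison complex.
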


\begin{proof}
In the bar spectral sequence $E^1_{n,p,q}=H_{n,q}(\bf{A}^{\otimes p}) \Rightarrow H_{n,p+q-1}^{E_1}(\bf{A})$, using the assumption and the K\"unneth theorem, we have $\smash{E^1_{n,p,q}} = 0$ for $q<2n-p$ and we conclude $\smash{H_{n,d}^{E_1}(\bf{A})}=0$ for $d<2n-1$. Transferring vanishing lines up using \cite[Theorem 14.4]{GKRW18} (with $\rho(n)=2n$), we obtain that $\smash{H_{n,d}^{E_\infty}(\bf{A})}=0$ for $d<2n-1$ as well.   
\end{proof}

\begin{lemma} \label{lem:transferring-vanishing-line-rel}
    Let $f \colon \bf{A} \to \bf{B} \in \Alg_{E_\infty^\rm{nu}}^\rm{red}(\Fun(\bb{N},\DQ))$ be a map such that 
    \begin{enumerate}[(i)]
        \item \label{enum:transferring-vanishing-line-rel-i} $H_{n,d}(\bf{A})=0$ for $d<2n-1$,
        \item \label{enum:transferring-vanishing-line-rel-ii} $H_{n,d}(\bf{B})=0$ for $d \neq 2n-1$,
        \item \label{enum:transferring-vanishing-line-rel-iii} $H_{n,2n-1}(f)$ is an isomorphism for all $n$.
    \end{enumerate}
Then $H_{n,d}^{E_\infty}(\bf{B},\bf{A})=0$ for $d \le 2n$. 
\end{lemma}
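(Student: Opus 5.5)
We mirror the proof of \cref{lem:transferring-vanishing-lines-abs}: first obtain a vanishing range for relative $E_1$-homology from the relative bar spectral sequence, then transfer it up to $E_\infty$-homology via \cite[Theorem 14.4]{GKRW18}.

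\textbf{Step 1: relative homology of tensor powers.} Hypotheses \eqref{enum:transferring-vanishing-line-rel-i} and \eqref{enum:transferring-vanishing-line-rel-iii} give that $H_{n,d}(\bf{A}) \to H_{n,d}(\bf{B})$ is an isomorphism for $d \le 2n-1$. Indeed, for $d<2n-1$ both sides vanish by \eqref{enum:transferring-vanishing-line-rel-i} and \eqref{enum:transferring-vanishing-line-rel-ii}, and for $d=2n-1$ we use \eqref{enum:transferring-vanishing-line-rel-iii}. In degree $d=2n$ the target vanishes by \eqref{enum:transferring-vanishing-line-rel-ii}, so the map is surjective. The long exact sequence then yields $H_{n,d}(\bf{B},\bf{A})=0$ for $d\le 2n$.

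For $p \geq 1$ we next compare $\bf{A}^{\otimes p}$ and $\bf{B}^{\otimes p}$ using the Künneth theorem over $\bb{Q}$. Since the algebras are reduced, the homology of $\bf{A}^{\otimes p}$ in rank $n$ is a sum of tensor products over decompositions $n=n_1+\dots+n_p$ with all $n_i\ge 1$. Each factor has homology in degrees $\ge 2n_i-1$, so the tensor product lies in degrees $\ge 2n-p$.

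We then filter $f^{\otimes p}$ as a composite of maps changing one tensor factor at a time. The cofibre of one step has the form $\bf{B}^{\otimes a}\otimes \rm{cof}(f)\otimes\bf{A}^{\otimes b}$, and the cofibre of $f$ has homology only in degrees $\ge 2m+1$ in rank $m$. Hence this cofibre, in total rank $n$, has homology in degrees $\ge 2n-(p-1)+1 = 2n-p+2$. We conclude $E^1_{n,p,q}=H_{n,q}(\bf{B}^{\otimes p},\bf{A}^{\otimes p})=0$ for $q \le 2n-p+1$.

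\textbf{Step 2: relative $E_1$-homology.} The relative bar spectral sequence converges to $H^{E_1}_{n,p+q-1}(\bf{B},\bf{A})$. Step 1 shows every contribution has total degree $p+q-1 \ge 2n+1$. Therefore $H^{E_1}_{n,d}(\bf{B},\bf{A})=0$ for $d\le 2n$.

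\textbf{Step 3: transfer to $E_\infty$.} Apply the relative version of \cite[Theorem 14.4]{GKRW18} to transfer this vanishing line from $E_1$- to $E_\infty$-homology. In the absolute case we used $\rho(n)=2n$; the relative vanishing region $d \le 2n$ is one line higher. We need to check that the transferring theorem tolerates this shift, since it requires the abstract connectivity to be suitably subadditive. The function $n\mapsto 2n+1$ is strictly better than additive in the sense needed, which is what makes the transfer work. This step is the main obstacle: it requires checking that the relative hypotheses of \cite[Theorem 14.4]{GKRW18} match, namely the absolute vanishing for $\bf{A}$ from \cref{lem:transferring-vanishing-lines-abs} together with the relative vanishing just proved. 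If the relative version is not directly available, the fallback is to run the $E_k$-to-$E_{k+1}$ induction by hand through iterated bar constructions, using the same connectivity bookkeeping as in Step 1.
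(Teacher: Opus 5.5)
Your argument is correct and is essentially the paper's proof. The relative bar spectral sequence gives $E^1_{n,p,q}=0$ for $q\le 2n-p+1$, hence $H^{E_1}_{n,d}(\mathbf{B},\mathbf{A})=0$ for $d\le 2n$; your one-factor-at-a-time filtration of $f^{\otimes p}$ is only a repackaging of the paper's K\"unneth identification of the bottom line $q=2n-p$. Step 3 is also what the paper does: it cites the relative transfer result \cite[Proposition 14.5]{GKRW18} with $\rho(n)=2n$ and $\sigma(n)=2n+1$, feeding in the absolute $E_1$-vanishing below $2n-1$ for both $\mathbf{A}$ and $\mathbf{B}$ from \cref{lem:transferring-vanishing-lines-abs}, so your hand-run fallback induction is not needed.
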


\begin{proof}
    In the relative bar spectral sequence $E^1_{n,p,q}=H_{n,q}(\bf{B}^{\otimes p}, \bf{A}^{\otimes p}) \Rightarrow H_{n,p+q-1}^{E_1}(\bf{B},\bf{A})$, by construction $E^1_{n,p,q}$ vanishes for $p \le 0$. By assumption \eqref{enum:transferring-vanishing-line-rel-i}, we have that $H_{n,q}(\bf{A}^{\otimes p})=0$ for $q<2n-p$ and that there is an isomorphism
        \[H_{n,2n-p}(\bf{A}^{\otimes p}) \cong \bigoplus_{\stackrel{n_1,\dots, n_p>0}{\text{such that} \sum n_i= n}} H_{n_1,2n_1-1}(\bf{A}) \otimes \cdots \otimes H_{n_p, 2 n_p-1}(\bf{A}).\]
    By assumption \eqref{enum:transferring-vanishing-line-rel-ii} and using that the K\"unneth theorem involves no Tor-terms when working rationally, we similarly have that $H_{n,q}(\bf{B}^{\otimes p})=0$ for $q \neq 2n-p$ and that there is an isomorphism
        \[H_{n,2n-p}(\bf{B}^{\otimes p})= \bigoplus_{\stackrel{n_1,\dots, n_p>0}{\text{such that} \sum n_i= n}} H_{n_1,2n_1-1}(\bf{B}) \otimes \cdots \otimes H_{n_p, 2 n_p-1}(\bf{B}).\]
    Then using assumption \eqref{enum:transferring-vanishing-line-rel-iii}, we see that $H_{n,q}(\bf{B}^{\otimes p}, \bf{A}^{\otimes p})=0$ for $q \le 2n-p+1$. In conclusion, we have that $\smash{E^1_{n,p,q}}=0$ for $q \le 2n-p+1$ and obtain that $\smash{H_{n,d}^{E_1}(\bf{B},\bf{A})}=0$ for $d \le 2n$. 
    
    Since by Lemma \ref{lem:transferring-vanishing-lines-abs} we have that $\smash{H_{n,d}^{E_1}(\bf{A})}=0=\smash{H_{n,d}^{E_1}(\bf{B})}$ for $d <2n-1$, we can transfer the vanishing lines up using \cite[Proposition 14.5]{GKRW18} (with $\rho(n)=2n$ and $\sigma(n)=2n+1$) to obtain that $H_{n,d}^{E_\infty}(\bf{B},\bf{A})=0$ for $d \le 2n$ as well.
    \end{proof}

    \begin{remark} \label{rem E1 homology version}
        The same statement holds for $E_1$-algebras and $E_1$-homology with the same proof, without need to transfer vanishing lines up. 
    \end{remark}

\subsubsection{Proof of \cref{thm:image-edge-is-cobracket}} We will split the proof into several steps.

\medskip

\noindent \textbf{Step 1.} Using assumption \eqref{enum:image-edge-is-cobracket-hypothesis-2} that the canonical maps are injective, we may choose maps 
    \[s_n \colon H_{n,2n-1}^{E_\infty}(\bf{R}/(\sigma)) \lra H_{n,2n-1}(\bf{R}/(\sigma))\]
    of $\bb{Q}$-vector spaces such that $s_n \circ \rm{can}_n= \id$. Consider the morphism $f$ in $\Fun(\bb{N},\DQ)$ given by 
    \[\cot_{E^\rm{nu}_\infty}(\bf{R}/(\sigma)) \to \rho_{\le \slopetwo} (\cot_{E^\rm{nu}_\infty}(\bf{R}/(\sigma))) \simeq \bigoplus_{n \ge 1} H_{n,2n-1}^{E_\infty}(\bf{R}/(\sigma)) \xrightarrow{\bigoplus_{n \ge 1} s_n} \bigoplus_{n \ge 1}H_{n,2n-1}(\bf{R}/(\sigma)), \]
    where we have used assumption \eqref{enum:image-edge-is-cobracket-hypothesis-1} and \cref{lem:transferring-vanishing-lines-abs} to deduce that $\rho_{\le \slopetwo}(\cot_{E^\rm{nu}_\infty}(\bf{R}/(\sigma)))$ lies in the heart of the $t$-structure on $\Fun(\bb{N},\DQ)$ defined by the abstract connectivity $n \mapsto 2n-1$, so is equivalent to its homology. The adjoint of $f$ under the adjunction $\cot_{E^\rm{nu}_\infty} \dashv \triv_{E^\rm{nu}_\infty}$ is the following morphism in $\Alg_{E_\infty^\rm{nu}}(\Fun(\bb{N},\DQ))$
    \[F \colon \bf{R}/(\sigma) \lra \bf{T} \coloneq \triv_{E^\rm{nu}_\infty}\big(\bigoplus_{n \ge 1} H_{n,2n-1}(\bf{R}/(\sigma))\big),\]
    given explicitly by $\triv_{E^\rm{nu}_\infty}(f) \circ \eta$ where $\eta$ is a unit for the adjunction $\cot_{E^\rm{nu}_\infty} \dashv \triv_{E^\rm{nu}_\infty}$.
 
\medskip

\noindent \textbf{Step 2.} We prove that the relative $E_\infty$-homology groups $H_{n,d}^{E_\infty}(\bf{T},\bf{R}/(\sigma))$ of $F$ vanish for $d \le 2n$. Assumption \eqref{enum:image-edge-is-cobracket-hypothesis-1} says $H_{n,d}(\bf{R}/(\sigma))=0$ for $d<2n-1$, and by definition of $\bf{T}$ we know that $H_{n,d}(\bf{T})=0$ for $d \neq 2n-1$. Thus, by \cref{lem:transferring-vanishing-line-rel} it suffices to prove that $H_{n,2n-1}(F)$ is an isomorphism for any $n \ge 1$. By definition, $H_{n,2n-1}(F)$ is the dashed composition 
     \[\begin{tikzcd}H_{n,2n-1}(\bf{R}/(\sigma)) \rar{\eta_*} \arrow[dashed]{rd}[swap]{F_*} &[25pt] H_{n,2n-1}(\triv_{E^\rm{nu}_\infty}(\cot_{E^\rm{nu}_\infty}(\bf{R}/(\sigma)))) \dar{\triv_{E^\rm{nu}_\infty}(f)_*} \\[-5pt]
     & H_{n,2n-1}(\bf{T}) \simeq H_{n,2n-1}(\bf{R}/(\sigma)).\end{tikzcd}\]
Under the canonical identification $H_{n,2n-1}(\triv_{E^\rm{nu}_\infty}(\cot_{E^\rm{nu}_\infty}(\bf{R}/(\sigma)))) \cong H_{n,2n-1}^{E_\infty}(\bf{R}/(\sigma))$ the above composition agrees with 
    \[H_{n,2n-1}(\bf{R}/(\sigma)) \xrightarrow{\rm{can}_n} H_{n,2n-1}^{E_\infty}(\bf{R}/(\sigma)) \xrightarrow{s_n} H_{n,2n-1}(\bf{R}/(\sigma)),\]
which by construction is the identity, implying the required result. 

\medskip

\noindent \textbf{Step 3.} Now we prove parts \eqref{enum:image-edge-is-cobracket-conclusion-1} and \eqref{enum:image-edge-is-cobracket-conclusion-2} of the theorem. By the previous step we know that 
    \[H_{n,2n-1}^{E_\infty}(F) \colon H_{n,2n-1}^{E_\infty}(\bf{R}/(\sigma)) \overset{\cong}\lra H_{n,2n-1}^{E_\infty}(\bf{T})\]
is an isomorphism for each $n \ge 1$.  Thus, the following map becomes an equivalence upon truncating using $\rho_{\leq 2\bullet}$ 
    \[(\indec_{E_\infty^\rm{nu}}(\bf{R}/(\sigma)))[1] \xrightarrow{\indec_{E_\infty^\rm{nu}}(F)[1]}  (\indec_{E^\rm{nu}_\infty}(\bf{T}))[1] \simeq \cofree_{\rm{coLie}}\Big(\bigoplus_{n \ge 1} H_{n,2n-1}(\bf{R}/(\sigma))[1]\Big).\] 
    This identifies the Goncharov Lie coalgebra $\scr{G}(\bf{R})$ as a cofree Lie coalgebra. The identification of the maps $\rm{can}_n \colon H_{n,2n-1}(\bf{R}/(\sigma)) \to \smash{H_{n,2n-1}^{E_\infty}}(\bf{R}/(\sigma))$ with inclusions is a consequence of the construction. This completes the proof of part \eqref{enum:image-edge-is-cobracket-conclusion-1} and part \eqref{enum:image-edge-is-cobracket-conclusion-2} follows from the fact that the kernel of the cobracket on a cofree Lie coalgebra is equal to the inclusion of the cogenerators.
    
\medskip

\noindent  \textbf{Step 4.} Now we can prove part \eqref{enum:image-edge-is-cobracket-conclusion-3} of the theorem. The fact that $\rm{im}(\rm{edge}_n) \subseteq \ker(\delta|_{\scr{G}_n(\bf{R})})$ follows from assumption \eqref{enum:image-edge-is-cobracket-hypothesis-4} by \cref{cor:image-of-edge-in-ker-cobracket}. We want to prove $\ker(\delta|_{\scr{G}_n(\bf{R})}) \subset \rm{im}(\rm{edge}_n)$ and to do so, consider the map of rank spectral sequences from \cref{lem:rank-spectral-sequences}
    \begin{equation}\label{eqn:rank-ss}\begin{tikzcd}
        E^1_{n,d}=H_{n,d}(\bf{R}/(\sigma)) \Longrightarrow H_d(\bf{R}/(\sigma-1)) \dar \\[-10pt]
        {}^\infty E^1_{n,d}= H^{E_\infty}_{n,d}(\bf{R}/(\sigma)) \Longrightarrow H_d^{E_\infty}(\bf{R}/(\sigma-1)).
    \end{tikzcd}\end{equation}
Part \eqref{enum:image-edge-is-cobracket-conclusion-2} says that $\ker(\delta|_{\scr{G}_n(\bf{R})})$ is the image of the canonical map $\rm{can}_n \colon H_{n,2n-1}(\bf{R}/(\sigma)) \to \smash{H_{n,2n-1}^{E_\infty}(\bf{R}/(\sigma))}$, so we have an identification
    \[\rm{im}\big[E^1_{n,2n-1} \to {}^\infty E^1_{n,2n-1}\big]= \ker(\delta|_{\scr{G}_n(\bf{R})}) \subseteq {}^\infty E^1_{n,2n-1} \cong \scr{G}_n(\bf{R}).\] 
    Moreover, by the vanishing of assumption \eqref{enum:image-edge-is-cobracket-hypothesis-1} the stabilisation map $H_{n,2n-1}(\bf{R}) \xrightarrow{\simeq} H_{n',2n-1}(\bf{R})$ is surjective for any $n' \ge n$ and hence the stabilisation map $H_{n,2n-1}(\bf{R}) \to H_{2n-1}(\bf{R}/(\sigma-1))$ is surjective. Assumption \eqref{enum:image-edge-is-cobracket-hypothesis-3} says the right map is surjective in the exact sequence
    \[H_{n-1,2n-1}(\bf{R}) \overset{\sigma}\lra H_{n,2n-1}(\bf{R}) \lra H_{n,2n-1}(\bf{R}/(\sigma)) \lra 0,\]
    so that $\gr^\rm{rk}_n H_{2n-1}(\bf{R}/(\sigma-1)) \cong H_{n,2n-1}(\bf{R}/(\sigma))$. It follows that the edge homomorphism $\rm{edge}_n \colon H_{2n-1}(\bf{R}/(\sigma-1)) \to H_{n,2n-1}(\bf{R}/(\sigma))$ of the first spectral sequence in \eqref{eqn:rank-ss} is surjective. Thus, $\ker(\delta|_{\scr{G}_n(\bf{R})}) \subseteq \rm{im}(\rm{edge}_n)$ follows by commutativity of the diagram
     \[\begin{tikzcd}
        H_{2n-1}(\bf{R}/(\sigma-1)) \rar[two heads] \dar & E^1_{n,2n-1} = H_{n,2n-1}(\bf{R}/(\sigma)) \dar \\[-5pt] 
        H^{E_\infty}_{2n-1}(\bf{R}/(\sigma-1)) \rar{\rm{edge}_n} & {}^\infty E^1_{n,2n-1} = H_{n,2n-1}^{E_\infty}(\bf{R}/(\sigma)) \cong \scr{G}_n(\bf{R}) 
     \end{tikzcd}\]
    from the naturality of the edge homomorphism.

\subsection{Applications to number fields} 

The first result of this subsection is: 

\begin{theorem} \label{thm:number-fields-goncharov}
    If $F$ is a number field then $\bf{R}=\BGLb(F)_\bb{Q}$ satisfies the assumptions \eqref{enum:image-edge-is-cobracket-hypothesis-1}--\eqref{enum:image-edge-is-cobracket-hypothesis-4} of \cref{thm:image-edge-is-cobracket}. Thus there exists an isomorphism of Lie coalgebras
    \[\PolyL_\bullet(F) \cong \rm{cofree}_\rm{coLie}\Big(\bigoplus_{n \ge 1} K_{2n-1}(F)_\bb{Q}[1]\Big)\]
    with the property that the edge homomorphisms $K_{2n-1}(F)_\bb{Q} \to \PolyL_n(F)$ correspond to the inclusion of the cogenerators. 
\end{theorem}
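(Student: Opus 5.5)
We verify the four hypotheses of \cref{thm:image-edge-is-cobracket} for $\bf{R}=\BGLb(F)_\bb{Q}$ and then read off the conclusion. First we identify the homology of $\bf{R}/(\sigma)$. Since we work rationally, the underlying object of the cofibre $\bf{R}/(\sigma)$ in $E_\infty^\rm{nu}$-algebras is the graded cofibre of multiplication by $\sigma$ (cf.\ \cref{thm:stable-algebras-DQ} and its corollary). Hence $H_{n,d}(\bf{R}/(\sigma))\cong H_d(\GL_n(F),\GL_{n-1}(F);\bb{Q})$, with the convention $\GL_0=1$ in rank $1$, where one removes the class $\sigma$. By \cref{prop:borel-yang-stab}\eqref{enum:borel-yang-stab-i} the stabilisation maps are injective, so the long exact sequence splits into short exact sequences. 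Hypothesis \eqref{enum:image-edge-is-cobracket-hypothesis-1} is then \cref{prop:borel-yang-stab}\eqref{enum:borel-yang-stab-ii}. In rank $1$ one needs $d\ge 1$, which is clear after quotienting by $\sigma$. Hypothesis \eqref{enum:image-edge-is-cobracket-hypothesis-3} is the surjectivity in \cref{prop:borel-yang-stab}\eqref{enum:borel-yang-stab-i}. Hypothesis \eqref{enum:image-edge-is-cobracket-hypothesis-4} is \cref{lem:rational-algebras-from-spaces-free}, since $\BGLb(F)$ arises from spaces that are connected in each rank (after unitalising).

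The main obstacle is hypothesis \eqref{enum:image-edge-is-cobracket-hypothesis-2}, the injectivity of $\rm{can}_n\colon H_{n,2n-1}(\bf{R}/(\sigma))\to H^{E_\infty}_{n,2n-1}(\bf{R}/(\sigma))$. For $n=1$ it is immediate: $H_{1,1}=F^\times_\bb{Q}$ is indecomposable because products land in rank $\ge 2$. For $n\ge 2$, \cref{prop:borel-yang-stab}\eqref{enum:borel-yang-stab-iii} and \cref{lem:borel-yang-prim-rel-iso} show that $H_{n,2n-1}(\bf{R}/(\sigma))$ is spanned by the images $\overline{\sf{x}}_{n,2n-1}$ of primitive classes pairing nontrivially with Borel classes. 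We need these to be nonzero in $E_\infty$-homology. One route is to show they are not decomposable in $H_{*,*}(\bf{R}/(\sigma))$: products of classes on the line $d=2m-1$ land on $d=2n-2$, and the homology below the line vanishes. This makes them indecomposable in homology, but homology indecomposability does not by itself give nonvanishing in $E_\infty$-homology.

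The plan is therefore to use a detecting map of $E_\infty^\rm{nu}$-algebras $\bf{R}\to\bf{C}_*(\fr{gl},\fr{u})$, given by the relative Lie algebra chains (van Est). Its target, after quotienting by $\sigma$, is a trivial algebra below the slope-$2$ line, and its homology in bidegree $(n,2n-1)$ detects the duals of the Borel classes. For a trivial algebra, homology injects into $E_\infty$-homology. Naturality of $\rm{can}$ then shows that any class mapping nontrivially to the target has nonzero $E_\infty$-image. Linear independence across the embeddings follows from Borel's theorem (\cref{lem:borel-yang-regulators}).

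With all hypotheses verified, \cref{thm:image-edge-is-cobracket}\eqref{enum:image-edge-is-cobracket-conclusion-1} gives $\scr{G}(F)\cong\cofree_\coLie(\bigoplus_n H_{n,2n-1}(\bf{R}/(\sigma))[1])$. It remains to identify $H_{n,2n-1}(\bf{R}/(\sigma))$ with $K_{2n-1}(F)_\bb{Q}$, compatibly with the edge maps. For $n=1$ both sides are $F^\times_\bb{Q}$. For $n\ge2$ we compose \cref{lem:borel-yang-prim-rel-iso}, \cref{lem:borel-yang-regulators} (Hurewicz and Milnor--Moore) and the stabilisation isomorphism. \cref{lem:edge-homomorphism-in-rank-n} shows this identification agrees with $\rm{edge}_n$ followed by the inclusion of cogenerators, by \cref{thm:image-edge-is-cobracket}\eqref{enum:image-edge-is-cobracket-conclusion-1}.
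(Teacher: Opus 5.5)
Your treatment of hypotheses (i), (iii) and (iv), of (ii) in rank $1$, and your final identification of $H_{n,2n-1}(\BGLb(F)_\bb{Q}/(\sigma))$ with $K_{2n-1}(F)_\bb{Q}$ via \cref{lem:borel-yang-prim-rel-iso}, \cref{lem:borel-yang-regulators} and \cref{lem:edge-homomorphism-in-rank-n} all match the paper. So does the idea of detecting the classes $\ol{\sf{x}}_{n,2n-1}$ through relative Lie algebra chains.

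The gap is in how you use that detecting map for $n\geq 2$. You assume a map of $E_\infty^{\rm{nu}}$-algebras $\BGLb(F)_\bb{R}\to\bf{C}_*(\fr{gl},\fr{u})$, and then appeal to naturality of $\rm{can}$ and of the quotient by $\sigma$. The only available construction is integration over geodesic simplices. Its multiplicativity comes from the Eilenberg--Zilber shuffle formula and Fubini, which only gives a map of strictly associative ($E_1$) algebras (\cref{thm:integration-assocative-algebra}). No $E_\infty$-refinement is constructed; the paper merely expects a lift to Barratt--Eccles algebras. An $E_1$-map induces neither a map on $E_\infty$-homology nor a map between the $E_\infty$-cofibres $\bf{R}/(\sigma)$. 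So your step ``any class mapping nontrivially to the target has nonzero $E_\infty$-image'' is unjustified as written.

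The missing idea is to route the argument through $E_1$-homology. Over $\bb{Q}$ or $\bb{R}$, Barr's splitting \cite[Proposition C.16]{KRS1} makes the $E_\infty$-homology of an $E_\infty$-algebra a split summand of its $E_1$-homology, compatibly with the canonical maps from homology. Hence a homology class has nonzero $E_\infty$-image as soon as it has nonzero $E_1$-image, and $E_1$-homology is functorial for $E_1$-maps. Concretely:
\begin{enumerate}[(a)]
\item Lift $\ol{x}$ to $x\in H_{n,2n-1}(\BGLb(F)_\bb{R})$ using \cref{prop:borel-yang-stab}\eqref{enum:borel-yang-stab-i}.
\item Use that $H^{E_\infty}_{n,2n-1}(\bf{R})\to H^{E_\infty}_{n,2n-1}(\bf{R}/(\sigma))$ is an isomorphism for $n\geq 2$, so you never need to quotient by $\sigma$ on the source side.
\item Reduce to showing that the $E_1$-image of the image $y$ of $x$ in $\bf{C}_*(\fr{gl},\fr{u})$ is nonzero.
\end{enumerate}
The target is strictly commutative with zero differential. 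There, map $\bf{C}_*(\fr{gl},\fr{u})/(\sigma)$ to the trivial algebra $\bf{T}$ on the classes $\ol{\sf{c}}_{n,2n-1}$. Then use \cref{lem:transferring-vanishing-line-rel} and its $E_1$-analogue (\cref{rem E1 homology version}) to compare the two in the range $d\leq 2n-1$, where the claim for $\bf{T}$ is obvious.

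This last step is what makes your phrase ``a trivial algebra below the slope-$2$ line'' precise. Vanishing of products on homology alone would not control $E_\infty$-homology, for exactly the reason you yourself noted for $\bf{R}/(\sigma)$.
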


\begin{proof}Using \cref{lem:rational-algebras-from-spaces}, $\BGLb(F)_\bb{Q}$ lifts to $(\bb{Q}^\rm{nu}[t] \to \BGLb(F)_\bb{Q} \to \bb{Q}^\rm{nu}[t])$ and we verify this satisfies the assumptions of \cref{thm:image-edge-is-cobracket}, listed below for the convenience of the reader:
    \begin{enumerate}[(i)]
        \item that $H_{n,d}(\BGLb(F)_\bb{Q}/(\sigma))=0$ for $d<2n-1$ is verified in \cref{prop:borel-yang-stab} \eqref{enum:borel-yang-stab-ii},
        \item that the canonical maps $\rm{can}_n \colon H_{n,2n-1}(\BGLb(F)_\bb{Q}/(\sigma)) \to H_{n,2n-1}^{E_\infty}(\BGLb(F)_\bb{Q}/(\sigma))$ are injective for all $n \ge 1$ will be verified in \cref{prop:injectivity-number-fields},
        \item that the maps $H_{n,2n-1}(\BGLb(F)_\bb{Q}) \to H_{n,2n-1}(\BGLb(F)_\Q/(\sigma))$ are surjective for all $n \ge 1$ is verified in \cref{prop:borel-yang-stab} \eqref{enum:borel-yang-stab-i},
        \item that the stable algebra $(\BGLb(F)_\bb{Q})/(\sigma-1) \in \Alg_{E_\infty}(\DQ)$ is free follows from \cref{lem:rational-algebras-from-spaces-free}.
    \end{enumerate}

At this point we know that $\scr{G}(F)$ is a cofree Lie coalgebra on the relative homology groups $H_{n,2n-1}(\BGLb(F)/(\sigma))$. Using \cref{lem:borel-yang-prim-rel-iso} and \cref{lem:borel-yang-regulators}, however, both natural maps
\[H_{n,2n-1}(\BGLb(F)_\bb{Q}/(\sigma)) \overset{\cong}\longleftarrow \rm{prim}\, H_{2n-1}(\GL_n(F)) \overset{\cong}\lra K_{2n-1}(F)_\bb{Q}\]
are isomorphisms. That we can identify the canonical map and edge homomorphism 
\begin{align*}\rm{can}_n \colon H_{n,2n-1}(\BGLb(F)_\bb{Q}/(\sigma)) &\lra H_{n,2n-1}^{E_\infty}(\BGLb(F)_\bb{Q}/(\sigma)) \\
\rm{edge}_n \colon K_{2n-1}(F)_\bb{Q} &\lra H_{n,2n-1}^{E_\infty}(\BGLb(F)_\bb{Q}/(\sigma))\end{align*} 
using the isomorphism $H_{n,2n-1}(\BGLb(F)_\bb{Q}/(\sigma)) \cong K_{2n-1}(F)_\bb{Q}$ now follows from \cref{lem:edge-homomorphism-in-rank-n}.
\end{proof}

\subsection{Verifying the injectivity condition for number fields.}\label{subsec:number-fields-injectivity}

In the previous subsection we applied Theorem \ref{thm:image-edge-is-cobracket} to the $E_\infty$-algebra $\bf{R}= \BGLb(F)_\bb{Q}$ for $F$ a number field, up to the verification of assumption \eqref{enum:image-edge-is-cobracket-hypothesis-2}. This will use the explicit construction of the Borel regulator. This is done in this subsection, the main result of which is as follows: 

\begin{proposition} \label{prop:injectivity-number-fields}
For a number field $F$ the canonical maps 
\[\rm{can}_n \colon H_{n,2n-1}(\BGLb(F)_\bb{Q}/(\sigma)) \lra H_{n,2n-1}^{E_\infty}(\BGLb(F)_\bb{Q}/(\sigma))\]
are injective for any $n \ge 1$.     
\end{proposition}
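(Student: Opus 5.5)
The plan is to detect the classes in $H_{n,2n-1}(\BGLb(F)_\bb{Q}/(\sigma))$ after passing to $E_\infty$-homology by comparing with an $E_\infty^\rm{nu}$-algebra in which everything is explicit. For $n=1$ there is nothing to do: $H_{1,1}(\BGLb(F)_\bb{Q}/(\sigma))\cong F^\times_\bb{Q}$ and the canonical map to $H^{E_\infty}_{1,1}$ is an isomorphism, since in rank one there are no decomposables. So assume $n\geq 2$. By \cref{prop:borel-yang-stab}\eqref{enum:borel-yang-stab-iii} and \cref{lem:borel-yang-prim-rel-iso}, the source is spanned by the images $\overline{\sf{x}}_{n,2n-1}$ of the primitive Borel classes, one for each real embedding (if $n$ is odd) and for each pair of complex embeddings. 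It therefore suffices to produce, for each such embedding $F\hookrightarrow \C$, a linear functional on $H^{E_\infty}_{n,2n-1}(\BGLb(F)_\bb{Q}/(\sigma))\otimes\bb{R}$ whose composite with $\rm{can}_n$ is a nonzero multiple of the Borel regulator. Injectivity then follows because, by \cref{lem:borel-yang-regulators}, the Borel regulators jointly detect the $\overline{\sf{x}}_{n,2n-1}$.

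To build these functionals I would use the map of $E_\infty^\rm{nu}$-algebras $\BGLb(F)_\bb{Q}\to \BGLb(\C^\delta)_\bb{R}\to \bf{C}_*(\fr{gl},\fr{u})$ mentioned in the introduction. Its target is the levelwise dual of the relative Lie algebra cochains $C^*(\fr{gl}_n,\fr{u}_n;\bb{R})$, with the product dual to the restriction along block inclusions $\fr{gl}_a\oplus\fr{gl}_b\to \fr{gl}_{a+b}$. Under the van Est isomorphism, the map on homology pairs $H_*(\GL_n(\C);\bb{R})$ against continuous cohomology. This is the step I expect to be the main technical obstacle: one has to construct the map as a coherent $E_\infty$-map, presumably by realising continuous cochains via smooth or measurable cochains on the symmetric spaces $\GL_n(\C)/U_n$ compatibly with block sum, and one has to handle $\sigma$ correctly.

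Next I would compute the target. The relative Lie algebra cohomology $H^*(\fr{gl}_n,\fr{u}_n)$ is an exterior algebra on primitive classes in degrees $1,3,\ldots,2n-1$ for complex places, with the appropriate variant for real places. Restriction along block sums is surjective, and it kills exactly the top class $\mathrm{Bo}_{n,2n-1}$ when restricted to $\GL_{n-1}$. Dually, after quotienting by $\sigma$, the homology is concentrated on the line $d=2n-1$ (on and below the slope-2 line), where it is the dual of the top primitive class. The product of two positive-rank pieces lands strictly above the line, because $(2a-1)+(2b-1)<2(a+b)-1$. Hence $\bf{C}_*(\fr{gl},\fr{u})/(\sigma)$ is trivial below a line of slope $2$. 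Because the homology is formal and concentrated there, the canonical map $H_{n,2n-1}\to H^{E_\infty}_{n,2n-1}$ is an isomorphism on this line for the target; I would argue this with \cref{lem:transferring-vanishing-lines-abs} and the bar spectral sequence, noting that no decomposables can hit the line.

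Finally, naturality of $\rm{can}_n$ gives a commutative square. The composite $H_{n,2n-1}(\BGLb(F)_\bb{Q}/(\sigma))\to H_{n,2n-1}(\bf{C}_*(\fr{gl},\fr{u})/(\sigma))\cong\bb{R}$ (one coordinate per place) pairs $\overline{\sf{x}}_{n,2n-1}$ with the Borel classes, and by Borel this pairing has maximal rank. This composite factors through $\rm{can}_n$ on the $F$ side, so $\rm{can}_n$ is injective.
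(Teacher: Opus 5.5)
Your outline follows the paper's strategy. You compare with $\bf{C}_*(\fr{gl},\fr{u})$ via geodesic integration, identify the resulting functional with the Borel class by van Est and Dupont, and use Borel's theorem that the regulators jointly detect $H_{n,2n-1}(\BGLb(F)_\bb{Q}/(\sigma))$. But the step you flag as the main obstacle is a genuine gap, and the paper does not resolve it the way you propose.

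Your final factorisation through $\mathrm{can}_n$ needs $E_\infty$-homology, and the quotient by $\sigma$, to be natural along $\BGLb(F)_\bb{R}\to\bf{C}_*(\fr{gl},\fr{u})$. So it needs an $E_\infty$-map. The paper only proves that integration over geodesic simplices is strictly associative (\cref{thm:integration-assocative-algebra}, via Eilenberg--Zilber shuffles and Fubini), and it leaves an upgrade to an $E_\infty$-map as an expectation.

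The missing idea is that an $E_1$-map suffices. Both $\BGLb(F)_\bb{R}$ and $\bf{C}_*(\fr{gl},\fr{u})$ are $E_\infty$; the latter is even strictly commutative by \cref{lem:cts-cohomology-maps}. In characteristic zero, Barr's splitting makes $E_\infty$-homology a natural summand of $E_1$-homology, compatibly with the canonical maps from homology. The argument then runs as follows:
\begin{enumerate}
\item lift $\ol{x}$ to $x\in H_{n,2n-1}(\BGLb(F)_\bb{R})$;
\item use that $H^{E_\infty}_{n,2n-1}(\bf{R})\to H^{E_\infty}_{n,2n-1}(\bf{R}/(\sigma))$ is an isomorphism for $n\geq 2$, on both sides;
\item compare $x$ and its image $y$ in $E_1$-homology, which is functorial for the $E_1$-map.
\end{enumerate}
This way the quotient by $\sigma$ is only ever taken along $E_\infty$-maps.

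Your treatment of the target also needs repair. Degree counting only kills products of two classes lying on the line $d=2n-1$. A product $H_{a,2a-1}\otimes H_{b,2b}\to H_{a+b,2(a+b)-1}$ also lands on the line, and $H_{b,2b}(\bf{C}_*(\fr{gl},\fr{u})/(\sigma))$ need not vanish: for $b=2$ it is spanned by the class dual to $\ol{c}_{2,1}\ol{c}_{2,3}$. So ``no decomposables can hit the line'' does not follow from degrees alone.

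What kills these products is the following. The class $\ol{c}_{n,2n-1}$ is primitive, and restriction along the group homomorphism $\mathrm{U}(a)\times\mathrm{U}(b)\to\mathrm{U}(n)$ preserves primitives. For $a,b\geq 1$, $H^*(\mathrm{U}(a)\times\mathrm{U}(b);\bb{R})$ has no primitives in degree $2n-1$. Hence all such products pair to zero with $\ol{c}_{n,2n-1}$. This is what justifies the algebra map to the trivial algebra on the $\ol{\sf{c}}_{n,2n-1}$, and hence the $E_1$-version of \cref{lem:transferring-vanishing-line-rel}.

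Finally, $\mathrm{can}_n$ for the target is injective but not an isomorphism once $n\geq 3$. Its $E_\infty$-homology on the line is a cofree Lie coalgebra, which contains for example a class built from $\ol{\sf{c}}_{1,1}$ and $\ol{\sf{c}}_{2,3}$. Injectivity is all you use, so this last point is harmless.
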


Using the universal coefficients theorem, we may replace $\bb{Q}$ by $\bb{R}$. Until we say otherwise, we work 1-categorically in this subsection. Let us now outline the strategy to prove it. We first construct an augmented strictly associative algebra model $C_*(\BGLb(F);\bb{R})$ for $\BGLb(F)_\bb{R}$ in $\Fun(\bb{N},\rm{Ch}_\bb{R})$. We then construct an augmented strictly associative algebra $\bf{C}_*(\mathfrak{gl},\mathfrak{u})$ in $\Fun(\bb{N},\rm{Ch}_\bb{R})$ by taking relative Lie algebra chains and a map of augmented strictly associative algebras $\smallint_{\Delta(-)} \colon C_*(\BGLb(F);\bb{R}) \to \bf{C}_*(\mathfrak{gl},\mathfrak{u})$. We combine the map induced by $\smallint_{\Delta(-)}$ on $E_1$-homology with the splitting result of Barr \cite[Proposition C.16]{KRS1} to reduce \cref{prop:injectivity-number-fields} to a computation for $\bf{C}_*(\mathfrak{gl},\mathfrak{u})$.

\subsubsection{Constructing $C_*(\BGLb(F);\bb{R})$}
Recall the functors
\[\rm{Grp} \xrightarrow{B_\bullet} \rm{sSet} \xrightarrow{\bb{R}[-]} \rm{sVect}_\bb{R} \overset{C}\lra \rm{Ch}_\bb{R}\]
given by (i) the bar construction, (ii) the levelwise free vector space $\bb{R}[X]$ on the simplices of a simplicial set $X$, and (iii) the unnormalised chain complex $C(M)$ of a simplicial vector space $M$: the composition sends a simplicial set to its chains with $\bb{R}$-coefficients. The first is symmetric monoidal with respect to cartesian products. The second symmetric monoidal with respect to cartesian product on $\rm{sSet}$ and levelwise tensor product on $\rm{sVect}_\bb{R}$. The third is lax symmetric monoidal with respect to the levelwise tensor product on $\rm{sVect}_\bb{R}$ and tensor product of chain complexes on $\rm{Ch}_{\bb{R}}$, with lax symmetric monoidality given by
\[C(X_p) \otimes C(Y_q) \ni x_p \otimes y_q \longmapsto \sum_{(\mu,\nu) \in \rm{Sh}(p,q)} \rm{sgn}(\mu,\nu) (s_\nu(x_p) \otimes s_\mu(y_p)) \in C(X \otimes Y)_{p+q}\]
where the sum is over the set of $(p,q)$-shuffles (permutations of $\{0,\ldots,p+q-1\}$ so that the restriction $\mu = (\mu_1,\ldots,\mu_p)$ to the first $p$ elements and the restriction $\nu = (\nu_1,\ldots,\nu_q)$ to the last $q$ elements preserve the standard order), $\rm{sgn}(\mu,\nu)$ is the sign of permutations, and we set $s_\mu = s_{\mu_p} \circ \cdots \circ s_{\mu_1}$ and $s_\nu = s_{\nu_q} \circ \cdots \circ s_{\nu_1}$. Finally, using Day convolution this induces a symmetric lax monoidality on 
\[C_*(-;\bb{R}) \colon \Fun(\bb{N},\rm{sSet}) \lra \Fun(\bb{N},\rm{Ch}_\bb{R}).\]

Applied to the associative algebra object $\rm{GL}^\rm{perm}(F)$ in $\rm{Grp}$ given by $\rm{GL}_n(F)$ under block sum, we obtained an associative algebra 
\[C_*(\BGLb(F);\bb{R}) \coloneqq C_*(B_\bullet \rm{GL}^\rm{perm}(F);\bb{R}) \in \Alg^\rm{aug}_{\rm{Ass}^\rm{u}}(\Fun(\bb{N},\rm{Ch}_\bb{R})),\] 
which admits a unique augmentation since it is equal to the singular chains on a point in rank $0$. Using the comparison results of \cite[Appendix B.4]{KRS1}, this models the underlying augmented $E^\rm{u}_1$-algebra of $\BGLb(F)_\bb{R} \simeq \BGLb(F)_\bb{Q} \otimes_\bb{Q} \bb{R}$:

\begin{lemma}There is an equivalence $C_*(\BGLb(F);\bb{R}) \simeq \BGLb(F)_\bb{R} \in \Alg^\rm{aug}_{E^\rm{u}_1}(\Fun(\bb{N},\scr{D}_{\bb{R}}))$.
\end{lemma}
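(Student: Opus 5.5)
The plan is to compare the two objects through a chain of monoidal functors, each known to model its $\infty$-categorical counterpart, so that the equivalence is obtained by transporting algebra structures along them. There are three steps.

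\textbf{Step 1: the space-level algebra.} First I will recall that $\BGLb(F)\in\Alg_{E_\infty^\rm{nu}}(\Fun(\bb{N},\Spc))$ is, by its construction in \cite[Section 4.1]{KRS1}, obtained from the symmetric monoidal groupoid $\rm{GL}^\rm{perm}(F)$ with block sum. Its underlying $E_1$-algebra is therefore modelled by the strictly associative monoid $B_\bullet\rm{GL}^\rm{perm}(F)$ in $\Fun(\bb{N},\rm{sSet})$. Concretely, the strict block-sum monoid structure, viewed in the $\infty$-category $\Fun(\bb{N},\Spc)$ via the localisation $\rm{sSet}\to\Spc$ (which is symmetric monoidal for cartesian products), recovers the restriction of the $E_\infty$-structure along $E_1\to E_\infty$. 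This is a standard rectification statement for monoids in simplicial sets, and I will take it from \cite[Appendix B.4]{KRS1}.

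\textbf{Step 2: passing to chains.} Next I will pass to chains. The composite $C\circ\bb{R}[-]$, with its Eilenberg--Zilber shuffle lax monoidality, is lax monoidal on the strict level. Its structure maps $C(X)\otimes C(Y)\to C(X\times Y)$ are quasi-isomorphisms, so after localising at weak equivalences it induces a \emph{strong} monoidal functor $\Fun(\bb{N},\Spc)\to\Fun(\bb{N},\scr{D}_\bb{R})$. This functor should be identified with the $\infty$-categorical symmetric monoidal functor of real chains, $X\mapsto \bb{R}\otimes\Sigma^\infty_+X$, Day-convolved over $\bb{N}$.

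Granting that identification, strict associative algebras go to $E_1$-algebras compatibly. Here I again invoke the comparison in \cite[Appendix B.4]{KRS1}, which says that strict unital associative algebras in $\rm{Ch}_\bb{R}$, localised at quasi-isomorphisms, present $\Alg_{E_1^\rm{u}}(\scr{D}_\bb{R})$, compatibly with the functor from simplicial monoids. Applying this to $B_\bullet\rm{GL}^\rm{perm}(F)$ gives an equivalence of $E_1^\rm{u}$-algebras between $C_*(\BGLb(F);\bb{R})$ and the real chains on $\BGLb(F)$. The latter equals $\BGLb(F)_\bb{Q}\otimes_\bb{Q}\bb{R}=\BGLb(F)_\bb{R}$, because extension of scalars $\scr{D}_\bb{Q}\to\scr{D}_\bb{R}$ is symmetric monoidal.

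\textbf{Step 3: augmentations.} Finally I will match augmentations. In rank $0$ both sides are chains on a point, i.e.\ the unit, and the augmentation is projection onto rank $0$. So the space of augmentations on either side is contractible, being determined by the rank-$0$ part. Hence the equivalence of $E_1^\rm{u}$-algebras upgrades uniquely to one of augmented algebras; the same uniqueness is what Lemma~\ref{lem:rational-algebras-from-spaces} uses in the $E_\infty$ setting.

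The main obstacle is the identification in Step~2: that the strict shuffle-lax-monoidal chains functor induces, after localisation, exactly the $\infty$-categorical symmetric monoidal chains functor as a monoidal functor, not merely on objects. I would handle this by citing the rectification and comparison results of \cite[Appendix B.4]{KRS1} rather than reproving them. Only the $E_1$-level (associative) compatibility is needed, which is where the strict model is reliable.
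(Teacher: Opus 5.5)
Your proposal is correct and follows the paper's route. The paper likewise invokes the comparison results of \cite[Appendix B.4]{KRS1} for the strict model $C_*(B_\bullet \mathrm{GL}^{\mathrm{perm}}(F);\bb{R})$, identifies $\BGLb(F)_\bb{R}$ with $\BGLb(F)_\bb{Q}\otimes_\bb{Q}\bb{R}$, and notes that the augmentation is unique because rank $0$ is chains on a point; your Steps 1--3 spell out that citation. One harmless inaccuracy in an aside: \cref{lem:rational-algebras-from-spaces} gets its uniqueness from $\ul{*}$ being terminal in $\Fun(\bb{N},\Spc)$, and it produces the augmentation to $\bb{Q}[t]$ rather than to the monoidal unit, so it is not the same mechanism as your rank-$0$ argument.
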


\subsubsection{Constructing $\bf{C}_*(\mathfrak{gl},\mathfrak{u})$} Let us recall the definition of relative Lie algebra (co)chains, see e.g.~ \cite[Chapitre II]{Guichardet}, \cite[Chapter I]{BorelWallach}, \cite[Chapter 5]{BurgosGil}.

\begin{definition}\label{def:relative-lie-algebra-homology-cohomology} Let $\mathfrak{g}$ be a Lie algebra over $\bb{R}$ and let $\mathfrak{h} \subset \mathfrak{g}$ be a Lie subalgebra. 
\begin{enumerate}[(i)]
\item The relative Lie algebra chains $C_*(\mathfrak{g},\mathfrak{h})$ are defined as
\[
   C_*(\mathfrak{g},\mathfrak{h}) \coloneq \frac{\Lambda^*(\mathfrak{g}/\mathfrak{h})}{\Big \langle \sum_{i=1}^p \overline{X}_1 \wedge \cdots \wedge \overline{[X,X_i]} \wedge \cdots \wedge \overline{X}_p) \mid \forall X \in \mathfrak{h}\text{, }\forall \overline{X}_1, \dots, \overline{X}_p \in \mathfrak{g}/\mathfrak{h} \Big \rangle}, 
\]
with differential given by
\[d(\overline{X}_0 \wedge \cdots \wedge \overline{X}_p) = \sum_{i<j} (-1)^{i+j} \ol{[X_i,X_j]} \wedge \overline{X}_0 \wedge \cdots \wedge \widehat{\overline{X}_i} \wedge \cdots \wedge\widehat{\overline{X}_j} \wedge \cdots \wedge X_p.\]
\item The relative Lie algebra cochains $C^*(\mathfrak{g},\mathfrak{h})$ are defined as
\[
   C^*(\mathfrak{g},\mathfrak{h}) \coloneq \left\{\varphi \in \rm{Hom}(\Lambda^*(\mathfrak{g}/\mathfrak{h}),\R) \,\middle|\,  \parbox{6.5cm}{\centering $\sum_{i=1}^p \varphi(\overline{X}_1,\dots,\overline{[X,X_i]},\dots,\overline{X}_p)=0$ $\forall X \in \mathfrak{h}$, $\forall \overline{X}_1, \dots, \overline{X}_p \in \mathfrak{g}/\mathfrak{h} $}\right\}, 
\]
with differential given by
\[d\varphi(\overline{X}_0,\ldots,\overline{X}_p) = \sum_{i<j} (-1)^{i+j} \varphi(\ol{[X_i,X_j]},\overline{X}_0,\ldots,\widehat{\overline{X}_i},\ldots,\widehat{\overline{X}_j},\ldots,X_p).\]
\end{enumerate}
\end{definition}

By construction we have maps
\[C_*(\fr{g}) \twoheadrightarrow C_*(\fr{g},\fr{h}), \qquad C^*(\mathfrak{g},\mathfrak{h}) \hookrightarrow C^*(\mathfrak{g}), \qquad C^*(\fr{g},\fr{h}) \cong C_*(\fr{g},\fr{h})^\vee,\]
where on the left and middle we have the usual Chevalley--Eilenberg chains and cochains. If $\fr{g}$ is finite-dimensional, then the double-dual map $C_*(\fr{g},\fr{h}) \overset{\cong}\lra C^*(\fr{g},\fr{h})^\vee$ is an isomorphism and we may think of relative Lie algebra chains as the dual to relative Lie algebra cochains.

Let $\fr{gl}_n \coloneq \rm{Lie}(\GL_n(\bb{C}))$ be the Lie algebra of the Lie group $\GL_n(\bb{C})$ and $\fr{u}_n \coloneq \rm{Lie}(\rm{U}(n))$ be the Lie subalgebra of its maximal compact Lie subgroup $\rm{U}(n)$.

\begin{lemma} \label{lem:relative-lie-algebra-computation-gl-u}\,
    \begin{enumerate}[(i)]
    \item \label{enum:relative-lie-algebra-computation-gl-u-i} $C^*(\fr{gl}_n,\fr{u}_n)$ has trivial differential so is isomorphic to its cohomology.
    \item \label{enum:relative-lie-algebra-computation-gl-u-ii} There is an isomorphism $H^*(\fr{gl}_n,\fr{u}_n) \cong H^*(\rm{U}(n);\bb{R}) \cong \Lambda_\R^* \langle \ol{c}_{n,1}, \ol{c}_{n,3},\ldots,\ol{c}_{n,2n-1} \rangle$ with $\ol{c}_{n,2i-1}$ primitive of degree $2i-1$ and transgression given by the $i$th Chern class.
    \end{enumerate}
\end{lemma}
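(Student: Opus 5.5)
Both parts will be proved by the standard chain of identifications: relative Lie algebra cohomology of $(\fr{gl}_n,\fr{u}_n)$, then the compact dual symmetric space, then $\rm{U}(n)$. The key observation is that $\GL_n(\C)/\rm{U}(n)$ is a Riemannian symmetric space. With the Cartan decomposition $\fr{gl}_n = \fr{u}_n \oplus \fr{p}_n$, where $\fr{p}_n = i\fr{u}_n$ is the space of Hermitian matrices, we have $[\fr{u}_n,\fr{p}_n]\subseteq \fr{p}_n$ and $[\fr{p}_n,\fr{p}_n]\subseteq \fr{u}_n$.

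For \eqref{enum:relative-lie-algebra-computation-gl-u-i}, I identify $C^p(\fr{gl}_n,\fr{u}_n)$ with $\rm{Hom}_{\fr{u}_n}(\Lambda^p \fr{p}_n,\R)$, the $\fr{u}_n$-invariant alternating forms on $\fr{g}/\fr{h}\cong\fr{p}_n$. In the differential of \cref{def:relative-lie-algebra-homology-cohomology}, each term involves $\ol{[X_i,X_j]}$ with $X_i,X_j$ representatives. Choosing these representatives in $\fr{p}_n$ puts $[X_i,X_j]$ in $[\fr{p}_n,\fr{p}_n]\subseteq \fr{u}_n$, so its class in $\fr{g}/\fr{h}$ vanishes. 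Hence every term is zero and the differential is trivial. The only point to check is that the formula is independent of the choice of representatives; this is exactly what the invariance condition defining relative cochains guarantees.

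For \eqref{enum:relative-lie-algebra-computation-gl-u-ii}, I pass to the compact dual. Multiplication by $i$ gives a $\fr{u}_n$-equivariant isomorphism $\fr{p}_n \cong \fr{u}_n$. Under it, $(\fr{gl}_n,\fr{u}_n)$ corresponds to the compact pair $(\fr{u}_n\oplus\fr{u}_n,\Delta\fr{u}_n)$, whose complexifications agree. Accordingly the invariant forms $\rm{Hom}_{\fr{u}_n}(\Lambda^*\fr{p}_n,\R)$ match the invariant forms on $(\fr{u}_n\oplus\fr{u}_n)/\Delta\fr{u}_n\cong\fr{u}_n$, which are the harmonic, i.e.\ bi-invariant, forms on the compact symmetric space $(\rm{U}(n)\times\rm{U}(n))/\Delta\rm{U}(n)\cong \rm{U}(n)$. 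This is the standard Cartan/Hodge argument: on a compact symmetric space, invariant forms are closed and represent cohomology injectively and surjectively, by averaging over the compact group. It gives $H^*(\fr{gl}_n,\fr{u}_n)\cong H^*(\rm{U}(n);\R)$ as algebras; I expect the sign or scaling introduced by the factor $i$ to affect only the normalisation, not the isomorphism. The description $H^*(\rm{U}(n);\R)\cong\Lambda_\R^*\langle \ol c_{n,1},\ldots,\ol c_{n,2n-1}\rangle$, with primitive generators transgressing to the Chern classes $c_i$ in the Serre spectral sequence of $\rm{U}(n)\to E\rm{U}(n)\to B\rm{U}(n)$, is classical (Borel). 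Primitivity is compatible with the Hopf structure: the coproduct induced by $\rm{U}(n)$ multiplication matches block-sum compatibility on the Lie algebra side.

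The main obstacle is making the compact-dual identification precise in the relative-cochain language used here, since the paper's definition quotients by $\fr{h}$ rather than restricting to $\fr{p}$. I would state carefully that $\fr{g}/\fr{h}\cong\fr{p}_n$ as $\fr{u}_n$-modules and then cite \cite[Chapter I]{BorelWallach} or \cite[Chapter 5]{BurgosGil} for the comparison with the compact dual, rather than reproving Cartan's theorem.
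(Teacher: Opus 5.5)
Your proposal is correct and is essentially the paper's proof. For (i), the paper packages your observation $[\fr{p}_n,\fr{p}_n]\subseteq\fr{u}_n$ as the Cartan involution $\Theta(A)=-A^\dagger$, which commutes with $d$ and acts by $(-1)^p$ on $C^p(\fr{gl}_n,\fr{u}_n)$; for (ii), it likewise transports $\fr{u}_n$-invariant forms on $\fr{p}_n$ along multiplication by $i$ to bi-invariant forms on $\rm{U}(n)$ (your compact dual $(\rm{U}(n)\times\rm{U}(n))/\Delta\rm{U}(n)$ under another name) and cites Chevalley--Eilenberg for the fact that these compute $H^*(\rm{U}(n);\R)$. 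The one loose point is your closing remark on primitivity: the relevant coproduct comes from group multiplication on $\rm{U}(n)$ and has nothing to do with block sum, so primitivity of the $\ol{c}_{n,2i-1}$ is simply part of the classical description of $H^*(\rm{U}(n);\R)$ that you and the paper both cite.
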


\begin{proof}
    For part \eqref{enum:relative-lie-algebra-computation-gl-u-i}, we use the standard basis to identify $\fr{gl}_n$ with the Lie algebra $\fr{mat}_n$ of $(n \times n)$-matrices with complex entries and Lie bracket given by the commutator, $[A,B] \coloneq AB - BA$ (we view $\fr{gl}_n$ as a real Lie algebra here). Under this identification $\fr{u}_n \subset \mathfrak{gl}_n$ is given by the Lie subalgebra of skew-Hermitian matrices. Let $\fr{p}_n \subset \fr{gl}_n$ denote the real vector subspace of Hermitian matrices, so that there is a splitting of real vector spaces
     \[\fr{gl}_n = \fr{u}_n \oplus \fr{p}_n\]
    inducing an isomorphism $\fr{p}_n \cong \fr{gl}_n/\fr{u}_n$ of real vector spaces. Let $\Theta$ be the Cartan involution on $\fr{gl}_n$ given by $\Theta(A) \coloneq - A^\dagger$. This preserves the Lie bracket by inspection and acts by $+1$ on $\fr{u}_n$ and by $-1$ on $\fr{p}_n$. Thus, $\Theta$ defines an involution on the complex $C^*(\fr{gl}_n,\fr{u}_n)$ and acts by $(-1)^p$ on $C^p(\fr{gl}_n,\fr{u}_n)$, implying the required triviality of differentials. 

    For part \eqref{enum:relative-lie-algebra-computation-gl-u-ii}, by the above involution we have that $[\fr{u}_n,\fr{p}_n] \subset \fr{p}_n$ and obtain an isomorphism of cochain complexes
    \[
    C^*(\fr{gl}_n,\fr{u}_n) \cong \left\{\varphi \in \rm{Hom}(\Lambda^*\fr{p}_n,\R) \,\middle|\,  \parbox{6.5cm}{\centering $\sum_{i=1}^p \varphi(X_1,\dots,[X,X_i],\dots,X_p)=0$ $\forall X \in \fr{u}_n$, $\forall X_1, \dots, X_p \in \fr{p}_n $}\right\}, 
    \]
    Multiplication by $i$ induces an isomorphism $i \colon \fr{u}_n \xrightarrow{\cong} \fr{p}_n$ of real vector spaces, which in turn induces an isomorphism $i^* \colon \rm{Hom}(\Lambda^*\fr{p}_n,\R) \to \rm{Hom}(\Lambda^*\fr{u}_n,\R)$ of graded vector spaces, and from this we obtain an isomorphism of cochain complexes
     \[
    C^*(\fr{gl}_n,\fr{u}_n) \cong \left\{\varphi \in \rm{Hom}(\Lambda^*\fr{u}_n,\R) \,\middle|\,  \parbox{6.5cm}{\centering $\sum_{i=1}^p\varphi(X_1,\dots,[X,X_i],\dots,X_p)=0$ $\forall X,X_1, \dots, X_p \in \fr{u}_n $}\right\}.\]
    The right is a subcomplex of the usual Chevalley--Eilenberg cochain $C^*_\rm{CE}(\mathfrak{u}_n)$ and under the identification of the latter with left invariant forms on $\rm{U}(n)$, this is subcomplex of bi-invariant forms and its inclusion is a quasi-isomorphism \cite[Theorem 12.1]{chevalley1948cohomology}. This implies the result by our previous discussion about the relationship between de Rham cohomology and Lie algebra cohomology, and well-known computations of the cohomology of unitary groups as an exterior algebra on its primitives, whose transgressions are given by the Chern classes.
\end{proof}

Let $\rm{LieAlg}^2_\bb{R}$ denote the 1-category whose objects are pairs $\fr{h} \subseteq \fr{g}$ of Lie algebras over $\bb{R}$ and whose morphisms are maps of Lie algebras $\fr{g} \to \fr{g}'$ sending $\fr{h}$ into $\fr{h}'$. Relative Lie algebra chains are natural in pairs $\fr{h} \subseteq \fr{g}$ and lift to a functor $C_*(-,-) \colon \rm{LieAlg}^2_\bb{R} \to \rm{Ch}_\bb{R}$ of 1-categories. The domain has a symmetric monoidal structure given by direct sum of Lie algebras and the target has a symmetric monoidal structure given by tensor product of chain complexes; we observe that the map $\Lambda^*(\fr{g}_1/\fr{h}_1) \otimes \Lambda^*(\fr{g}_2/\fr{h}_2) \to \Lambda^*((\fr{g}_1 \oplus \fr{g}_2)/(\fr{h}_1 \oplus \fr{h}_2))$ induced by inclusions yields an lax symmetric monoidality on $C_*(-,-)$; this computation is left to the reader. 

Block sum makes the functor $n \mapsto (\fr{u}_n \subset \fr{gl}_n)$ into a strictly associative unital algebra in $\Fun(\bb{N},\rm{LieAlg}^2_\bb{R})$, and applying $C_*(-,-)$ we get 
\[\bf{C}_*(\fr{gl},\fr{u}) \coloneq \big(n \mapsto C_*(\fr{gl}_n,\fr{u}_n)\big) \in \rm{Alg}^\rm{aug}_{\rm{Ass}^\rm{u}}(\Fun(\bb{N},\rm{Ch}_\bb{R}))\]
which admits a unique augmentation since it is equal to $\bb{R}$ in rank 0. Let us investigate the multiplicative structure: we denote by 
\[\sigma \in H_{1,0}(\bf{C}_*(\fr{gl},\fr{u})) \cong \R \qquad \text{and} \qquad  \ol{\sf{c}}_{n,2i-1} \in H_{n,2i-1}(\bf{C}_*(\fr{gl},\fr{u}))\]
the canonical generator corresponding to $1 \in \Lambda_\R^0 \fr{gl}_1/\fr{u}_1$, and the unique primitive class which pairs to one against $\ol{c}_{n,2i-1}$. 

\begin{lemma}\label{lem:cts-cohomology-maps} \,
\begin{enumerate}[(i)]
    \item \label{enum:cts-cohomology-maps-i} $\bf{C}_*(\fr{gl},\fr{u})$ lifts uniquely to a strictly commutative algebra.
    \item \label{enum:cts-cohomology-maps-ii} We have $\sigma \ol{\sf{c}}_{n-1,2j-1} = \ol{\sf{c}}_{n,2j-1}$.
\end{enumerate}
\end{lemma}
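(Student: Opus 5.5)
For part (i), the plan is to show that the associative multiplication on $\bf{C}_*(\fr{gl},\fr{u})$ is already commutative at the chain level, with the Koszul sign rule, and that this commutativity is strict rather than up to homotopy. Uniqueness is then automatic, because a strictly commutative structure lifting a given associative one is just a property of that associative structure.

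The key observation is that the product
\[C_*(\fr{gl}_n,\fr{u}_n)\otimes C_*(\fr{gl}_m,\fr{u}_m)\to C_*(\fr{gl}_{n+m},\fr{u}_{n+m})\]
is induced by wedging on $\Lambda^*$ along the block-sum inclusion $\fr{gl}_n\oplus\fr{gl}_m\to\fr{gl}_{n+m}$. The symmetry swapping the two factors corresponds to conjugating $\fr{gl}_{n+m}$ by the block permutation matrix $P\in \rm{U}(n+m)$.

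I would therefore prove a general lemma: for $\fr{h}\subseteq\fr{g}$ and $k$ in the connected group $H$ integrating $\fr{h}$, the map $\rm{Ad}_k$ acts as the identity on the relative chains $C_*(\fr{g},\fr{h})$. The reason is that the relations quotiented out in the definition of $C_*(\fr{g},\fr{h})$ say exactly that the infinitesimal action $\rm{ad}_X$, for $X\in\fr{h}$, is zero on $\Lambda^*(\fr{g}/\fr{h})$. Exponentiating along a path from $1$ to $k$ then gives that $\rm{Ad}_k$ is the identity. Concretely, $\rm{Ad}_{\exp(tX)}=\exp(t\,\rm{ad}_X)$ on the finite-dimensional space $\Lambda^p(\fr{g}/\fr{h})$, and this descends to the identity on the quotient, since $\rm{ad}_X$ maps $\Lambda^p$ into the span of the relations and preserves that span. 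This last point is the main thing to check: the relation subspace is exactly the image of $\rm{ad}(\fr{h})$, and it is $\rm{ad}(\fr{h})$-stable by the Jacobi identity.

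Since $\rm{U}(n+m)$ is connected, conjugation by $P$ acts trivially on the target. Hence $x\cdot y=\pm y\cdot x$ holds strictly, with the Koszul sign coming from reordering wedge factors. The same argument, applied to the permutation of three blocks, gives compatibility with the associator, which is already strict. Together these make the algebra strictly commutative. A strictly commutative associative algebra in $\Fun(\bb{N},\rm{Ch}_\bb{R})$ with the Day convolution tensor product is a commutative monoid, and its underlying associative algebra is the original one.

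For part (ii), I would use Lemma~\ref{lem:relative-lie-algebra-computation-gl-u}(i): the differential vanishes, so homology equals chains and both $\sigma$ and $\ol{\sf{c}}_{n,2j-1}$ are honest elements. Multiplication by $\sigma$ is the map on chains induced by the inclusion $\fr{gl}_{n-1}\to\fr{gl}_n$ (block sum with $\fr{gl}_1$, tensored with $1\in\Lambda^0$). Dually, it is restriction $C^*(\fr{gl}_n,\fr{u}_n)\to C^*(\fr{gl}_{n-1},\fr{u}_{n-1})$, which under Lemma~\ref{lem:relative-lie-algebra-computation-gl-u}(ii) is restriction $H^*(\rm{U}(n))\to H^*(\rm{U}(n-1))$. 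This restriction sends $\ol{c}_{n,2i-1}\mapsto\ol{c}_{n-1,2i-1}$ (and $\ol{c}_{n,2n-1}\mapsto 0$), by naturality of transgression and Chern classes. It also preserves decomposables and primitivity.

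It follows that $\sigma\ol{\sf{c}}_{n-1,2j-1}$ is primitive: $\sigma$ is grouplike in degree $0$, and the image of a primitive under a coalgebra map is primitive. It also pairs to $1$ with $\ol{c}_{n,2j-1}$. By the uniqueness in the definition of $\ol{\sf{c}}_{n,2j-1}$, the two elements are equal.

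The main obstacle I expect is the normalisation of the classes $\ol{c}_{n,2i-1}$. One must make sure that "transgression is the $i$th Chern class" picks generators compatible with restriction on the nose, not merely up to a scalar. This follows because the universal Chern classes restrict along $B\rm{U}(n-1)\to B\rm{U}(n)$ to the corresponding Chern classes, and transgression is natural.
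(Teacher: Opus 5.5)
Your proof is correct. Part (ii) follows the paper's argument: stability of Chern classes, naturality of transgression, and the fact that stabilisation preserves primitives. You also spell out the pairing-and-uniqueness step that the paper leaves implicit. Like the paper, this relies on the comparison with $H^*(\mathrm{U}(n);\mathbb{R})$ being compatible with block-sum inclusions.

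Part (i), however, takes a genuinely different route. The paper transports the question through the identification of $C^*(\mathfrak{gl}_n,\mathfrak{u}_n)$ with bi-invariant forms on $\mathrm{U}(n)$, which is compatible with block sums. It then uses that the two block-sum inclusions $\mathrm{U}(n)\times\mathrm{U}(m)\to\mathrm{U}(n+m)$ are homotopic. Since cochains coincide with cohomology by Lemma~\ref{lem:relative-lie-algebra-computation-gl-u}, this homotopy-commutativity is already strict.

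You instead stay at the chain level. You identify $C_*(\mathfrak{g},\mathfrak{h})$ with the $\mathfrak{h}$-coinvariants of $\Lambda^*(\mathfrak{g}/\mathfrak{h})$. You then show that $\mathrm{Ad}_k$ acts trivially on these coinvariants for $k$ in a connected group integrating $\mathfrak{h}$, and apply this to the block permutation matrix.

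Both arguments rest on the same fact: the permutation matrix lies in the connected group $\mathrm{U}(n+m)$. Yours is purely algebraic and does not need the vanishing of the differential or the comparison with $\mathrm{U}(n)$. It also works for any pair $\mathfrak{h}\subseteq\mathfrak{g}$ integrated by a connected subgroup. The paper's version is shorter because it reuses the comparison that (ii) needs anyway.

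One minor point: your additional check with three blocks is unnecessary. In a symmetric monoidal $1$-category, commutativity of an associative monoid is a single condition on the multiplication, so there is no further associator coherence to verify.
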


\begin{proof}The comparison produced in the proof of \cref{lem:relative-lie-algebra-computation-gl-u} is compatible with block sums. Using this, \eqref{enum:cts-cohomology-maps-i} follows since the inclusion $\rm{U}(n) \times \rm{U}(m) \to \rm{U}(n+m)$ is homotopic to the inclusion $\rm{U}(m) \times \rm{U}(n) \to \rm{U}(n+m)$, and \eqref{enum:cts-cohomology-maps-ii} follows since the Chern classes are stable, transgression is natural and stabilization sends primitives to primitives.
\end{proof}

In particular, it makes sense to define the $E_\infty^\rm{u}$-algebra $\bf{C}_*(\fr{gl},\fr{u})/(\sigma)$, whose homology agrees with the relative homology of the map given by multiplication by $\sigma$ and is explicitly given by:

\begin{lemma}\label{lem:relative-lie-algebra-mod-sigma} We have
    \begin{enumerate}[(i)]
        \item $H_{n,d}(\bf{C}_*(\fr{gl},\fr{u})/(\sigma)) = 0$ for $d<2n-1$. 
        \item $H_{n,2n-1}(\bf{C}_*(\fr{gl},\fr{u})/(\sigma)) = \R \langle \ol{\sf{c}}_{n,2n-1} \rangle$ for each $n \ge 1$. 
    \end{enumerate}
\end{lemma}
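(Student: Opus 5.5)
The homology of the cofibre is computed from the long exact sequence of multiplication by $\sigma$. Since $\bf{C}_*(\fr{gl},\fr{u})$ is a strictly commutative algebra by \cref{lem:cts-cohomology-maps}, the cofibre $\bf{C}_*(\fr{gl},\fr{u})/(\sigma)$ in $E_\infty^\rm{u}$-algebras has underlying object the cofibre of $\sigma\cdot -\colon \bf{C}_*(\fr{gl},\fr{u})(n-1)\to \bf{C}_*(\fr{gl},\fr{u})(n)$ in each rank $n$. This uses the rational description of \cref{sec:graded-over-dq} / \cref{thm:stable-algebras-DQ}, where the underlying filtered object is $\bf{R}(0)\to\bf{R}(1)\to\cdots$ with associated graded the levelwise cofibres. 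There is therefore a long exact sequence
\[\cdots \to H_{n-1,d}(\bf{C}_*(\fr{gl},\fr{u})) \xrightarrow{\sigma} H_{n,d}(\bf{C}_*(\fr{gl},\fr{u})) \to H_{n,d}(\bf{C}_*(\fr{gl},\fr{u})/(\sigma)) \to H_{n-1,d-1}(\bf{C}_*(\fr{gl},\fr{u})) \xrightarrow{\sigma}\cdots.\]

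Next we identify the map $\sigma\cdot -$ on homology. By \cref{lem:relative-lie-algebra-computation-gl-u}, $H_{n,*}(\bf{C}_*(\fr{gl},\fr{u}))$ is the dual of $\Lambda^*\langle \ol{c}_{n,1},\ldots,\ol{c}_{n,2n-1}\rangle$, which is a coalgebra, indeed the exterior algebra on the primitive duals $\ol{\sf{c}}_{n,2i-1}$ (Pontryagin-type product coming from $\rm{U}(n)$). Multiplication by $\sigma$ is dual to the restriction $H^*(\fr{gl}_n,\fr{u}_n)\to H^*(\fr{gl}_{n-1},\fr{u}_{n-1})$ along the block inclusion. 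By the compatibility of the comparison with block sum used in \cref{lem:cts-cohomology-maps}, this is the restriction $H^*(\rm{U}(n))\to H^*(\rm{U}(n-1))$. That restriction sends $\ol{c}_{n,2i-1}\mapsto \ol{c}_{n-1,2i-1}$ for $i<n$ and $\ol{c}_{n,2n-1}\mapsto 0$, so it is a surjective algebra map whose kernel is the ideal generated by $\ol{c}_{n,2n-1}$.

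Dually, $\sigma\cdot-$ is injective, and its cokernel is identified with the dual of that ideal, namely $\ol{c}_{n,2n-1}\cdot\Lambda^*\langle \ol{c}_{n,1},\ldots,\ol{c}_{n,2n-3}\rangle$. Injectivity splits the long exact sequence, giving $H_{n,d}(\bf{C}_*(\fr{gl},\fr{u})/(\sigma))\cong \rm{coker}$ in degree $d$. The lowest degree in which the ideal is nonzero is $2n-1$, and there it is spanned by $\ol{c}_{n,2n-1}$ alone, whose dual is $\ol{\sf{c}}_{n,2n-1}$. This gives (i) and (ii); for (ii) we check that the image of $\ol{\sf{c}}_{n,2n-1}$ spans the quotient, which holds because $\ol{\sf{c}}_{n,2n-1}$ pairs to $1$ with $\ol{c}_{n,2n-1}$. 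Alternatively, part (ii) of \cref{lem:cts-cohomology-maps} shows that all other degree-$(2n-1)$ basis elements are products or stabilised classes.

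The main obstacle is the first step: justifying that the homotopical cofibre $E_\infty$-algebra has homology equal to the relative homology of multiplication by $\sigma$, together with the identification of $\sigma\cdot-$ with the dual of restriction. I would first appeal to \cref{thm:stable-algebras-DQ} and \cref{lem:graded-stable-algebra}\eqref{enum:graded-stable-algebra-ii}, which the paper's sentence before the lemma already asserts.
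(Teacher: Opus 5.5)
Your argument is correct and is the one the paper intends; the paper gives no written proof, only the preceding remark that the homology of $\bf{C}_*(\fr{gl},\fr{u})/(\sigma)$ is the relative homology of multiplication by $\sigma$, which it then computes by dualising the restriction $H^*(\rm{U}(n);\R)\to H^*(\rm{U}(n-1);\R)$ (surjective, with kernel the ideal generated by $\ol{c}_{n,2n-1}$), exactly as in the proof of \cref{prop:borel-yang-stab}. Your justification of the levelwise-cofibre description via \cref{thm:stable-algebras-DQ} is fine; the only caveat is that this setting is over $\R$ rather than $\Q$, and the same argument applies there, as does computing the relative tensor product directly via the Koszul resolution of the augmentation.
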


\subsubsection{Constructing $\smallint_{\Delta(-)}$}
Suppose that $G$ is a Lie group and $H \subset G$ is a subgroup. The quotient $G/H$ is a pointed smooth manifold with a smooth left $G$-action such that the quotient map $G \to G/H$ induces an isomorphism $T_{eH}(G/H) \cong \mathfrak{g}/\mathfrak{h}$ where $\mathfrak{g}=\rm{Lie}(G)$ and $\mathfrak{h}=\rm{Lie}(H)$. Moreover, evaluation at the basepoint gives an isomorphism of cochain complexes 
\[\Omega^*(G/H)^G \xrightarrow{\rm{ev}_{eH}} C^*(\fr{g},\fr{h})\]
whose domain is the left $G$-invariant de Rham forms on $G/H$ and whose target is relative Lie algebra cohomology as in \cref{def:relative-lie-algebra-homology-cohomology}. This map has an inverse given by extending the given form to a left $G$-invariant one on $G/H$:
\begin{align*}C^*(\fr{g},\fr{h}) &\lra \Omega^*(G/H)^G \\
\varphi &\longmapsto \widetilde{\varphi} \coloneq \big(gH \mapsto (g^{-1} \cdot -)^* (\varphi) \in \rm{Hom}(\Lambda^* T_{gH}(G/H),\bb{R})\big)\end{align*} 
A left $G$-invariant Riemannian metric on $G/H$ is determined by an appropriate choice of inner product at the basepoint and then the left action of $G$ is by isometries. When $H$ is maximal compact, this gives a way to produce a map of chain complexes by integrating over geodesic simplices \cite[\S 1]{DUPONT1976233}. For a generator $[g_1|\dots|g_p] \in C_p(B_\bullet G;\bb{R})$ we let $\Delta(g_1,\compactldots,g_p) \subset G/H$ denote the geodesic simplex with vertices $(eH, g_1 H, g_1 \cdot g_2 H, \dots, g_1 \cdots g_p H)$. This is defined inductively: $\Delta(g_1)$ is the geodesic arc from $eH$ to $g_1H$ and $\Delta(g_1,\ldots,g_p)$ is the geodesic cone on $g_1 \cdot \Delta(g_2,\ldots,g_p)$ with top point $eH$. A geodesic simplex always exists and is uniquely determined by its vertices as $G/H$ is diffeomorphic to a Euclidean space.

\begin{lemma} \label{lem:integration-chain-map}
 Integration defines a chain map
\begin{align*}
    {\textstyle \int}_{\Delta(-)} \colon  C_*(B_\bullet G;\bb{R}) &\lra C^*(\fr{g}, \fr{h})^\vee \\
    [g_1|\compactldots|g_p] &\longmapsto \big( \varphi \mapsto {\textstyle \int}_{\Delta(g_1,\dots,g_p)} \widetilde{\varphi} \big).
\end{align*}
\end{lemma}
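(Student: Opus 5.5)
The plan is to verify the chain map identity $d \circ {\textstyle \int}_{\Delta(-)} = {\textstyle \int}_{\Delta(-)} \circ \partial$ by dualising and reducing it to Stokes' theorem on geodesic simplices. Here $C^*(\fr{g},\fr{h})^\vee$ carries the differential dual to that of $C^*(\fr{g},\fr{h})$. Concretely, for a generator $[g_1|\compactldots|g_p]$ and $\varphi \in C^{p-1}(\fr{g},\fr{h})$, we must show
\[{\textstyle \int}_{\Delta(g_1,\compactldots,g_p)} \widetilde{d\varphi} = {\textstyle\int}_{\partial [g_1|\compactldots|g_p]}\widetilde{\varphi},\]
where the right side means the alternating sum of integrals over the images of the faces of the bar differential.

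\emph{First step.} Under the isomorphism $\rm{ev}_{eH}\colon \Omega^*(G/H)^G \to C^*(\fr{g},\fr{h})$, the Lie algebra differential corresponds to the de Rham differential, so $\widetilde{d\varphi} = d\widetilde{\varphi}$. To see this, note that both $d\widetilde\varphi$ and $\widetilde{d\varphi}$ are $G$-invariant, so it suffices to compare them at $eH$. There we compute $d\widetilde\varphi$ using the invariant formula for $d$ on vector fields induced by elements of $\fr{g}$ (pulled back to $G$). On $G$, invariant forms satisfy the Chevalley--Eilenberg formula, and $\widetilde\varphi$ pulls back along $G \to G/H$ to the left-invariant form on $G$ determined by $\varphi$ (basic for $\fr{h}$). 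This is standard; I would cite the references for relative Lie algebra cohomology rather than recompute.

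\emph{Second step.} Apply Stokes' theorem to the smooth singular simplex $\Delta(g_1,\compactldots,g_p)$, parametrised via iterated geodesic coning. This gives $\int_\Delta d\widetilde\varphi = \sum_i (-1)^i \int_{\partial_i \Delta}\widetilde\varphi$. Geodesic simplices are determined by their vertices and faces of geodesic simplices are geodesic simplices, since the cone construction is compatible with faces by uniqueness of geodesics in $G/H$, which is diffeomorphic to Euclidean space. Hence $\partial_i\Delta$ is the geodesic simplex on the vertex set with $x_i$ omitted, where $x_k = g_1\cdots g_k H$.

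\emph{Third step.} Match these faces with the bar differential. For $0<i<p$, $d_i[g_1|\compactldots|g_p]$ merges $g_i g_{i+1}$, and its simplex has exactly the vertices with $x_i$ deleted. For $i=p$ we drop $g_p$, deleting $x_p$. For $i=0$ the bar face is $[g_2|\compactldots|g_p]$, whose simplex has vertices $eH, g_2H,\compactldots$. This is $g_1^{-1}$ times the zeroth face of $\Delta$. By left $G$-invariance of $\widetilde\varphi$, together with the fact that $G$ acts by isometries and so carries geodesic simplices to geodesic simplices, the integrals agree.

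The main obstacle is the smoothness and orientation bookkeeping. One must ensure the geodesic cone gives a piecewise-smooth parametrisation to which Stokes applies, with the face orientations matching the sign conventions of the simplicial boundary. Degenerate simplices in the unnormalised complex also need handling: their integrals of top-degree forms vanish, because the image lies in a lower-dimensional set, which is consistent with the formula. I would follow Dupont's treatment \cite[\S 1]{DUPONT1976233} for these points.
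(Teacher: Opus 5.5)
Your proposal is correct and follows essentially the same route as the paper: Stokes' theorem on the geodesic simplex, identification of its faces with the simplices of the bar faces, and left $G$-invariance of $\widetilde{\varphi}$ to replace the zeroth face $g_1\cdot\Delta(g_2,\compactldots,g_p)$ by $\Delta(g_2,\compactldots,g_p)$. Your Step 1 ($d\widetilde{\varphi}=\widetilde{d\varphi}$) is what the paper takes as part of its assertion that $\rm{ev}_{eH}$ is an isomorphism of cochain complexes, and your remarks on smoothness and degenerate simplices add care that the paper leaves implicit.
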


\begin{proof}We need to verify compatibility with differentials, and to do so it is convenient to allow formal $\bb{R}$-linear combinations of geodesic simplices and their translates by $G$, $\bb{R}$-linearly extending $\Delta$. Doing so, we compute:
\begin{align*}
  \hspace{-.2cm} \Delta\big(d([g_1|\compactldots|g_p])) &= \Delta( [g_2|\compactldots|g_p]+ {\textstyle \sum}_{i=1}^{p-1} (-1)^i [g_1|\compactldots|g_ig_{i+1}|\compactldots|g_p] + (-1)^p [g_1|\compactldots|g_{p-1}]\big) \\
  &= -g_1 \cdot \Delta(g_2,\dots,g_p)+ \Delta(g_2,\dots,g_p)+ \partial \Delta(g_1,\dots,g_p)
\end{align*}
and hence for any $\varphi \in C^*(\fr{g}, \fr{h})$ we have
\begin{align*}
    \int_{\Delta(d([g_1|\compactldots|g_p]))} \widetilde{\varphi} &= -\int_{g_1 \cdot \Delta(g_2,\dots,g_p)} \widetilde{\varphi} + \int_{\Delta(g_2,\dots,g_p)} \widetilde{\varphi} + \int_{\partial \Delta(g_1,\dots,g_p)} \varphi \\
    &=- \int_{\Delta(g_2,\dots,g_p)} g_1^* \widetilde{\varphi} + \int_{\Delta(g_2,\dots,g_p)} \widetilde{\varphi} + \int_{\Delta(g_1,\dots,g_p)} d \varphi = \int_{\Delta([g_1|\dots|g_p])} d \varphi
\end{align*}
by Stokes' theorem plus the left $G$-invariance of $\widetilde{\varphi}$. Thus, we indeed get a chain map.  
\end{proof}

We take $G = \rm{GL}_n(\bb{C})$ and $H = \rm{U}(n)$ and prove that taking \textit{Frobenius inner product} 
\begin{align*}\lambda \colon \mathfrak{gl}_n(\bb{C}) \otimes \mathfrak{gl}_n(\bb{C}) &\lra \bb{R} \\
 (X,Y) &\longmapsto \rm{Re}(\rm{tr}(X^\dagger Y)),\end{align*}
to produce a left $\rm{GL}_n(\bb{C})$-invariant Riemannian metric on the homogeneous space $M_n \coloneq \rm{GL}_n(\bb{C})/\rm{U}(n)$, the integration maps assemble to a map of augmented strictly associative algebras. Namely, this choice is such that the inclusion $M_n \times M_m \subset M_{n+m}$ induced by the inclusion $\rm{GL}_n(\bb{C}) \times \rm{GL}_m(\bb{C}) \subset \rm{GL}_{n+m}(\bb{C})$ by block sum, is an isometric embedding.

\begin{theorem} \label{thm:integration-assocative-algebra}
    The integration maps of \cref{lem:integration-chain-map} defined using the Frobenius inner products, assemble to a map of augmented strictly associative algebras
    \[C_*(\BGLb(\bb{C});\bb{R}) \xrightarrow{\int_{\Delta(-)}} \bf{C}^*(\fr{gl},\fr{u})^\vee \xleftarrow{\cong} \bf{C}_*(\fr{gl},\fr{u}).\]
\end{theorem}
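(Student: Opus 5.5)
The statement has three parts to check: that each integration map is well defined and a chain map, that the maps respect the unit and augmentation, and that they are multiplicative. The first is \cref{lem:integration-chain-map} applied with $G=\GL_n(\C)$, $H=\rm{U}(n)$ and the Frobenius metric. The right-hand map $\bf{C}_*(\fr{gl},\fr{u}) \to \bf{C}^*(\fr{gl},\fr{u})^\vee$ is the double-dual isomorphism, which is available because $\fr{gl}_n$ is finite-dimensional. It is multiplicative because the lax monoidal structure on cochains is dual to the one on chains given by the inclusions $\Lambda^*(\fr{g}_1/\fr{h}_1)\otimes\Lambda^*(\fr{g}_2/\fr{h}_2)\to\Lambda^*((\fr{g}_1\oplus\fr{g}_2)/(\fr{h}_1\oplus\fr{h}_2))$.

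Units and augmentations are handled in rank $0$. There both sides are $\bb{R}$ concentrated in degree $0$, and a $0$-simplex is a point at which we evaluate the constant function. The augmentations are therefore matched automatically, since each side has a unique augmentation.

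The main step is multiplicativity. Take $a=[g_1|\compactcdots|g_p]$ in rank $n$ and $b=[h_1|\compactcdots|h_q]$ in rank $m$. Under the Eilenberg--Zilber shuffle map their product is
\[
a\cdot b=\sum_{(\mu,\nu)}\rm{sgn}(\mu,\nu)\,[k_1|\compactcdots|k_{p+q}],
\]
where each $k_i$ is a block matrix $g_j\oplus 1$ or $1\oplus h_l$ according to the shuffle. I would proceed as follows.

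First, since $M_n\times M_m\subset M_{n+m}$ is an isometric embedding, it is totally geodesic: it is the fixed locus of an isometric involution, conjugation by $\rm{diag}(1_n,-1_m)$, which acts on $M_{n+m}$. Hence geodesic simplices with vertices in $M_n\times M_m$ stay inside $M_n\times M_m$.

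Second, the vertices of the simplex attached to a $(p,q)$-shuffle are pairs $(g_1\compactcdots g_i\,\rm{U}(n),\,h_1\compactcdots h_j\,\rm{U}(m))$ running along a lattice path. The corresponding signed geodesic simplices should sum, as chains, to a triangulation of the product $\Delta(g)\times\Delta(h)$ of the two geodesic simplices. This is where I expect the difficulty. In a general product of Hadamard manifolds, the geodesic simplex spanned by lattice-path vertices does not coincide with the corresponding cell of the standard prism triangulation of $\Delta(g)\times\Delta(h)$. So the equality holds only up to chains whose boundaries differ.

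To get around this, I would avoid literal equality and use a homotopy argument. Both
\[
\int_{\Delta(a\cdot b)}\widetilde{\varphi\otimes\psi} \qquad\text{and}\qquad \int_{\Delta(a)}\widetilde{\varphi}\cdot\int_{\Delta(b)}\widetilde{\psi}
\]
are integrals of closed invariant forms; closedness comes from \cref{lem:relative-lie-algebra-computation-gl-u}, which shows all differentials vanish. The two chains have the same boundary pieces, built inductively from lower-dimensional faces, so one can hope to show that they differ by the boundary of a geodesically coned chain. Stokes' theorem then makes the integrals agree.

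If only agreement up to homotopy can be established, I would instead argue with the straight-line (in Euclidean coordinates) homotopy on the product. Since the pairing factors as a product form on $M_n\times M_m$, the integral over the shuffle triangulation equals the product of integrals by Fubini. Checking this Fubini and shuffle-sign bookkeeping exactly is the main obstacle.
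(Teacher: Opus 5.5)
Your routine checks are fine: the chain map from \cref{lem:integration-chain-map}, rank $0$, the double-dual isomorphism, and total geodesicity of $M_n\times M_m\subset M_{n+m}$ as the fixed locus of the isometry given by $\mathrm{diag}(1_n,-1_m)\in\mathrm{U}(n+m)$, which the paper leaves implicit. But multiplicativity, the only substantive step, is not proved, and your reason for abandoning the direct argument is wrong. With the paper's definition of $\Delta(-)$ as an iterated geodesic cone from the first vertex, the signed sum of the shuffle simplices is \emph{exactly} the prism $\Delta(g)\times\Delta(h)$. This is what the paper asserts before applying Fubini. Your Stokes substitute also does not close as sketched. All differentials on $C^*(\fr{gl},\fr{u})$ vanish and $M_{n+m}$ is contractible, so $\int_c\widetilde{\varphi}=\int_{\partial c}\eta$ for a primitive $\eta$ of $\widetilde{\varphi}$. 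That $\eta$ is neither invariant nor closed, so you would need the shuffle sum and the prism to have the same boundary as chains. The boundary of the shuffle sum is a signed shuffle sum of lower-dimensional geodesic simplices, so this needs the lower-dimensional identity on the nose. Knowing it only up to boundaries $\partial c'_j$ leaves error terms $\sum_j\pm\int_{c'_j}\widetilde{\varphi}$ that nothing controls. So the homotopy route presupposes the exact chain identity it was meant to replace, and the ``Euclidean coordinates'' fallback is not an argument.

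The missing idea is short. In the Riemannian product $M_n\times M_m$, a geodesic is a pair of geodesics run with the same constant-speed parameter. So coning from $(x_0,y_0)$ at time $t$ sends $(x,y)$ to $(\gamma_{x_0,x}(t),\gamma_{y_0,y}(t))$. Moreover, if $x=\gamma_{x_0,x'}(r)$ then $\gamma_{x_0,x}(t)=\gamma_{x_0,x'}(tr)$. Parametrise $\Delta(g)$ by its iterated-cone coordinates $(r_1,\dots,r_p)\in[0,1]^p$ and set $a_k=r_1\cdots r_k$; these are staircase coordinates $1\ge a_1\ge\cdots\ge a_p\ge 0$ on $\Delta^p$. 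Likewise set $b_l=s_1\cdots s_l$ for $\Delta(h)$. Induct on $p+q$, splitting by the first step of the lattice path. The two facts show that coning from $(x_0,y_0)$ multiplies all staircase coordinates of the smaller prism by $t$ and inserts $t$ as the new largest coordinate ($a_1$ or $b_1$, according to the first step). Hence the geodesic simplex of a $(p,q)$-shuffle, in its own staircase coordinates $c_1\ge\cdots\ge c_{p+q}$, is the point of $\Delta(g)\times\Delta(h)$ whose $a$'s and $b$'s are the $c$'s distributed according to the shuffle. In other words, it is the product parametrisation composed with the affine shuffle simplex $\Delta^{p+q}\to\Delta^p\times\Delta^q$. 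The signs are then those of the Euclidean Eilenberg--Zilber triangulation. Finally, write $\widetilde{\varphi}|_{M_n\times M_m}=\sum_i\pr_1^*\widetilde{\phi_i}\wedge\pr_2^*\widetilde{\psi_i}$, and Fubini gives the product of the two integrals.
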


\begin{proof}
    We know that $\smash{\int_{\Delta(-)}}$ is a chain map so it remains to check its compatibility with the binary products, i.e.~that for $n, m \ge 1$ the diagram
    \[\begin{tikzcd}
        C_p(B_\bullet \rm{GL}_n(\bb{C});\bb{R}) \otimes C_q(B_\bullet \rm{GL}_m(\bb{C});\bb{R}) \rar{\oplus} \dar[swap]{\int_{\Delta(-)} \otimes \int_{\Delta(-)}} & C_{p+q}(B_\bullet \rm{GL}_{n+m}(\bb{C});\bb{R}) \dar{\int_{\Delta(-)}} \\[-5pt]
        C^p(\mathfrak{gl}_n,\mathfrak{u}_n)^\vee \otimes  C^q(\mathfrak{gl}_m,\mathfrak{u}_m)^\vee \rar{\oplus} & C^{p+q}(\mathfrak{gl}_{n+m},\mathfrak{u}_{n+m})^\vee.
    \end{tikzcd}\]
    commutes, where the horizontal maps are induced by block sum. 
    
    We first compute the bottom-left composition on a chain $[g_1|\compactldots|g_p] \otimes [h_1|\compactcdots|h_q]$. Tracing through the definitions, its value on $\varphi \in C^{p+q}(\mathfrak{gl}_{n+m},\mathfrak{u}_{n+m},\R)$ is given by restricting the inputs of $\varphi$ to obtain a sum of elements $\sum_i \phi_i \otimes \psi_i \in C^p(\mathfrak{gl}_n,\mathfrak{u}_n) \otimes C^q(\mathfrak{gl}_m,\mathfrak{u}_m)$ and then taking 
    \[\sum_i \Big( \int_{\Delta(g_1,\compactldots,g_p)} \widetilde{\phi_i} \Big)  \Big( \int_{\Delta(h_1,\compactldots,h_q)} \widetilde{\psi_i} \Big) \in \bb{R}.\]
    
    We next compute the right-top composition on the same chain $[g_1|\compactldots|g_p] \otimes [h_1|\compactcdots|h_q]$. By the explicit formula for the Eilenberg--Zilber map, we have 
    \[\oplus\big([g_1|\compactldots|g_p] \otimes [h_1|\compactcdots|h_q]) = \sum_{(\mu,\nu) \in \rm{Sh}(p,q)} \rm{sgn}(\mu,\nu) \big(s_{\nu}([g_1|\compactldots|g_p]) \oplus s_{\mu}([h_1|\compactldots|h_q])\big)\]
    and when we apply $\Delta(-)$ to the right side we get a sum of $p+q$-simplices in $M_{n} \times M_m \subset M_{n+m}$ whose union is precisely the $(p+q)$-dimensional prism $\Delta(g_1,\dots,g_p) \times \Delta(h_1,\dots,h_q) \subset M_n \times M_m$ with correct orientation. Thus, given $\varphi \in C^{p+q}(\mathfrak{gl}_{n+m},\mathfrak{u}_{n+m})$ we first restrict it to $\varphi_1 \otimes \varphi_2 \in C^p(\mathfrak{gl}_n(\C),\mathfrak{u}_n) \otimes C^q(\mathfrak{gl}_m(\C),\mathfrak{u}_m)$ and then we evaluate it to 
    \[\sum_i \int_{\Delta(g_1,\dots,g_p) \times \Delta(h_1,\dots,h_q)} \pr_1^*(\widetilde{\phi_i}) \cup \pr_2^*(\widetilde{\psi_i})\]
    for the projections $\pr_1 \colon \Delta(g_1,\dots,g_p) \times \Delta(h_1,\dots,h_q) \to  \Delta(g_1,\dots,g_p)$ and $\pr_2 \colon \Delta(g_1,\dots,g_p) \times \Delta(h_1,\dots,h_q) \to \Delta(h_1,\dots,h_q)$, which decomposes as 
    \[\sum_i \Big(\int_{\Delta(g_1,\dots,g_p) } \widetilde{\phi_i} \Big) \Big( \int_{\Delta(h_1,\dots,h_q)} \widetilde{\psi_i} \Big) \in \bb{R}\]
    by Fubini's theorem. Thus, both compositions agree, as required.\end{proof}

\begin{remark}We expect that ${\textstyle \int}_{\Delta(-)}$ can be upgraded to a map of algebras over the Barratt--Eccles operad. This would simplify some later arguments, but is not required for the proof.\end{remark}

\subsubsection{Proof of  \cref{prop:injectivity-number-fields}} To complete the proof of \cref{prop:injectivity-number-fields} we connect the integration map $\smallint_{\Delta(-)}$ with the Borel class. This is the evaluation map
\[- \cap \rm{Bo}_{n,2n-1} \colon H_*(\BGL_n(\bb{C});\bb{R}) \lra \bb{R}\] 
given by taking the cap product against the cohomology class $\rm{Bo}_{n,2n-1} \in H^{2n-1}(\BGL_n(\bb{C});\bb{R})$ that is the image of the continuous cohomology class $\ol{c}_{n,2n-1} \in H_\rm{cts}^{2n-1}(\rm{GL}_n(\bb{C});\bb{R})$ that corresponds to the $n$th Chern class under the van Est isomorphism \cite{vanEst}, cf.~\cref{lem:relative-lie-algebra-computation-gl-u}. By \cite[Proposition 1.5]{DUPONT1976233} the inverse of the van Est isomorphism is the map
\[H^*(\mathfrak{gl}_n,\mathfrak{u}_n) \overset{\simeq}\lra H^*_\rm{cts}(\rm{GL}_n(\C);\bb{R})\]
induced by the integration map $\int_{\Delta(-)}$ of \cref{lem:integration-chain-map}. Hence we have \cite[Theorem 1.1]{DUPONT1976233}:

\begin{lemma} \label{lem integration map and borel regulator}
    Fixing a choice of a representative of $\ol{c}_{n,2n-1}$, the Borel class $\rm{Bo}_{n,2n-1}$ is represented (up to a nonzero constant) by the cocycle given by the composition 
    \[C_{2n-1}(\BGL_n(\C);\R) \xrightarrow{\smallint_{\Delta(-)}} C_{2n-1}(\fr{gl}_n,\fr{u}_n) \xrightarrow{\rm{ev}_{\ol{c}_{n,2n-1}}} \R.\]
\end{lemma}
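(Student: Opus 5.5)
The plan is to deduce the lemma from Dupont's results together with the construction of $\smallint_{\Delta(-)}$ in \cref{lem:integration-chain-map}, by comparing three descriptions of the same cohomology class. First I will make the van Est and Borel sides explicit. The continuous cohomology $H^*_\rm{cts}(\GL_n(\C);\R)$ is computed by continuous (homogeneous or inhomogeneous) cochains on $\GL_n(\C)$. By \cite[Proposition 1.5]{DUPONT1976233}, the assignment
\[\varphi \longmapsto \big([g_1|\compactldots|g_p] \mapsto {\textstyle\int}_{\Delta(g_1,\compactldots,g_p)} \widetilde{\varphi}\big)\]
is a cochain map $C^*(\fr{gl}_n,\fr{u}_n) \to C^*_\rm{cts}(\GL_n(\C);\R)$, and it induces the inverse of the van Est isomorphism. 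The cochains it produces are continuous, because geodesic simplices in $M_n$ depend continuously on their vertices. The Borel class $\rm{Bo}_{n,2n-1}$ is by definition the image of the van Est class $\ol{c}_{n,2n-1}$ under the forgetful map $H^*_\rm{cts}(\GL_n(\C);\R) \to H^*(\BGL_n(\C);\R)$, where $\GL_n(\C)$ is regarded as a discrete group. At the cochain level this forgetful map simply regards a continuous inhomogeneous cochain as an arbitrary one on the bar construction $C_*(B_\bullet\GL_n(\C);\R)$.

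Next I identify the composition in the statement with that cochain. Fix a cocycle representative $\varphi \in C^{2n-1}(\fr{gl}_n,\fr{u}_n)$ of $\ol{c}_{n,2n-1}$. By \cref{lem:relative-lie-algebra-computation-gl-u}(i) the differential is zero, so this is literally the class. The map $\rm{ev}_{\ol{c}_{n,2n-1}}$ pairs $C_{2n-1}(\fr{gl}_n,\fr{u}_n) \cong C^{2n-1}(\fr{gl}_n,\fr{u}_n)^\vee$ against $\varphi$. Unwinding \cref{lem:integration-chain-map}, the composition sends $[g_1|\compactldots|g_{2n-1}]$ to $\int_{\Delta(g_1,\compactldots,g_{2n-1})}\widetilde{\varphi}$, which is exactly Dupont's cochain. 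It is a cocycle because $\smallint_{\Delta(-)}$ is a chain map and $\varphi$ is closed. By the previous paragraph it represents the image of $\ol{c}_{n,2n-1}$, hence $\rm{Bo}_{n,2n-1}$.

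The main obstacle is normalisation, which is why the statement says ``up to a nonzero constant''. Three conventions enter: the choice of invariant metric (Frobenius here), which affects geodesic simplices only through their vertices and so not the cohomology class; the identification of $\ol{c}_{n,2n-1}$ with the Chern class via transgression; and Dupont's constants in \cite[Theorem 1.1]{DUPONT1976233}. These can differ only by a nonzero scalar, since in degree $2n-1$ the primitive subspace is one-dimensional, spanned by $\ol{c}_{n,2n-1}$. I will also check that independence of the metric is genuine and not just a convention: different $\GL_n(\C)$-invariant metrics give homotopic straight-simplex constructions in the contractible space $M_n$, and Dupont's argument shows the resulting cochains are cohomologous. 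I expect this to go through by citing Dupont rather than recomputing anything.
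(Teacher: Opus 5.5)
Your proposal is correct and follows essentially the paper's route: the paper also cites \cite[Proposition 1.5]{DUPONT1976233} for the fact that the integration map induces the inverse of the van Est isomorphism. Pairing $\smallint_{\Delta(-)}$ with a representative of $\ol{c}_{n,2n-1}$, which is automatically closed since the differential vanishes, then gives a cocycle for the image of $\ol{c}_{n,2n-1}$ under the forgetful map, i.e.\ $\mathrm{Bo}_{n,2n-1}$, and this is \cite[Theorem 1.1]{DUPONT1976233}; your remarks on normalisation and on independence of the metric are extra but harmless.
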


A homologous construction using linear simplices rather than geodesic simplices appears in \cite[Section 1.2]{Igusa}. A representative of $\ol{c}_{n,2n-1}$ can be obtained using Chern--Weil theory, leading to the formulas in \cite{Hamida} or \cite[Section 1.2]{Igusa}. We now prove that the canonical maps for $\BGLb(F)_\bb{Q}$ are injective in bidegrees $(n,2n-1)$:

\begin{proof}[Proof of \cref{prop:injectivity-number-fields}] The case $n=1$ is trivial as the canonical map $\smash{H_{1,1}(\BGLb(F)_\bb{Q}/(\sigma))} \to \smash{H_{1,1}^{E_\infty}(\BGLb(F)_\bb{Q}/(\sigma))}$ is an isomorphism for any field $F$. Thus we may assume $n \ge 2$. 

\medskip

Using the universal coefficients theorem we may replace $\bb{Q}$-coefficients by $\bb{R}$-coefficients. As a consequence of \cref{thm:integration-assocative-algebra}, for any embedding $F \hookrightarrow \bb{C}$ the composition 
\[\BGLb(F)_\bb{R} \lra \BGLb(\bb{C})_\bb{R} \lra \bf{C}_*(\fr{gl},\fr{u})\]
is a map of augmented $E_1^\rm{u}$-algebras, so we may consider the commutative diagram
    \[\begin{tikzcd}[column sep=small, row sep=small] H_{n,2n-1}(\BGLb(F)_\bb{R}) \arrow{rr} \arrow{dd} \arrow[two heads]{rd} &[-15pt] &[-15pt] H_{2n-1}(\bf{C}_*(\fr{gl},\fr{u})) \arrow{dd} \arrow{rd} & \\
    & H_{n,2n-1}(\BGLb(F)_\bb{R}/(\sigma)) \arrow{dd} & & H_{2n-1}(\bf{C}_*(\fr{gl},\fr{u})/(\sigma)) \arrow{dd} \\
    H^{E_\infty}_{n,2n-1}(\BGLb(F)_\bb{R}) \arrow[hook]{dd} \arrow{rd}{\cong} & & H^{E_\infty}_{2n-1}(\bf{C}_*(\fr{gl},\fr{u})) \arrow[hook]{dd} \arrow{rd}{\cong} & \\
    & H^{E_\infty}_{n,2n-1}(\BGLb(F)_\bb{R}/(\sigma)) \arrow[hook]{dd} & & H^{E_\infty}_{2n-1}(\bf{C}_*(\fr{gl},\fr{u})/(\sigma)) \arrow[hook]{dd} \\
    H^{E_1}_{n,2n-1}(\BGLb(F)_\bb{R})  \arrow{rd} \arrow{rr} & & H^{E_1}_{2n-1}(\bf{C}_*(\fr{gl},\fr{u})) \arrow{rd} & \\
    & H^{E_1}_{n,2n-1}(\BGLb(F)_\bb{R}/(\sigma)) & & H^{E_1}_{2n-1}(\bf{C}_*(\fr{gl},\fr{u})/(\sigma)) \end{tikzcd}\]
where the top-left diagonal map is surjective as a consequence of \cref{lem:borel-yang-prim-rel-iso}, the bottom vertical maps from $E_\infty$- to $E_1$-homology are the splittings from \cite[Proposition C.16]{KRS1} so in particular split injective, and middle diagonal map are isomorphisms since $n \geq 2$.

We start with a nonzero element $\ol{x} \in H_{n,2n-1}(\BGLb(F)_\R/(\sigma))$, and a lift $x \in H_{n,2n-1}(\BGLb(F)_\bb{R})$. By the commutativity of the left side, it suffices to prove $x$ maps to a nonzero element of $H^{E_1}_{n,2n-1}(\BGLb(F)_\bb{R})$. We do so by moving to the right side. By \cref{lem:borel-yang-regulators} there exists an embedding $\nu \colon F \hookrightarrow \C$ such that
\[H_{2n-1}(\GL_n(F);\bb{R}) \lra H_{2n-1}(\GL_n(\bb{C});\bb{R}) \xrightarrow{- \cap \rm{Bo}_{n,2n-1}} \bb{R}\]
has nonzero value on $x$. By \cref{lem integration map and borel regulator} the image $y$ of $x$ in $H_{n,2n-1}(\bf{C}_*(\fr{gl},\fr{u}))$ is nonzero and in fact has a component that is a nonzero multiple of $\ol{\sf{c}}_{n,2n-1}$, so its image $\ol{y}$ in $H_{n,2n-1}(\bf{C}_*(\fr{gl},\fr{u})/(\sigma))$ is also nonzero by \cref{lem:transferring-vanishing-line-rel}. By commutativity of the back square, it suffices to prove $y$ maps to a nonzero element of $\smash{H^{E_1}_{n,2n-1}(\bf{C}_*(\fr{gl},\fr{u}))}$ and this will follow when we prove that $\ol{y}$ maps to a nonzero element of $\smash{H^{E_\infty}_{n,2n-1}(\bf{C}_*(\fr{gl},\fr{u})/(\sigma))}$.

There is a map of $E_\infty$-algebras to a trivial $E_\infty$-algebra
\[\bf{C}_*(\fr{gl},\fr{u}) \lra \bf{T} \coloneq \triv_{E_\infty}(\bb{R}\{\ol{\sf{c}}_{1,1}, \ol{\sf{c}}_{2,3}, \ol{\sf{c}}_{3,5},\ldots\})\]
since $\bf{C}_*(\fr{gl},\fr{u})$ is isomorphic to its own homology, and there is a map in homology by first mapping to $H_{*,*}(\bf{C}_*(\fr{gl},\fr{u})/\sigma)$ and then using \cref{lem:relative-lie-algebra-mod-sigma}. Using the analogue of \cref{lem:transferring-vanishing-line-rel} for $E_1$-homology, see \cref{rem E1 homology version}, we deduce that the map $\bf{C}_*(\fr{gl},\fr{u})/\sigma \to \bf{T}$ is an isomorphism in homology and $E_1$-homology in bidegrees $(n,d)$ with $d \le 2n-1$. Thus, to prove the statement about $\ol{y}$ we may verify the injectivity result for $\bf{T}$ where it is trivial since its $E_\infty$-homology is the cofree Lie coalgebra on the image of the classes $\ol{\sf{c}}_{n,2n-1}$ by Koszul duality. 
\end{proof}

\section{A cocycle for the Goncharov class} 

For a field $F$, the canonical map from homology to $E_\infty$-homology yields a map
\[H_*(\GL_n(F)) \lra H_{*-2n+1}(\GL_n(F);\scr{G}_n(F)).\]
For $*=2n-1$, this can be interpreted as an element $\rm{g}_n  \in H^{2n-1}(\GL_n(F);\scr{G}_n(F))$ by the universal coefficients theorem, which we call the \emph{Goncharov class}; the canonical map is given by the cap product against it. The goal of this section is to give an explicit cocycle representative, with notation to be defined later:

\begin{theorem}\label{thm:goncharov-class} The Goncharov class $\rm{g}_n \in H^{2n-1}(\GL_n(F);\scr{G}_n(F))$ is represented by the inhomogeneous cocycle
\begin{align*}
    \sf{g}_n \colon \GL_n(F)^{2n-1} &\lra \scr{G}_n(F) \\
    [g_1|\compactcdots|g_{2n-1}] &\longmapsto (-1)^{n-1} D_{\ell_{n-1},\ell_n}\big(\pi([\ell_0,\compactldots,\ell_{n-1}] \otimes [\ell_n,\compactldots,\ell_{2n-1}])\big)
\end{align*}
where $\ell_i \coloneqq g_1 \compactcdots g_i \cdot \ell$ for $\ell \subseteq F^n$ a fixed choice of line.
\end{theorem}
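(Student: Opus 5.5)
The plan is to factor the canonical map from homology to $E_\infty$-homology in rank $n$ through $E_1$- and $E_2$-homology, and to realise each factor as a cap product with an explicit cocycle. Following \cite[Section 4.4.1]{KRS1}, $H^{E_1}_{n,*}(\BGLb(F)_\bb{Q})$ is identified with $H_{*-(n-1)}(\GL_n(F);\St_n(F))$. Under this identification the map from homology is the cap product with the Steinberg class $\rm{st}_n \in H^{n-1}(\GL_n(F);\St_n(F))$. This class is the Yoneda class of the augmented chains of the $(n-2)$-spherical Tits building $T(F^n)$.

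The first step is to write an inhomogeneous representative $\sf{st}_n$ by unwinding the Solomon--Tits contraction, namely coning off from the fixed line $\ell$. This gives $[g_1|\compactcdots|g_{n-1}] \mapsto [\ell_0,\compactldots,\ell_{n-1}]$, the apartment class spanned by the flag of partial spans of $\ell_0,\ell_1=g_1\ell,\ldots$. This is the standard explicit cocycle, and it vanishes on degenerate configurations. The second step passes to $E_\infty$-homology. Following \cite{CharltonRadchenkoRudenko}, the map $H_{n,*} \to H^{E_\infty}_{n,*} \cong H_{*-2n+2}(\GL_n(F);\StL_n(F))$ is the cap product with $\pi(\rm{st}_n \cup \rm{st}_n)$. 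With the Alexander--Whitney formula for the cup product of inhomogeneous cochains, the representative is
\[[g_1|\compactcdots|g_{2n-2}] \mapsto \pi\big([\ell_0,\compactldots,\ell_{n-1}] \otimes g_1\cdots g_{n-1}[\ell,\ldots]\big),\]
with the second apartment based at $\ell_{n-1}$. One has to check that this matches $[\ell_{n-1},\ldots,\ell_{2n-2}]$ after translation.

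The third step identifies $H_1(\GL_n(F);\StL_n(F))$ with $\scr{G}_n(F)$. Since $\scr{G}_n(F)=H^{E_\infty}_{n,2n-1}$ and $E_\infty$-homology vanishes below the line, this is a coinvariants-type statement in degree one. From \cite{KRS1}, the identification is given by the connecting map for a short exact sequence of $\GL_n(F)$-modules $0\to \scr{G}_n(F) \to M \to \StL_n(F)\to 0$. Here $M$ is built from the dual decomposition operators $D_x$ of \cref{sec:dual-decomposition}, and $D_{x,y}=D_x-D_y$ gives the resulting $1$-cocycle $\GL_n(F)\to \mathrm{Hom}(\StL_n(F),\scr{G}_n(F))$. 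Capping a $1$-chain $\sum g\otimes s$ with it yields $D_{\ell,g\ell}(s)$, up to orientation conventions.

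The fourth step cups everything together, again by Alexander--Whitney. The $(2n-1)$-cochain evaluates the Steinberg part on the first $2n-2$ arguments and the $D$-cocycle on the remaining one. Reindexing so that the middle arrow is $g_n$ produces $D_{\ell_{n-1},\ell_n}$ applied to $\pi([\ell_0,\ldots,\ell_{n-1}]\otimes[\ell_n,\ldots,\ell_{2n-1}])$. Two points must be checked here. First, the placement of the $D$-factor moves from the end of the chain to the middle, which uses $\GL_n$-equivariance of $\pi$ and of $D$; this move and the Koszul signs of the cup products account for $(-1)^{n-1}$. Second, choosing a different base line $\ell$ changes the cocycle only by a coboundary.

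I expect the main obstacle to be step three: pinning down precisely that the identification $H_1(\GL_n;\StL_n)\cong\scr{G}_n$ from \cite{KRS1} is the connecting map of the $D$-extension, with the correct sign and normalisation, and checking that $D_{x,y}$ is a cocycle modulo the relations defining $\StL_n$. I would first verify the formula for $n=1,2$ against the known dilogarithm cocycles as a sanity check before fixing the general signs.
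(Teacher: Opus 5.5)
Your plan follows the paper's proof. The paper likewise writes the canonical map as the cap product with $\pi(\mathrm{st}_n\cup\mathrm{st}_n)$ (via \cite{CharltonRadchenkoRudenko}), followed by the connecting map of $0\to\FLR(V)\to\FL(V)\to\StL(V)\to 0$; your extension $M$ is the pushout of this sequence along the coinvariants map $\FLR(V)\to\scr{G}_n(F)$. It realises the connecting map by the $1$-cocycle $(\ell_0,\ell_1)\mapsto D_{\ell_0,\ell_1}$ and uses graded commutativity of the cup product to move this degree-one factor past one Steinberg factor, which is where $(-1)^{n-1}$ comes from. The remaining differences are bookkeeping. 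First, the paper computes cup products on the resolution by tuples of lines $\bb{Q}[L_\bullet]$, pulled back along $[g_1|\cdots|g_p]\mapsto(\ell,g_1\ell,\ldots,g_1\cdots g_p\ell)$. Second, its Steinberg cocycle carries a sign $(-1)^{n-1}$, which cancels in the square. Third, with its conventions the boundary map is $[g]\otimes a\mapsto D_{v,gv}(g\cdot a)$ rather than $D_{\ell,g\ell}(s)$; the translate $g\cdot a$ is what makes the lines $\ell_i$ line up. This also places the $D$-factor on the first bar entry rather than the last, which gives the same sign when it is moved to the middle.
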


In the next section we identify, for $F$ a number field with embedding into $\bb{C}$, its composition with Goncharov's real period map \cite{Gon19} as a nonzero multiple of the Borel class.

\subsection{Group cohomology and extensions} Fix a discrete group $G$. We work over $\bb{Q}$ (though any field will do) and drop $\bb{Q}$ from the notation: e.g.~a \emph{$G$-module} is a left $\ds{Q}[G]$-module. Let $\scr{D} \coloneq \scr{D}(\ds{Q}[G])$ be the derived category of $G$-modules. Group cohomology with coefficients a $G$-module $M$ can be defined in terms of Ext-functors
\[H^d(G;M) = \rm{Ext}^d_{\bb{Q}[G]}(\bb{Q},M) \coloneq \pi_0\,\rm{Hom}_\scr{D}(\bb{Q},M[d]).\]
Yoneda identified this in terms of extensions \cite[Theorem 3.5]{Yoneda-homology}: an extension of $G$-modules of $M$ by $N$ of degree $d$
\[0 \to N \to N_{d-1} \to \cdots \to N_0 \to M \to 0\]
can be viewed as a quasi-isomorphism of chain complexes $(0 \to N \to N_{d-1} \to \cdots \to N_0) \xrightarrow{\simeq} M$ and hence gives an element in $\rm{Ext}^d_{\bb{Q}[G]}(M,N) = \pi_0\,\rm{Hom}_\scr{D}(M,N[d])$ via the zig-zag
\[\begin{tikzcd}
0 \rar & 0 \rar & 0 \rar & \cdots \rar & M \\[-11pt]
    0 \rar \dar \uar & N \rar \dar[swap]{\id\,} \uar & N_{d-1} \uar \rar \dar & \cdots \rar & N_0 \dar \uar{\simeq\,} \\[-11pt]
    0 \rar & N \rar & 0 \rar & \cdots \rar & 0. 
\end{tikzcd}\]
This construction induces for $d>0$ an isomorphism between $\rm{Ext}^d_{\bb{Q}[G]}(M,N)$ and equivalence classes of degree $d$ extensions of $M$ by $N$, where two such extensions are equivalent if there is a zig-zag of maps of extensions between them so that the maps are the identity on $M$ and $N$. We will explain how to think of cup and cap products in this language, using bar constructions. These are well-known facts in homological algebra: we state them precisely to fix conventions, but forego detailed proofs.

\subsubsection{Composition products} The \emph{composition product} on $\rm{Ext}$-groups is the composition map (tensor products are over $\bb{Q}$ unless mentioned otherwise)
\[ \circ \colon \pi_0\,\rm{Hom}_\scr{D}(B,C[t]) \otimes  \pi_0\,\rm{Hom}_\scr{D}(A,B[s]) \lra \pi_0\,\rm{Hom}_\scr{D}(A,C[s+t])\]
sending $g \otimes f \mapsto g[s] \circ f$. Thus, under the correspondence between $\rm{Ext}$-groups and extensions one computes the composition product as follows: given extensions 
\[0 \to C \to C_{s-1} \to \compactcdots \to C_0 \to B \to 0, \qquad 0 \to B \to B_{t-1} \to \compactcdots \to B_0 \to A \to 0\]
their composition product is represented by the extension 
 \[0 \to C \to C_{s-1} \to \compactcdots \to C_0 \to B_{t-1} \to \compactcdots \to B_0 \to A \to 0.\]

\subsubsection{Cup products}
The category of $G$-modules and its associated derived category $\scr{D}$, admit a symmetric monoidal structure $\otimes$ defined as follows: if $M, N$ are $G$-modules then $M \otimes N$ is a $\bb{Q}[G] \otimes \bb{Q}[G]$-module, and we can view it as a $\bb{Q}[G]$-module via the coproduct $\bb{Q}[G] \to \bb{Q}[G] \otimes_\bb{Q} \bb{Q}[G]$ induced by $g \mapsto g \otimes g$ for $g \in G$. This symmetric monoidal structure induces in turn a \emph{cup product} 
\[\cup \colon \pi_0\,\rm{Hom}_\scr{D}(A,B[s]) \otimes \pi_0\,\rm{Hom}_\scr{D}(A',B'[t]) \lra \pi_0\,\rm{Hom}_\scr{D}(A \otimes A',(B \otimes B')[s+t])\]
by applying $\otimes$ to morphisms and using the canonical equivalence $B[s] \otimes B'[t] \simeq (B \otimes B')[s+t]$. We now explain how to compute this at the level of extensions. 

\begin{lemma} \label{lem:ext-cup-products}
For a pair of extensions
\[0 \to B \to B_{s-1} \to \compactcdots \to B_0 \to A \to 0, \qquad 0 \to B' \to B'_{t-1} \to \compactcdots \to B'_0 \to A' \to 0,\]
the following two extensions both compute their cup product:
\begin{enumerate}[(i)]
    \item \label{enum:ext-cup-products-i} The extension 
\[0 \to (B \otimes B') \to (B \otimes B')_{s+t-1} \to \compactcdots \to (B \otimes B')_0 \to A \otimes A' \to 0,\]
where for $0 \le d \le s+t-1$ we let $(B \otimes B')_d \coloneq \bigoplus_{i+j=d} B_i \otimes B'_j$, with the convention that $B_s \coloneq B$ and $B'_t\coloneq B'$. The maps are given by the maps of the given two extensions with usual Koszul signs for $B_i, B'_i$ in degree $i$ for each $i \ge 0$. 
\item \label{enum:ext-cup-products-ii} The extension corresponding to the composition product of 
 \begin{align*}&0 \to B \otimes B' \to B_{s-1} \otimes B' \to \compactcdots \to B_0 \otimes B' \to A \otimes B' \to 0 \qquad \text{and} \\
 &0 \to A \otimes B' \to A \otimes B'_{t-1} \to \compactcdots \to A \otimes B'_0 \to A \otimes A' \to 0, \end{align*}
 that is, the extension given by
  \[0 \to  B \otimes B' \to B_{s-1} \otimes B' \to \compactcdots \to B_0 \otimes B'\to A \otimes B'_{t-1} \to \compactcdots \to A \otimes B'_0 \to A \otimes A' \to 0.\]
\end{enumerate}
\end{lemma}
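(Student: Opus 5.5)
The plan is to prove both descriptions by showing that each represents the derived tensor product $f \otimes f'$ of the two Yoneda morphisms $f \colon A \to B[s]$ and $f' \colon A' \to B'[t]$. Write $P_\bullet = (B \to B_{s-1} \to \cdots \to B_0)$, placed in homological degrees $s,\ldots,0$, and $P'_\bullet$ similarly. These complexes come with the zig-zags $A \xleftarrow{\simeq} P_\bullet \to B[s]$ and $A' \xleftarrow{\simeq} P'_\bullet \to B'[t]$ that define the Yoneda classes.

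For \eqref{enum:ext-cup-products-i}, we tensor the two zig-zags over $\bb{Q}$ with the diagonal $G$-action. Since we work over a field, $-\otimes-$ on chain complexes of $G$-modules preserves quasi-isomorphisms, because the underlying $\bb{Q}$-complexes are flat. Hence $P_\bullet \otimes P'_\bullet \to A \otimes A'$ is again a quasi-isomorphism. The total complex $P_\bullet \otimes P'_\bullet$ is exactly the complex $(B \otimes B')_\bullet$ in the statement, with Koszul signs. The map $P_\bullet \otimes P'_\bullet \to B[s] \otimes B'[t] \cong (B\otimes B')[s+t]$ is the projection onto the top-degree summand $B_s \otimes B'_t$. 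The resulting zig-zag is the tensor product of the two original zig-zags, so it represents $f \otimes f'$, which is the cup product by definition. The remaining check is that this zig-zag is the Yoneda zig-zag of the displayed exact sequence. That holds by construction, since the extension is read off from the complex $(B\otimes B')_\bullet$ augmented to $A \otimes A'$. The one sign convention to fix here is the identification $B[s] \otimes B'[t] \simeq (B \otimes B')[s+t]$, which should be chosen compatibly with the Koszul sign rule.

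For \eqref{enum:ext-cup-products-ii}, factor $f \otimes f' = (f \otimes \id_{B'[t]}) \circ (\id_A \otimes f')$ in $\scr{D}$, up to the shift identifications. This is the standard interchange law for a symmetric monoidal functor on morphisms. The morphism $\id_A \otimes f'$ is represented by tensoring the $P'$-zig-zag with $A$, which gives the exact sequence $0 \to A \otimes B' \to A\otimes B'_{t-1} \to \cdots \to A \otimes A' \to 0$; this sequence is exact because $A \otimes -$ is exact over $\bb{Q}$. Likewise, $f \otimes \id_{B'}$ is represented by $0 \to B\otimes B' \to \cdots \to A \otimes B' \to 0$. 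The description of the composition product in terms of splicing extensions, recalled just above, then gives the displayed spliced extension.

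The main obstacle is bookkeeping of signs and shifts. The interchange law introduces a sign $(-1)^{st}$ when one commutes a degree-$t$ shift past a degree-$s$ morphism, and the ordering in \eqref{enum:ext-cup-products-ii}, with $f$ applied after $f'$, has to be matched with the convention used in the cup product so that no stray sign appears. As a sanity check, one can also compare \eqref{enum:ext-cup-products-i} and \eqref{enum:ext-cup-products-ii} directly. There is an explicit map of extensions from the spliced complex of \eqref{enum:ext-cup-products-ii} to $(B\otimes B')_\bullet$: it includes $B_i \otimes B'$ and $A \otimes B'_j$ after lifting $A$ to $B_0$ only up to homotopy. Because of that lifting it is more reliable to argue in $\scr{D}$ as above, rather than to construct this map of extensions by hand.
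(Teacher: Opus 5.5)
The paper does not prove this lemma; it records it as a standard fact in order to fix conventions. So your argument has to stand on its own.

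\textbf{Part (i).} Your argument is correct. Tensoring the two Yoneda roofs is the definition of the cup product. The map $P_\bullet\otimes P'_\bullet\to A\otimes A'$ is a quasi-isomorphism because everything is flat over $\bb{Q}$. The total complex is the displayed one with Koszul signs, and the right leg is the projection onto $B\otimes B'$ in degree $s+t$. The only convention involved is the identification $B[s]\otimes B'[t]\simeq (B\otimes B')[s+t]$, which for modules in degree $0$ is taken to be the identity on $B\otimes B'$.

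\textbf{Part (ii).} There is a gap exactly at what you call the main obstacle. You say the interchange law produces a sign $(-1)^{st}$ that ``has to be matched'' with the cup-product convention, but you never do the matching. The lemma asserts an equality of classes, not one up to sign. On your route, settling it means reconciling three conventions:
\begin{itemize}
\item the shift identification used in the cup product;
\item the isomorphism $X\otimes Y[t]\cong (X\otimes Y)[t]$, needed to read $f\otimes \id_{B'[t]}$ as $(f\otimes\id_{B'})[t]$;
\item the splicing convention for composition products.
\end{itemize}

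Your reason for abandoning the direct comparison is also mistaken. The comparison map should go from the complex of (i) to the spliced complex of (ii), and in that direction no lift of $A$ to $B_0$ is needed. Write $p\colon B_0\to A$ and $p'\colon B'_0\to A'$ for the augmentations and $\iota'\colon B'\to B'_{t-1}$ for the first map of the second extension. Define $\phi$ as follows:
\begin{itemize}
\item on each $B_i\otimes B'$ with $0\le i\le s$ (where $B_s=B$), $\phi$ is the identity onto the same summand;
\item on $B_0\otimes B'_j$ with $0\le j\le t-1$, $\phi$ is $p\otimes\id\colon B_0\otimes B'_j\to A\otimes B'_j$;
\item on $B_i\otimes B'_j$ with $i\ge 1$ and $j\le t-1$, $\phi$ is zero.
\end{itemize}

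To see that $\phi$ is a chain map, look at each term of $d(x\otimes y)=dx\otimes y+(-1)^i x\otimes dy$.
\begin{itemize}
\item The Koszul sign only matters for $i\ge 1$. There $x\otimes dy$ lies in some $B_i\otimes B'_{j-1}$ with $j-1\le t-1$, which $\phi$ kills.
\item For $i=0$, the term $x\otimes dy$ goes to $p(x)\otimes dy$. This is exactly the spliced differential; for $y\in B'$ it is $p\otimes\iota'$, the composite through $A\otimes B'$.
\item The term $dx\otimes y$ is handled by $\phi$ being the identity on the $B_\bullet\otimes B'$ part. The one exception is $B_1\otimes B'_j\to B_0\otimes B'_j\to A\otimes B'_j$ for $j\le t-1$, which vanishes because $p\circ d=0$.
\end{itemize}
Moreover, $\phi$ is the identity on $B\otimes B'$, and $(\id\otimes p')\circ(p\otimes \id)=p\otimes p'$ shows it respects the augmentations to $A\otimes A'$.

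A map of extensions that is the identity on both ends is a Yoneda equivalence. Hence (ii) follows from (i) with no sign at all, and the interchange-law bookkeeping becomes unnecessary. It also shows that, once conventions are chosen consistently, the $(-1)^{st}$ you anticipated must cancel.
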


\subsubsection{Cup products in group cohomology} Taking $A=A'=\bb{Q}$, $B=M$, and $B'=N$, we obtain the cup product in group cohomology $\cup \colon H^s(G;M) \otimes H^t(G;N) \to H^{s+t}(G;M \otimes N)$. To give an explicit natural chain-level description, we use the bar resolution $\epsilon \colon B_\bullet(\bb{Q}[G],\bb{Q}[G],\bb{Q}) \xrightarrow{\simeq} \bb{Q}$ of $G$-modules given by
\begin{equation}\label{eqn:non-homogeneous-g-mod} B_p(\bb{Q}[G],\bb{Q}[G],\bb{Q}) \coloneq \free_{\bb{Q}[G]}(\bb{Q}[G^p]) \cong \bb{Q}[G^{p+1}].\end{equation} 
This is the free $G$-module on $\bb{Q}[G^p]$, so has a $\bb{Q}[G]$-basis denoted as $[g_1|\compactcdots|g_p] \in G^p$, and its differential $d_p \colon B_p(\bb{Q}[G],\bb{Q}[G],\bb{Q}) \to B_{p-1}(\bb{Q}[G],\bb{Q}[G],\bb{Q})$ is given on this basis element by
    \[[g_1|\compactcdots|g_p] \mapsto g_1{\star}[g_2|\compactcdots|g_p] + \sum_{i=1}^{p-1} (-1)^i [g_1|\compactcdots|g_i g_{i+1}|\compactcdots|g_p] + (-1)^p [g_1|\compactcdots|g_{p-1}],
    \]
where $\star$ denotes the multiplication by elements of $G$ with respect to the free $G$-module structure. For $M$ a $G$-module, we define the \emph{inhomogeneous cochains with coefficients in $M$} as
\[C^\bullet(G;M) \coloneq \rm{Hom}_G(B_\bullet(\bb{Q}[G],\bb{Q}[G],\bb{Q}),M) \cong \rm{Hom}_\bb{Q}(\bb{Q}[G^\bullet],M)\]
with the differential induced by that of bar resolution. A cocycle $\phi \in C^s(G;M)$ defines $[\phi] \in \pi_0\,\rm{Hom}_\scr{D}(\bb{Q},M[s])$ via the zig-zag
\[\bb{Q} \underset{\simeq}{\overset{\epsilon}\longleftarrow} B_\bullet(\bb{Q}[G],\bb{Q}[G],\bb{Q}) \overset{\phi}{\lra} M[s]\]
where the cocycle equation is used to say that the right map is a chain map.

\begin{lemma} \label{lem chain description of cup product}
Given inhomogeneous cocycles $\phi \in C^s(G;M)$ and $\psi \in C^t(G;N)$, the inhomogeneous cocycle
\[[g_1|\compactldots|g_{s+t}] \longmapsto \phi(g_1,\compactldots,g_s) \otimes g_1 \compactcdots g_s {\star}\psi(g_{s+1},\compactldots,g_{s+t})\]
represents the cup product $\phi \cup \psi \in C^{s+t}(G;M \otimes N)$.
\end{lemma}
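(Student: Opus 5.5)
The plan is to compute the cup product at the level of the bar resolution via the description in \cref{lem:ext-cup-products}. Concretely, fix cocycles $\phi \in C^s(G;M)$ and $\psi \in C^t(G;N)$. They correspond to the zig-zags $\bb{Q} \xleftarrow{\epsilon} B_\bullet \xrightarrow{\phi} M[s]$ and $\bb{Q} \xleftarrow{\epsilon} B_\bullet \xrightarrow{\psi} N[t]$, where $B_\bullet = B_\bullet(\bb{Q}[G],\bb{Q}[G],\bb{Q})$. The derived tensor product of these zig-zags gives
\[\bb{Q} \simeq \bb{Q}\otimes\bb{Q} \xleftarrow{\epsilon\otimes\epsilon} B_\bullet \otimes B_\bullet \xrightarrow{\phi\otimes\psi} M[s]\otimes N[t] \simeq (M\otimes N)[s+t],\]
with the diagonal $G$-action on $B_\bullet\otimes B_\bullet$. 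Since $B_\bullet \otimes B_\bullet$ is a bounded-below complex of free $G$-modules (tensor with the diagonal action of a free module is free), it is a projective resolution of $\bb{Q}$. It therefore suffices to precompose with any $G$-equivariant chain map $\Delta \colon B_\bullet \to B_\bullet \otimes B_\bullet$ lifting the identity of $\bb{Q}$, that is, a diagonal approximation satisfying $(\epsilon\otimes\epsilon)\circ\Delta = \epsilon$. Any two such lifts are chain homotopic, so the resulting class $(\phi\otimes\psi)\circ\Delta$ represents $\phi\cup\psi$.

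Next I would take for $\Delta$ the Alexander--Whitney diagonal. In homogeneous notation it is
\[(g_0,\ldots,g_{s+t}) \mapsto \sum_{p+q=s+t}(g_0,\ldots,g_p)\otimes(g_p,\ldots,g_{p+q}).\]
Translated to the inhomogeneous basis via $[g_1|\cdots|g_p] \leftrightarrow (1,g_1,g_1g_2,\ldots,g_1\cdots g_p)$, this reads
\[[g_1|\cdots|g_{s+t}] \mapsto \sum_{p+q=s+t}[g_1|\cdots|g_p]\otimes (g_1\cdots g_p){\star}[g_{p+1}|\cdots|g_{p+q}].\]
The needed facts are routine: this map is $G$-equivariant and a chain map, which is the standard front-face/back-face check, and it is compatible with the augmentation, which holds since in degree $0$ it sends $[\,]\mapsto[\,]\otimes[\,]$.

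Finally I would evaluate $\phi\otimes\psi$, which is nonzero only on the $(s,t)$ summand. Using $G$-equivariance of $\psi$ as a map $B_t\to N$, this gives $\phi(g_1,\ldots,g_s)\otimes g_1\cdots g_s{\star}\psi(g_{s+1},\ldots,g_{s+t})$, which is the claimed formula.

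The main obstacle is signs and conventions. I have to check that the identification $M[s]\otimes N[t]\simeq (M\otimes N)[s+t]$ and the Koszul sign in $\phi\otimes\psi$ introduce no sign, and that this agrees with the extension-level description of \cref{lem:ext-cup-products}\eqref{enum:ext-cup-products-i}. To pin this down I would first test the case $s=t=1$ against the explicit extensions. If a sign $(-1)^{st}$ appears, I would absorb it into the chosen convention for the shift isomorphism, as \cref{lem:ext-cup-products} fixes conventions only up to that choice.
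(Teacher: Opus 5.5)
Your proposal is correct and matches the paper's proof. Like you, the paper represents $[\phi]\otimes[\psi]$ by $(\phi\otimes\psi)\circ\Delta$ for a diagonal $\Delta\colon B_\bullet\to B_\bullet\otimes B_\bullet$ lifting $\bb{Q}\simeq\bb{Q}\otimes\bb{Q}$, uses exactly your inhomogeneous Alexander--Whitney formula for $\Delta$ (citing Yoneda and Brown V.(1.3) instead of checking the chain-map property), and attributes the possible sign discrepancy to a convention choice, as you do.
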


\begin{proof}
By definition, the cup product is given by $[\phi] \otimes [\psi] \in \rm{Hom}_\scr{D}(\bb{Q} \otimes \bb{Q}, (M \otimes_\bb{Q} N)[s+t])$. To represent it by an inhomogeneous cocycle we need an equivalence 
\[\Delta \colon B_\bullet(\bb{Q}[G],\bb{Q}[G],\bb{Q}) \smash{\xrightarrow{\simeq}} B_\bullet(\bb{Q}[G],\bb{Q}[G],\bb{Q}) \otimes B_\bullet(\bb{Q}[G],\bb{Q}[G],\bb{Q})\] 
which lifts $\bb{Q} \simeq \bb{Q} \otimes \bb{Q}$, and take the composition $(\phi \otimes \psi) \circ \Delta \colon B_\bullet(\bb{Q}[G],\bb{Q}[G],\bb{Q}) \to M \otimes N$. By \cite[page 226]{Yoneda-homology} or \cite[V.(1.3)]{Brown} (the sign difference is due to an ordering convention for the composition product), a choice of $\Delta$ is given by $\Delta_p[g_1|\compactldots|g_p] \coloneq \sum_{i=0}^{p} [g_1|\compactldots|g_i] \otimes g_1 \compactcdots g_i {\star}[g_{i+1}|\compactcdots|g_p]$, with the convention $[\;]=1 \in \bb{Q}=\bb{Q}[G^0]$. This yields the formula.
\end{proof}

\subsubsection{Cap products in group cohomology}
Dually, for a $G$-module $M$ we have \emph{group homology}
\[H_d(G;M) \coloneq\rm{Tor}_d^\scr{D}(\bb{Q},M)= H_d(\bb{Q} \otimes_G M),\]
where we view $\bb{Q} \otimes_G(-) \coloneq \bb{Q} \otimes_{\bb{Q}[G]}(-)$ as a functor $\scr{D}=\scr{D}(\bb{Q}[G]) \to \scr{D}(\bb{Q})$. As discussed above, any extension $0 \to N \to N_{t-1} \to \cdots \to N_0 \to M \to 0$ of $G$-modules gives a (homotopy class of) map $M \to N[t]$ in $\scr{D}$ and induces, by applying the functor $H_{s+t}(\bb{Q} \otimes_G -)$, canonical maps $H_{s+t}(G;M) \to H_s(G;N)$. This construction is linear in the extension and induces the \emph{cap product} pairing
\[\cap \colon H_{s+t}(G;M) \otimes \rm{Ext}_G^t(M,N) \lra H_s(G;N).\]
The relevant case for us is the cap product $\cap \colon H_{s+t}(G;\bb{Q}) \otimes_\bb{Q} H^t(G;M) \xrightarrow{\cap} H_s(G;M)$ between group homology and group cohomology. Before giving an explicit chain-level description, recall that for a $G$-module $M$ we have \emph{inhomogeneous chains with coefficients in $M$} given by
\[C_p(G;M) \coloneq B_p(\bb{Q},\bb{Q}[G],\bb{Q}[G]) \otimes_G M \cong \bb{Q}[G^{\bullet}] \otimes M\]
with differential given on $[g_1|\compactcdots|g_p] \otimes m$ by
\[[g_2|\compactcdots|g_p] \otimes m +\sum_{i=1}^{p-1} (-1)^i[g_1| \compactcdots| g_i g_{i+1}| \compactcdots | g_p] \otimes m + (-1)^p [g_1| \compactcdots |g_{p-1}] \otimes g_p\cdot m.\]
When $M=\bb{Q}$ we use the notation $[g_1|\compactcdots|g_p] \coloneq [g_1|\compactcdots|g_p] \otimes 1$. This construction can be generalised to $N \in \scr{D}$ by representing $N$ by a chain complex $N_\bullet$ and taking the totalisation of the double complex $(\bb{Q}[G^{p}] \otimes N_q$.

\begin{lemma} \label{lem chain description cap product}
Let $\phi \in C^t(G;M)$ be an inhomogeneous cocycle, then the chain map
\begin{align*}- \cap \phi \colon C_{s+t}(G;\bb{Q}) &\lra C_s(G;M) \\
[g_1|\compactcdots|g_{s+t}] &\longmapsto [g_1|\compactcdots|g_s] \otimes  \phi(g_{s+1},\compactldots,g_{s+t})\end{align*}
induces the cap product with the class represented by $\phi$ in cohomology.
\end{lemma}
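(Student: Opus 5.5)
The plan is to prove \cref{lem chain description cap product} by working directly with the definition of the cap product as the map on $H_*(\bb{Q}\otimes_G -)$ induced by the morphism $[\phi]\colon \bb{Q}\to M[t]$ in $\scr{D}$. As in the proof of \cref{lem chain description of cup product}, we represent $[\phi]$ by the zig-zag $\bb{Q} \xleftarrow{\simeq} B_\bullet(\bb{Q}[G],\bb{Q}[G],\bb{Q}) \xrightarrow{\phi} M[t]$. Applying $\bb{Q}\otimes_G(-)$, the left map becomes the identity of $C_*(G;\bb{Q})$ under the identification $\bb{Q}\otimes_G B_\bullet(\bb{Q}[G],\bb{Q}[G],\bb{Q}) \cong C_\bullet(G;\bb{Q})$. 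The right map only gives $C_\bullet(G;\bb{Q}) \to M[t]$ with $M$ viewed as a chain complex with trivial structure after coinvariants, so it does not yet see the coefficients. The correct approach is to compute $H_*(G;-)$ via a resolution that keeps track of the $M$-coefficients.

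Concretely, I would use the derived tensor product $B_\bullet(\bb{Q},\bb{Q}[G],\bb{Q}[G])\otimes_G(-)$, which computes $\bb{Q}\otimes^{\bb{L}}_G(-)$ on any complex and gives $C_*(G;N)$ as in the text. The morphism $[\phi]$ then induces
\[C_\bullet(G;\bb{Q}) \xleftarrow{\simeq} \mathrm{Tot}\big(B_\bullet(\bb{Q},\bb{Q}[G],\bb{Q}[G])\otimes_G B_\bullet(\bb{Q}[G],\bb{Q}[G],\bb{Q})\big) \xrightarrow{\id\otimes\phi} C_{\bullet-t}(G;M).\]
The key step is to produce an explicit chain map $C_\bullet(G;\bb{Q}) \to \mathrm{Tot}(\cdots)$ that splits the left quasi-isomorphism. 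For this I would use the diagonal $\Delta_p[g_1|\compactcdots|g_p]=\sum_i [g_1|\compactcdots|g_i]\otimes g_1\compactcdots g_i\star[g_{i+1}|\compactcdots|g_p]$ already used in \cref{lem chain description of cup product}. Tensoring it with $\bb{Q}$ over $G$ on the left factor gives $[g_1|\compactcdots|g_{s+t}]\mapsto \sum_i [g_1|\compactcdots|g_i]\otimes g_1\compactcdots g_i\star[g_{i+1}|\compactcdots|g_{s+t}]$. The left map is augmentation on the second factor, which kills all terms except $i=s+t$, so the composite is the identity and this is a section.

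Composing with $\id\otimes\phi$ keeps only the term with $p-i=t$, i.e.\ $i=s$. This gives $[g_1|\compactcdots|g_s]\otimes \phi(g_1\compactcdots g_s\star[g_{s+1}|\compactcdots|g_{s+t}])$. Here I must match the twist with the convention of $C_*(G;M)$, where the last bar entry acts on $M$, so the $g_1\compactcdots g_s$ factor is absorbed into the $\otimes_G$. The main obstacle is exactly this bookkeeping: checking that the identification $B_\bullet(\bb{Q},\bb{Q}[G],\bb{Q}[G])\otimes_G M\cong \bb{Q}[G^\bullet]\otimes M$ is the one for which the $G$-factor cancels, rather than leaving a residual $g_1\compactcdots g_s$ acting on $\phi(\dots)$. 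If it does not cancel, I would instead use the opposite-ordered diagonal $\sum_i [g_1|\compactcdots|g_i]\,g_{i+1}\compactcdots g_p\otimes[g_{i+1}|\compactcdots|g_p]$. Finally I would check signs (Koszul sign $(-1)^{st}$ from the shift $M[t]$), which only affect the class by a global convention, and verify directly that the stated formula is a chain map using the cocycle identity for $\phi$.
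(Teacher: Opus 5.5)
Your strategy is the paper's. You represent $[\phi]$ by the chain map $B_\bullet(\bb{Q}[G],\bb{Q}[G],\bb{Q})\to M[t]$ and apply $C_*(G;-)=B_\bullet(\bb{Q},\bb{Q}[G],\bb{Q}[G])\otimes_G(-)$ to get a zig-zag through $C_*(G;B_\bullet(\bb{Q}[G],\bb{Q}[G],\bb{Q}))$. You then split $C_*(G,\epsilon)$ by an explicit sum over split points.

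However, the one step that produces the formula fails as written: your section is not a chain map into the complex you named. In $C_*(G;N)$ the last bar entry acts on $N$. Already for $p=1$, your candidate $[g]\mapsto[\,]\otimes[g]+[g]\otimes g\star[\,]$ has total differential $2\,[\,]\otimes g\star[\,]-[\,]\otimes[\,]-[\,]\otimes g^2\star[\,]$. This is nonzero for $g\neq 1$, whereas $d[g]=0$ in $C_*(G;\bb{Q})$. Your fallback $\sum_i[g_1|\cdots|g_i]\otimes g_{i+1}\cdots g_p\star[g_{i+1}|\cdots|g_p]$ fails the same test. Relatedly, the factor $g_1\cdots g_s$ is not ``absorbed into $\otimes_G$''. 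Coinvariants have already been taken in $C_s(G;M)=\bb{Q}[G^s]\otimes M$, so $[g_1|\cdots|g_s]\otimes g_1\cdots g_s m$ and $[g_1|\cdots|g_s]\otimes m$ are different chains.

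In this model the correct homotopy inverse of $C_*(G,\epsilon)$ is the untwisted one used in the paper, $[g_1|\cdots|g_p]\mapsto\sum_{i=0}^p[g_1|\cdots|g_i]\otimes[g_{i+1}|\cdots|g_p]$. It is a chain map for the following reasons:
\begin{itemize}
\item The last horizontal face of the $(i+1)$-st summand, $(-1)^{i+1}[g_1|\cdots|g_i]\otimes g_{i+1}\star[g_{i+2}|\cdots|g_p]$, cancels the zeroth vertical face of the $i$-th summand.
\item The remaining faces reassemble into the section applied to $d[g_1|\cdots|g_p]$.
\end{itemize}
Applying $\epsilon$ keeps only $i=p$, so this is a section. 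Applying $C_\bullet(G;\phi)$ keeps only $i=s$, giving exactly $[g_1|\cdots|g_s]\otimes\phi(g_{s+1},\ldots,g_{s+t})$.

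Your twisted formula is what $\bb{Q}\otimes_G\Delta$ gives in the other standard model, $\bb{Q}\otimes_G(B_\bullet\otimes N)\cong\bb{Q}[G^\bullet]\otimes N$ via $h\star x\otimes n\mapsto x\otimes h^{-1}n$. There it is the first bar entry that acts, by $g_1^{-1}$, and your formula is a chain map. To use it, you must transport along the chain isomorphism $[g_1|\cdots|g_p]\otimes n\mapsto[g_1|\cdots|g_p]\otimes(g_1\cdots g_p)^{-1}n$ to the paper's model, and this is precisely what removes $g_1\cdots g_s$. This bookkeeping is the whole content of the lemma, so it has to be carried out rather than left as a two-way hedge.
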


\begin{proof}
    We view the cocycle $\phi$ as a chain map $\phi \colon B_\bullet(\bb{Q}[G],\bb{Q}[G],\bb{Q}) \to M[t]$. By definition of the cap product, the map in question is given by the zig-zag
    \[C_*(G;\bb{Q}) \xleftarrow[\simeq]{C_*(G,\epsilon)} C_*(G;B_\bullet(\bb{Q}[G],\bb{Q}[G],\bb{Q})) \xrightarrow{C_\bullet(G,\phi)} C_*(G;M[t])=C_{*-t}(G;M).\]
    To get the formula, note an explicit chain homotopy inverse to $C_*(G,\epsilon)$ is given by 
    \[[g_1|\compactcdots|g_n] \longmapsto \sum_{t=0}^n [g_1|\compactcdots|g_t] \otimes (g_{t+1},\compactldots,g_n) \in \bigoplus_{t=0}^n \bb{Q}[G^t] \otimes C_{n-t}.\qedhere\]
\end{proof}

\subsection{Steinberg cocycles} 

\subsubsection{The Steinberg cocycle} We next understand a cocycle representing the cohomology class $\rm{st}_n \in H^{n-1}(\GL_n(F),\St_n(F))$ associated with the extension 
\[ 0 \to \St_n(F) \to C_{n-2}(T_n) \to \cdots \to C_0(T_n) \to \bb{Q} \to 0\]
given by the augmented rational cellular chains on the semisimplicial Tits building $T_n \coloneq T(F^n)$, whose set of $p$-simplices is given by flags $0 \subsetneq V_0 \subsetneq\cdots \subsetneq V_p \subsetneq F^n$ and where the $i$th face maps forgets the $i$th subspace. We will refer to any representative of $\rm{st}_n$ as a \emph{Steinberg cocycle}.

When studying the group $\GL_n(F)$, there is another resolution of $\bb{Q}$ than the inhomogeneous bar resolution of the previous subsection. Let $L_\bullet$ be the full simplicial set on the set of lines, i.e.~its set of $p$-simplices $L_p \coloneq \bb{P}(F^n)^{p+1}$ is given by $(p+1)$-tuples of lines and the $i$th face map forgets the $i$th line. As $||L_\bullet|| \simeq *$, the associated augmented simplicial chain complex
\[\cdots \to \bb{Q}[L_1] \to \bb{Q}[L_0] \to \bb{Q} \to 0\]
is acyclic. This yields an equivalence $\bb{Q}[L_\bullet] \simeq \bb{Q}$ in $\scr{D} \coloneq \scr{D}(\bb{Q}[\GL_n(F)])$, where $\GL_n(F)$ acts in the evident manner on lines. It can be convenient to describe the cup product in terms of this resolution by lines rather than the inhomogeneous bar construction.

\begin{lemma} \label{lem cup product lines}
    Let $M, N$ be $\GL_n(F)$-modules and let $\phi \colon \ds{Q}[L_\bullet] \to M[s]$ and $\psi \colon \ds{Q}[L_\bullet] \to N[t]$ be chain maps. Then the cup product of the corresponding cohomology classes represented by $\phi$ and $\psi$ is represented by the chain map 
    \begin{align*}\ds{Q}[L_\bullet] &\lra  (M \otimes N)[s+t] \\
    (\ell_0,\dots,\ell_{s+t}) &\longmapsto \phi(\ell_0,\dots,\ell_s) \otimes \psi(\ell_s,\dots,\ell_{s+t}).\end{align*}
\end{lemma}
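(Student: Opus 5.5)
The plan is to mimic the proof of \cref{lem chain description of cup product}, replacing the inhomogeneous bar resolution by the resolution $\bb{Q}[L_\bullet] \xrightarrow{\simeq} \bb{Q}$ by tuples of lines. By definition, the cup product of the classes $[\phi] \in \pi_0\,\rm{Hom}_\scr{D}(\bb{Q},M[s])$ and $[\psi] \in \pi_0\,\rm{Hom}_\scr{D}(\bb{Q},N[t])$ is $[\phi]\otimes[\psi]$ composed with the equivalence $\bb{Q}\simeq \bb{Q}\otimes\bb{Q}$. To represent it on $\bb{Q}[L_\bullet]$, it therefore suffices to produce a $\GL_n(F)$-equivariant chain map
\[\Delta \colon \bb{Q}[L_\bullet] \lra \bb{Q}[L_\bullet] \otimes \bb{Q}[L_\bullet]\]
lifting the identification $\bb{Q} \simeq \bb{Q}\otimes\bb{Q}$ under the augmentations. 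The cup product is then represented by $(\phi\otimes\psi)\circ\Delta$.

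For $\Delta$ I would take the Alexander--Whitney diagonal of the simplicial set $L_\bullet$:
\[\Delta(\ell_0,\compactldots,\ell_p) = \sum_{i=0}^p (\ell_0,\compactldots,\ell_i)\otimes(\ell_i,\compactldots,\ell_p),\]
that is, front face tensor back face. Three checks are needed, in this order. First, $\Delta$ is $\GL_n(F)$-equivariant for the diagonal action on the target, since $g$ acts on tuples of lines entrywise. Second, $\Delta$ is a chain map; this is the standard Alexander--Whitney computation for simplicial sets, with the usual Koszul signs absorbed into the total complex sign conventions. Third, $\Delta$ is compatible with augmentations: in degree $0$ it sends $(\ell_0)\mapsto(\ell_0)\otimes(\ell_0)$, which maps to $1\otimes 1$.

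It remains to show that $\Delta$ is a quasi-isomorphism. Since $\bb{Q}[L_\bullet]\simeq\bb{Q}$ and, rationally, $\bb{Q}[L_\bullet]\otimes\bb{Q}[L_\bullet]\simeq\bb{Q}\otimes\bb{Q}$ by Künneth, any map of augmented resolutions that is compatible with augmentations is automatically a quasi-isomorphism. Note that $\bb{Q}[L_\bullet]$ need not be projective over $\bb{Q}[\GL_n(F)]$. However, the zig-zag only uses that it is a resolution in $\scr{D}$: chain maps out of it define morphisms in $\scr{D}$ after inverting the augmentation quasi-isomorphism. Hence the class $[\phi]\cup[\psi]$ is represented by the composite $\bb{Q}\xleftarrow{\simeq}\bb{Q}[L_\bullet]\xrightarrow{(\phi\otimes\psi)\circ\Delta}(M\otimes N)[s+t]$.

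Finally, I would evaluate $(\phi\otimes\psi)\circ\Delta$ on $(\ell_0,\compactldots,\ell_{s+t})$. Since $\phi$ is concentrated on $s$-simplices and $\psi$ on $t$-simplices, only the summand with $i=s$ survives. This yields $\phi(\ell_0,\compactldots,\ell_s)\otimes\psi(\ell_s,\compactldots,\ell_{s+t})$.

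I expect the main obstacle to be the sign conventions. The sign convention for $[\phi]\otimes[\psi]$ under $B[s]\otimes B'[t]\simeq(B\otimes B')[s+t]$ must match the Alexander--Whitney sign so that no extra $(-1)^{st}$ appears. I would fix this by comparing with \cref{lem chain description of cup product}. To do so, take the equivariant comparison map $B_\bullet(\bb{Q}[G],\bb{Q}[G],\bb{Q})\to\bb{Q}[L_\bullet]$, $g_0[g_1|\compactcdots|g_p]\mapsto(g_0\ell,g_0g_1\ell,\compactldots,g_0g_1\cdots g_p\ell)$. Under this map the diagonal used there corresponds exactly to the Alexander--Whitney diagonal above, so the signs agree with those already fixed in the paper.
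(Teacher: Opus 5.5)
Your proposal is correct and follows the paper's proof. The paper also takes the diagonal $\delta_p(\ell_0,\dots,\ell_p)=\sum_{i}(\ell_0,\dots,\ell_i)\otimes(\ell_i,\dots,\ell_p)$ as the lift of $\bb{Q}\simeq\bb{Q}\otimes\bb{Q}$ and composes with $\phi\otimes\psi$. Your sign check via the comparison map $\sf{L}$, under which the bar-resolution diagonal corresponds to this one, is exactly the compatibility remark the paper makes right after the lemma.
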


\begin{proof}
As in Lemma \ref{lem chain description of cup product}, in order to compute the cup product we need to produce a chain equivalence $\delta \colon \ds{k}[L_\bullet] \to \ds{Q}[L_\bullet] \otimes \ds{Q}[L_\bullet]$, lifting the canonical equivalence $\ds{Q} \simeq \ds{Q} \otimes_\ds{Q} \ds{Q}$. The following formula works: $\delta_p(\ell_0,\dots,\ell_p) \coloneq \sum_{i=0}^p (\ell_0,\dots,\ell_i) \otimes (\ell_i,\dots,\ell_p)$.\end{proof}

\begin{lemma}\label{lem:equivalent-extension-steinberg}  The Steinberg class $\rm{st}_n \in H^{n-1}(\GL_n(F);\St_n(F))$ is represented by the ``linear'' $(n-1)$-cocycle 
\begin{align*}\sf{st}_n \colon \bb{Q}[L_{n-1}] &\lra \St_n(F) \\
(\ell_0,\compactldots,\ell_{n-1}) &\longmapsto \begin{cases} (-1)^{n-1}[\ell_0,\compactldots,\ell_{n-1}] & \text{if generic,} \\ 0 & \text{otherwise.}\end{cases}\end{align*}
\end{lemma}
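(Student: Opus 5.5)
We represent the extension class by an explicit chain map $\bb{Q}[L_\bullet] \to C_\bullet(T_n)$ that lifts the identity of $\bb{Q}$. In top degree this map lands in $\St_n(F) = \widetilde{H}_{n-2}(T_n)$. The resulting composite $\bb{Q}[L_\bullet] \to \St_n(F)[n-1]$ then represents $\rm{st}_n$ by Yoneda's description.

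Concretely, we define $\Phi_p \colon \bb{Q}[L_p] \to C_p(T_n)$ for $0 \le p \le n-2$ by sending $(\ell_0,\compactldots,\ell_p)$ to the flag $\ell_0 \subsetneq \ell_0+\ell_1 \subsetneq \cdots \subsetneq \ell_0+\cdots+\ell_p$ when the lines are independent, and to $0$ otherwise. This is a standard choice, but it is not quite a chain map: the $i$-th face of $(\ell_0,\compactldots,\ell_p)$ gives the flag missing $\ell_i$, and this is not a face of the flag of $(\ell_0,\compactldots,\ell_p)$ unless $i=p$. We therefore instead take the barycentric-type formula
\[
\Phi_p(\ell_0,\compactldots,\ell_p)=\sum_{\sigma\in \mathfrak{S}_{p+1}} \rm{sgn}(\sigma)\,\big(\ell_{\sigma(0)}\subsetneq \ell_{\sigma(0)}+\ell_{\sigma(1)}\subsetneq\cdots\big),
\]
which is the image of the simplex spanned by the lines in the barycentric subdivision of the apartment. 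For independent lines this is compatible with faces. Degenerate tuples are sent to $0$, and we check that whenever a face becomes independent the terms cancel appropriately. This step is routine but needs care.

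In degree $n-1$, a generic $(\ell_0,\compactldots,\ell_{n-1})$ spans $F^n$. Then $d\Phi_{n-2}$ applied to its boundary is exactly the apartment class, i.e.\ the element $[\ell_0,\compactldots,\ell_{n-1}]\in\St_n(F)$ (up to the sign convention). Hence the map $\bb{Q}[L_{n-1}] \to \St_n(F)$ in the zig-zag is $\Phi_{n-2}\circ d$, read as a cycle in $\widetilde{H}_{n-2}(T_n)$. For non-generic tuples, the $n$ lines lie in a proper subspace $W$. All terms of $\Phi_{n-2}(d(\ell_0,\compactldots,\ell_{n-1}))$ are then flags inside $W$, i.e.\ chains of the cone with apex $W$. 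This gives a null cycle, so the value $0$ is correct.

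The main obstacle is the sign $(-1)^{n-1}$. It comes from two places: the Yoneda zig-zag convention, in which the degree-$(n-1)$ chain map shifts by $[n-1]$ and picks up Koszul signs, and the orientation convention for apartment classes $[\ell_0,\compactldots,\ell_{n-1}]$ fixed in \cite{KRS1}. We would pin this down by computing the case $n=2$ directly, where $\St_2$ is the kernel of augmentation on $\bb{Q}[\bb{P}^1]$ and $[\ell_0,\ell_1]=\ell_1-\ell_0$ or its negative. We would then induct using the Lee--Szczarba-type recursion of apartments.
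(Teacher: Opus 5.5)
Your route is the paper's. You use the same barycentric chain map $\Phi_\bullet$ (the paper's $a_\bullet$) and read off the class from its degree-$(n-1)$ component $\Phi_{n-2}\circ d\colon \bb{Q}[L_{n-1}]\to\St_n(F)$.

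\textbf{The gap: the sign.} You leave the sign open and attribute it to the wrong source. No Koszul sign comes from the Yoneda zig-zag: $\St_n(F)[n-1]$ has zero differential, and the class is literally that degree-$(n-1)$ component. Nor do you need an $n=2$ base case plus a Lee--Szczarba-type induction. Take the convention $[\ell_0,\compactldots,\ell_{n-1}]=\sum_{\tau\in\fr{S}_n}(-1)^\tau(\ell_{\tau(0)}\subsetneq\ell_{\tau(0)}+\ell_{\tau(1)}\subsetneq\cdots)$ of \cite[(23)]{CharltonRadchenkoRudenko}. Then the sign is forced by the computation you already set up. The terms of $\sum_i(-1)^i\Phi_{n-2}(\ell_0,\compactldots,\hat\ell_i,\compactldots,\ell_{n-1})$ are indexed by $\tau\in\fr{S}_n$, where $i=\tau(n-1)$ and $\tau'$ is the induced ordering of the remaining indices. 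Each term carries coefficient $(-1)^i(-1)^{\tau'}$. Moving $i$ to the last slot gives $(-1)^\tau=(-1)^{n-1-i}(-1)^{\tau'}$, so every coefficient is $(-1)^{n-1}(-1)^\tau$. For example, for $n=2$ you get $\Phi_0(d(\ell_0,\ell_1))=\ell_1-\ell_0=-[\ell_0,\ell_1]$. As written, your argument proves the lemma only up to sign. Also, in the generic case you mean $\Phi_{n-2}$ applied to the boundary, not $d\Phi_{n-2}$, which vanishes on boundaries.

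\textbf{Two smaller differences.} First, you leave the chain-map property of $\Phi_\bullet$ as a routine check. The paper avoids this bookkeeping by realising $a_\bullet$ as the cellular chains of $|L^{(n-2)}_\bullet|\cong|\rm{simp}\,L^{(n-2)}_\bullet|\xrightarrow{\rm{span}}|T'_n|\to|T_n|$. Here $T'_n$ has freely adjoined degeneracies, and the thin realisation kills degenerate flags, which is exactly why dependent tuples go to $0$.

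Second, for vanishing on non-spanning $n$-tuples, the paper extends the same construction to $L^{\rm{prop}}_\bullet$. Its degree-$(n-1)$ component lands in $C_{n-1}(T_n)=0$, so $\Phi_{n-2}\circ d=0$ there. Your cone argument is a valid alternative once you add that $T_n$ has no $(n-1)$-simplices, so a null-homologous $(n-2)$-cycle is actually zero.

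Run by induction on $p\le n-2$, the same cone argument also settles the dependent-tuple part of your chain-map check. For dependent $x\in L_p$ with $W=\rm{span}(x)$, the chain $\Phi_{p-1}(dx)$ is a $(p-1)$-cycle in the cone $K_W$ of flags inside $W$. That cone is contractible of dimension $\dim W-1\le p-1$, so the cycle vanishes.
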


\begin{proof}We first prove that there is a commutative diagram 
    \[ \begin{tikzcd}[column sep=small]
    \cdots \rar & \bb{Q}[L_{n}] \dar{0} \rar{d} & \bb{Q}[L_{n-1}] \dar{d} \rar{d} & \bb{Q}[L_{n-2}] \dar{\id} \rar{d} & \cdots \rar{d} & \bb{Q}[L_0] \dar{\id} \rar & \bb{Q} \dar{\id} \rar & 0 \\[-5pt]
  \cdots \rar & 0 \rar \dar & \Lambda \rar{\inc} \dar{a}  & \bb{Q}[L_{n-2}] \rar{d} \dar{a_{n-2}}  & \cdots \rar{d} & \bb{Q}[L_0] \rar \dar{a_0}  & \bb{Q} \rar \dar{\id} & 0\\[-5pt]
  \cdots \rar & 0 \rar & \St_n(F) \rar & C_{n-2}(T_n) \rar   & \cdots \rar & C_0(T_n) \rar & k  \rar & 0
\end{tikzcd} \]
where $\Lambda \coloneq \ker(\bb{Q}[L_{n-2}] \to \bb{Q}[L_{n-3}])$, $a=a_{n-2}|_\Lambda$, and for $0 \le p \le n-2$ 
\[a_p(\ell_0,\compactldots,\ell_p) \coloneq \begin{cases} \sum_{\sigma \in \fr{S}_{p+1}} (-1)^\sigma (\ell_{\sigma(0)}\subsetneq \ell_{\sigma(0)}+ \ell_{\sigma(1)} \subsetneq \cdots \subsetneq \ell_{\sigma(0)}+\compactcdots+ \ell_{\sigma(p)}) & \text{if generic,} \\ 0 & \text{otherwise.} \end{cases}\]
Let $L_\bullet^{(n-2)}$ denote the $(n-2)$-skeleton of $L_\bullet$, the semisimplicial set containing only the $k$-simplices for $k \leq n-2$. Let $T'_\bullet$ denote the simplicial set obtained from the semisimplicial set $T_\bullet$ by freely adjoining degeneracies, which amounts to allowing $V_i = V_{i+1}$. There are maps
\[|L^{(n-2)}_\bullet| \overset{\cong}\longrightarrow |\rm{simp}\, L^{(n-2)}_\bullet| \xrightarrow{\rm{span}} |(T'_n)_\bullet| \overset{\simeq}\lra |(T_n)_\bullet|.\]
Firstly, the left homeomorphism is that from the geometric realisation of a semisimplicial set to its barycentric subdivision (obtained by passing to the nerve of the poset of simplices). This is a cellular map and the induced map on cellular chains is given by
\[(\ell_0,\compactldots,\ell_p) \longmapsto \sum_{\sigma \in \fr{S}_{p+1}} (-1)^\sigma (\ell_{\sigma([0])}) \subsetneq (\ell_{\sigma([1])}) \subsetneq \cdots \subsetneq (\ell_{\sigma([p])})\] 
where $(\ell_{\sigma([j])}) \coloneq (\ell_{i_0},\compactldots,\ell_{i_j})$ if $\sigma([j]) = \{i_0,\ldots,i_j\}$ with $i_0<\cdots<i_j$.

Secondly, the middle map is induced by a semisimplicial map $\rm{simp}\, L^{(n-2)}_\bullet$ to $(T'_n)_\bullet$ induced by sending an ordered collection of lines $(\ell_0,\compactldots,\ell_p)$ to its span $\ell_0+\cdots+\ell_p$. Finally, the right homotopy equivalence is that from thick geometric realisation of a simplicial set to its thin realisation. This is a cellular map and the induced map on cellular chains is given by sending degenerate simplices to zero. Composing these maps on cellular chains, we get the chain map $a_\bullet$ in the statement. This yields the commutative diagram.

\smallskip

We next prove that composition $a \circ d \colon \bb{Q}[L_{n-1}] \to \St_n(F)$ is given by 
\[(\ell_0,\compactldots,\ell_{n-1}) \mapsto \begin{cases} (-1)^{n-1}[\ell_0,\compactldots,\ell_{n-1}] & \text{if generic,} \\ 0 & \text{otherwise,} \end{cases}\]
where $[\ell_0,\compactldots,\ell_{n-1}]$ is the apartment class. We first observe that if $(\ell_0,\compactldots,\ell_{n-1})$ are generic, i.e.~span $F^n$, applying $a \circ d$ yields  
\[(-1)^{n-1}\sum_{\sigma \in \fr{S}_{n}} (-1)^\sigma (\rm{span}\,\ell_{\sigma([0])}  \subsetneq \rm{span}\,\ell_{\sigma([1])} 
 \subsetneq \compactcdots \subsetneq \rm{span}\,\ell_{\sigma([n-1])}), \] 
which is by definition the apartment class $(-1)^{n-1}[\ell_0,\dots,\ell_{n-1}] \in \St_n(F)$, see \cite[(23)]{CharltonRadchenkoRudenko}. We claim that if they are not generic, then applying $a \circ d$ vanishes. To see this, we note that the map $\rm{span}$ extends to $\rm{simp}\, L^\rm{prop}_\bullet$ where $L^\rm{prop}_\bullet \subset L_\bullet$ is the subcomplex of lines which do not span, but the composition (which is in fact an equivalence \cite[Theorem 1]{KahnSun})
\[|L^\rm{prop}_\bullet| \overset{\cong}\longrightarrow |\rm{simp}\, L^\rm{prop}_\bullet| \xrightarrow{\rm{span}} |(T'_n)_\bullet| \overset{\simeq}\lra |(T_n)_\bullet|\]
induces a chain map $a^\rm{prop}$ extending $a$ and in degree $n$ this is zero because the target vanishes then, which implies the claim.
\end{proof}

We can relate the linear resolution to the inhomogeneous bar resolution from \eqref{eqn:non-homogeneous-g-mod} as follows: for a fixed line $\ell \in \bb{P}^1(F^n)$ there is a chain map
\begin{equation} \label{eqn:comparison-of-two-resolutions} \begin{aligned} \sf{L} \colon B_\bullet(\bb{Q}[\GL_n(F)],\bb{Q}[\GL_n(F)],\bb{Q}) &\lra \bb{Q}[L_\bullet] \\
   [g_1|\compactcdots|g_p] &\longmapsto (\ell,g_1 \cdot \ell, \compactldots, g_1 \compactcdots g_p \cdot \ell), \end{aligned}
\end{equation}
compatible with their augmentations to $\bb{Q}$, hence representing $\id_\bb{Q}$ in $\scr{D}$. Under $\sf{L}$ from \eqref{eqn:comparison-of-two-resolutions}, the map $\Delta$ from the proof of \cref{lem chain description of cup product} is compatible with the map $\delta$ from the proof of \cref{lem cup product lines}, so cup products computed using either diagonal map agree. We obtain from this discussion an inhomogeneous cocycle $\sf{st}^B_n$ representing $\rm{st}_n$ by taking $\sf{st}_n \circ \sf{L}$ with $\sf{L}$ as in \eqref{eqn:comparison-of-two-resolutions}:

\begin{lemma} \label{lem:formula-steinberg-class}
    The Steinberg class $\rm{st}_n \in H^{n-1}(\GL_n(F);\St_n(F))$ is represented by the inhomogeneous $(n-1)$-cocycle 
    \begin{align*}\sf{st}^B_n \colon \GL_n(F)^{n-1}&\lra \St_n(F) \\
    [g_1|\compactldots|g_{n-1}] &\longmapsto (-1)^{n-1} [\ell_0,\ell_1,\ell_2,  \compactldots,\ell_{n-1}]\end{align*}
    for a fixed choice of line $\ell \in \bb{P}^1(F)$ and $\ell_i \coloneqq g_1\compactcdots g_i \cdot \ell$, where the right-hand side is 0 if the apartment is degenerate. 
\end{lemma}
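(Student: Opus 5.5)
The plan is to pull back the ``linear'' cocycle $\sf{st}_n$ of \cref{lem:equivalent-extension-steinberg} along the comparison map $\sf{L}$ of \eqref{eqn:comparison-of-two-resolutions}. Since $\sf{L}$ is a chain map compatible with the augmentations of both resolutions, it represents $\id_\bb{Q}$ in $\scr{D}(\bb{Q}[\GL_n(F)])$.

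Concretely, $\sf{st}_n$ is a chain map $\bb{Q}[L_\bullet] \to \St_n(F)[n-1]$ representing $\rm{st}_n$. The zig-zag $\bb{Q} \xleftarrow{\simeq} \bb{Q}[L_\bullet] \xrightarrow{\sf{st}_n} \St_n(F)[n-1]$ is therefore equivalent to the zig-zag
\[\bb{Q} \xleftarrow{\epsilon} B_\bullet(\bb{Q}[\GL_n(F)],\bb{Q}[\GL_n(F)],\bb{Q}) \xrightarrow{\sf{st}_n \circ \sf{L}} \St_n(F)[n-1].\]
This equivalence holds because $\epsilon$ equals the augmentation of $\bb{Q}[L_\bullet]$ precomposed with $\sf{L}$.

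Two checks remain. First, $\sf{L}$ is $\GL_n(F)$-equivariant: the basis element $h{\star}[g_1|\compactcdots|g_p]$ maps to $(h\ell, hg_1\ell, \ldots)$. Second, $\sf{L}$ commutes with differentials, which follows from a direct comparison of face maps. The leading term $g_1{\star}[g_2|\compactcdots|g_p]$ goes to $(g_1\ell, g_1g_2\ell,\ldots)$, which is the $0$th face of $(\ell,g_1\ell,\ldots)$. The inner terms merge $g_ig_{i+1}$ and thereby drop the vertex $\ell_i$. The last term drops $\ell_p$. With these checks in place, the composite is an inhomogeneous cocycle representing $\rm{st}_n$. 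Evaluating it on the basis element $[g_1|\compactcdots|g_{n-1}]$ gives $\sf{st}_n(\ell_0,\ldots,\ell_{n-1})$ with $\ell_i = g_1\compactcdots g_i\ell$, which is the claimed formula. The value is $0$ when the lines are not generic, and in that case the apartment class is degenerate.

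The only point needing care is the sign and face-ordering convention in the comparison of differentials, since the bar differential and the simplicial differential on $\bb{Q}[L_\bullet]$ must match exactly. I expect this to be routine, and otherwise the argument is a direct consequence of the preceding lemma.
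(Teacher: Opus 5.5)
Your proposal is correct and is the paper's argument: the paper defines $\sf{st}^B_n$ as $\sf{st}_n \circ \sf{L}$, using that $\sf{L}$ is a chain map compatible with the augmentations and hence represents $\id_{\bb{Q}}$ in $\scr{D}$. Your checks of $\GL_n(F)$-equivariance and of the face-map matching (the inner bar terms drop $\ell_i$ with sign $(-1)^i$) fill in details the paper leaves implicit.
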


\begin{proposition}\label{prop:canonical-e1-cap-with-st} The canonical map
\[H_d(\GL_n(F)) = H_{n,d}(\BGLb(F)_\bb{Q}) \lra H^{E_1}_{n,d}(\BGLb(F)_\bb{Q}) \cong H_{d-n+1}(\GL_n(F);\St_n(F))\]
is given by the cap product against $\rm{st}_n$ and hence on inhomogeneous chains given by the cap product against $\sf{st}_n^B$.
\end{proposition}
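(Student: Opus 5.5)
The plan is to reduce the statement to the general principle recalled in \cref{sec:intro-borel}: for an $n$-spherical $G$-complex $X$, the map induced on pointed $G$-orbits by the canonical $G$-equivariant map from the circle into the double suspension of $X$ is cap product with the Yoneda class of the augmented cellular chains of $X$. By \cite[Section 4.4.1]{KRS1}, the $E_1$-homology of $\BGLb(F)_\bb{Q}$ in rank $n$ is computed as the homotopy orbits of the $\GL_n(F)$-action on the (double suspension of the) Tits building $T_n$. The canonical map $H_{n,d} \to H^{E_1}_{n,d}$ is then induced by the comparison from the rank-$n$ part of the bar construction to this building model, and I would identify it with the map on orbits induced by the inclusion of $S^0$- or $S^1$-suspended points into $\Sigma^2 T_n$ (or $\Sigma\, T_n^\diamond$).

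\textbf{Step 1.} Make this identification precise. Write $\widetilde{C}_*(T_n)$ for the augmented chains. The canonical map $\bb{Q} \to \widetilde{C}_*(T_n)[\,\cdot\,]$, which is the inclusion of the augmentation term, is exactly the zig-zag $\bb{Q} \xleftarrow{\simeq} (C_{n-2}(T_n)\to\cdots\to C_0(T_n)) \to \St_n(F)[n-1]$ used to define $\rm{st}_n$ via Yoneda. Applying $\bb{Q}\otimes_{\GL_n(F)}(-)$ and taking homology therefore gives the cap product $- \cap \rm{st}_n$, by the definition of the cap product recalled above.

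\textbf{Step 2.} Pass to chains. By \cref{lem chain description cap product}, cap product with the class of an inhomogeneous cocycle is computed by the formula given there. By \cref{lem:formula-steinberg-class}, $\sf{st}^B_n = \sf{st}_n \circ \sf{L}$ represents $\rm{st}_n$, which gives the final clause.

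The main obstacle is Step 1. One must check that the $E_1$-homology identification of \cite{KRS1}, which proceeds through $E_1$-splitting complexes and their equivalence with Tits buildings, is compatible with the canonical map from homology. In other words, the map from the bar construction in rank $n$ to its top skeletal quotient must correspond to collapsing onto the building, and the sign and suspension conventions must match the Yoneda convention, including the sign $(-1)^{n-1}$. I would attempt this by naturality. Both maps are natural transformations out of $H_*(\GL_n(F);-)$ applied to maps $\bb{Q}\to \St_n(F)[n-1]$ in $\scr{D}$, so it suffices to show that the two underlying morphisms in $\scr{D}$ agree. This reduces to checking that the cofibre sequence $\widetilde{C}_*(T_n)\to \bb{Q}\to \Sigma\,\cdots$ arising in \cite{KRS1} is the augmented cellular chain extension, which can be verified on the explicit cell structure.
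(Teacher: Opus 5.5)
Your proposal is correct and follows essentially the same route as the paper. Both identify the canonical map, via \cite{KRS1}, with the inclusion of $\bb{Q}$ as the augmentation term of the augmented Tits building chains, observe that composing with the inverse of the quasi-isomorphism from $\St_n(F)[n-1]$ gives the Yoneda class $\rm{st}_n$ (so the map is $-\cap\rm{st}_n$), and then pass to the inhomogeneous representative $\sf{st}^B_n$; the paper does not re-verify your Step 1 compatibility but cites \cite[Section 2.2, C.3]{KRS1} and the proof of \cite[Corollary 4.12]{KRS1}. One slip to fix: in Step 1 the resolving complex in the Yoneda zig-zag must be $(\St_n(F)\to C_{n-2}(T_n)\to\cdots\to C_0(T_n))$, since without the $\St_n(F)$ term it is not quasi-isomorphic to $\bb{Q}$ and admits no projection to $\St_n(F)[n-1]$.
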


\begin{proof}Using \cite[Section 2.2, C.3]{KRS1}, the canonical map is induced by taking the map $\ul{\bb{Q}} \to B^\rm{As}_{\para} (\ul{\bb{Q}})[-1]$ in $\Fun(\Vect,\rm{Ch}_\bb{Q})$, applying $\dim_!$, and evaluating at $n \in \bb{N}$. Equivalently, it is induced by composition with the map $\ul{\bb{Q}}(F^n) \to B^\rm{As}_{\para} (\ul{\bb{Q}})(F^n)[-1]$ in $\scr{D}$. Using the proof of \cite[Corollary 4.12]{KRS1} we can identify this map with the inclusion
\[\bb{Q} \lra (C_{n-2}(T_n) \to \cdots \to C_0(T_n) \to \bb{Q})\]
into the bottom term of the augmented simplicial chains of the Tits building, where $C_p(T_n)$ is in degree $p+1$. The identification of the target with $H_{d-n+1}(\GL_n(F);\St_n(F))$ is induced by the equivalence
\[\St_n(F)[n-1] \overset{\inc}\lra (C_{n-2}(T_n) \to \cdots \to C_0(T_n) \to \bb{Q}),\]
and we saw above this zigzag is implemented by the cap product against the class represented by the extension $(0 \to \St_n(F) \to C_{n-2}(T_n) \to \cdots \to C_0(T_n) \to \bb{Q} \to 0)$.
\end{proof}

\subsubsection{The infinite Steinberg cocycle}

Since we work in characteristic zero  we can use the commutative bar construction. Recall that $\pi \colon \St_n(F) \otimes \St_n(F) \to \St^\infty_n(F)$ denotes the canonical projection onto indecomposables.

\begin{proposition}\label{prop:formula-for-stinfty-class} The cohomology class $\rm{st}^\infty_n \in H^{2n-2}(\GL_n(F);\St_n^\infty(F))$ corresponding to the extension 
     \[0 \to \St^\infty_n(F) \to (B_n^\rm{Com} \St)_n(F) \to \compactcdots \to (B_2^\rm{Com}\St)_n(F) \to C_{n-2}(T_n) \to \compactcdots\to \ds{Q} \to 0\]
    is given by $\pi(\rm{st}_n \cup \rm{st}_n)$. In particular, it is represented by the ``linear'' $(2n-2)$-cocycle $\sf{st}_n^\infty \coloneq \pi(\sf{st}_n \cup \sf{st}_n)$
    \begin{align*}\bb{Q}[L_{2n-3}] &\lra \St_n^\infty(F) \\
    (\ell_0,\compactldots,\ell_{2n-3}) &\longmapsto \pi \big( [\ell_0,\ell_1, \compactldots, \ell_{n-1}] \otimes [\ell_{n-1},\ell_n, \compactldots, \ell_{2n-3}] \big)\end{align*}
     for a fixed choice of line $\ell \in \bb{P}^1(F)$ and $\ell_i \coloneqq g_1\compactcdots g_i \cdot \ell$, as well as the inhomogeneous $(2n-2)$-cocycle $\sf{st}_n^{\infty,\rm{B}} \coloneq \sf{st}_n^\infty \circ \sf{L}$ admitting a formula analogous to \cref{lem:formula-steinberg-class}.
\end{proposition}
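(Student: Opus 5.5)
The plan is to compute the class of the displayed extension by factoring it as a Yoneda composition of two extensions and then applying \cref{lem:ext-cup-products}. The extension splices two pieces. The first is the augmented Tits building extension $0 \to \St_n(F) \to C_{n-2}(T_n) \to \cdots \to C_0(T_n) \to \bb{Q} \to 0$, which represents $\rm{st}_n$. The second is an extension of $\St_n(F)$ by $\St^\infty_n(F)$ of length $n-1$ coming from the commutative bar construction, which I will call $\beta$. Its class lies in $\rm{Ext}^{n-1}(\St_n(F),\St^\infty_n(F))$, and the class of the whole extension is $\beta \circ \rm{st}_n$.

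The first step is to identify $\beta$. Since the commutative bar construction in characteristic zero is obtained from the associative one by passing to indecomposables, the arity-$2$ part of the bar complex supplies a map $\St_n(F)\otimes\St_n(F) \to \St^\infty_n(F)$. This map is the projection $\pi$, precomposed with the first-coordinate edge of the $\St$-extension. Concretely, $\beta$ should be equivalent to the composite of two things. One is the extension $0 \to \St_n(F)\otimes \St_n(F) \to \St_n(F) \otimes C_{n-2}(T_n) \to \cdots \to \St_n(F)\otimes\bb{Q} \to 0$, obtained by tensoring the Tits extension with $\St_n(F)$. The other is the map $\pi$ applied to it. I expect to verify this by writing down a map of extensions from this tensored complex to $0 \to \St^\infty_n \to (B^\rm{Com}_n\St)_n \to \cdots \to (B^\rm{Com}_2\St)_n \to \St_n \to 0$ that is the identity on $\St_n$ and equal to $\pi$ on the left end. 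This map should be built from the multiplication on $\St$ (the concatenation of flags), as in the identification of $E_\infty$-homology in \cite{KRS1} via \cite{CharltonRadchenkoRudenko}.

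Granting this, \cref{lem:ext-cup-products}\eqref{enum:ext-cup-products-ii} with $A=A'=\bb{Q}$ and $B=B'=\St_n(F)$ shows that the splice of the Tits extension with its tensor with $\St_n(F)$ represents $\rm{st}_n \cup \rm{st}_n$. Postcomposing with $\pi$ then gives $\pi(\rm{st}_n\cup\rm{st}_n)$, and naturality of Yoneda composition in the coefficient module yields the first claim.

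The cocycle formulas follow from results already available. \cref{lem:equivalent-extension-steinberg} gives the linear cocycle $\sf{st}_n$, and \cref{lem cup product lines} computes cup products on $\bb{Q}[L_\bullet]$ by $\phi(\ell_0,\dots,\ell_s)\otimes\psi(\ell_s,\dots,\ell_{s+t})$, with $s=t=n-1$. The sign $(-1)^{n-1}\cdot(-1)^{n-1}=1$ drops out. The inhomogeneous version follows by composing with $\sf{L}$, using that $\Delta$ and $\delta$ are compatible under $\sf{L}$.

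The main obstacle is the first step: constructing the comparison map of extensions between the tensored Tits extension and the commutative bar extension, with correct degrees and signs. I would handle this by working with the associative bar construction $B^{\rm{As}}$, where the splitting via flags is explicit by \cite[Corollary 4.12]{KRS1}, and then projecting to $B^\rm{Com}$.
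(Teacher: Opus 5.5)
This is essentially the paper's proof. The paper maps the analogous associative extension (built from $B^{\rm{As}}$ and ending in $\StH_n(F)\cong\St_n(F)\otimes\St_n(F)$) to the displayed one via $\pi$ and $B^{\rm{As}}\to B^{\rm{Com}}$. The comparison you defer is exactly \cite[Theorem 23]{CharltonRadchenkoRudenko}, which identifies that associative extension with the splice of the Tits extension tensored with $\St_n(F)$ and the Tits extension itself; \cref{lem:ext-cup-products}\eqref{enum:ext-cup-products-ii} and \cref{lem cup product lines} then finish as you say.

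One bookkeeping caveat: that lemma tensors with $\St_n(F)$ on the right, $C_p(T_n)\otimes\St_n(F)$, whereas you wrote $\St_n(F)\otimes C_p(T_n)$. The two splices differ by the swap of the two $\St_n(F)$ factors, which acts on $\rm{st}_n\cup\rm{st}_n$ by $(-1)^{(n-1)^2}=(-1)^{n-1}$. So keep the Tits factor on the left, or track this sign against the behaviour of $\pi$ under the swap.
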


\begin{proof}There is a commutative diagram of extensions to the extension in the statement from the analogous extension where $\StL_n(F)$ is replaced by $\StH_n(F) \cong \St_n(F) \otimes \St_n(F)$ and the commutative bar construction is replaced by the associative bar construction, induced by $\pi$ and the projection from associative to commutative bar construction. This shows that the class in question is given by applying $\pi$ to the class in $H^{2n-2}(\GL_n(F);\St_n(F) \otimes \St_n(F))$ defined by the former. Using \cite[Theorem 23]{CharltonRadchenkoRudenko} we identify this extension as the composition product of two extensions
    \begin{align*}&(0 \to \St_n(F) \to C_{n-2}(T_n) \to \compactcdots \to C_0(T_n) \to \ds{Q}) \to 0) \otimes \St_n(F) \qquad \text{and} \\
    &0 \to \St_n(F) \to C_{n-2}(T_n) \to \compactcdots \to C_0(T_n) \to \ds{Q} \to 0,\end{align*}
and by \cref{lem:ext-cup-products} \eqref{enum:ext-cup-products-ii} this represents $\rm{st}_n \cup \rm{st}_n$. Finally, the formula is obtained by using the chain-level description of cup product of cocycles from \cref{lem chain description of cup product}. 
\end{proof}

Arguing as in \cref{prop:canonical-e1-cap-with-st}, we obtain:

\begin{proposition} \label{prop:canonical-map-chain-level}
    The canonical map from homology to $E_\infty$-homology
    \[H_d(\GL_n(F)) = H_{n,d}(\BGLb(F)) \lra H^{E_\infty}_{n,d}(\BGLb(F)) \cong H_{d-2n+2}(\GL_n(F);\St_n^\infty(F))\]
    is given by taking cap product with $\rm{st}_n^\infty = \pi(\rm{st}_n \cup \rm{st}_n)$ and hence on inhomogeneous chains given by the cap product against $\sf{st}_n^{\infty,\rm{B}}$ as in \cref{prop:formula-for-stinfty-class}.
\end{proposition}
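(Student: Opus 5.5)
The plan is to mirror the argument of \cref{prop:canonical-e1-cap-with-st}, replacing the associative bar construction by the commutative one. First I will recall from \cite[Section 2.2, C.3]{KRS1} that the canonical map from homology to $E_\infty$-homology is induced by the unit map $\ul{\bb{Q}} \to B^{\rm{Com}}(\ul{\bb{Q}})[-1]$ in $\Fun(\Vect,\rm{Ch}_\bb{Q})$. In characteristic zero, $E_\infty$-indecomposables may be computed via the commutative bar construction, and this map is then pushed forward along $\dim_!$ and evaluated at rank $n$. Concretely, in $\scr{D}(\bb{Q}[\GL_n(F)])$ it is the inclusion of $\bb{Q}$ as the bottom term of the augmented complex
\[(B^{\rm{Com}}_n\St)_n(F) \to \compactcdots \to (B^{\rm{Com}}_2\St)_n(F) \to C_{n-2}(T_n) \to \compactcdots \to C_0(T_n) \to \bb{Q}.\]
The rank-$n$ part of the first bar stage is the reduced chains of the Tits building, exactly as in the proof of \cite[Corollary 4.12]{KRS1}. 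The higher stages are built from tensor products of Steinberg modules of smaller rank.

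Second, I will invoke the identification, from \cite{CharltonRadchenkoRudenko} (the proof of the Rognes conjecture), that this complex is acyclic except at the top. Its only homology is $\St^\infty_n(F)$ in degree $2n-2$. The equivalence $\St^\infty_n(F)[2n-2] \to (\compactcdots)$ identifies the target with $H_{d-2n+2}(\GL_n(F);\St^\infty_n(F))$, and this is exactly the identification $H^{E_\infty}_{n,d}(\BGLb(F)) \cong H_{d-2n+2}(\GL_n(F);\StL_n(F))$ used in \cite{KRS1}. The zig-zag ``include $\bb{Q}$, then invert the top inclusion'' is therefore the morphism $\bb{Q} \to \St^\infty_n(F)[2n-2]$ in $\scr{D}$ represented by the Yoneda extension displayed in \cref{prop:formula-for-stinfty-class}, i.e.~by $\rm{st}^\infty_n$. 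Applying $\bb{Q}\otimes_{\GL_n(F)}(-)$ and taking homology gives, by the definition of the cap product in the preceding subsection, cap product with $\rm{st}^\infty_n$.

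Third, \cref{prop:formula-for-stinfty-class} gives $\rm{st}^\infty_n = \pi(\rm{st}_n\cup\rm{st}_n)$ together with the inhomogeneous representative $\sf{st}^{\infty,\rm{B}}_n$. \cref{lem chain description cap product} then converts the cap product into the stated chain-level formula on inhomogeneous chains.

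The main obstacle is the first step. I must check that the abstract unit map to $E_\infty$-indecomposables, together with the identification of the target with twisted homology in $\St^\infty_n(F)$ used in \cite{KRS1}, agrees with this explicit commutative-bar extension, including signs and shifts. I will handle this by comparing with the $E_1$ case of \cref{prop:canonical-e1-cap-with-st}. There is a map of extensions from the associative to the commutative bar construction that is compatible with the projection $E_1$-homology $\to E_\infty$-homology, and $\pi$ on coefficients. This reduces the signs to those already fixed in the $E_1$ situation.
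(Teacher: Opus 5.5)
Your proposal takes the paper's approach: the paper's proof is literally ``argue as in \cref{prop:canonical-e1-cap-with-st}''. The canonical map becomes the inclusion of $\bb{Q}$ into the bottom of the commutative-bar extension, hence by Yoneda the cap product with $\rm{st}^\infty_n$; \cref{prop:formula-for-stinfty-class} identifies this class with $\pi(\rm{st}_n\cup\rm{st}_n)$, and \cref{lem chain description cap product} makes it explicit on inhomogeneous chains.

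One correction to your last paragraph: the associative-bar extension there is not the $E_1$ extension. Its top term is $\St_n(F)\otimes\St_n(F)$, and by \cite[Theorem 23]{CharltonRadchenkoRudenko} it is the Yoneda composite of the Tits-building extension with that extension tensored with $\St_n(F)$. That identification, not a direct comparison with the $E_1$ extension, is what fixes the signs in \cref{prop:formula-for-stinfty-class}.
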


\subsection{A cocycle for the Goncharov class} 

\subsubsection{Dual decomposition operator} \label{sec:dual-decomposition}
Let $V$ be an $n$-dimensional vector space. We define $\FL(V)$
to have the same generators, relations, and $\GL(V)$-action as
$\FI(V)$ \cite[Section 5.1.2]{KRS1}; we will use the notation
\[
\FL[v_1,\compactldots,v_n]\in \FL(V)
\]
for its generators. The notation is different because we will use a different map to $\StL(V)$ and hence $\FL(V)$ ought to be interpreted differently from $\FI(V)$. Steinberg polylogarithms \cite[Definition 28]{CharltonRadchenkoRudenko} are elements of the space $\StH(V)$ defined by the formula
\[
\mathrm{L}[v_1,\dots,v_n]=[v_n,v_{n}+v_{n-1},\dots,v_n+\dots+v_1]\otimes [v_n,v_{n-1},\dots,v_1];
\]
we denote their projections to $\StL(V)$ by the same symbol. The map 
\begin{align*}\mathrm{L}\colon \FL(V) &\lra \StL(V)\\
\FL[v_1,\compactldots,v_n] &\longmapsto \mathrm{L}[v_1,\compactldots,v_n]\end{align*}
is well-defined by \cite[Proposition 33]{CharltonRadchenkoRudenko}, and \cite[Proposition 37]{CharltonRadchenkoRudenko} implies that the map is surjective.
 Consider the map
\begin{align*}
\rm{D}^{\rm{F}}\colon \FI(V) &\lra \FL(V^{\vee})  \\
\FI[v_1,\dots,v_n] &\longmapsto (-1)^n\FL[v^n,\dots,v^1]
\end{align*}
where the basis $v^j$ is dual to the basis $v_i$.  \cite[Proposition 32]{CharltonRadchenkoRudenko} implies that the following diagram commutes:
\[\begin{tikzcd}\FI(V) \dar[swap]{\rm{D}^{\rm{F}}} \rar{\rm{I}} & \StL(V) \dar{\rm{D}} \\[-5pt]
 \FL(V^{\vee}) \rar{\rm{L}} & \StL(V^{\vee}) .
\end{tikzcd}\]    

The kernel $\FLR(V)$ of the map $\rm{L}$ fits in the commutative diagram
\[\begin{tikzcd}0 \rar & \FIR(V) \dar{\rm{D}^{\rm{F}}} \rar &\FI(V) \dar{\rm{D}^{\rm{F}}}\rar{\rm{I}} & \StL(V) \dar{\rm{D}}  \rar &0\\[-5pt]
 0 \rar & \FLR(V^{\vee})  \rar &\FL(V^{\vee}) \rar{\rm{L}} & \StL(V^{\vee})\rar &0 .
\end{tikzcd}\]  
Both $H_{0}(\GL(V);\FIR(V))$ and $H_{0}(\GL(V);\FLR(V))$ are canonically isomorphic to $\PolyL_n(F)$. The results of \cite[Section 8.2]{KRS1} imply that the induced map
\begin{equation}\label{eqn: iso FI to FL}
     H_{0}(\GL(V);\FIR(V)) \stackrel{D_*}{\lra}  H_{0}(\GL(V^{\vee});\FLR(V^{\vee}))
\end{equation}
is given by multiplication by $-1$.

In \cite[Section 5.1.2]{KRS1} we described a decomposition operator $D^\FI_h  \colon \StL(V) \to \FI(V)$ for a nonzero functional $h \in V^\vee$. The \emph{dual decomposition operator} is defined as follows:
\begin{definition} Consider a nonzero vector $v\in V$. The dual decomposition operator $D^\FL_v$ is a composition
\[
\StL(V)\stackrel{D}{\lra} \StL(V^{\vee}) \stackrel{D^\FI_v}{\lra} \FI(V^{\vee}) \stackrel{\rm{D}^{\rm{F}}}{\lra} \FL(V),
\]
 where $D^\FI_v$  is the decomposition operator associated to the nonzero functional $v$ on $V^{\vee}$.
\end{definition}

Next we give an explicit formula for the dual decomposition operator. Observe that the projective dual to \cite[Proposition 2.32]{KRS1} says that the composition
\[\StL(V) \lra (B^\rm{Com}\SSt)^v_n(V)\]
of the symbol map with the projection onto those terms $[P_1|\compactcdots|P_n]$ so that $P_1,\ldots,P_n$ and $v$ are in general position is an isomorphism. For each such term there exist unique vectors $v_i\in P_i$ such that $v=v_1+\dots+v_n$, and we use this to define the map
\begin{align*}
\rm{C}^{\FL}_v \colon (B^\rm{Com}\SSt)^v_n(V) &\lra \FL(V) \\
[P_1|\compactcdots|P_n] &\longmapsto \FL[v_n,\compactldots,v_1].\end{align*} 

\begin{proposition}\label{prop: dual decomposition} The dual decomposition operator $D^\FL_v$ equals the composition 
\[
\StL(V) \lra  (B^\rm{Com}\SSt)_n(V)\lra  (B^\rm{Com}\SSt)^v_n(V) \stackrel{\rm{C}^{\FL}_v}{\lra} \FL(V).
\] 
with left map the symbol and the middle map the projection.
\end{proposition}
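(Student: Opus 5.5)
The plan is to unwind the definition of $D^\FL_v = \rm{D}^{\rm{F}} \circ D^\FI_v \circ D$, and to use the already established explicit formula for the ordinary decomposition operator $D^\FI_h$ from \cite[Section 5.1.2]{KRS1}. That formula describes $D^\FI_h$ as the composition of the symbol $\StL(V)\to (B^\rm{Com}\SSt)_n(V)$, the projection onto terms in general position with respect to the hyperplane $\ker h$, and an assignment $\rm{C}^\FI_h$ which sends such a term to an $\FI$-generator built from the vectors cut out by $h$. Applying this on $V^\vee$ with $h=v$ writes $D^\FL_v$ as
\[
\rm{D}^{\rm{F}}\circ \rm{C}^\FI_v \circ \rm{proj}\circ \rm{sym}_{V^\vee}\circ D.
\]
The proof then reduces to two compatibilities.

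\textbf{First compatibility.} The symbol commutes with the duality $D$ up to the projective duality on bar constructions. Concretely, we need a commutative square
\[
\rm{sym}_{V^\vee}\circ D = D^{B}\circ \rm{sym}_V ,
\]
where $D^B\colon (B^\rm{Com}\SSt)_n(V)\to(B^\rm{Com}\SSt)_n(V^\vee)$ sends a bar term $[P_1|\compactcdots|P_n]$ to the term built from the annihilators of the complementary sums. To get this I would check generators: take apartment products $\pi([\ell_0,\ldots]\otimes[\ldots])$ and use that $D$ on apartments is given by passing to dual bases. This is exactly the content of the projective dual of \cite[Proposition 2.32]{KRS1} quoted just before the statement, so I expect it can be cited rather than reproved.

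Under $D^B$, the condition that the $P_i$ together with $v$ are in general position becomes the condition that the dual lines are in general position with respect to the hyperplane $v=0$ in $V^\vee$. Hence the two projections correspond.

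\textbf{Second compatibility.} On a single term $[P_1|\compactcdots|P_n]$ in general position with $v$, we must check
\[
\rm{D}^{\rm{F}}(\rm{C}^\FI_v(D^B[P_1|\compactcdots|P_n]))=\rm{C}^\FL_v[P_1|\compactcdots|P_n]=\FL[v_n,\ldots,v_1],
\]
where $v=\sum v_i$ with $v_i\in P_i$. Write $P_i$ in terms of a basis. The term $\rm{C}^\FI_v$ produces $\FI[w_1,\ldots,w_n]$, where the $w_j\in V^\vee$ are normalised by $\langle w_j, v\rangle$-conditions. These $w_j$ form the basis dual to $(v_1,\ldots,v_n)$, possibly up to reordering: each $w_j$ kills all $P_i$ except one and pairs to $1$ with the corresponding $v_i$. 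Then $\rm{D}^{\rm{F}}$ dualises back and reverses the order, which yields $\FL[v_n,\ldots,v_1]$ up to a sign $(-1)^n$ together with the sign conventions in $D^B$ and in $\rm{C}^\FI_v$.

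\textbf{Main obstacle.} The hard part is the sign and ordering bookkeeping in this second compatibility. The sign $(-1)^n$ from $\rm{D}^{\rm{F}}$ has to cancel against the sign that $D^B$ picks up from reversing the order of the bar term and from the orientation of the dual apartments. I would pin this down first in the case $n=1$ and then in the case $n=2$, check that it holds there, and then argue that the sign is multiplicative in the bar length, so that the general case follows from the small cases.
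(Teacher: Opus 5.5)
Your architecture matches the paper's. There the proof is a diagram whose bottom row is the formula for $D^{\FI}_v$ on $V^\vee$ (symbol, projection, $\rm{C}^{\FI}_v$), with vertical maps $D$ and $(\rm{D}^{\rm{F}})^{-1}$. Two of your steps are correct: the matching of the two general-position conditions, and your second compatibility (the normalised $w_j\in P^j$ form the basis dual to $(v_1,\dots,v_n)$, so $\rm{D}^{\rm{F}}$ returns $\FL[v_n,\dots,v_1]$ up to sign).

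\textbf{The gap is the first compatibility, which is where the actual content lies.} It is not what the projective dual of \cite[Proposition 2.32]{KRS1} says. That result only asserts that the symbol followed by projection onto $v$-generic terms is an isomorphism, which is what makes $\rm{C}^{\FL}_v$ meaningful. It says nothing about how the symbol interacts with duality. The paper proves a separate lemma, $s\circ\vee=\vee\circ s$ on $\StH(V)$, in three steps:
\begin{itemize}
\item plug in the explicit formula of \cite[Lemma 19]{CharltonRadchenkoRudenko} for the symbol of $[a_0,\dots,a_{n-1}]\otimes[b_0,\dots,b_{n-1}]$, a signed sum over $(\sigma,\tau)\in\mathfrak{S}_n^2$ of the words $[F^\sigma_0\cap G^\tau_{n-1}|\cdots|F^\sigma_{n-1}\cap G^\tau_0]$;
\item compute $(\bigoplus_{j\neq i}P^{\sigma,\tau}_j)^\perp=\mathcal{F}^{\sigma\rho}_{n-1-i}\cap\mathcal{G}^{\tau\rho}_i$ for $\rho(i)=n-1-i$;
\item reindex by $(\sigma,\tau)\mapsto(\sigma\rho,\tau\rho)$.
\end{itemize}
Your sketch also misses two sign-relevant facts. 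First, this yields the \emph{reversed} word $[P^n|\cdots|P^1]$. Second, $D$ on $\StL(V)$ is not literally ``pass to dual bases'': on symbols it differs from $\vee$ by $(-1)^{n-1}$ \cite[Lemma 8.1]{KRS1}. The square with the order-preserving $D^B$ commutes only because this sign cancels the $(-1)^{n-1}$ in $[a_n|\cdots|a_1]=(-1)^{n-1}[a_1|\cdots|a_n]$ modulo shuffles.

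\textbf{Your plan for the signs would not give a proof either.} Checking $n=1,2$ and appealing to multiplicativity in the bar length cannot determine them, because the signs in play are not multiplicative. For example, $(-1)^{n-1}$ (reversal modulo shuffles) and $(-1)^{\binom{n}{2}}$ (naive reversal of a graded tensor word) agree for $n\le 2$ and differ at $n=3$. The signs must be tracked exactly. In the paper the two factors $(-1)^{n-1}$ cancel in the first square as just described. In the right square, with the order-preserving $D^B$, the $(-1)^n$ of $\rm{D}^{\rm{F}}$ is absorbed by the sign convention of $\rm{C}^{\FI}_v$, not by anything coming from $D^B$. Compare the factor $(-1)^n$ in the later computation of $\rm{D}^{\FI}_{a_{n-1}}D(T(a,b))$.
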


We start with a lemma relating the duality map $\vee$ defined in \cite[8.1.2]{KRS1} and the symbol. For that, we define the map 
\[
\vee\colon  (B^\rm{As}\SSt)_n(V)\lra  (B^\rm{As}\SSt)_n(V^{\vee})
\]
by the formula
\[
\vee([P_1|\dots|P_n])=[P^n|\dots| P^1], \quad P^i=\Bigl(\bigoplus_{j\neq i} P_j\Bigr )^{\perp}.
\]

\begin{lemma}\label{lem: duality and symbol}  The following diagram is commutative:
\begin{equation}
\begin{tikzcd}
\StH(V) \rar{s} \dar{\vee}& (B^\rm{As}\SSt)_n(V)  \dar{\vee}  \\[-5pt]
\StH(V^{\vee}) \rar{s}&  (B^\rm{As}\SSt)_n(V^{\vee})
\end{tikzcd}
\end{equation}
\end{lemma}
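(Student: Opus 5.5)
The plan is to reduce both compositions to the underlying definitions of the symbol maps and of $\vee$, and to compare them on a spanning set of $\StH(V) \cong \St(V) \otimes \St(V)$. Since $\StH(V)$ is spanned by tensor products of apartment classes, and both $s$ and $\vee$ are linear and $\GL(V)$-equivariant, it suffices to check commutativity on elements $[u_1,\compactldots,u_n] \otimes [w_1,\compactldots,w_n]$ with both $(u_i)$ and $(w_j)$ bases of $V$.

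First I will recall the explicit formula for the symbol $s \colon \StH(V) \to (B^\rm{As}\SSt)_n(V)$ from \cite[Section 2.6]{KRS1} and \cite{CharltonRadchenkoRudenko}. On $[u_\bullet]\otimes[w_\bullet]$ it is a signed sum over pairs of complete flags, taken from the two apartments, that are transverse. Each such pair determines a decomposition $V = P_1 \oplus \compactcdots \oplus P_n$ into lines by intersecting the $i$th step of the first flag with the $(n-i+1)$st step of the second, and this decomposition is the term $[P_1|\compactcdots|P_n]$. Next I will recall the duality $\vee$ on $\StH(V)$ from \cite[8.1.2]{KRS1}. On apartment classes it is induced by dual bases, $[u_1,\compactldots,u_n] \mapsto \pm[u^1,\compactldots,u^n]$, and on flags by $W \mapsto W^\perp$, so a flag in $V$ goes to the reversed annihilator flag in $V^\vee$. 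On $\StH$ it acts factorwise, possibly with a swap of the two factors and a sign.

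The heart of the argument is a linear-algebra identity. Take a transverse pair of flags $(A_\bullet, B_\bullet)$ in $V$ with associated line decomposition $P_i = A_i \cap B_{n-i+1}$. The annihilator flags $(A_\bullet^\perp, B_\bullet^\perp)$ in $V^\vee$ are again transverse. Their associated decomposition consists of the lines $A_{i-1}^\perp \cap B_{n-i}^\perp = (A_{i-1}+B_{n-i})^\perp$. Because $A_{i-1} + B_{n-i} = \bigoplus_{j \neq i} P_j$, this line is exactly $P^i$, listed in reversed order. This matches the formula $\vee[P_1|\compactcdots|P_n]=[P^n|\compactcdots|P^1]$ term by term.

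The remaining step, and the main obstacle, is bookkeeping the signs. These come from three sources: the sign in $s$ (the permutation signs defining apartment classes), the sign and factor swap in the definition of $\vee$ on $\StH$, and the reversal of order. I would handle this by checking the identity on the single standard pair, $u=w=$ the standard basis with the transverse flags chosen accordingly, and then using equivariance under the symmetric group permuting basis vectors to propagate it. This works provided the sign conventions in \cite[8.1.2]{KRS1} are themselves defined by such an equivariant rule.
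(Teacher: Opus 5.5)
Your linear-algebra core is the same as the paper's. For a transverse pair of apartment flags, $\bigoplus_{j\neq i}P_j$ is the sum of the $(i-1)$st step of the first flag and the complementary step of the second, so taking annihilators yields the decomposition attached to the annihilator flags, in reversed order.

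The gap is the sign step. You single it out as the main obstacle but do not carry it out, and the plan you sketch would not work as stated:
\begin{itemize}
\item Checking the identity on $[e_1,\ldots,e_n]\otimes[e_1,\ldots,e_n]$ and propagating by $\GL(V)$-equivariance and permutations of basis vectors only reaches elements of the form $[u]\otimes[u]$. Permuting the vectors of a basis just multiplies the apartment class by a sign.
\item Such elements lie in the symmetric tensors of $\St(V)\otimes\St(V)$, which is a proper subspace of $\StH(V)$ for $n\geq 2$.
\item Even if you meant a term-by-term sign check, propagating it requires knowing which term of $s(\vee x)$ a given term of $s(x)$ goes to, and you never identify this.
\end{itemize}
You also leave $\vee$ on $\StH(V)$ undetermined (``possibly with a swap of the two factors and a sign''), and make the argument conditional on an unverified property of its definition. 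In the paper it is simply $[a_0,\ldots,a_{n-1}]\otimes[b_0,\ldots,b_{n-1}]\mapsto[\alpha_0,\ldots,\alpha_{n-1}]\otimes[\beta_0,\ldots,\beta_{n-1}]$, with dual bases. This matters: your own identity shows the first line of $\vee(s(x))$ is spanned by a vector of the $\alpha$-basis, so with a factor swap the two sides would not agree for generic $a,b$.

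What closes the gap is an explicit indexing rather than an equivariance argument. Put $F^\sigma_i=\langle a_{\sigma(0)},\ldots,a_{\sigma(i)}\rangle$ and $G^\tau_j=\langle b_{\tau(n-j-1)},\ldots,b_{\tau(n-1)}\rangle$, and define $\mathcal F^\sigma_i$, $\mathcal G^\tau_j$ analogously for $\alpha,\beta$. By \cite[Lemma 19]{CharltonRadchenkoRudenko},
\[
s([a]\otimes[b])=\sum_{\sigma,\tau\in\mathfrak S_n}(-1)^\sigma(-1)^\tau\,[F^\sigma_0\cap G^\tau_{n-1}|\cdots|F^\sigma_{n-1}\cap G^\tau_0],
\]
and similarly for $[\alpha]\otimes[\beta]$. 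Let $\rho(i)=n-1-i$. Then
\[
(F^\sigma_{i-1})^\perp=\langle\alpha_{\sigma(i)},\ldots,\alpha_{\sigma(n-1)}\rangle=\mathcal F^{\sigma\rho}_{n-1-i}
\quad\text{and}\quad
(G^\tau_{n-i-2})^\perp=\langle\beta_{\tau(0)},\ldots,\beta_{\tau(i)}\rangle=\mathcal G^{\tau\rho}_i.
\]
So your annihilator identity says precisely that $\vee$ sends the $(\sigma,\tau)$-term to the $(\sigma\rho,\tau\rho)$-term. This reindexing is a bijection of $\mathfrak S_n\times\mathfrak S_n$ that preserves general position. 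The coefficients match because $\mathrm{sgn}(\sigma\rho)\,\mathrm{sgn}(\tau\rho)=\mathrm{sgn}(\sigma)\,\mathrm{sgn}(\tau)$. This cancellation relies on both tensor factors acquiring the same $\mathrm{sgn}(\rho)$, which is another reason the swap-free, sign-free definition of $\vee$ on $\StH(V)$ has to be pinned down.
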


\begin{proof}
It suffices to check the claim on elements of the form
\[
[a_0,\ldots,a_{n-1}]\otimes[b_0,\ldots,b_{n-1}],
\]
since apartment classes span the Steinberg module. Let
$\alpha_0,\ldots,\alpha_{n-1}$ and
$\beta_0,\ldots,\beta_{n-1}$ be the bases of $V^\vee$
dual to the $a$- and $b$-bases, respectively. For $\sigma,\tau\in S_n$ and $0\leq i,j\leq n-1$, set
\begin{align*}
&F_i^\sigma
=
\langle a_{\sigma(0)},\ldots,a_{\sigma(i)}\rangle,
\qquad
G_j^\tau
=
\langle b_{\tau(n-j-1)},\ldots,b_{\tau(n-1)}\rangle,\\
&\mathcal F_i^\sigma
=\langle \alpha_{\sigma(0)},\ldots,\alpha_{\sigma(i)}\rangle,
\qquad
\mathcal G_j^\tau=\langle \beta_{\tau(n-j-1)},\ldots,\beta_{\tau(n-1)}\rangle.
\end{align*}
By \cite[Lemma 19]{CharltonRadchenkoRudenko},
\[
s\bigl(
[a_0,\ldots,a_{n-1}]
\otimes
[b_0,\ldots,b_{n-1}]
\bigr)
=
\sum_{\sigma,\tau\in S_n}
(-1)^\sigma(-1)^\tau
[P_0^{\sigma,\tau}|\cdots|P_{n-1}^{\sigma,\tau}],
\]
where
$
P_i^{\sigma,\tau}
:=
F_i^\sigma\cap G_{n-i-1}^\tau,$
and terms for which the two flags are not in general position are
omitted. Similarly,
\[
s\bigl(
[\alpha_0,\ldots,\alpha_{n-1}]
\otimes
[\beta_0,\ldots,\beta_{n-1}]
\bigr)
=
\sum_{\sigma,\tau\in S_n}
(-1)^\sigma(-1)^\tau
[
\mathcal F_0^\sigma\cap\mathcal G_{n-1}^\tau
|\cdots|
\mathcal F_{n-1}^\sigma\cap\mathcal G_0^\tau
].
\]

Consider the permutation 
\[
\rho(i)=n-1-i.
\]
We claim that $\vee$ sends the summand indexed by $(\sigma,\tau)$
in the first expression to the summand indexed by
$(\sigma\rho,\tau\rho)$ in the second expression. Suppose that $F^\sigma$ and $G^\tau$ are in general position.
Then
\[
V=P_0^{\sigma,\tau}\oplus\cdots\oplus P_{n-1}^{\sigma,\tau},
\]
and
\[
F_i^\sigma=P_0^{\sigma,\tau}\oplus\cdots\oplus P_i^{\sigma,\tau},
\qquad
G_{n-i-1}^\tau=P_i^{\sigma,\tau}\oplus\cdots\oplus P_{n-1}^{\sigma,\tau}.
\]
Consequently, with the conventions
$F_{-1}^\sigma=G_{-1}^\tau=0$, we have
\[
\bigoplus_{j\neq i}P_j^{\sigma,\tau}
=
F_{i-1}^\sigma\oplus G_{n-i-2}^\tau.
\]
Taking annihilators gives
\[
\left(
\bigoplus_{j\neq i}P_j^{\sigma,\tau}
\right)^\perp
=
(F_{i-1}^\sigma)^\perp
\cap
(G_{n-i-2}^\tau)^\perp.
\]
Since the $\alpha$'s and $\beta$'s are the corresponding dual
bases,
\[
(F_{i-1}^\sigma)^\perp
=
\langle
\alpha_{\sigma(i)},\ldots,\alpha_{\sigma(n-1)}
\rangle
=
\mathcal F_{n-1-i}^{\sigma\rho},
\]
and
\[
(G_{n-i-2}^\tau)^\perp
=
\langle
\beta_{\tau(0)},\ldots,\beta_{\tau(i)}
\rangle
=
\mathcal G_i^{\tau\rho}.
\]
Therefore
\[
\left(
\bigoplus_{j\neq i}P_j^{\sigma,\tau}
\right)^\perp
=
\mathcal F_{n-1-i}^{\sigma\rho}
\cap
\mathcal G_i^{\tau\rho}.
\]

By definition of $\vee$, we thus have
\[
\vee
\bigl([P_0^{\sigma,\tau}|\cdots|P_{n-1}^{\sigma,\tau}]\bigr)=
[P_{\sigma,\tau}^{n-1}|\cdots|P_{\sigma,\tau}^0]=
[
\mathcal F_0^{\sigma\rho}
 \cap \mathcal G_{n-1}^{\tau\rho}
|\cdots|
\mathcal F_{n-1}^{\sigma\rho}
 \cap \mathcal G_0^{\tau\rho}
],
\]
where $P_{\sigma,\tau}^i
:=
\left(
\bigoplus_{j\neq i}P_j^{\sigma,\tau}
\right)^\perp.$
Taking annihilators also shows that $F^\sigma$ and $G^\tau$ are
in general position if and only if
$\mathcal F^{\sigma\rho}$ and $\mathcal G^{\tau\rho}$ are in
general position. Thus the omitted terms correspond under the same
bijection.

Finally,
\[
\rm{sgn}(\sigma\rho)\:\rm{sgn}(\tau\rho)
=
\rm{sgn}(\sigma)\:\rm{sgn}(\tau).
\]
Reindexing the sum by
\[
(\sigma,\tau)\longmapsto(\sigma\rho,\tau\rho)
\]
therefore gives $\vee\circ s=s\circ \vee,$
which proves the commutativity of the diagram.
\end{proof}

\begin{proof}[Proof of \cref{prop: dual decomposition}]
Consider a diagram  
\begin{equation}\label{eqn: dual decomp operator}\begin{tikzcd}\StL(V) \rar \dar{\rm{D}}& (B^\rm{Com}\SSt)_n(V)\rar  \dar{\rm{D}} &(B^\rm{Com}\SSt)^v_n(V) \rar \dar{\rm{D}} & \FL(V) \dar{(\rm{D}^{\rm{F}})^{-1}} \\[-5pt]
\StL(V^{\vee}) \rar&  (B^\rm{Com}\SSt)_n(V^{\vee})\rar  &(B^\rm{Com}\SSt)^v_n(V^{\vee}) \rar& \FI(V^{\vee})
\end{tikzcd}\end{equation}
where $\rm{D}$ acts on $(B^\rm{Com}\SSt)_n(V)$ via 
\[
D([P_1|\dots|P_n])=[P^1|\dots| P^n] \quad \text{for}\quad P^i=\Bigl(\bigoplus_{j\neq i} P_j\Bigr )^{\perp}.
\] 
 We claim that  \eqref{eqn: dual decomp operator} is a commutative diagram.  The middle square of \eqref{eqn: dual decomp operator} clearly commutes;  the commutativity of the right square can be checked directly. \cref{lem: duality and symbol}  implies that for $x\in \StH(V)$ with $
s(x)=\sum n_i [(P_1)_i|\cdots|(P_n)_i]$, we have 
\[
s(\vee(x))=\sum n_i [(P^n)_i|\cdots|(P^1)_i].
\]
Up to shuffles, we have $[a_n|\dots|a_1]=(-1)^{n-1}[a_1|\dots|a_n]$, so the commutativity of the first square  follows from \cite[Lemma~8.1]{KRS1}.
The composition of the maps in the bottom row is precisely the decomposition operator $\rm{C}^{\FI}_v$ associated with the functional $v$ on $V^{\vee}$. This implies the statement.
\end{proof}

Informally, the dual decomposition operator $D^\FL_v$ writes the image of $\rm{FL}[v_1,\compactldots,v_n]$ as a sum of elements $\rm{FL}[w_1,\compactldots,w_n]$ with $w_1,\ldots,w_n$ in general position with $v$ and $v = w_1+\cdots+w_n$. Rewriting this in terms of the Steinberg iterated integrals instead of Steinberg multiple polylogarithms using \cite[Section 3.5]{CharltonRadchenkoRudenko}, we see $\rm{FI}[u_1,\compactldots,u_n]$ where $u_1 = v$.

\subsubsection{A cocycle for the Goncharov class} We now return to the Goncharov class associated with the linear map
\[H_{2n-1}(\GL_n(F)) \lra \scr{G}_n(F),\]
given by the composition of the canonical map $H_{2n-1}(\GL_n(F)) \to H_1(\GL_n(F);\StL_n(F))$ with the connecting homomorphism $H_1(\GL_n(F);\StL_n(F)) \to H_0(\GL_n(F);\rm{FIR}_n(F)) = \scr{G}_n(F)$. We have a description of the former on the chain level, so it remains to give one of the latter.

\smallskip

For a pair of nonzero vectors $v,v'$ we have a map
\[\widetilde{D}_{v,v'} \coloneq D^\FL_v-D^\FL_{v'} \colon \StL(V) \lra \FLR(V)\]
and projecting to the space of $\GL(V)$-coinvariants yields a map $D_{v,v'}\colon \StL(V)\to \PolyL(V)$, which for $n \geq 2$, by the invariance of correlators under scaling, only depends on the lines $\ell,\ell'$ spanned by $v,v'$. The connecting homomorphism in a long exact sequence associated with the short exact sequence 
    \[0 \lra \FLR(V) \lra \FL(V) \lra \StL(V) \lra 0\] 
induces an isomorphism $H_1(\GL(V);\StL(V)) \overset{\cong}\lra H_0(\GL(V);\FLR(V)) \cong \scr{G}_n(F)$.

\begin{lemma} \label{lem formula for boundary map from St_infty to FIR}
    Fixing a nonzero vector $v \in V$, this isomorphism is induced by the map 
    \begin{align*}C_1(\GL(V);\StL(V)) = \bb{Q}[\GL(V)] \otimes \StL(V) &\lra C_0(\GL(V);\FLR(V))= \FLR(V) \\
    [g] \otimes a &\longmapsto \widetilde{D}_{v,g \cdot v}(g \cdot a).\end{align*}
\end{lemma}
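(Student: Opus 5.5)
The connecting homomorphism is computed by the standard snake-lemma recipe on inhomogeneous chains. I would do this for the short exact sequence of $\GL(V)$-modules
\[0 \to \FLR(V) \to \FL(V) \to \StL(V) \to 0.\]
On a 1-cycle $z=\sum_k [g_k]\otimes a_k \in C_1(\GL(V);\StL(V))$, first lift each $a_k$ to $\FL(V)$. The canonical choice is the dual decomposition operator: set $\tilde a_k \coloneq D^\FL_v(a_k)$. This is a lift because $\rm{L}\circ D^\FL_v=\id_{\StL(V)}$. To see this identity, combine \cref{prop: dual decomposition} with the fact that $D^\FI_h$ is a section of $\rm{I}$ (from \cite[Section 5.1.2]{KRS1}) and with the commutative square $\rm{L}\circ \rm{D}^{\rm{F}} = \rm{D}\circ \rm{I}$. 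Writing $D^\FL_v = \rm{D}^{\rm{F}}\circ D^\FI_v\circ \rm{D}$, we get $\rm{L}D^\FL_v = \rm{D}\,\rm{I}\, D^\FI_v\,\rm{D} = \rm{D}^2$. We also need that $\rm{D}$ is an involution (up to the identification $V^{\vee\vee}=V$). That holds because $\vee\vee$ is the identity on flags/apartments, possibly up to a sign that must be checked and absorbed.

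Next, apply the boundary of $C_*(\GL(V);\FL(V))$ to the lifted chain. With the convention of \cref{lem chain description cap product}, $d([g]\otimes m) = [\,]\otimes m - [\,]\otimes g\cdot m$, up to the sign convention for the last face. The boundary of $\sum_k [g_k]\otimes \tilde a_k$ is therefore $\sum_k \bigl(D^\FL_v(a_k) - g_k\cdot D^\FL_v(a_k)\bigr)$. The cycle condition $\sum_k (a_k - g_k a_k)=0$ guarantees this lies in $\FLR(V)$, and by definition its class in $H_0(\GL(V);\FLR(V))$ is the connecting image of $[z]$.

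The main step is rewriting this using equivariance, $g\cdot D^\FL_v(a) = D^\FL_{g v}(g\cdot a)$. This follows from naturality of $\rm{D}$, $D^\FI_h$ and $\rm{D}^{\rm{F}}$ under $\GL(V)$; concretely it is visible from the explicit formula of \cref{prop: dual decomposition}, since the symbol and $\rm{C}^\FL_v$ are defined by $\GL$-natural recipes. The boundary then equals
\[\sum_k \bigl(D^\FL_v(a_k) - D^\FL_{g_kv}(g_k a_k)\bigr).\]
Now use the cycle condition again to replace $\sum_k D^\FL_v(a_k)$ by $\sum_k D^\FL_v(g_ka_k)$. Both sides then become $\sum_k \widetilde D_{v,g_kv}(g_k a_k)$, matching the formula claimed in the statement. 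Each summand individually lies in $\FLR(V)$, since $\rm{L}$ kills $\widetilde D_{v,v'}$, so the chain-level map is well defined termwise.

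The remaining work is bookkeeping: the overall sign convention of the connecting homomorphism relative to the face convention of $C_*(G;M)$, and whether the natural choice is $g\cdot a$ or $a$. I would fix these by checking the formula against the convention used for the isomorphism $H_0(\GL(V);\FLR(V))\cong\scr{G}_n(F)$. The step I expect to be the main obstacle is verifying $\rm{L}\circ D^\FL_v=\id$, i.e.\ that $\rm{D}$ is an involution compatible with the identifications. If it is only an involution up to sign, the lemma's formula would hold up to that global sign, and it would have to be tracked through \eqref{eqn: iso FI to FL}.
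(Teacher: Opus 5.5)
Your proposal is correct and is essentially the paper's proof. You lift the cycle along the splitting $D^\FL_v$, apply $d([g]\otimes m)=m-g\cdot m$, and then use the equivariance $g\cdot D^\FL_v(a)=D^\FL_{gv}(g\cdot a)$ together with the cycle condition to rewrite the boundary as $\sum_k \widetilde{D}_{v,g_k v}(g_k\cdot a_k)$.

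Your worry about the section property, which the paper takes for granted, resolves with no sign. As you say, $\rm{L}\circ D^\FL_v=\rm{D}^2$, and $\rm{D}^2=\id$ because two facts combine:
\begin{itemize}
\item On $(B^\rm{Com}\SSt)_n(V)$ the map $[P_1|\compactcdots|P_n]\mapsto[P^1|\compactcdots|P^n]$ is visibly an involution.
\item The symbol is injective on $\StL(V)$ and commutes with $\rm{D}$.
\end{itemize}
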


\begin{proof}
    Observe that the dual decomposition operator $D_v^{\FL} \colon \StL(V) \to \FL(V)$ gives a splitting as vector spaces. We will use this to describe the boundary map explicitly: let $x = \sum_i [g_i] \otimes a_i \in C_1(\GL(V);\StL(V))$ be a cycle. Using $D_v^{\FL}$ we can lift this to a chain in $C_1(\GL(V);\FL(V))$ given by $\widetilde{x} = \sum_i [g_i] \otimes D_v^{\FL}(a_i)$. Next, we take the differential of this chain to obtain the following element of $C_0(\GL(V);\FL(V))= \FL(V)$:
    \begin{align*} d \widetilde{x} &= \sum_i (D_v^{\FL}(a_i)- g_i \cdot D_v^{\FL}(a_i)) \\
    &=\sum_i (D_v^{\FL}(a_i- g_i \cdot a_i)+ D_v^{\FL}(g_i \cdot a_i) - D_{g_i v}^{\FL}(g_i \cdot a_i)) \\
    &= D_v^{\FL}(d(x))+\sum_i \widetilde{D}_{v,g_i \cdot v}(g_i \cdot a_i),\end{align*}
    where we have used that $g_i \cdot D_v(a_i)= D_{g_i \cdot v}(g_i \cdot a_i)$. Since $x$ was a cycle then the first term vanishes and the second one lies in $\FLR(V)$. Thus, the boundary map applied $x= \sum_i [g_i] \otimes a_i$ is given by formula in the statement of the lemma.
\end{proof}

Now suppose $n \geq 2$. By definition, we have $D_{x,y}+D_{y,z}=D_{x,z}$ and so we have a chain map
\begin{align*}\sf{D} \colon \ds{k}[L_\bullet] &\lra \rm{Hom}(\StL_n(F),\scr{G}_n(F))[1] \\
(\ell_0,\ell_1) &\longmapsto D_{\ell_0,\ell_1}.\end{align*}
Moreover, viewing $\rm{Hom}(\StL_n(F),\scr{G}_n(F))$ as a left $\GL_n(F)$-module via $(g \cdot f)(a)= f(g^{-1} \cdot a)$, we have $g \cdot D_{\ell_0,\ell_1}= D_{g \cdot \ell_0, g \cdot \ell_1}$ because for any $a \in \StL_n(F)$ we have 
 \[\widetilde{D}_{g \cdot \ell_0, g \cdot \ell_1}(a)= \widetilde{D}_{g \cdot \ell_0, g \cdot \ell_1}(g \cdot g^{-1} \cdot a)= g \cdot \widetilde{D}_{\ell_0,\ell_1}(g^{-1} \cdot a),\]
and taking coinvariants one gets that $D_{g \cdot \ell_0, g \cdot \ell_1}(a)= D_{\ell_0,\ell_1}(g^{-1} \cdot a) \in \scr{G}_n(F)$. Therefore, using $\ds{Q}[L_\bullet] \simeq \ds{Q}$, this defines a map $\pi_0\, \rm{Hom}_{\scr{D}}(\ds{Q}, \rm{Hom}(\StL_n(F),\scr{G}_n(F))[1])$, or equivalently a class $\rm{D}_n \in H^1(\GL_n(F);\rm{Hom}(\StL_n(F),\scr{G}_n(F)))$.

\begin{proposition}\label{prop:canonical-is-cap-gamman} The canonical map from homology to $E_\infty$-homology
    \[\rm{can}_n\colon H_{2n-1}(\GL_n(F)) = H_{n,2n-1}(\BGLb(F)_\bb{Q}) \lra H_{n,2n-1}^{E_\infty}(\BGLb(F)_\bb{Q}) \cong \scr{G}_n(F)\]
is given by taking the cap product against the cohomology class
\[ \rm{ev}_n (\rm{D}_n \cup \pi(\rm{st}_n \cup \rm{st}_n)) \in H^{2n-1}(\GL_n(F);\scr{G}_n(F)),\]
where $\rm{ev}_n \colon \rm{Hom}(\StL_n(F),\scr{G}_n(F)) \otimes \StL_n(F) \to \scr{G}_n(F)$ is the evaluation map.
\end{proposition}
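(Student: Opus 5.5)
The plan is to factor $\rm{can}_n$ as a composite of two maps and recognise each as a cap product. By \cref{prop:canonical-map-chain-level}, the canonical map
\[H_{2n-1}(\GL_n(F)) \lra H^{E_\infty}_{n,2n-1}(\BGLb(F)_\bb{Q}) \cong H_1(\GL_n(F);\StL_n(F))\]
is the cap product with $\rm{st}^\infty_n = \pi(\rm{st}_n\cup\rm{st}_n)$. The identification $H_1(\GL_n(F);\StL_n(F)) \cong \scr{G}_n(F)$ used in the statement is the connecting homomorphism for $0 \to \FLR_n(F) \to \FL_n(F) \to \StL_n(F) \to 0$. Strictly, this has to be compared with the identification via $\FIR$ in \cite{KRS1}. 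That comparison goes through the commutative diagram involving $\rm{D}^{\rm{F}}$ and $\rm{D}$ together with \eqref{eqn: iso FI to FL}, and it only changes the map by a sign. So the first step is to fix that sign convention, or absorb it into the normalisation of $\rm{D}_n$. Once that is done, it remains to show two things: that the connecting homomorphism is the cap product with $\rm{D}_n$ followed by $\rm{ev}_n$, and that capping successively with two classes equals capping with their cup product.

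For the second point I use the standard associativity $(x\cap a)\cap b = x\cap(a\cup b)$, with coefficients paired by $\rm{ev}_n$. This can be checked on the chain level using \cref{lem chain description cap product} and \cref{lem chain description of cup product}. Since cup products may be computed in the line resolution $\bb{Q}[L_\bullet]$ via \cref{lem cup product lines}, compatibly with the comparison $\sf{L}$, everything can be done there. The one subtlety is the ordering of factors and the resulting Koszul sign, since $\rm{D}_n$ has degree $1$ and $\rm{st}^\infty_n$ has degree $2n-2$. I would fix the convention so that $\rm{D}_n$ is applied last, that is in the final slot $[g_{2n-1}]$, which matches the formula of \cref{thm:goncharov-class}.

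For the first point, I would show that for a $1$-chain $[g]\otimes a \in C_1(\GL_n(F);\StL_n(F))$, capping with the inhomogeneous cocycle $g\mapsto D_{\ell, g\ell}$ and then evaluating gives $D_{\ell,g\ell}(g\cdot a)$. Here the $\GL_n(F)$-action on $\rm{Hom}(\StL_n(F),\scr{G}_n(F))$ has to be tracked through \cref{lem chain description cap product}: the last entry acts on the coefficient, producing $g\cdot a$, or equivalently $g^{-1}$ on the functional. By \cref{lem formula for boundary map from St_infty to FIR}, after passing to coinvariants this is exactly the connecting homomorphism. I also need $\sf{D}\circ\sf{L}$ to be the inhomogeneous cocycle $[g]\mapsto D_{\ell,g\ell}$, which is immediate from the definition of $\sf{L}$.

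I expect the main obstacle to be bookkeeping rather than a conceptual difficulty. It consists of matching the left/right action conventions in \cref{lem formula for boundary map from St_infty to FIR}, which applies $g\cdot a$, against those in the cap product formula, which applies $g_p\cdot m$ in the last slot. It also includes getting the sign of the Koszul interchange and the $\FI$ versus $\FL$ identification consistent. I would resolve these by a direct chain-level computation on a generator $[g_1|\cdots|g_{2n-1}]$, checking that the left and right sides produce the same element of $\scr{G}_n(F)$ up to the globally fixed sign.
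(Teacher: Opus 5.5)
Your proposal follows the paper's proof. You factor $\rm{can}_n$ through $H_1(\GL_n(F);\StL_n(F))$, identify the first map as the cap product with $\pi(\rm{st}_n\cup\rm{st}_n)$ via \cref{prop:canonical-map-chain-level}, identify the connecting homomorphism as the cap product with $\rm{D}_n$ followed by $\rm{ev}_n$ via \cref{lem formula for boundary map from St_infty to FIR}, and merge the two caps into a cap with the cup product.

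One correction to your bookkeeping aside concerns the slot in which $\rm{D}_n$ is evaluated. With the conventions of \cref{lem chain description cap product} and \cref{lem chain description of cup product}, capping first with $\pi(\rm{st}_n\cup\rm{st}_n)$ uses up the last $2n-2$ slots, so $\rm{D}_n$ is evaluated on the first slot $[g_1]$. This gives exactly the order $\rm{D}_n\cup\pi(\rm{st}_n\cup\rm{st}_n)$ of the statement. By contrast, \cref{thm:goncharov-class} places $\rm{D}_n$ in the middle slot $[g_n]$ (the factor $D_{\ell_{n-1},\ell_n}$), not the final one, so your claimed match with that theorem is off. Either way, reordering costs no sign here, because $\pi(\rm{st}_n\cup\rm{st}_n)$ has even degree $2n-2$.
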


\begin{proof}Under the indicated isomorphisms the canonical map is given by the composition
\[H_{2n-1}(\GL_n(F)) \lra H_1(\GL_n(F);\StL_n(F)) \lra H_0(\GL_n(F);\FLR(V)),\]
and combining \cref{prop:canonical-map-chain-level} with \cref{lem formula for boundary map from St_infty to FIR} the first map is given by cap product against $\pi(\rm{st}_n \cup \rm{st}_n)$ and the second map by cap product against $\rm{D}_n$ followed by evaluation.
\end{proof}

We can now prove the formula given in \cref{thm:goncharov-class} for the Goncharov class.

\begin{proof}[Proof of \cref{thm:goncharov-class}] Consider the map
\begin{align*}\sf{g}_n \colon \bb{Q}[L_{2n-1}] &\lra \scr{G}_n(F) \\
(\ell_0,\compactldots,\ell_{2n-1}) &\longmapsto (-1)^{n-1} D_{\ell_{n-1},\ell_n}(\pi ([\ell_0,\compactldots,\ell_{n-1}] \otimes [\ell_n,\compactldots,\ell_{2n-1}])).\end{align*}
Then $\sf{g}^B_n = \sf{g}_n \circ\sf{L}$ with $\sf{L} \colon C_{2n-1}(\GL_n(F)) \to \bb{Q}[L_{2n-1}]$ from \eqref{eqn:comparison-of-two-resolutions} is exactly given by the formula in \cref{thm:goncharov-class}. By \cref{lem cup product lines} this represents $(-1)^{n-1} \rm{ev}_n(\pi(\sf{st}_n \cup \sf{D} \cup \sf{st}_n)) = \rm{ev}_n(\sf{D} \cup \pi(\sf{st}_n \cup \sf{st}_n))$ using graded commutativity of the cup product. By \cref{prop:canonical-is-cap-gamman} this implements the canonical map on the chain level.\end{proof}

\begin{corollary}\label{cor:gn-cocycle-equation} $\sf{g}_n$ satisfies the $(2n-1)$-cocycle equation
    \[\sum_{i=0}^{2n} (-1)^i \sf{g}_n(\ell_0,\compactldots,\hat{\ell}_i, \compactldots, \ell_{2n})=0.\]
\end{corollary}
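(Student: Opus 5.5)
The plan is to read the cocycle equation off from the fact that $\sf{g}_n$ is built as a cup product of chain maps out of the linear resolution $\bb{Q}[L_\bullet]$. In the proof of \cref{thm:goncharov-class}, $\sf{g}_n$ was identified with $(-1)^{n-1}\rm{ev}_n(\pi(\sf{st}_n \cup \sf{D} \cup \sf{st}_n))$. Here the cup product is formed with the diagonal $\delta$ from \cref{lem cup product lines}, and its ingredients are the following chain maps:
\begin{itemize}
\item $\sf{st}_n \colon \bb{Q}[L_\bullet] \to \St_n(F)[n-1]$ from \cref{lem:equivalent-extension-steinberg};
\item $\sf{D} \colon \bb{Q}[L_\bullet] \to \rm{Hom}(\StL_n(F),\scr{G}_n(F))[1]$, which is a chain map because $D_{x,y}+D_{y,z}=D_{x,z}$.
\end{itemize}
A composite of a chain map with $\delta$ (tensored up) is again a chain map, and so are $\pi$ and $\rm{ev}_n$. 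Hence $\sf{g}_n \colon \bb{Q}[L_\bullet] \to \scr{G}_n(F)[2n-1]$ is a chain map. Concretely, it vanishes on $d(\bb{Q}[L_{2n}])$, since the target is concentrated in a single degree. Writing out $d(\ell_0,\compactldots,\ell_{2n}) = \sum_i (-1)^i(\ell_0,\compactldots,\hat\ell_i,\compactldots,\ell_{2n})$ gives exactly the displayed identity.

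The key steps, in order, are these.

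\begin{enumerate}[(1)]
\item Check that $\sf{st}_n$ is a cocycle on $\bb{Q}[L_\bullet]$. It factors as $a\circ d$ in the diagram of \cref{lem:equivalent-extension-steinberg}, so it kills $d(\bb{Q}[L_n])$ because $d^2=0$.
\item Check that $\sf{D}$ is a cocycle. Its coboundary on $(\ell_0,\ell_1,\ell_2)$ is $D_{\ell_1,\ell_2}-D_{\ell_0,\ell_2}+D_{\ell_0,\ell_1}=0$.
\item Check that $\delta$ is a chain map. This is the Alexander--Whitney-type computation implicit in \cref{lem cup product lines}, so the Leibniz rule gives $d(\phi\cup\psi)=d\phi\cup\psi\pm\phi\cup d\psi = 0$.
\item Compose with the linear maps $\pi$ and $\rm{ev}_n$, and track the sign $(-1)^{n-1}$, which is a global constant and so irrelevant for the equation.
\end{enumerate}

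The step needing the most care is the identification of the triple cup product with the explicit formula. Here one must make sure that, with the index conventions of \cref{lem cup product lines}, the $\sf{D}$ factor lands on $(\ell_{n-1},\ell_n)$ and the two Steinberg factors on $(\ell_0,\compactldots,\ell_{n-1})$ and $(\ell_n,\compactldots,\ell_{2n-1})$. If instead one uses the rearranged product $\sf{D}\cup\pi(\sf{st}_n\cup\sf{st}_n)$, the overlapping indices shift, and one needs graded commutativity at the chain level. That holds only up to homotopy, which would spoil an exact cocycle identity. So I would work directly with the ordering $\sf{st}_n\cup\sf{D}\cup\sf{st}_n$, which reproduces the formula verbatim with the lines $\ell_{n-1},\ell_n$ shared at the two junctions. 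For this ordering, strict cocycle-ness follows from (1)--(3) with no homotopies involved.

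An alternative check is to note that $\sf{g}_n\circ\sf{L}$ is a genuine inhomogeneous cocycle by \cref{thm:goncharov-class}. This only gives the identity on tuples of the form $(g_1\cdots g_i\ell)$, which is all of $L_{2n}$ since $\GL_n(F)$ acts transitively on lines. Both routes should agree.
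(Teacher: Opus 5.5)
Your proposal is correct and is essentially the paper's argument. The corollary is read off from the proof of \cref{thm:goncharov-class}, where \cref{lem cup product lines} identifies $\sf{g}_n$ exactly with $(-1)^{n-1}\rm{ev}_n(\pi(\sf{st}_n\cup\sf{D}\cup\sf{st}_n))$; this is a cup product of chain maps out of $\bb{Q}[L_\bullet]$, hence itself a chain map, i.e.\ a cocycle. (In your alternative check, note that $\sf{L}$ only produces tuples whose first entry is the fixed line $\ell$, so transitivity alone is not enough: you also need the invariance $\sf{g}_n(g\ell_0,\compactldots,g\ell_{2n-1})=\sf{g}_n(\ell_0,\compactldots,\ell_{2n-1})$, both to match the $0$-th face and to move $\ell_0$ to $\ell$; it follows from $D_{g\ell,g\ell'}(g\cdot a)=D_{\ell,\ell'}(a)$.)
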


\subsection{Explicit formula for the Goncharov cocycle on generic configurations}
Our next goal is to give an explicit formula for the Goncharov class 
when the configuration of lines $\ell_0,\dots,\ell_{2n-1}$ is generic, in the sense that any $n$ out of $2n$ lines give a direct sum decomposition of $V$. This suffices to describe it as a measurable cocycle. To do so, we will express an element $\sf{g}_n(\ell_0,\compactldots,\ell_{2n-1}) \in \PolyL_n(F)$ in terms of the cluster Grassmannian polylogarithms that were introduced in \cite{MR22}.

\subsubsection{Cluster Grassmannian polylogarithms}
\label{sec:cluster grassm polylogs}
Assume $n\geq 2$. Let $v_0,\ldots,v_{2n-1}$ be a generic ordered tuple of vectors in $V$, and fix a volume form $0\neq\omega\in\Lambda^nV^\vee$.  For an ordered $n$-tuple of vectors in general position, the expression $\omega(v_{i_1},\compactldots,v_{i_n})$ agrees with a Pl{\"u}cker coordinate up to a sign.

For $0\leq k\leq n-1$, define an ordered $(n-1)$-tuple
\[
I_k \coloneq (v_k,v_{k+1},\compactldots,v_{n-2},v_n,v_{n+1},\compactldots,v_{n+k-1})
\]
with empty ranges omitted, and define
\[
 R_k(v_0,\compactldots,v_{2n-1}) \coloneq \frac{\omega(I_k,v_{2n-1})}
                  {\omega(I_k,v_{n-1})}.
\]
For an ordered $(n-2)$-tuple $S$ of vectors, we have the projected cross-ratio
\[
 [S | v_0,v_1,v_2,v_3]
 \coloneq \frac{\omega(S,v_0,v_1)\omega(S,v_2,v_3)}
          {\omega(S,v_0,v_3)\omega(S,v_2,v_1)},
\]
which also equals to the cross-ratio of the projections of the vectors $v_0, v_1, v_2, v_3$ to the quotient of the space $V$ by the span of the vectors in $S$. Finally, for $0\leq k\leq n-2$ we define
\[
 A_k \coloneq [v_{k+1},\compactldots,v_{n-2},v_n,\compactldots,v_{n+k-1}| v_k,v_{n-1},v_{n+k},v_{2n-1}].
\]
A direct determinant calculation gives
\begin{equation}\label{eq:Ak-R}
 A_k=\frac{R_{k+1}(v_0,\compactldots,v_{2n-1})}{R_k(v_0,\compactldots,v_{2n-1})}.
\end{equation}
The next definition uses the (unnormalised) alternation operator $\rm{Alt}_{S}(f) \coloneq \sum_{\sigma \in \mathfrak{S}_S}(-1)^{\sigma} \sigma f$.

\begin{definition}\label{def:GLi-revised}
For $n\geq 2$, $m\geq n-1$ and $m\geq2$, define
\begin{align}
 \GLiG_m(v_0,\compactldots,v_{2n-1})
 :=\mathrm{Alt}_{\{0,\compactldots,n-2\}}
    \mathrm{Alt}_{\{n,\compactldots,2n-2\}}
 \LiG_{m-n+1;1,\compactldots,1}(A_0,\compactldots,A_{n-2}).
\end{align}
\end{definition}

Equivalently, using \eqref{eq:Ak-R}, \cite[Definition 7.15]{KRS1}, and rescaling invariance, we have
\[
 \GLiG_m(v_0,\compactldots,v_{2n-1})
 =(-1)^{n-1}
 \mathrm{Alt}_{\{0,\compactldots,n-2\}}
\mathrm{Alt}_{\{n,\compactldots,2n-2\}} \ItG \!\left(
 0;\underbrace{0,\compactldots,0}_{m-n+1},
 R_0,\compactldots,R_{n-2};R_{n-1}\right).
\]

\begin{example} For $n=m=3$ we have
\begin{align*}
 \GLiG_3(v_0,v_1,v_2,v_3,v_4,v_{5})=&\quad\: \LiG_{1;1,1}([v_1|v_0,v_2,v_3,v_5],[v_3|v_1,v_2,v_4,v_5])\\
 &-\LiG_{1;1,1}([v_0|v_1,v_2,v_3,v_5],[v_3|v_0,v_2,v_4,v_5])\\
 &+\LiG_{1;1,1}([v_0|v_1,v_2,v_4,v_5],[v_4|v_0,v_2,v_3,v_5])\\
 &-\LiG_{1;1,1}([v_1|v_0,v_2,v_4,v_5],[v_4|v_1,v_2,v_3,v_5]).
\end{align*}
\end{example}

The cluster Grassmannian polylogarithm depends only on the configuration of lines $\ell_0,\ldots,\ell_{2n-1}$ spanned by the vectors $v_0,\dots,v_{2n-1}$, so we will sometimes use the notation $\GLiG_{m}(\ell_0,\compactldots,\ell_{2n-1})$ instead. The formal realisation of the element defined above coincides with the cluster Grassmannian polylogarithm defined in \cite[Section 4.2]{MR22}.

\begin{remark}
The adjective ``cluster'' refers to the symbol of the element
$\GLiG_m$.  Label the Pl{\"u}cker coordinates on
$\rm{Gr}(n,2n)$ by $p_I$, where $I\subset\{0,\ldots,2n-1\}$ and
$|I|=n$.  Two such subsets $I$ and $J$ are called \emph{weakly separated}
if there are no cyclically ordered indices $a,b,c,d$ with
$a,c\in I\setminus J$ and $b,d\in J\setminus I$; equivalently,
$I\setminus J$ and $J\setminus I$ can be separated by a chord of the
circle.  Maximal pairwise weakly separated collections are precisely the Pl\"ucker clusters.  The symbol of $\operatorname{GLi}^{\mathrm G}_m$
admits an integrable representative
\[
 \sum_\alpha c_\alpha\,
 p_{I_{\alpha,1}}\otimes\cdots\otimes p_{I_{\alpha,m}}
\]
such that, for every $\alpha$, the sets
$I_{\alpha,1},\ldots,I_{\alpha,m}$ are pairwise weakly separated, and
hence all letters of each tensor word belong to a single Pl\"ucker
cluster.  In this sense its symbol is a Pl\"ucker cluster-integrable
symbol \cite[Theorem~1.5(1) and Corollary~4.5]{MR22}.
\end{remark}

\subsubsection{The proof of the explicit formula}
The main result of this section is the following theorem:

\begin{theorem}\label{thm:explicit formula for the cocycle on generic configurations}
For $n\geq 2$ and a generic configuration of lines $\ell_0,\ldots,\ell_{2n-1}$, we have
\[
\sf{g}_n(\ell_0,\compactldots,\ell_{2n-1})
 =(-1)^{\binom{n}{2}+1}
 \GLiG_n(\ell_0,\compactldots,\ell_{n-1},
                       \ell_{2n-1},\compactldots,\ell_n).\]
\end{theorem}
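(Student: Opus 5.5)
The plan is a direct computation: unwind the definition of $\sf{g}_n$ on a generic configuration through the explicit description of the dual decomposition operator in \cref{prop: dual decomposition}, and then match the outcome term by term with \cref{def:GLi-revised}. Choose vectors $v_i \in \ell_i$. The first step is to compute the symbol of $\pi([v_0,\compactldots,v_{n-1}]\otimes[v_n,\compactldots,v_{2n-1}])$ in $(B^\rm{Com}\SSt)_n(V)$. By \cite[Lemma 19]{CharltonRadchenkoRudenko}, as recalled in the proof of \cref{lem: duality and symbol}, this is the signed sum over pairs $(\sigma,\tau)\in\fr{S}_n^2$ of the bars $[F^\sigma_0\cap G^\tau_{n-1}|\compactcdots|F^\sigma_{n-1}\cap G^\tau_0]$. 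Here the $F$ are coordinate flags built from $v_0,\compactldots,v_{n-1}$ and the $G$ are coordinate flags built from $v_n,\compactldots,v_{2n-1}$. Genericity ensures that every pair of flags is in general position, so no terms are dropped.

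The second step applies $\rm{C}^\FL_{v}$ with $v=v_{n-1}$ and with $v=v_n$. For $v=v_{n-1}$, one has $\ell_{n-1}\subset F^\sigma_i$ for every $i\ge\sigma^{-1}(n-1)$. Consequently, the lines $P_i$ are in general position with $v_{n-1}$ only when $\sigma(n-1)=n-1$ or some similarly restrictive condition holds; the exact condition needs checking. The same reasoning for $v_n$ cuts down the flags $G^\tau$. In the surviving terms, the decomposition $v=\sum w_i$ with $w_i\in P_i$ is solved by Cramer's rule. This expresses each $\FL[w_n,\compactldots,w_1]$, and hence its class in $\scr{G}_n(F)$, as a correlator or iterated integral $\ItG$ whose arguments are ratios of determinants $\omega(\cdots)$. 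Using scaling invariance and \cite[Definition 7.15]{KRS1}, I expect these arguments to be exactly the $R_k$, up to the reindexing in which $\ell_n,\compactldots,\ell_{2n-1}$ appear in reversed order. That reversal is already visible in the theorem's statement.

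The third step takes the difference $D^\FL_{v_{n-1}}-D^\FL_{v_n}$ and passes to coinvariants. The sums over $\sigma$ and $\tau$ that survive should reorganise into $\rm{Alt}_{\{0,\compactldots,n-2\}}\rm{Alt}_{\{n,\compactldots,2n-2\}}$ of a single term $\ItG(0;0,R_0,\compactldots,R_{n-2};R_{n-1})$, which is the $m=n$ case of the equivalent form of \cref{def:GLi-revised}. The two operators $D^\FL$ should contribute the two "ends" of the iterated integral, and the terms they share should cancel. I would first check this mechanism and the sign for $n=2$, where everything reduces to the five-term relation and to cross-ratios.

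The main obstacle is the bookkeeping of signs and indices. The signs come from several sources: $(-1)^{n-1}$ in $\sf{g}_n$, $(-1)^n$ in $\rm{D}^\rm{F}$, the reversal $[a_n|\compactcdots|a_1]=(-1)^{n-1}[a_1|\compactcdots|a_n]$, the sign $-1$ in \eqref{eqn: iso FI to FL}, and $(-1)^{n-1}$ in the $\ItG$-form of $\GLiG$. Together they must produce $(-1)^{\binom n2+1}$, and I expect the $\binom n2$ to come from reversing $\ell_n,\compactldots,\ell_{2n-1}$. A secondary risk is whether the leftover terms really cancel. If they don't cancel outright, I would use the cocycle relation \cref{cor:gn-cocycle-equation} together with the correlator relations from \cite{KRS1} to remove them.
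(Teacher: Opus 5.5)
Your overall shape matches the paper: compute the symbol via \cite[Lemma 19]{CharltonRadchenkoRudenko}, keep the terms in general position with the vector being decomposed, and solve for the normalised vectors by Cramer's rule. You then recognise the resulting ratios as the $R_k$ of the tuple $(v_0,\ldots,v_{n-1},v_{2n-1},\ldots,v_n)$. Your condition $\sigma(n-1)=n-1$ on the $V$-side and your account of the reversal are correct.

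The gap is in your third step. The element $\widetilde{D}_{v_{n-1},v_n}(T)$ lies in $\FLR(V)$, and $\sf{g}_n$ is, up to sign, its image under $H_0(\GL(V);\FLR(V))\cong\scr{G}_n(F)$. An individual generator $\FL[w_n,\ldots,w_1]$ is not in $\FLR(V)$, so it has no class in $\scr{G}_n(F)$. Moreover, the paper gives no generator-level formula for this isomorphism on the $\FL$-side; the only handle on it is through $\rm{D}^{\rm{F}}$. So your sentence ``this expresses each $\FL[w_n,\ldots,w_1]$, and hence its class in $\scr{G}_n(F)$, as a correlator'' is not a step you can carry out. ``Pass to coinvariants'' then leaves the actual computation unspecified.

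The missing idea is the route the paper takes. Since $D^{\FL}_v=\rm{D}^{\rm{F}}\circ D^{\FI}_v\circ D$, the identity \eqref{eqn: iso FI to FL} applies. This is exactly where the sign $-1$ you listed enters. It says the class of $\widetilde{D}_{a_{n-1},b_0}(T)$ is minus the class of $D^{\FI}_{a_{n-1}}D(T)-D^{\FI}_{b_0}D(T)\in\FIR(V^\vee)$. That class is computed by the evaluation map $E_{b_0}$ of \cite[Section 5.2.2]{KRS1}, which is defined on all of $\FI(V^\vee)$.

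The choice of evaluation point is essential. Every term of $D^{\FI}_{b_0}D(T)$ is normalised by $h^i(b_0)=1$, so it evaluates to $\CorG(0,1,\ldots,1)=0$, and the second operator drops out entirely. So your expected mechanism, where the two operators supply the two ends of the iterated integral and shared terms cancel, is not what happens. Instead:
\begin{itemize}
\item Only $D^{\FI}_{a_{n-1}}$ contributes, namely the terms with $\sigma(0)=n-1$ on the dual side.
\item The terms with $\tau^{-1}(0)\le n-3$ cancel in pairs by transposing two entries of $\tau$ after $0$.
\item The terms with $\tau(n-1)=0$ and $\tau(n-2)=0$ pair off to give $\CorG(0,x_0,\ldots,x_{n-1})-\CorG(0,x_0,\ldots,x_{n-2},0)=\ItG(0;0,x_0,\ldots,x_{n-2};x_{n-1})$.
\item Reindexing this by $(\rho,\eta)$ gives the double alternation in the definition of $\GLiG_n$. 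The $\binom{n}{2}$ comes from $\rm{sgn}(\tau_\eta)$, which confirms your guess about its origin.
\end{itemize}

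Your fallback through \cref{cor:gn-cocycle-equation} does not repair the gap. The cocycle equation relates different values of $\sf{g}_n$; it does not compute a single one.
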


\begin{proof} 
Choose vectors $a_0,\dots,a_{n-1}$ spanning the lines $\ell_0,\dots,\ell_{n-1}$ and $b_0,\dots,b_{n-1}$ spanning the lines $\ell_n,\dots,\ell_{2n-1}$. Introduce the notation
\[
T(a,b) \coloneq \pi ([a_0,\compactldots,a_{n-1}] \otimes [b_0,\compactldots,b_{n-1}])\in \StL(V).
\]
Let $\alpha_0,\ldots,\alpha_{n-1}$ and
$\beta_0,\ldots,\beta_{n-1}$ be the bases of $V^\vee$ dual to the $a$- and $b$-bases. By definition $\sf{g}_n(\ell_0,\compactldots,\ell_{2n-1})
 =(-1)^{n-1} D_{a_{n-1},b_0}(T(a,b))\in \PolyL_n(F)$.

We start by computing the symbol of the dual element $
\vee(T(a,b))=\pi ([\alpha_0,\compactldots,\alpha_{n-1}]\otimes [\beta_0,\compactldots,\beta_{n-1}])$ using \cite[Lemma 19]{CharltonRadchenkoRudenko}. For permutations $\sigma,\tau$ of  the set $\{0,\dots,n-1\}$ and $0\leq i,j\leq n-1$ we define subspaces
\[
\mathcal{F}_i^{\sigma}=\langle \alpha_{\sigma(0)},\dots,\alpha_{\sigma(i)} \rangle, \quad \text{and} \quad \scr{G}_j^{\tau}=\langle \beta_{\tau(n-j-1)},\dots, \beta_{\tau(n-1)}\rangle;
\]
The following identity holds in $(B^\rm{Com}\SSt)(V^{\vee})$:
\[
s([\alpha_0,\compactldots,\alpha_{n-1}]\otimes [\beta_0,\compactldots,\beta_{n-1}])=\sum_{\sigma,\tau\in\mathfrak{S}_n}(-1)^{\sigma}(-1)^{\tau}[\,\mathcal{F}_0^{\sigma}{\cap}\scr{G}_{n-1}^{\tau}\,|\,\mathcal{F}_1^{\sigma}{\cap}\scr{G}_{n-2}^{\tau}\,|\cdots|\,\mathcal{F}_{n-1}^{\sigma}{\cap} \scr{G}_{0}^{\tau}\,],
\]
where we set $[\,\mathcal{F}_0^{\sigma}{\cap} \scr{G}_{n-1}^{\tau}\,|\,\mathcal{F}_1^{\sigma}{\cap}\scr{G}_{n-2}^{\tau}\,|\cdots|\,\mathcal{F}_{n-1}^{\sigma}{\cap} \scr{G}_{0}^{\tau}\,]=0$ if the flags $\mathcal{F}^{\sigma}$ and $\scr{G}^{\tau}$ are not in general position. The lines $l_0,\dots, l_{2n-1}$ are in general position and so, in our case, the flags $\mathcal{F}^{\sigma}$ and $\scr{G}^{\tau}$ are in general position for any permutations $\sigma$ and $\tau$.

We use duality to rewrite the expression above using annihilators. We have
\begin{align*}
&\langle \alpha_{\sigma(0)},\dots, \alpha_{\sigma(i-1)}\rangle=a_{\sigma(i)}^{\perp}\cap \dots \cap a_{\sigma(n-1)}^{\perp}\\
&\langle \beta_{\tau(n-j)},\dots, \beta_{\tau(n-1)}\rangle= b_{\tau(0)}^{\perp}\cap \dots \cap b_{\tau(n-j-1)}^{\perp}.
\end{align*} 
Thus, for any $\sigma,\tau \in \mathfrak{S}_n$  and $0\leq k \leq n-1$ we have 
\[\mathcal{F}_{k}^{\sigma} \cap \scr{G}_{n-k-1}^{\tau}
 =\bigcap_{r=k+1}^{n-1}a_{\sigma(r)}^\perp \cap \bigcap_{s=0}^{k-1}b_{\tau(s)}^\perp  \subset V^{\vee};\]
we denote this line by $P_k^{\sigma,\tau}$.
Then we get
\[
s([\alpha_0,\dots,\alpha_{n-1}]\otimes [\beta_0,\dots,\beta_{n-1}])= \sum_{\sigma,\tau\in S_n}(-1)^{\sigma}(-1)^{\tau}
 [P_0^{\sigma,\tau}|\cdots|P_{n-1}^{\sigma,\tau}].
\]

Next, we compute the action of the operator $D_{a_{n-1},b_0}$ on $T(a,b)$. We have 
\begin{equation}\label{eqn: element to project}
\widetilde{D}_{a_{n-1},b_0}T(a,b)=\rm{D}^\FL_{a_{n-1}}T(a,b)-\rm{D}^\FL_{b_{0}}T(a,b) \in \FLR(V)
\end{equation}
and $D_{a_{n-1},b_0} T(a,b)$ is the projection of this element to $\PolyL_n(F)$. The map
\[
     H_{0}(\GL(V);\FIR(V)) \stackrel{D_*}{\lra}  H_{0}(\GL(V^{\vee});\FLR(V^{\vee}))
\]
is given by multiplication by $-1$, see \eqref{eqn: iso FI to FL}. Thus, the projection of \eqref{eqn: element to project} to $\PolyL_n(F)$ equals the projection of 
\[
-\Bigl(\rm{D}^\FI_{a_{n-1}} D(T(a,b))-\rm{D}^\FI_{b_{0}}D(T(a,b))\Bigr)\in \FIR(V^{\vee})
\] 
to $\PolyL_n(F)$. This projection can be computed by composing with the map $E_{b_0}$ defined in \cite[Section 5.2.2]{KRS1}, viewing $b_0$ as a functional on $V^{\vee}$. Observe that we have $\smash{E_{b_0}\circ \rm{D}^\FI_{b_{0}}} =0$ because evaluating at $b_0$ any element $\FC[0:h^1:\compactcdots:h^n]$ with $h^i(b_0)=1$ gives $\CorG(0,1,\compactldots,1)$ and this vanishes in $\PolyL_n(F)$. Thus we have
\[
D_{a_{n-1},b_0}\bigl(T(a,b)\bigr)=-E_{b_0}\circ \rm{D}^\FI_{a_{n-1}}D(T(a,b)).
\]

To evaluate this expression, we recall that by \cite[Lemma 8.1]{KRS1} the symbol of $D(T(a,b))$ differs from that for $\vee(T(a,b))$ by the sign $(-1)^{n-1}$:
\[
s(D(T(a,b)))=(-1)^{n-1}\sum_{\sigma,\tau\in S_n}(-1)^{\sigma}(-1)^{\tau}
 [P_0^{\sigma,\tau}|\cdots|P_{n-1}^{\sigma,\tau}].
\]
To compute $\rm{D}^\FI_{a_{n-1}}$ we omit the terms on the symbol which are not in general position with respect to the functional $a_{n-1}$. The surviving terms correspond to $\sigma, \tau$ such that 
\[
\bigcap_{r=k+1}^{n-1}a_{\sigma(r)}^\perp \cap \bigcap_{s=0}^{k-1}b_{\tau(s)}^\perp \not \subset a_{n-1}^{\perp} \quad \text{for} \quad k=0,\dots,n-1.
\] 
These are precisely the terms with $\sigma(0)=n-1$.
For such a term, let
$q_k^{\sigma,\tau}\in P_k^{\sigma,\tau}$ be normalised by
$q_k^{\sigma,\tau}(a_{n-1})=1$.  Explicitly,
\begin{equation}\label{eq:q-sigma-tau}
 q_k^{\sigma,\tau}(x)=
 \frac{\omega(a_{\sigma(k+1)},\ldots,a_{\sigma(n-1)},
                 b_{\tau(0)},\ldots,b_{\tau(k-1)},x)}
      {\omega(a_{\sigma(k+1)},\ldots,a_{\sigma(n-1)},
                 b_{\tau(0)},\ldots,b_{\tau(k-1)},a_{n-1})}
\end{equation}
and thus
\[
\rm{D}^\FI_{a_{n-1}}D(T(a,b))=(-1)^n(-1)^{n-1}\sum_{\substack{\sigma,\tau\in \mathfrak{S}_n\\\sigma(0)=n-1}}
  (-1)^{\sigma}(-1)^{\tau}
  \FI\bigl[q_0^{\sigma,\tau},\compactldots,q_{n-1}^{\sigma,\tau}\bigr].
\]
Put $x_k^{\sigma,\tau}=q_k^{\sigma,\tau}(b_0)\in F$.  From the above, we obtain 
\[
    D_{a_{n-1},b_0}([a_0,\compactldots,a_{n-1}]\otimes [b_0,\compactldots,b_{n-1}])
 =(-1)^n\sum_{\substack{\sigma,\tau\in \mathfrak{S}_n\\\sigma(0)=n-1}}
  (-1)^{\sigma}(-1)^{\tau}
  \CorG(0,x_0^{\sigma,\tau},\compactldots,x_{n-1}^{\sigma,\tau});
\]
 notice that the sign cancelled because of the difference between $\FI$ and $\FC$.
 
Next we observe that terms of the expression above with $\tau^{-1}(0)\leq n-3$ vanish. Indeed, if $k>\tau^{-1}(0)$, then $x_k^{\sigma,\tau}=0$, because the numerator in \eqref{eq:q-sigma-tau} contains $b_0$ twice. For $k\leq r$, the value depends only on the entries of $\tau$ preceding $0$.
If $r\leq n-3$, swapping two entries after $0$ reverses $\textup{sgn}(\tau)$ and leaves the
correlator unchanged, so these terms cancel. 

The remaining permutations $\tau$ satisfy $\tau(n-1)=0$ or $\tau(n-2)=0$. The permutations of the first type contribute
\[
(-1)^n\sum_{\substack{\sigma,\tau\in \mathfrak{S}_n\\\sigma(0)=n-1\\\tau(n-1)=0 }}
  (-1)^{\sigma}(-1)^{\tau}
  \CorG(0,x_0^{\sigma,\tau},\compactldots,x_{n-1}^{\sigma,\tau}).
\]
The permutations of the second type contribute
\[
(-1)^n\sum_{\substack{\sigma,\tau\in \mathfrak{S}_n\\\sigma(0)=n-1\\\tau(n-2)=0 }}
  (-1)^{\sigma}(-1)^{\tau}
  \CorG(0,x_0^{\sigma,\tau},\compactldots,x_{n-2}^{\sigma,\tau},0)
\]
because if $\tau(n-2)=0$ then $x_{n-1}^{\sigma,\tau}$ vanishes. Notice that the latter expression depends only on the value of $\tau$ on $0,\dots,n-3$. We pair each permutation with $\tau(n-2)=0$ with the permutation obtained by interchanging its last two positions.  We obtain
\begin{align*}
 &D_{a_{n-1},b_0}([a_0,\dots,a_{n-1}]\otimes [b_0,\dots,b_{n-1}])\\
 &=(-1)^n\sum_{\substack{\sigma,\tau\in \mathfrak{S}_n\\\sigma(0)=n-1\\\tau(n-1)=0}}(-1)^{\sigma}(-1)^{\tau}
  \Bigl(\CorG(0,x_0^{\sigma,\tau},\compactldots,x_{n-1}^{\sigma,\tau})-  \CorG(0,x_0^{\sigma,\tau},\compactldots,x_{n-2}^{\sigma,\tau},0)\Bigr)\\
  &=(-1)^n\sum_{\substack{\sigma,\tau\in \mathfrak{S}_n\\\sigma(0)=n-1\\\tau(n-1)=0}}(-1)^{\sigma}(-1)^{\tau}\ItG(0;0,x_0^{\sigma,\tau},\compactldots,x_{n-2}^{\sigma,\tau};x_{n-1}^{\sigma,\tau}).
\end{align*}

For any $\rho\in\mathfrak S_{\{0,\ldots,n-2\}}$ and  $\eta\in\mathfrak{S}_{\{1,\ldots,n-1\}}$ define permutations
\begin{align*}
&\sigma_\rho=
\begin{pmatrix}
  0 & 1 & \dots &n-1 \\
  n-1 & \rho(0) & \dots & \rho(n-2)
\end{pmatrix},\\
&\tau_\eta=
\begin{pmatrix}
  0 & 1 & \dots  &n-2 &n-1 \\
  \eta(n-1) & \eta(n-2)& \dots & \eta(1)&0
\end{pmatrix}.
\end{align*}

We have $\operatorname{sgn}(\sigma_\rho) =
(-1)^{n-1}\rm{sgn}(\rho)$ and $\rm{sgn}(\tau_\eta)
= (-1)^{\binom n2}\rm{sgn}(\eta)$ and thus 
\begin{align*}
 &D_{a_{n-1},b_0}(T(a,b))=(-1)^{\binom{n}{2}-1}\sum_{\rho,\eta}(-1)^{\rho}(-1)^{\eta}\ItG\bigl(0;0,x_0^{\sigma_\rho,\tau_\eta},\compactldots,x_{n-2}^{\sigma_\rho,\tau_\eta};x_{n-1}^{\sigma_\rho,\tau_\eta}\bigr).
\end{align*}

For $\rho=\eta=\mathrm{id}$, we have
\begin{equation}\label{eq:x-equals-R}
x_k^{\sigma,\tau}
=\frac{
 \omega(a_k,\ldots,a_{n-2},
        b_{n-1},\ldots,b_{n-k},b_0)
}{
 \omega(a_k,\ldots,a_{n-2},
        b_{n-1},\ldots,b_{n-k},a_{n-1})
}
=
R_k(a_0,\compactldots,a_{n-1},b_{n-1},\compactldots,b_0).
\end{equation}
For general $\rho$ and $\eta$, these are the same ordered ratios
after permuting $a_0,\ldots,a_{n-2}$ by $\rho$ and
$b_{n-1},\ldots,b_1$ by $\eta$. Hence
\begin{align}
D_{a_{n-1},b_0}(T(a,b))
&=
(-1)^{\binom n2-1}
\mathrm{Alt}_{\{0,\compactldots,n-2\}}
\mathrm{Alt}_{\{n,\compactldots,2n-2\}}
\ItG(0;0,R_0,\compactldots,R_{n-2};R_{n-1})
\nonumber\\
&=
(-1)^{\binom{n}{2}+n}
\GLiG_n(a_0,\compactldots,a_{n-1},
       b_{n-1},\compactldots,b_0).
\label{eq:Gamma-explicit}
\end{align}
The second equality uses the factor $(-1)^{n-1}$ in the iterated-integral formula for $\GLiG_n$.

Finally, we get the following formula proving the theorem:
\begin{align*}
\sf g_n(\ell_0,\compactldots,\ell_{2n-1})
&=
(-1)^{n-1}
(-1)^{n+\binom{n}{2}}
\GLiG_n(\ell_0,\compactldots,\ell_{n-1},
       \ell_{2n-1},\compactldots,\ell_n)\\
&=
(-1)^{\binom{n}{2} +1}
\GLiG_n(\ell_0,\compactldots,\ell_{n-1},
       \ell_{2n-1},\compactldots,\ell_n).\qedhere
\end{align*}
\end{proof}

\subsection{The dual cocycle equation}
Let $V$ be an $n$-dimensional vector space, and let
$\Conf_N(\mathbb P(V))$ be the space of generic ordered configurations
of $N$ points in $\mathbb P(V)$, modulo the action of $\PGL(V)$. Here \emph{generic} means that every $n$ points are in linear general position. If the spaces $V$ and $W$ are isomorphic, then the spaces $\Conf_N(\mathbb P(V))$ and $\Conf_N(\mathbb P(W))$ are canonically isomorphic: any two isomorphisms $V\to W$ differ by an element of
$\GL(W)$ and therefore induce the same map on equivalence
classes. To simplify the notation, we will denote the configuration $(x_0,\dots,x_{N-1})$ with $x_{i}=\langle v_i \rangle$ by $(v_0,\dots,v_{N-1})$. 

Let $\Gr^\circ(n,N)\subset \Gr(n,N)$ be the open locus in the Grassmannian on which all Pl\"ucker coordinates are nonzero.  A point of $\Gr^\circ(n,N)$ may be
represented by an $(n \times N)$-matrix
\[
M=[v_0\ \cdots\ v_{N-1}],
\]
of rank $n$, whose columns determine the points $x_i=[v_i]\in\mathbb P(V)$.  Left multiplication of $M$ by $\GL(V)$ changes the choice of basis
of $V$, whereas right multiplication by a diagonal matrix independently
rescales the chosen lifts $v_i$ of the projective points $x_i$. The Gelfand--MacPherson correspondence \cite[Section~2.2]{Kap93} identifies
\[
\Conf_N(\mathbb P(V))
\cong \Gr^\circ(n,N)/T_N,
\qquad
\text{with } T_N \coloneq \mathbb G_m^N/\mathbb G_m,
\]
where $T_N$ acts by independently rescaling the columns of $M$.

We have \emph{deletion maps} for $0\leq i \leq N-1$
\begin{align*}
a_i\colon \Conf_{N}(\mathbb P(V))&\lra \Conf_{N-1}(\mathbb P(V)) \\
(v_0,\dots,v_{N-1}) &\longmapsto (v_0,\dots,\widehat{v_i},\dots, v_{N-1}) \end{align*}
We also have projection maps for $0\leq i\leq N-1$
\begin{align*}
b_i\colon \Conf_N(\mathbb P(V))
&\lra
\Conf_{N-1}\bigl(\mathbb P(V/\langle v_i\rangle)\bigr) \\
(v_0,\ldots,v_{N-1}) &\longmapsto 
\bigl(\overline v_0,\dots, \hat{\overline v}_i,\dots, \overline v_{N-1}\bigr),
\end{align*}
where $\overline v_j$ is the image of $v_j$ in the quotient $V/\langle v_i\rangle$.

The \emph{association map} \cite[Section~2.3]{Kap93}, \cite[Section 7]{Gon95b} is a map
\[
 A_{n,N} \colon \Conf_N(\mathbb P^{n-1})
 \overset{\cong}\lra \Conf_N(\mathbb P^{N-n-1})
\]
defined as follows. Consider a matrix $M=[v_0\ \cdots\ v_{N-1}]$ representing a configuration $x\in\Conf_N(\mathbb P^{n-1})$. Then choose an $(N-n)\times N$ matrix
$M^\vee$ whose row space is $\ker(M)$, equivalently such that $
M(M^\vee)^{\mathsf t}=0$ and $\rm{rank}(M^\vee)=N-n$. The columns $v_0^\vee,\ldots,v_{N-1}^\vee$ of $M^\vee$ determine a
configuration of $N$ points in $\mathbb P^{N-n-1}$.  Its projective
equivalence class is independent of all choices and is called the
\emph{associate} of $x$. The association
exchanges the deletion and projection maps \cite[Section 7]{Gon95b}:
\begin{equation}\label{eq:association-deletion-projection-short}
 A_{N-n,N}A_{n,N}=\id,
 \qquad \text{and} \qquad
 A_{n,N-1} a_i=b_i A_{n,N}.
\end{equation}
\begin{remark}
Below we will study the action of the association $A_{n,2n}$ on the cluster Grassmannian polylogarithm.
Goncharov observed \cite[Proposition 7.1]{Gon95b} the association $A_{n,2n}$ agrees with the duality $\vee$ in the following sense: if a  configuration is represented by two bases $(a_0,\ldots,a_{n-1},b_0,\ldots,b_{n-1})$, then its associate is represented by the corresponding dual bases
$(\alpha_0,\ldots,\alpha_{n-1},\beta_0,\ldots,\beta_{n-1})$. 
\end{remark}

The cluster Grassmannian polylogarithm  $\GLiG_n$ can be viewed as a function from $\Conf_{2n}(\mathbb{P}^{n-1})$ to $\PolyL_n(F)$ sending  $x=(v_0,\dots,v_{2n-1})$ to $\GLiG_n(v_0,\ldots,v_{2n-1}).$ 

\begin{lemma} For a generic configuration 
$(v_0,\dots,v_{2n-1})\in \Conf_{2n}(\mathbb{P}^{n-1})$ we have
\begin{equation}\label{eq:GLi-dual-block-swap-short}
 \GLiG_n\bigl(A_{n,2n}(v_0,\compactldots,v_{2n-1})\bigr)
 =\GLiG_n(v_n,\compactldots,v_{2n-1},v_0,\compactldots,v_{n-1}).
\end{equation}
\end{lemma}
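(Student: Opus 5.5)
The plan is to prove \eqref{eq:GLi-dual-block-swap-short} termwise, by comparing the arguments of $\LiG_{1;1,\compactldots,1}$ in \cref{def:GLi-revised} for the two configurations. Both sides are alternations over $\mathfrak S_{\{0,\compactldots,n-2\}}\times\mathfrak S_{\{n,\compactldots,2n-2\}}$ of $\LiG_{1;1,\compactldots,1}(A_0,\compactldots,A_{n-2})$. I will show that the tuple $(A_0,\compactldots,A_{n-2})$ computed from $A_{n,2n}(v)$ agrees with the tuple computed from $w\coloneq(v_n,\compactldots,v_{2n-1},v_0,\compactldots,v_{n-1})$. I will also show that this identification intertwines the two alternation groups.

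The first input is the standard description of association on Pl\"ucker coordinates. If $M^\vee$ has row space $\ker M$, then for $|I|=n$
\[
p_{I}(M^\vee)=\varepsilon(I)\,c\,\Bigl(\textstyle\prod_{i\in I^{c}}\lambda_i\Bigr)^{\pm1}p_{I^{c}}(M).
\]
Here $c$ is a global constant, $\lambda_i$ are torus factors from the choice of lifts, and $\varepsilon(I)=(-1)^{\sum I}$ up to a global sign (see \cite[Section~2.3]{Kap93}). I will substitute this into a projected cross-ratio $[S\,|\,v_{i_0},v_{i_1},v_{i_2},v_{i_3}]$ with $|S|=n-2$. This is a ratio of four Pl\"ucker coordinates in which every index occurs equally often in numerator and denominator. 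Hence $c$ and the torus factors cancel. The complement of $S\cup\{i,j\}$ is $S'\cup\{$the other two of the four indices$\}$, where $S'\coloneq\{0,\compactldots,2n-1\}\setminus(S\cup\{i_0,\compactldots,i_3\})$ also has $n-2$ elements. So the associated cross-ratio becomes
\[
\frac{p_{S'i_2i_3}\,p_{S'i_0i_1}}{p_{S'i_1i_2}\,p_{S'i_3i_0}}\cdot(\text{sign}),
\]
which is again $[S'\,|\,v_{i_0},v_{i_1},v_{i_2},v_{i_3}]$ up to a sign.

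Next I specialise to $S_k=\{k+1,\compactldots,n-2\}\cup\{n,\compactldots,n+k-1\}$ with the four indices $(k,n-1,n+k,2n-1)$. Then $S'_k=\{0,\compactldots,k-1\}\cup\{n+k+1,\compactldots,2n-2\}$. For the swapped configuration $w$, the definition of $A_k(w)$ uses exactly the set $S'_k$ of original indices, with the four points in the order $(n+k,2n-1,k,n-1)$. A cross-ratio is invariant under $(a,b,c,d)\mapsto(c,d,a,b)$. Therefore $A_k(A_{n,2n}(v))=A_k(w)$, provided the signs work out.

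Under the relabelling $j\mapsto j+n \bmod 2n$, the alternation over $\{0,\compactldots,n-2\}$ for $w$ is the alternation over $\{n,\compactldots,2n-2\}$ for $v$, and vice versa. The same bookkeeping applies to every permuted term, so the two double alternations match term by term. This gives \eqref{eq:GLi-dual-block-swap-short}.

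The main obstacle is the sign bookkeeping. I need the sign $\varepsilon$ together with the reorderings of the arguments of $\omega$ to give exactly $+1$ in each $A_k$; a wrong sign would replace $A_k$ by $-A_k$, which genuinely changes $\LiG$. I would first check this with $\varepsilon(I)=(-1)^{\sum_{i\in I}i}$ by direct computation in the case $n=2$, where $A_0=[\,\emptyset\,|\,v_0,v_1,v_2,v_3]$. I would then argue in general that the four $\varepsilon$ factors in each cross-ratio cancel in pairs. The remaining sorting signs from writing $\omega(S,v_i,v_j)$ in increasing order are the same on both sides, since they come from moving the same pair $\{i,j\}$ past the same set.

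As a fallback, I can take Goncharov's remark that association corresponds to passing to dual bases. Combined with the explicit formula \eqref{eq:Gamma-explicit} and the symmetric roles of $a$ and $b$ under $\vee$ (\cref{lem: duality and symbol}), this would give an independent check of the overall sign.
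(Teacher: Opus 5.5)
Your proposal is correct and is essentially the paper's proof. The paper also rests on the identity $[S|a,b,c,d]_{x^\vee}=[S^c\setminus\{a,b,c,d\}|a,b,c,d]_x$, which it justifies by comparing minors of $M$ and $M^\vee$ (i.e.\ your Pl\"ucker computation); it then applies this to the $A_k$ to recognise the arguments of the block-swapped configuration and notes compatibility with the two alternations.

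One small correction to your sign heuristic: the sorting signs do not match factor by factor. On the dual side you sort $(S,a,b)$, whereas the matching factor on the other side is $\omega(S',c,d)$. The correct reason is that in each cross-ratio every one of $a,b,c,d$ occurs once in the numerator and once in the denominator. So the dependence on $S$ (resp.\ $S'$) cancels, and both sides carry the same sign $(-1)^{[a>b]+[c>d]+[a>d]+[c>b]}$, which gives the identity with no sign.
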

\begin{proof} Put $x=(v_0,\ldots,v_{2n-1})$ and choose representatives
\[
x^\vee=A_{n,2n}(x)
=(v_0^\vee,\ldots,v_{2n-1}^\vee).
\]
For a subset $S=\{s_1,\dots,s_{n-2}\}\subset\{0,\dots,2n-1\}$ and $a,b,c,d\in S^c$ we introduce the notation
\[
[S|a,b,c,d]_{x} \coloneq [v_{s_1},\dots,v_{s_{n-2}}|v_a,v_b,v_c,v_d].
\] 
We have the following well-known identity for projective cross-ratios involving the dual vectors:
\begin{equation}\label{eq:dual-cross-ratio-short}
[S|a,b,c,d]_{x^{\vee}}
 =[S^c\setminus\{a,b,c,d\}|a,b,c,d]_{x},
\end{equation} 
where $S^c$ is the complement of $S$ in $\{0,\dots,2n-1\}$.  \cref{eq:dual-cross-ratio-short} can be either deduced from the fact that the association exchanges deletion and projection operators or proven directly by comparing the minors of the matrices $M$ and $M^{\vee}$.

\cref{eq:dual-cross-ratio-short}  can be applied to the arguments $A_k$ in \cref{sec:cluster grassm polylogs}.  It gives
\begin{align*}
 A_k(x^\vee)
 ={}&[v_0,\ldots,v_{k-1},v_{n+k+1},\ldots,v_{2n-2}\mid
       v_k,v_{n-1},v_{n+k},v_{2n-1}].
\end{align*}
This is precisely the $k$-th argument of 
for the configuration obtained from $x$ by interchanging its two blocks of
$n$ vectors. The same statement remains true after the two alternations. From here the statement follows.
\end{proof}

\begin{proposition}\label{prop:duality-goncharov-cocycle-short}
For every generic configuration $x\in\Conf_{2n}(\mathbb P^{n-1})$,
\[
 \sf{g}_n\bigl(A_{n,2n}(x)\bigr)=(-1)^n  \sf{g}_n(x).
\]
\end{proposition}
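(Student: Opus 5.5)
We reduce to the explicit formula of \cref{thm:explicit formula for the cocycle on generic configurations} and the block-swap lemma just proved. Write $x=(v_0,\compactldots,v_{2n-1})$. By \cref{thm:explicit formula for the cocycle on generic configurations},
\[\sf{g}_n(x)=(-1)^{\binom n2+1}\GLiG_n(v_0,\compactldots,v_{n-1},v_{2n-1},\compactldots,v_n).\]
Applying the same theorem to $x^\vee=A_{n,2n}(x)=(v_0^\vee,\compactldots,v_{2n-1}^\vee)$ gives
\[\sf{g}_n(x^\vee)=(-1)^{\binom n2+1}\GLiG_n(v_0^\vee,\compactldots,v_{n-1}^\vee,v_{2n-1}^\vee,\compactldots,v_n^\vee).\]
Association is equivariant for permutations of the points: permuting the columns of $M$ permutes the columns of $M^\vee$, since the kernel condition is column-wise. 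Hence the right-hand side is $\GLiG_n$ evaluated on the associate of the reordered configuration $y=(v_0,\compactldots,v_{n-1},v_{2n-1},\compactldots,v_n)$. The block-swap identity \eqref{eq:GLi-dual-block-swap-short} then yields
\[\GLiG_n(A_{n,2n}(y))=\GLiG_n(v_{2n-1},\compactldots,v_n,v_0,\compactldots,v_{n-1}).\]

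It remains to compare $\GLiG_n(v_{2n-1},\compactldots,v_n,v_0,\compactldots,v_{n-1})$ with $\GLiG_n(v_0,\compactldots,v_{n-1},v_{2n-1},\compactldots,v_n)$ and show they differ by $(-1)^n$. The cleanest route avoids the cluster formula and uses the definition of $\sf{g}_n$. Up to the sign $(-1)^{\binom n2+1}$, the first expression is $\sf{g}_n$ evaluated on $(v_{2n-1},\compactldots,v_n,v_n',\compactldots)$ with a suitable reordering. So we need a symmetry of $\sf{g}_n$ under exchanging and reversing the two blocks of $n$ lines.

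From the definition, $\sf{g}_n(\ell_0,\compactldots,\ell_{2n-1})=(-1)^{n-1}D_{\ell_{n-1},\ell_n}(\pi([\ell_0,\compactldots]\otimes[\ell_n,\compactldots]))$, and we use three facts:
\begin{itemize}
\item $\pi$ is symmetric with the appropriate sign, since $\StL$ is the indecomposable quotient of the commutative bar construction, so $\pi(a\otimes b)=\pi(b\otimes a)$ for these degree-$(n-1)$ classes up to the Koszul sign.
\item Reversing an apartment $[\ell_0,\compactldots,\ell_{n-1}]$ multiplies it by $\mathrm{sgn}$ of the reversal, namely $(-1)^{\binom n2}$.
\item $D_{x,y}=-D_{y,x}$.
\end{itemize}
Reversing both blocks gives a factor $(-1)^{2\binom n2}=1$, the antisymmetry of $D$ contributes $-1$, and swapping the tensor factors in $\pi$ contributes its sign. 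Combining these, we expect the total sign to be $(-1)^n$.

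The main obstacle is fixing the sign of $\pi(a\otimes b)$ versus $\pi(b\otimes a)$ in $\StL$, which depends on the grading conventions of \cite{CharltonRadchenkoRudenko}. We would first determine it by computing symbols with \cite[Lemma 19]{CharltonRadchenkoRudenko}, then check the final sign against the case $n=2$, where $\sf{g}_2$ is expressible through $\LiG_2$ of a cross-ratio and the associate of $4$ points in $\mathbb P^1$ is projectively equivalent to the configuration itself.
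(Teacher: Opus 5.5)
Your argument follows the paper's proof. The explicit formula, the compatibility of association with relabelling, and the block-swap identity \eqref{eq:GLi-dual-block-swap-short} give $\sf{g}_n(A_{n,2n}(x))=\sf{g}_n(x^{\mathrm{rev}})$ with $x^{\mathrm{rev}}=(v_{2n-1},\ldots,v_0)$. This is the precise form of your ``suitable reordering'': the explicit formula applied to $x^{\mathrm{rev}}$ reads its second block backwards, so it produces exactly $\GLiG_n(v_{2n-1},\ldots,v_n,v_0,\ldots,v_{n-1})$ with the same prefactor $(-1)^{\binom n2+1}$. The remaining step is then the reversal symmetry of $\sf{g}_n$, read off from its definition. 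Your bookkeeping there is right. The two apartment reversals contribute $(-1)^{2\binom n2}=1$, and $D_{\ell_n,\ell_{n-1}}=-D_{\ell_{n-1},\ell_n}$. Hence $\sf{g}_n(x^{\mathrm{rev}})=-\epsilon\,\sf{g}_n(x)$, where $\pi(b\otimes a)=\epsilon\,\pi(a\otimes b)$ for $a,b\in\St_n(F)$.

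The one gap is that you never establish $\epsilon=(-1)^{n-1}$, and this is exactly the input that produces $(-1)^n$; as written you only expect it. The paper takes this sign from the proof of \cite[Lemma 8.1]{KRS1}. It is the Koszul sign $(-1)^{(n-1)^2}=(-1)^{n-1}$ for classes of degree $n-1$, which is what your first bullet points to. The symbol computation you propose does settle it, and the $n=2$ check is unnecessary (it could only fix the sign for $n=2$). In \cite[Lemma 19]{CharltonRadchenkoRudenko}, swap the two apartments and reindex $(\sigma,\tau)\mapsto(\sigma\rho,\tau\rho)$ with $\rho(i)=n-1-i$. This shows that $s(b\otimes a)$ is the term-by-term word reversal of $s(a\otimes b)$, with unchanged signs, since $\mathrm{sgn}(\sigma\rho)\,\mathrm{sgn}(\tau\rho)=\mathrm{sgn}(\sigma)\,\mathrm{sgn}(\tau)$. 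In $B^{\mathrm{Com}}$, reversing a word of length $n$ costs $(-1)^{n-1}$ modulo shuffles, as used in the proof of \cref{prop: dual decomposition}. Finally, the symbol is injective on $\StL$, because its composite with the projection onto terms in general position with a fixed vector is an isomorphism. Together these give $\epsilon=(-1)^{n-1}$ and complete your argument.
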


\begin{proof}
Put $x=(v_0,\ldots,v_{2n-1})$ and
$x^{\mathrm{rev}}=(v_{2n-1},\ldots,v_0)$.  The explicit formula of Section~\ref{thm:explicit formula for the cocycle on generic configurations} expresses $\sf{g}_n(x)$, up to a fixed sign, as
\[
 \GLiG_n(v_0,\ldots,v_{n-1},v_{2n-1},\ldots,v_n).
\]
Association commutes with relabelling. Applying \eqref{eq:GLi-dual-block-swap-short} to this ordered tuple therefore gives
\[
 \sf{g}_n\bigl(A_{n,2n}(x)\bigr)=\sf{g}_n(x^{\mathrm{rev}}).
\]
The definition of $\sf{g}_n$ gives the evident reversal symmetry $\sf{g}_n(x^{\mathrm{rev}})=(-1)^n \sf{g}_n(x)$. Indeed, reversal interchanges the two Steinberg factors, contributing $(-1)^{n-1}$ by the proof of \cite[Lemma 8.1]{KRS1} and reverses the ordered difference $D_{\ell_{n-1},\ell_n}$, contributing one further minus sign.  This proves the claim.
\end{proof}

We can now deduce the dual cocycle equation. 
\begin{corollary}\label{cor:gn-dual-cocycle-equation} For every generic configuration $x\in\Conf_{2n+1}(\mathbb P^n)$, the Goncharov cocycle  $\sf{g}_n$ satisfies the dual cocycle equations
\[
\sum_{i=0}^{2n}(-1)^i \sf{g}_n(b_i x)= 0. 
\]
\end{corollary}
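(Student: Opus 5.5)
The plan is to deduce the dual cocycle equation from the ordinary cocycle equation (\cref{cor:gn-cocycle-equation}) by transporting it through the association map, using \cref{prop:duality-goncharov-cocycle-short} to control how $\sf{g}_n$ behaves under association. Let $x\in\Conf_{2n+1}(\mathbb P^n)$ be generic, and put $y \coloneq A_{n+1,2n+1}(x)\in \Conf_{2n+1}(\mathbb P^{n-1})$. By \eqref{eq:association-deletion-projection-short}, applied with the roles of $n$ and $N-n$ exchanged and using $A_{n,2n+1}A_{n+1,2n+1}=\id$, we get for each $i$
\[
A_{n,2n}(a_i\,y)=b_i\,A_{n,2n+1}(y)=b_i\,x .
\]
(The identity $A_{n,N-1}a_i=b_iA_{n,N}$ is applied with the dimension equal to $n$ and $N=2n+1$.) Genericity is preserved under association, since Plücker coordinates of the associate agree with complementary ones up to sign. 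Hence $a_iy$ is a generic configuration of $2n$ points in $\mathbb P^{n-1}$, and $b_ix$ is a generic configuration of $2n$ points in $\mathbb P(\,F^{n+1}/\langle v_i\rangle)\cong\mathbb P^{n-1}$.

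Now apply \cref{prop:duality-goncharov-cocycle-short} to each $a_iy$. This gives $\sf{g}_n(b_ix)=\sf{g}_n(A_{n,2n}(a_iy))=(-1)^n\sf{g}_n(a_iy)$, and therefore
\[
\sum_{i=0}^{2n}(-1)^i\sf{g}_n(b_ix)=(-1)^n\sum_{i=0}^{2n}(-1)^i\sf{g}_n(\hat y_i),
\]
where $\hat y_i$ denotes $y$ with its $i$th point deleted. The right-hand side vanishes by \cref{cor:gn-cocycle-equation}, applied to lifts $\ell_0,\dots,\ell_{2n}$ of the points of $y$ to lines in an $n$-dimensional space $V$. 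The function $\sf{g}_n$ is defined on ordered tuples of lines and is $\GL(V)$-invariant after projecting to coinvariants $\PolyL_n(F)$, so it descends to $\Conf_{2n}$. This invariance follows from $g\cdot D_{\ell,\ell'}=D_{g\ell,g\ell'}$ together with equivariance of apartment classes. As a result, both sides are well-defined functions of the configurations.

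I expect the main obstacle to be bookkeeping rather than anything conceptual. The first point is to make sure that the identification of $\Conf_{2n}(\mathbb P(F^{n+1}/\langle v_i\rangle))$ with $\Conf_{2n}(\mathbb P^{n-1})$ introduces no sign; this holds because the canonical isomorphism discussed before the deletion maps is independent of the chosen isomorphism of vector spaces. The second point is to check that the indexing in \eqref{eq:association-deletion-projection-short} is compatible with the order of the points, so that deletion of the $i$th point corresponds exactly to projection from the $i$th point with the same ordering of the remaining points. If there were any uniform sign discrepancy, it would be independent of $i$ and would not affect the vanishing. An $i$-dependent sign would matter, though, so I would verify it in the explicit matrix model: for $M(M^\vee)^{\mathsf t}=0$, deleting column $i$ of $M$ corresponds to restricting $M^\vee$ to the kernel of its $i$th column and then deleting that column, which preserves the order of the remaining columns.
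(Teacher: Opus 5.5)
Your proof is correct and is the same argument as the paper's: put $y=A_{n+1,2n+1}(x)$, use \eqref{eq:association-deletion-projection-short} to get $b_ix=A_{n,2n}(a_iy)$, then combine \cref{prop:duality-goncharov-cocycle-short} with the cocycle equation of \cref{cor:gn-cocycle-equation} to obtain $\sum_i(-1)^i\sf{g}_n(b_ix)=(-1)^n\sum_i(-1)^i\sf{g}_n(a_iy)=0$. You also correctly place $y$ in $\Conf_{2n+1}(\mathbb P^{n-1})$, where the paper's text writes $\mathbb P^{n}$ (a typo). Your remarks on genericity and on how the point orderings match under association spell out bookkeeping that the paper leaves implicit.
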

\begin{proof}
Consider the generic configuration $y=A_{n+1,2n+1}(x) \in \Conf_{2n+1}(\mathbb P^{n}).$ By \eqref{eq:association-deletion-projection-short}, we have $b_i x=A_{n,2n}(a_i y)$ and thus, by the cocycle equation for $\sf{g}_n$ of \cref{cor:gn-cocycle-equation}, we get
\[
\sum_{i=0}^{2n}(-1)^i \sf{g}_n(b_i x)=  (-1)^n\sum_{i=0}^{2n}(-1)^i \sf{g}_n(a_i y)=0. \qedhere
\]
\end{proof}

\section{A cocycle for the Borel class and the proof of \cref{thm:polyl-iso-number-field}} Recall that for a number field $F$, the motivic realisation is a map of Lie coalgebras
\[r^\rm{MTM} \colon \scr{G}(F) \lra \scr{L}^\rm{MTM}(F)\]
whose domain and target are abstractly isomorphic: both are cofree with cogenerators given by the (rationalised) algebraic $K$-theory groups $K_*(F)_\Q$ for $*>0$: the content of \cref{thm:polyl-iso-number-field} is that the particular map $r^\rm{MTM}$ is an isomorphism. To prove this, we need to use how Borel computed $K_*(F)_\Q$, using regulators. This uses a polylogarithmic interpretation of the Borel regulators and the Borel classes. Composing the Goncharov class with the real period map $p_\bb{R} \colon \scr{G}_n(\C) \to \bb{R}$ gives a cohomology class 
\[\rm{Bo}^\scr{G}_{n,2n-1} \coloneq p_\bb{R} \circ \rm{g}_n \in H^{2n-1}(\GL_n(\C);\bb{R}).\] 
We prove below that this is a nonzero multiple of the Borel class and hence gives a construction of the Borel regulator map on a part of the rank filtration. Proving that it is a multiple amounts to verifying that the cocycle is measurable using our formula and properties of Hodge correlators, but that it is a \emph{nonzero} multiple is interwoven with the proof of \cref{thm:polyl-iso-number-field}.

\subsection{Real periods and Hodge correlators} \label{sec: real periods}
In this section we discuss the \emph{real period map} 
\begin{align*}p_{\R}\colon \PolyL_n(\C) &\lra \R \\
\CorG(x_0,\compactldots,x_n) &\longmapsto \CorH(x_0,\compactldots,x_n,)\end{align*}
where $\CorH$ is the Hodge correlator introduced by Goncharov \cite{Gon19}; see \cite{Mal20,KT24} for other expositions. (We use $\CorH$ instead of $\rm{Cor}^\rm{Hod}$ to distinguish from the Hodge realisation of $\CorG$ in $\scr{L}^\rm{Hod}$.) The real period map is a composition of the Hodge realisation map $r^{\rm{Hod}}$ constructed in \cite[Theorem D.b]{KRS1} and the canonical real period map constructed by Goncharov \cite[Section 1.11]{Gon19}. The normalisation of the Hodge correlator varies in the literature, which can lead to confusion. For the results of this paper, the particular choice of normalisation does not play a role. We prefer not to fix it, and, as a result, the real period map depends on a choice of constants $\lambda_n\in \R^\times$ for $n\in \N$. We will discuss more on normalizations in \cref{sec:normalisation}.

\subsubsection{Definition of the Hodge correlator}
Consider a disc with $n+1$ marked points on the boundary labelled by complex numbers $x_0,\dots,x_n \in \C$ oriented clockwise. Let $\cal{T}_{n+1}$ be the set of isotopy classes of plane trivalent trees embedded in the disc with leaves labelled by the marked points $x_0,\dots,x_n$.  A tree $T\in \cal{T}_{n+1}$ has $n-1$ internal vertices and $2n-1$ edges. 

An orientation of a tree is a choice of ordering of the edges up to an even permutation. A plane trivalent tree has a canonical orientation, which can be computed as follows: choose an external edge as a root and order the edges by their first appearance while traversing clockwise the boundary of a small regular neighbourhood of $T$. Changing the root cyclically permutes the $2n-1$ edges. Since $2n-1$ is odd, such a cyclic permutation has sign
$(-1)^{2n-2}=1$. Hence the resulting orientation is independent
of the chosen root, c.f.~\cite[Lemma 4.3.1]{KT24}.

Consider additional variables $x_{n+1},\dots,x_{2n-1}\in \C$ and label the internal vertices of the tree by these labels. For an edge $E$ of a tree with endpoints $x_i$ and $x_j$ consider a function 
\[
f_E=\log|x_i-x_j|.
\]

Choose for each $n\in \N$ a nonzero constant  $\lambda_n \in \R$. The Hodge correlator is defined as the following sum of absolutely convergent integrals
\begin{equation}\label{eqn:hodge correlator via log abs}
    \CorH(x_0,\dots, x_n)=\lambda_n \sum_{T\in \mathcal{T}_{n+1}}\int_{\C^{n-1}} f_{E_1} d f_{E_2}\wedge \dots \wedge d f_{E_{2n-1}}.
\end{equation} where we integrate the ``internal'' variables $x_{n+1},\ldots,x_{2n-1}$. Note that each of the integrals in the sum does not depend on the order of internal vertices and on the order of edges with the given orientation class. 

\begin{remark} Goncharov often gives a different definition of Hodge correlators, which is based on the differential form $\omega$, see \cite[11.1.1]{Gon19}. This definition can be reduced to \eqref{eqn:hodge correlator via log abs} thanks to the integral  identity in the proof of \cite[Lemma 11.2]{Gon19}.
\end{remark}

\begin{example} For $n=2$ we have an identity
\begin{equation}\label{eqn: integral bloch-wigner}
    \int_{\C} \log|1-x| d \log|x| \wedge d \log|x-a|=\pi D(a),
\end{equation} 
where $D(z) =\Im(Li_2(z))+\log|z|\arg(1-z)$ is the \emph{Bloch-Wigner dilogarithm}. Thus, we have 
\[
\CorH(1,0,a)=\lambda_2 \int_{\C} \log|1-x| d \log|x| \wedge d \log|x-a|= \lambda_2 \pi D(a).
\]
\end{example}

\begin{proposition}\label{prop:hodge-correlator-cts-bounded} For every $n\geq 2$, the Hodge correlator $\CorH(x_0,\dots,x_n)$ is continuous and bounded on
$\C^{n+1}\setminus\{x_0=\cdots=x_n\}$. 
\end{proposition}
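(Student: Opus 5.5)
We will prove this directly from the integral formula \eqref{eqn:hodge correlator via log abs}, treating each plane trivalent tree $T \in \cal{T}_{n+1}$ separately. Fix $T$ and write the integrand as $f_{E_1}\, df_{E_2}\wedge\cdots\wedge df_{E_{2n-1}}$ on $\C^{n-1}$ in the internal variables. Each $f_E = \log|x_i-x_j|$ has only logarithmic singularities, and each $df_E$ is locally integrable with singularities of order $|x_i-x_j|^{-1}$. So the integral is a finite sum of integrals of products of such kernels over $\C^{n-1}$.

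\textbf{Step 1: absolute convergence, uniformly on compacta.} We first establish a power-counting estimate.

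\begin{enumerate}[(a)]
\item \emph{Local singularities.} Near the diagonal loci $x_a = x_b$ for internal vertices, the bound $|df_E| \le C|x_i-x_j|^{-1}$ and the trivalence of $T$ give at most three edges meeting at each internal vertex. Since $\C$ has real dimension $2$, integrating out one internal vertex at a time costs degree at most $2$ per vertex. Handling the leaves first, then inducting on the tree, shows integrability near all collision strata.
\item \emph{Decay at infinity.} When an internal variable tends to infinity, the adjacent $df_E$ decay like $|x|^{-1}$. The count of $2n-2$ differential forms against $2(n-1)$ real dimensions, together with the logarithmic growth of $f_{E_1}$, needs more care. We will use that $d\log|x-a| - d\log|x-b| = O(|x|^{-2})$, which follows from antisymmetry of the wedge of differentials at a vertex. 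This is the standard argument of \cite[Section 11]{Gon19} and \cite{KT24}; we will cite or reproduce it.
\end{enumerate}

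These bounds hold locally uniformly in the external points $x_0,\dots,x_n$, including when some external points coincide, provided not all of them coincide. Dominated convergence then gives continuity on $\C^{n+1}\setminus\{x_0=\cdots=x_n\}$.

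\textbf{Step 2: boundedness via scale and translation invariance.} The integral is invariant under translation $x_i \mapsto x_i + c$, applied to all variables including internal ones. Under scaling $x_i \mapsto \lambda x_i$, each $df_E$ is invariant and $f_{E_1}$ shifts by $\log|\lambda|$. The shifted term is $\log|\lambda|\int df_{E_2}\wedge\cdots\wedge df_{E_{2n-1}}$. We will show it vanishes after summing over $T$ (equivalently, that it is the integral of an exact form with no boundary contribution). Alternatively, we can reduce to the well-known scaling invariance of Hodge correlators for $n \ge 2$ \cite{Gon19}.

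Granting both invariances, the function descends to the quotient of $\C^{n+1}\setminus\{x_0=\cdots=x_n\}$ by the affine group, which is $(\C^{n+1}/\C)\setminus\{0\}$ modulo $\C^\times$. This quotient is $\bb{P}^{n-1}(\C)$, a compact space. Here we use that scaling by $\C^\times$, including its $U(1)$ part, is allowed: rotation invariance holds because $|\cdot|$ is rotation-invariant. A continuous function on a compact space is bounded, which proves the proposition.

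\textbf{Main obstacle.} The hard part is the uniformity in Step 1 as external points collide partially. When a subset of the $x_i$ merge, integrals over internal vertices near the cluster can develop logarithmic divergences. We need these to cancel, or at least stay bounded, so that the limit is continuous. Our first approach will be to rescale around the cluster and show that the contribution near the cluster converges to a product of lower Hodge correlators, by induction on $n$. If cancellation across trees is required, we will invoke the $\omega$-form description of \cite[11.1.1]{Gon19}, where this continuity is proven.
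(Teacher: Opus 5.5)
Your Step 2 is essentially the paper's argument: translation invariance and $\C^\times$-homogeneity make $\CorH(0,x_1,\dots,x_n)$ a continuous function on the compact space $\bb{P}^{n-1}(\C)$, hence bounded. The paper, like you, takes homogeneity for $n\geq 2$ as known. The difference is Step 1. The paper does not prove continuity; it quotes Malkin \cite[Theorem 11]{Mal20}.

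Your direct argument has a genuine gap exactly at partial collisions. You assert that the bounds $|df_E|\le C|x_i-x_j|^{-1}$ give locally uniform integrable majorants ``including when some external points coincide'', so that dominated convergence applies. This is false. Already for $n=2$, take the integrand $\log|y-x_0|\,d\log|y-x_1|\wedge d\log|y-x_2|$. The substitution $y=x_1+(x_2-x_1)w$ shows that the $2$-form $d\log|y-x_1|\wedge d\log|y-x_2|$ has total variation over $\C$ independent of $|x_1-x_2|$. Yet it tends to $0$ pointwise as $x_2\to x_1$. So a fixed amount of mass concentrates at the collision point, and no integrable majorant exists. The limit is correct only because of the cancellation $\int_\C d\log|y-x_1|\wedge d\log|y-x_2|=0$, not because of domination.

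Moving the undifferentiated factor onto a merging edge does not rescue domination in general. For $n\geq 3$, two disjoint pairs of leaves can merge at once, e.g.\ $x_0\to x_1$ and $x_2\to x_3$ in the tree pairing them. Only one edge carries $f_{E_1}$, so at least one cherry keeps the concentrating form. Controlling such shrinking clusters of internal vertices, with the necessary cancellations, is precisely the content of Malkin's theorem.

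Your ``main obstacle'' paragraph recognises this issue but does not resolve it. The phrase ``cancel, or at least stay bounded, so that the limit is continuous'' is a non sequitur: boundedness near a collision does not give continuity there. Your fallback \cite[11.1.1]{Gon19} is used in the paper only for the alternative $\omega$-form definition of the correlator, not as a source of continuity.

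The gap also affects Step 2. Compactness yields boundedness only if the function is continuous on all of $\bb{P}^{n-1}(\C)$. That includes configurations with repeated entries, such as $(1,0,\dots,0,a)$, which is exactly where the paper evaluates correlators for $\LiG_n(a)$. Continuity on the locus of pairwise distinct points would not suffice.

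Replacing Step 1 by the citation of \cite[Theorem 11]{Mal20} turns your proposal into the paper's proof. The absolute-convergence estimates in Step 1(a)--(b) are then unnecessary, since the paper takes convergence as part of the definition.
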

\begin{proof}Malkin proved \cite[Theorem 11]{Mal20} that Hodge correlators are continuous away from the diagonal $x_0=\dots=x_n$. For $n\geq 2$, the Hodge correlators are homogeneous: we have
\[
\CorH(x_0,\compactldots, x_n)=\CorH(ax_0,\compactldots, ax_n) \text{ for }a\in \C^{\times}.
\]
Thus the function $\CorH(0,x_1,\dots, x_n)$ restricts to a continuous function on $\bb{P}^{n-1}(\C)$ and thus is bounded. The statement of the proposition follows from the translation invariance of Hodge correlators.
\end{proof}

\subsubsection{Real period of a classical polylogarithm} \label{sec: real period classical polylogs}
The real period of the classical polylogarithm $\LiG_n(a)$ can be computed explicitly, with the use of a result of Levin \cite[Section 4]{Levin}. To state it, we first recall a single-valued version of classical polylogarithm introduced by Zagier  \cite[\S 7]{Zag90}. Let $\pi_n(x)$ denote $\Re(x)$ if $n$ is odd and $i \Im(x)$ if $n$ is  even. Next, define the coefficients \(\beta_k\)  by the Taylor expansion
\[
\frac{2x}{e^{2x}-1}=\sum_{k\ge 0}\beta_k\,x^k.
\]
The function
\[
\mathcal{L}_n(z) \coloneq \pi_n \left(\sum_{k=0}^{n-1}
\beta_k\big(\log |z|\big)^k\,\rm{Li}_{\,n-k}(z)\right)
\]
is continuous, and extends continuously to $\bb{P}^1_{\C}$ by $\mathcal{L}_n(\infty)=0$ \cite[\S 7]{Zag90}.  Levin introduced {\it modified single-valued polylogarithm}
\[
\mathcal{L}_n^{\#}(z)=4^{-(n-1)}\sum_{\substack{0\le k\le n-2 \\ k\ \text{even}}}
\binom{2n-k-3}{n-1}\,
\frac{2^{\,k+1}}{(k+1)!}\,
\mathcal{L}_{\,n-k}(z)\,
\big(\log|z|\big)^k\,
\]
and proved the following identity \cite[Section 4.4]{Levin}:
\begin{align*}
    &\int_{\C^{n-1}}\log|1-x_1|\Lambda_{j=1}^{n-2}\Bigl( d\log|x_j|\wedge d\log|x_j-x_{j+1}| \Bigr)\wedge d \log|x_{n-1}|\wedge d \log|x_{n-1}-z|\\
    &= -(2\pi i)^{n-1}\mathcal{L}_n^{\#}(z).
\end{align*}
The reader is encouraged to check that for $n=2$ this identity is equivalent to \eqref{eqn: integral bloch-wigner}.

\begin{lemma}\label{lem: real period of classical polylogarithm}
For $n\geq 2$ and $a\in \C$ we have
\[
   p_{\R}(\LiG_n(a)) = \lambda_n (2\pi i)^{n-1}\mathcal{L}_n^{\#}(a).
\]
\end{lemma}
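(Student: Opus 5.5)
The plan is to write $\LiG_n(a)$ as a single correlator and evaluate its Hodge correlator directly. Only one plane trivalent tree should contribute, and Levin's integral identity then gives that tree's value.

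First, recall from the proof of \cref{lem symbol map} that $\LiG_n(a)=-\CorG(1,0,\compactldots,0,a)$ for $n\ge 2$, with $n-1$ zeros. By definition of $p_\R$ we therefore get
\[p_\R(\LiG_n(a))=-\CorH(1,0,\compactldots,0,a),\]
and \eqref{eqn:hodge correlator via log abs} expresses the right side as $-\lambda_n\sum_{T\in\cal{T}_{n+1}}\int_{\C^{n-1}} f_{E_1}\,df_{E_2}\wedge\cdots\wedge df_{E_{2n-1}}$, with boundary labels $x_0=1$, $x_1=\cdots=x_{n-1}=0$, $x_n=a$.

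Second, show that every tree except the ``caterpillar'' contributes zero. In the caterpillar the leaf $1$ is attached at one end, each zero leaf hangs off consecutive internal vertices $y_1,\compactldots,y_{n-1}$, and $a$ sits at the other end. I would argue this by symmetry and degree.
\begin{itemize}
\item An internal vertex adjacent to two zero leaves contributes $d\log|y|\wedge d\log|y|=0$ after reordering the edges. If the function-carrying edge is one of these two, one reorders using the orientation independence of the root choice.
\item More generally, whenever two leaves with the same label $0$ are joined through a subtree, swapping them is an isotopy of labelled trees that reverses orientation. The contributions of such pairs cancel.
\end{itemize}
The surviving tree has edges $(1,y_1)$, $(0,y_j)$, $(y_j,y_{j+1})$, $(0,y_{n-1})$ and $(y_{n-1},a)$. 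Choosing the root at the leaf $1$ places $f_{(1,y_1)}=\log|1-y_1|$ first. The remaining forms then appear in the clockwise order $d\log|y_1|$, $d\log|y_1-y_2|$, and so on, ending with $d\log|y_{n-1}|$, $d\log|y_{n-1}-a|$. This matches exactly the integrand in Levin's identity.

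Third, apply Levin's formula from \cite[Section 4.4]{Levin}, which evaluates that integral as $-(2\pi i)^{n-1}\mathcal{L}_n^{\#}(a)$. Combining this with the minus sign from the first step gives $\lambda_n(2\pi i)^{n-1}\mathcal{L}_n^{\#}(a)$.

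The main obstacle is the second step: the vanishing or cancellation of all other trees, together with the precise sign of the surviving tree. That sign depends on the canonical orientation convention and on the clockwise labelling of the disc. I would check it first for $n=2$ against \eqref{eqn: integral bloch-wigner}, since that case also fixes the sign relating $\LiG_2$ and $\CorG(1,0,a)$. I would then verify the induction on the caterpillar length for general $n$, tracking the sign by the cyclic invariance of the orientation noted after the definition of $\cal{T}_{n+1}$.
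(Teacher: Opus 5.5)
Your overall route is the paper's. You write $\LiG_n(a)=-\CorG(1,0,\ldots,0,a)$, show that only the caterpillar tree survives, and read off Levin's integral. Your check that rooting at the leaf $1$ gives exactly Levin's integrand, and your final sign, are both correct.

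The problem is in your second step. Your first bullet is the right mechanism; it is what the paper means by ``the unique tree for which the $f_E$ are pairwise distinct''. Your second bullet, however, is false. Trees in $\mathcal{T}_{n+1}$ have their leaves attached to fixed, distinct marked points on the boundary. Interchanging two leaves labelled $0$ is therefore not an isotopy, and in general it does not even produce a plane tree. If the claim held, it would also apply to the caterpillar, whose zero leaves are joined through the spine. It would then force the caterpillar's contribution to cancel, contradicting Levin's formula.

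What you need instead is the combinatorial fact that your first bullet already disposes of every tree except the caterpillar. For $n=2$ there is only one tree, so assume $n\ge 3$.
\begin{itemize}
\item A trivalent tree with $n+1\ge 4$ leaves has at least two cherries, i.e.\ internal vertices carrying two leaves. To see this, note that the subtree of internal vertices has at least two leaves.
\item By planarity, the two leaves of a cherry are cyclically adjacent on the boundary.
\item With boundary labels $1,0,\ldots,0,a$ in cyclic order, the only adjacent pairs not consisting of two zeros are $(1,0)$, $(0,a)$ and $(a,1)$.
\item Cherries are disjoint, and $(a,1)$ meets both of the other pairs. So a tree with no zero--zero cherry has exactly the two cherries $(1,0)$ and $(0,a)$. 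Its internal subtree is then a path, so it is the caterpillar.
\end{itemize}
Every other tree therefore has an internal vertex $y$ with two edges carrying $\log|y|$. Rooting at a leaf outside that cherry puts both of these edges among the $df_E$, so the integrand vanishes.

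With this replacement your argument is complete. The induction on caterpillar length you propose is unnecessary, since the direct clockwise ordering already matches Levin's integrand.
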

\begin{proof} Observe that there exists a unique isotopy class of a tree for which the functions $f_{E_1}  f_{E_2},  \dots, f_{E_{2n-1}}$ are pairwise distinct. For this tree, the integral 
\[
\int_{\C^{n-1}} f_{E_1} d f_{E_2}\wedge \dots \wedge d f_{E_{2n-1}}
\]
agrees with the integral computed by Levin. Thus we have 
\begin{align*}
   p_{\R}(\LiG_n(a)) &=p_{\R}\bigl(-\CorG(1,\underbrace{0,\compactldots,0}_{n-1},a)\bigr)= \lambda_n (2\pi i)^{n-1}\mathcal{L}_n^{\#}(a).\qedhere
\end{align*}
\end{proof}

Note that if $a=e^{i\theta}$ the modified single-valued polylogarithm simplifies and we get
\[
\mathcal{L}_n^{\#}(a)= \begin{cases} 2^{-2n+3}\binom{2n-3}{n-1}i\sum_{m\geq 1}\dfrac{\sin(m\theta)}{m^n} & \text{for even $n$,} \\
2^{-2n+3}\binom{2n-3}{n-1}\sum_{m\geq 1}\dfrac{\cos(m\theta)}{m^n} & \text{for odd $n$.}\end{cases}\] 
Thus \cref{lem: real period of classical polylogarithm} implies the following:
\begin{corollary}\label{cor:value-of-real-period-on-unit-norm}
The real period of $\LiG_n(i)$
is nonzero.
\end{corollary}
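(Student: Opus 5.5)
By \cref{lem: real period of classical polylogarithm} applied at $a=i$, we have $p_\R(\LiG_n(i)) = \lambda_n (2\pi i)^{n-1}\mathcal{L}_n^{\#}(i)$. Since $\lambda_n\neq 0$ and $(2\pi i)^{n-1}\neq 0$, the statement reduces to showing that $\mathcal{L}_n^{\#}(i)\neq 0$ for every $n\geq 2$.

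To do this we use the simplified formula for $\mathcal{L}_n^{\#}$ on the unit circle, taking $a=e^{i\theta}$ with $\theta=\pi/2$. The prefactor $2^{-2n+3}\binom{2n-3}{n-1}$ is a nonzero positive constant, since $2n-3\geq n-1\geq 1$ for $n \geq 2$. It therefore suffices to show that the trigonometric series does not vanish.

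For even $n$ the series is $\sum_{m\geq 1}\sin(m\pi/2)/m^n$. Only odd $m$ contribute, and we obtain the Dirichlet beta value
\[\beta(n)=\sum_{k\geq 0}\frac{(-1)^k}{(2k+1)^n}.\]
This is an alternating series whose terms decrease strictly in absolute value, so it is bounded below by $1-3^{-n}>0$.

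For odd $n$ the series is $\sum_{m\geq 1}\cos(m\pi/2)/m^n$. Now only even $m=2k$ contribute, with sign $(-1)^k$, giving
\[2^{-n}\sum_{k\geq 1}\frac{(-1)^k}{k^n}=-2^{-n}\eta(n),\]
where $\eta$ is the Dirichlet eta function. Again the series alternates with decreasing terms, so $\eta(n)\geq 1-2^{-n}>0$.

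The only point needing care is confirming that the simplification of $\mathcal{L}_n^{\#}$ at $|a|=1$ is correct. On the unit circle $\log|a|=0$, so only the $k=0$ term of $\mathcal{L}_n^{\#}$ survives, with coefficient $4^{-(n-1)}\binom{2n-3}{n-1}\cdot 2$. Inside $\mathcal{L}_n$ only the $\beta_0=1$ term $\pi_n(\mathrm{Li}_n(e^{i\theta}))$ survives for the same reason. Expanding $\mathrm{Li}_n(e^{i\theta})=\sum_m e^{im\theta}/m^n$ and taking the real part (for $n$ odd) or $i$ times the imaginary part (for $n$ even) gives the displayed series. Both resulting values are nonzero, which proves the corollary.
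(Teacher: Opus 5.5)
Your proof is correct and follows the paper's route. You apply \cref{lem: real period of classical polylogarithm} at $a=i$, use the unit-circle simplification of $\mathcal{L}_n^{\#}$, and observe that the resulting series are $\beta(n)$ and $-2^{-n}\eta(n)$ (the paper identifies them as $\beta(n)$ and a nonzero rational multiple of $\zeta(n)$ in the proof of \cref{prop:proportionality}). Your alternating-series lower bounds simply make explicit the nonvanishing that the paper leaves implicit.
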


\subsection{The Borel class and the Borel regulator}

\subsubsection{Continuous and measurable cohomology} \label{sec:continuous-and-measurable-cohomology} One can compute the cohomology of a (discrete) group $G$ with $\bb{R}$-coefficients using the inhomogeneous bar resolution, that is, as the cohomology $H^*(G;\bb{R})$ of the cochain complex with $C^p(G;\bb{R}) \coloneq \rm{Fun}(G^p,\bb{R})$ with differential
\[(df)([g_0|\compactldots|g_{p}]) = f([g_1|\compactldots|g_p])+ \sum_{i=1}^p (-1)^{i+1} f([g_0|\compactldots|g_ig_{i+1}|\ldots|g_p])+(-1)^{p+1} f([g_0|\compactldots|g_{p-1}]).\] 
If a $G$ is a topological group, then we define the \emph{continuous cohomology} $H^*_\rm{cts}(G;\bb{R})$ as the cohomology of the subcomplex $C^p_\rm{cts}(G;\bb{R}) \subset C^p(G;\bb{R})$ given by the continuous functions. 

In practice, one can construct continuous cocycles by improving the regularity of measurable cocycles. Following \cite[Section 2]{AustinMoore}, measurable cohomology is defined for $G$ a locally compact second countable topological group. Fixing $\mu_G$ to be a left-invariant Haar measure, we let $C^p_\rm{meas}(G;\bb{R})$ be the set of almost everywhere equivalence classes of Borel measurable functions $G^p \to \bb{R}$ with differential given by the same formula as above; the \emph{measurable cohomology} $H^*_\rm{meas}(G;\bb{R})$ is given by the cohomology of this complex. 

Since every continuous function is Borel measurable, there is a natural map
\[H^*_\rm{cts}(G;\bb{R}) \lra H^*_\rm{meas}(G;\bb{R})\]
and this is an isomorphism \cite[Theorem A]{AustinMoore}. It is sometimes convenient to take the smaller subcomplex of almost everywhere equivalence classes of that are essentially bounded on compact subsets, as one can then directly use a regularisation argument to prove the map from continuous cohomology to its cohomology is an isomorphism \cite[\S 4]{Blanc}, cf.~\cite[Proposition 2.4]{MonodSLn} in the bounded case.

\subsubsection{A cocycle for the Borel class} The goal of this section is to compare two cohomology classes in $H^{2n-1}(\GL_n(\C);\R)$ given by (using the universal coefficients theorem)
\begin{align*}&\rm{Bo}_{n,2n-1} \colon H_{2n-1}(\GL_n(\C);\R) \xrightarrow{- \cap \ol{c}_{n,2n-1}} \R \\
&\rm{Bo}^\scr{G}_{n,2n-1} \colon H_{2n-1}(\GL_n(\C);\R) \xrightarrow{- \cap \rm{g}_n} \scr{G}_n(\C)_\bb{R} \overset{p_\R}\lra \R.\end{align*}
The latter is the composition of the Goncharov class with the real period map. The former is the \emph{Borel} class given by the cap product against the continuous cohomology class $\ol{c}_{n,2n-1}$, where we recall there are isomorphisms by the van Est isomorphism followed by \cref{lem:relative-lie-algebra-computation-gl-u}
\[H^*_\rm{cts}(\GL_n(\C);\R) \cong H^*(\mathfrak{gl}_n,\mathfrak{u}_n;\R) \cong H^*(\rm{U}(n);\R) \cong \Lambda^*_\R \langle \ol{c}_{n,1},\dots,\ol{c}_{n,2n-1} \rangle.\]

\begin{proposition} \label{prop:multiple}
    For every $n \ge 1$ there exists a constant $\mu_n \in \R$ such that $\rm{Bo}^\scr{G}_{n,2n-1} = \mu_n \cdot \rm{Bo}_{n,2n-1}$. 
\end{proposition}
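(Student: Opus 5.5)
The plan is to show that $\rm{Bo}^\scr{G}_{n,2n-1}$ lies in the image of continuous cohomology. Its restriction to $\GL_{n-1}(\C)$ must then vanish, which forces it to be a multiple of the top primitive class $\ol{c}_{n,2n-1}$, and that class is $\rm{Bo}_{n,2n-1}$. The case $n=1$ is immediate: $H^1(\GL_1(\C);\R)$ comparisons reduce to $\log|\det|$, and $p_\R$ on $\scr{G}_1(\C)\cong \C^\times_\bb{Q}$ is a multiple of $\log|\cdot|$. So assume $n\ge 2$.

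\emph{Step 1 (measurability).} By \cref{thm:goncharov-class}, $\rm{Bo}^\scr{G}_{n,2n-1}$ is represented by the inhomogeneous cocycle $[g_1|\compactcdots|g_{2n-1}]\mapsto p_\R(\sf{g}_n(\ell_0,\compactldots,\ell_{2n-1}))$, with $\ell_i=g_1\compactcdots g_i\cdot\ell$. The non-generic configurations form a null set of $\GL_n(\C)^{2n-1}$, since they are a proper algebraic subvariety. On the complement, \cref{thm:explicit formula for the cocycle on generic configurations} expresses $\sf{g}_n$ as $\pm\GLiG_n$, a finite signed sum of correlators $\CorG$ evaluated at ratios of Pl\"ucker coordinates. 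These arguments are rational functions of the $g_i$. Applying $p_\R$ turns each term into a Hodge correlator, which is continuous and bounded by \cref{prop:hodge-correlator-cts-bounded}. The only care needed is where some arguments coincide, but this is covered there since only the full diagonal is excluded, and for generic configurations the arguments are not all equal. Hence the cocycle agrees almost everywhere with a bounded Borel function. Because the cocycle identity of \cref{cor:gn-cocycle-equation} holds pointwise, this defines a class in $H^{2n-1}_\rm{meas}(\GL_n(\C);\R)$, and by Austin--Moore it also defines a class in $H^{2n-1}_\rm{cts}(\GL_n(\C);\R)$.

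\emph{Step 2 (compatibility).} I then need to check that the continuous class, mapped to discrete cohomology $H^{2n-1}(\GL_n(\C);\R)$, equals $\rm{Bo}^\scr{G}_{n,2n-1}$. The measurable and discrete cocycles differ on a null set, and null-set modifications do not obviously preserve the discrete class. This is the main obstacle. My first approach is to note that the value of $\sf{g}_n$ does not depend on the choice of base line $\ell$ up to coboundary, and to average over $\ell\in\bb{P}^{n-1}(\C)$ against a $\rm{U}(n)$-invariant measure. That produces a homogeneous cocycle that is defined everywhere, is bounded and measurable, and represents the same discrete class. I would then follow the standard regularisation argument of Blanc or Monod to land in continuous cochains without changing the discrete class.

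\emph{Step 3 (identification).} By the van Est computation, $H^{2n-1}_\rm{cts}(\GL_n(\C);\R)$ is the degree $2n-1$ part of $\Lambda^*\langle\ol{c}_{n,1},\compactldots,\ol{c}_{n,2n-1}\rangle$. This equals $\R\ol{c}_{n,2n-1}$ plus decomposables built from lower classes, all of which restrict injectively to $\GL_{n-1}(\C)$. So it suffices to show that the class restricts to zero on $\GL_{n-1}(\C)$. By naturality the Goncharov class is pulled back along the canonical map, and the composite
\[H_{2n-1}(\GL_{n-1})\to H_{2n-1}(\GL_n)\to H^{E_\infty}_{n,2n-1}\]
factors through the product with $\sigma$. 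Every product vanishes in $E_\infty$-homology, so this composite is zero. Therefore the restriction of $\rm{Bo}^\scr{G}_{n,2n-1}$ vanishes on $H_{2n-1}(\GL_{n-1}(\C);\R)$ as a functional, and hence as a class. Comparing restrictions, all decomposable coefficients must vanish, so $\rm{Bo}^\scr{G}_{n,2n-1}=\mu_n\rm{Bo}_{n,2n-1}$.
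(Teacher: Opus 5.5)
Your overall route is the paper's. You get a continuous lift $\alpha$ of a measurable cocycle via Austin--Moore, show that the discrete class vanishes on $\GL_{n-1}(\C)$, and use $\ker\bigl[H^{2n-1}_{\rm{cts}}(\GL_n(\C);\R)\to H^{2n-1}_{\rm{cts}}(\GL_{n-1}(\C);\R)\bigr]=\R\,\ol{c}_{n,2n-1}$. Your reason for the vanishing is that stabilisation is multiplication by $\sigma$ and products die in $E_\infty$-homology; this is a fine substitute for the paper's factorisation through $H_{2n-1}(\GL_n(\C),\GL_{n-1}(\C);\R)$.

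\textbf{The gap.} It sits in the sentence ``Comparing restrictions, all decomposable coefficients must vanish.'' What you have proved is that the image of $\alpha|_{\GL_{n-1}(\C)}$ in the \emph{discrete} cohomology $H^{2n-1}(\GL_{n-1}(\C);\R)$ is zero. But the injectivity of restriction on decomposables in $\ol{c}_{n,1},\ldots,\ol{c}_{n,2n-3}$ is a statement about \emph{continuous} cohomology. To conclude that $\alpha|_{\GL_{n-1}(\C)}=0$ in $H^{2n-1}_{\rm{cts}}(\GL_{n-1}(\C);\R)$, you need the comparison map $H^{*}_{\rm{cts}}(\GL_{n-1}(\C);\R)\to H^{*}(\GL_{n-1}(\C);\R)$ to be injective. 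This is not formal. If some nonzero product of the lower classes became zero on the discrete group, the decomposable part of $\alpha$ would be invisible to your test. Your argument would then give no control over $\rm{Bo}^{\scr{G}}_{n,2n-1}-\mu_n\rm{Bo}_{n,2n-1}$ in $H^{2n-1}(\GL_n(\C);\R)$.

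\textbf{How to close it.} This injectivity is exactly \cref{lem:gln-c-cts-injective}. The paper proves it by restricting further to $\GL_m(\Q(i))$:
\begin{itemize}
\item by Borel--Yang, $H^*_{\rm{cts}}(\SL_m(\C);\R)\to H^*(\SL_m(\Q(i));\R)$ is an isomorphism;
\item the Lyndon--Hochschild--Serre spectral sequence for $\det$ collapses, with the units acting trivially;
\item $\ol{c}_{m,1}$ maps to the nonzero class $z\mapsto\log|z|$.
\end{itemize}
A standard alternative is Borel's transfer argument. For a cocompact lattice $\Gamma\subset\GL_m(\C)$, integration over $\Gamma\backslash\GL_m(\C)$ splits the map $H^*_{\rm{cts}}(\GL_m(\C);\R)\to H^*(\Gamma;\R)$, so the comparison map is injective. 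With either input added, your proof closes.

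Your Step 2 concerns a compatibility point that the paper's \cref{lem:borel-cocycle-continuous} takes for granted. That extra care is fine, but it is not the missing piece.
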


This is proven by first showing that $\rm{Bo}_{n,2n-1}^\scr{G}$ lifts to a continuous cohomology class, and observing that it vanishes when restricted to $\GL_{n-1}$ but $\ol{c}_{n,2n-1}$ spans the kernel of this restriction map. We will later, in \cref{thm:polyl-iso-mun}, show the constants $\mu_n$ are nonzero. For the first step, we use that \cref{thm:goncharov-class} implies:

\begin{proposition}\label{prop:borel-class-representative}
    The cohomology class $\rm{Bo}^\scr{G}_{n,2n-1}$ is represented by the inhomogeneous cocycle
    \begin{align*} \GL_n(\C)^{2n-1} &\lra \bb{R} \\
    [g_1|\compactldots|g_{2n-1}] &\longmapsto (-1)^{n-1} p_\R \big(D_{\ell_{n-1},\ell_n}(\pi ([\ell_0,\compactldots,\ell_{n-1}] \otimes [\ell_n,\compactldots,\ell_{2n-1}])) \big),\end{align*}
    where $\ell_i \coloneq g_1 \compactcdots g_i \cdot \ell$ for a fixed line $\ell \in \bb{P}(\C^n)$.
\end{proposition}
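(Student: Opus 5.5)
The plan is to obtain the statement directly from \cref{thm:goncharov-class} applied to the field $F=\C$, followed by postcomposition with the real period map. By definition, $\rm{Bo}^\scr{G}_{n,2n-1}$ is the functional on $H_{2n-1}(\GL_n(\C);\R)$ given by $x \mapsto p_\R(x \cap \rm{g}_n)$. Here $\rm{g}_n \in H^{2n-1}(\GL_n(\C);\scr{G}_n(\C))$ is the Goncharov class, and we extend scalars to $\R$. By \cref{thm:goncharov-class}, the inhomogeneous cocycle $\sf{g}_n$ represents $\rm{g}_n$. By \cref{lem chain description cap product}, taking $s=0$ and $t=2n-1$, the cap product of a cycle $[g_1|\compactcdots|g_{2n-1}]$ with $\sf{g}_n$ is the element $\sf{g}_n(g_1,\compactldots,g_{2n-1}) \in C_0(\GL_n(\C);\scr{G}_n(\C)) = \scr{G}_n(\C)$. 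Extending linearly, this describes the map $H_{2n-1}(\GL_n(\C)) \to H_0(\GL_n(\C);\scr{G}_n(\C))$ at chain level.

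The step that needs care is the identification of the degree-zero target. Since $\scr{G}_n(\C)$ is a quotient of $\FLR$ by the $\GL_n(\C)$-action, the group acts trivially on it. Hence $H_0(\GL_n(\C);\scr{G}_n(\C)) = \scr{G}_n(\C)$, and the trivial coefficients are consistent with the module structure used in \cref{thm:goncharov-class}. It follows that the composite $H_{2n-1}(\GL_n(\C);\R) \to \scr{G}_n(\C)_\R \xrightarrow{p_\R} \R$ is evaluation of the real-valued cochain $p_\R \circ \sf{g}_n$ on cycles.

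Next I would check that $p_\R \circ \sf{g}_n$ is itself a cocycle in $C^{2n-1}(\GL_n(\C);\R)$. This holds because $p_\R$ is $\Q$-linear, hence $\R$-linear after tensoring, and because \cref{cor:gn-cocycle-equation} gives the cocycle equation for $\sf{g}_n$. Applying the inhomogeneous differential commutes with postcomposition by a linear map, so the equation is preserved.

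Finally, by the universal coefficients theorem the cohomology class determined by this pairing is exactly $\rm{Bo}^\scr{G}_{n,2n-1}$, which gives the claimed formula. No analytic input is required at this stage. I expect the main obstacle to be bookkeeping: checking that the sign $(-1)^{n-1}$ and the choice of basepoint line $\ell$ match \cref{thm:goncharov-class} exactly, which they do because the present formula is copied from it. Independence of the choice of $\ell$ is automatic at the level of cohomology classes.
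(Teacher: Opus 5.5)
Your proposal is correct and matches the paper, which deduces this proposition directly from \cref{thm:goncharov-class} (applied to $F=\C$) by postcomposing the representing cocycle $\sf{g}_n$ with the real period map $p_\R$. Your use of the chain-level cap product with $s=0$, the trivial action on $\scr{G}_n(\C)$, and universal coefficients just spells out the bookkeeping the paper leaves implicit.
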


\begin{lemma}\label{lem:borel-cocycle-continuous}
    The cohomology class $\rm{Bo}^\scr{G}_{n,2n-1}$ lifts to a continuous cohomology class, that is, along the forgetful map $H^{2n-1}_\rm{cts}(\GL_n(\C);\R) \to H^{2n-1}(\GL_n(\C);\R)$.
\end{lemma}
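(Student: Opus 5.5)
Our plan is to exhibit the cocycle of \cref{prop:borel-class-representative} as a measurable cocycle that is essentially bounded on compact sets, and then invoke the comparison between measurable and continuous cohomology recalled in \cref{sec:continuous-and-measurable-cohomology}. Write $\beta \colon \GL_n(\C)^{2n-1} \to \R$ for that cocycle. For $g=(g_1,\ldots,g_{2n-1})$ in a Haar-conull subset, the lines $\ell_0,\ldots,\ell_{2n-1}$ with $\ell_i = g_1\cdots g_i\cdot\ell$ are in general position. Indeed, non-genericity is the vanishing of finitely many nonzero polynomial functions in the entries of the $g_i$ (namely the minors $\omega(g_1\cdots g_{i_1}v,\ldots)$), so it happens on a proper Zariski closed set, which has Haar measure zero. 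On this generic locus \cref{thm:explicit formula for the cocycle on generic configurations} gives
\[\beta(g) = \pm\, p_\R\bigl(\GLiG_n(\ell_0,\ldots,\ell_{n-1},\ell_{2n-1},\ldots,\ell_n)\bigr).\]
Expanding \cref{def:GLi-revised}, this is a finite signed sum of real periods of $\LiG_{1;1,\ldots,1}(A_0,\ldots,A_{n-2})$, equivalently of iterated integrals $\ItG$, equivalently of finitely many correlators $\CorG(x_0,\ldots,x_n)$. Each argument is a ratio of Plücker coordinates $R_k$, which is a rational function of the entries of the $g_i$ and is regular and nonzero on the generic locus. Applying $p_\R$ turns each correlator into a Hodge correlator $\CorH(x_0,\ldots,x_n)$ evaluated at these rational functions.

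Next we check regularity. By \cref{prop:hodge-correlator-cts-bounded}, each $\CorH$ is continuous and bounded away from the small diagonal. The relevant arguments always include $0$ together with some $R_k\neq 0$, so they never lie on the small diagonal on the generic locus. Hence $\beta$ agrees almost everywhere with a Borel function that is continuous on the generic locus and globally bounded, since it is a finite sum of bounded terms. This gives a class in $H^{2n-1}_\rm{meas}$ represented by an essentially bounded cocycle. It is a cocycle because \cref{cor:gn-cocycle-equation} holds identically, and in particular almost everywhere.

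By \cite[Theorem A]{AustinMoore}, or the regularisation argument of \cite{Blanc} for cocycles essentially bounded on compacta, there is a continuous cocycle $\beta'$ with $\beta - \beta' = d\gamma$ almost everywhere. The remaining step is to show that the image of $[\beta']$ under the forgetful map to $H^{2n-1}(\GL_n(\C);\R)$ equals the discrete class of $\beta$ itself, not just its almost-everywhere class. This is the main obstacle, because a coboundary that holds only almost everywhere says nothing about the discrete group. We would handle it by running the regularisation (convolution with a smooth bump in each variable) explicitly. Using the cocycle identity, which holds everywhere, the standard homotopy formula writes $\beta - \beta'$ as $d$ of an everywhere-defined bounded Borel cochain, since convolutions of an everywhere-defined cocycle are defined pointwise. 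That makes the comparison hold in the complex of all functions, which gives the lift. If this proves delicate, we would instead appeal to the Borel/Blanc comparison with bounded-on-compacta Borel cochains, whose cohomology maps compatibly both to continuous and to discrete cohomology.
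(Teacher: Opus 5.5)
Your argument is the paper's. Both show that the cocycle of \cref{prop:borel-class-representative} is Borel, because on the Haar-conull generic locus it is a fixed finite sum of Hodge correlators of rational functions of the entries of the $g_i$, which are continuous and bounded by \cref{prop:hodge-correlator-cts-bounded}; both then invoke the isomorphism between continuous and measurable cohomology. Your extra step is a real improvement, since the paper passes silently from an almost-everywhere coboundary to an equality in $H^{2n-1}(\GL_n(\C);\R)$. Your everywhere-defined smoothing homotopy does close this, but it needs the cocycle to be Borel and locally bounded on all of $\GL_n(\C)^{2n-1}$, degenerate strata included. That follows from the same stratum-by-stratum description of the decomposition operator (a finite sum of correlators of rational functions, or zero), not from the generic formula alone.
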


\begin{proof}The formula in \cref{prop:borel-class-representative} defines a Borel measurable function on $\GL_n(\C)^{2n-1}$ since it is continuous on the set of tuples $(g_1,\compactldots,g_{2n-1}) \in \GL_n(\C)^{2n-1}$ such that the lines $(\ell_0,\ldots,\ell_{2n-1})$ are in general position, whose complement has zero measure with respect to the Haar measure as a finite union of codimension one submanifolds. To see this, observe that on this subset the decomposition operator has a fixed form so our function is a finite linear sum of Hodge correlators applied to rational functions of the entries of the $g_i$, and Hodge correlators are continuous by \cref{prop:hodge-correlator-cts-bounded}. Thus its cohomology class lifts to measurable cohomology and as we saw above, the inclusion map from continuous to measurable cohomology is an isomorphism, hence giving the result. 
\end{proof}

For the second step, we start with a computation using the work of Borel--Yang, see \cref{section: borel-yang}:

\begin{lemma}\label{lem:gln-c-cts-injective} The canonical map
    $H_\rm{cts}^*(\GL_n(\C);\R) \to H^*(\GL_n(\C);\R)$ is injective.
\end{lemma}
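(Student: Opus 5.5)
Our plan is to factor the canonical map through the discrete group $\GL_n(F)$ for a number field $F$ with a complex embedding, say $F=\bb{Q}(i)$, and then to use the explicit Borel--Yang description of $H^*(\GL_n(F);\bb{R})$. First, $H^*_\rm{cts}(\GL_n(\C);\R)\cong \Lambda^*_\R\langle \ol{c}_{n,1},\ldots,\ol{c}_{n,2n-1}\rangle$ by van Est together with \cref{lem:relative-lie-algebra-computation-gl-u}, and this is an exterior Hopf algebra on primitives $\ol{c}_{n,2i-1}$. The composition
\[H^*_\rm{cts}(\GL_n(\C);\R)\lra H^*(\GL_n(\C);\R)\lra H^*(\GL_n(F);\R)\]
is a map of algebras. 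It therefore suffices to show that the composite is injective.

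Next we identify the composite. By Borel's original construction, which Borel--Yang use, restricting along $F\hookrightarrow \C$ sends $\ol{c}_{n,2i-1}$ to the Borel class $x_{n,2i-1}$ attached to that complex embedding, up to nonzero scalars. For $i=1$ it instead goes to the class of $\log|\det|$ in $H^1(F^\times;\R)$, viewed in the $\Lambda^*F^\times$ factor of \cref{cor:borelyang-gln}. By \cref{thm:borelyang-sln} and \cref{cor:borelyang-gln}, for a complex embedding the classes $x_{n,3},\ldots,x_{n,2n-1}$ generate a free graded-commutative algebra $S^*(x_{n,3},\ldots,x_{n,2n-1})$. Since all these generators are odd, this is an exterior algebra. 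It sits as a tensor factor inside $H^*(\GL_n(F);\R)$, tensored with $\Lambda^*(F^\times_\R)^\vee$, and the class from $\ol{c}_{n,1}$ is a nonzero element of that factor.

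An algebra map from an exterior algebra on odd generators is injective if the images of the generators are linearly independent and generate an exterior subalgebra of the right dimension. This holds here because the target is a tensor product of free graded-commutative algebras containing these images as distinct free generators. Injectivity of the composite follows, and hence injectivity of $H^*_\rm{cts}(\GL_n(\C);\R)\to H^*(\GL_n(\C);\R)$.

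The main obstacle is the identification of the restrictions of the $\ol{c}_{n,2i-1}$ with the Borel--Yang generators $x_{n,2i-1}$, rather than with some decomposable or zero classes. We would try to settle this via \cref{lem:borel-yang-regulators} and the Borel regulator. The pairing of $\ol{c}_{n,2i-1}$, pulled back to $\GL_n(F)$, against the primitive homology classes in $H_{2i-1}(\GL_n(F);\R)$ is exactly Borel's regulator, which is nonzero for a complex embedding. Hence the pullback is a nonzero primitive class in degree $2i-1$, so it is a nonzero multiple of $x_{n,2i-1}$ modulo decomposables. That suffices, since elements that are independent modulo decomposables generate a free subalgebra.
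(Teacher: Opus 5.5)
Your main line is the paper's argument: factor through $\GL_n(\Q(i))$, use the Borel--Yang description, and send $\ol{c}_{n,1}$ to the class $\ell$ of $\log|\det|$. The identification you single out as the main obstacle is not actually an obstacle. That the composite $H^*_{\rm{cts}}(\SL_n(\C);\R)\to H^*(\SL_n(\Q(i));\R)$ is an isomorphism is exactly what the paper takes from \cite[Theorem 2.1]{borelyang}. Together with the triviality of the unit action and the collapse of the Lyndon--Hochschild--Serre spectral sequence (Lemma 7.2 and Theorem 7.5 of loc.~cit.), this gives $H^*(\GL_n(\Q(i));\R)\cong \mathrm{Hom}_\R(\Lambda^*_\R\Q(i)^\times_\R,\R)\otimes\Lambda_\R\langle\ol{c}_{n,3},\ldots,\ol{c}_{n,2n-1}\rangle$ as algebras. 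After that, injectivity only needs $\log|\cdot|\neq0$ on $\Q(i)^\times$. Citing this, your proof is complete. One small correction: the first tensor factor is not the exterior algebra on $\mathrm{Hom}(\Q(i)^\times,\R)$, since $\Q(i)^\times_\R$ is infinite-dimensional, so it is not ``free'' as you claim. You never need more than $\ell\neq 0$ there, though.

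The regulator fallback, however, has a gap in its last step. First, $H^*(\GL_n(F);\R)$ has no coproduct, so ``primitive class'' has no meaning there. A nonzero pairing with the homology primitive $\sf{x}_{n,2i-1}$ only shows that the pullback of $\ol{c}_{n,2i-1}$ has nonzero $x_{n,2i-1}$-coordinate in the indecomposables. It does not exclude an indecomposable component coming from $H^{2i-1}(\Q(i)^\times;\R)$: that pairing vanishes there, and this algebra is not generated in degree one. So ``a nonzero multiple of $x_{n,2i-1}$ modulo decomposables'' does not follow. Second, the principle ``elements independent modulo decomposables generate a free subalgebra'' is false in general graded-commutative algebras (take $a,b$ in $\Lambda(a,b)/(ab)$).

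The repair is to run your pairing argument on $\SL_n(\Q(i))$. Its cohomology $\Lambda(x_{n,3},\ldots,x_{n,2n-1})$ is a finite exterior algebra with one-dimensional indecomposables in each degree $2i-1$. The restrictions are then $c_ix_{n,2i-1}+(\text{decomposables})$ with $c_i\neq 0$. This part of your argument is sound: for $2\le i\le n$, the class $\sf{x}_{n,2i-1}$ stabilises to a generator of $K_{2i-1}(\Q(i))_\R\cong\R$, on which Borel's regulator is nonzero. Any term in the product involving a decomposable has word length at least $n$ in $n-1$ odd generators, so it vanishes. Hence the product is $(\prod_i c_i)\,x_{n,3}\cdots x_{n,2n-1}\neq 0$, and the restrictions are again free exterior generators. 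Then, as in the paper, use these classes and $\det^*$ to split the Lyndon--Hochschild--Serre spectral sequence multiplicatively. Under this splitting $\ol{c}_{n,1}\mapsto\ell\otimes 1$ and $\ol{c}_{n,2i-1}\mapsto 1\otimes(\text{generator})$, and injectivity follows from $\ell\neq0$.
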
 

\begin{proof}
To see this, consider the standard embedding $\Q(i) \hookrightarrow \C$ and note it suffices to show that the composition 
        \[H_\rm{cts}^{*}(\GL_n(\C);\R) \lra H^{*}(\GL_n(\C);\R) \lra H^{*}(\GL_n(\Q(i));\R)\]
is injective. From \cite[Theorem 2.1]{borelyang} it follows that the composition 
         \[H_\rm{cts}^{*}(\SL_n(\C);\R) \lra H^{*}(\SL_n(\C);\R) \lra H^{*}(\SL_n(\Q(i));\R)\]
        is an isomorphism. Thus, $H^{*}(\SL_n(\Q(i));\R) \cong \Lambda_\bb{R} \langle \overline{c}_{n,3},\dots,\overline{c}_{n,2n-1} \rangle$ as algebras. Next, by Lemma 7.2 loc.cit., the units $\Q(i)^\times$ act trivially on the classes $\ol{c}_{n,2i-1}$ for $2 \le i \le n$.  Moreover, Theorem 7.5 loc.cit.~says that the following spectral sequence collapses
        \[E^2_{p,q}= H^p(\Q(i)^\times,H^q(\SL_n(\Q(i));\R)) \Longrightarrow H^{p+q}(\GL_n(\Q(i));\R),\]
        so gives an isomorphism $H^*(\GL_n(\Q(i));\R) \cong \rm{Hom}_\R(\Lambda_\R^*\Q(i)^\times_\R, \R) \otimes_\R \Lambda_\R^*\langle \ol{c}_{n,3},\cdots,\ol{c}_{n,2n-1}\rangle$ of algebras. Finally, the class $\overline{c}_{n,1}$ maps to the class in $\rm{Hom}_\R(\Q(i)^\times_\R, \R)$ given by $z \mapsto \log(|z|)$ using \cite{Hamida} or because this formula defines a continuous cocycle in $C^1_\rm{cts}(\GL_1(\C);\R)$ which is nonzero, so must be a (nonzero) multiple of $\ol{c}_{n,1}$. Injectivity then follows since the class $z \mapsto \log(|z|)$ is nonzero in $\rm{Hom}_\Z(\Q(i)^\times, \R)$.  This completes the proof of the claim
\end{proof}

We now prove that $\rm{Bo}_{n,2n-1}^\scr{G}$ is a multiple of $\rm{Bo}_{n,2n-1}$.

\begin{proof}[Proof of \cref{prop:multiple}]
    By construction, $\smash{\rm{Bo}_{n,2n-1}^\scr{G}}$ factors via the canonical map through $H_{n,2n-1}^{E_\infty}(\BGL(\C)_\R)= \scr{G}_n(\C)_\R$ and hence through $H_{n,2n-1}(\BGL(\C)_\R/(\sigma))$ naturally isomorphic to $H_{2n-1}(\GL_n(\C),\GL_{n-1}(\C);\R)$. It thus suffices to prove more generally that
    \[\alpha = \mu\cdot \ol{c}_{n,2n-1} \qquad \text{for some $\mu \in \R$},\]
    if the image of $\alpha \in H^{2n-1}_\rm{cts}(\GL_n(\C);\R)$ under the forgetful map to $H^{2n-1}(\GL_n(\C);\R) \cong \rm{Hom}_\R(H_{2n-1}(\GL_n(\C);\R),\R)$ factors through
    $H_{2n-1}(\GL_n(\C),\GL_{n-1}(\C);\R)$.
    
By the universal coefficients theorem, a class in $H^{2n-1}(\GL_n(\C);\R)$ factors through the group $H_{2n-1}(\GL_n(\C),\GL_{n-1}(\C);\R)$ if and only if it is in the image of $H^{2n-1}(\GL_n(\C), \GL_{n-1}(\C);\R)$, which by the long exact sequence of a pair in cohomology is equivalent to it vanishing in $H^{2n-1}(\GL_{n-1}(\C);\R)$. Thus, if we map $\alpha$ to $H^{2n-1}(\GL_n(\C);\R)$ and then restrict it to $H^{2n-1}(\GL_{n-1}(\C);\R)$ it vanishes. By \cref{lem:gln-c-cts-injective}, $\alpha$ must vanish when restricted to $H_\rm{cts}^{2n-1}(\GL_{n-1}(\C);\R)$. The result follows from the following, obtained from the description of the restriction map obtained from the dual of \cref{lem:cts-cohomology-maps}
    \[\ker\big[H^{2n-1}_\rm{cts}(\GL_n(\C);\R) \to H^{2n-1}_\rm{cts}(\GL_{n-1}(\C);\R)\big]= \R \langle \ol{c}_{n,2n-1} \rangle.\qedhere\]
\end{proof}

\subsubsection{A formula for the Borel regulator} The goal of this section is to compare two regulator maps (we continue working rationally implicitly):
\[\begin{aligned} &\rm{reg}^\rm{Bo}_n \colon K_{2n-1}(\bb{C}) \xrightarrow{\rm{inc}} H_{2n-1}(\BGL_\infty(\bb{C})) \xrightarrow{(-) \cap \ol{c}_{\infty,2n-1}} \bb{R}, \\
&\rm{reg}^\scr{G}_n \colon K_{2n-1}(\bb{C}) \xrightarrow{\rm{edge}_n} \scr{G}_n(\bb{C}) \xrightarrow{p_\bb{R}} \bb{R}.\end{aligned}\]
The top map is the classical Borel regulator \cite{BorelReg,BorelRegErrata} obtained by identifying algebraic $K$-theory with the primitives in the stable homology and taking the cap product against the stable class corresponding to the continuous cohomology classes $\ol{c}_{N,2n-1}$ for $N \geq n$. The bottom map is the edge homomorphism composed with the real period map. We shall prove that these agree up to a multiple on the $n$th step of the indecomposable rank filtration, though it is reasonable to expect they agree on all of $K_{2n-1}(\bb{C})$. We will later, in \cref{thm:polyl-iso-mun}, show that the constants $\mu_n$ are nonzero. 

\begin{proposition}\label{prop:borel-regulator-multiple}
For every $n \geq 1$ we have $\rm{reg}_n^\scr{G} = \mu_n \cdot \rm{reg}_n^\rm{Bo}$ on the subspace $\fil^\rm{ind}_n K_{2n-1}(\C) \subseteq K_{2n-1}(\C)$ for $\mu_n \in \bb{R}$ as in \cref{prop:multiple}. 
\end{proposition}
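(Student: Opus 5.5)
The plan is to reduce the statement to \cref{prop:multiple} via \cref{lem:edge-homomorphism-in-rank-n}, which computes the edge homomorphism on elements coming from rank $n$. Let $x \in \fil^\rm{ind}_n K_{2n-1}(\C)$. Identify $K_{2n-1}(\C)_\bb{Q}$ with $H^{E_\infty}_{2n-1}(\BGLb(\C)_\bb{Q}/(\sigma-1))$, using \cref{prop:stable-algebra-spc}, so that by \cref{defn rank filtrations} the element $x$ is the image under $\rm{can}\circ\rm{stab}$ of some $y \in H_{n,2n-1}(\BGLb(\C)_\bb{Q}) = H_{2n-1}(\GL_n(\C);\bb{Q})$. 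By Milnor--Moore, the canonical map $\rm{prim}\,H_* \to H^{E_\infty}_*$ of the stable algebra is an isomorphism. We will therefore regard $x$ as a primitive class in $H_{2n-1}(\GL_\infty(\C))$ equal to $\rm{stab}(y)$ modulo decomposables; the regulator $\rm{reg}^\rm{Bo}_n$ is computed on this primitive representative.

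First we compute $\rm{reg}^\scr{G}_n(x)$. By \cref{lem:edge-homomorphism-in-rank-n}, $\rm{edge}_n(x)$ equals the image of $y$ under $H_{n,2n-1}(\BGLb(\C)) \to H^{E_\infty}_{n,2n-1}(\BGLb(\C)/(\sigma)) \cong \scr{G}_n(\C)$. This map is the canonical map $\rm{can}_n$, which by the proof of \cref{thm:goncharov-class} is the cap product with $\rm{g}_n$. Hence $\rm{reg}^\scr{G}_n(x) = p_\bb{R}(y\cap \rm{g}_n) = \rm{Bo}^\scr{G}_{n,2n-1}(y)$, and by \cref{prop:multiple} this equals $\mu_n\, \rm{Bo}_{n,2n-1}(y) = \mu_n\,\langle y, \ol{c}_{n,2n-1}\rangle$.

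Next we compute $\rm{reg}^\rm{Bo}_n(x) = \langle x, \ol{c}_{\infty,2n-1}\rangle$. The stable class $\ol{c}_{\infty,2n-1}$ restricts to $\ol{c}_{n,2n-1}$ by \cref{lem:cts-cohomology-maps} (dually, stabilisation sends $\ol{\sf{c}}_{n,2n-1}$ to $\ol{\sf{c}}_{N,2n-1}$), and the forgetful map to discrete cohomology is natural. Hence $\langle \rm{stab}(y), \ol{c}_{\infty,2n-1}\rangle = \langle y, \ol{c}_{n,2n-1}\rangle$. It remains to check that replacing $\rm{stab}(y)$ by its primitive part does not change the pairing. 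This holds because $\ol{c}_{\infty,2n-1}$ is primitive in cohomology, being primitive in $H^*(\rm{U})$, and so vanishes on decomposable homology classes. Combining the two computations gives $\rm{reg}^\scr{G}_n(x) = \mu_n\,\rm{reg}^\rm{Bo}_n(x)$.

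The main obstacle is the bookkeeping in this last step: we must check that the identification of $x$ with a primitive class is compatible with the identifications used in \cref{lem:edge-homomorphism-in-rank-n}, and that primitivity of the stable continuous class survives passage to discrete group cohomology, which is a map of Hopf algebras. For $n=1$ the statement should be checked directly, since $\scr{G}_1 = \C^\times_\bb{Q}$ and both regulators are multiples of $\log|\cdot|$.
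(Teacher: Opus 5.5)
Your proposal is correct and follows essentially the same route as the paper: lift to $y \in H_{2n-1}(\GL_n(\C))$, use \cref{lem:edge-homomorphism-in-rank-n} and the fact that the canonical map is capping with $\rm{g}_n$ (the paper cites \cref{prop:canonical-is-cap-gamman}) to get $p_\R(y \cap \rm{g}_n)$, apply \cref{prop:multiple}, and then stabilise. Your additional step, that $\ol{c}_{\infty,2n-1}$ is primitive for the block-sum coproduct and so kills the decomposables separating $\rm{stab}(y)$ from the primitive representative of $x$, makes explicit a point the paper leaves implicit when it writes $\rm{reg}^\rm{Bo}_n(\alpha)$ for the stabilised class; your separate check of $n=1$ is unnecessary, since the general argument already covers that case.
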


\begin{proof}
    If $\ol{\alpha} \in \fil^\rm{ind}_n K_{2n-1}(\C)_\bb{Q}$ then by \cref{defn rank filtrations}(ii), there exists $\alpha_0 \in H_{2n-1}(\BGL_n(\C);\bb{Q})$ whose stabilisation $\alpha$ maps to $\overline{\alpha}$ under the canonical map from homology to $E_\infty$-homology, identified with algebraic $K$-theory. We have that 
    \begin{align*}\rm{reg}^\scr{G}_n(\alpha) &= p_\R(\rm{edge}_n(\alpha)) = p_\R(\rm{can}_n(\alpha_0)) = p_\R(\alpha_0 \cap \rm{g}_n) \\
    &= \mu_n \cdot (\alpha_0 \cap \ol{c}_{n,2n-1}) =\mu_n\cdot (\alpha \cap \ol{c}_{\infty,2n-1}) =\mu_n \cdot \rm{reg}^\rm{Bo}_n(\alpha).\end{align*}
    The second equation follows from \cref{lem:edge-homomorphism-in-rank-n} for $\bf{R}=\BGLb(\C)_\bb{Q}$. The third equation uses that the canonical map is implemented by capping with the Goncharov class $\sf{g}_n$ by \cref{prop:canonical-is-cap-gamman}. The fourth equation uses that $\rm{Bo}^\scr{G}_{n,2n-1} = \mu_n \cdot \rm{Bo}_{n,2n-1}$ by \cref{prop:multiple}, after the definitions. The fifth equation uses that $\alpha_0$ stabilises to $\alpha$ and $\rm{Bo}_{n,2n-1}$ is obtained from $\overline{c}_{\infty,2n-1}$ by restriction to rank $n$.
\end{proof}

\begin{remark} \label{rem:goncharov-regulator-and-borel-regulator} 
    If a subfield $F$ of $\C$ satisfies $\fil^\rm{ind}_n K_{2n-1}(F) = K_{2n-1}(F)$, e.g.~if $F$ is a number field, then the Borel and Goncharov regulators agree up to the multiple $\mu_n$ on all of $K_{2n-1}(F)$.
\end{remark}

\subsection{Proof of \cref{thm:polyl-iso-number-field}} We now have gathered all tools needed to prove the main result of this paper, \cref{thm:polyl-iso-number-field}, though we state a stronger variant that also implies \cref{thm:borel-cocycles}.

\begin{theorem}\label{thm:polyl-iso-mun}
Let $F$ be a number field, then the $\mu_n \in \R$ of \cref{prop:multiple} are nonzero for every $n \ge 1$, and the canonical map 
    \[r^\rm{MTM} \colon \scr{G}(F) \lra \scr{L}^\rm{MTM}(F)\]
    is an isomorphism of graded Lie coalgebras.
\end{theorem}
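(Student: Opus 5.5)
We argue by induction on $n$, proving simultaneously that $\mu_n \neq 0$ and that $r^\rm{MTM}$ is an isomorphism in weight $n$; for brevity write $r_n$ for $r^\rm{MTM}|_{\scr{G}_n(F)}$. For $n=1$ both $\scr{G}_1(F)$ and $\scr{L}^\rm{MTM}_1(F)$ are identified with $F^\times_\Q$ compatibly with $r^\rm{MTM}$. For $\mu_1$, note that the Goncharov class in rank one is $z\mapsto \log|z|$ up to the normalisation $\lambda_1$, and this is a nonzero multiple of $\ol{c}_{1,1}$, as in the proof of \cref{lem:gln-c-cts-injective}. Now fix $n\ge 2$ and assume $r^\rm{MTM}$ is an isomorphism in weights $<n$.

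\textbf{Step 1: reduction to primitives.} By \cref{thm:number-fields-goncharov} and \cite{DG05}, both $\scr{G}(F)$ and $\scr{L}^\rm{MTM}(F)$ are cofree Lie coalgebras on $\bigoplus_m K_{2m-1}(F)_\Q$. Consider the commutative square formed by $r_n$ and the cobrackets $\delta\colon\scr{G}_n(F)\to\Lambda^2\scr{G}(F)$ and $\delta^\rm{MTM}$, with $\Lambda^2 r^\rm{MTM}$ on the right. Only weights $<n$ appear in the weight-$n$ part of $\Lambda^2$, so by induction the right vertical map is an isomorphism there. Since $r^\rm{MTM}$ is a map of coalgebras, it sends $\ker\delta$ to $\ker\delta^\rm{MTM}$.

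Both kernels have dimension $d_n=\dim K_{2n-1}(F)_\Q$. In a cofree Lie coalgebra the image of the cobracket in weight $n$ is the whole weight-$n$ part of $\Lambda^2$ allowed by co-Jacobi, so the two images correspond under $\Lambda^2 r^\rm{MTM}$, and the two sides have equal total dimensions. A five-lemma argument then shows that $r_n$ is an isomorphism as soon as $r_n|_{\ker\delta}$ is injective.

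\textbf{Step 2: detecting $\ker\delta$ by regulators.} By \cref{thm:image-edge-is-cobracket}(3) and \cref{thm:number-fields-goncharov}, $\rm{edge}_n\colon K_{2n-1}(F)_\Q\to\ker\delta$ is an isomorphism. For each embedding $\nu\colon F\hookrightarrow\C$, the composition $K_{2n-1}(F)_\Q\to\scr{G}_n(F)\xrightarrow{r_n}\scr{L}^\rm{MTM}_n(F)\to\scr{L}^\rm{Hod}_n\to\R$ equals $p_\R\circ\nu_*\circ\rm{edge}_n=\rm{reg}^\scr{G}_n\circ\nu_*$. By \cref{prop:borel-regulator-multiple} and \cref{rem:goncharov-regulator-and-borel-regulator} this is $\mu_n\cdot\rm{reg}^\rm{Bo}_n\circ\nu_*$. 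If $\mu_n\neq0$, Borel's theorem (\cref{lem:borel-yang-regulators}) makes the combined map to $\R^{d_n}$ injective. Since this map factors through $r_n|_{\ker\delta}$, that restriction is injective, and Step 1 completes the induction.

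\textbf{Step 3: $\mu_n\neq 0$ (main obstacle).} The constant $\mu_n$ is independent of $F$, so it suffices to exhibit one nonzero value of $\rm{reg}^\scr{G}_n$. Take $F=\Q(i)\subset\C$. Because $i$ is a root of unity, $[i]=0$ in $F^\times_\Q$, so the symbol-level cobracket of $\LiG_n(i)$ vanishes. More precisely, the cobracket of $\LiG_n(i)$ is a combination of $\LiG_k(i)\wedge[i]$ terms, and these vanish, hence $\LiG_n(i)\in\ker\delta$. Here I expect to use the induction hypothesis to justify the relevant cobracket formula and its vanishing in weights $<n$; this is the delicate point. Then $\LiG_n(i)=\rm{edge}_n(x)$ for some $x$, and by \cref{cor:value-of-real-period-on-unit-norm}, $\rm{reg}^\scr{G}_n(x)=p_\R(\LiG_n(i))\neq0$. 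Hence $\mu_n\neq0$.

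For this step to go through, $\rm{edge}_n$ must be compatible with the embedding $\Q(i)\to\C$, which follows from naturality of the constructions, and \cref{prop:borel-regulator-multiple} must apply to $x$, which holds because $\fil^\rm{ind}_n$ is everything for number fields.
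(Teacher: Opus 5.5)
Your proposal is correct and follows the paper's proof essentially step for step. It uses the same induction on $n$, the same reduction to $\ker(\delta)_n \to \ker(\delta^{\mathrm{MTM}})_n$ via cofreeness of both sides together with injectivity of the Borel regulators through \cref{prop:borel-regulator-multiple}, and the same test element $\LiG_n(i) \in \scr{G}_n(\Q(i))$ with nonzero real period to show $\mu_n \neq 0$. The point you flag as delicate is handled exactly as you anticipate: the paper computes $\delta^{\mathrm{MTM}}\mathrm{Li}^{\mathrm{MTM}}_n(i)$ on the motivic side, where it is a multiple of $(i)=0$, and pulls this back using that $\Lambda^2 r^{\mathrm{MTM}}$ is an isomorphism in weight $n$ by the induction hypothesis; since that hypothesis is applied to $\Q(i)$, the induction should be read as running over all number fields simultaneously.
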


\begin{proof}By induction on $n$ we prove that the maps $r^\rm{MTM} \colon \scr{G}_k(F) \to \scr{L}^\rm{MTM}_k(F)$ are isomorphisms for $k \leq n$ and that $\mu_k \neq 0$ for $k\leq n$.

For the base case $n=1$, we first use that $\rm{Bo}_{1,1}$ and $\rm{Bo}^\scr{G}_{1,1}$ are both represented, up to nonzero constants, by $[g] \in C_1(\GL_1(F)) \mapsto \log|g|$ so $\mu_1 \neq 0$; for the former this is \cite{Hamida} and for the latter this is by construction. Next we use that the isomorphism $F^\times \cong \scr{L}^\rm{MTM}_1(F)$ given by $(x_0-x_1) \mapsto \rm{Cor}^\rm{MTM}(x_0,x_1)$ of \cite[Proposition 50]{CMRR24} is compatible with the isomorphism $F^\times \cong \scr{G}_1(F)$ of \cite{KRS1}.

For the induction we suppose that we have proven the statement for $n-1$, and will prove it for $n$. We first verify that the constant $\mu_n$ is nonzero. For that, it suffices by \cref{prop:borel-regulator-multiple} to prove that the composition 
 \[K_{2n-1}(\Q(i)) \overset{j}\lra K_{2n-1}(\C) \xrightarrow{\rm{edge}_n} \scr{G}_n(\C) \xrightarrow{p_\R} \R\]
with left-most map induced by the standard inclusion $j \colon \Q(i) \hookrightarrow \C$, is nonzero. To do so, consider the element $ \LiG_n(i) = -\CorG(1,0,\compactldots,0,i) \in \scr{G}_n(\Q(i))$, which has nonzero real period by \cref{cor:value-of-real-period-on-unit-norm}. 
Its cobracket in $\scr{L}^\rm{MTM}_{n-1}(F) \otimes F^\times$ (or $\Lambda^2 F^\times$ for $n=2$) satisfies (take \cite[(25)]{CMRR24} modulo products)
\[(\Lambda^2 r^\rm{MTM})\Big(\delta\, \LiG_n(i)\Big)= \delta^\rm{MTM} \, \rm{Li}^\rm{MTM}_n(i) = \begin{cases}
   \rm{Li}^\rm{MTM}_{n-1}(i) \otimes (i) & \text{if $n \geq 3$}, \\
   (i) \wedge (1-i) & \text{if $n=2$,}
\end{cases}\]
which vanishes in both cases since $(i)=0 \in F^\times$ using that $i^4=1$ and we implicitly rationalise. By our induction hypothesis, $r^\rm{MTM}$ is an isomorphism in weights less than $n$ and hence we must have $\delta\,\LiG_n(i)=0 \in (\Lambda^2 \PolyL(F))_n$. Thus, by \cref{thm:number-fields-goncharov} we have that $\rm{Li}_n^\scr{G}(i) \in \rm{im}(\rm{edge}_n)$ and it lifts to $K_{2n-1}(\Q(i))$, giving the required result. 

Next we will prove that $r^\rm{MTM} \colon \PolyL_n(F) \to \MotCoLie_n(F)$ is an isomorphism. By the inductive hypothesis it suffices to prove that the map
\[\begin{tikzcd} \ker(\delta)_n \coloneq \ker\big[\delta \colon \scr{G}_n(F) \to (\Lambda^2 \scr{G}(F))_n\big] \dar \\[-10pt] \ker(\delta^\rm{MTM})_n \coloneq \ker\big[\delta \colon  \scr{L}^\rm{MTM}_n(F) \to (\Lambda^2 \scr{L}^\rm{MTM}(F))_n\big]\end{tikzcd}\]
is an isomorphism. There are maps
\[K_{2n-1}(F) \xrightarrow{\rm{edge}_n} \ker(\delta)_n \xrightarrow{r^\rm{MTM}} \ker(\delta^\rm{MTM})_n \lra \ker(\delta^\rm{Hod})_n \subseteq \scr{L}^\rm{Hod}_n \lra \bb{R}^{r_1+r_2}\]
where for the third map we use that by \cite[Section 5.2]{CMRR24} Hodge realisation factors over motivic realisation, and for the fourth map we apply the real period map $p_\bb{R}$ for all $r_1$ real embeddings of $F$ and $r_2$ pairs of conjugate complex embeddings of $F$. It follows from \cref{prop:borel-regulator-multiple} that this is a \emph{nonzero} multiple of the Borel regulators for all real and complex embeddings and hence is injective by \cite{BorelReg,BorelRegErrata}.

We now recall that $H^1(\scr{G}(F))_n = \ker(\delta)_n$ is isomorphic to $K_{2n-1}(F)_\Q$ because $\scr{G}(F)$ is cofree with cogenerators $K_*(F)_\Q$ for $*>0$ by \cref{thm:number-fields-goncharov}. Similarly $H^1(\scr{L}^\rm{MTM}(F))_n = \ker(\delta^\rm{MTM})_n$ is isomorphic to $K_{2n-1}(F)_\Q$ because $\scr{L}^\rm{MTM}(F)$ is cofree with cogenerators $K_*(F)_\Q$ for $*>0$ by \cite[Section 2]{DG05}. For dimension reasons it must hence be the case that $r^\rm{MTM} \colon \ker(\delta)_n \to \ker(\delta^\rm{MTM})_n$ is an isomorphism.\end{proof}

\begin{corollary}
    If $F$ is a number field then the formal realisation is an isomorphism of Lie coalgebras
    \[r^\rm{f} \colon \PolyL(F) \overset{\cong}\lra \PolyLF(F).\]
\end{corollary}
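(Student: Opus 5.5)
The plan is to deduce the corollary from \cref{thm:polyl-iso-mun} together with the compatibility of the formal and motivic realisations. The key input is that the motivic realisation factors through the formal one. By construction (see \cite[Theorem 7.28]{KRS1} and \cite{CMRR24}), $\PolyLF(F)$ is the Lie coalgebra of formal polylogarithms, presented by the symbols $\rm{Cor}^\rm{f}(x_0,\compactldots,x_n)$ subject only to the relations that are known to hold universally. Moreover, \cite{CMRR24} constructs a map of Lie coalgebras $\PolyLF(F)\to\MotCoLie(F)$ for number fields, sending $\rm{Cor}^\rm{f}(x_0,\compactldots,x_n)\mapsto \rm{Cor}^\rm{MTM}(x_0,\compactldots,x_n)$. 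So I would first record the commutative triangle
\[r^\rm{MTM} = r^{\rm{f}\to\rm{MTM}}\circ r^\rm{f}\colon \PolyL(F)\lra\PolyLF(F)\lra\MotCoLie(F).\]
To check that it commutes, it suffices to compare the two composites on the generators $\CorG(x_0,\compactldots,x_n)$, and both send such a generator to $\rm{Cor}^\rm{MTM}(x_0,\compactldots,x_n)$. This is exactly the characterisation of $r^\rm{MTM}$ in \cref{thm:polyl-iso-number-field}, and of $r^\rm{f}$ in \cite[Theorem 7.28]{KRS1}, as recalled in the proof of \cref{lem symbol map}.

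Given the triangle, injectivity of $r^\rm{f}$ follows at once, because $r^\rm{MTM}$ is an isomorphism by \cref{thm:polyl-iso-mun}. For surjectivity, I would use that $\PolyLF(F)$ is by definition spanned by the elements $\rm{Cor}^\rm{f}(x_0,\compactldots,x_n)$, each of which is the image of $\CorG(x_0,\compactldots,x_n)$; the same observation was used in the proof of \cref{cor:char-p-distribution}. Hence $r^\rm{f}$ is a bijective map of Lie coalgebras, and therefore an isomorphism.

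The main obstacle is making sure the factorisation of the motivic realisation through the formal one is genuinely available, i.e.\ that the relations defining $\PolyLF(F)$ hold in $\MotCoLie(F)$ and that this map respects cobrackets. I would cite \cite[Section 5]{CMRR24} for this; it is the same input already invoked in the proof of \cref{thm:polyl-iso-mun} to factor the Hodge realisation over the motivic one. With that in hand, the argument is purely formal.
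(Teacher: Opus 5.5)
Your proof is correct and is essentially the paper's argument: the motivic realisation factors through $r^{\rm{f}}$, so $r^{\rm{f}}$ is injective by \cref{thm:polyl-iso-mun}, and it is surjective by construction of $\PolyLF(F)$ \cite[Theorem 7.28]{KRS1}. One small correction to your citation remark: the map $\PolyLF(F)\to\MotCoLie(F)$ is the motivic realisation of formal polylogarithms from \cite{CMRR24}, which is a different input from the Hodge-over-motivic factorisation used in \cref{thm:polyl-iso-mun}, but this does not affect the argument.
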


\begin{proof}
    Motivic realisation $r^\rm{MTM}$ factors through formal realisation, and hence $r^\rm{f}$ must be injective by the previous theorem. As $r^\rm{f}$ is surjective by definition, see \cite[Theorem 7.28]{KRS1}, we obtain the result. 
\end{proof}

In particular, all functional equations that are known to hold for $\PolyLF(F)$, see \cite{CMRR24}, hold in $\PolyL(F)$ for $F$ a number field. 

\subsection{Normalisations and the proof of \cref{cor:zagier}}\label{sec:normalisation}  Our definition of real period map depends on the choice of arbitrary constants $\lambda_n \in \bb{R}^\times$, see \eqref{eqn:hodge correlator via log abs}. 
By \cref{prop:multiple}, \cref{prop:borel-regulator-multiple}, and \cref{thm:polyl-iso-mun} there exist nonzero constants $\mu_n \in \bb{R}$ such that
\[\rm{reg}^\scr{G}_n = \mu_n\, \rm{reg}^\rm{Bo}_n.\]
The exact value of $\mu_n$ depends on the choice of $\lambda_n$ and only the quotient $\mu_n/\lambda_n \in \bb{R}^\times$ is canonically defined. It measures the proportionality between the two regulators and in this section we determine it up to a nonzero rational number. We finish by establishing \cref{cor:zagier}.

\subsubsection{Borel's regulator(s) and values of the zeta function} \label{sec:various-normaisations}
There are different conventions in the literature about normalisations for the Borel regulator. We will explain three conventions below, as they are more convenient in different settings:
\begin{enumerate}[\noindent 1.]
\item There is the \emph{renormalised Borel regulator} appearing in \cite[Definition 9.19]{BurgosGil}, denoted $\rm{reg}^\rm{Bo}_n$, which in terms of Beilinson's regulator $\rm{reg}^\rm{Be}_n$ satisfies  \cite[Theorem 10.9]{BurgosGil}
\[\rm{reg}^\rm{Bo}_n=2\, \rm{reg}^\rm{Be}_n.\]
\item There is an \emph{unnormalised Borel regulator}, denoted $(\rm{reg}^\rm{Bo}_n)'$, which appears in Borel's original paper \cite{BorelReg}. By \cite[Definition 9.19, Remark 9.20]{BurgosGil} this satisfies
\[\rm{reg}^\rm{Bo}_n \sim_{\Q^\times} \pi\, (\rm{reg}^\rm{Bo}_n)'.\]
It is possible to pin down the rational constant further using the now-proven Quillen--Lichtenbaum conjecture, but we will not have a use for this.
\item There is a final choice of normalisation, which appears in the literature around Zagier's conjecture \cite[Section 1]{Zag90} \cite[Remark 1.3]{Dup20} (see \cref{rem:Zaiger's normalisation} for an explanation). 
This normalisation will be called \emph{Zagier's normalisation} of Borel's regulator and will be denoted $(\rm{reg}^\rm{Bo}_n)''$. By \cite[Note 6]{Dup20}, it satisfies
\[(\rm{reg}^\rm{Bo}_n)'' \sim_{\Q^\times} \pi^{n-1} \rm{reg}^\rm{Bo}_n.\] 
\end{enumerate}

\noindent Fix a number field $F$ and let $\zeta_F(s)$ denote its Dedekind zeta function. For $n \ge 2$, we set 
    \[d_n \coloneq \dim_\Q(K_{2n-1}(F)_\Q)=\begin{cases}
        r_1+r_2 & \text{if $n$ is odd,} \\ r_2 & \text{if $n$ is even,}
    \end{cases}\]
so that the Borel regulator defines a map $\rm{reg}^\rm{Bo}_n \colon K_{2n-1}(F)/\rm{tors} \to \bb{R}^{d_n}$ whose image is a lattice with covolume denoted by $R_F(n)$. Doing the same for the un-normalised Borel regulator we obtain a lattice with covolume $R_F'(n)$ satisfying $R_F(n) \sim_{\Q^\times} \pi^{d_n} R'_F(n)$.

In \cite[Section 6]{BorelReg}, Borel proved that the un-normalised Borel regulator satisfies
\[R_F'(n) \sim_{\Q^\times} \pi^{-d_n} \zeta_F^*(1-n),\]
where $\zeta_F^*(1-n) \coloneq \lim_{s \to 1-n} (s+n-1)^{-d_n} \zeta(s) \in \bb{C}^\times$. Thus, as stated in \cite[Theorem 9.12]{BurgosGil}, the re-normalised Borel regulator satisfies 
\[R_F(n) \sim_{\Q^\times} \zeta_F^*(1-n).\]

Finally, Zagier's Borel regulator has covolume $R''_F(n)$ satisfying
\[\zeta^*_F(1-n) \sim_{\Q^\times} \pi^{(1-n)d_n} R''_F(n).\]

The functional equation for $\zeta_F(s)$ implies that for $n \ge 2$ integer, 
\[\zeta_F(n) \sim_{\Q^\times} \frac{\pi^{n [F:\Q]-d_n}}{\sqrt{|D_F|}} \zeta_F^*(1-n),\]
where $|D_F|$ is the absolute value of the discriminant of $F$,
and hence we get
\begin{equation} \label{eqn: zeta value and regulators}
\zeta_F(n) \sim_{\Q^\times} \frac{\pi^{n [F:\Q]-d_n}}{\sqrt{|D_F|}} R_F(n) \sim_{\Q^\times} \frac{\pi^{n [F:\Q]}}{\sqrt{|D_F|}} R_F'(n) \sim_{\Q^\times} \frac{\pi^{n ([F:\Q]-d_n)}}{\sqrt{|D_F|}} R_F''(n).
\end{equation}

\begin{remark} \label{rem:Zaiger's normalisation}
   Zagier's normalisation of the Borel regulator is chosen to be compatible with the single-valued classical polylogarithms $\mathcal{L}_n$ introduced in \cref{sec: real period classical polylogs} in the sense that Zagier's conjecture is equivalent to the existence of $\xi_1,\compactldots, \xi_{d_n} \in F^\times$ such that
\[R''_F(n)\sim_{\Q^\times} \det[\mathcal{L}_n(\sigma_i(\xi_j))_{1 \le i,j \le d_n}],\]
where $\sigma_1,\compactldots,\sigma_{d_n}$ are either all the conjugacy classes of complex non-real embeddings if $n$ is even, or all the conjugacy classes of complex embeddings for $n$ odd. 
\end{remark}

\subsubsection{Comparing Goncharov's and Borel's regulators} We now compare the Goncharov regulator to Borel's regulators. 

\begin{proposition}\label{prop:proportionality}
    For $n \ge 2$ we have that 
 \[\rm{reg}^{\scr{G}}_n \sim_{\Q^\times} \lambda_n \pi^{2n-2} \rm{reg}^{\rm{Bo}}_n \sim_{\Q^\times}   \lambda_n \pi^{2n-1}(\rm{reg}^{\rm{Bo}}_n)' \sim_{\Q^\times} \lambda_n {\pi^{n-1}}(\rm{reg}^{\rm{Bo}}_n)''.\]
\end{proposition}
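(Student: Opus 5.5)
The second and third equivalences are formal consequences of the normalisation relations recalled in \cref{sec:various-normaisations}: $\rm{reg}^\rm{Bo}_n \sim_{\Q^\times} \pi(\rm{reg}^\rm{Bo}_n)'$ and $(\rm{reg}^\rm{Bo}_n)'' \sim_{\Q^\times} \pi^{n-1}\rm{reg}^\rm{Bo}_n$. Indeed, $\pi^{2n-2}\rm{reg}^\rm{Bo}_n \sim \pi^{2n-1}(\rm{reg}^\rm{Bo}_n)'$ and $\pi^{2n-2}\rm{reg}^\rm{Bo}_n \sim \pi^{n-1}(\rm{reg}^\rm{Bo}_n)''$. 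So the content is the first equivalence, i.e.\ that $\mu_n/\lambda_n \in \pi^{2n-2}\Q^\times$. Since both regulators are linear, and by \cref{prop:borel-regulator-multiple} together with \cref{rem:goncharov-regulator-and-borel-regulator} they are proportional on $K_{2n-1}(F)$ for a number field $F$, it suffices to compare them on one nonzero element. The natural test field is $\Q(i)$, which has a single complex place, so $d_n=1$ for all $n$.

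The plan is to evaluate both sides on a nonzero $\xi\in K_{2n-1}(\Q(i))_\Q$ with $\rm{edge}_n(\xi)=\LiG_n(i)$. Such a lift exists by the proof of \cref{thm:polyl-iso-mun}: $\delta\LiG_n(i)=0$, and then \cref{thm:number-fields-goncharov} applies.

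For the Goncharov side, \cref{lem: real period of classical polylogarithm} gives $\rm{reg}^\scr{G}_n(\xi)=\lambda_n(2\pi i)^{n-1}\mathcal{L}_n^{\#}(i)$. The simplification of $\mathcal{L}_n^\#$ on the unit circle expresses this as a rational multiple of $\lambda_n\pi^{n-1}\sum_{m\ge1}\chi(m)/m^n$. Here the relevant sum is $\sum \sin(m\pi/2)/m^n = L(\chi_{-4},n)$ for $n$ even, and $\sum\cos(m\pi/2)/m^n = 2^{-n}\zeta(n)\cdot(\text{rational})$ for $n$ odd. The factor $i$ for even $n$ should be absorbed by the convention $\pi_n$, so that the result is real; I would check this sign and phase bookkeeping carefully.

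For the Borel side, I would compute $\rm{reg}^\rm{Bo}_n(\xi)$ up to $\Q^\times$ using $d_n=1$. The image of $\rm{reg}^\rm{Bo}_n$ on $K_{2n-1}(\Q(i))$ is a rank-one lattice, so $\rm{reg}^\rm{Bo}_n(\xi)\sim_{\Q^\times} R_{\Q(i)}(n)\sim_{\Q^\times}\zeta^*_{\Q(i)}(1-n)$. By \eqref{eqn: zeta value and regulators}, $R_{\Q(i)}(n)\sim \pi^{-(2n-1)}\zeta_{\Q(i)}(n)$, since $[F:\Q]=2$ and $|D_F|=4$. Factoring $\zeta_{\Q(i)}(n)=\zeta(n)L(\chi_{-4},n)$, we use $\zeta(n)\in\pi^n\Q^\times$ for $n$ even and $L(\chi_{-4},n)\in\pi^n\Q^\times$ for $n$ odd. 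In both parities this gives $\rm{reg}^\rm{Bo}_n(\xi)\sim\pi^{1-n}\cdot c_n$, where $c_n$ is the transcendental value $L(\chi_{-4},n)$ ($n$ even) or $\zeta(n)$ ($n$ odd). This is the same $c_n$ appearing on the Goncharov side.

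Taking the ratio gives $\rm{reg}^\scr{G}_n(\xi)/\rm{reg}^\rm{Bo}_n(\xi)\sim_{\Q^\times}\lambda_n\pi^{n-1}\cdot\pi^{n-1}=\lambda_n\pi^{2n-2}$. By proportionality on all of $K_{2n-1}$, this is the first equivalence. The main obstacle I expect is keeping track of the precise $\Q^\times$-class in the chain of normalisations: the factor $\pi^{-d_n}$ relating $R_F$ and $R'_F$, and the functional-equation exponent. Here I would double-check the exponents against the known case $n=2$, where $p_\R(\LiG_2(z))=\lambda_2\pi D(z)$ and the Bloch--Wigner/Borel comparison is classical.
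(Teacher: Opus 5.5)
Your proposal is correct and is essentially the paper's proof. It reduces to the first equivalence via the normalisation relations, tests on a lift of $\LiG_n(i)$ to $K_{2n-1}(\Q(i))_\Q$, and combines $\zeta_{\Q(i)}(n)\sim_{\Q^\times}\pi^{2n-1}R_{\Q(i)}(n)$, the factorisation $\zeta_{\Q(i)}=\zeta\cdot\beta$, Levin's formula for $p_\R(\LiG_n(i))$, and the facts $\zeta(2k)\in\pi^{2k}\Q^\times$, $\beta(2k+1)\in\pi^{2k+1}\Q^\times$. The paper chains the two sides into one relation and solves for $\lambda_n/\mu_n$, whereas you compute each side separately and take the ratio. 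One small correction: for even $n$, the factor $i$ in $\mathcal{L}_n^{\#}(i)$ comes from the convention $\pi_n$ and is cancelled by the odd power $i^{n-1}$ in $(2\pi i)^{n-1}$, so $p_\R(\LiG_n(i))$ is indeed real.
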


\begin{proof}By \cref{sec:various-normaisations}, it suffices to prove $\rm{reg}^{\scr{G}}_n \sim_{\Q^\times} \lambda_n \pi^{2n-2} \rm{reg}^{\rm{Bo}}_n$. By \eqref{eqn: zeta value and regulators} we know that 
    \[\zeta_F(n) \sim_{\Q^\times} \frac{\pi^{n[F: \Q]-d_n}}{\sqrt{|D_F|}} R_F(n).\]
    We now specialize to $F=\Q(i)$ and simplify. Firstly, $\sqrt{|\smash{D_{\Q(i)}}|}=2$ is rational. Secondly, $[\Q(i):\Q]=2$ and $d_n=1$ for all $n \ge 2$. Thirdly, since $d_n=1$ for $n \ge 2$ we have 
    \[R_{\Q(i)}(n)=|\rm{reg}^{\rm{Bo}}_n(\alpha_n)|,\]
    with $\alpha_n$ a generator of $K_{2n-1}(\Q(i))/\rm{tors}$ and Borel regulator using the standard embedding $\Q(i) \hookrightarrow \bb{C}$. Fourthly, the Dedekind zeta function factors as $\zeta_{\Q(i)}(s)= \zeta(s) \cdot \beta(s)$, where $\zeta(s)$ is the Riemann zeta function and $\beta(s)$ is the Dirichlet beta function $\beta(s) \coloneq \sum_{k=0}^{\infty} \smash{\tfrac{(-1)^k}{(2k+1)^s}}$.

    By the proof of \cref{thm:polyl-iso-mun} we know that $\smash{\LiG_n(i) \in \PolyL_n(\Q(i))}$ lifts to a unique element $\beta_n \in K_{2n-1}(\Q(i))_\Q$ since its cobracket vanishes, and cannot be zero since $p_\R(\LiG(i)) \neq 0$ by \cref{cor:value-of-real-period-on-unit-norm}. Thus, $\beta_n \sim_{\Q^\times} \alpha_n$ and we obtain that for $n \ge 2$
    \[\zeta(n) \cdot \beta(n) \sim_{\Q^\times} {\pi^{2n-1}} \, \rm{reg}^\rm{Bo}_n(\beta_n) = \tfrac{\pi^{2n-1}}{\mu_n} \, \rm{reg}^\scr{G}_n(\beta_n)= \tfrac{\pi^{2n-1}}{\mu_n} \, p_\R(\LiG_n(i)).\]
    Now, by \cref{lem: real period of classical polylogarithm}, $p_\R(\LiG_n(i))= \lambda_n (2 \pi i)^{n-1} \mathcal{L}^{\#}(i)$, and
    \[\mathcal{L}^{\#}(i) \sim_{\Q^\times} \begin{cases}
        i \cdot \sum_{m \ge 1} \frac{\sin(m \pi/2)}{m^n} & \text{if $n$ even} \\
        \sum_{m \ge 1} \frac{\cos(m \pi/2)}{m^n} & \text{if $n$ odd}. 
    \end{cases}\]
    For the $n$ even case, $\sum_{m \ge 1} \frac{\sin(m \pi/2)}{m^n}= \beta(n)$ by definition. For the $n$ odd case, we have 
    \[\sum_{m \ge 1}\frac{\cos(m \pi/2)}{m^n}= - \frac{1}{2^n} \sum_{k \ge 1} \frac{(-1)^{k-1}}{k^n}= - \frac{1}{2^n} \Big(1- \frac{2}{2^n}\Big) \zeta(n) \sim_{\Q^\times} \zeta(n).\]
    Thus, we obtain that 
    \[\zeta(n) \cdot \beta(n) \sim_{\Q^\times} \pi^{3n-2} \tfrac{\lambda_n}{\mu_n} \cdot \begin{cases}
        \beta(n) & \text{if $n$ even,} \\
        \zeta(n) & \text{if $n$ odd.}
    \end{cases}\]
    We will analyse both cases separately. When $n$ is even, we see that $\smash{\zeta(n) \sim_{\Q^\times} \pi^{3n-2} \tfrac{\lambda_n}{\mu_n}}$ and the known computation $\smash{\zeta(n) \sim_{\Q^\times} \pi^n}$ yields the result. When $n$ is odd, we see that $\smash{\beta(n) \sim_{\Q^\times} \pi^{3n-2} \tfrac{\lambda_n}{\mu_n}}$, and the known computation $\smash{\beta(n) \sim_{\Q^\times} \pi^n}$ yields the result.
\end{proof}  

\subsubsection{The proof of \cref{cor:zagier}} \label{subsubsection Zagier}

Using Zagier's normalisation, \eqref{eqn: zeta value and regulators} gives that 
\[\zeta_F(n) \sim_{\Q^\times} \frac{\pi^{n ([F:\Q]-d_n)}}{\sqrt{|D_F|}} R_F''(n).\]
We now will take $\lambda_n \sim_{\Q^\times} \pi^{1-n}$ so that $\rm{reg}^\scr{G}_n \sim_{\Q^\times} (\rm{reg}^\rm{Bo}_n)''$ by \cref{prop:proportionality}. The Goncharov regulators can also be used to define a map $ K_{2n-1}(F)/\rm{tors} \to \R^{d_n}$ whose image is a lattice, with covolume denoted $R^\scr{G}_n(F)$. Our choice of normalisation then implies that $R^\scr{G}_n(F) \sim_{\Q^\times} R_n''(F)$. 

\cref{thm:number-fields-goncharov} tells us that $\rm{edge}_n$ induces an isomorphism $K_{2n-1}(F)_\Q \xrightarrow{\cong} \ker(\delta|_{\PolyL_n(F)}) \subset \PolyL_n(F)$, and by definition $\smash{\rm{reg}_n^\scr{G}= p_\R \circ \rm{edge}_n}$. Thus, if $\xi_1,\compactldots,\xi_{d_n}$ is a basis for $\ker(\delta|_{\PolyL_n(F)})$ then we can compute
\[R^\scr{G}_n(F) \sim_{\Q^\times} \det\big(p_\R(\sigma_i(\xi_j))\big)_{1 \le i,j \le d_n},\]
where $\sigma_1,\compactldots,\sigma_{d_n}$ denotes either the set of all conjugacy classes of complex nonreal embeddings of $F$ (if $n$ even) or the set of all conjugacy classes of real and complex embeddings of $F$ (if $n$ is odd). 
This proves \cref{cor:zagier}.

\bibliographystyle{amsalpha}
\bibliography{./refs}

\bigskip

\end{document}